\newtheorem{definition}{Definition}[section]
\newtheorem{theorem}[definition]{Theorem}
\newtheorem{lemma}[definition]{Lemma}
\newtheorem{corollary}[definition]{Corollary}
\newtheorem{remark}[definition]{Remark}
\newtheorem{example}[definition]{Example}
\newtheorem{problem}[definition]{Problem}
\newtheorem{note}[definition]{Note}
\newtheorem{proposition}[definition]{Proposition}
\begin{document} 

\title{\bf 
The Lie algebra $\mathfrak{sl}_4(\mathbb C)$
  \\
and the hypercubes
}
\author{ 
William J. Martin \\
Paul Terwilliger
}
\date{}
%%\date{\today}
\maketitle
\begin{abstract} We describe a relationship between the Lie algebra $\mathfrak{sl}_4(\mathbb C)$ and the hypercube graphs.
Consider the $\mathbb C$-algebra $P$ of polynomials in four commuting variables. We turn $P$ into an $\mathfrak{sl}_4(\mathbb C)$-module
on which each element of $\mathfrak{sl}_4(\mathbb C)$ acts as a derivation. 
Then $P$ becomes a direct sum of irreducible $\mathfrak{sl}_4(\mathbb C)$-modules
 $P = \sum_{N\in \mathbb N} P_N$, where $P_N$ is the $N$th homogeneous component
of $P$. 
For  $N\in \mathbb N$ we construct some additional $\mathfrak{sl}_4(\mathbb C)$-modules  ${\rm Fix}(G)$ and $T$.
For these modules the underlying vector space is described as follows.
Let $X$ denote the vertex set of the hypercube $H(N,2)$, and let $V$ denote the
$\mathbb C$-vector space with basis $X$. For the automorphism
group $G$  of  $H(N,2)$,  the action of $G$ on $X$ turns $V$ into a $G$-module. The vector space $V^{\otimes 3} = V \otimes V \otimes V$
becomes a $G$-module such that $g(u \otimes v \otimes w)= g(u) \otimes g(v) \otimes g(w)$  for $g\in G$ and $u,v,w \in V$. The subspace ${\rm Fix}(G)$
 of $V^{\otimes 3}$ consists of the vectors in $V^{\otimes 3}$ that are fixed by every element in $G$. 
 Pick $\varkappa \in X$. The corresponding subconstituent algebra $T$ of $H(N,2)$ is the subalgebra of ${\rm End}(V)$ generated by the adjacency
map $\sf A$ of $H(N,2)$ and the dual adjacency map ${\sf A}^*$ of $H(N,2)$ with respect to $\varkappa$.
In our main results, we turn ${\rm Fix}(G)$ and $T$ into $\mathfrak{sl}_4(\mathbb C)$-modules,
and display $\mathfrak{sl}_4(\mathbb C)$-module isomorphisms $P_N \to {\rm Fix}(G) \to T$.
 We describe the  $\mathfrak{sl}_4(\mathbb C)$-modules $P_N$, ${\rm Fix}(G)$, $T$ from multiple points of view.
 \bigskip

\noindent
{\bf Keywords}. Cartan subalgebra; derivation;  subconstituent algebra; Wedderburn decomposition.
\hfil\break
\noindent {\bf 2020 Mathematics Subject Classification}.
Primary: 05E30; 17B10.
%%%Secondary 06A11; 05C50.
 \end{abstract}
 
\section{Introduction} 

The subconstituent algebra of a distance-regular graph was introduced in  \cite{tSub1,tSub2,tSub3}. 
This algebra is finite-dimensional, semisimple, and noncommutative in general. 
Its basic properties are described in  \cite{bbit,cerzo, int}.
The subconstituent algebra has been used to study
tridiagonal pairs \cite{bbit,itoTD, someAlg,augIto, altDRG},
spin models \cite{CW, C:thin, HSD,nomSM,nomSpin},
codes \cite{code2,code1, tanaka2},
projective geometries \cite{jhl3, koolen,liang1,liang2,seong},
quantum groups \cite{bcv, curtin6, qtetanddrg, itoTer3,boyd},
DAHA of rank one \cite{jhl1,jhl2, jhl4},
and some areas of mathematical physics \cite{bcv3,bcv2, bernard3, krystal}.
Further applications can be found in the survey \cite{dkt}.
\medskip

\noindent For someone seeking an introduction to the subconstituent algebra, the  hypercube graphs offer an attractive and accessible example.
For these graphs the subconstituent algebra is described in \cite{go}. It is apparent from \cite{go} that for the hypercube graphs,  the subconstituent algebra is closely
related to the Lie algebra $\mathfrak{sl}_2(\mathbb C)$. 
\medskip

\noindent  In the present paper, we will investigate the hypercube graphs using an approach that is different from the one in \cite{go}.
We will use the $S_3$-symmetric approach that was suggested in \cite{S3}. As it turns out, 
 in this approach the Lie algebra $\mathfrak{sl}_4(\mathbb C)$ plays the key role. Consequently, we will begin our investigation with
a detailed study of $\mathfrak{sl}_4(\mathbb C)$. For the rest of this section, we summarize our results.
 \medskip
 
\noindent  Our main topic is the Lie algebra $\mathfrak{sl}_4(\mathbb C)$, although the Lie algebra $\mathfrak{sl}_2(\mathbb C)$
will play a supporting role. Recall that $\mathfrak{sl}_2(\mathbb C)$ has a basis 
\begin{align*}
E= \begin{pmatrix} 0 & 1 \\ 0&0\end{pmatrix}, \qquad \quad
F= \begin{pmatrix} 0 & 0 \\ 1&0\end{pmatrix}, \qquad \quad
H= \begin{pmatrix} 1 & 0 \\ 0&-1\end{pmatrix}
\end{align*}
and Lie bracket
\begin{align*}
\lbrack H,E\rbrack = 2E, \qquad \quad \lbrack H,F\rbrack=-2F, \qquad \quad \lbrack E,F\rbrack=H.
\end{align*}
We now consider $\mathfrak{sl}_4(\mathbb C)$. We show that $\mathfrak{sl}_4(\mathbb C)$ has a generating set with six generators
\begin{align*}
&A_1 = \begin{pmatrix}              0&1&0&0\\
                                         1&0&0&0 \\
                                          0&0&0&1 \\
                                         0&0&1& 0
                                           \end{pmatrix}, \qquad \qquad A^*_1 = {\rm diag}(1,1,-1,-1), 
                                               \\
   &A_2 = \begin{pmatrix} 
                                           0&0&1&0\\
                                         0 &0&0&1 \\
                                         1 &0&0&0 \\
                                        0 &1&0& 0
  \end{pmatrix}, \qquad \qquad A^*_2 =  {\rm diag}(1,-1,1,-1),
                                               \\
&A_3 =  \begin{pmatrix}
                                   0&0&0&1\\
                                          0&0&1&0 \\
                                         0 &1&0&0 \\
                                         1&0&0& 0
                                          \end{pmatrix}, \qquad \qquad 
                                                                A^*_3 = {\rm diag}(1,-1,-1,1).                                                                                                   
\end{align*}
Using these generators, we give a presentation of $\mathfrak{sl}_4(\mathbb C)$ by generators and relations.
We show that  $A_1, A_2, A_3$ form a basis for a Cartan subalgebra $\mathbb H$ of  $\mathfrak{sl}_4(\mathbb C)$.
We show that  $A^*_1, A^*_2, A^*_3$ form a basis for a Cartan subalgebra $\mathbb H^*$ of  $\mathfrak{sl}_4(\mathbb C)$.
Let $i,j,k$ denote a permutation of $\lbrace 1,2,3\rbrace$. We show that $A_i, A^*_i$ commute.
We show that  $A_j, A^*_k$ generate a Lie subalgebra of $\mathfrak{sl}_4(\mathbb C)$ isomorphic
to $\mathfrak{sl}_2(\mathbb C)$. We show that $A_j, A_k, A^*_j, A^*_k$ generate a Lie subalgebra of $\mathfrak{sl}_4(\mathbb C)$ isomorphic to 
$\mathfrak{sl}_2(\mathbb C) \oplus \mathfrak{sl}_2(\mathbb C)$. We call this Lie subalgebra the $i$th Lie subalgebra of  $\mathfrak{sl}_4(\mathbb C)$ isomorphic to 
$\mathfrak{sl}_2(\mathbb C) \oplus \mathfrak{sl}_2(\mathbb C)$.
We display an automorphism $\tau$ of  $\mathfrak{sl}_4(\mathbb C)$ that swaps 
$A_1 \leftrightarrow A^*_1$, $A_2 \leftrightarrow A^*_2$, $A_3 \leftrightarrow A^*_3$.
\medskip

\noindent Next, we bring in a polynomial algebra $P$.
Let $x,y,z,w$ denote mutually commuting indeterminates, and consider the polynomial algebra $P=\mathbb C \lbrack x,y,z,w\rbrack$. 
We turn $P$ into an $\mathfrak{sl}_4(\mathbb C)$-module on which each element of $\mathfrak{sl}_4(\mathbb C)$ acts as a derivation.
%For $N \in \mathbb N$ let $P_N$ denote the subspace of $P$ consisting of the homogeneous polynomials that have total degree $N$.
%The sum $P=\sum_{N \in \mathbb N} P_N$ is direct. 
%We show that each summand $P_N$ is an irreducible $\mathfrak{sl}_4(\mathbb C)$-submodule of $P$.
We display two bases for $P$.  The first basis is
\begin{align} \label{eq:INTbasis1}
x^r y^s z^t w^u \qquad \qquad r,s,t,u \in \mathbb N.
\end{align}
 Define
\begin{align*}
& x^* = \frac{x+y+z+w}{2}, \qquad \qquad y^* = \frac{x+y-z-w}{2}, \\
& z^*= \frac{x-y+z-w}{2}, \qquad \qquad w^* = \frac{x-y-z+w}{2}.
\end{align*}
\noindent The second basis is
\begin{align} \label{eq:INTbasis2}
x^{*r} y^{*s} z^{*t} w^{*u} \qquad \qquad r,s,t,u \in \mathbb N.
\end{align}
We show how the $\mathfrak{sl}_4(\mathbb C)$-generators act on the bases \eqref{eq:INTbasis1} and \eqref{eq:INTbasis2}.
As we will see, the basis \eqref{eq:INTbasis1} diagonalizes  $\mathbb H^*$ and the basis  \eqref{eq:INTbasis2} diagonalizes $\mathbb H$. 
We display an automorphism $\sigma$ of the algebra $P$ that sends 
\begin{align*}
x \leftrightarrow x^*, \qquad \quad
y \leftrightarrow y^*, \qquad \quad
z \leftrightarrow z^*, \qquad \quad
w \leftrightarrow w^*.
\end{align*}
We show that for $\varphi \in \mathfrak{sl}_4(\mathbb C)$ the equation $\tau(\varphi) = \sigma \varphi \sigma^{-1}$ holds on $P$.
\medskip

\noindent Next, we consider the homogeneous components of $P$. For $N \in \mathbb N$ let $P_N$ denote the subspace of $P$ consisting of the homogeneous polynomials that have total degree $N$.
One basis for $P_N$  consists of the polynomials in \eqref{eq:INTbasis1} that have total degree $N$.
Another basis for $P_N$ consists of the polynomials in \eqref{eq:INTbasis2} that have total degree $N$.
By construction, the sum $P=\sum_{N \in \mathbb N} P_N$ is  direct. 
We show that each summand $P_N$ is an irreducible $\mathfrak{sl}_4(\mathbb C)$-submodule of $P$. We show that on the  $\mathfrak{sl}_4(\mathbb C)$-module  $P_N$,
each $\mathbb H$-weight space has dimension one and each  $\mathbb H^*$-weight space has dimension one.
By construction, each $\mathfrak{sl}_4(\mathbb C)$-generator is diagonalizable on $P_N$.
We show that the eigenvalues are $\lbrace N-2n\rbrace_{n=0}^N$, and for $0 \leq n \leq N$
the $(N-2n)$-eigenspace  has dimension $(n+1)(N-n+1)$. 
\medskip

\noindent Next, 
we display  a Hermitian form $\langle \,,\,\rangle$ on $P$ with respect to which the basis \eqref{eq:INTbasis1} is orthogonal
and the basis \eqref{eq:INTbasis2} is orthogonal.  For each basis we compute the square norm of the basis vectors. 
For $f,g \in P$ we show that $\langle \sigma f, \sigma g \rangle = \langle f,g \rangle$ and 
\begin{align*}
\langle A_i f, g \rangle = \langle f, A_i g\rangle,  \qquad \qquad \langle A^*_i f, g \rangle = \langle f, A^*_i g\rangle \qquad \qquad i \in \lbrace 1,2,3\rbrace.
\end{align*}
We  describe the inner product between each basis vector in \eqref{eq:INTbasis1} and each basis vector in \eqref{eq:INTbasis2}. 
We express these inner products in two ways; using a generating function and as hypergeometric sums. We display some orthogonality relations
and recurrence relations that involve the hypergeometric sums. For $N \in \mathbb N$ we display a polynomial $\mathcal P^\vee$ in six variables with
the following property: for
$r,s,t,u \in \mathbb N$ such that $r+s+t+u=N$,
\begin{align*}
\mathcal P^\vee(s,t,u; A_1, A_2, A_3) x^N &= x^r y^s z^t w^u; \\
\mathcal P^\vee(s,t,u; A^*_1, A^*_2, A^*_3) x^{*N} &= x^{*r} y^{*s} z^{*t} w^{*u}.
\end{align*}
Using this, we show that $P_N$ has a basis
\begin{align*}
A_1^s A_2^t A_3^u x^N \qquad \qquad s,t,u \in \mathbb N, \qquad s+t+u \leq N
\end{align*}
and a basis
\begin{align*}
A_1^{*s} A_2^{*t} A_3^{*u} x^{*N} \qquad \qquad s,t,u \in \mathbb N, \qquad s+t+u \leq N.   
\end{align*}

\noindent  Next, for $i \in \lbrace 1,2,3\rbrace$ we introduce a ``lowering map''  $L_i \in {\rm End}(P)$ and a ``raising map''  $R_i \in {\rm End}(P)$.
We define 
\begin{align*}
L_1 = D_x D_y -D_z D_w, \qquad \quad %%%\frac{\partial}{\partial x} \frac{\partial}{\partial y} - \frac{\partial}{\partial z} \frac{\partial }{\partial w}, \qquad \quad
L_2 = D_x D_z -D_w D_y, \qquad \quad %%%%%\frac{\partial}{\partial x} \frac{\partial}{\partial z} - \frac{\partial}{\partial w} \frac{\partial }{\partial y}, \qquad  \quad
L_3 =  D_x D_w -D_y D_z, %%%\frac{\partial}{\partial x} \frac{\partial}{\partial w} - \frac{\partial}{\partial y} \frac{\partial }{\partial z}.
\end{align*}
where $D_x, D_y, D_z, D_w$ are the partial derivatives with respect to $x,y,z,w$ respectively.  Let $i \in \lbrace 1,2,3 \rbrace$. We show that $L_i(P_N)= P_{N-2}$ for  $N \in \mathbb N$,
where $P_{-1}=0$ and $P_{-2}=0$.
By construction $L_1, L_2, L_3$ mutually commute. We define
  \begin{align*}
 R_1 =  M_x M_y  - M_z M_w,      \qquad \quad R_2  = M_x M_z - M_w M_y,  \qquad \quad R_3 = M_x M_w - M_y M_z,
 \end{align*}
where $M_x, M_y, M_z, M_w$ denote multiplication by $x,y,z,w$ respectively. Let $i \in \lbrace 1,2,3 \rbrace$.  
By construction, $R_i(P_N) \subseteq P_{N+2}$ for  $N \in \mathbb N$.
Also by construction, $R_1, R_2, R_3$ are injective and  mutually commute.
Let  $i \in \lbrace 1,2,3\rbrace$. We give the action of $L_i$, $R_i$ on the basis \eqref{eq:INTbasis1} and the basis \eqref{eq:INTbasis2}.
We show that
\begin{align*}
\langle L_i f, g\rangle = \langle f, R_i g\rangle, \qquad \qquad \langle R_i f, g \rangle = \langle f, L_i g\rangle \qquad \qquad f,g \in P.
\end{align*}
We show that each of $L_i, R_i$ commutes with $\sigma$.
We show that each of $L_i, R_i$ commutes with each of $A_j, A_k, A^*_j, A^*_k$ where $j,k$ are the elements in $\lbrace 1,2,3 \rbrace \backslash \lbrace i \rbrace$.
We show that for $N \in \mathbb N$ the following sum is orthogonal and direct:
\begin{align*}
P_N = R_i (P_{N-2}) + {\rm Ker}(L_i) \cap P_N.
\end{align*}
Expanding on this, we obtain an orthogonal direct sum
\begin{align} \label{eq:IntroPN}
P_N=  \sum_{\ell=0}^{\lfloor N/2 \rfloor}  R_i^\ell \Bigl( {\rm Ker}(L_i ) \cap P_{N-2 \ell}\Bigr)
\end{align}
and an orthogonal direct sum
\begin{align} \label{eq:ODSintro}
     P = \sum_{N \in \mathbb N} \sum_{\ell \in \mathbb N} R_i^\ell \Bigl( {\rm Ker}(L_i ) \cap P_N\Bigr).
\end{align}
Next, we investigate the summands in  \eqref{eq:ODSintro}.
For each summand in  \eqref{eq:ODSintro} we display an orthogonal basis.
We show that for $N, \ell \in \mathbb N$ the corresponding summand in \eqref{eq:ODSintro} is an irreducible submodule for  
the $i$th Lie subalgebra of $\mathfrak{sl}_4(\mathbb C)$ isomorphic to  $\mathfrak{sl}_2(\mathbb C) \oplus \mathfrak{sl}_2(\mathbb C)$. 
This irreducible
submodule has dimension $(N+1)^2$. It is isomorphic to ${\mathbb V}_N \otimes {\mathbb V}_N$, where $\mathbb V_N$ denotes the irreducible $\mathfrak{sl}_2(\mathbb C)$-module with dimension $N+1$.
\medskip

\noindent Next, for $N \in \mathbb N$ we consider the sum
\begin{align} \label{eq:IntroELL}
 \sum_{\ell \in \mathbb N} R_i^\ell \Bigl( {\rm Ker}(L_i ) \cap P_N\Bigr).
\end{align}
As we investigate \eqref{eq:IntroELL}, it is convenient to define $\Omega \in {\rm End}(P)$ as follows. 
For $N \in \mathbb N$ the subspace $P_N$ is an eigenspace for $\Omega$ with eigenvalue $N$.
For $i \in \lbrace 1,2,3\rbrace$ we show that
\begin{align*}
\lbrack L_i, R_i \rbrack = \Omega +2I, \qquad \quad 
\lbrack \Omega, R_i \rbrack = 2 R_i, \qquad \quad
\lbrack \Omega, L_i \rbrack = -2 L_i.
\end{align*}
Consequently, $P$ becomes an $\mathfrak{sl}_2(\mathbb C)$-module on which $E, F, H$ act as follows:
\begin{align*} 
\begin{tabular}[t]{c|ccc}
{\rm element $\varphi$ }&$E$ &$F$&$H$ 
 \\
 \hline 
 {\rm action of $\varphi$ on $P$} & $-L_i$ &$R_i$ &$-\Omega-2I$ 
   \end{tabular}
\end{align*}
We show that the subspace \eqref{eq:IntroELL} is
an  $\mathfrak{sl}_2(\mathbb C)$-submodule of $P$. We express  \eqref{eq:IntroELL} as an orthogonal direct sum
of irreducible $\mathfrak{sl}_2(\mathbb C)$-modules. The irreducible $\mathfrak{sl}_2(\mathbb C)$-modules in the sum
are mutually isomorphic; they are all  highest-weight with highest weight $-N-2$.
\medskip

\noindent Next, we introduce some maps $C_1, C_2, C_3 \in {\rm End}(P)$.
Let $i,j,k$ denote a permutation of $\lbrace 1,2,3\rbrace$. We show that
\begin{align*}
\frac{(\Omega+2I)^2}{2} - L_i R_i - R_i L_i  &= \frac{4 A^2_j + 4 A^{*2}_k - (A_j A^*_k - A^*_k A_j)^2 }{8} \\
 &= \frac{4 A^{*2}_j + 4 A^{2}_k - (A^{*}_j A_k - A_k A^{*}_j)^2 }{8}.
\end{align*}
Call this common value $C_i$. We interpret $C_i$ using the concept of a Casimir operator.
 We compute the action of $C_i$ on the basis \eqref{eq:INTbasis1} and the basis \eqref{eq:INTbasis2}.
We show that $C_i$ commutes with each of $\Omega, L_i, R_i, A_j, A_k, A^*_j, A^*_k, \sigma$.
We show that
\begin{align*}
\langle C_i f, g \rangle = \langle f, C_i g \rangle \qquad \qquad f, g \in P.
\end{align*}
 We show that  \eqref{eq:IntroELL} is an eigenspace for the action of $C_i$ on $P$; the eigenvalue is $N(N+2)/2$.
 \medskip
 
 \noindent
Let $i \in \lbrace 1,2,3\rbrace$ and $N \in \mathbb N$. From our previous discussion we draw the following conclusions about \eqref{eq:IntroPN}.
For $0 \leq \ell \leq \lfloor N/2 \rfloor$ the $\ell$-summand in \eqref{eq:IntroPN} is an irreducible  submodule for the $i$th Lie subalgebra of
$\mathfrak{sl}_4(\mathbb C)$ isomorphic to 
$\mathfrak{sl}_2(\mathbb C) \oplus \mathfrak{sl}_2(\mathbb C)$.
This  $\ell$-summand  has dimension $(N-2\ell+1)^2$ and is isomorphic to ${\mathbb V}_{N-2\ell } \otimes {\mathbb V}_{N-2\ell}$.
This  $\ell$-summand  is an eigenspace for
the action of $C_i $ on $P_N$, with eigenvalue $(N-2\ell)(N-2\ell+2)/2$. 
\medskip

\noindent We will return to the decomposition \eqref{eq:IntroPN} later in this section.
\medskip

\noindent Next, we bring in the hypercube graphs. For the rest of this section, fix $N \in \mathbb N$. We  consider the $N$-cube $H(N,2)$. Let $X$ denote the vertex set of $H(N,2)$
%%and for  $x,y \in X$ let $\partial(x,y)$ denote the path-length distance between $x,y$. Let
and let $V$ denote the vector space with basis $X$. 
For $x \in X$ the set $\Gamma(x)$ consists of the vertices in $X$ that are adjacent to $x$.
The adjacency map  ${\sf A} \in {\rm End}(V)$ satisfies
\begin{align*}
{\sf A} x = \sum_{\xi \in \Gamma(x)} \xi, \qquad \qquad x \in X.       
\end{align*}
The vector space $V^{\otimes 3} = V \otimes V \otimes V$ has a basis
\begin{align*}
X^{\otimes 3} = \lbrace x \otimes y \otimes z\vert x,y,z \in X \rbrace.
\end{align*}
Let $G$ denote the automorphism group of  $H(N,2)$.
The action of $G$ on $X$ turns $V$ into a $G$-module. The vector space $V^{\otimes 3}$ becomes a $G$-module 
 such that for $g \in G$ and $u,v,w \in V$, 
\begin{align*}
g(u \otimes v \otimes w) = g(u) \otimes g(v) \otimes g(w).       
\end{align*}
Define the subspace
\begin{align*}
{\rm Fix}(G) = \lbrace v \in V^{\otimes 3} \vert g(v)=v \;\forall g \in G\rbrace.
\end{align*}
We  turn ${\rm Fix}(G)$ into an  $\mathfrak{sl}_4(\mathbb C)$-module as follows.
Define $A^{(1)}, A^{(2)}, A^{(3)} \in {\rm End}(V^{\otimes 3})$ such that
 for $x\otimes y\otimes z \in X^{\otimes 3}$,
\begin{align*}
A^{(1)} (x\otimes y \otimes z) &=  {\sf A}x \otimes y \otimes z, \\
A^{(2)} (x\otimes y \otimes z) &=  x \otimes {\sf A}y \otimes z, \\
A^{(3)} (x\otimes y \otimes z) &= x \otimes y \otimes {\sf A}z.
\end{align*}
 For notational convenience, define $\theta^*_i = N-2i$ for $0 \leq i \leq N$. For  $x,y \in X$ let $\partial(x,y)$ denote the path-length distance between $x,y$.
Define $A^{*(1)}, A^{*(2)}, A^{*(3)} \in {\rm End}(V^{\otimes 3})$ such that for $x\otimes y\otimes z \in X^{\otimes 3}$,
\begin{align*}
A^{*(1)} (x\otimes y \otimes z) &=  x \otimes y \otimes z\,\theta^*_{\partial(y,z)}, \\
A^{*(2)} (x\otimes y \otimes z) &=  x \otimes y\otimes z\, \theta^*_{\partial(z,x)}  , \\
A^{*(3)} (x\otimes y \otimes z) &=  x \otimes y \otimes z\, \theta^*_{\partial(x,y)}.
\end{align*}
We show that 
 ${\rm Fix}(G)$ is invariant under 
$A^{(i)}$ and $ A^{*(i)}$ for $ i \in \lbrace 1,2,3 \rbrace$.
We show that   ${\rm Fix}(G)$ is an  $\mathfrak{sl}_4(\mathbb C)$-module on which
\begin{align*}
A_i = A^{(i)}, \qquad \quad A^*_i = A^{*(i)} \qquad \qquad i \in \lbrace 1,2,3\rbrace.
\end{align*}
We endow $V^{\otimes 3}$ with a Hermitian form 
 $\langle\,,\,\rangle$ 
with respect to which the basis $X^{\otimes 3}$  is orthonormal. 
We display an  $\mathfrak{sl}_4(\mathbb C)$-module isomorphism $\ddag : P_N \to {\rm Fix}(G)$
such that
\begin{align*}
\langle f,g \rangle = \langle f^\ddag, g^\ddag \rangle \qquad \qquad f,g \in P_N.
\end{align*}

\noindent Our treatment of  $H(N,2)$ follows the $S_3$-symmetric approach discussed in \cite{S3}; see Note \ref{note:S3} 
below.
\medskip

\noindent Next, we consider the subconstituent algebras of $H(N,2)$.
Recall the adjacency map ${\sf A} \in {\rm End}(V)$ for $H(N,2)$.
For the rest of this section, fix $\varkappa \in X$. The corresponding dual adjacency map ${\sf A}^* = {\sf A}^*(\varkappa) \in {\rm End}(V)$ satisfies
\begin{align*}
{\sf A}^* x = \theta^*_{\partial(x, \varkappa)} x, \qquad \qquad x \in X. 
\end{align*}
By construction, the map ${\sf A}^*$ is diagonalizable with eigenvalues $\lbrace \theta^*_i \rbrace_{i=0}^N$.
The subconstituent algebra $T=T(\varkappa)$ is the subalgebra of ${\rm End}(V)$ generated by $\sf A, A^*$; see \cite[Definition~2.1]{go}.
 By \cite[Corollary~14.15]{go} we have ${\rm dim}\, T = \binom{N+3}{3}$.
\medskip

\noindent We mention some bases for the vector space $T$.
As we will see, the adjacency map $\sf A$ is diagonalizable with eigenvalues
$\theta_i = N-2i $ $ (0 \leq i \leq N)$.
For $0 \leq i \leq N$  let ${\sf E}_i \in {\rm End}(V)$ denote the primitive idempotent of $\sf A$ associated with $\theta_i$.
 For $x \in X$ and $0 \leq i \leq N$, define the set $\Gamma_i(x)=\lbrace y \in X \vert \partial(x,y)=i\rbrace$.
 For $0 \leq i \leq N$ define ${\sf A}_i \in {\rm End}(V)$ such that
\begin{align*}
{\sf A}_i x = \sum_{\xi \in \Gamma_i(x)} \xi, \qquad \qquad x \in X.
\end{align*}
For $0 \leq i \leq N$ define ${\sf E}^*_i = {\sf E}^*_i(\varkappa) \in {\rm End}(V)$ such that
\begin{align*}
{\sf E}^*_i x =  \begin{cases} x, & {\mbox{\rm if $\partial(x,\varkappa)=i$}}; \\
0, & {\mbox{\rm if $\partial(x,\varkappa)\not= i$}},
\end{cases} 
\qquad \qquad x \in X.
\end{align*}
By construction, ${\sf E}^*_i$ is the primitive idempotent of ${\sf A}^*$ for the eigenvalue $\theta^*_i$.
For $0 \leq i \leq N$  define ${\sf A}^*_i = {\sf A}^*_i(\varkappa) \in {\rm End}(V)$ such that
\begin{align*}
{\sf A}^*_i x = 2^N  \langle  {\sf E}_i \varkappa,  x \rangle x, \qquad \qquad x \in X.
\end{align*}
%%%%%%%%%%%%%%%
For notational convenience, let the set $\mathcal P''_N$ consist of the $3$-tuples of integers $(h,i,j)$ such that
\begin{align*}
&0 \leq h,i,j\leq N, \qquad  h+i+j \;\hbox{\rm is even}, \qquad  h+i+j \leq 2N, \\
&h \leq i+j, \quad \qquad \qquad i \leq j+h, \quad \qquad \qquad j \leq h+i.
\end{align*}
As we will see in Lemma \ref{lem:Tbasis}, the vector space $T$ has a basis
\begin{align*}
{\sf E}^*_i {\sf A}_h {\sf E}^*_j \qquad \qquad (h,i,j) \in \mathcal P''_N
\end{align*}
and a basis
\begin{align*}
{\sf E}_i {\sf A}^*_h {\sf E}_j \qquad \qquad (h,i,j) \in \mathcal P''_N.
\end{align*}
Define ${\mathcal A}^{(1)}, {\mathcal A}^{(2)}, {\mathcal A}^{(3)} \in {\rm End}(T)$ such that for $(h,i,j) \in \mathcal P''_N$,
\begin{align*}
{\mathcal A}^{(1)} \bigl( {\sf E}_i {\sf A}^*_h {\sf E}_j \bigr) &= \theta_h {\sf E}_i {\sf A}^*_h {\sf E}_j, \\
{\mathcal A}^{(2)} \bigl( {\sf E}_i {\sf A}^*_h {\sf E}_j \bigr) &= \theta_i {\sf E}_i {\sf A}^*_h {\sf E}_j, \\
{\mathcal A}^{(3)} \bigl( {\sf E}_i {\sf A}^*_h {\sf E}_j \bigr) &= \theta_j {\sf E}_i {\sf A}^*_h {\sf E}_j.
\end{align*}
Define ${\mathcal A}^{*(1)}, {\mathcal A}^{*(2)}, {\mathcal A}^{*(3)} \in {\rm End}(T)$ such that for $(h,i,j) \in \mathcal P''_N$,
\begin{align*}
{\mathcal A}^{*(1)} \bigl( {\sf E}^*_i {\sf A}_h {\sf E}^*_j \bigr) &= \theta^*_h {\sf E}^*_i {\sf A}_h {\sf E}^*_j, \\
{\mathcal A}^{*(2)} \bigl( {\sf E}^*_i {\sf A}_h {\sf E}^*_j \bigr) &= \theta^*_j {\sf E}^*_i {\sf A}_h {\sf E}^*_j, \\
{\mathcal A}^{*(3)} \bigl( {\sf E}^*_i {\sf A}_h {\sf E}^*_j \bigr) &= \theta^*_i {\sf E}^*_i {\sf A}_h {\sf E}^*_j.
\end{align*}
We show that $T$ is an $\mathfrak{sl}_4(\mathbb C)$-module on which
\begin{align*}
A_i = {\mathcal A}^{(i)}, \qquad \qquad A^*_i = {\mathcal A}^{*(i)} \qquad \qquad i \in \lbrace 1,2,3\rbrace.
\end{align*}
For $x,y \in X$ define a map $e_{x,y} \in {\rm End}(V)$ that sends $y \mapsto x$ and all other vertices to $0$. Note
that $\lbrace e_{x,y} \rbrace_{x,y \in X}$ form a basis for  ${\rm End}(V)$.
 We endow ${\rm End}(V)$ with a Hermitian form $\langle \,,\,\rangle $ with respect to which the basis $\lbrace e_{x,y} \rbrace_{x,y \in X}$ is
orthonormal.
We display an $\mathfrak{sl}_4(\mathbb C)$-module isomorphism $\vartheta: P_N \to T$
such that 
\begin{align*}
\langle f,g \rangle = \langle \vartheta(f), \vartheta(g) \rangle \qquad \qquad f,g \in P_N.
\end{align*}
 We show
that for $r,s,t,u \in \mathbb N$ such that $r+s+t+u=N$,
 the map $\vartheta$ sends
\begin{align*}
x^r y^s z^t w^u &\mapsto \frac{r!s!t!u!}{(N!)^{1/2}} {\sf E}^*_j {\sf A}_h {\sf E}^*_i,         \\
x^{*r} y^{*s} z^{*t} w^{*u} &\mapsto \frac{r!s!t!u!}{(N!)^{1/2}} {\sf E}_i {\sf A}^*_h {\sf E}_j, 
\end{align*}
where
\begin{align*}
h= t+u, \qquad \quad i = u+s, \qquad \quad j= s+t.
\end{align*}

\noindent We return our attention to the decomposition of $P_N$ given in \eqref{eq:IntroPN}. Referring to that decomposition, let us take $i=1$.
We show that the isomorphism $\vartheta:P_N\to T$ maps the given decomposition of $P_N$ to the Wedderburn decomposition of $T$
from  \cite[Theorems~14.10, 14.14]{go}.
\medskip

\noindent The main results of this paper are 
Theorems \ref{lem:HermComp},
\ref{prop:GenAction},
\ref{prop:TisoFix},
\ref{lem:HFFT},
\ref{lem:VarThetaAct},
\ref{thm:main1},
\ref{thm:HFFT},
\ref{thm:final}.

%%%%%%%%%%%%%%%%%%%%%%%%%%%%%%%%%%%
%%%%%%%%%%%%%%%%%%%%%%%%%%%%%%%%%%%
\section{Preliminaries}
We now begin our formal argument.
The following concepts and notation will be used throughout the paper. 
Recall the natural numbers $\mathbb N = \lbrace 0,1,2,\ldots \rbrace$.
Let $\mathbb C$ denote the field of complex numbers.
Every vector space, algebra, and tensor product that we discuss, is understood to be over $\mathbb C$.
Every algebra without the Lie prefix that we discuss, is understood to be associative  and have a multiplicative identity.
A subalgebra has the same multiplicative identity as the parent algebra.
For a nonzero vector space $V$, the algebra ${\rm End}(V)$ consists of the $\mathbb C$-linear maps from $V$ to $V$.
Let $I$ denote the multiplicative identity in ${\rm End}(V)$.
An element $B \in {\rm End}(V)$ is called {\it diagonalizable} whenever $V$ is spanned by the eigenspaces of $B$.
Assume that $B$ is diagonalizable, and let $\lbrace V_i \rbrace_{i=0}^N$ denote an ordering of the eigenspaces of $B$.
The sum $V=\sum_{i=0}^N V_i$ is direct. For $0 \leq i \leq N$ let $\theta_i$ denote the eigenvalue of $B$ for $V_i$.
For $0 \leq i \leq N$ define  $E_i\in {\rm End}(V)$ such that $(E_i-I)V_i=0$ and $E_i V_j =0$ if $j \not=i$ $(0 \leq j \leq N)$.
We call  $E_i$  the {\it primitive idempotent} of $B$ associated with $V_i$ (or $\theta_i$).
We have (i) $E_i E_j = \delta_{i,j} E_i $ $(0 \leq i,j\leq N)$; (ii)  $I = \sum_{i=0}^N E_i$; (iii) $B= \sum_{i=0}^N \theta_i E_i$;
(iv) $BE_i = \theta_i E_i = E_i B$ $(0 \leq i \leq N)$; (v) $V_i = E_iV$ $(0 \leq i \leq N)$.
Moreover
\begin{align} \label{eq:Eprod}
  E_i=\prod_{\stackrel{0 \leq j \leq N}{j \neq i}}
          \frac{B-\theta_jI}{\theta_i-\theta_j} \qquad \qquad (0 \leq i \leq N).
\end{align}
The maps $\lbrace E_i \rbrace_{i=0}^N$ form a basis for the subalgebra of ${\rm End}(V)$ generated by $B$.
For $\alpha \in \mathbb C$ let $\overline \alpha$ denote the complex-conjugate of $\alpha$.
For a positive real number $\alpha$, let $\alpha^{1/2}$ denote the positive square root of $\alpha$.
Let $\mathcal B$ denote an algebra.  By an {\it automorphism} of $\mathcal B$ we mean an algebra isomorphism  $\mathcal B \to \mathcal B$.
Let the algebra $\mathcal B^{\rm opp}$ consist of the vector space $\mathcal B$ and the following multiplication.
For $a,b \in \mathcal B$ the product $ab$ (in $\mathcal B^{\rm opp}$) is equal to $ba$ (in $\mathcal B$).
By an {\it antiautomorphism} of $\mathcal B$ we mean an algebra isomorphism $\mathcal B \to \mathcal B^{\rm opp}$.
We will be discussing Lie algebras. Background information about Lie algebras can be found in \cite{carter, humphreys}.

\section{The Lie algebras $\mathfrak{sl}_2(\mathbb C)$ and $\mathfrak{sl}_4(\mathbb C)$}

For an integer $n\geq 1$, the algebra ${\rm Mat}_n(\mathbb C)$ consists of the $n \times n$ matrices with all entries in $\mathbb C$.
The Lie algebra $\mathfrak{gl}_n(\mathbb C)$  consists of  the vector space ${\rm Mat}_n(\mathbb C)$ and Lie bracket
\begin{align*}
\lbrack \varphi,\phi \rbrack = \varphi \phi - \phi \varphi  \qquad \qquad \varphi, \phi \in {\rm Mat}_n(\mathbb C).
\end{align*}
The Lie algebra $\mathfrak{sl}_n(\mathbb C)$ is a Lie subalgebra of $\mathfrak{gl}_n(\mathbb C)$, consisting of the
matrices in  $\mathfrak{gl}_n(\mathbb C)$ that have trace 0. In this paper we will mainly consider
$\mathfrak{sl}_4(\mathbb C)$, although $\mathfrak{sl}_2(\mathbb C)$ will play a supporting role.

\begin{example}  \label{ex:sl2} \rm The Lie algebra
$\mathfrak{sl}_2(\mathbb C)$ has a basis
\begin{align*}
E = \begin{pmatrix} 0 & 1 \\ 0 & 0 \end{pmatrix},
\qquad \quad 
F = \begin{pmatrix} 0 & 0 \\ 1& 0 \end{pmatrix},
\qquad \quad 
H = \begin{pmatrix} 1 & 0 \\ 0 & -1 \end{pmatrix}
\end{align*}
and Lie bracket
\begin{align*}
 \lbrack H,E \rbrack=2E, \qquad \quad \lbrack H,F \rbrack=-2F, \qquad \quad \lbrack E,F \rbrack=H.
\end{align*}
\end{example}
\noindent We describe a presentation of $\mathfrak{sl}_2(\mathbb C)$ by generators and relations.
\begin{definition} \label{def:sl2P} \rm Define a Lie algebra $\bf L$ by generators $A, A^*$
and relations
\begin{align*} 
\lbrack A, \lbrack A, A^* \rbrack \rbrack=4 A^*, \qquad \qquad \lbrack A^*, \lbrack A^*, A\rbrack \rbrack=4A.
\end{align*}
\end{definition}

\begin{lemma} \label{lem:Lnat} There exists a Lie algebra isomorphism $\natural: {\bf L} \to \mathfrak{sl}_2(\mathbb C)$ that sends
\begin{align*} 
A \mapsto E+F, \qquad \qquad A^*\mapsto H.
\end{align*}
%\noindent The inverse isomorphism sends
%\begin{align*}
%E \mapsto \frac{2A-\lbrack A, A^* \rbrack}{4}, \qquad \qquad F \mapsto \frac{ \lbrack A, A^* \rbrack+2A}{4}, \qquad \quad H \mapsto A^*.
%\end{align*}
\end{lemma}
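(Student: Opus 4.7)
The plan is to produce $\natural$ by the universal property of the presentation, then check both surjectivity and injectivity. Concretely, I would proceed in three steps.

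First, I would verify that the pair $(E+F, H)$ in $\mathfrak{sl}_2(\mathbb C)$ satisfies the two defining relations of $\bf L$ from Definition \ref{def:sl2P}. Using the Lie bracket in Example \ref{ex:sl2}, a short computation gives $[E+F, H] = 2(F-E)$, hence
\begin{align*}
\lbrack E+F, \lbrack E+F, H \rbrack \rbrack &= 2\lbrack E+F, F-E\rbrack = 2\bigl(\lbrack E,F\rbrack - \lbrack F,E\rbrack\bigr) = 4H,\\
\lbrack H, \lbrack H, E+F\rbrack \rbrack &= \lbrack H, 2E - 2F\rbrack = 4E + 4F.
\end{align*}
These are exactly the relations $[A,[A,A^*]] = 4A^*$ and $[A^*,[A^*,A]] = 4A$ with $A \mapsto E+F$ and $A^* \mapsto H$, so the universal property of $\bf L$ yields a Lie algebra homomorphism $\natural$ as stated.

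Second, surjectivity follows immediately: the image contains $E+F$, $H$, and $[E+F,H] = 2(F-E)$, hence contains $E = \tfrac{1}{2}\bigl((E+F)-(F-E)\bigr)$ and $F = \tfrac{1}{2}\bigl((E+F)+(F-E)\bigr)$. Thus the image contains the basis $E, F, H$ of $\mathfrak{sl}_2(\mathbb C)$.

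Third, for injectivity I would show that $\dim {\bf L} \le 3$, which combined with surjectivity onto the three-dimensional $\mathfrak{sl}_2(\mathbb C)$ forces $\natural$ to be an isomorphism. Set $B = \lbrack A, A^*\rbrack$ in $\bf L$. The two defining relations say
\begin{align*}
\lbrack A, B\rbrack = 4A^*, \qquad \lbrack A^*, B\rbrack = -4A,
\end{align*}
so the subspace $\mathrm{span}\{A, A^*, B\} \subseteq {\bf L}$ is closed under bracketing with either generator, hence closed under all iterated brackets. Since $\bf L$ is generated as a Lie algebra by $A, A^*$, this subspace equals $\bf L$, giving $\dim {\bf L} \le 3$. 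This bound is the one step that genuinely uses the presentation and is the only place where a priori $\bf L$ could be larger than expected; the argument is direct once $B$ is introduced, so no real obstacle appears, but this is the conceptual crux of the lemma.
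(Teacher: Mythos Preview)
Your proof is correct. The first two steps match the paper's argument essentially verbatim. The third step is where you diverge: the paper instead defines an explicit $\mathbb C$-linear map $\mathfrak{sl}_2(\mathbb C) \to {\bf L}$ on the basis $E,F,H$ by
\[
E \mapsto \frac{2A-\lbrack A,A^*\rbrack}{4}, \qquad F \mapsto \frac{\lbrack A,A^*\rbrack + 2A}{4}, \qquad H \mapsto A^*,
\]
and checks directly that this is a two-sided inverse of $\natural$. Your dimension bound $\dim {\bf L}\le 3$ via closure of $\mathrm{span}\{A,A^*,B\}$ under $\mathrm{ad}(A)$ and $\mathrm{ad}(A^*)$ is a clean alternative; it avoids writing down the inverse but gives slightly less information. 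The paper's explicit formulas have the advantage of being reusable---indeed, the analogous Lemma~\ref{lem:sharp} for $\mathfrak{sl}_4(\mathbb C)$ is proved by the same ``write down the inverse'' strategy, where a dimension-counting argument would be considerably more laborious.
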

\begin{proof} We have
\begin{align*}
&\lbrack E+F, \lbrack E+F, H\rbrack \rbrack = \lbrack E+F, 2F-2E\rbrack = 4\lbrack E, F \rbrack=4H, \\
& \lbrack H, \lbrack H, E+F \rbrack \rbrack = \lbrack H, 2E-2F \rbrack = 4(E+F).
\end{align*}
Thus the matrices $E+F, H$ satisfy the relations in Definition \ref{def:sl2P}.
Consequently, there exists a Lie algebra homomorphism
 $\natural: {\bf L} \to \mathfrak{sl}_2(\mathbb C)$ that sends $A \mapsto E+F$ and $A^* \mapsto H$. Since $E,F,H$ form a basis for $\mathfrak{sl}_2(\mathbb C)$,
 there exists a $\mathbb C$-linear map $\mathfrak{sl}_2(\mathbb C) \to {\bf L}$ that sends
\begin{align*}
E \mapsto \frac{2A-\lbrack A, A^* \rbrack}{4}, \qquad \qquad F \mapsto \frac{ \lbrack A, A^* \rbrack+2A}{4}, \qquad \quad H \mapsto A^*.
\end{align*}
One checks that this map is the inverse of $\natural$. The map $\natural$ is a bijection, and hence a
Lie algebra isomorphism.
\end{proof}
\noindent For the rest of this paper, we identify $\bf L$ and $ \mathfrak{sl}_2(\mathbb C)$ via the isomorphism $\natural$ in Lemma \ref{lem:Lnat}.
\medskip
\begin{lemma} \label{lem:sl2Basis} The following is a basis for the vector space  $\mathfrak{sl}_2(\mathbb C)$:
\begin{align*}
A, \qquad \quad A^*, \qquad \quad \lbrack A, A^*\rbrack.
\end{align*}
\end{lemma}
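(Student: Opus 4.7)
The plan is to pull the statement back to $\mathfrak{sl}_2(\mathbb C)$ via the isomorphism $\natural$ of Lemma \ref{lem:Lnat} and then verify linear independence against the basis $\{E,F,H\}$ of Example \ref{ex:sl2}. Since $\natural$ is a Lie algebra isomorphism, it suffices to show that $\natural(A)$, $\natural(A^*)$, $\natural([A,A^*])$ form a basis for $\mathfrak{sl}_2(\mathbb C)$.

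First I would compute the image of $[A,A^*]$. Using $\natural(A)=E+F$ and $\natural(A^*)=H$ together with the bracket relations $[H,E]=2E$ and $[H,F]=-2F$ from Example \ref{ex:sl2}, one gets
\begin{align*}
\natural([A,A^*])=[E+F,H]=-2E+2F=2(F-E).
\end{align*}
So the question becomes whether $E+F$, $H$, $2(F-E)$ form a basis for $\mathfrak{sl}_2(\mathbb C)$.

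Next I would verify linear independence by inverting the change-of-basis relation. From $E+F$ and $F-E$ one recovers
\begin{align*}
E=\tfrac{1}{2}\bigl((E+F)-(F-E)\bigr),\qquad F=\tfrac{1}{2}\bigl((E+F)+(F-E)\bigr),
\end{align*}
while $H$ is already present. Therefore $E+F$, $H$, $2(F-E)$ span $\mathfrak{sl}_2(\mathbb C)$, and since $\dim \mathfrak{sl}_2(\mathbb C)=3$, they form a basis. Pulling back through $\natural$ yields the claim.

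The proof is essentially a three-line verification once the image of $[A,A^*]$ under $\natural$ is computed, so there is no genuine obstacle; the only thing to be careful about is the sign in $[E+F,H]$, which comes out as $2(F-E)$ rather than $2(E-F)$.
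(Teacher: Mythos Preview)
Your proof is correct and follows essentially the same idea as the paper's: both ultimately show that $E,F,H$ lie in the span of $A,A^*,[A,A^*]$. The paper's proof is just the one-liner ``The matrices $A,A^*$ generate $\mathfrak{sl}_2(\mathbb C)$,'' relying implicitly on the inverse map already written out in the proof of Lemma~\ref{lem:Lnat}, where $E,F,H$ are expressed as linear combinations of $A,A^*,[A,A^*]$.
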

\begin{proof} The matrices $A, A^*$ generate  $\mathfrak{sl}_2(\mathbb C)$.
\end{proof}

\noindent Next, we describe $\mathfrak{sl}_4(\mathbb C)$. We will give a basis for $\mathfrak{sl}_4(\mathbb C)$, and a presentation of
 $\mathfrak{sl}_4(\mathbb C)$ by generators and relations.
For $1\leq i,j\leq 4$ let 
$E_{i,j} \in {\rm Mat}_4(\mathbb C)$ have $(i,j)$-entry 1 and all other entries 0.
The following is a basis for $\mathfrak{sl}_4(\mathbb C)$:
\begin{align} \label{eq:sl4basis}
E_{i,j} \qquad (1 \leq i,j\leq 4, \quad i \not=j), \qquad E_{i,i}-E_{i+1,i+1} \qquad (1 \leq i \leq 3).
\end{align}
\noindent Serre gave a presentation of $\mathfrak{sl}_4(\mathbb C)$ by generators and relations,
see \cite[p.~99]{humphreys}. We will use a different presentation, that is better suited to our purpose.

\begin{definition} \label{def:LL} \rm We define a Lie algebra $\mathbb L$ by generators 
\begin{align*}
A_i, \quad A^*_i\qquad \quad i \in \lbrace 1,2,3\rbrace
\end{align*}
and the following relations.
\begin{enumerate}
\item[\rm (i)] For distinct $i,j \in \lbrace 1,2,3\rbrace$,
\begin{align*}
\lbrack A_i, A_j \rbrack=0, \qquad \qquad \lbrack A_i^*, A_j^* \rbrack=0.
\end{align*}
\item[\rm (ii)] For $i \in \lbrace 1,2,3\rbrace$,
\begin{align*}
\lbrack A_i, A^*_i \rbrack=0.
\end{align*}
\item[\rm (iii)]
For distinct $i,j \in \lbrace 1,2,3\rbrace$,
\begin{align*}
&\lbrack A_i, \lbrack A_i, A^*_j \rbrack \rbrack=4 A^*_j, \qquad \qquad
\lbrack A^*_j, \lbrack A^*_j, A_i \rbrack \rbrack=4 A_i.
\end{align*}
\item[\rm (iv)] For mutually distinct $h,i,j \in \lbrace 1,2,3\rbrace$,
\begin{align*}
\lbrack A_h, \lbrack A^*_i, A_j \rbrack \rbrack = \lbrack A^*_h, \lbrack A_i, A^*_j \rbrack \rbrack = 
\lbrack A_j, \lbrack A^*_i, A_h \rbrack \rbrack = \lbrack A^*_j, \lbrack A_i, A^*_h \rbrack \rbrack.
\end{align*}
\end{enumerate}
\end{definition}
%%&\lbrack A^{*(2)}, \lbrack A^{(1)}, A^{*(3)} \rbrack \rbrack =\lbrack A^{(2)}, \lbrack A^{*(1)}, A^{(3)} \rbrack \rbrack, \\
%%&\lbrack A^{*(3)}, \lbrack A^{(2)}, A^{*(1)} \rbrack \rbrack =\lbrack A^{(3)}, \lbrack A^{*(2)}, A^{(1)} \rbrack \rbrack, \\
%%&\lbrack A^{*(1)}, \lbrack A^{(3)}, A^{*(2)} \rbrack \rbrack =\lbrack A^{(1)}, \lbrack A^{*(3)}, A^{(2)} \rbrack \rbrack.

\begin{lemma} \label{lem:sharp} There exists a Lie algebra isomorphism $\sharp: {\mathbb L} \to \mathfrak{sl}_4(\mathbb C)$ that sends
\begin{align*}
&A_1 \mapsto  \begin{pmatrix}              0&1&0&0\\
                                         1&0&0&0 \\
                                          0&0&0&1 \\
                                         0&0&1& 0
                                           \end{pmatrix}, \qquad \qquad A^*_1 \mapsto  {\rm diag}(1,1,-1,-1), 
                                               \\
   &A_2 \mapsto \begin{pmatrix} 
                                           0&0&1&0\\
                                         0 &0&0&1 \\
                                         1 &0&0&0 \\
                                        0 &1&0& 0
  \end{pmatrix}, \qquad \qquad A^*_2 \mapsto  {\rm diag}(1,-1,1,-1),
                                               \\
&A_3 \mapsto  \begin{pmatrix}
                                   0&0&0&1\\
                                          0&0&1&0 \\
                                         0 &1&0&0 \\
                                         1&0&0& 0
                                          \end{pmatrix}, \qquad \qquad 
                                                                A^*_3 \mapsto  {\rm diag}(1,-1,-1,1).                                                                                                   
\end{align*}
\end{lemma}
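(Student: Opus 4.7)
The plan mirrors the proof of Lemma \ref{lem:Lnat}. Call the six matrices on the right-hand side $B_1, B_2, B_3, B^*_1, B^*_2, B^*_3$. First I would verify by direct matrix computation that these satisfy each of the relations (i)--(iv) in Definition \ref{def:LL}; by the universal property of the presentation this produces the Lie algebra homomorphism $\sharp: \mathbb{L} \to \mathfrak{sl}_4(\mathbb{C})$ given by the stated rule.

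Many of these verifications are structurally transparent. The matrices $B_1, B_2, B_3$ are the permutation matrices of the three nonidentity elements of a Klein four subgroup of $S_4$, so they pairwise commute, handling relation (i) for the $A_i$, while the $B^*_i$ commute because they are diagonal. Each $B^*_i$ is constant on the orbits of the involution $B_i$, which yields relation (ii). For distinct $i,j$ a short computation shows that $[B_i, B^*_j]$ is a signed version of $B_i$ whose double bracket with $B_i$ or with $B^*_j$ scales it by $4$; this is relation (iii). Relation (iv) is verified by showing that the four three-fold brackets listed collapse to the same signed permutation matrix, which can be checked entry-by-entry and is essentially a consequence of the $S_3$-symmetry among the indices $\{1,2,3\}$.

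Next, to upgrade $\sharp$ to a bijection, I would construct an explicit linear inverse, as in Lemma \ref{lem:Lnat}. The six matrices $B_i, B^*_i$ generate $\mathfrak{sl}_4(\mathbb{C})$ as a Lie algebra: linear combinations of $B^*_1, B^*_2, B^*_3$ span all traceless diagonal matrices, bracketing $B_i$ with a suitable $B^*_j$ separates the two Klein-group orbits appearing in $B_i$, and bracketing the resulting matrix once more with a second $B^*_k$ isolates each elementary matrix $E_{i,j}$. Expressing each of the 15 basis vectors of $\mathfrak{sl}_4(\mathbb{C})$ in \eqref{eq:sl4basis} explicitly as a Lie polynomial in the $B_i, B^*_i$, the same Lie polynomial in the abstract generators $A_i, A^*_i$ defines a $\mathbb{C}$-linear map $\flat: \mathfrak{sl}_4(\mathbb{C}) \to \mathbb{L}$ on this basis. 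By construction $\sharp \circ \flat = \mathrm{id}_{\mathfrak{sl}_4(\mathbb{C})}$, so $\sharp$ is surjective.

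The main obstacle is verifying $\flat \circ \sharp = \mathrm{id}_{\mathbb{L}}$, equivalently that $\mathbb{L}$ is spanned by the 15 chosen Lie polynomials in the $A_i, A^*_i$. Here the defining relations do the real work: relation (iii) together with the Jacobi identity absorbs triple brackets of the form $[A_i, [A_i, A^*_j]]$, relation (iv) collapses all cyclic rearrangements of mixed three-fold brackets into a single element, and relations (i), (ii) eliminate the remaining redundant two-fold brackets. A careful bookkeeping then reduces every iterated bracket of the generators to a linear combination of the 15 reference elements, giving $\dim \mathbb{L} \leq 15$. Combined with the surjectivity of $\sharp$ onto the $15$-dimensional algebra $\mathfrak{sl}_4(\mathbb{C})$, this forces $\sharp$ to be an isomorphism.
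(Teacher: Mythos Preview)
Your proposal is correct and follows essentially the same approach as the paper: verify the relations to get the homomorphism $\sharp$, then write each $E_{i,j}$ and $E_{i,i}-E_{i+1,i+1}$ explicitly as a Lie polynomial in the $A_i,A^*_i$ to define a linear map back, and check the two are inverse. The paper simply lists the explicit formulas for the inverse map on the basis \eqref{eq:sl4basis} and asserts ``one checks that this map is the inverse of $\sharp$,'' whereas you spell out more carefully that the nontrivial half of this check amounts to showing the fifteen Lie polynomials span $\mathbb L$ using the defining relations; this is exactly the content hidden in the paper's ``one checks.''
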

\begin{proof}  Consider the six matrices from the lemma statement.
One checks that these six matrices satisfy the relations in Definition \ref{def:LL}. 
Consequently, there exists a Lie algebra homomorphism  $\sharp: {\mathbb L} \to \mathfrak{sl}_4(\mathbb C)$ that sends each $\mathbb L$-generator to the
corresponding matrix.
In  \eqref{eq:sl4basis} we gave a basis for $\mathfrak{sl}_4(\mathbb C)$. There exists a $\mathbb C$-linear map $\mathfrak{sl}_4(\mathbb C) \to \mathbb L$
that acts on this basis as follows. The map sends
\begin{align*}
E_{1,2} &\mapsto \frac{4 A_1 + 2\lbrack A_2^{*}, A_1\rbrack+2 \lbrack A_3^{*}, A_1 \rbrack + \lbrack A_2^{*}, \lbrack A_3^{*} , A_1 \rbrack \rbrack    }{16}, \\
E_{2,1} &\mapsto \frac{4 A_1 - 2\lbrack A_2^{*}, A_1\rbrack-2 \lbrack A_3^{*}, A_1\rbrack + \lbrack A_2^{*}, \lbrack A_3^{*} , A_1 \rbrack \rbrack    }{16}, \\
E_{3,4} &\mapsto \frac{4 A_1 + 2\lbrack A_2^{*}, A_1\rbrack-2 \lbrack A_3^{*}, A_1\rbrack - \lbrack A_2^{*}, \lbrack A_3^{*} , A_1\rbrack \rbrack    }{16}, \\
E_{4,3} &\mapsto \frac{4 A_1 - 2\lbrack A_2^{*}, A_1\rbrack+2 \lbrack A_3^{*}, A_1 \rbrack - \lbrack A_2^{*}, \lbrack A_3^{*} , A_1 \rbrack \rbrack    }{16}
\end{align*}
and
\begin{align*}
E_{1,3} &\mapsto \frac{4 A_2 + 2\lbrack A_3^{*}, A_2\rbrack+2 \lbrack A_1^{*}, A_2 \rbrack + \lbrack A_3^{*}, \lbrack A_1^{*} , A_2 \rbrack \rbrack    }{16}, \\
E_{3,1} &\mapsto \frac{4 A_2 - 2\lbrack A_3^{*}, A_2\rbrack-2 \lbrack A_1^{*}, A_2 \rbrack + \lbrack A_3^{*}, \lbrack A_1^{*} , A_2 \rbrack \rbrack    }{16},\\
E_{4,2} &\mapsto \frac{4 A_2 + 2\lbrack A_3^{*}, A_2\rbrack-2 \lbrack A_1^{*}, A_2 \rbrack - \lbrack A_3^{*}, \lbrack A_1^{*} , A_2 \rbrack \rbrack    }{16}, \\
E_{2,4} &\mapsto \frac{4 A_2 - 2\lbrack A_3^{*}, A_2\rbrack+2 \lbrack A_1^{*}, A_2 \rbrack - \lbrack A_3^{*}, \lbrack A_1^{*} , A_2 \rbrack \rbrack    }{16}
\end{align*}
and
\begin{align*}
E_{1,4} &\mapsto \frac{4 A_3 + 2\lbrack A_1^{*}, A_3\rbrack+2 \lbrack A_2^{*}, A_3 \rbrack + \lbrack A_1^{*}, \lbrack A_2^{*} , A_3 \rbrack \rbrack    }{16}, \\
E_{4,1} &\mapsto \frac{4 A_3 - 2\lbrack A_1^{*}, A_3\rbrack-2 \lbrack A_2^{*}, A_3 \rbrack + \lbrack A_1^{*}, \lbrack A_2^{*} , A_3 \rbrack \rbrack    }{16},\\
E_{2,3} &\mapsto \frac{4 A_3 + 2\lbrack A_1^{*}, A_3\rbrack-2 \lbrack A_2^{*}, A_3 \rbrack - \lbrack A_1^{*}, \lbrack A_2^{*} , A_3 \rbrack \rbrack    }{16}, \\
E_{3,2} &\mapsto \frac{4 A_3 - 2\lbrack A_1^{*}, A_3\rbrack+2 \lbrack A_2^{*}, A_3 \rbrack - \lbrack A_1^{*}, \lbrack A_2^{*} , A_3 \rbrack \rbrack    }{16}
\end{align*}
and
\begin{align*}
E_{1,1}-E_{2,2} &\mapsto \frac{A_2^{*}+A_3^{*}}{2}, \\
E_{2,2}-E_{3,3} &\mapsto \frac{A_1^{*}-A_2^{*}}{2}, \\
E_{3,3}-E_{4,4} &\mapsto \frac{A_2^{*}-A_3^{*}}{2}.
\end{align*}
One checks that the above map $\mathfrak{sl}_4(\mathbb C) \to \mathbb L$ is the inverse of $\sharp$. The map $\sharp$ is a bijection, and hence
a Lie algebra isomorphism.
\end{proof}

\begin{note} \label{note:S3} \rm (See \cite[Definition~4.1]{S3}.) The universal enveloping algebra $U(\mathbb L)$ is a homomorphic image of the 
$S_3$-symmetric tridiagonal algebra $\mathbb T(2,0,0,4,4)$.
\end{note}

\noindent For the rest of this paper, we identify the Lie algebras $\mathbb L$ and $\mathfrak{sl}_4(\mathbb C)$ via the isomorphism $\sharp$ from Lemma
\ref{lem:sharp}.

\begin{lemma} \label{lem:sl4Basis} The following is a basis for the vector space $\mathfrak{sl}_4(\mathbb C)$:
\begin{align*}
&A_1, \qquad A_2, \qquad A_3, \qquad A^*_1, \qquad A^*_2, \qquad A^*_3,\\
&\lbrack A_1, A^*_2 \rbrack, \quad
\lbrack A_2, A^*_3 \rbrack, \quad
\lbrack A_3, A^*_1 \rbrack, \quad
\lbrack A^*_1, A_2 \rbrack, \quad
\lbrack A^*_2, A_3 \rbrack, \quad
\lbrack A^*_3, A_1 \rbrack,  \\
& \lbrack A^*_1, \lbrack A^*_2, A_3 \rbrack \rbrack, \qquad \quad
\lbrack A^*_2, \lbrack A^*_3, A_1 \rbrack \rbrack, \qquad \quad
\lbrack A^*_3, \lbrack A^*_1, A_2 \rbrack \rbrack.
\end{align*}
\end{lemma}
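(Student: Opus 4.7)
The list contains exactly $6+6+3=15$ elements, which matches $\dim \mathfrak{sl}_4(\mathbb{C})=15$ as read off from the basis \eqref{eq:sl4basis} (twelve off-diagonal matrix units $E_{i,j}$ together with three differences $E_{i,i}-E_{i+1,i+1}$). Consequently it suffices to prove that the 15 listed vectors span $\mathfrak{sl}_4(\mathbb{C})$; linear independence will then follow automatically by dimension count.

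The plan is to exhibit each element of the basis \eqref{eq:sl4basis} as an explicit $\mathbb{C}$-linear combination of the 15 listed vectors, thereby re-using the formulas already recorded in the proof of Lemma \ref{lem:sharp}. That proof expresses the twelve matrix units $E_{i,j}$ (with $i\neq j$) and the three diagonal differences in terms of the generators $A_1,A_2,A_3,A_1^*,A_2^*,A_3^*$ together with the single commutators $[A_j^*,A_i]$ and the double commutators $[A_j^*,[A_k^*,A_i]]$ for appropriate permutations of $\{1,2,3\}$. The first step is to verify that each single commutator appearing there agrees, up to a sign coming from $[X,Y]=-[Y,X]$, with one of the six single commutators $[A_1,A_2^*],[A_2,A_3^*],[A_3,A_1^*],[A_1^*,A_2],[A_2^*,A_3],[A_3^*,A_1]$ in the present list. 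The second step is to verify the analogous matching for the three double commutators $[A_1^*,[A_2^*,A_3]],[A_2^*,[A_3^*,A_1]],[A_3^*,[A_1^*,A_2]]$. Once these sign-matchings are in place, the explicit inversion formulas from the proof of Lemma \ref{lem:sharp} immediately display each element of \eqref{eq:sl4basis} as a combination of the 15 listed vectors, and spanning follows.

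The only real bookkeeping obstacle is keeping the cyclic roles of the indices $1,2,3$ straight: the double commutators from Lemma \ref{lem:sharp} follow the cyclic scheme $(1,2,3)\to(2,3,1)\to(3,1,2)$ applied to the pattern $[A_j^*,[A_k^*,A_i]]$, and one must confirm that the three double commutators in the current list realize precisely this cycle. The remainder of the argument is mechanical; no Jacobi identity or Serre-relation manipulation is required beyond what was already used to establish Lemma \ref{lem:sharp}.
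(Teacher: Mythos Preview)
Your proposal is correct and follows essentially the same approach as the paper's proof: both count dimensions, reduce to spanning, and invoke the explicit inversion formulas from the proof of Lemma~\ref{lem:sharp}. The paper simply asserts that spanning ``is clear from the proof of Lemma~\ref{lem:sharp}'' without the detailed sign-matching discussion you provide, so your write-up is if anything more explicit than the original.
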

\begin{proof}  
There are 15 matrices in the list, and 
the dimension of $\mathfrak{sl}_4(\mathbb C)$ is 15. By linear algebra, it suffices to show that the listed matrices span $\mathfrak{sl}_4(\mathbb C)$. It is clear
from the proof of Lemma \ref{lem:sharp}, that the listed matrices span $\mathfrak{sl}_4(\mathbb C)$.
\end{proof}

\begin{lemma} \label{lem:LtoL} For distinct $i, j \in \lbrace 1,2,3\rbrace $ there exists a Lie algebra homomorphism $\mathfrak{sl}_2(\mathbb C) \to \mathfrak{sl}_4(\mathbb C)$
that sends
\begin{align*}
A \mapsto A_i, \qquad \qquad A^* \mapsto A^*_j.
\end{align*}
This homomorphism is injective.
\end{lemma}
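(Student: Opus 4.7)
The plan is to read off the desired homomorphism directly from the presentation of $\mathfrak{sl}_2(\mathbb C)$ given in Definition \ref{def:sl2P}, then use simplicity of $\mathfrak{sl}_2(\mathbb C)$ to conclude injectivity.

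First I would recall that by Lemma \ref{lem:Lnat} we have identified $\mathfrak{sl}_2(\mathbb C)$ with the Lie algebra $\mathbf L$, which is presented by generators $A, A^*$ subject to the two relations $\lbrack A, \lbrack A, A^*\rbrack\rbrack = 4A^*$ and $\lbrack A^*, \lbrack A^*, A\rbrack\rbrack = 4A$. Fix distinct $i,j \in \lbrace 1,2,3\rbrace$. By Definition \ref{def:LL}(iii), the elements $A_i, A^*_j \in \mathfrak{sl}_4(\mathbb C)$ satisfy exactly these same two relations:
\begin{align*}
\lbrack A_i, \lbrack A_i, A^*_j\rbrack\rbrack = 4 A^*_j, \qquad \qquad \lbrack A^*_j, \lbrack A^*_j, A_i\rbrack\rbrack = 4 A_i.
\end{align*}
By the universal property of the presentation, there is therefore a (unique) Lie algebra homomorphism $\pi: \mathfrak{sl}_2(\mathbb C) \to \mathfrak{sl}_4(\mathbb C)$ sending $A \mapsto A_i$ and $A^* \mapsto A^*_j$.

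For injectivity, the standard route is to invoke the fact that $\mathfrak{sl}_2(\mathbb C)$ is simple, so its only ideals are $0$ and itself. The kernel of $\pi$ is an ideal of $\mathfrak{sl}_2(\mathbb C)$, and it is proper because the image contains $\pi(A) = A_i$, which is nonzero by Lemma \ref{lem:sharp}. Hence $\ker \pi = 0$ and $\pi$ is injective. If the paper prefers not to assume simplicity of $\mathfrak{sl}_2(\mathbb C)$ as a known fact, an elementary alternative is to exhibit a left inverse on a basis: using Lemma \ref{lem:sl2Basis}, the map $\pi$ sends the basis $A, A^*, \lbrack A, A^*\rbrack$ of $\mathfrak{sl}_2(\mathbb C)$ to $A_i, A^*_j, \lbrack A_i, A^*_j\rbrack$, and a direct inspection of the explicit matrices in Lemma \ref{lem:sharp} shows these three elements of $\mathfrak{sl}_4(\mathbb C)$ are linearly independent.

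The main (very minor) obstacle is just to verify that the two specific defining relations of Definition \ref{def:sl2P} match up with the particular clause of Definition \ref{def:LL} that applies to distinct indices $i,j$; this is immediate from clause (iii). No other relation of Definition \ref{def:LL} is needed, so the verification is essentially bookkeeping and the lemma follows at once.
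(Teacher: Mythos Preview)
Your proof is correct and essentially matches the paper's. The existence argument is identical (compare Definition~\ref{def:sl2P} with Definition~\ref{def:LL}(iii)); for injectivity the paper cites Lemmas~\ref{lem:sl2Basis} and~\ref{lem:sl4Basis} to see that $A_i, A^*_j, \lbrack A_i, A^*_j\rbrack$ are linearly independent (they appear among the basis elements of Lemma~\ref{lem:sl4Basis}), which is exactly your alternative argument, while your primary route via simplicity of $\mathfrak{sl}_2(\mathbb C)$ is an equally valid shortcut.
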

\begin{proof}  To see that the homomorphism exists, compare the relations in Definition  \ref{def:sl2P} and Definition \ref{def:LL}(iii).
The homomorphism is injective by Lemmas \ref{lem:sl2Basis}, \ref{lem:sl4Basis}.
\end{proof}

\begin{corollary} \label{cor:6} For distinct $i,j \in \lbrace 1,2,3\rbrace$ the matrices $A_i, A^*_j$ generate a Lie subalgebra of $\mathfrak{sl}_4(\mathbb C)$
that is isomorphic to $\mathfrak{sl}_2(\mathbb C)$.
\end{corollary}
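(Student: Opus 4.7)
The plan is to deduce this statement almost immediately from Lemma \ref{lem:LtoL}. By that lemma, for distinct $i,j \in \lbrace 1,2,3\rbrace$ there exists an injective Lie algebra homomorphism $\phi: \mathfrak{sl}_2(\mathbb C) \to \mathfrak{sl}_4(\mathbb C)$ sending $A \mapsto A_i$ and $A^* \mapsto A^*_j$. The image $\phi\bigl(\mathfrak{sl}_2(\mathbb C)\bigr)$ is a Lie subalgebra of $\mathfrak{sl}_4(\mathbb C)$, and since $\phi$ is injective, its image is isomorphic to $\mathfrak{sl}_2(\mathbb C)$ as a Lie algebra. So it suffices to identify this image with the Lie subalgebra of $\mathfrak{sl}_4(\mathbb C)$ generated by $A_i$ and $A^*_j$.

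Next I would observe that by Definition \ref{def:sl2P}, the elements $A, A^*$ generate $\mathfrak{sl}_2(\mathbb C)$ as a Lie algebra. Applying the homomorphism $\phi$, the images $A_i = \phi(A)$ and $A^*_j = \phi(A^*)$ generate $\phi\bigl(\mathfrak{sl}_2(\mathbb C)\bigr)$. Thus the Lie subalgebra of $\mathfrak{sl}_4(\mathbb C)$ generated by $A_i, A^*_j$ is precisely $\phi\bigl(\mathfrak{sl}_2(\mathbb C)\bigr)$, which by the injectivity of $\phi$ is isomorphic to $\mathfrak{sl}_2(\mathbb C)$. This yields the corollary in two short sentences.

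There is no real obstacle here; the corollary is an immediate packaging of Lemma \ref{lem:LtoL} together with the observation that $A, A^*$ generate $\mathfrak{sl}_2(\mathbb C)$ (already noted in the proof of Lemma \ref{lem:sl2Basis}). The only thing to be slightly careful about is the language: Lemma \ref{lem:LtoL} produces the homomorphism abstractly, and one should explicitly note that its image equals the Lie subalgebra generated by $A_i, A^*_j$ before concluding the isomorphism.
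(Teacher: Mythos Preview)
Your proposal is correct and follows the same approach as the paper, which simply writes ``By Lemma \ref{lem:LtoL}.'' You have just made explicit the standard facts (the image of an injective Lie algebra homomorphism is isomorphic to the domain, and a homomorphism sends generators to generators of the image) that the paper leaves implicit.
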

\begin{proof} By Lemma \ref{lem:LtoL}.
\end{proof}

\noindent  Our  presentation of $\mathfrak{sl}_4(\mathbb C)$  is described by the diagram below:
\vspace{.5cm}

\begin{center}
\begin{picture}(0,60)
\put(-100,0){\line(1,1.73){30}}
\put(-100,0){\line(1,-1.73){30}}
\put(-70,52){\line(1,0){60}}
\put(-70,-52){\line(1,0){60}}
\put(-10,-52){\line(1,1.73){30}}
\put(-10,52){\line(1,-1.73){30}}

\put(-102,-3){$\bullet$}
\put(-72.5,49){$\bullet$}
\put(-72.5,-55){$\bullet$}

\put(16.5,-3){$\bullet$}
\put(-13.5,49){$\bullet$}
\put(-13.5,-55){$\bullet$}

\put(-120,-3){$A_1$}
\put(-11,60){$A_2$}
\put(-11,-66){$A_3$}

\put(26,-3){$A^*_1$}
\put(-82,-66){$A^*_2$}
\put(-82,60){$A^*_3$}

\end{picture}
\end{center}
\vspace{2.5cm}

\hspace{2.2cm}
 Fig. 1.  Nonadjacent matrices commute. \\
${} \qquad $ Adjacent matrices generate a Lie subalgebra isomorphic to $\mathfrak{sl}_2(\mathbb C)$.
\vspace{.5cm}

\begin{lemma} \label{lem:sl2sl2} For distinct $j,k \in \lbrace 1,2,3\rbrace$ there exists a Lie algebra homomorphism
 $\mathfrak{sl}_2(\mathbb C) \oplus \mathfrak{sl}_2(\mathbb C) \to  \mathfrak{sl}_4(\mathbb C)$ that sends
 \begin{align*}
 (A,0) \mapsto A_j, \qquad \quad
 (A^*, 0) \mapsto A^*_k, \qquad \quad
 (0, A) \mapsto A_k, \qquad \quad
 (0, A^*) \mapsto A^*_j.
 \end{align*}
 This homomorphism is injective.
\end{lemma}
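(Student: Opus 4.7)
The plan is to check the presentation of $\mathfrak{sl}_2(\mathbb C) \oplus \mathfrak{sl}_2(\mathbb C)$ against the defining relations of $\mathbb L$. By Definition \ref{def:sl2P}, the direct sum $\mathfrak{sl}_2(\mathbb C) \oplus \mathfrak{sl}_2(\mathbb C)$ is presented by generators $A^{(1)}, A^{*(1)}, A^{(2)}, A^{*(2)}$ (the two copies of $A, A^*$) subject to
\begin{align*}
&[A^{(1)}, [A^{(1)}, A^{*(1)}]] = 4A^{*(1)}, \qquad [A^{*(1)}, [A^{*(1)}, A^{(1)}]] = 4A^{(1)}, \\
&[A^{(2)}, [A^{(2)}, A^{*(2)}]] = 4A^{*(2)}, \qquad [A^{*(2)}, [A^{*(2)}, A^{(2)}]] = 4A^{(2)},
\end{align*}
together with the commutation relations $[A^{(1)}, A^{(2)}] = [A^{(1)}, A^{*(2)}] = [A^{*(1)}, A^{(2)}] = [A^{*(1)}, A^{*(2)}] = 0$ saying the two summands commute.

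First, I would verify that the prescribed images $A_j, A^*_k, A_k, A^*_j$ in $\mathfrak{sl}_4(\mathbb C)$ satisfy these relations. The two pairs of Serre-type relations are immediate from Definition \ref{def:LL}(iii) applied with $(i,j) \mapsto (j,k)$ and $(i,j) \mapsto (k,j)$ respectively. The four cross-commutators vanish: $[A_j, A_k]=0$ and $[A^*_k, A^*_j]=0$ by Definition \ref{def:LL}(i), while $[A_j, A^*_j]=0$ and $[A_k, A^*_k]=0$ by Definition \ref{def:LL}(ii). Hence there exists a Lie algebra homomorphism $\Phi: \mathfrak{sl}_2(\mathbb C) \oplus \mathfrak{sl}_2(\mathbb C) \to \mathfrak{sl}_4(\mathbb C)$ with the stated action on generators.

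For injectivity, I would use the semisimple structure of the domain. The kernel $\mathrm{Ker}(\Phi)$ is an ideal of $\mathfrak{sl}_2(\mathbb C) \oplus \mathfrak{sl}_2(\mathbb C)$. Since $\mathfrak{sl}_2(\mathbb C)$ is simple, the only ideals of the direct sum are $0$, $\mathfrak{sl}_2(\mathbb C) \oplus 0$, $0 \oplus \mathfrak{sl}_2(\mathbb C)$, and the whole algebra. The restriction of $\Phi$ to $\mathfrak{sl}_2(\mathbb C) \oplus 0$ is precisely the homomorphism of Lemma \ref{lem:LtoL} sending $A \mapsto A_j$, $A^* \mapsto A^*_k$, which is injective; similarly for the restriction to $0 \oplus \mathfrak{sl}_2(\mathbb C)$. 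Thus $\mathrm{Ker}(\Phi)$ contains neither summand, forcing $\mathrm{Ker}(\Phi) = 0$.

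There is no real obstacle here; the only thing to watch is that one invokes Definition \ref{def:LL} with the correct indices (in particular, that the commutation relations in parts (i) and (ii) cover all four required cross-commutators between the two $\mathfrak{sl}_2$-copies). As an alternative to the simplicity argument, one could instead cite Lemma \ref{lem:sl4Basis}: the six images $A_j, A^*_k, [A_j,A^*_k], A_k, A^*_j, [A_k,A^*_j]$ appear among the fifteen basis elements listed there, hence are linearly independent, so $\Phi$ is injective on a basis of the $6$-dimensional domain.
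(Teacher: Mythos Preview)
Your proof is correct. The existence argument is essentially the same as the paper's, just spelled out via the presentation rather than by citing Lemma~\ref{lem:LtoL} directly together with the commutation of the two images.

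For injectivity you take a genuinely different primary route: you use that $\mathrm{Ker}(\Phi)$ is an ideal of the semisimple algebra $\mathfrak{sl}_2(\mathbb C)\oplus\mathfrak{sl}_2(\mathbb C)$ and rule out each nonzero ideal using the injectivity from Lemma~\ref{lem:LtoL}. The paper instead goes straight to what you list as your alternative: it observes that the six elements $A_j, A_k, A^*_j, A^*_k, [A_j,A^*_k], [A^*_j,A_k]$ appear among the basis of Lemma~\ref{lem:sl4Basis}, hence are linearly independent, so $\Phi$ is injective on a basis of the domain. Your ideal argument is a bit more structural and avoids checking a spanning set, while the paper's argument is more direct and reuses the basis already computed; both are short and equally valid.
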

\begin{proof}  The homomorphism exists by Lemma  \ref{lem:LtoL} and since each of $A_j, A^*_k$ commutes with each of $A^*_j, A_k$.
The homomorphism is injective because the matrices
\begin{align*}
 A_j, \qquad A_k, \qquad A^*_j, \qquad A^*_k, \qquad \lbrack A_j, A^*_k \rbrack, \qquad  \lbrack A^*_j, A_k\rbrack
 \end{align*}
are linearly independent by Lemma \ref{lem:sl4Basis}.
\end{proof} 

\begin{corollary} \label{cor:LL} For distinct $j,k \in \lbrace 1,2,3\rbrace$ the matrices $A_j, A_k, A^*_j, A^*_k$ generate a Lie subalgebra of $\mathfrak{sl}_4(\mathbb C)$
that is isomorphic to $\mathfrak{sl}_2(\mathbb C) \oplus \mathfrak{sl}_2(\mathbb C)$.
\end{corollary}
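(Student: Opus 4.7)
The plan is to extract the conclusion directly from Lemma \ref{lem:sl2sl2}. That lemma produces an injective Lie algebra homomorphism
\begin{align*}
\iota : \mathfrak{sl}_2(\mathbb C) \oplus \mathfrak{sl}_2(\mathbb C) \longrightarrow \mathfrak{sl}_4(\mathbb C)
\end{align*}
sending $(A,0) \mapsto A_j$, $(A^*,0) \mapsto A^*_k$, $(0,A) \mapsto A_k$, $(0,A^*) \mapsto A^*_j$. So I would argue that the image of $\iota$ is precisely the Lie subalgebra of $\mathfrak{sl}_4(\mathbb C)$ generated by $A_j, A_k, A^*_j, A^*_k$, and then invoke injectivity.

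To identify the image with the Lie subalgebra in question, I need the preliminary fact that $\mathfrak{sl}_2(\mathbb C) \oplus \mathfrak{sl}_2(\mathbb C)$ is generated as a Lie algebra by the four elements $(A,0), (A^*,0), (0,A), (0,A^*)$. This is immediate from Lemma \ref{lem:sl2Basis}, which shows $\{A,A^*\}$ generates each $\mathfrak{sl}_2(\mathbb C)$ summand, and these generating sets for the two summands are independent. Applying $\iota$, the images of these four elements generate $\iota(\mathfrak{sl}_2(\mathbb C) \oplus \mathfrak{sl}_2(\mathbb C))$, and this image is exactly the subalgebra $\langle A_j, A_k, A^*_j, A^*_k \rangle \subseteq \mathfrak{sl}_4(\mathbb C)$.

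Because $\iota$ is injective, it restricts to a Lie algebra isomorphism from $\mathfrak{sl}_2(\mathbb C) \oplus \mathfrak{sl}_2(\mathbb C)$ onto its image, which is the desired subalgebra. I anticipate no genuine obstacle here, since the substantive content—the mixed commutation relations among $A_j, A^*_k, A_k, A^*_j$ and the linear independence needed for injectivity (via Lemma \ref{lem:sl4Basis})—has already been dispensed with in the proof of Lemma \ref{lem:sl2sl2}. The corollary is essentially a repackaging of that lemma.
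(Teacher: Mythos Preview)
Your proof is correct and follows the same approach as the paper, which simply writes ``By Lemma \ref{lem:sl2sl2}.'' You have merely made explicit the routine step that the image of the injective homomorphism is the subalgebra generated by the images of the generators.
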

\begin{proof} By Lemma \ref{lem:sl2sl2}.
\end{proof}

\begin{definition}\label{def:sl2sl2sub} \rm Let $i\in \lbrace 1,2,3\rbrace$. By the {\it $i$th Lie subalgebra of $\mathfrak{sl}_4(\mathbb C)$
isomorphic to $\mathfrak{sl}_2(\mathbb C) \oplus \mathfrak{sl}_2(\mathbb C)$}, we mean the Lie subalgebra of $\mathfrak{sl}_4(\mathbb C)$
generated by $A_j, A_k, A^*_j, A^*_k$ where $j,k$ are the  elements in $\lbrace 1,2,3\rbrace \backslash \lbrace i \rbrace$.
\end{definition}

\section{The automorphism $\tau$ of $\mathfrak{sl}_4(\mathbb C)$}

\noindent We continue to discuss the Lie algebra $\mathfrak{sl}_4(\mathbb C)$. In this section, we introduce
an automorphism $\tau$ of  $\mathfrak{sl}_4(\mathbb C)$ that swaps $A_i, A^*_i$ for $i \in \lbrace 1,2,3\rbrace$.

\begin{definition} \label{def:T} \rm Define $\Upsilon \in {\rm Mat}_4(\mathbb C)$ by
\begin{align*}
\Upsilon = \frac{1}{2}  \begin{pmatrix}  1&1&1&1\\
                                         1 &1&-1&-1 \\
                                         1 &-1&1&-1 \\
                                        1 &-1&-1& 1
                                          \end{pmatrix}.
  \end{align*}
  \end{definition}
 \noindent  Note that $\Upsilon^2 = I$.
  
  \begin{lemma} \label{lem:ATA} For $i \in \lbrace 1,2,3\rbrace$ we have
  \begin{align*}
A_i \Upsilon = \Upsilon A^*_i, \qquad \qquad A^*_i \Upsilon = \Upsilon A_i.
\end{align*}
\end{lemma}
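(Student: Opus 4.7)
The two identities in the lemma are in fact equivalent, so the problem reduces to verifying one of them. Indeed, since $\Upsilon^2 = I$ (as noted immediately after Definition~\ref{def:T}), starting from $A_i \Upsilon = \Upsilon A^*_i$ and multiplying on the right by $\Upsilon$ gives $A_i = \Upsilon A^*_i \Upsilon$; multiplying this on the left by $\Upsilon$ then yields $\Upsilon A_i = A^*_i \Upsilon$, which is the second identity. So the plan is to prove the single identity $A_i \Upsilon = \Upsilon A^*_i$ for each $i \in \lbrace 1,2,3\rbrace$.

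For this, I would simply perform the matrix multiplication. The right-hand side $\Upsilon A^*_i$ is easy to compute: since $A^*_i$ is diagonal with entries $\pm 1$, the product $\Upsilon A^*_i$ is obtained from $\Upsilon$ by negating the columns that correspond to the $-1$ entries of $A^*_i$. The left-hand side $A_i \Upsilon$ is also easy: each $A_i$ is a permutation matrix (it encodes the action of the $i$-th generator of $(\mathbb Z/2\mathbb Z)^2$ on itself by translation), so $A_i \Upsilon$ is obtained from $\Upsilon$ by permuting its rows according to the involution defining $A_i$. Comparing the two resulting $4 \times 4$ matrices entrywise for each $i \in \lbrace 1,2,3\rbrace$ is a short verification, and there is essentially no room for anything to go wrong — the computation will succeed because $\Upsilon$ is (up to the scalar $1/2$) the character table of $(\mathbb Z/2\mathbb Z)^2$, and the relations to be checked are precisely the standard intertwining relations between the regular and diagonal actions of this group.

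There is no substantive obstacle: the only ``hard'' aspect is bookkeeping — keeping track of which row permutation $A_i$ implements and which sign pattern $A^*_i$ imposes, so that the six scalar-by-scalar checks line up. A possible conceptual shortcut, should one prefer to avoid the explicit $4\times 4$ multiplications, would be to recognize $\Upsilon$ as the Fourier matrix for $(\mathbb Z/2\mathbb Z)^2$ and to invoke the general fact that this matrix conjugates translation operators to their character (i.e.\ diagonal) counterparts; but for a lemma of this size a direct matrix check is the most economical route.
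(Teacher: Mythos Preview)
Your proposal is correct and matches the paper's approach: the paper's proof simply reads ``By matrix multiplication using Lemma~\ref{lem:sharp} and the comment above Lemma~\ref{lem:sl4Basis},'' which is exactly the direct verification you describe. Your additional observations (that the two identities are equivalent via $\Upsilon^2 = I$, and the Fourier-matrix interpretation) are sound but go slightly beyond what the paper states.
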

\begin{proof} By matrix multiplication using Lemma \ref{lem:sharp} and the comment above Lemma \ref{lem:sl4Basis}.
\end{proof}

  \noindent By an {\it automorphism} of the Lie algebra $\mathfrak{sl}_4(\mathbb C)$, we mean a Lie algebra isomorphism
  $\mathfrak{sl}_4(\mathbb C) \to \mathfrak{sl}_4(\mathbb C)$.
  
  \begin{lemma} \label{lem:tau} There exists an automorphism $\tau$ of $\mathfrak{sl}_4(\mathbb C)$ that sends
  $\varphi  \mapsto \Upsilon \varphi \Upsilon^{-1}$ for all $\varphi \in \mathfrak{sl}_4(\mathbb C)$. Moreover, $\tau^2 = {\rm id}$.
  \end{lemma}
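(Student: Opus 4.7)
The statement is a standard fact about conjugation by an invertible matrix, so the plan is largely routine verification. My approach will be as follows.

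First, I would note that $\Upsilon$ is invertible, since $\Upsilon^2 = I$ gives $\Upsilon^{-1} = \Upsilon$. Then I would define the map $\tau : \mathfrak{sl}_4(\mathbb C) \to \mathfrak{sl}_4(\mathbb C)$ by $\tau(\varphi) = \Upsilon \varphi \Upsilon^{-1}$ and check that it lands inside $\mathfrak{sl}_4(\mathbb C)$: conjugation preserves trace, so $\mathrm{tr}(\Upsilon \varphi \Upsilon^{-1}) = \mathrm{tr}(\varphi) = 0$ whenever $\varphi \in \mathfrak{sl}_4(\mathbb C)$.

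Next I would verify that $\tau$ is a Lie algebra homomorphism. It is obviously $\mathbb C$-linear, and the Lie bracket is preserved by the standard identity
\begin{align*}
\Upsilon [\varphi, \phi] \Upsilon^{-1} &= \Upsilon(\varphi \phi - \phi \varphi)\Upsilon^{-1} \\
&= (\Upsilon \varphi \Upsilon^{-1})(\Upsilon \phi \Upsilon^{-1}) - (\Upsilon \phi \Upsilon^{-1})(\Upsilon \varphi \Upsilon^{-1}) \\
&= [\tau(\varphi), \tau(\phi)].
\end{align*}

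To see that $\tau$ is a bijection, I would observe that the map $\varphi \mapsto \Upsilon^{-1} \varphi \Upsilon$ is a two-sided inverse; hence $\tau$ is an automorphism. Finally, for the last claim, since $\Upsilon^{-1} = \Upsilon$, I get $\tau^2(\varphi) = \Upsilon^2 \varphi \Upsilon^{-2} = \varphi$, so $\tau^2 = \mathrm{id}$.

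There is no real obstacle here; the only subtle point (if any) is confirming that conjugation by $\Upsilon$ sends $\mathfrak{sl}_4(\mathbb C)$ to itself, which follows immediately from trace invariance. The companion claim that $\tau$ swaps $A_i \leftrightarrow A_i^*$ for $i \in \{1,2,3\}$ has already been verified in Lemma \ref{lem:ATA} (together with $\Upsilon^{-1} = \Upsilon$), so it need not be reproved here.
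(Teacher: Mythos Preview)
Your proof is correct and follows the same approach as the paper, which simply writes ``By $\Upsilon^2=I$ and linear algebra.'' You have merely spelled out the routine details that the paper leaves implicit.
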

  \begin{proof} By $\Upsilon^2=I$ and linear algebra.
  \end{proof}

\begin{lemma} \label{lem:Tint}  The automorphism $\tau$ from Lemma \ref{lem:tau} swaps
  \begin{align*}
  A_i \leftrightarrow  A^*_i  \qquad \qquad i \in \lbrace 1,2,3\rbrace.
  \end{align*}
\end{lemma}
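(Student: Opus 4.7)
The plan is to derive the claim directly from Lemma \ref{lem:ATA} together with the involution property $\Upsilon^2=I$. Since $\Upsilon^2=I$ we have $\Upsilon^{-1}=\Upsilon$, so for each $i\in\{1,2,3\}$ the automorphism $\tau$ acts by conjugation as $\tau(\varphi)=\Upsilon\varphi\Upsilon$.

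Fix $i\in\{1,2,3\}$. The identity $A^*_i\Upsilon=\Upsilon A_i$ from Lemma \ref{lem:ATA} gives, upon right-multiplying by $\Upsilon^{-1}$,
\begin{align*}
\tau(A_i)=\Upsilon A_i\Upsilon^{-1}=A^*_i\Upsilon\cdot\Upsilon^{-1}=A^*_i.
\end{align*}
Similarly, the identity $A_i\Upsilon=\Upsilon A^*_i$ from Lemma \ref{lem:ATA} gives
\begin{align*}
\tau(A^*_i)=\Upsilon A^*_i\Upsilon^{-1}=A_i\Upsilon\cdot\Upsilon^{-1}=A_i.
\end{align*}
This is exactly the swap asserted in the lemma.

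There is no real obstacle here: the substantive content is already packaged in Lemma \ref{lem:ATA}, which was verified by direct matrix multiplication against the explicit generators of Lemma \ref{lem:sharp} and the diagonal matrices $A^*_i$. The present lemma is simply a reinterpretation of that identity in terms of the conjugation automorphism $\tau$, made possible by $\Upsilon^{-1}=\Upsilon$.
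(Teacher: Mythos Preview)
Your proof is correct and follows the same approach as the paper, which simply cites Lemmas \ref{lem:ATA} and \ref{lem:tau}. You have written out the conjugation computation explicitly, but the substance is identical.
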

\begin{proof} By Lemmas \ref{lem:ATA}, \ref{lem:tau}.
\end{proof}

\noindent We will be discussing Cartan subalgebras of $\mathfrak{sl}_4(\mathbb C)$. The definition of a Cartan subalgebra can be found in \cite[p.~23]{carter}.

\begin{lemma} \label{lem:Cartan} The following {\rm (i)--(iv)} hold.
\begin{enumerate}
\item[\rm (i)] 
The elements $A_1, A_2, A_3$ form a basis for a Cartan subalgebra $\mathbb H$ of $\mathfrak{sl}_4(\mathbb C)$.
\item[\rm (ii)] 
The elements $A^*_1, A^*_2, A^*_3$ form a basis for a Cartan subalgebra $\mathbb H^*$ of $\mathfrak{sl}_4(\mathbb C)$.
\item[\rm (iii)] The automorphism $\tau$ swaps  $\mathbb H \leftrightarrow \mathbb H^*$.
\item[\rm (iv)]  The Lie algebra $\mathfrak{sl}_4(\mathbb C)$ is generated by $\mathbb H, \mathbb H^*$.
\end{enumerate}
\end{lemma}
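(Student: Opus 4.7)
The plan is to handle the four parts in the order (ii), (i), (iii), (iv), leveraging the fact that conjugation by $\Upsilon$ is an automorphism of $\mathfrak{sl}_4(\mathbb C)$ (since $\Upsilon$ is invertible and conjugation preserves trace), and hence carries Cartan subalgebras to Cartan subalgebras. For (ii), I would first observe that $A^*_1, A^*_2, A^*_3$ are diagonal trace-zero matrices that are linearly independent (e.g., the three diagonal vectors $(1,1,-1,-1),(1,-1,1,-1),(1,-1,-1,1)$ can be seen linearly independent by inspection, or since they are mutually orthogonal). Since the subspace of diagonal matrices in $\mathfrak{sl}_4(\mathbb C)$ has dimension $3$, the span of $A^*_1, A^*_2, A^*_3$ equals this diagonal subspace. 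It is a standard fact about $\mathfrak{sl}_n(\mathbb C)$ that the subspace of diagonal trace-zero matrices is a Cartan subalgebra (it is abelian, consists of semisimple elements, and is easily checked to be self-normalizing). So $\mathbb H^*$ is a Cartan subalgebra with basis $A^*_1, A^*_2, A^*_3$.

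For (i), I would invoke Lemma \ref{lem:ATA} to rewrite each generator as $A_i = \Upsilon A^*_i \Upsilon^{-1}$. Thus $\mathrm{span}\{A_1,A_2,A_3\} = \Upsilon\, \mathbb H^* \,\Upsilon^{-1}$. Since conjugation by $\Upsilon$ is a Lie algebra automorphism of $\mathfrak{sl}_4(\mathbb C)$, it preserves linear independence and the property of being a Cartan subalgebra; hence $A_1,A_2,A_3$ are linearly independent and $\mathbb H$ is a Cartan subalgebra, establishing (i). For (iii), by Lemma \ref{lem:Tint} the automorphism $\tau$ swaps $A_i \leftrightarrow A^*_i$ for $i\in\{1,2,3\}$, so as $\tau$ is linear, it maps the span $\mathbb H$ of the $A_i$ onto the span $\mathbb H^*$ of the $A^*_i$, and vice versa. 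For (iv), the six elements $A_1,A_2,A_3,A^*_1,A^*_2,A^*_3$ are contained in $\mathbb H \cup \mathbb H^*$ by (i) and (ii), and they generate $\mathfrak{sl}_4(\mathbb C)$ by Definition \ref{def:LL} together with the identification $\sharp$, so $\mathbb H$ and $\mathbb H^*$ together generate $\mathfrak{sl}_4(\mathbb C)$.

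The arguments are largely structural once (ii) is in hand, so the only real subtlety is the verification in (ii) that the diagonal trace-zero subspace of $\mathfrak{sl}_4(\mathbb C)$ is indeed a Cartan subalgebra in the sense used by the paper (Carter's definition, as cited). I expect this to be the main obstacle only insofar as one must either cite it as a standard fact about $\mathfrak{sl}_n(\mathbb C)$ or verify self-normalization directly by writing a generic matrix $X=\sum_{i\ne j} c_{ij} E_{ij}+D$ and checking that $[D',X]\in \mathbb H^*$ for all diagonal $D'\in \mathbb H^*$ forces $c_{ij}=0$ for all $i\ne j$; this is immediate since $[D',E_{ij}]=(D'_{ii}-D'_{jj})E_{ij}$ and one can choose $D'\in \mathbb H^*$ with $D'_{ii}\ne D'_{jj}$ for any specified pair $i\ne j$.
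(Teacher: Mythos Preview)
Your proposal is correct and follows essentially the same approach as the paper: establish (ii) by identifying $\mathbb H^*$ with the diagonal trace-zero matrices (the standard Cartan subalgebra of $\mathfrak{sl}_4(\mathbb C)$), deduce (i) by transporting $\mathbb H^*$ to $\mathbb H$ via the automorphism $\tau$ (equivalently, conjugation by $\Upsilon$), obtain (iii) from Lemma~\ref{lem:Tint}, and obtain (iv) from the fact that the six matrices $A_i, A^*_i$ generate $\mathfrak{sl}_4(\mathbb C)$. Your version simply fills in more detail (e.g., the explicit linear-independence check and the self-normalization sketch) than the paper does.
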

\begin{proof} The matrices $A_1, A_2, A_3$ are linearly independent, so they form a basis for a subspace $\mathbb H$ of $\mathfrak{sl}_4(\mathbb C)$.
Similarly, the matrices $A^*_1, A^*_2, A^*_3$ form a basis for a subspace $\mathbb H^*$ of $\mathfrak{sl}_4(\mathbb C)$.
The subspaces $\mathbb H$, $\mathbb H^*$ satisfy (iii) by Lemma \ref{lem:Tint}.
The subspaces $\mathbb H$, $\mathbb H^*$ satisfy (iv) by construction.
The subspace $\mathbb H^*$ is a Cartan subalgebra of $\mathfrak{sl}_4(\mathbb C)$, because $\mathbb H^*$ consists of the diagonal matrices in $\mathfrak{sl}_4(\mathbb C)$.
The subspace $\mathbb H$ is a Cartan subalgebra of $\mathfrak{sl}_4(\mathbb C)$, because $\mathbb H$ is the image of a Cartan subalgebra $\mathbb H^*$
under an automorphism $\tau$ of $\mathfrak{sl}_4(\mathbb C)$.
\end{proof}

\begin{definition} \rm Let $W$ denote an $\mathfrak{sl}_4(\mathbb C)$-module, and consider the action of
$\mathbb H$ on $W$.
 A common eigenspace for this action is called an {\it $\mathbb H$-weight space} for $W$. An $\mathbb H^*$-weight space
for $W$ is similarly defined.
\end{definition}

\section{An $\mathfrak{sl}_4(\mathbb C)$-action on the polynomial algebra $\mathbb C \lbrack x,y,z,w\rbrack$}

Let $x,y,z,w$ denote mutually commuting indeterminates, and consider the algebra $\mathbb C\lbrack x,y,z,w\rbrack$ of polynomials in
$x,y,z,w$ that have all coefficients in $\mathbb C$. We abbreviate $P=\mathbb C \lbrack x,y,z,w\rbrack$.
The following is a basis for $P$:
\begin{align}
x^r y^s z^t w^u \qquad \qquad r,s,t,u \in \mathbb N. \label{eq:Pbasis}
\end{align}
For $N\in \mathbb N$ let $P_N$ denote the subspace of $P$ consisting of the homogeneous polynomials that have total degree $N$.
We call $P_N$ the {\it $N$th homogeneous component} of $P$. 
The sum $P =\sum_{N \in \mathbb N} P_N$ is direct. For notational convenience, define $P_{-1}=0$ and $P_{-2}=0$.

\begin{definition} \label{def:profiles} \rm For $N \in \mathbb N$ let the set $\mathcal P_N$ consist of the 4-tuples of natural numbers $(r,s,t,u)$ such that
$r+s+t+u=N$. An element of $\mathcal P_N$ is called a {\it profile of degree $N$}.
\end{definition}
\noindent Note that 
\begin{align}
\vert \mathcal P_N \vert = \binom{N+3}{3}. \label{eq:PNsize}
\end{align}

\begin{lemma} \label{lem:PDbasic} For $N \in \mathbb N$
the following is a basis for $P_N$:
\begin{align}
x^r y^s z^t w^u \qquad \qquad (r,s,t,u)  \in \mathcal P_N. \label{eq:PDbasis}
\end{align}
Moreover, $P_N$ has dimension $\binom{N+3}{3}$.
\end{lemma}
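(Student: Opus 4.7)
The plan is to derive this lemma directly from the basis \eqref{eq:Pbasis} for $P$, together with the definitions of $P_N$ and $\mathcal P_N$, and finally invoke the count \eqref{eq:PNsize}.

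First I would show that the monomials in \eqref{eq:PDbasis} span $P_N$. Every element $f \in P_N$ is, by definition, a homogeneous polynomial of total degree $N$, and by \eqref{eq:Pbasis} it may be written as a $\mathbb C$-linear combination of monomials $x^r y^s z^t w^u$ with $r,s,t,u \in \mathbb N$. Because the total degree of $x^r y^s z^t w^u$ is $r+s+t+u$, and distinct monomials have distinct multi-degrees (hence cannot cancel), homogeneity forces the only monomials occurring with nonzero coefficient to satisfy $r+s+t+u=N$. By Definition \ref{def:profiles} these are exactly the monomials indexed by $\mathcal P_N$, so \eqref{eq:PDbasis} spans $P_N$.

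Next I would argue linear independence. The monomials in \eqref{eq:PDbasis} form a subset of the basis \eqref{eq:Pbasis} for $P$, so they are linearly independent in $P$, hence in $P_N$. Combining with the previous step, the list \eqref{eq:PDbasis} is a basis of $P_N$. The dimension statement then follows immediately from $\dim P_N = |\mathcal P_N| = \binom{N+3}{3}$ by \eqref{eq:PNsize}.

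There is no genuine obstacle here; the content is purely bookkeeping about monomial bases of a polynomial ring. The only point requiring care is the homogeneity step, where one uses that monomials of different total degree are linearly independent over $\mathbb C$, so the degree-$N$ part of any polynomial is obtained simply by discarding monomials of other degrees.
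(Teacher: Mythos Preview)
Your proof is correct and is exactly the routine verification the paper has in mind; the paper's own proof consists of the single word ``Routine.'' You have simply spelled out the standard details that the set \eqref{eq:PDbasis} is the subset of the basis \eqref{eq:Pbasis} consisting of monomials of total degree $N$, and then invoked \eqref{eq:PNsize}.
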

\begin{proof} Routine.
\end{proof}

\noindent  Our next goal is to turn $P$ into an $ \mathfrak{sl}_4(\mathbb C)$-module.
To this end, we first consider $P_1$. Note that $x,y,z,w$ is a basis for $P_1$.

\begin{lemma} \label{lem:sl4onP1} 
The vector space $P_1$ becomes 
an $\mathfrak{sl}_4(\mathbb C)$-module that satisfies {\rm (i)--(vi)} below:
\begin{enumerate}
\item[\rm (i)] 
$A_1$ swaps $x \leftrightarrow y$ and $z \leftrightarrow w$;
\item[\rm (ii)] 
$A_2$ swaps $x \leftrightarrow z$ and $y \leftrightarrow w$;
\item[\rm (iii)] 
$A_3$ swaps $x \leftrightarrow w$ and $y \leftrightarrow z$;
\item[\rm (iv)] $A^*_1$ sends
\begin{align*}
x \mapsto x, \qquad y \mapsto y, \qquad z \mapsto -z, \qquad w \mapsto -w;
\end{align*}
\item[\rm (v)] $A^*_2$ sends
\begin{align*}
x \mapsto x, \qquad y \mapsto -y, \qquad z \mapsto z, \qquad w \mapsto -w;
\end{align*}
\item[\rm (vi)] $A^*_3$ sends
\begin{align*}
x \mapsto x, \qquad y \mapsto -y, \qquad z \mapsto -z, \qquad w \mapsto w.
\end{align*}
\end{enumerate}
\end{lemma}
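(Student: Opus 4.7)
The plan is to turn $P_1$ into an $\mathfrak{sl}_4(\mathbb C)$-module by identifying $P_1$ with the natural (defining) representation $\mathbb C^4$ via the ordered basis $(x,y,z,w)$. Under this identification, $\mathfrak{sl}_4(\mathbb C)\subseteq \mathrm{Mat}_4(\mathbb C)$ acts on $P_1$ by matrix multiplication on coordinate column vectors, which makes $P_1$ an $\mathfrak{sl}_4(\mathbb C)$-module with no further work, since $\mathfrak{sl}_4(\mathbb C)$ is a Lie subalgebra of $\mathfrak{gl}_4(\mathbb C)$ and $\mathbb C^4$ is the tautological $\mathfrak{gl}_4(\mathbb C)$-module. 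The content of the lemma then reduces to computing how each of the six generator matrices displayed in Lemma \ref{lem:sharp} acts on the standard basis of $\mathbb C^4$, and translating the result back to the basis $\{x,y,z,w\}$ of $P_1$.

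For claims (i)--(iii) I would simply read off the columns of $A_1$, $A_2$, $A_3$ in turn. Writing $x = (1,0,0,0)^{T}$, the first column of $A_1$ from Lemma \ref{lem:sharp} is $(0,1,0,0)^T$, so $A_1 x = y$; the second column is $(1,0,0,0)^T$, so $A_1 y = x$; the third and fourth columns similarly give $A_1 z = w$ and $A_1 w = z$, establishing (i). The analogous column-reading argument applied to $A_2$ yields (ii), and to $A_3$ yields (iii). For (iv)--(vi), each of $A_1^{*}, A_2^{*}, A_3^{*}$ is a diagonal matrix, so reading off the diagonal entries $(1,1,-1,-1)$, $(1,-1,1,-1)$, $(1,-1,-1,1)$ respectively recovers exactly the sign patterns stated.

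The main obstacle is essentially bookkeeping, namely making sure the matrix-column convention matches the chosen ordering $(x,y,z,w)$, and that the sign conventions for the diagonal $A_i^{*}$ are transcribed consistently. There is no relation-checking needed: the six matrices in question were already shown in Lemma \ref{lem:sharp} to satisfy the defining relations of $\mathbb L$ from Definition \ref{def:LL}, and under the identification $\mathbb L = \mathfrak{sl}_4(\mathbb C)$ this is precisely what is required for the standard action of these matrices on $\mathbb C^4 \cong P_1$ to define an $\mathfrak{sl}_4(\mathbb C)$-module structure.
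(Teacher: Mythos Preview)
Your proof is correct and follows essentially the same approach as the paper: the paper's proof simply cites Lemma~\ref{lem:sharp} and the identification of $\mathbb L$ with $\mathfrak{sl}_4(\mathbb C)$, which amounts to exactly the ``let the matrices act on $\mathbb C^4 \cong P_1$ via the basis $(x,y,z,w)$'' argument you spelled out. You have just made the column-reading explicit where the paper leaves it to the reader.
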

\begin{proof}
By Lemma \ref{lem:sharp} and the comment above Lemma  \ref{lem:sl4Basis}.
\end{proof}

\noindent We will be discussing derivations of $P$.  The Lie algebra $\mathfrak{gl}(P)$ consists of the vector space ${\rm End}(P)$ and Lie bracket
\begin{align*}
\lbrack \varphi, \phi \rbrack = \varphi \phi - \phi \varphi  \qquad \qquad \varphi, \phi \in {\rm End}(P).
\end{align*}
A {\it derivation of $P$} is an element $\mathcal D \in {\rm End}(P)$ such that 
\begin{align}
\mathcal D (fg) = \mathcal D(f) g + f \mathcal D(g)\qquad \qquad f, g \in P.          \label{eq:fg}
\end{align}
Let the set ${\rm Der}(P)$ consist of the derivations of $P$. One checks that
 ${\rm Der}(P)$ is a Lie subalgebra of $\mathfrak{gl}(P)$.
 \begin{lemma} \label{lem:oneD} For  $\mathcal D \in {\rm Der}(P)$ we have $\mathcal D(1)=0$.
 \end{lemma}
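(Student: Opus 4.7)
The plan is to apply the Leibniz rule \eqref{eq:fg} to the multiplicative identity written as a product with itself. First I would take $f = g = 1$ in the derivation identity, obtaining
\begin{align*}
\mathcal D(1) = \mathcal D(1 \cdot 1) = \mathcal D(1) \cdot 1 + 1 \cdot \mathcal D(1) = 2 \mathcal D(1).
\end{align*}
Subtracting $\mathcal D(1)$ from both sides then yields $\mathcal D(1) = 0$.

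There is no real obstacle here: the argument uses only that $1$ is the multiplicative identity of $P$ and that $\mathcal D$ satisfies the Leibniz rule. No further structure of $P$ (such as the specific variables $x,y,z,w$ or the homogeneous decomposition) is needed, and the same one-line proof works for derivations of any unital algebra.
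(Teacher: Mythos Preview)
Your proof is correct and is essentially identical to the paper's own proof: both apply the Leibniz rule to $1 = 1 \cdot 1$ to obtain $\mathcal D(1) = 2\mathcal D(1)$, whence $\mathcal D(1) = 0$.
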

 \begin{proof} We have
 \begin{align*}
 \mathcal D(1) = \mathcal D(1^2) = \mathcal D(1) 1 + 1 \mathcal D(1) = 2 \mathcal D(1).
 \end{align*}
 \noindent Therefore $\mathcal D(1)=0$.
 \end{proof}
 
 \begin{lemma} \label{lem:fn} For $\mathcal D \in {\rm Der}(P)$ and $f \in P$ and $n \in \mathbb N$,
 \begin{align*}
 \mathcal D(f^n) = n f^{n-1} \mathcal D(f).
 \end{align*}
 \end{lemma}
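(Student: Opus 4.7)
The plan is to prove this by induction on $n$, using the Leibniz rule \eqref{eq:fg} that defines a derivation, together with the fact that $P$ is a commutative algebra.

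For the base case $n=0$, I would invoke Lemma \ref{lem:oneD} to get $\mathcal D(f^0) = \mathcal D(1) = 0$, which matches the right-hand side (interpreted as zero, since the coefficient $n=0$). For $n=1$, the identity reduces to the tautology $\mathcal D(f) = 1 \cdot f^0 \cdot \mathcal D(f)$. So both initial cases are immediate.

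For the inductive step, assume the identity holds for some $n \geq 1$. Then writing $f^{n+1} = f \cdot f^n$ and applying \eqref{eq:fg}, I would compute
\begin{align*}
\mathcal D(f^{n+1}) &= \mathcal D(f) \, f^n + f \, \mathcal D(f^n) \\
&= f^n \, \mathcal D(f) + f \cdot n f^{n-1} \, \mathcal D(f) \\
&= (n+1) f^n \, \mathcal D(f),
\end{align*}
where in the second equality I use the inductive hypothesis, and in the collection of terms I use commutativity of $P$ (so that $\mathcal D(f)$ commutes with $f$ and its powers). This completes the induction.

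There is essentially no obstacle here; the lemma is the standard power rule for derivations, and the argument is a two-line induction. The only small care needed is to handle $n = 0$ separately via Lemma \ref{lem:oneD}, since the formula $n f^{n-1} \mathcal D(f)$ formally involves $f^{-1}$ in that case but is rendered harmless by the zero coefficient.
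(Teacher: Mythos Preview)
Your proof is correct and takes essentially the same approach as the paper, which simply says to use \eqref{eq:fg}, Lemma~\ref{lem:oneD}, and induction on $n$. You have spelled out exactly those steps.
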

 \begin{proof} Use \eqref{eq:fg} and Lemma  \ref{lem:oneD} and induction on $n$.
 \end{proof}

\begin{lemma} \label{lem:derAction} For  $\mathcal D \in {\rm End}(P)$ the following are equivalent:
\begin{enumerate}
\item[\rm (i)] $\mathcal D \in {\rm Der}(P)$;
\item[\rm (ii)] for $r,s,t,u \in \mathbb N$,
\begin{align*}
\mathcal D(x^r y^s z^t w^u) &= r x^{r-1} y^s z^t w^u \mathcal D(x) +
                                                s x^r y^{s-1} z^t w^u \mathcal D(y) \\
                                              &\quad  + t x^r y^s z^{t-1} w^u \mathcal D(z)+
                                                u x^r y^s z^t w^{u-1} \mathcal D(w);
 \end{align*}
 \item[\rm (iii)] for all polynomials $f,g$ in the $P$-basis \eqref{eq:Pbasis},
 \begin{align*}
\mathcal D (fg) = \mathcal D(f) g + f \mathcal D(g).    
\end{align*}
 \end{enumerate}
 \end{lemma}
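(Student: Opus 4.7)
The plan is to prove the chain of implications (i)$\Rightarrow$(ii)$\Rightarrow$(iii)$\Rightarrow$(i), which suffices since all three conditions involve the same object $\mathcal D$.

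For (i)$\Rightarrow$(ii), assume $\mathcal D$ is a derivation. First I would repeatedly apply the Leibniz rule \eqref{eq:fg} to $x^r y^s z^t w^u = x^r \cdot y^s \cdot z^t \cdot w^u$, obtaining
\[
\mathcal D(x^r y^s z^t w^u) = \mathcal D(x^r) y^s z^t w^u + x^r \mathcal D(y^s) z^t w^u + x^r y^s \mathcal D(z^t) w^u + x^r y^s z^t \mathcal D(w^u).
\]
Then I would apply Lemma \ref{lem:fn} to each of $\mathcal D(x^r), \mathcal D(y^s), \mathcal D(z^t), \mathcal D(w^u)$ to get $r x^{r-1} \mathcal D(x)$, etc., yielding the formula in (ii).

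For (ii)$\Rightarrow$(iii), let $f = x^{r_1} y^{s_1} z^{t_1} w^{u_1}$ and $g = x^{r_2} y^{s_2} z^{t_2} w^{u_2}$ be two basis monomials, so that $fg = x^{r_1+r_2} y^{s_1+s_2} z^{t_1+t_2} w^{u_1+u_2}$. Applying the formula in (ii) to $fg$ gives, for the $x$-term, $(r_1+r_2) x^{r_1+r_2-1} y^{s_1+s_2} z^{t_1+t_2} w^{u_1+u_2} \mathcal D(x)$, which splits as $\mathcal D(f)\cdot$(rest of $g$) plus $(\text{rest of } f)\cdot \mathcal D(g)$ contributions to the $x$-part; doing this termwise for $y,z,w$ and rearranging produces exactly $\mathcal D(f)g + f\mathcal D(g)$. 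This is a routine bookkeeping calculation using the commutativity of $x,y,z,w$.

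For (iii)$\Rightarrow$(i), assume \eqref{eq:fg} holds for all pairs $f,g$ drawn from the basis \eqref{eq:Pbasis}. Since both sides of \eqref{eq:fg} are $\mathbb C$-bilinear in $(f,g)$, and since \eqref{eq:Pbasis} spans $P$, the identity extends by bilinearity to all $f,g \in P$, showing $\mathcal D \in \mathrm{Der}(P)$.

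No step here is a serious obstacle; the only mildly delicate point is the bookkeeping in (ii)$\Rightarrow$(iii), which is entirely mechanical once one writes out the four terms and matches them to the corresponding contributions from $\mathcal D(f)g$ and $f\mathcal D(g)$.
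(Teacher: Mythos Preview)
Your proof is correct and follows essentially the same approach as the paper: the same chain (i)$\Rightarrow$(ii)$\Rightarrow$(iii)$\Rightarrow$(i), with (i)$\Rightarrow$(ii) via the Leibniz rule and Lemma~\ref{lem:fn}, (ii)$\Rightarrow$(iii) as a routine check, and (iii)$\Rightarrow$(i) by $\mathbb C$-bilinear extension from the basis. You supply more detail in the latter two steps than the paper does, but the underlying argument is identical.
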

 \begin{proof} ${\rm (i)} \Rightarrow {\rm (ii)}$   By  \eqref{eq:fg} and Lemma \ref{lem:fn},
 \begin{align*}
\mathcal D(x^r y^s z^t w^u) &=    \mathcal D(x^r) y^s z^t w^u  +
                                                 x^r \mathcal D(y^s) z^t w^u  
                                               +  x^r y^s\mathcal D(z^t) w^u 
                                              +
                                                 x^r y^s z^t  \mathcal D(w^u) \\
                                            &=   r x^{r-1} y^s z^t w^u \mathcal D(x) +
                                                s x^r y^{s-1} z^t w^u \mathcal D(y) 
                                               + t x^r y^s z^{t-1} w^u \mathcal D(z) \\
                                               &\quad +
                                                u x^r y^s z^t w^{u-1} \mathcal D(w).
 \end{align*}
 ${\rm (ii)} \Rightarrow {\rm (iii)}$  Routine. \\
  ${\rm (iii)} \Rightarrow {\rm (i)}$  The map $\mathcal D$ satisfies condition \eqref{eq:fg} since $\mathcal D$ is $\mathbb C$-linear.
 \end{proof}
 
 \begin{lemma} \label{lem:zeroDer} For $\mathcal D \in {\rm Der}(P)$ the following are equivalent:
 \begin{enumerate}
 \item[\rm (i)] $\mathcal D=0$;
 \item[\rm (ii)] $\mathcal D=0$ on $P_1$.
 \end{enumerate}
 \end{lemma}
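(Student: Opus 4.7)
The implication (i) $\Rightarrow$ (ii) is immediate since $P_1 \subseteq P$. The substance is in (ii) $\Rightarrow$ (i), and the plan is to reduce everything to Lemma \ref{lem:derAction}(ii), which has already done the heavy lifting: it expresses $\mathcal D$ on any monomial basis element in terms of $\mathcal D(x)$, $\mathcal D(y)$, $\mathcal D(z)$, $\mathcal D(w)$, all of which live in $P_1$.

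My plan is as follows. Assume (ii), so that $\mathcal D$ vanishes on $P_1$. Since $x,y,z,w$ form a basis for $P_1$, we get $\mathcal D(x) = \mathcal D(y) = \mathcal D(z) = \mathcal D(w) = 0$. Now invoke Lemma \ref{lem:derAction}(ii) on an arbitrary basis monomial $x^r y^s z^t w^u$: every term on the right-hand side carries a factor of the form $\mathcal D(x)$, $\mathcal D(y)$, $\mathcal D(z)$, or $\mathcal D(w)$, hence vanishes. Thus $\mathcal D(x^r y^s z^t w^u) = 0$ for all $r,s,t,u \in \mathbb N$. Since these monomials form a basis \eqref{eq:Pbasis} for $P$ and $\mathcal D$ is $\mathbb C$-linear, $\mathcal D = 0$ on all of $P$, giving (i).

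There is essentially no obstacle here; the entire content has been packaged into Lemma \ref{lem:derAction}. The only thing to verify carefully is that the basis $x,y,z,w$ of $P_1$ is indeed what one needs to control via the hypothesis, which is transparent. So the proof will be about three lines long.
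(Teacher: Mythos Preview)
Your proof is correct and takes essentially the same approach as the paper: both directions are handled identically, with the nontrivial implication reduced to Lemma~\ref{lem:derAction}(ii) applied to the monomial basis. The paper's version is just terser.
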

 \begin{proof} ${\rm (i)} \Rightarrow {\rm (ii)}$ Clear.
 \\
 ${\rm (ii)} \Rightarrow {\rm (i)}$ Each of $\mathcal D(x),\mathcal D(y),\mathcal D(z),\mathcal D(w)$ is zero, so $\mathcal D=0$ by Lemma \ref{lem:derAction}(i),(ii).
 \end{proof}
 
 \begin{lemma}\label{lem:derExt} For a $\mathbb C$-linear map $\mathcal D_1 : P_1 \to P$ there exists a unique
 $\mathcal D \in {\rm Der}(P)$ such that the restriction $\mathcal D \vert_{P_1} = \mathcal D_1$.
 \end{lemma}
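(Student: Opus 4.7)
The plan is to handle uniqueness and existence separately, with uniqueness being immediate from earlier lemmas and existence requiring an explicit construction on the monomial basis \eqref{eq:Pbasis}.

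For uniqueness, suppose $\mathcal D, \mathcal D' \in {\rm Der}(P)$ both restrict to $\mathcal D_1$ on $P_1$. Then $\mathcal D - \mathcal D'$ is $\mathbb C$-linear, and one checks at once from \eqref{eq:fg} that the difference of two derivations is a derivation. Since $\mathcal D - \mathcal D'$ vanishes on $P_1$, Lemma \ref{lem:zeroDer} forces $\mathcal D = \mathcal D'$.

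For existence, I would proceed guided by Lemma \ref{lem:derAction}(ii). Write $\alpha = \mathcal D_1(x)$, $\beta = \mathcal D_1(y)$, $\gamma = \mathcal D_1(z)$, $\delta = \mathcal D_1(w)$, all in $P$. Define a $\mathbb C$-linear map $\mathcal D : P \to P$ on the basis \eqref{eq:Pbasis} by
\begin{align*}
\mathcal D(x^r y^s z^t w^u) &= r x^{r-1} y^s z^t w^u \alpha + s x^r y^{s-1} z^t w^u \beta \\
&\quad + t x^r y^s z^{t-1} w^u \gamma + u x^r y^s z^t w^{u-1} \delta,
\end{align*}
for all $r,s,t,u \in \mathbb N$, where a term is interpreted as $0$ whenever the displayed exponent becomes negative. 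By construction $\mathcal D \vert_{P_1} = \mathcal D_1$, so the only thing left to verify is that $\mathcal D$ is a derivation.

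By Lemma \ref{lem:derAction}, to verify $\mathcal D \in {\rm Der}(P)$ it suffices to check the Leibniz rule on the basis \eqref{eq:Pbasis}. That is, I would fix $(r,s,t,u), (r',s',t',u') \in \mathbb N^4$ and verify
\begin{align*}
\mathcal D(x^{r+r'} y^{s+s'} z^{t+t'} w^{u+u'}) &= \mathcal D(x^r y^s z^t w^u)\, x^{r'} y^{s'} z^{t'} w^{u'} \\
&\quad + x^r y^s z^t w^u\, \mathcal D(x^{r'} y^{s'} z^{t'} w^{u'})
\end{align*}
by expanding both sides using the defining formula. The identity reduces to the four scalar identities $r+r' = r + r'$, $s+s' = s+s'$, etc., one for each of $\alpha, \beta, \gamma, \delta$; these hold trivially, so the Leibniz rule is established. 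This is the only step that requires any real calculation, but it is a direct expansion and not an obstacle. With this, Lemma \ref{lem:derAction} gives $\mathcal D \in {\rm Der}(P)$ and the proof is complete.
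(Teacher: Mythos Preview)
Your proof is correct and follows essentially the same approach as the paper: define $\mathcal D$ on the monomial basis by the formula suggested by Lemma~\ref{lem:derAction}(ii), check that it restricts to $\mathcal D_1$ on $P_1$, and invoke Lemma~\ref{lem:derAction} to conclude $\mathcal D \in {\rm Der}(P)$; uniqueness via Lemma~\ref{lem:zeroDer} is identical. The only cosmetic difference is that the paper cites the equivalence (i)$\Leftrightarrow$(ii) directly (since the constructed $\mathcal D$ satisfies (ii) by definition), whereas you verify (iii) by hand---but that verification is exactly what the paper's ``Routine'' in the proof of (ii)$\Rightarrow$(iii) amounts to.
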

 \begin{proof} Concerning existence, by linear algebra there exists $\mathcal D \in {\rm End}(P)$ that acts on the $P$-basis
 \eqref{eq:Pbasis} as follows.
 For $r,s,t,u \in \mathbb N$,
 \begin{align*}
\mathcal D(x^r y^s z^t w^u) &= r x^{r-1} y^s z^t w^u \mathcal D_1(x) +
                                                s x^r y^{s-1} z^t w^u \mathcal D_1(y) \\
                                              &\quad  + t x^r y^s z^{t-1} w^u \mathcal D_1(z)+
                                                u x^r y^s z^t w^{u-1} \mathcal D_1(w).
 \end{align*}
 Note that
 \begin{align*}
 \mathcal D(x) = \mathcal D_1(x), \qquad 
  \mathcal D(y) = \mathcal D_1(y), \qquad 
   \mathcal D(z) = \mathcal D_1(z), \qquad 
    \mathcal D(w) = \mathcal D_1(w).
 \end{align*}
Therefore $\mathcal D \vert_{P_1} = \mathcal D_1$. 
 By these comments and Lemma  \ref{lem:derAction}(i),(ii) we have $\mathcal D \in {\rm Der}(P)$. We have shown that there exists 
 $\mathcal D \in {\rm Der}(P)$ such that $\mathcal D \vert_{P_1} = \mathcal D_1$. The uniqueness of $\mathcal D$ follows
 from Lemma \ref{lem:zeroDer}.
\end{proof}
 
 \begin{proposition} \label{prop:sl4EXT} There exists a unique Lie algebra homomorphism ${\rm der}: \mathfrak{sl}_4(\mathbb C) \to {\rm Der}(P)$
 such that for all $\varphi \in \mathfrak{sl}_4(\mathbb C)$ the following coincide:
 \begin{enumerate}
 \item[\rm (i)] the acton of ${\rm der}(\varphi)$ on $P_1$;
 \item[\rm (ii)] the action of $\varphi$ on $P_1$ from Lemma \ref{lem:sl4onP1}.
 \end{enumerate}
 The map $\rm der$ is injective.
 \end{proposition}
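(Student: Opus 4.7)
The plan is to combine the presentation of $\mathfrak{sl}_4(\mathbb C)$ by generators and relations (Definition \ref{def:LL}, Lemma \ref{lem:sharp}) with the derivation-extension principle of Lemma \ref{lem:derExt}. For each of the six generators $\varphi \in \{A_1,A_2,A_3,A^*_1,A^*_2,A^*_3\}$, the action described in Lemma \ref{lem:sl4onP1} is a $\mathbb C$-linear map $P_1 \to P_1 \subseteq P$, and Lemma \ref{lem:derExt} provides a unique derivation $\widetilde{\varphi} \in {\rm Der}(P)$ whose restriction to $P_1$ equals that action.

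The next step is to check that $\widetilde{A}_1,\ldots,\widetilde{A}^*_3$ satisfy the defining relations of $\mathbb L$ listed in Definition \ref{def:LL}(i)--(iv). Since ${\rm Der}(P)$ is a Lie subalgebra of $\mathfrak{gl}(P)$, both sides of each relation are derivations of $P$, so by Lemma \ref{lem:zeroDer} their equality can be verified by comparing their actions on $P_1$. Each $\widetilde{\varphi}$ leaves $P_1$ invariant and acts there as the matrix $\varphi$ from Lemma \ref{lem:sharp}; hence the restriction to $P_1$ of any nested Lie bracket of the $\widetilde{\varphi}$'s equals the corresponding matrix commutator acting on $P_1$. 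The required relations therefore reduce to the matrix identities that already hold in $\mathfrak{sl}_4(\mathbb C)$ by Lemma \ref{lem:sharp}. By the universal property underlying Lemma \ref{lem:sharp}, the assignment $A_i \mapsto \widetilde{A}_i$, $A^*_i \mapsto \widetilde{A}^*_i$ extends uniquely to a Lie algebra homomorphism ${\rm der}: \mathfrak{sl}_4(\mathbb C) \to {\rm Der}(P)$. Linearity in $\varphi$ of both restriction maps $\varphi \mapsto {\rm der}(\varphi)\vert_{P_1}$ and $\varphi \mapsto \varphi\vert_{P_1}$ then yields agreement on all of $\mathfrak{sl}_4(\mathbb C)$.

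For uniqueness, any homomorphism ${\rm der}'$ with the stated property agrees with ${\rm der}$ on each generator $A_i$ or $A^*_i$: the common restriction to $P_1$ forces equality as derivations via the uniqueness clause of Lemma \ref{lem:derExt}, and since the $A_i, A^*_i$ generate $\mathfrak{sl}_4(\mathbb C)$ one concludes ${\rm der}' = {\rm der}$. For injectivity, suppose ${\rm der}(\varphi) = 0$; then $\varphi$ annihilates $P_1$. The module structure of Lemma \ref{lem:sl4onP1} realizes $P_1$ as the natural $4$-dimensional (defining) representation of $\mathfrak{sl}_4(\mathbb C)$, which is faithful, so $\varphi = 0$.

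The only potentially delicate step is the verification of the cubic-type relations in Definition \ref{def:LL}(iii),(iv) for the extended derivations, but the restriction-to-$P_1$ reduction above collapses this to identities among the explicit matrices of Lemma \ref{lem:sharp}, which hold by construction. Thus no genuinely new computation is required beyond matrix identities already established, and the main conceptual content of the proof is the observation that a Lie bracket of derivations whose restrictions to $P_1$ preserve $P_1$ is determined on $P_1$ by the matrix commutator there.
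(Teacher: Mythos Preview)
Your proof is correct, but it takes a more roundabout route than the paper's. The paper defines ${\rm der}(\varphi)$ directly for \emph{every} $\varphi \in \mathfrak{sl}_4(\mathbb C)$ at once: the action of $\varphi$ on $P_1$ from Lemma~\ref{lem:sl4onP1} is a linear map $P_1 \to P_1 \subseteq P$, which by Lemma~\ref{lem:derExt} extends uniquely to a derivation ${\rm der}(\varphi)$. Linearity of $\varphi \mapsto {\rm der}(\varphi)$ is then immediate from the uniqueness clause, and the homomorphism identity ${\rm der}[\varphi,\phi] = [{\rm der}\,\varphi, {\rm der}\,\phi]$ follows because both sides are derivations whose restrictions to $P_1$ equal the action of $[\varphi,\phi]$. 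You instead define ${\rm der}$ first on the six generators $A_i, A^*_i$, verify the defining relations of $\mathbb L$ in ${\rm Der}(P)$, and invoke the universal property of the presentation to extend. Both arguments hinge on the same key observation---a derivation is determined by its restriction to $P_1$---but the paper's version handles all of $\mathfrak{sl}_4(\mathbb C)$ uniformly and bypasses the presentation entirely. One small wording issue: where you write ``Linearity in $\varphi$ \ldots\ yields agreement on all of $\mathfrak{sl}_4(\mathbb C)$'', linearity alone does not suffice, since the six generators do not span $\mathfrak{sl}_4(\mathbb C)$ as a vector space; what you actually need (and essentially establish in the preceding sentence) is that both $\varphi \mapsto {\rm der}(\varphi)\vert_{P_1}$ and $\varphi \mapsto \varphi\vert_{P_1}$ are Lie algebra homomorphisms agreeing on a Lie-generating set.
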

 \begin{proof} For $\varphi \in \mathfrak{sl}_4(\mathbb C)$ we define ${\rm der}(\varphi)$ as follows.
 Consider the action of $\varphi$ on $P_1$ from Lemma \ref{lem:sl4onP1}. By Lemma \ref{lem:derExt},
 there exists a unique $\mathcal D \in {\rm Der}(P)$ such that $ \mathcal D= \varphi$ on $P_1$. Define ${\rm der}(\varphi)=\mathcal D$.
 So far, we have a map ${\rm der}: \mathfrak{sl}_4(\mathbb C) \to {\rm Der}(P)$. The map der is $\mathbb C$-linear by construction.
 We show that der is a Lie algebra homomorphism. For $\varphi, \phi \in \mathfrak{sl}_4(\mathbb C)$ we show that
 \begin{align}
 {\rm der}\,\lbrack \varphi, \phi \rbrack = \lbrack {\rm der}\, \varphi, {\rm der} \,\phi \rbrack.  \label{eq:RScom}
 \end{align}
 Each side of \eqref{eq:RScom} is 
 a derivation of $P$. For each side of  \eqref{eq:RScom} the restriction to $P_1$ coincides with the action of $\lbrack \varphi, \phi \rbrack$ on $P_1$.
 The two sides of  \eqref{eq:RScom} are equal in view of Lemma \ref{lem:derExt}. We have shown that the Lie algebra homomorphism
 der exists. This map is unique by Lemma \ref{lem:derExt}, and injective because $\mathfrak{sl}_4(\mathbb C)$ acts faithfully on $P_1$.
 \end{proof}
 
 \noindent By Proposition \ref{prop:sl4EXT}, the vector space $P$ becomes an $\mathfrak{sl}_4(\mathbb C)$-module on which the elements of $\mathfrak{sl}_4(\mathbb C)$
 act as derivations.

\begin{proposition} \label{lem:ActP} The $\mathfrak{sl}_4(\mathbb C)$-generators
 $A_1, A_2, A_3$ and $ A^*_1, A^*_2, A^*_3$ act on the $P$-basis \eqref{eq:Pbasis} as follows.
For $r,s,t,u \in \mathbb N$,
\begin{enumerate}
\item[\rm (i)] the vector
\begin{align*}
 A_1( x^ry^s z^t w^u)
 \end{align*}
 is a linear combination with the following terms and coefficients:
\begin{align*} 
\begin{tabular}[t]{c|c}
{\rm Term }& {\rm Coefficient} 
 \\
 \hline
   $ x^{r-1} y^{s+1} z^{t} w^{u} $& $r$   \\
 $ x^{r+1} y^{s-1} z^{t} w^{u} $   & $s$\\
  $ x^{r} y^{s} z^{t-1} w^{u+1}  $  & $t$ \\
  $ x^{r} y^{s} z^{t+1} w^{u-1} $& $u$
   \end{tabular}
\end{align*}
%%%\newpage
\item[\rm (ii)] 
the vector
\begin{align*}
A_2(x^r y^s z^t w^u)     
\end{align*}
 is a linear combination with the following terms and coefficients:
\begin{align*} 
\begin{tabular}[t]{c|c}
{\rm Term }& {\rm Coefficient} 
 \\
 \hline
   $ x^{r-1} y^{s} z^{t+1} w^{u} $& $ r$   \\
 $ x^{r} y^{s-1} z^{t} w^{u+1} $   & $s$\\
  $ x^{r+1} y^{s} z^{t-1} w^{u} $  & $t$ \\
  $ x^{r} y^{s+1} z^{t} w^{u-1}  $& $u $
   \end{tabular}
\end{align*}
\item[\rm (iii)] 
the vector
\begin{align*}
A_3( x^{r} y^{s} z^{t} w^{u} )
\end{align*}
 is a linear combination with the following terms and coefficients:
\begin{align*} 
\begin{tabular}[t]{c|c}
{\rm Term }& {\rm Coefficient} 
 \\
 \hline
    $ x^{r-1} y^{s} z^{t} w^{u+1} $& $r $   \\
 $x^{r} y^{s-1} z^{t+1} w^{u} $   & $s $\\
    $ x^{r} y^{s+1} z^{t-1} w^{u} $  & $t$ \\
  $x^{r+1} y^{s} z^{t} w^{u-1} $& $u $
   \end{tabular}
\end{align*}
\item[\rm (iv)] $A^*_1(x^{r} y^{s} z^{t} w^{u} ) = (r+s-t-u) x^{r} y^{s} z^{t} w^{u} $;
\item[\rm (v)] $A^*_2(x^{r} y^{s} z^{t} w^{u} ) = (r-s+t-u) x^{r} y^{s} z^{t} w^{u} $;
\item[\rm (vi)] $A^*_3(x^{r} y^{s} z^{t} w^{u} ) = (r-s-t+u) x^{r} y^{s} z^{t} w^{u} $.
\end{enumerate}
\end{proposition}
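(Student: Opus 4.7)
The plan is to apply Lemma \ref{lem:derAction}(ii), which expresses the value of any derivation on a monomial $x^r y^s z^t w^u$ purely in terms of its values on the generators $x,y,z,w$ of $P_1$. Since Proposition \ref{prop:sl4EXT} ensures that each of $A_1,A_2,A_3,A_1^*,A_2^*,A_3^*$ acts on $P$ as a derivation whose restriction to $P_1$ coincides with the action prescribed in Lemma \ref{lem:sl4onP1}, the computation for each generator becomes a one-line substitution.

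I would handle the six cases in parallel. For each $A_i$ (resp.\ $A_i^*$), I would read off the four values $A_i(x), A_i(y), A_i(z), A_i(w)$ (resp.\ $A_i^*(x), A_i^*(y), A_i^*(z), A_i^*(w)$) from Lemma \ref{lem:sl4onP1}, substitute them into the product-rule expansion
\begin{align*}
\mathcal{D}(x^r y^s z^t w^u) &= r x^{r-1} y^s z^t w^u \mathcal{D}(x) + s x^r y^{s-1} z^t w^u \mathcal{D}(y) \\
&\quad + t x^r y^s z^{t-1} w^u \mathcal{D}(z) + u x^r y^s z^t w^{u-1} \mathcal{D}(w),
\end{align*}
and simplify. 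For instance, in case (i) the substitutions $A_1(x)=y$, $A_1(y)=x$, $A_1(z)=w$, $A_1(w)=z$ immediately yield the four terms in the table, with the stated coefficients $r,s,t,u$. In case (iv) the substitutions $A_1^*(x)=x$, $A_1^*(y)=y$, $A_1^*(z)=-z$, $A_1^*(w)=-w$ produce four monomials that collapse into a single term $x^r y^s z^t w^u$ with scalar $r+s-t-u$. The remaining cases (ii), (iii), (v), (vi) are analogous, each requiring only a change of which variables get swapped or which variables get sign-flipped.

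There is essentially no obstacle here: the statement is a bookkeeping consequence of the derivation property, and the only thing to be careful about is matching the permutations $x\leftrightarrow y,\ z\leftrightarrow w$ (for $A_1$), $x\leftrightarrow z,\ y\leftrightarrow w$ (for $A_2$), $x\leftrightarrow w,\ y\leftrightarrow z$ (for $A_3$) to the correct shifts in the exponent tuple, and similarly tracking the sign patterns $(+,+,-,-),\ (+,-,+,-),\ (+,-,-,+)$ for $A_1^*, A_2^*, A_3^*$. Once these are read off correctly from Lemma \ref{lem:sl4onP1}, the six tables follow directly, completing the proof.
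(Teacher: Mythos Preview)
Your proposal is correct and matches the paper's own proof, which simply cites Lemmas \ref{lem:sl4onP1} and \ref{lem:derAction}. You have spelled out the substitution step that the paper leaves implicit, but the underlying argument is identical.
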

\begin{proof} By Lemmas \ref{lem:sl4onP1} and  \ref{lem:derAction}.
\end{proof} 

\noindent We have a comment.
\begin{lemma} \label{lem:PNAAA} For $N \in \mathbb N$ the subspace $P_N$ is a submodule of
the $\mathfrak{sl}_4(\mathbb C)$-module $P$.
\end{lemma}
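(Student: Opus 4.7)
The plan is to reduce the invariance claim to the action of a generating set, and then read off the required degree-preservation directly from Proposition \ref{lem:ActP}.

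First I would recall that $\mathfrak{sl}_4(\mathbb C)$ is generated, as a Lie algebra, by the six elements $A_1,A_2,A_3,A^*_1,A^*_2,A^*_3$ (this is built into the presentation in Definition \ref{def:LL}, which was identified with $\mathfrak{sl}_4(\mathbb C)$ via Lemma \ref{lem:sharp}). Consequently $\mathfrak{sl}_4(\mathbb C)$ is spanned by iterated Lie brackets of these six generators. Since the $\mathfrak{sl}_4(\mathbb C)$-action on $P$ is via the Lie algebra homomorphism $\mathrm{der}$ into $\mathrm{Der}(P)\subseteq \mathfrak{gl}(P)$ of Proposition \ref{prop:sl4EXT}, any bracket $[\varphi,\phi]$ acts on $P$ as the commutator $\varphi\phi-\phi\varphi$ in $\mathrm{End}(P)$. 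Therefore it suffices to verify that each of the six generators sends $P_N$ into $P_N$: if two endomorphisms preserve $P_N$, then so does their commutator, and the claim then extends to all of $\mathfrak{sl}_4(\mathbb C)$ by induction on bracket depth together with $\mathbb C$-linearity.

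Next I would appeal to Proposition \ref{lem:ActP} and inspect the action on the monomial basis \eqref{eq:PDbasis}. Fix $(r,s,t,u)\in \mathcal P_N$, so $r+s+t+u=N$. Parts (iv)--(vi) of Proposition \ref{lem:ActP} show that each of $A^*_1,A^*_2,A^*_3$ acts on $x^r y^s z^t w^u$ as a scalar multiple of itself, hence trivially preserves $P_N$. Parts (i)--(iii) show that each of $A_1,A_2,A_3$ sends $x^r y^s z^t w^u$ to a linear combination of four monomials, each of which has total degree equal to $(r-1)+(s+1)+t+u=N$, or an analogous pattern in which one exponent drops by $1$ and another rises by $1$; in every case the total degree is preserved. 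Thus each generator carries the basis \eqref{eq:PDbasis} of $P_N$ into $P_N$, and hence carries $P_N$ into $P_N$.

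There is no real obstacle here; the only point requiring care is the reduction from $\mathfrak{sl}_4(\mathbb C)$ to its generators. Combining the two paragraphs, $P_N$ is invariant under every element of $\mathfrak{sl}_4(\mathbb C)$, so $P_N$ is an $\mathfrak{sl}_4(\mathbb C)$-submodule of $P$, as required.
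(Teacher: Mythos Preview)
Your proof is correct and follows essentially the same approach as the paper: both verify, via Proposition~\ref{lem:ActP}, that each generator $A_i$, $A^*_i$ preserves $P_N$. You are simply more explicit than the paper in justifying why invariance under the six generators implies invariance under all of $\mathfrak{sl}_4(\mathbb C)$.
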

\begin{proof} By Proposition \ref{lem:ActP}, we have
\begin{align*}
A_i (P_N) \subseteq P_N, \qquad \qquad A^*_i (P_N) \subseteq P_N
\end{align*}
for $i \in \lbrace 1,2,3 \rbrace$.
\end{proof}

\noindent Referring to Lemma \ref{lem:PNAAA}, the submodule $P_N$ is irreducible by \cite[p.~97]{janzten}.
\medskip

\noindent Let $N \in \mathbb N$, and consider the $\mathfrak{sl}_4(\mathbb C)$-module $P_N$.
By Proposition \ref{lem:ActP}(iv)--(vi), the $P_N$-basis \eqref{eq:PDbasis}  diagonalizes the Cartan subalgebra $\mathbb H^*$.
Consequently,  $P_N$ is the direct sum of its $\mathbb H^*$-weight spaces. 
Our next goal is to describe these weight spaces.

\begin{lemma} \label{lem:rR} For natural numbers $r,s,t,u$ and $R,S,T,U$ we have
\begin{align*}
r=R ,\qquad s=S, \qquad t=T, \qquad u=U
\end{align*}
if and only if
\begin{align*}
r+s+t+u&=R+S+T+U,\\
r+s-t-u&=R+S-T-U,\\
r-s+t-u&=R-S+T-U,\\
r-s-t+u&=R-S-T+U.
\end{align*}
\end{lemma}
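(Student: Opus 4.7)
The forward direction is immediate: substituting $r=R$, $s=S$, $t=T$, $u=U$ into the four right-hand sides makes the identities obvious. So the content of the lemma lies in the reverse implication.

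For the reverse direction, the plan is to treat the four given identities as a linear system in the unknowns $r,s,t,u$ (with $R,S,T,U$ as parameters), and to solve it by taking suitable linear combinations of the four equations. Specifically, I would observe that the coefficient matrix of the system is
\begin{align*}
\begin{pmatrix} 1&1&1&1\\ 1&1&-1&-1\\ 1&-1&1&-1\\ 1&-1&-1&1\end{pmatrix},
\end{align*}
which is $2\Upsilon$ for the matrix $\Upsilon$ of Definition \ref{def:T}. Since $\Upsilon^2=I$, this matrix is invertible, so the system has a unique solution. Explicitly, adding all four equations yields $4r=4R$; adding equations (i)+(ii)$-$(iii)$-$(iv) yields $4s=4S$; adding (i)$-$(ii)+(iii)$-$(iv) yields $4t=4T$; and adding (i)$-$(ii)$-$(iii)+(iv) yields $4u=4U$. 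Dividing by $4$ gives $r=R$, $s=S$, $t=T$, $u=U$.

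There is no real obstacle here; the argument is essentially a one-line linear algebra computation using the involutivity of $\Upsilon$. The only thing worth noting is that the hypothesis $r,s,t,u,R,S,T,U \in \mathbb N$ plays no role in the deduction (the claim holds over $\mathbb C$), though the lemma is stated for natural numbers because that is how it will be applied later, presumably to identify profiles in $\mathcal P_N$ via their $\mathbb H^*$-weights as read off from Proposition \ref{lem:ActP}(iv)--(vi).
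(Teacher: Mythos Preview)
Your proof is correct and takes essentially the same approach as the paper: the paper's proof is the one-liner ``Because the matrix $\Upsilon$ in Definition \ref{def:T} is invertible,'' and you have simply unpacked this by noting the coefficient matrix is $2\Upsilon$ with $\Upsilon^2=I$, then writing out the explicit linear combinations. Your added remark about the natural-number hypothesis being inessential and about the intended application to $\mathbb H^*$-weight spaces is accurate and helpful context.
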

\begin{proof} Because the matrix $\Upsilon$ in Definition \ref{def:T} is invertible.
\end{proof}

\begin{lemma} \label{lem:WSdim1} Let $N \in \mathbb N$ and consider the $\mathfrak{sl}_4(\mathbb C)$-module $P_N$.
\begin{enumerate}
\item[\rm (i)] 
 Each $\mathbb H^*$-weight space has dimension one.
 \item[\rm (ii)] The $\mathbb H^*$-weight spaces are in bijection with $\mathcal P_N$.
\item[\rm (iii)] For $(r,s,t,u) \in \mathcal P_N$ the following holds on the 
corresponding $\mathbb H^*$-weight space:
\begin{align*} 
A^*_1 = (r+s-t-u)I, \qquad \quad A^*_2 = (r-s+t-u)I, \qquad \quad A^*_3 = (r-s-t+u)I.
\end{align*}
\end{enumerate}
\end{lemma}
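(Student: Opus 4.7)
The plan is to use Proposition \ref{lem:ActP}(iv)--(vi) together with Lemma \ref{lem:rR}. By Proposition \ref{lem:ActP}(iv)--(vi), each basis vector $x^r y^s z^t w^u$ with $(r,s,t,u)\in \mathcal P_N$ is a simultaneous eigenvector for $A^*_1, A^*_2, A^*_3$ with eigenvalues $r+s-t-u$, $r-s+t-u$, $r-s-t+u$ respectively. So the basis \eqref{eq:PDbasis} diagonalizes $\mathbb H^*$, and $P_N$ is the direct sum of the common eigenspaces of $A^*_1, A^*_2, A^*_3$.

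Next I would argue that distinct profiles yield distinct eigenvalue triples. Suppose $(r,s,t,u)$ and $(R,S,T,U)$ lie in $\mathcal P_N$ and produce the same triple of $\mathbb H^*$-eigenvalues. Then
\begin{align*}
r+s-t-u &= R+S-T-U, \\
r-s+t-u &= R-S+T-U, \\
r-s-t+u &= R-S-T+U,
\end{align*}
and we also have $r+s+t+u=N=R+S+T+U$. By Lemma \ref{lem:rR}, these four equations force $(r,s,t,u)=(R,S,T,U)$. Hence the map $(r,s,t,u)\mapsto (r+s-t-u,\, r-s+t-u,\, r-s-t+u)$ is injective on $\mathcal P_N$, so no two basis vectors of \eqref{eq:PDbasis} share the same $\mathbb H^*$-weight.

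Combining these observations, each common eigenspace of $A^*_1, A^*_2, A^*_3$ contains exactly one basis vector from \eqref{eq:PDbasis}. Since the basis spans $P_N$, each $\mathbb H^*$-weight space has dimension one, proving (i), and the weight spaces are in bijection with $\mathcal P_N$ via the correspondence described above, proving (ii). For (iii), the $A^*_i$ act by the displayed scalars on the one-dimensional weight space spanned by $x^r y^s z^t w^u$, by Proposition \ref{lem:ActP}(iv)--(vi).

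There is no real obstacle here; the only subtlety is ensuring that profiles in $\mathcal P_N$ are recovered from their eigenvalue triples, and this is exactly what Lemma \ref{lem:rR} delivers once we remember the constraint $r+s+t+u=N$.
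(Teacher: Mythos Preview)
Your proof is correct and follows exactly the same approach as the paper, which cites Proposition \ref{lem:ActP}(iv)--(vi) and Lemma \ref{lem:rR}; you have simply written out the details that the paper leaves implicit.
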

\begin{proof} By  Proposition \ref{lem:ActP}(iv)--(vi) and Lemma \ref{lem:rR}.
\end{proof}

\noindent As an aside, we describe the $\mathbb H^*$-weight spaces from another point of view.
We will make a change of variables.

\begin{lemma} \label{lem:ChangeVar}  Let $N \in \mathbb N$. Pick $(r,s,t,u) \in \mathcal P_N$ and recall that
\begin{align*}
N=r+s+t+u. 
\end{align*}
\noindent Define
\begin{align*}
\lambda = r+s-t-u, \qquad \quad \mu = r-s+t-u, \qquad \quad \nu= r-s-t+u.
\end{align*}
Then
\begin{align*}
&r = \frac{N+\lambda + \mu + \nu}{4}, \qquad \qquad s = \frac{ N+\lambda - \mu - \nu}{4}, \\
&t = \frac{N-\lambda + \mu - \nu}{4}, \qquad \qquad u = \frac{ N-\lambda - \mu + \nu}{4}.
\end{align*}
\end{lemma}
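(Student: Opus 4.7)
The plan is to treat this as a linear-algebra inversion problem: the four scalars $N,\lambda,\mu,\nu$ are the images of $(r,s,t,u)$ under the linear map whose matrix is (up to the factor $2$ in Definition \ref{def:T}) exactly $\Upsilon$. Since $\Upsilon^2 = I$, the inverse map is given by the same matrix, so the stated formulas should fall out by reading off entries.

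Concretely, I would simply form the four linear combinations of the defining equations
\begin{align*}
r+s+t+u &= N, \\
r+s-t-u &= \lambda, \\
r-s+t-u &= \mu, \\
r-s-t+u &= \nu
\end{align*}
that isolate each variable. Adding all four equations gives $4r = N+\lambda+\mu+\nu$; taking (first) $+$ (second) $-$ (third) $-$ (fourth) gives $4s = N+\lambda-\mu-\nu$; taking (first) $-$ (second) $+$ (third) $-$ (fourth) gives $4t = N-\lambda+\mu-\nu$; and taking (first) $-$ (second) $-$ (third) $+$ (fourth) gives $4u = N-\lambda-\mu+\nu$. Dividing by $4$ yields the four formulas in the lemma statement.

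There is no real obstacle here; the only point worth noting is that the system is guaranteed to be invertible precisely because the coefficient matrix is (proportional to) the involution $\Upsilon$ from Definition \ref{def:T}, which was already observed to satisfy $\Upsilon^2 = I$. This is the same observation used in Lemma \ref{lem:rR} to conclude injectivity of the assignment $(r,s,t,u)\mapsto(r+s+t+u,\,r+s-t-u,\,r-s+t-u,\,r-s-t+u)$, so one could alternatively cite that lemma and simply verify that the proposed formulas are consistent with the four defining equations. Either route gives a one-line verification.
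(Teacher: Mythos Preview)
Your proof is correct and takes essentially the same approach as the paper: the paper's proof is the single line ``Because the matrix $\Upsilon$ in Definition \ref{def:T} satisfies $\Upsilon^2 = I$,'' which is precisely the observation you identify and then spell out explicitly.
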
 
\begin{proof} Because the matrix $\Upsilon$ in Definition \ref{def:T} satisfies $\Upsilon^2 = I$.
\end{proof}

\begin{definition}\label{def:PnP}\rm For $N \in \mathbb N$, let the set $\mathcal P'_N$ consist of the $3$-tuples $(\lambda, \mu, \nu)$  such that 
\begin{align*}
&\lambda, \mu, \nu \in \lbrace N, N-2, N-4, \ldots, -N\rbrace, \qquad \quad N+\lambda+\mu+\nu \;\; \hbox{\rm is divisible by 4}, \\
& N+\lambda+\mu+\nu\geq 0, \qquad\qquad N+\lambda - \mu - \nu\geq 0, \\
& N-\lambda+\mu-\nu\geq 0, \qquad\qquad N-\lambda - \mu + \nu\geq 0.
\end{align*}
\end{definition}

\begin{lemma} \label{lem:PNVSPNP} For $N \in \mathbb N$, there exists a bijection $\mathcal P_N \to \mathcal P'_N$ that sends
\begin{align*}
(r,s,t,u) \mapsto (r+s-t-u, r-s+t-u, r-s-t+u).
\end{align*}
The inverse bijection sends
\begin{align*}
(\lambda, \mu, \nu) \mapsto \biggl ( 
 \frac{N+\lambda+\mu+\nu}{4},
 \frac{N+\lambda-\mu-\nu}{4},
  \frac{N-\lambda+\mu-\nu}{4},
   \frac{N-\lambda-\mu+\nu}{4}
  \biggr ).
\end{align*}
\end{lemma}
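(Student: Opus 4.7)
The plan is to establish two things: the stated forward map $(r,s,t,u) \mapsto (\lambda,\mu,\nu) = (r+s-t-u,\, r-s+t-u,\, r-s-t+u)$ lands in $\mathcal{P}'_N$, and the inverse formula is well-defined on $\mathcal{P}'_N$ and lands in $\mathcal{P}_N$. The key observation driving everything is the block of four identities
\begin{align*}
N + \lambda + \mu + \nu &= 4r, & N + \lambda - \mu - \nu &= 4s, \\
N - \lambda + \mu - \nu &= 4t, & N - \lambda - \mu + \nu &= 4u,
\end{align*}
obtained by adding and subtracting the defining equations $r+s+t+u=N$, $r+s-t-u=\lambda$, $r-s+t-u=\mu$, $r-s-t+u=\nu$ in the four possible sign patterns. (Equivalently, this is the content of $\Upsilon^2 = I$ applied to the column vector $(r,s,t,u)^T$, as in Lemma~\ref{lem:ChangeVar}.)

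For the forward direction, I would start with $(r,s,t,u) \in \mathcal{P}_N$ and check each clause of Definition~\ref{def:PnP}: the parity condition $\lambda,\mu,\nu \in \{N,N-2,\ldots,-N\}$ follows because $r,s,t,u \geq 0$ and $r+s+t+u=N$ (so $|\lambda|,|\mu|,|\nu| \leq N$ and each has the same parity as $N$); divisibility of $N+\lambda+\mu+\nu$ by $4$ is immediate from $N+\lambda+\mu+\nu=4r$; and the four nonnegativity conditions are precisely the statements that $4r,4s,4t,4u \geq 0$.

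For the inverse direction, given $(\lambda,\mu,\nu) \in \mathcal{P}'_N$ I would define $r,s,t,u$ by the stated formulas and verify that these are nonnegative integers summing to $N$. Nonnegativity is exactly the four inequalities in Definition~\ref{def:PnP}, and the sum is $4N/4 = N$. The mutual inversion of the two maps then follows from the same block of four identities: applying forward-then-inverse is Lemma~\ref{lem:ChangeVar}, and the reverse composition is checked by the same substitution.

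The main obstacle is the integrality of $r,s,t,u$ in the inverse direction. Definition~\ref{def:PnP} only asserts divisibility by $4$ for the single quantity $N+\lambda+\mu+\nu$, but I need all four expressions $N \pm \lambda \pm \mu \pm \nu$ (with even sign-counts) to be divisible by $4$. This is where the parity hypothesis $\lambda,\mu,\nu \equiv N \pmod{2}$ is essential: each of the three other expressions differs from $N+\lambda+\mu+\nu$ by $-2(\mu+\nu)$, $-2(\lambda+\nu)$, or $-2(\lambda+\mu)$ respectively, and each of these differences is divisible by $4$ because the pair in parentheses consists of two integers of the same parity. Once this point is handled, the remainder of the argument is bookkeeping.
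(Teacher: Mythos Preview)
Your proposal is correct and follows the same approach as the paper, which simply writes ``This is readily checked using Lemma~\ref{lem:ChangeVar}.'' You have supplied the details the paper omits, including the careful integrality argument (that all four of $N\pm\lambda\pm\mu\pm\nu$ are divisible by~$4$, not just the one explicitly demanded by Definition~\ref{def:PnP}), which is a genuine subtlety worth spelling out.
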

\begin{proof} This is readily checked using Lemma \ref{lem:ChangeVar}.
\end{proof}

\begin{lemma} \label{lem:together} Let $N \in \mathbb N$ and consider the $\mathfrak{sl}_4(\mathbb C)$-module $P_N$. The $\mathbb H^*$-weight spaces 
are in bijection with $\mathcal P'_N$, in such a way that for $(\lambda, \mu,\nu) \in \mathcal P'_N$ the following holds on the corresponding $\mathbb H^*$-weight space:
\begin{align*}
  A^*_1 = \lambda I,  \qquad \quad A^*_2 = \mu I, \qquad \quad A^*_3 = \nu I.
\end{align*}
\end{lemma}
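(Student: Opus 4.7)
The plan is to assemble the statement directly from Lemma \ref{lem:WSdim1} and Lemma \ref{lem:PNVSPNP} by composing bijections. From Lemma \ref{lem:WSdim1}, the $\mathbb H^*$-weight spaces of $P_N$ are parametrized by $\mathcal P_N$, and the weight space indexed by $(r,s,t,u) \in \mathcal P_N$ is the one-dimensional span of $x^r y^s z^t w^u$, on which $A^*_1, A^*_2, A^*_3$ act as the scalars
\begin{align*}
r+s-t-u, \qquad r-s+t-u, \qquad r-s-t+u
\end{align*}
respectively. Meanwhile Lemma \ref{lem:PNVSPNP} provides a bijection $\mathcal P_N \to \mathcal P'_N$ sending $(r,s,t,u)$ to precisely the triple $(r+s-t-u,\,r-s+t-u,\,r-s-t+u)$.

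My plan is to use this bijection to transport the parametrization in Lemma \ref{lem:WSdim1}. Given $(\lambda,\mu,\nu) \in \mathcal P'_N$, let $(r,s,t,u) \in \mathcal P_N$ be its preimage under the map of Lemma \ref{lem:PNVSPNP}; then by construction
\begin{align*}
\lambda = r+s-t-u, \qquad \mu = r-s+t-u, \qquad \nu = r-s-t+u,
\end{align*}
so the $\mathbb H^*$-weight space in question, indexed by $(r,s,t,u)$, is one-dimensional and $A^*_1, A^*_2, A^*_3$ act on it as $\lambda I, \mu I, \nu I$ respectively. The fact that distinct elements of $\mathcal P'_N$ yield distinct weight spaces is immediate from the injectivity of the bijection in Lemma \ref{lem:PNVSPNP}, and surjectivity of the resulting parametrization follows from its surjectivity. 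There is no real obstacle here; the entire content is the change of index from $\mathcal P_N$ to $\mathcal P'_N$, which is exactly the content of Lemma \ref{lem:PNVSPNP}.
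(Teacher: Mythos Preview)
Your proof is correct and follows the same approach as the paper, which simply cites Definition \ref{def:PnP} and Lemmas \ref{lem:WSdim1}, \ref{lem:PNVSPNP}; you have spelled out in detail exactly the composition of bijections that these references encode.
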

\begin{proof} By Definition \ref{def:PnP} and  Lemmas \ref{lem:WSdim1}, \ref{lem:PNVSPNP}.
\end{proof}

\noindent We have a comment.

\begin{lemma} \label{lem:ASspec} For  $i \in \lbrace 1,2,3\rbrace$ and $N \in \mathbb N$       the eigenvalues of $A^*_i$ on $P_N$ are $\lbrace N-2n\rbrace_{n=0}^N$. For $0 \leq n \leq N$
the $(N-2n)$-eigenspace for $A^*_i$ on $P_N$ has dimension $(n+1)(N-n+1)$. 
\end{lemma}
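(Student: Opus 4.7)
The plan is to exploit the fact, already recorded in Proposition \ref{lem:ActP}(iv)--(vi), that the basis \eqref{eq:PDbasis} of $P_N$ simultaneously diagonalizes $A^*_1,A^*_2,A^*_3$ with explicit eigenvalues linear in $r,s,t,u$. So the whole problem reduces to counting, for each $i$, how many profiles $(r,s,t,u)\in\mathcal P_N$ produce a given eigenvalue.

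First I would handle $i=1$. By Proposition \ref{lem:ActP}(iv), the vector $x^r y^s z^t w^u$ with $(r,s,t,u)\in\mathcal P_N$ is an eigenvector of $A^*_1$ with eigenvalue $r+s-t-u$. Writing $n=t+u$, so that $r+s=N-n$, the eigenvalue equals $(N-n)-n=N-2n$, and $n$ ranges over $\{0,1,\dots,N\}$. Thus the eigenvalues of $A^*_1$ on $P_N$ are exactly $\{N-2n\}_{n=0}^N$. The $(N-2n)$-eigenspace has as basis those monomials $x^r y^s z^t w^u$ with $r+s=N-n$ and $t+u=n$; the number of such monomials is $(N-n+1)(n+1)$, since $r+s=N-n$ with $r,s\in\mathbb N$ has $N-n+1$ solutions and $t+u=n$ with $t,u\in\mathbb N$ has $n+1$ solutions. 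This gives the claimed dimension.

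Next I would treat $i=2$ and $i=3$ by the same bookkeeping. For $A^*_2$, use Proposition \ref{lem:ActP}(v): the eigenvalue of $x^r y^s z^t w^u$ is $r-s+t-u=(r+t)-(s+u)$; setting $n=s+u$ gives eigenvalue $N-2n$, and the corresponding eigenspace is spanned by the $(N-n+1)(n+1)$ monomials with $r+t=N-n$ and $s+u=n$. For $A^*_3$, use Proposition \ref{lem:ActP}(vi): the eigenvalue is $r-s-t+u=(r+u)-(s+t)$; setting $n=s+t$ gives eigenvalue $N-2n$, spanned by $(N-n+1)(n+1)$ monomials with $r+u=N-n$ and $s+t=n$. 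In all three cases the monomial basis is partitioned into the eigenspaces, so the counted dimension is the actual dimension.

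There is no substantive obstacle here; the only thing to double-check is that the monomials enumerated in each case do form a basis of the relevant eigenspace, which is immediate since they are a subset of the basis \eqref{eq:PDbasis} on which $A^*_i$ acts diagonally. As a sanity check one may verify $\sum_{n=0}^{N}(n+1)(N-n+1)=\binom{N+3}{3}$, agreeing with $\dim P_N$ from Lemma \ref{lem:PDbasic}.
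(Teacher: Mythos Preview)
Your proof is correct and takes essentially the same approach as the paper: both use that the monomial basis \eqref{eq:PDbasis} diagonalizes $A^*_i$ with the explicit eigenvalues from Proposition~\ref{lem:ActP}(iv)--(vi), and then count the profiles $(r,s,t,u)\in\mathcal P_N$ yielding each eigenvalue by splitting $N=r+s+t+u$ into two parts. The paper phrases the count via the $\mathbb H^*$-weight space bijection of Lemma~\ref{lem:WSdim1}, but the underlying argument is identical to yours.
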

\begin{proof} We first prove our assertions for $A^*_1$. By Definition \ref{def:PnP} and Lemma  \ref{lem:together},  the eigenvalues of $A^*_1$ on $P_N$ are $\lbrace N-2n\rbrace_{n=0}^N$.
Pick a natural number $n$ at most $N$, and let $W$ denote the $(N-2n)$-eigenspace for $A^*_1$ on $P_N$. The subspace $W$ is a direct sum of $\mathbb H^*$-weight spaces.
The $\mathbb H^*$-weight spaces in question correspond (via Lemma \ref{lem:WSdim1}(ii),(iii)) to the elements $(r,s,t,u) \in \mathcal P_N$ such that $r+s=N-n$ and $t+u=n$.
There are exactly $N-n+1$ nonnegative integer solutions to $r+s=N-n$. There are exactly $n+1$ nonnegative integer solutions to $t+u=n$.
Consequently,  there are exactly $(n+1)(N-n+1)$ elements $(r,s,t,u ) \in \mathcal P_N$ such that $r+s=N-n$ and $t+u=n$. This shows that $W$ has dimension $(n+1)(N-n+1)$.
We have proved our assertions for $A^*_1$. Our assertions for $A^*_2, A^*_3$ are similarly proved.
\end{proof}

\noindent We have been discussing  $\mathbb H^*$. In Section 7, we will have a similar discussion about $\mathbb H$.

\section{Some derivations and multiplication maps}

Recall the polynomial algebra $P= \mathbb C\lbrack x,y,z,w\rbrack$. In the previous section,
we turned $P$ into an
$\mathfrak{sl}_4(\mathbb C)$-module. In this section, we describe the $\mathfrak{sl}_4(\mathbb C)$-module $P$ using four partial derivatives and
four multiplication maps.
\medskip

\noindent  Consider the partial derivatives 
\begin{align*}
D_x=\frac{\partial}{\partial x}, \qquad \quad 
D_y=\frac{\partial}{\partial y}, \qquad \quad 
D_z=\frac{\partial}{\partial z}, \qquad \quad 
D_w=\frac{\partial}{\partial w}.%%%%%\label{eq:4der}
\end{align*}
These derivatives act as follows on the  $P$-basis \eqref{eq:Pbasis}. For $r,s,t,u \in \mathbb N$,
\begin{align} \label{eq:one}
&D_x \bigl(x^r y^s z^t w^u\bigr) = r x^{r-1} y^s z^t w^u, \qquad \qquad
D_y  \bigl(x^r y^s z^t w^u\bigr) = s x^{r} y^{s-1} z^t w^u, \\
& D_z \bigl(x^r y^s z^t w^u\bigr) = t x^{r} y^s z^{t-1} w^u, \qquad \qquad
D_w  \bigl(x^r y^s z^t w^u\bigr) = u x^{r} y^{s} z^t w^{u-1}. \label{eq:two}
\end{align}
\noindent %% The derivations \eqref{eq:4der} mutually commute. 
For $N \in \mathbb N$ we have
\begin{align*}
D_x (P_N) = P_{N-1}, \qquad 
D_y (P_N) = P_{N-1}, \qquad 
D_z (P_N) = P_{N-1}, \qquad 
D_w  (P_N) = P_{N-1}.
\end{align*}

\begin{lemma} \label{lem:DerDef} The following {\rm (i)--(iv)} hold:
\begin{enumerate}
\item[\rm (i)] $D_x$ is the unique element in ${\rm Der}(P)$ that sends
\begin{align*}
x \mapsto 1, \qquad \quad y \mapsto 0, \qquad \quad z \mapsto 0, \qquad \quad w \mapsto 0;
\end{align*}
\item[\rm (ii)] $D_y$ is the unique element in ${\rm Der}(P)$ that sends
\begin{align*}
x \mapsto 0, \qquad \quad y \mapsto 1, \qquad \quad z \mapsto 0, \qquad \quad w \mapsto 0;
\end{align*}
\item[\rm (iii)] $D_z$ is the unique element in ${\rm Der}(P)$ that sends
\begin{align*}
x \mapsto 0, \qquad \quad y \mapsto 0, \qquad \quad z \mapsto 1, \qquad \quad w \mapsto 0;
\end{align*}
\item[\rm (iv)] $D_w$ is the unique element in ${\rm Der}(P)$ that sends
\begin{align*}
x \mapsto 0, \qquad \quad y \mapsto 0, \qquad \quad z \mapsto 0, \qquad \quad w \mapsto 1.
\end{align*}
\end{enumerate}
\end{lemma}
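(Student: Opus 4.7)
The plan is to handle the four items in parallel since they are structurally identical; I would prove (i) in detail and note that (ii)--(iv) follow by the same reasoning with the roles of $x,y,z,w$ permuted.

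For existence in (i), first I would verify that $D_x$ as defined by the partial derivative formulas \eqref{eq:one}, \eqref{eq:two} actually lies in ${\rm Der}(P)$. The most efficient route is Lemma \ref{lem:derAction}(ii)$\Rightarrow$(i): I simply observe that the formula
\[
D_x(x^r y^s z^t w^u) = r x^{r-1} y^s z^t w^u
\]
is precisely the specialization of the identity in Lemma \ref{lem:derAction}(ii) with $D_x(x)=1$ and $D_x(y)=D_x(z)=D_x(w)=0$ (since the three remaining terms vanish). Hence $D_x \in {\rm Der}(P)$, and by construction it sends $x \mapsto 1$ and kills $y,z,w$.

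For uniqueness, I would appeal directly to Lemma \ref{lem:derExt}: any $\mathbb{C}$-linear map $\mathcal{D}_1 : P_1 \to P$ extends to a \emph{unique} derivation of $P$. Since the prescribed action on $x,y,z,w$ uniquely determines a linear map $P_1 \to P$, there is at most one derivation of $P$ realizing it. Combined with the existence argument, this identifies that unique derivation with $D_x$. Alternatively, one can invoke Lemma \ref{lem:zeroDer}: if $\mathcal{D}, D_x \in {\rm Der}(P)$ both act as $x \mapsto 1$, $y,z,w \mapsto 0$ on $P_1$, then $\mathcal{D}-D_x$ is a derivation vanishing on $P_1$, hence zero on all of $P$.

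Parts (ii)--(iv) are proved identically; in each case one checks that the partial-derivative formulas in \eqref{eq:one}, \eqref{eq:two} match the product-rule identity of Lemma \ref{lem:derAction}(ii) with exactly one of $D_\bullet(x), D_\bullet(y), D_\bullet(z), D_\bullet(w)$ equal to $1$ and the others equal to $0$, then invokes Lemma \ref{lem:derExt} for uniqueness. There is no serious obstacle here; the lemma is essentially a bookkeeping consequence of the existence/uniqueness machinery set up in Lemmas \ref{lem:derAction}--\ref{lem:derExt}, and the only thing to be careful about is citing the right equivalence (ii)$\Rightarrow$(i) of Lemma \ref{lem:derAction} rather than proving the Leibniz rule from scratch.
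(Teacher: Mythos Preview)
Your proposal is correct and matches the paper's approach. The paper's proof is terser --- it simply cites Lemma~\ref{lem:derAction} together with \eqref{eq:one}, \eqref{eq:two} --- but the content is identical: the explicit formulas show condition (ii) of Lemma~\ref{lem:derAction} holds (giving $D_x \in {\rm Der}(P)$), and that same condition (ii) forces any derivation agreeing with $D_x$ on $x,y,z,w$ to agree on the whole basis, handling uniqueness without a separate appeal to Lemma~\ref{lem:derExt} or Lemma~\ref{lem:zeroDer}.
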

\begin{proof} By Lemma \ref{lem:derAction}  and \eqref{eq:one},  \eqref{eq:two}.
\end{proof}

\begin{definition}\label{def:MMMM} \rm We define $M_x, M_y, M_z, M_w \in {\rm End}(P)$ as follows.
For $f \in P$,
\begin{align*}
M_x (f) = x f, \qquad \quad 
M_y(f) = y f, \qquad \quad
M_z (f) = z f, \qquad \quad
M_w (f) = w f.
\end{align*}
\end{definition}
\noindent The maps $M_x, M_y, M_z, M_w$  act as follows on the $P$-basis  \eqref{eq:Pbasis}. For $r,s,t,u \in \mathbb N$,
\begin{align} \label{eq:three}
&M_x \bigl(x^r y^s z^t w^u\bigr) = x^{r+1} y^s z^t w^u, \qquad \qquad
M_y  \bigl(x^r y^s z^t w^u\bigr) = x^{r} y^{s+1} z^t w^u, \\
& M_z \bigl(x^r y^s z^t w^u\bigr) =  x^{r} y^s z^{t+1} w^u, \qquad \qquad
M_w  \bigl(x^r y^s z^t w^u\bigr) =  x^{r} y^{s} z^t w^{u+1}. \label{eq:four}
\end{align}
\noindent  %%The derivations \eqref{eq:4der} mutually commute.
 For $N \in \mathbb N$ we have
\begin{align*}
M_x (P_N) \subseteq P_{N+1}, \qquad 
M_y (P_N) \subseteq P_{N+1}, \qquad 
M_z (P_N) \subseteq P_{N+1}, \qquad 
M_w  (P_N) \subseteq P_{N+1}.
\end{align*}

\begin{lemma} \label{lem:Weyl}  {\rm (See \cite[p.~550]{rotman}.)} The following relations hold.
\begin{enumerate}
\item[\rm (i)] For $a \in \lbrace x,y,z,w\rbrace$,
\begin{align*}
\lbrack D_a, M_a \rbrack = I.
\end{align*}
\item[\rm (ii)] For distinct $a,b \in \lbrace x,y,z,w\rbrace$,
\begin{align*}
\lbrack D_a, D_b \rbrack=0, \qquad \quad 
\lbrack M_a, M_b \rbrack=0, \qquad \quad 
\lbrack D_a, M_b \rbrack=0.
\end{align*}
\end{enumerate}
\end{lemma}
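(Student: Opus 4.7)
The plan is to verify each of the listed relations directly from the defining properties of the maps involved, using the characterization of $D_a$ as a derivation (Lemma \ref{lem:DerDef}, Lemma \ref{lem:derAction}) and of $M_a$ as multiplication by $a$ (Definition \ref{def:MMMM}), rather than by grinding through computations on the monomial basis \eqref{eq:Pbasis}.

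For part (i), fix $a \in \lbrace x,y,z,w\rbrace$ and let $f \in P$ be arbitrary. Since $D_a$ is a derivation, one has $D_a M_a(f) = D_a(af) = D_a(a)\, f + a\, D_a(f) = f + M_a D_a(f)$, where $D_a(a) = 1$ by Lemma \ref{lem:DerDef}. Rearranging gives $\lbrack D_a, M_a\rbrack f = f$, so $\lbrack D_a, M_a \rbrack = I$.

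For part (ii), let $a, b \in \lbrace x,y,z,w\rbrace$ be distinct. The relation $\lbrack M_a, M_b \rbrack = 0$ is immediate from the commutativity of $P$. For $\lbrack D_a, M_b \rbrack = 0$, apply $D_a M_b$ to an arbitrary $f \in P$: by the derivation property and $D_a(b) = 0$ (Lemma \ref{lem:DerDef}), one obtains $D_a(bf) = D_a(b) f + b D_a(f) = M_b D_a(f)$, as desired.

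The remaining relation $\lbrack D_a, D_b \rbrack = 0$ I would handle via Lemma \ref{lem:zeroDer}. First, a short application of \eqref{eq:fg} to the composition $\mathcal D_1 \mathcal D_2(fg)$ shows that the commutator of any two derivations of $P$ is again a derivation. Hence $\lbrack D_a, D_b \rbrack \in {\rm Der}(P)$, and by Lemma \ref{lem:zeroDer} it suffices to check that $\lbrack D_a, D_b \rbrack$ annihilates the basis $\lbrace x,y,z,w\rbrace$ of $P_1$. This is immediate from Lemma \ref{lem:DerDef}: applying either $D_a$ or $D_b$ to any element of $\lbrace x,y,z,w\rbrace$ yields either $0$ or $1$, both of which are killed by the other derivation (using Lemma \ref{lem:oneD} for the constant $1$). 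No serious obstacle is expected; the only mildly non-routine ingredient is the observation that $\lbrack \mathcal D_1, \mathcal D_2 \rbrack$ is a derivation whenever $\mathcal D_1, \mathcal D_2$ are, which reduces the last relation to a single-step check on $P_1$.
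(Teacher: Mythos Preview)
Your proof is correct and matches the paper's approach: the paper verifies $[D_x,M_x]=I$ by exactly the derivation computation you give for part (i), then dismisses the remaining relations with ``checked in a similar way.'' Your treatment of part (ii) is more explicit than the paper's, and your handling of $[D_a,D_b]=0$ via the observation that commutators of derivations are derivations (reducing to a check on $P_1$) is a clean touch that the paper does not spell out.
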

\begin{proof}  We check that $\lbrack D_x, M_x \rbrack=I$. For $f \in P$, 
\begin{align*}
\lbrack D_x, M_x \rbrack (f) &= D_x M_x (f) - M_x D_x (f) = D_x (x f) - x D_x (f) \\
                                  &= D_x(x) f + x D_x (f) - x D_x(f) = f.
\end{align*}
Therefore  $\lbrack D_x, M_x \rbrack=I$. The remaining assertions are checked in a similar way.
\end{proof}

\begin{remark}\rm  The subalgebra of ${\rm End}(P)$ generated by $D_x, D_y, D_z, D_w$ and $M_x, M_y, M_z, M_w$
is called the   fourth Weyl algebra, see  \cite[p.~550]{rotman}.
\end{remark}

\begin{proposition} \label{lem:A6} On the $\mathfrak{sl}_4(\mathbb C)$-module $P$,
\begin{align*}
A_1 &= M_y D_x + M_x D_y  + M_w D_z + M_z D_w, \\
A_2 &= M_z D_x + M_w D_y  + M_x D_z + M_y D_w, \\
A_3 &= M_w D_x + M_z D_y  + M_y D_z + M_x D_w 
\end{align*}
and also
\begin{align*}
A^*_1 &= M_x D_x + M_y D_y  - M_z D_z - M_w D_w, \\
A^*_2 &= M_x D_x - M_y D_y  + M_z D_z - M_w D_w, \\
A^*_3 &= M_x D_x - M_y D_y  - M_z D_z + M_w D_w.
\end{align*}
\end{proposition}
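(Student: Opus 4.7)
The plan is to show that both sides of each of the six displayed equations are derivations of $P$, and then to invoke the uniqueness clause of Lemma~\ref{lem:derExt} to reduce the verification to the $4$-dimensional subspace $P_1$.

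First I would observe that each elementary summand $M_a D_b$ (with $a,b \in \{x,y,z,w\}$, not necessarily distinct) is itself an element of ${\rm Der}(P)$. Indeed, for $f,g \in P$,
\begin{align*}
M_a D_b(fg) = a(f_b g + f g_b) = (a f_b)g + f(a g_b) = (M_a D_b f)g + f(M_a D_b g),
\end{align*}
where $f_b$ denotes $\partial f/\partial b$. Since ${\rm Der}(P)$ is a $\mathbb C$-subspace of ${\rm End}(P)$, each of the six right-hand sides in the proposition is a derivation of $P$. The six left-hand sides are derivations of $P$ by Proposition~\ref{prop:sl4EXT}. By Lemma~\ref{lem:derExt}, two derivations of $P$ that coincide on $P_1$ are equal on all of $P$, so it remains to check each of the six equations on the basis $x,y,z,w$ of $P_1$.

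For the checks, I would use Lemma~\ref{lem:DerDef} to read off the values of $D_x, D_y, D_z, D_w$ on $x,y,z,w$, use Definition~\ref{def:MMMM} for the multiplication maps, and then compare with Lemma~\ref{lem:sl4onP1}. For example, for $A_1$ the RHS sends $x \mapsto M_y D_x(x) = y$, $y \mapsto M_x D_y(y) = x$, $z \mapsto M_w D_z(z) = w$, $w \mapsto M_z D_w(w) = z$, all other terms vanishing; this matches Lemma~\ref{lem:sl4onP1}(i). For $A^*_1$ the RHS sends $x \mapsto M_x D_x(x) = x$, $y \mapsto M_y D_y(y) = y$, $z \mapsto -M_z D_z(z) = -z$, $w \mapsto -M_w D_w(w) = -w$, matching Lemma~\ref{lem:sl4onP1}(iv). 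The other four cases are entirely analogous.

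I do not anticipate any real obstacle; the only thing to get right is the bookkeeping of the twelve off-diagonal $(a,b)$-pairs occurring in the three formulas for $A_1, A_2, A_3$, but these are dictated by the permutation action described in parts (i)--(iii) of Lemma~\ref{lem:sl4onP1}. As a sanity check, one can also verify the equations directly on the full basis \eqref{eq:Pbasis} by combining \eqref{eq:one}--\eqref{eq:two} with \eqref{eq:three}--\eqref{eq:four} and comparing with Proposition~\ref{lem:ActP}, but the derivation-plus-$P_1$ approach is shorter and avoids repeating the monomial computations already recorded there.
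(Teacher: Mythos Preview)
Your proof is correct and complete. The paper takes the more pedestrian route you mention at the end: it simply applies each side to a general monomial $x^r y^s z^t w^u$ from the basis \eqref{eq:Pbasis}, evaluates the right-hand side via \eqref{eq:one}--\eqref{eq:four}, and matches the result against the action recorded in Proposition~\ref{lem:ActP}. Your approach---recognizing that each $M_a D_b$ is a derivation and then invoking the uniqueness clause of Lemma~\ref{lem:derExt} to reduce the check to $P_1$---is cleaner and more conceptual; it replaces a four-term computation at a generic monomial by a four-vector check at degree one, and it makes transparent \emph{why} the equalities must hold (both sides are derivations determined by their restriction to $P_1$). The paper's method has the minor advantage that it does not require separately noting that $M_a D_b \in {\rm Der}(P)$, but it effectively reproves the content of Lemma~\ref{lem:derAction} inside the verification.
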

\begin{proof} To verify these equations, apply each side to a $P$-basis vector from \eqref{eq:Pbasis}, 
and evaluate the result using Proposition  \ref{lem:ActP} along with \eqref{eq:one}--\eqref{eq:four}.
\end{proof}

\section{ A basis for $P$ that diagonalizes $\mathbb H$}

\noindent  We continue to discuss the $\mathfrak{sl}_4(\mathbb C)$-module $P= \mathbb C\lbrack x,y,z,w\rbrack$.
In Lemma \ref{lem:Cartan}  we described the Cartan subalgebras $\mathbb H$, $\mathbb H^*$ of  $\mathfrak{sl}_4(\mathbb C)$.
In Section 5, we displayed a basis for $P$ that diagonalizes $\mathbb H^*$.
In this section, we display a basis for $P$ that  diagonalizes $\mathbb H$.
\medskip

\noindent Recall that $x,y,z,w$ form a basis for $P_1$.

\begin{definition} \label{def:xyzws} \rm We define some  vectors in $P_1$:
\begin{align*}
& x^* = \frac{x+y+z+w}{2}, \qquad \qquad y^* = \frac{x+y-z-w}{2}, \\
& z^*= \frac{x-y+z-w}{2}, \qquad \qquad w^* = \frac{x-y-z+w}{2}.
\end{align*}
%%%%Note that $x^*, y^*, z^*, w^*$ is a basis for $P_1$.
\end{definition}

\noindent Recall the matrix $\Upsilon$ from Definition \ref{def:T}.
\begin{lemma} \label{lem:3f} The following {\rm (i)--(iii)} hold:
\begin{enumerate}
\item[\rm (i)] the vectors $x^*, y^*, z^*, w^*$ form a basis for $P_1$;
\item[\rm (ii)] $\Upsilon$ is the transition matrix from the basis $x,y,z,w$ 
to the basis $x^*, y^*, z^*, w^*$;
\item[\rm (iii)]
$\Upsilon$ is the transition matrix from the basis $x^*, y^*, z^*, w^*$ to the basis
$x,y,z,w$.
\end{enumerate}
\end{lemma}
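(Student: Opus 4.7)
My plan is to read the transition matrix directly off Definition \ref{def:xyzws} and identify it with $\Upsilon$ from Definition \ref{def:T}; then (i) and (iii) both follow immediately from the observation $\Upsilon^2 = I$ recorded just below Definition \ref{def:T}.

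First I would expand Definition \ref{def:xyzws} and record the coordinate vector of each of $x^*, y^*, z^*, w^*$ in the ordered basis $x, y, z, w$ of $P_1$. These four column vectors are
\begin{align*}
\frac{1}{2}(1,1,1,1)^T, \qquad \frac{1}{2}(1,1,-1,-1)^T, \qquad \frac{1}{2}(1,-1,1,-1)^T, \qquad \frac{1}{2}(1,-1,-1,1)^T,
\end{align*}
and assembling them as the columns of a $4 \times 4$ matrix gives exactly $\Upsilon$. By the standard convention, this matrix is the transition matrix from the basis $x,y,z,w$ to the basis $x^*, y^*, z^*, w^*$, settling (ii).

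For (i), since $\Upsilon^2 = I$ the matrix $\Upsilon$ is invertible, so the vectors $x^*, y^*, z^*, w^*$ (being the images of the basis $x,y,z,w$ under an invertible linear map on $P_1$) are linearly independent; combined with $\dim P_1 = 4$, they form a basis. For (iii), the transition matrix from $x^*,y^*,z^*,w^*$ back to $x,y,z,w$ is by definition the inverse of the matrix appearing in (ii), and that inverse is again $\Upsilon$ by $\Upsilon^2 = I$.

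The only real obstacle is bookkeeping: one must fix the convention for \emph{transition matrix} (new basis expressed as columns in the old basis, as opposed to the reverse) so that the identification with $\Upsilon$ is consistent with the way $\Upsilon$ acts on column vectors elsewhere, such as in Lemmas \ref{lem:ATA} and \ref{lem:rR}. Once the convention is pinned down, the proof reduces to a single matrix-read-off plus the involutive property $\Upsilon^2 = I$.
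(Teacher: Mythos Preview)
Your proposal is correct and matches the paper's approach exactly: the paper's proof is the one-liner ``By Definitions \ref{def:T}, \ref{def:xyzws} and $\Upsilon^2=I$,'' which is precisely the read-off-and-invert argument you describe. Your worry about transition-matrix conventions is harmless here because $\Upsilon$ is symmetric, so rows-versus-columns makes no difference.
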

\begin{proof} By Definitions  \ref{def:T},  \ref{def:xyzws} and $\Upsilon^2=I$.
\end{proof}

\begin{lemma} \label{lem:xyzws}  We have
\begin{align*}
& x = \frac{x^*+y^*+z^*+w^*}{2}, \qquad \qquad y = \frac{x^*+y^*-z^*-w^*}{2}, \\
& z = \frac{x^*-y^*+z^*-w^*}{2}, \qquad \qquad w = \frac{x^*-y^*-z^*+w^*}{2}.
\end{align*}
\end{lemma}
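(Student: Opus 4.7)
The plan is to derive this as an immediate consequence of Lemma \ref{lem:3f}(iii) together with the explicit form of the matrix $\Upsilon$ in Definition \ref{def:T}. By Lemma \ref{lem:3f}(iii), $\Upsilon$ is the transition matrix from the starred basis $x^*, y^*, z^*, w^*$ to the unstarred basis $x,y,z,w$ of $P_1$. Since $\Upsilon$ has the explicit entries
\[
\Upsilon = \frac{1}{2}\begin{pmatrix} 1&1&1&1\\ 1&1&-1&-1\\ 1&-1&1&-1\\ 1&-1&-1&1\end{pmatrix},
\]
reading off the rows yields precisely the four claimed expressions for $x, y, z, w$.

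Alternatively, one can verify the four identities by direct substitution: replace each of $x^*, y^*, z^*, w^*$ on the right-hand side by the formulas in Definition \ref{def:xyzws}, collect terms, and check that the coefficient of $x$ (respectively $y$, $z$, $w$) is $1$ and all other coefficients vanish. This is simply the statement $\Upsilon^2=I$ applied to the coordinate vectors of $x, y, z, w$.

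There is no real obstacle: the content of the lemma is exactly the involutivity $\Upsilon^2=I$ noted after Definition \ref{def:T} and used already in Lemma \ref{lem:3f}. My write-up will therefore be a single short paragraph invoking Lemma \ref{lem:3f}(iii) and Definition \ref{def:T}.
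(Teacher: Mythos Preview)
Your proposal is correct and matches the paper's own proof, which simply states that the lemma is a reformulation of Lemma~\ref{lem:3f}(iii). Your slightly more detailed explanation (reading off the rows of $\Upsilon$ from Definition~\ref{def:T}, or equivalently invoking $\Upsilon^2=I$) is exactly the content behind that one-line reference.
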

\begin{proof}  This is a reformulation of Lemma \ref{lem:3f}(iii).
\end{proof}

\begin{lemma} \label{lem:AAABBBP1} 
Referring to the $\mathfrak{sl}_4(\mathbb C)$-module $P_1$,
\begin{enumerate}
\item[\rm (i)] $A_1$ sends
\begin{align*}
x^* \mapsto x^*, \qquad y^* \mapsto y^*, \qquad z^* \mapsto -z^*, \qquad w^* \mapsto -w^*;
\end{align*}
\item[\rm (ii)] $A_2$ sends
\begin{align*}
x^* \mapsto x^*, \qquad y^* \mapsto -y^*, \qquad z^* \mapsto z^*, \qquad w^* \mapsto -w^*;
\end{align*}
\item[\rm (iii)] $A_3$ sends
\begin{align*}
x^* \mapsto x^*, \qquad y^* \mapsto -y^*, \qquad z^* \mapsto -z^*, \qquad w^* \mapsto w^*;
\end{align*}
\item[\rm (iv)] 
$A^*_1$ swaps $x^* \leftrightarrow y^*$ and $z^* \leftrightarrow w^*$;
\item[\rm (v)] 
$A^*_2$ swaps $x^* \leftrightarrow z^*$ and $y^* \leftrightarrow w^*$;
\item[\rm (vi)] 
$A^*_3$ swaps $x^* \leftrightarrow w^*$ and $y^* \leftrightarrow z^*$.
\end{enumerate}
\end{lemma}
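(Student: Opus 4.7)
The plan is to verify each of the twelve claims by a direct linear computation, applying the $\mathfrak{sl}_4(\mathbb C)$-action recorded in Lemma \ref{lem:sl4onP1} to the explicit expressions for $x^*,y^*,z^*,w^*$ given in Definition \ref{def:xyzws}.

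Concretely, I would begin with $A_1$. Since $A_1$ acts $\mathbb C$-linearly on $P_1$,
\[
2 A_1(x^*) = A_1(x) + A_1(y) + A_1(z) + A_1(w) = y + x + w + z = 2 x^*,
\]
where the middle equality is Lemma \ref{lem:sl4onP1}(i). The evaluations $A_1(y^*), A_1(z^*), A_1(w^*)$ proceed in exactly the same manner: the sign patterns in Definition \ref{def:xyzws} conspire with the swaps $x \leftrightarrow y$ and $z \leftrightarrow w$ to produce $y^*, -z^*, -w^*$ respectively. The arguments for $A_2$ and $A_3$ are entirely analogous, using parts (ii) and (iii) of Lemma \ref{lem:sl4onP1}. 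For the dual generators, $A^*_1$ acts diagonally on $\{x,y,z,w\}$ with eigenvalues $(1,1,-1,-1)$ by part (iv); expanding, for instance,
\[
2 A^*_1(x^*) = x + y - z - w = 2 y^*,
\]
gives $x^* \mapsto y^*$, and the remaining three evaluations (yielding $y^* \mapsto x^*$, $z^* \mapsto w^*$, $w^* \mapsto z^*$) are immediate. Parts (v), (vi) of Lemma \ref{lem:sl4onP1} handle $A^*_2, A^*_3$ the same way.

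There is no substantive obstacle; the whole lemma is a finite verification on twelve linear combinations of four vectors. As a conceptual sanity check I would also note that, by Lemma \ref{lem:3f}(ii), $\Upsilon$ is the transition matrix from $x,y,z,w$ to $x^*,y^*,z^*,w^*$, so the matrix of $A_i$ in the starred basis is $\Upsilon^{-1} A_i \Upsilon$. Lemma \ref{lem:ATA} together with $\Upsilon^2 = I$ then yields
\[
\Upsilon^{-1} A_i \Upsilon = A^*_i, \qquad \qquad \Upsilon^{-1} A^*_i \Upsilon = A_i,
\]
so the matrix of $A_i$ in the starred basis equals the matrix of $A^*_i$ in the unstarred basis and vice versa. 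Reading these off from Lemma \ref{lem:sharp} recovers precisely the assertions (i)--(vi), and also exhibits the lemma as the $P_1$-level incarnation of the automorphism $\tau$ of Lemma \ref{lem:Tint}.
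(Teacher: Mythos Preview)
Your proposal is correct. The paper's own proof is precisely your ``conceptual sanity check'': it cites Lemmas \ref{lem:ATA} and \ref{lem:3f}, i.e., it uses that $\Upsilon$ is the transition matrix together with $A_i\Upsilon=\Upsilon A_i^*$ and $A_i^*\Upsilon=\Upsilon A_i$ to read off the action on the starred basis from Lemma \ref{lem:sl4onP1}. Your primary argument---the direct linear computation from Definition \ref{def:xyzws} and Lemma \ref{lem:sl4onP1}---is a slightly more hands-on route to the same end, but you already recognize and record the paper's approach as well.
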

\begin{proof} By Lemmas \ref{lem:ATA} and \ref{lem:3f}.
\end{proof}

\noindent We have been discussing $P_1$. Next we consider $P$.

\begin{lemma} \label{lem:Pdualbasis} The following is a basis for $P$:
\begin{align}
x^{*r} y^{*s} z^{*t} w^{*u} \qquad \qquad r,s,t,u \in \mathbb N. \label{eq:Pdualbasis}
\end{align}
\end{lemma}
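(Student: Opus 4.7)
The plan is to reduce this to a dimension count in each homogeneous component $P_N$, exploiting the fact that $x^*, y^*, z^*, w^*$ all lie in $P_1$ and form a basis there (Lemma \ref{lem:3f}(i)), and that the change of variables from $x,y,z,w$ to $x^*,y^*,z^*,w^*$ is linear and invertible.

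First I would observe that each monomial $x^{*r} y^{*s} z^{*t} w^{*u}$ is a product of $r+s+t+u$ elements of $P_1$, hence lies in $P_{r+s+t+u}$. So the family in \eqref{eq:Pdualbasis} splits according to total degree, and since the sum $P = \sum_{N \in \mathbb N} P_N$ is direct, it suffices to show that for every $N \in \mathbb N$ the collection
\begin{align*}
x^{*r} y^{*s} z^{*t} w^{*u}, \qquad (r,s,t,u) \in \mathcal P_N
\end{align*}
is a basis for $P_N$.

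Next I would verify the spanning property. By Lemma \ref{lem:xyzws}, each of $x,y,z,w$ is a $\mathbb C$-linear combination of $x^*, y^*, z^*, w^*$. Substituting these expressions into an arbitrary basis vector $x^r y^s z^t w^u$ from \eqref{eq:PDbasis} (with $(r,s,t,u) \in \mathcal P_N$) and expanding yields a $\mathbb C$-linear combination of products of $N$ factors drawn from $\{x^*, y^*, z^*, w^*\}$; since these commute, every such product is of the form $x^{*R} y^{*S} z^{*T} w^{*U}$ with $(R,S,T,U) \in \mathcal P_N$. Thus the monomials in \eqref{eq:Pdualbasis} of total degree $N$ span the basis \eqref{eq:PDbasis} of $P_N$, hence span $P_N$.

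Finally I would close by counting. The number of monomials $x^{*r} y^{*s} z^{*t} w^{*u}$ with $(r,s,t,u) \in \mathcal P_N$ equals $|\mathcal P_N| = \binom{N+3}{3}$ by \eqref{eq:PNsize}, which matches $\dim P_N$ from Lemma \ref{lem:PDbasic}. A spanning set whose cardinality equals the dimension of the ambient space must be a basis, so \eqref{eq:Pdualbasis} restricted to degree $N$ is a basis of $P_N$. Taking the union over all $N \in \mathbb N$ gives the desired basis of $P$. There is no genuine obstacle here; the only thing to be careful about is that the substitution in Lemma \ref{lem:xyzws} is homogeneous of degree one, so total degree is preserved — this is what lets the argument be organized degree by degree.
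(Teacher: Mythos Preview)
Your argument is correct and is essentially the same approach as the paper's; the paper simply records in one line that $x^*,y^*,z^*,w^*$ form a basis for $P_1$ (Lemma~\ref{lem:3f}(i)) and leaves implicit the standard fact that a linear change of generators in a polynomial ring carries monomials to a new monomial basis. Your degree-by-degree spanning-plus-dimension argument is exactly the routine verification of that implicit step.
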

\begin{proof} The vectors $x^*, y^*,z^*, w^*$ form a basis for $P_1$.
\end{proof}

\begin{lemma} \label{lem:PDdualbasis} For $N \in \mathbb N$ the following is a basis for $P_N$:
\begin{align}
x^{*r} y^{*s} z^{*t} w^{*u} \qquad \qquad (r,s,t,u) \in \mathcal P_N.    \label{eq:PNDbasis}
\end{align}
\end{lemma}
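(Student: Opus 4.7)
The plan is to deduce this from Lemma \ref{lem:Pdualbasis} combined with the homogeneity of the starred variables. First I would observe that each of $x^*, y^*, z^*, w^*$ is a linear combination of $x,y,z,w$ by Definition \ref{def:xyzws}, hence lies in $P_1$. Consequently, for any $(r,s,t,u) \in \mathbb N^4$ the product $x^{*r} y^{*s} z^{*t} w^{*u}$ is homogeneous of total degree $r+s+t+u$, so it belongs to $P_{r+s+t+u}$. In particular, if $(r,s,t,u) \in \mathcal P_N$ then $x^{*r} y^{*s} z^{*t} w^{*u} \in P_N$.

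Next I would exploit the direct-sum decomposition $P = \sum_{N \in \mathbb N} P_N$. By Lemma \ref{lem:Pdualbasis}, the full collection $\{x^{*r} y^{*s} z^{*t} w^{*u} : r,s,t,u \in \mathbb N\}$ is a basis for $P$, and this basis partitions into the subsets indexed by $\mathcal P_N$ for $N \in \mathbb N$. Since each member of the subset indexed by $\mathcal P_N$ lies in $P_N$ and the decomposition $P = \sum_N P_N$ is direct, linear algebra forces the subset indexed by $\mathcal P_N$ to be a basis of $P_N$: linear independence is inherited from the full basis, and any $f \in P_N$ expanded in the full basis can only involve terms of total degree $N$, i.e.\ terms from $\mathcal P_N$.

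There is essentially no obstacle here; the lemma is a formal consequence of Lemma \ref{lem:Pdualbasis} together with the standard fact that a basis respecting a grading restricts to a basis on each graded piece. The only point that requires a moment's thought is verifying that the expansion of an element of $P_N$ in terms of the basis \eqref{eq:Pdualbasis} involves only monomials of total degree exactly $N$, which follows because the graded components $P_N$ intersect trivially for distinct $N$.
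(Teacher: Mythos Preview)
Your proposal is correct and follows essentially the same approach as the paper: the paper's proof is the one-liner ``By Lemma \ref{lem:Pdualbasis} and since each of $x^*, y^*, z^*, w^*$ is homogeneous with total degree one,'' and you have simply unpacked this in detail.
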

\begin{proof} By Lemma  \ref{lem:Pdualbasis} and since 
each of  $x^*, y^*,z^*, w^*$ is homogeneous with total degree one.
\end{proof}

\begin{lemma} \label{lem:dualDer}
Each $\mathcal D \in {\rm Der}(P)$ acts on the $P$-basis \eqref{eq:Pdualbasis} as follows. For $r,s,t,u \in \mathbb N$,
\begin{align*}
\mathcal D(x^{*r} y^{*s} z^{*t} w^{*u}) &= r x^{* r-1} y^{*s} z^{*t} w^{*u} \mathcal D(x^*) +
                                                s x^{*r} y^{*s-1} z^{*t} w^{*u} \mathcal D(y^*) \\
                                              &\quad  + t x^{*r} y^{*s} z^{*t-1} w^{*u} \mathcal D(z^*)+
                                                u x^{*r} y^{*s} z^{*t} w^{*u-1} \mathcal D(w^*).
 \end{align*}
\end{lemma}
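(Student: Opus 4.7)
The plan is to observe that the proof is essentially identical to the implication (i) $\Rightarrow$ (ii) of Lemma \ref{lem:derAction}, but with the starred variables $x^*, y^*, z^*, w^*$ playing the role of $x, y, z, w$. The key point is that the Leibniz rule \eqref{eq:fg} and its consequence Lemma \ref{lem:fn} are properties of $\mathcal D$ that apply to \emph{any} elements of $P$, not just to $x,y,z,w$.

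First I would apply the Leibniz rule \eqref{eq:fg} three times in succession to the product $x^{*r} y^{*s} z^{*t} w^{*u}$, splitting it into four summands:
\begin{align*}
\mathcal D\bigl(x^{*r} y^{*s} z^{*t} w^{*u}\bigr)
 &= \mathcal D(x^{*r})\, y^{*s} z^{*t} w^{*u}
   + x^{*r} \mathcal D(y^{*s})\, z^{*t} w^{*u} \\
 &\quad + x^{*r} y^{*s} \mathcal D(z^{*t})\, w^{*u}
   + x^{*r} y^{*s} z^{*t} \mathcal D(w^{*u}).
\end{align*}
Next I would apply Lemma \ref{lem:fn} to each of the four factors $\mathcal D(x^{*r}), \mathcal D(y^{*s}), \mathcal D(z^{*t}), \mathcal D(w^{*u})$, which gives $\mathcal D(x^{*r}) = r\, x^{*r-1} \mathcal D(x^*)$ and likewise for the other three. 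Substituting these into the display above produces exactly the claimed formula.

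There is no real obstacle here; the entire argument is just a direct iteration of the derivation axiom \eqref{eq:fg} together with Lemma \ref{lem:fn}, and nothing in those lemmas depended on the specific choice of generators $x,y,z,w$ for $P_1$. Since $x^*, y^*, z^*, w^*$ lie in $P$, the derivation rules apply to them verbatim.
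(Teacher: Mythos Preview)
Your proposal is correct and matches the paper's own proof exactly: the paper simply says ``Similar to the proof of Lemma~\ref{lem:derAction}$\bigl({\rm (i)}\Rightarrow{\rm (ii)}\bigr)$,'' which is precisely the iterated Leibniz rule \eqref{eq:fg} plus Lemma~\ref{lem:fn} applied to $x^*,y^*,z^*,w^*$ in place of $x,y,z,w$.
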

\begin{proof} Similar to the proof of Lemma  \ref{lem:derAction}$\bigl( {\rm (i)}\Rightarrow {\rm (ii)}\bigr)$.
\end{proof}

\begin{proposition} \label{lem:dbAction}
The $\mathfrak{sl}_4(\mathbb C)$-generators
 $A_1, A_2, A_3$ and $ A^*_1, A^*_2, A^*_3$ act on the $P$-basis \eqref{eq:Pdualbasis} as follows.
For $r,s,t,u \in \mathbb N$,
\begin{enumerate}
\item[\rm (i)] $A_1(x^{*r} y^{*s} z^{*t} w^{*u} ) = (r+s-t-u) x^{*r} y^{*s} z^{*t} w^{*u} $;
\item[\rm (ii)] $A_2(x^{*r} y^{*s} z^{*t} w^{*u} ) = (r-s+t-u) x^{*r} y^{*s} z^{*t} w^{*u} $;
\item[\rm (iii)] $A_3(x^{*r} y^{*s} z^{*t} w^{*u} ) = (r-s-t+u) x^{*r} y^{*s} z^{*t} w^{*u} $;
\item[\rm (iv)] the vector
\begin{align*}
 A^*_1( x^{*r}y^{*s} z^{*t} w^{*u})
 \end{align*}
 is a linear combination with the following terms and coefficients:
\begin{align*} 
\begin{tabular}[t]{c|c}
{\rm Term }& {\rm Coefficient} 
 \\
 \hline
   $ x^{*r-1} y^{*s+1} z^{*t} w^{*u} $& $r$   \\
 $ x^{*r+1} y^{*s-1} z^{*t} w^{*u} $   & $s$\\
  $ x^{*r} y^{*s} z^{*t-1} w^{*u+1}  $  & $t$ \\
  $ x^{*r} y^{*s} z^{*t+1} w^{*u-1} $& $u$
   \end{tabular}
\end{align*}
%%%\newpage
\item[\rm (v)] 
the vector
\begin{align*}
A^*_2(x^{*r} y^{*s} z^{*t} w^{*u})     
\end{align*}
 is a linear combination with the following terms and coefficients:
\begin{align*} 
\begin{tabular}[t]{c|c}
{\rm Term }& {\rm Coefficient} 
 \\
 \hline
   $ x^{*r-1} y^{*s} z^{*t+1} w^{*u} $& $ r$   \\
 $ x^{*r} y^{*s-1} z^{*t} w^{*u+1} $   & $s$\\
  $ x^{*r+1} y^{*s} z^{*t-1} w^{*u} $  & $t$ \\
  $ x^{*r} y^{*s+1} z^{*t} w^{*u-1}  $& $u $
   \end{tabular}
\end{align*}
\item[\rm (vi)] 
the vector
\begin{align*}
A^*_3( x^{*r} y^{*s} z^{*t} w^{*u} )
\end{align*}
 is a linear combination with the following terms and coefficients:
\begin{align*} 
\begin{tabular}[t]{c|c}
{\rm Term }& {\rm Coefficient} 
 \\
 \hline
    $ x^{*r-1} y^{*s} z^{*t} w^{*u+1} $& $r $   \\
 $x^{*r} y^{*s-1} z^{*t+1} w^{*u} $   & $s $\\
    $ x^{*r} y^{*s+1} z^{*t-1} w^{*u} $  & $t$ \\
  $x^{*r+1} y^{*s} z^{*t} w^{*u-1} $& $u $
   \end{tabular}
\end{align*}
%%%%\item[\rm (iv)] $A^{*(1)} \chi (d_1, d_2, d_3, d_4, d_5) = \chi (d_1, d_2, d_3, d_4, d_5) \bigl( Nd_1+Nd_2-D     \bigr)$
\end{enumerate}
\end{proposition}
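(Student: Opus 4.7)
The plan is to mirror the proof of Proposition \ref{lem:ActP}, replacing the basis $x,y,z,w$ throughout with $x^*, y^*, z^*, w^*$. By Proposition \ref{prop:sl4EXT}, every element of $\mathfrak{sl}_4(\mathbb C)$ acts on $P$ as a derivation, so Lemma \ref{lem:dualDer} applies to each of $A_1, A_2, A_3, A^*_1, A^*_2, A^*_3$. Hence it suffices to combine Lemma \ref{lem:dualDer} with the $P_1$-level action described in Lemma \ref{lem:AAABBBP1}; this will send each generator applied to a monomial $x^{*r} y^{*s} z^{*t} w^{*u}$ to a sum of four terms, which I would then identify with the entries in the stated tables.

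For parts (i)--(iii), Lemma \ref{lem:AAABBBP1}(i)--(iii) shows that each of $A_1, A_2, A_3$ is diagonal on the basis $x^*, y^*, z^*, w^*$ with eigenvalues $\pm 1$. For example, $A_1$ fixes $x^*, y^*$ and negates $z^*, w^*$. Substituting $\mathcal D = A_1$ into Lemma \ref{lem:dualDer}, every resulting term collapses to a scalar multiple of $x^{*r} y^{*s} z^{*t} w^{*u}$, and collecting coefficients yields the eigenvalue $r + s - t - u$. The cases $A_2$ and $A_3$ are treated identically, using the diagonal signs from Lemma \ref{lem:AAABBBP1}(ii),(iii).

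For parts (iv)--(vi), Lemma \ref{lem:AAABBBP1}(iv)--(vi) shows that each of $A^*_1, A^*_2, A^*_3$ swaps two pairs of basis vectors in $P_1$. Substituting $\mathcal D = A^*_1$ into Lemma \ref{lem:dualDer} and using $A^*_1(x^*) = y^*$, $A^*_1(y^*) = x^*$, $A^*_1(z^*) = w^*$, $A^*_1(w^*) = z^*$, the four terms produced are
\[
r x^{*r-1} y^{*s+1} z^{*t} w^{*u},\quad s x^{*r+1} y^{*s-1} z^{*t} w^{*u},\quad t x^{*r} y^{*s} z^{*t-1} w^{*u+1},\quad u x^{*r} y^{*s} z^{*t+1} w^{*u-1},
\]
which is precisely the table in (iv). The tables in (v) and (vi) follow in the same manner from the swap patterns of $A^*_2$ and $A^*_3$ in Lemma \ref{lem:AAABBBP1}(v),(vi).

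There is no real obstacle here; the argument is mechanical once Lemma \ref{lem:dualDer} and Lemma \ref{lem:AAABBBP1} are in hand. The only care required is in matching each pair-swap in Lemma \ref{lem:AAABBBP1}(iv)--(vi) to the correct monomial shift in the corresponding table (so that, for instance, the coefficient $s$ is attached to $x^{*r+1} y^{*s-1} z^{*t} w^{*u}$ rather than to a neighboring monomial). I would verify each row of each table directly against Lemma \ref{lem:AAABBBP1} as a sanity check.
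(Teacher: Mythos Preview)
Your proposal is correct and matches the paper's own proof exactly: the paper simply cites Lemmas \ref{lem:AAABBBP1} and \ref{lem:dualDer}, which is precisely the combination you describe and carry out in detail.
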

\begin{proof} By Lemmas  \ref{lem:AAABBBP1}  and  \ref{lem:dualDer}.
\end{proof}

\noindent Let $N \in \mathbb N$, and consider the $\mathfrak{sl}_4(\mathbb C)$-module $P_N$.
By Proposition \ref{lem:dbAction}(i)--(iii), the $P_N$-basis   \eqref{eq:PNDbasis}  diagonalizes $\mathbb H$.
Consequently,  $P_N$ is the direct sum of its $\mathbb H$-weight spaces.

\begin{lemma} \label{lem:WSdim1dual} Let $N \in \mathbb N$ and consider the $\mathfrak{sl}_4(\mathbb C)$-module $P_N$.
\begin{enumerate}
\item[\rm (i)] 
 Each $\mathbb H$-weight space has dimension one.
 \item[\rm (ii)] The $\mathbb H$-weight spaces are in bijection with $\mathcal P_N$.
\item[\rm (iii)] For $(r,s,t,u) \in \mathcal P_N$ the following holds on the 
corresponding $\mathbb H$-weight space:
\begin{align*} 
A_1 = (r+s-t-u)I, \qquad \quad A_2 = (r-s+t-u)I, \qquad \quad A_3 = (r-s-t+u)I.
\end{align*}
\end{enumerate}
\end{lemma}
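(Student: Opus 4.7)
The plan is to mimic the proof of Lemma \ref{lem:WSdim1} verbatim, swapping the roles of the two Cartan subalgebras and the two bases. All the necessary ingredients have already been prepared in Section 7: Lemma \ref{lem:PDdualbasis} supplies the basis $x^{*r} y^{*s} z^{*t} w^{*u}$ with $(r,s,t,u) \in \mathcal P_N$ for $P_N$, and Proposition \ref{lem:dbAction}(i)--(iii) tells us that each such vector is a common eigenvector for $A_1, A_2, A_3$, with $A_1$ acting as $r+s-t-u$, $A_2$ as $r-s+t-u$, and $A_3$ as $r-s-t+u$.

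First I would note that $\mathbb H$ is spanned by $A_1, A_2, A_3$ (Lemma \ref{lem:Cartan}(i)), so a common eigenvector for $A_1, A_2, A_3$ is the same as an $\mathbb H$-weight vector. Hence the basis \eqref{eq:PNDbasis} consists entirely of $\mathbb H$-weight vectors, and $P_N$ is the direct sum of its $\mathbb H$-weight spaces. Each basis vector $x^{*r} y^{*s} z^{*t} w^{*u}$ with $(r,s,t,u) \in \mathcal P_N$ lies in the weight space with eigenvalue triple $(r+s-t-u,\ r-s+t-u,\ r-s-t+u)$, which establishes (iii) pending the claim that distinct profiles give distinct weights.

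The crux is that the assignment $(r,s,t,u) \mapsto (r+s-t-u,\ r-s+t-u,\ r-s-t+u)$ is injective on $\mathcal P_N$. This is exactly Lemma \ref{lem:rR}: if two profiles $(r,s,t,u)$ and $(R,S,T,U)$ both lie in $\mathcal P_N$ (so $r+s+t+u = N = R+S+T+U$) and yield the same $A_1, A_2, A_3$ eigenvalues, then the four equalities of Lemma \ref{lem:rR} hold, forcing $(r,s,t,u) = (R,S,T,U)$. Consequently, the basis vectors \eqref{eq:PNDbasis} lie in pairwise distinct $\mathbb H$-weight spaces. This immediately yields (i) (each weight space contains exactly one basis vector, so has dimension one) and (ii) (the map $(r,s,t,u) \mapsto$ (its weight space) is the required bijection $\mathcal P_N \to \{\mathbb H\text{-weight spaces}\}$).

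There is no real obstacle here; the only thing to be mindful of is that the two bases and two Cartan subalgebras are correctly paired: the starred basis \eqref{eq:Pdualbasis} diagonalizes the unstarred Cartan $\mathbb H$, exactly as the unstarred basis \eqref{eq:Pbasis} diagonalized the starred Cartan $\mathbb H^*$ in Lemma \ref{lem:WSdim1}. Once this pairing is respected, the argument is a direct transcription. An alternative, essentially free conclusion, would be to invoke the automorphism $\tau$ of $\mathfrak{sl}_4(\mathbb C)$ from Lemma \ref{lem:tau} together with the algebra automorphism $\sigma$ of $P$ mentioned in the introduction (which intertwines the two bases and swaps $\mathbb H \leftrightarrow \mathbb H^*$), but since $\sigma$ has not yet been formally constructed at this point in the paper, the direct argument via Proposition \ref{lem:dbAction} and Lemma \ref{lem:rR} is the cleaner route.
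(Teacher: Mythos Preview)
Your proposal is correct and follows exactly the paper's approach: the paper's proof is the single line ``By Lemma \ref{lem:rR} and Proposition \ref{lem:dbAction}(i)--(iii),'' and you have simply unpacked that reference in full detail. Your observation that $\sigma$ is not yet available at this point is also accurate, since Proposition \ref{thm:aut2} comes later in Section 7.
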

\begin{proof}  By Lemma \ref{lem:rR} and Proposition \ref{lem:dbAction}(i)--(iii).
\end{proof}

\begin{lemma} \label{lem:Dualtogether} Let $N \in \mathbb N$ and consider the $\mathfrak{sl}_4(\mathbb C)$-module $P_N$. The $\mathbb H$-weight spaces 
are in bijection with $\mathcal P'_N$, in such a way that for $(\lambda, \mu,\nu) \in \mathcal P'_N$ the following holds on the corresponding $\mathbb H$-weight space:
\begin{align*}
  A_1 = \lambda I,  \qquad \quad A_2 = \mu I, \qquad \quad A_3 = \nu I.
\end{align*}
\end{lemma}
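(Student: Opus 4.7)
The plan is to deduce Lemma \ref{lem:Dualtogether} by composing the bijection from Lemma \ref{lem:WSdim1dual}(ii) with the bijection from Lemma \ref{lem:PNVSPNP}, exactly mirroring the proof of Lemma \ref{lem:together} but with the roles of $\mathbb H$ and $\mathbb H^*$ (equivalently, the $A_i$ and $A^*_i$) interchanged. All the substantive work is already done: Lemma \ref{lem:WSdim1dual} established that the $\mathbb H$-weight spaces of $P_N$ are in one-to-one correspondence with $\mathcal P_N$, with the actions of $A_1, A_2, A_3$ on the weight space labelled by $(r,s,t,u)$ being scalar multiplication by $r+s-t-u$, $r-s+t-u$, $r-s-t+u$ respectively.

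First I would invoke Lemma \ref{lem:PNVSPNP} to identify $\mathcal P_N$ with $\mathcal P'_N$ via the map sending $(r,s,t,u)$ to the triple $(\lambda,\mu,\nu) = (r+s-t-u,\, r-s+t-u,\, r-s-t+u)$. Composing with the bijection of Lemma \ref{lem:WSdim1dual}(ii) produces a bijection between the $\mathbb H$-weight spaces of $P_N$ and $\mathcal P'_N$. Second, reading off the entries of $(\lambda,\mu,\nu)$ directly against Lemma \ref{lem:WSdim1dual}(iii) shows that on the weight space assigned to $(\lambda,\mu,\nu)$ one has $A_1 = \lambda I$, $A_2 = \mu I$, $A_3 = \nu I$, which is the asserted statement.

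There is no real obstacle here; the only thing to confirm is that the indexing bijection of Lemma \ref{lem:PNVSPNP} is consistent with the three scalars produced by Lemma \ref{lem:WSdim1dual}(iii), and this is literally the definition of $\lambda,\mu,\nu$. So the proof reduces to citing the two earlier lemmas together with Definition \ref{def:PnP}, and the argument is essentially a single sentence that parallels the proof of Lemma \ref{lem:together}.
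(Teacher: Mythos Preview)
Your proposal is correct and matches the paper's own proof essentially verbatim: the paper simply cites Definition~\ref{def:PnP} and Lemmas~\ref{lem:PNVSPNP}, \ref{lem:WSdim1dual}, which is exactly the composition of bijections you describe.
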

\begin{proof} By Definition \ref{def:PnP} and Lemmas  \ref{lem:PNVSPNP}, \ref{lem:WSdim1dual}.
\end{proof}
%%%%%

\begin{lemma} \label{lem:Aspec} For $i \in \lbrace 1,2,3\rbrace$  and  $N \in \mathbb N$      the eigenvalues of $A_i$ on $P_N$ are $\lbrace N-2n\rbrace_{n=0}^N$. For $0 \leq n \leq N$
the $(N-2n)$-eigenspace for $A_i$ on $P_N$ has dimension $(n+1)(N-n+1)$. 
\end{lemma}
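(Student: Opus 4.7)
The plan is to mimic the proof of Lemma \ref{lem:ASspec}, but replace $\mathbb{H}^*$ with $\mathbb{H}$ throughout, using the dual basis \eqref{eq:PNDbasis} in place of \eqref{eq:PDbasis}. So I would begin with the case $i=1$, then note that $A_2, A_3$ follow by the same argument.

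By Lemma \ref{lem:PDdualbasis}, the vectors $x^{*r} y^{*s} z^{*t} w^{*u}$ with $(r,s,t,u) \in \mathcal{P}_N$ form a basis for $P_N$. By Proposition \ref{lem:dbAction}(i) (equivalently, Lemma \ref{lem:WSdim1dual}(iii)), each such basis vector is an eigenvector for $A_1$ with eigenvalue $r+s-t-u$. Given the constraint $r+s+t+u = N$, the eigenvalue $r+s-t-u$ ranges over $\{N, N-2, N-4, \ldots, -N\} = \{N-2n\}_{n=0}^N$, which establishes the eigenvalue claim.

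For the dimension count, fix $n$ with $0 \leq n \leq N$. Since each $\mathbb{H}$-weight space has dimension one (Lemma \ref{lem:WSdim1dual}(i)), the $(N-2n)$-eigenspace of $A_1$ on $P_N$ is spanned by those basis vectors $x^{*r} y^{*s} z^{*t} w^{*u}$ with $(r,s,t,u) \in \mathcal{P}_N$ satisfying $r+s-t-u = N-2n$. Combined with $r+s+t+u = N$, this forces $r+s = N-n$ and $t+u = n$. The number of nonnegative integer solutions is $(N-n+1)(n+1)$, yielding the asserted dimension.

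For $A_2$ and $A_3$, the identical counting argument applies using parts (ii) and (iii) of Proposition \ref{lem:dbAction}, which give the eigenvalue as $r-s+t-u$ and $r-s-t+u$ respectively; in each case, pairing with the total-degree constraint $r+s+t+u=N$ splits the four indices into two pairs that must each sum to $N-n$ and $n$, producing the same count $(n+1)(N-n+1)$. There is no serious obstacle here — the lemma is essentially the $\mathbb{H}$-analog of Lemma \ref{lem:ASspec}, and the proof is purely a translation of that argument through the substitution $x,y,z,w \leftrightarrow x^*, y^*, z^*, w^*$ (which corresponds to the automorphism $\tau$ swapping $\mathbb{H}$ and $\mathbb{H}^*$).
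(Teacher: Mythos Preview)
Your proposal is correct and is precisely the argument the paper has in mind: the paper's own proof reads simply ``Similar to the proof of Lemma~\ref{lem:ASspec},'' and you have written out that analogous argument explicitly, replacing the $\mathbb H^*$-weight-space count via the basis~\eqref{eq:PDbasis} with the $\mathbb H$-weight-space count via the basis~\eqref{eq:PNDbasis}.
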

\begin{proof} Similar to the proof of Lemma \ref{lem:ASspec}.
\end{proof}

\begin{proposition} \label{thm:aut2} There exists an automorphism $\sigma$ of $P$ that sends
\begin{align*}
x \leftrightarrow x^*, \qquad \quad
y \leftrightarrow y^*, \qquad \quad
z \leftrightarrow z^*, \qquad \quad
w \leftrightarrow w^*.
\end{align*}
We have $\sigma^2 = {\rm id}$. For $i \in \lbrace 1,2,3\rbrace$ the following holds on the $\mathfrak{sl}_4(\mathbb C)$-module $P$:
\begin{align}
A^*_i = \sigma A_i \sigma^{-1},   \qquad \qquad A_i = \sigma A^*_i \sigma^{-1}.           \label{eq:SRS}
\end{align}
\end{proposition}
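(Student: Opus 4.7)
The plan has three steps: construct $\sigma$ via the universal property of the polynomial algebra $P$, establish $\sigma^{2}=\mathrm{id}$ by checking on generators, and verify the conjugation identities \eqref{eq:SRS} by reducing to an identity of derivations on $P_1$.

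For existence: $P=\mathbb{C}[x,y,z,w]$ is the free commutative $\mathbb{C}$-algebra on $x,y,z,w$, so there is a unique $\mathbb{C}$-algebra homomorphism $\sigma:P\to P$ determined by $x\mapsto x^*$, $y\mapsto y^*$, $z\mapsto z^*$, $w\mapsto w^*$. Applying $\sigma$ twice on the generator $x$ and invoking Lemma \ref{lem:xyzws} gives
\[
\sigma^{2}(x)=\sigma(x^*)=\sigma\!\left(\tfrac{x+y+z+w}{2}\right)=\tfrac{x^*+y^*+z^*+w^*}{2}=x,
\]
and the analogous calculation on $y,z,w$ shows $\sigma^{2}$ agrees with $\mathrm{id}$ on the generators of $P$; since both are algebra homomorphisms, $\sigma^{2}=\mathrm{id}$. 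Thus $\sigma$ is an algebra automorphism with $\sigma^{-1}=\sigma$, and by construction it performs the four swaps listed in the statement.

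For the first identity in \eqref{eq:SRS}, I would observe that both sides act on $P$ as derivations. Indeed, $A^*_i\in\mathrm{Der}(P)$ by Proposition \ref{prop:sl4EXT}, while the conjugate $\sigma A_i\sigma^{-1}$ of a derivation by an algebra automorphism is a derivation, as a one-line check using $\sigma(fg)=\sigma(f)\sigma(g)$ reveals. Hence by Lemma \ref{lem:derExt} (equivalently Lemma \ref{lem:zeroDer}) it suffices to prove $\sigma A_i\sigma^{-1}=A^*_i$ on $P_1$. Because $\sigma^{-1}=\sigma$, for any $v^*\in\{x^*,y^*,z^*,w^*\}$ with image $v=\sigma(v^*)\in\{x,y,z,w\}$ one computes $\sigma A_i\sigma^{-1}(v^*)=\sigma\bigl(A_i(v)\bigr)$. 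Comparing the prescriptions for $A_i$ on $\{x,y,z,w\}$ from Lemma \ref{lem:sl4onP1} with those for $A^*_i$ on $\{x^*,y^*,z^*,w^*\}$ from Lemma \ref{lem:AAABBBP1} shows they are manifestly parallel under the swap $v\leftrightarrow v^*$; the case-by-case check for $i\in\{1,2,3\}$ is then a short finite verification. The second identity in \eqref{eq:SRS} follows by conjugating the first by $\sigma^{-1}$ and invoking $\sigma^{2}=\mathrm{id}$.

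The main obstacle is mild: the only conceptually nontrivial point is the observation that conjugation by an algebra automorphism sends derivations to derivations, which unlocks the reduction from all of $P$ to the four-dimensional space $P_1$. After that, the proof is a transparent comparison of the tables in Lemmas \ref{lem:sl4onP1} and \ref{lem:AAABBBP1}.
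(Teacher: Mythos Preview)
Your proof is correct and follows essentially the same approach as the paper. The only cosmetic difference is that you reduce the conjugation identity to $P_1$ via the observation that conjugating a derivation by an algebra automorphism yields a derivation (then compare Lemmas \ref{lem:sl4onP1} and \ref{lem:AAABBBP1}), whereas the paper checks $A^*_i\sigma=\sigma A_i$ directly on all monomials by comparing Propositions \ref{lem:ActP} and \ref{lem:dbAction}; since those propositions were themselves derived from the $P_1$ action via the derivation property, the two arguments are equivalent.
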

\begin{proof}  The vector space $P_1$ has a basis $x,y,z,w$ and a basis $x^*, y^*, z^*, w^*$. Therefore, there exists
an automorphism $\sigma$ of $P$ that sends
\begin{align*}
x \mapsto x^*, \qquad \quad y \mapsto y^*, \qquad \quad z\mapsto z^*, \qquad \quad w \mapsto w^*.
\end{align*}
By Definition  \ref{def:xyzws} and Lemma  \ref{lem:xyzws}, the automorphism $\sigma$ sends
\begin{align*}
x^* \mapsto x, \qquad \quad y^* \mapsto y, \qquad \quad z^*\mapsto z, \qquad \quad w^* \mapsto w.
\end{align*}
\noindent Therefore  $\sigma^2 = {\rm id}$. Let $i \in \lbrace 1,2,3\rbrace$. Comparing Propositions \ref{lem:ActP}, \ref{lem:dbAction}
we find that $A^*_i \sigma = \sigma A_i$ holds on $P$. This yields   \eqref{eq:SRS}.
\end{proof}

\noindent Recall the automorphism $\tau$ of $\mathfrak{sl}_4(\mathbb C)$ from Lemma \ref{lem:tau}. 

\begin{proposition}
For $\varphi \in \mathfrak{sl}_4(\mathbb C)$ the following holds on the $\mathfrak{sl}_4(\mathbb C)$-module $P$:
\begin{align}
\tau(\varphi) = \sigma \varphi \sigma^{-1}.  \label{eq:SRS2}
\end{align}
\end{proposition}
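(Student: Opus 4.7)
The plan is to observe that both sides of the identity define Lie algebra homomorphisms from $\mathfrak{sl}_4(\mathbb C)$ into ${\rm End}(P)$, and then to verify that they agree on the generating set $\{A_i, A^*_i\}_{i=1}^3$; since this set generates $\mathfrak{sl}_4(\mathbb C)$ by Definition \ref{def:LL}, equality on all of $\mathfrak{sl}_4(\mathbb C)$ will follow automatically.

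More concretely, let $\rho: \mathfrak{sl}_4(\mathbb C)\to {\rm End}(P)$ denote the action afforded by the $\mathfrak{sl}_4(\mathbb C)$-module structure on $P$ (so $\rho = {\rm der}$ in the notation of Proposition \ref{prop:sl4EXT}). Since $\tau$ is a Lie algebra automorphism of $\mathfrak{sl}_4(\mathbb C)$ (Lemma \ref{lem:tau}) and $\rho$ is a Lie algebra homomorphism, the composition $\varphi \mapsto \rho(\tau(\varphi))$ is a Lie algebra homomorphism $\mathfrak{sl}_4(\mathbb C)\to {\rm End}(P)$. I would then check that the map $\psi: \varphi \mapsto \sigma\,\rho(\varphi)\,\sigma^{-1}$ is also a Lie algebra homomorphism: linearity is immediate from the linearity of conjugation, and for Lie brackets
\begin{align*}
\psi(\lbrack \varphi,\phi\rbrack)
&= \sigma\,\rho(\lbrack \varphi,\phi\rbrack)\,\sigma^{-1}
= \sigma\,\lbrack \rho(\varphi),\rho(\phi)\rbrack\,\sigma^{-1} \\
&= \lbrack \sigma\rho(\varphi)\sigma^{-1},\sigma\rho(\phi)\sigma^{-1}\rbrack
= \lbrack \psi(\varphi),\psi(\phi)\rbrack.
\end{align*}

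Next I would compare the two homomorphisms on the generators. By Lemma \ref{lem:Tint}, $\tau(A_i)=A^*_i$ and $\tau(A^*_i)=A_i$ for $i\in\lbrace 1,2,3\rbrace$, so $\rho(\tau(A_i))=\rho(A^*_i)$ and $\rho(\tau(A^*_i))=\rho(A_i)$. On the other hand, Proposition \ref{thm:aut2} (equation \eqref{eq:SRS}) gives $\sigma A_i \sigma^{-1}=A^*_i$ and $\sigma A^*_i \sigma^{-1}=A_i$ as operators on $P$, that is, $\psi(A_i)=\rho(A^*_i)$ and $\psi(A^*_i)=\rho(A_i)$. Thus the two Lie algebra homomorphisms $\rho\circ\tau$ and $\psi$ coincide on $\{A_i,A^*_i\}_{i=1}^3$.

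Since $\mathfrak{sl}_4(\mathbb C)$ is generated as a Lie algebra by these six elements (Definition \ref{def:LL} and Lemma \ref{lem:sharp}), any two Lie algebra homomorphisms out of $\mathfrak{sl}_4(\mathbb C)$ agreeing on them must coincide everywhere. Therefore $\rho(\tau(\varphi)) = \sigma\,\rho(\varphi)\,\sigma^{-1}$ for every $\varphi\in\mathfrak{sl}_4(\mathbb C)$, which is precisely \eqref{eq:SRS2}. There is no real obstacle in this argument; the only point to handle with a bit of care is the observation that $\psi$ is not a priori valued in $\rho(\mathfrak{sl}_4(\mathbb C))$, but this is not needed for the equality to hold in ${\rm End}(P)$, and it follows a posteriori from $\psi=\rho\circ\tau$.
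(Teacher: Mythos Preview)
Your proposal is correct and follows essentially the same approach as the paper: verify the identity on the generators $A_i, A^*_i$ using Lemma \ref{lem:Tint} and equation \eqref{eq:SRS}, then extend to all of $\mathfrak{sl}_4(\mathbb C)$ by generation. The paper's proof is more terse, but you have made explicit the underlying reason the extension works, namely that both $\varphi\mapsto\rho(\tau(\varphi))$ and $\varphi\mapsto\sigma\rho(\varphi)\sigma^{-1}$ are Lie algebra homomorphisms into ${\rm End}(P)$.
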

\begin{proof} 

By \eqref{eq:SRS}  and Lemma  \ref{lem:Tint}, the following holds on $P$:  %%%%%%%% \eqref{eq:SRS}.
\begin{align*}
\tau(A_i) = \sigma A_i \sigma^{-1}, \qquad \qquad \tau(A^*_i) = \sigma A^*_i \sigma^{-1} \qquad \quad i \in \lbrace 1,2,3 \rbrace.
\end{align*}
This yields \eqref{eq:SRS2} because the Lie algebra $\mathfrak{sl}_4(\mathbb C)$ is generated by 
\begin{align*}
A_i, \quad A^*_i \qquad \quad i \in \lbrace 1,2,3 \rbrace.
\end{align*}
\end{proof}

\noindent We have a comment about the automorphism $\sigma$ of $P$ from Proposition \ref{thm:aut2}. The map $\sigma$ acts on
the $P$-bases \eqref{eq:Pbasis}  and \eqref{eq:Pdualbasis}  as follows.
For $r,s,t,u \in \mathbb N$ the map $\sigma$ swaps
\begin{align*}
x^r y^s z^t w^u \leftrightarrow x^{*r} y^{*s} z^{*t} w^{*u}.
\end{align*}
Moreover, $\sigma(P_N)=P_N$ for $N \in \mathbb N$.

\section{More derivations and multiplication maps}

We continue to discuss the $\mathfrak{sl}_4(\mathbb C)$-module $P= \mathbb C\lbrack x,y,z,w\rbrack$. 
In Section 6, we investigated the derivations $D_x, D_y, D_z, D_w$ and the multiplication maps
$M_x, M_y, M_z, M_w$. In this section, we introduce the analogous derivations 
$D_{x^*}$, $D_{y^*}$, $D_{z^*}$, $D_{w^*}$ and multiplication maps
$M_{x^*}$, $M_{y^*}$, $M_{z^*}$, $M_{w^*}$.
\medskip

\noindent The next result is motivated by Lemma \ref{lem:DerDef}.

\begin{lemma} \label{lem:DerDefdual} The following {\rm (i)--(iv)} hold.
\begin{enumerate}
\item[\rm (i)] There exists a unique element ${ D_{x^*} \in \rm Der}(P)$ that sends
\begin{align*}
x^* \mapsto 1, \qquad \quad y^* \mapsto 0, \qquad \quad z^* \mapsto 0, \qquad \quad w^* \mapsto 0.
\end{align*}
\item[\rm (ii)] There exists a unique element  $ D_{y^*} \in {\rm Der}(P)$ that sends
\begin{align*}
x^* \mapsto 0, \qquad \quad y^* \mapsto 1, \qquad \quad z^* \mapsto 0, \qquad \quad w^* \mapsto 0.
\end{align*}
\item[\rm (iii)] There exists a unique element  $ D_{z^*}\in {\rm Der}(P)$ that sends
\begin{align*}
x^* \mapsto 0, \qquad \quad y^* \mapsto 0, \qquad \quad z^* \mapsto 1, \qquad \quad w^* \mapsto 0.
\end{align*}
\item[\rm (iv)] There exists a  unique element  $D_{w^*} \in {\rm Der}(P)$ that sends
\begin{align*}
x^*\mapsto 0, \qquad \quad y^* \mapsto 0, \qquad \quad z^* \mapsto 0, \qquad \quad w^* \mapsto 1.
\end{align*}
\end{enumerate}
\end{lemma}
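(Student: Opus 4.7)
The plan is to invoke Lemma \ref{lem:derExt} directly: a derivation of $P$ is determined uniquely by its restriction to $P_1$, and conversely any $\mathbb{C}$-linear map $P_1 \to P$ extends to a (unique) derivation. The only thing one needs is that $x^*, y^*, z^*, w^*$ constitute a basis of $P_1$, which is exactly Lemma \ref{lem:3f}(i).

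First I would fix $i \in \{1,2,3,4\}$ (or rather, treat the four statements in parallel). By Lemma \ref{lem:3f}(i), the vectors $x^*, y^*, z^*, w^*$ form a basis for $P_1$. Hence by linear algebra there is a unique $\mathbb{C}$-linear map $\mathcal{D}_1^{(1)} : P_1 \to P$ sending
\begin{align*}
x^* \mapsto 1, \qquad y^* \mapsto 0, \qquad z^* \mapsto 0, \qquad w^* \mapsto 0,
\end{align*}
and similarly unique linear maps $\mathcal{D}_1^{(2)}, \mathcal{D}_1^{(3)}, \mathcal{D}_1^{(4)}: P_1 \to P$ for the other three assignments in parts (ii)--(iv). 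Then by Lemma \ref{lem:derExt}, for each $k \in \{1,2,3,4\}$ there exists a unique $\mathcal{D}^{(k)} \in \mathrm{Der}(P)$ whose restriction to $P_1$ equals $\mathcal{D}_1^{(k)}$. Setting $D_{x^*} = \mathcal{D}^{(1)}$, $D_{y^*} = \mathcal{D}^{(2)}$, $D_{z^*} = \mathcal{D}^{(3)}$, $D_{w^*} = \mathcal{D}^{(4)}$ yields the desired derivations, and the uniqueness in Lemma \ref{lem:derExt} gives the uniqueness asserted in (i)--(iv).

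There is no real obstacle here; the lemma is essentially a specialization of the general extension result of Lemma \ref{lem:derExt} to the dual basis in place of the standard basis. The only subtlety worth noting is that Lemma \ref{lem:derExt} was proved using the basis $x,y,z,w$ of $P_1$, but its statement makes no reference to a particular basis, so it applies equally well with inputs specified on the basis $x^*, y^*, z^*, w^*$.
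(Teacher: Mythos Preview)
Your proposal is correct and takes essentially the same approach as the paper, which simply cites Lemma~\ref{lem:derExt}. You have just spelled out the implicit step that $x^*, y^*, z^*, w^*$ form a basis for $P_1$ (Lemma~\ref{lem:3f}(i)) so that the prescribed values determine a unique linear map $P_1 \to P$ to which Lemma~\ref{lem:derExt} applies.
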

\begin{proof} By Lemma \ref{lem:derExt}.
\end{proof}

\begin{lemma}\label{lem:DDact} The  derivations
$D_{x^*}$, $D_{y^*}$, $D_{z^*}$, $D_{w^*}$
act as follows on the  $P$-basis \eqref{eq:Pdualbasis}. For $r,s,t,u \in \mathbb N$,
\begin{align*}
&D_{x^*} \bigl(x^{*r} y^{*s} z^{*t} w^{*u}\bigr) = r x^{* r-1} y^{*s} z^{*t} w^{*u}, \qquad \quad
D_{y^*}  \bigl(x^{*r} y^{*s} z^{*t} w^{*u}\bigr) = s x^{*r} y^{* s-1} z^{*t} w^{*u}, \\
& D_{z^*} \bigl(x^{*r} y^{*s} z^{*t} w^{*u}\bigr) = t x^{*r} y^{*s} z^{* t-1} w^{*u}, \qquad \quad
D_{w^*}  \bigl(x^{*r} y^{*s} z^{*t} w^{*u}\bigr) = u x^{*r} y^{*s} z^{*t} w^{* u-1}. 
\end{align*}
\end{lemma}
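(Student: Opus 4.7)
The plan is to obtain each of the four formulas as an immediate consequence of Lemma~\ref{lem:dualDer} applied to the derivations $D_{x^*}, D_{y^*}, D_{z^*}, D_{w^*}$ whose existence was secured in Lemma~\ref{lem:DerDefdual}. That earlier lemma computes $\mathcal D(x^{*r}y^{*s}z^{*t}w^{*u})$ for an \emph{arbitrary} $\mathcal D \in {\rm Der}(P)$ as a sum of four terms whose coefficients are the starred partial-degree exponents multiplied by $\mathcal D(x^*), \mathcal D(y^*), \mathcal D(z^*), \mathcal D(w^*)$. So the entire task is to insert the four specific values supplied by Lemma~\ref{lem:DerDefdual} and see that three of the four terms vanish in each case.

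Concretely, I would begin with $D_{x^*}$. By Lemma~\ref{lem:DerDefdual}(i), $D_{x^*}(x^*) = 1$ while $D_{x^*}$ vanishes on $y^*, z^*, w^*$. Substituting these values into the formula of Lemma~\ref{lem:dualDer} with $\mathcal D = D_{x^*}$ collapses the four-term sum to its first term, yielding $D_{x^*}(x^{*r} y^{*s} z^{*t} w^{*u}) = r x^{*r-1} y^{*s} z^{*t} w^{*u}$, as claimed. The arguments for $D_{y^*}, D_{z^*}, D_{w^*}$ run along identical lines, with parts (ii), (iii), (iv) of Lemma~\ref{lem:DerDefdual} picking out the second, third, and fourth terms of Lemma~\ref{lem:dualDer} respectively.

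There is no real obstacle here: the product-rule bookkeeping that would otherwise be the main content has already been absorbed into Lemma~\ref{lem:dualDer}. The formal situation is the exact starred counterpart of the original one governing $D_x, D_y, D_z, D_w$ acting on $x^r y^s z^t w^u$ in \eqref{eq:one} and \eqref{eq:two}. The only point worth double-checking during the write-up is that the exponents in the statement of Lemma~\ref{lem:dualDer} appear in the order $(r,s,t,u)$ matching the order $(x^*, y^*, z^*, w^*)$ of the starred variables, which is immediate from the statement of that lemma.
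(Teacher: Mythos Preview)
Your proposal is correct and matches the paper's proof exactly: the paper simply cites Lemmas~\ref{lem:dualDer} and~\ref{lem:DerDefdual}, which is precisely the combination you describe.
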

\begin{proof} By Lemmas \ref{lem:dualDer}, \ref{lem:DerDefdual}.
\end{proof}

\begin{lemma} \label{lem:GradeDual}
For $N \in \mathbb N$ we have
\begin{align*}
D_{x^*} (P_N) = P_{N-1}, \qquad 
D_{y^*} (P_N) = P_{N-1}, \qquad 
D_{z^*} (P_N) = P_{N-1}, \qquad 
D_{w^*}  (P_N) = P_{N-1}.
\end{align*}
\end{lemma}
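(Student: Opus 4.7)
The plan is to use the explicit action of $D_{x^*}, D_{y^*}, D_{z^*}, D_{w^*}$ on the starred monomial basis from Lemma \ref{lem:DDact}, together with the fact from Lemma \ref{lem:PDdualbasis} that $\{x^{*r} y^{*s} z^{*t} w^{*u} : (r,s,t,u) \in \mathcal P_N\}$ is a basis for $P_N$. The argument is parallel for all four derivations, so I would prove it in detail for $D_{x^*}$ and indicate that the other three cases are analogous.

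For the inclusion $D_{x^*}(P_N) \subseteq P_{N-1}$, pick any basis vector $x^{*r} y^{*s} z^{*t} w^{*u}$ of $P_N$ with $r+s+t+u = N$. By Lemma \ref{lem:DDact}, $D_{x^*}$ sends it to $r x^{*r-1} y^{*s} z^{*t} w^{*u}$, which lies in $P_{N-1}$ (interpreting $P_{-1} = 0$ when $r=0$, consistent with our convention that the degree drops by one). Since $D_{x^*}$ is $\mathbb C$-linear, it follows that $D_{x^*}(P_N) \subseteq P_{N-1}$.

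For the reverse inclusion, assume $N \geq 1$, and consider an arbitrary basis vector $x^{*r} y^{*s} z^{*t} w^{*u}$ of $P_{N-1}$, so $r+s+t+u = N-1$. Then $x^{*r+1} y^{*s} z^{*t} w^{*u} \in P_N$, and
\begin{align*}
D_{x^*}\bigl(x^{*r+1} y^{*s} z^{*t} w^{*u}\bigr) = (r+1)\, x^{*r} y^{*s} z^{*t} w^{*u}.
\end{align*}
Dividing by the nonzero scalar $r+1$ exhibits every basis vector of $P_{N-1}$ as an element of $D_{x^*}(P_N)$, so by linearity $P_{N-1} \subseteq D_{x^*}(P_N)$. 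For $N=0$ we have $P_0 = \mathbb C$, and $D_{x^*}(1) = 0 = P_{-1}$ by Lemma \ref{lem:oneD} (applied through Lemma \ref{lem:DerDefdual}), so the claim holds trivially.

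There is no real obstacle here: the proof is a direct verification on the starred monomial basis, using exactly the same bookkeeping as the analogous statement for $D_x, D_y, D_z, D_w$ on the unstarred basis given in Section 6. The only mild point is keeping track of the boundary case $N=0$ and the convention $P_{-1}=0$; everything else reduces to the observation that each $D_{a^*}$ acts as an exponent-lowering operator on the basis \eqref{eq:Pdualbasis} whose image is spanned by a rescaling of the corresponding basis \eqref{eq:PNDbasis} of $P_{N-1}$.
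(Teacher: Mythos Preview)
Your proof is correct and takes essentially the same approach as the paper, which simply cites Lemmas \ref{lem:PDdualbasis} and \ref{lem:DDact}; you have just spelled out the details of both inclusions explicitly.
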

\begin{proof} By Lemmas \ref{lem:PDdualbasis},  \ref{lem:DDact}.
\end{proof}

\begin{lemma}  \label{lem:DDs} We have 
\begin{align*}
& D_{x^*} = \frac{D_x+D_y+D_z+D_w}{2}, \qquad \qquad D_{y^*} = \frac{D_x+D_y-D_z-D_w}{2}, \\
& D_{z^*}= \frac{D_x-D_y+D_z-D_w}{2}, \qquad \qquad D_{w^*} = \frac{D_x-D_y-D_z+D_w}{2}.
\end{align*}
\end{lemma}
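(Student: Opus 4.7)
The plan is to exploit the uniqueness clause in Lemma \ref{lem:DerDefdual}, which characterizes each of $D_{x^*}, D_{y^*}, D_{z^*}, D_{w^*}$ as the unique derivation of $P$ with a prescribed action on the basis $x^*, y^*, z^*, w^*$ of $P_1$. Since each $D_a$ (for $a \in \{x,y,z,w\}$) belongs to ${\rm Der}(P)$ by Lemma \ref{lem:DerDef}, and since ${\rm Der}(P)$ is a subspace of ${\rm End}(P)$, each of the four right-hand sides in the lemma is a derivation of $P$. By Lemma \ref{lem:zeroDer}, a derivation is determined by its restriction to $P_1$. So it suffices to check that each claimed right-hand side matches the prescribed action on $x^*, y^*, z^*, w^*$.

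The computation is direct. Using Definition \ref{def:xyzws}, one has $D_a(x^*) = D_a(y^*) = D_a(z^*) = D_a(w^*) = \tfrac{1}{2}$ when $a$ appears with coefficient $+\tfrac{1}{2}$ in the corresponding defining expression, and $0$ or $\pm \tfrac{1}{2}$ otherwise. For instance, applying $\tfrac{1}{2}(D_x + D_y + D_z + D_w)$ to $x^* = \tfrac{1}{2}(x+y+z+w)$ yields $\tfrac{1}{2} \cdot 4 \cdot \tfrac{1}{2} = 1$, while applying it to each of $y^*, z^*, w^*$ gives $0$ since the coefficients of $x,y,z,w$ in those expressions sum to zero. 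This matches the defining action of $D_{x^*}$ from Lemma \ref{lem:DerDefdual}(i), so the two derivations agree on $P_1$ and hence on all of $P$. The three remaining identities are verified by the same token: in each case the row of signs $(+,+,-,-)$, $(+,-,+,-)$, or $(+,-,-,+)$ assigned to the coefficients of $D_x, D_y, D_z, D_w$ lines up with the corresponding row of signs in Definition \ref{def:xyzws}, so that the derivation annihilates all but one of $x^*, y^*, z^*, w^*$ and sends the remaining one to $1$.

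Conceptually, the four identities assert precisely that the change-of-basis matrix from $(D_x, D_y, D_z, D_w)$ to $(D_{x^*}, D_{y^*}, D_{z^*}, D_{w^*})$ is the same matrix $\Upsilon$ of Definition \ref{def:T} that relates $(x, y, z, w)$ to $(x^*, y^*, z^*, w^*)$, which is consistent because $\Upsilon$ is self-inverse. There is no substantive obstacle here; the only thing to be careful about is bookkeeping the signs in the four rows of $\Upsilon$.
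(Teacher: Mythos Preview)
Your proof is correct and follows essentially the same approach as the paper: both arguments note that each side is a derivation, invoke Lemma~\ref{lem:zeroDer} to reduce to checking equality on $P_1$, and then verify the restriction to $P_1$ using Definition~\ref{def:xyzws}. Your version is more explicit in carrying out the verification and adds the conceptual remark about $\Upsilon$, but the underlying idea is identical.
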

\begin{proof} For these equations, each side is a derivation. So by Lemma \ref{lem:zeroDer}, these equations hold  if they hold on $P_1$.
By Definition  \ref{def:xyzws} or Lemma \ref{lem:xyzws}, these equations hold on $P_1$.
\end{proof}

\begin{lemma}  We have
\begin{align*}
& D_x = \frac{D_{x^*}+D_{y^*}+D_{z^*}+D_{w^*}}{2}, \qquad \quad D_y = \frac{D_{x^*}+D_{y^*}-D_{z^*}-D_{w^*}}{2}, \\
& D_z = \frac{D_{x^*}-D_{y^*}+D_{z^*}-D_{w^*}}{2}, \qquad \quad D_w = \frac{D_{x^*}-D_{y^*}-D_{z^*}+D_{w^*}}{2}.
\end{align*}
\end{lemma}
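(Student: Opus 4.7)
My plan is to mimic the proof of Lemma \ref{lem:DDs}, since the statement to prove is simply the inverse of that one. The key fact driving everything is that the matrix $\Upsilon$ of Definition \ref{def:T} satisfies $\Upsilon^2 = I$.

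First, I would observe that each of the four proposed equations has the form $\mathcal{D} = \mathcal{D}'$ where both $\mathcal{D}, \mathcal{D}' \in {\rm Der}(P)$: the left side $D_a$ is a derivation by Lemma \ref{lem:DerDef}, and the right side is a $\mathbb{C}$-linear combination of $D_{x^*}, D_{y^*}, D_{z^*}, D_{w^*}$, which are derivations by Lemma \ref{lem:DerDefdual}, hence still a derivation. So by Lemma \ref{lem:zeroDer}, it suffices to check each equation on $P_1$, and in fact only on the basis $x^*, y^*, z^*, w^*$ of $P_1$.

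Second, to check the first equation $D_x = (D_{x^*} + D_{y^*} + D_{z^*} + D_{w^*})/2$ on, say, $x^*$, I would use Lemma \ref{lem:xyzws} to write $x^* = (x+y+z+w)/2$ (recalling $x^{**}$ is not defined here; instead I use that $x^*$ is a linear combination in the $x,y,z,w$ basis), so that $D_x(x^*) = D_x((x+y+z+w)/2) = 1/2$. On the other hand, by Lemma \ref{lem:DerDefdual}, the right side applied to $x^*$ gives $(1+0+0+0)/2 = 1/2$. The other fifteen verifications ($D_x, D_y, D_z, D_w$ each applied to $x^*, y^*, z^*, w^*$) are identical in spirit.

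The cleanest way to package all sixteen computations at once is the matrix viewpoint: Lemma \ref{lem:DDs} asserts
\begin{align*}
\begin{pmatrix} D_{x^*} \\ D_{y^*} \\ D_{z^*} \\ D_{w^*} \end{pmatrix} = \Upsilon \begin{pmatrix} D_x \\ D_y \\ D_z \\ D_w \end{pmatrix},
\end{align*}
and since $\Upsilon^2 = I$ (the observation just below Definition \ref{def:T}), multiplying both sides on the left by $\Upsilon$ yields exactly the four identities of the statement. There is no real obstacle here; the whole content is that $\Upsilon$ is an involution, and the only care needed is to verify the matrix identity on the basis $x^*, y^*, z^*, w^*$ of $P_1$ before invoking Lemma \ref{lem:zeroDer} to pass from $P_1$ to all of $P$.
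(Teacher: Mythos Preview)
Your proposal is correct and matches the paper's approach: the paper simply says ``By Lemma \ref{lem:DDs} and linear algebra,'' which is exactly your matrix-inversion argument using $\Upsilon^2 = I$. Your additional direct verification on $P_1$ via Lemma \ref{lem:zeroDer} is more detailed than what the paper writes but entirely in the same spirit.
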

\begin{proof}  By Lemma \ref{lem:DDs} and linear algebra.
\end{proof}

\begin{lemma} \label{lem:sDs} The automorphism $\sigma$ of $P$ satisfies
\begin{align*}
D_{x^*} = \sigma D_x \sigma^{-1}, \qquad \quad
D_{y^*} = \sigma D_y\sigma^{-1}, \qquad \quad
D_{z^*} = \sigma D_z \sigma^{-1}, \qquad \quad
D_{w^*} = \sigma D_w \sigma^{-1}.
\end{align*}
\end{lemma}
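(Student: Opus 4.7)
The plan is to verify each of the four claimed equations by the standard uniqueness-of-derivations argument established in Lemmas~\ref{lem:derExt} and~\ref{lem:DerDefdual}. The key preliminary observation is that if $\mathcal{D} \in {\rm Der}(P)$ and $\sigma$ is any algebra automorphism of $P$, then $\sigma \mathcal{D} \sigma^{-1} \in {\rm Der}(P)$; this is a direct check from \eqref{eq:fg}, using that $\sigma$ is a homomorphism on both sides. Thus each of $\sigma D_x \sigma^{-1}, \sigma D_y \sigma^{-1}, \sigma D_z \sigma^{-1}, \sigma D_w \sigma^{-1}$ is a derivation of $P$.

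Next I would compute the action of these conjugated derivations on the basis $x^*, y^*, z^*, w^*$ of $P_1$. By Proposition~\ref{thm:aut2} the automorphism $\sigma$ swaps $x \leftrightarrow x^*$, $y \leftrightarrow y^*$, $z \leftrightarrow z^*$, $w \leftrightarrow w^*$, so
\begin{align*}
(\sigma D_x \sigma^{-1})(x^*) &= \sigma D_x(x) = \sigma(1) = 1, \\
(\sigma D_x \sigma^{-1})(y^*) &= \sigma D_x(y) = \sigma(0) = 0,
\end{align*}
and similarly $(\sigma D_x \sigma^{-1})(z^*) = 0$ and $(\sigma D_x \sigma^{-1})(w^*) = 0$, using Lemma~\ref{lem:DerDef}(i). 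By the uniqueness assertion in Lemma~\ref{lem:DerDefdual}(i), this forces $\sigma D_x \sigma^{-1} = D_{x^*}$.

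The remaining three equations follow by the identical argument, pairing Lemma~\ref{lem:DerDef}(ii)--(iv) with Lemma~\ref{lem:DerDefdual}(ii)--(iv): conjugation by $\sigma$ turns the derivation that sends exactly one of $x,y,z,w$ to $1$ and the others to $0$ into the derivation that sends exactly the corresponding starred variable to $1$ and the other three to $0$.

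I do not anticipate any real obstacle here; the whole proof reduces to two ingredients that are already in hand, namely that conjugation by an algebra automorphism preserves the derivation property, and the uniqueness of a derivation determined by its values on the generators $x^*,y^*,z^*,w^*$ of $P$ (Lemma~\ref{lem:derExt} transported via $\sigma$). The mild point to be careful about is simply to invoke $\sigma(x)=x^*$, $\sigma(x^*)=x$, etc.\ in the correct direction when evaluating $\sigma D_x \sigma^{-1}$ on starred monomials.
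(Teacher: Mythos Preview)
Your proof is correct. The paper takes a slightly more computational route: it directly compares the actions of $D_x$ on the full monomial basis $x^r y^s z^t w^u$ (equations \eqref{eq:one}, \eqref{eq:two}) with the actions of $D_{x^*}$ on the dual basis $x^{*r} y^{*s} z^{*t} w^{*u}$ (Lemma~\ref{lem:DDact}), and then uses that $\sigma$ swaps these two bases (the remark at the end of Section~7) to conclude that $\sigma D_x \sigma^{-1}$ and $D_{x^*}$ agree on every basis vector. Your approach is a bit more structural: you first observe that conjugation by an algebra automorphism preserves derivations, then check agreement only on the generators $x^*,y^*,z^*,w^*$, and invoke the uniqueness in Lemma~\ref{lem:DerDefdual}. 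Both arguments are short and rest on the same underlying facts; yours has the advantage of reducing the verification to $P_1$, while the paper's avoids the (easy) auxiliary claim that $\sigma \mathcal D \sigma^{-1}\in{\rm Der}(P)$.
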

\begin{proof}  Compare \eqref{eq:one}, \eqref{eq:two} with Lemma \ref{lem:DDact}
using the comment at the end of Section 7.
\end{proof}

\begin{definition}\label{def:MMMMd} \rm We define $M_{x^*}, M_{y^*}, M_{z^*}, M_{w^*} \in {\rm End}(P)$ as follows.
For $f \in P$,
\begin{align*}
M_{x^*} (f) = x^* f, \qquad \quad 
M_{y^*}(f) = y^* f, \qquad \quad
M_{z^*} (f) = z^* f, \qquad \quad
M_{w^*} (f) = w^* f.
\end{align*}
\end{definition}

\noindent The maps $M_{x^*}, M_{y^*}, M_{z^*}, M_{w^*}$  act as follows on the $P$-basis  \eqref{eq:Pdualbasis}. For $r,s,t,u \in \mathbb N$,
\begin{align*} %%%%%%\label{eq:threeD}
&M_{x^*} \bigl(x^{*r} y^{*s} z^{*t} w^{*u}\bigr) = x^{* r+1} y^{*s} z^{*t} w^{*u}, \qquad \quad
M_{y^*}  \bigl(x^{*r} y^{*s} z^{*t} w^{*u}\bigr) = x^{*r} y^{* s+1} z^{*t} w^{*u}, \\
& M_{z^*} \bigl(x^{*r} y^{*s} z^{*t} w^{*u}\bigr) =  x^{*r} y^{*s} z^{* t+1} w^{*u}, \qquad \quad
M_{w^*}  \bigl(x^{*r} y^{*s} z^{*t} w^{*u}\bigr) =  x^{*r} y^{*s} z^{*t} w^{* u+1}. 
%%%%%%%\label{eq:fourD}
\end{align*}
\noindent  %%The derivations \eqref{eq:4der} mutually commute.
 For $N \in \mathbb N$ we have
\begin{align*}
M_{x^*} (P_N) \subseteq P_{N+1}, \qquad 
M_{y^*} (P_N) \subseteq P_{N+1}, \qquad 
M_{z^*} (P_N) \subseteq P_{N+1}, \qquad 
M_{w^*}  (P_N) \subseteq P_{N+1}.
\end{align*}

\begin{lemma}  \label{lem:MMs} We have 
\begin{align*}
& M_{x^*} = \frac{M_x+M_y+M_z+M_w}{2}, \qquad \qquad M_{y^*} = \frac{M_x+M_y-M_z-M_w}{2}, \\
& M_{z^*}= \frac{M_x-M_y+M_z-M_w}{2}, \qquad \qquad M_{w^*} = \frac{M_x-M_y-M_z+M_w}{2}.
\end{align*}
\end{lemma}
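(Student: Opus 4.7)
The plan is to observe that the identities follow immediately from Definition~\ref{def:xyzws} together with the $\mathbb C$-linearity of the assignment $a \mapsto M_a$ for $a \in P$. Recall that $M_{x^*}$ denotes multiplication by $x^*$ on $P$ (Definition~\ref{def:MMMMd}), and that Definition~\ref{def:xyzws} expresses $x^*$ as the explicit linear combination $(x+y+z+w)/2$.

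Concretely, for any $f \in P$, I would compute
\begin{align*}
M_{x^*}(f) = x^* f = \frac{x+y+z+w}{2}\, f = \frac{xf + yf + zf + wf}{2} = \frac{M_x(f) + M_y(f) + M_z(f) + M_w(f)}{2}.
\end{align*}
Since this holds for every $f \in P$, the first asserted identity follows as an equality in ${\rm End}(P)$. The remaining three identities for $M_{y^*}, M_{z^*}, M_{w^*}$ are obtained in exactly the same way, substituting the corresponding formulas for $y^*, z^*, w^*$ from Definition~\ref{def:xyzws}.

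There is no real obstacle here: unlike the derivation case (Lemma~\ref{lem:DDs}), where one had to invoke Lemma~\ref{lem:zeroDer} to reduce the verification to the subspace $P_1$, multiplication maps are automatically $\mathbb C$-linear in the multiplier, so the result is purely a restatement of Definition~\ref{def:xyzws} at the level of ${\rm End}(P)$. One could alternatively verify the identities by applying both sides to a basis vector $x^{r}y^{s}z^{t}w^{u}$ from \eqref{eq:Pbasis} and comparing, but the direct argument above is cleaner.
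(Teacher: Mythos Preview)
Your proof is correct and takes essentially the same approach as the paper, which simply cites Definitions \ref{def:MMMM}, \ref{def:xyzws}, \ref{def:MMMMd}; you have just unpacked those definitions explicitly.
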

\begin{proof} By Definitions \ref{def:MMMM},    \ref{def:xyzws}, \ref{def:MMMMd}.
\end{proof}

\begin{lemma}  We have
\begin{align*}
& M_x = \frac{M_{x^*}+M_{y^*}+M_{z^*}+M_{w^*}}{2}, \qquad \quad M_y = \frac{M_{x^*}+M_{y^*}-M_{z^*}-M_{w^*}}{2}, \\
& M_z = \frac{M_{x^*}-M_{y^*}+M_{z^*}-M_{w^*}}{2}, \qquad \quad M_w = \frac{M_{x^*}-M_{y^*}-M_{z^*}+M_{w^*}}{2}.
\end{align*}
\end{lemma}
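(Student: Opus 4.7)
The plan is to invert the linear system in Lemma \ref{lem:MMs} directly at the level of multiplication maps, using Lemma \ref{lem:xyzws} which already expresses $x,y,z,w$ as the corresponding linear combinations of $x^*,y^*,z^*,w^*$.

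First I would note that for any $a \in \{x,y,z,w,x^*,y^*,z^*,w^*\}$ the map $M_a \in \operatorname{End}(P)$ is, by Definition \ref{def:MMMM} and Definition \ref{def:MMMMd}, multiplication by the scalar $a \in P$. Consequently, for any $\alpha_1,\alpha_2,\alpha_3,\alpha_4 \in \mathbb{C}$ and any $f \in P$,
\begin{align*}
(\alpha_1 M_{x^*} + \alpha_2 M_{y^*} + \alpha_3 M_{z^*} + \alpha_4 M_{w^*})(f)
  = (\alpha_1 x^* + \alpha_2 y^* + \alpha_3 z^* + \alpha_4 w^*)\, f.
\end{align*}
Taking $(\alpha_1,\alpha_2,\alpha_3,\alpha_4) = (1/2,1/2,1/2,1/2)$ and invoking Lemma \ref{lem:xyzws}, the right-hand side becomes $x f = M_x(f)$. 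This establishes the first identity; the other three follow by choosing the coefficient vectors $(1,1,-1,-1)/2$, $(1,-1,1,-1)/2$, $(1,-1,-1,1)/2$ respectively and applying Lemma \ref{lem:xyzws}.

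Alternatively, one may argue by pure linear algebra: Lemma \ref{lem:MMs} expresses the 4-tuple $(M_{x^*}, M_{y^*}, M_{z^*}, M_{w^*})$ as the image of $(M_x, M_y, M_z, M_w)$ under the matrix $\Upsilon$ of Definition \ref{def:T}, acting componentwise on elements of $\operatorname{End}(P)$. Since $\Upsilon^2 = I$, applying $\Upsilon$ again inverts the relation, yielding precisely the four displayed formulas. There is no real obstacle here; the only thing to double-check is the sign pattern, which is automatic from $\Upsilon^2 = I$, equivalently from the fact that the change-of-basis in Lemma \ref{lem:xyzws} is symmetric to that of Definition \ref{def:xyzws}.
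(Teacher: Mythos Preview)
Your proof is correct and follows essentially the same approach as the paper, which simply cites Lemma \ref{lem:xyzws} together with Definitions \ref{def:MMMM} and \ref{def:MMMMd}. Your alternative via Lemma \ref{lem:MMs} and $\Upsilon^2=I$ is also valid and amounts to the same inversion.
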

\begin{proof} 
By Lemma \ref{lem:xyzws} and Definitions \ref{def:MMMM},  \ref{def:MMMMd}.
 \end{proof}

\begin{lemma} \label{lem:sMs} The automorphism $\sigma$ of $P$ satisfies
\begin{align*}
M_{x^*} = \sigma M_x \sigma^{-1}, \qquad \quad
M_{y^*} = \sigma M_y\sigma^{-1}, \qquad \quad
M_{z^*} = \sigma M_z \sigma^{-1}, \qquad \quad
M_{w^*} = \sigma M_w \sigma^{-1}.
\end{align*}
\end{lemma}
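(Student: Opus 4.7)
The plan is to verify each of the four equations by a direct computation on an arbitrary polynomial, exploiting the fact that $\sigma$ is not just a linear isomorphism of $P$ but an \emph{algebra} automorphism. By Proposition \ref{thm:aut2}, the map $\sigma$ is an automorphism of the algebra $P$ that swaps $x \leftrightarrow x^*$, $y \leftrightarrow y^*$, $z \leftrightarrow z^*$, $w \leftrightarrow w^*$, and satisfies $\sigma^2 = {\rm id}$.

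I would argue uniformly for $a \in \lbrace x, y, z, w\rbrace$. Pick $f \in P$ and compute
\begin{align*}
\sigma M_a \sigma^{-1}(f) \;=\; \sigma\bigl( a \cdot \sigma^{-1}(f)\bigr) \;=\; \sigma(a) \cdot \sigma\bigl(\sigma^{-1}(f)\bigr) \;=\; a^* \cdot f \;=\; M_{a^*}(f),
\end{align*}
where the second equality uses that $\sigma$ is an algebra homomorphism (so it preserves products), the third uses $\sigma(a) = a^*$ from Proposition \ref{thm:aut2} together with $\sigma \sigma^{-1} = {\rm id}$, and the last uses Definition \ref{def:MMMMd}. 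Since this holds for all $f \in P$, we conclude $\sigma M_a \sigma^{-1} = M_{a^*}$. Specializing to $a \in \lbrace x,y,z,w\rbrace$ gives the four displayed equations.

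There is no real obstacle here: the only subtlety is that the identity $\sigma(a \cdot g) = \sigma(a) \cdot \sigma(g)$ requires $\sigma$ to be an algebra homomorphism rather than merely $\mathbb C$-linear, and this is exactly what Proposition \ref{thm:aut2} provides. An alternative, slightly longer route would be to check the equations on the basis \eqref{eq:Pbasis} using the explicit action of $\sigma$ (which swaps $x^r y^s z^t w^u \leftrightarrow x^{*r} y^{*s} z^{*t} w^{*u}$, as noted at the end of Section 7) together with the formulas \eqref{eq:three}, \eqref{eq:four} and the analogous formulas for $M_{x^*}, M_{y^*}, M_{z^*}, M_{w^*}$; but this is more cumbersome and offers no additional insight. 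The conceptual one-line proof above parallels the proof of Lemma \ref{lem:sDs} for the derivations.
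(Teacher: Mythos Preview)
Your proof is correct and is essentially the same as the paper's: both verify the identity by applying each side to an arbitrary $f\in P$ and using that $\sigma$ is an algebra automorphism with $\sigma(a)=a^*$. (One minor remark: your parenthetical that this ``parallels the proof of Lemma~\ref{lem:sDs}'' is slightly off, since the paper's proof of Lemma~\ref{lem:sDs} actually proceeds by basis comparison rather than the algebra-automorphism argument; in fact it is the present lemma whose proof in the paper matches your conceptual one-liner.)
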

\begin{proof}  We prove the first equation. We have $M_{x^*} \sigma = \sigma M_x$ because for all $f \in P$,
\begin{align*}
M_{x^*} \sigma (f) = x^* \sigma(f)  = \sigma(x) \sigma(f)  = \sigma(xf) = \sigma M_x (f).
\end{align*}
The first equation is proved. The remaining equations are similarly proved.
\end{proof}

\begin{lemma} \label{lem:WeylD} The following relations hold.
\begin{enumerate}
\item[\rm (i)] For $a \in \lbrace x^* ,y^* ,z^*,w^*\rbrace$,
\begin{align*}
\lbrack D_a, M_a \rbrack = I.
\end{align*}
\item[\rm (ii)] For distinct $a,b \in \lbrace x^*,y^*,z^*,w^*\rbrace$,
\begin{align*}
\lbrack D_a, D_b \rbrack=0, \qquad \quad 
\lbrack M_a, M_b \rbrack=0, \qquad \quad 
\lbrack D_a, M_b \rbrack=0.
\end{align*}
\end{enumerate}
\end{lemma}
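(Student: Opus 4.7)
My plan is to reduce Lemma \ref{lem:WeylD} to Lemma \ref{lem:Weyl} by conjugating everything by the automorphism $\sigma$ of $P$ from Proposition \ref{thm:aut2}. Since $\sigma$ is an algebra automorphism of $P$, the induced map $\varphi \mapsto \sigma \varphi \sigma^{-1}$ on ${\rm End}(P)$ is an algebra automorphism, and in particular it is a Lie algebra automorphism of $\mathfrak{gl}(P)$. This means it preserves all bracket relations and fixes $I$.

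The inputs I would use are Lemma \ref{lem:sDs} and Lemma \ref{lem:sMs}, which assert that
\begin{align*}
D_{a^*} = \sigma D_a \sigma^{-1}, \qquad \qquad M_{a^*} = \sigma M_a \sigma^{-1}
\end{align*}
for $a \in \{x,y,z,w\}$. Applying $\varphi \mapsto \sigma \varphi \sigma^{-1}$ to the identity $[D_a, M_a] = I$ from Lemma \ref{lem:Weyl}(i) immediately yields $[D_{a^*}, M_{a^*}] = I$, which is part (i). For part (ii), applying the same conjugation to $[D_a, D_b] = 0$, $[M_a, M_b] = 0$, and $[D_a, M_b] = 0$ for distinct $a, b \in \{x,y,z,w\}$ gives the corresponding identities with stars, since distinctness is preserved by $\sigma$.

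There is no real obstacle here; the work has already been done in Lemmas \ref{lem:sDs} and \ref{lem:sMs}. If one wanted a direct proof bypassing $\sigma$, an alternative is to verify the relations on the basis \eqref{eq:Pdualbasis} using the explicit actions of $D_{a^*}$ given in Lemma \ref{lem:DDact} and of $M_{a^*}$ given in the display just after Definition \ref{def:MMMMd}; this mirrors the argument used for Lemma \ref{lem:Weyl}, e.g.\ $[D_{x^*}, M_{x^*}](f) = D_{x^*}(x^* f) - x^* D_{x^*}(f) = D_{x^*}(x^*) f + x^* D_{x^*}(f) - x^* D_{x^*}(f) = f$ using that $D_{x^*}$ is a derivation (Lemma \ref{lem:DerDefdual}(i)). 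But the conjugation route is shorter and conceptually cleaner, so I would present that one.
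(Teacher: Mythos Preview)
Your proposal is correct and matches the paper's own proof exactly: the paper also conjugates each relation in Lemma~\ref{lem:Weyl} by $\sigma$ and invokes Lemmas~\ref{lem:sDs} and~\ref{lem:sMs}.
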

\begin{proof} In  Lemma \ref{lem:Weyl} conjugate each term by $\sigma$, and evaluate the
result using Lemmas \ref{lem:sDs}, \ref{lem:sMs}.
\end{proof}

\begin{proposition} \label{lem:A6D} On the $\mathfrak{sl}_4(\mathbb C)$-module $P$,
\begin{align*}
A^*_1 &= M_{y^*} D_{x^*} + M_{x^*} D_{y^*}  + M_{w^*} D_{z^*} + M_{z^*} D_{w^*}, \\
A^*_2 &= M_{z^*} D_{x^*} + M_{w^*} D_{y^*}  + M_{x^*} D_{z^*} + M_{y^*} D_{w^*}, \\
A^*_3 &= M_{w^*} D_{x^*} + M_{z^*} D_{y^*}  + M_{y^*} D_{z^*} + M_{x^*} D_{w^*} 
\end{align*}
and also
\begin{align*}
A_1 &= M_{x^*} D_{x^*} + M_{y^*} D_{y^*}  - M_{z^*} D_{z^*} - M_{w^*} D_{w^*}, \\
A_2 &= M_{x^*} D_{x^*} - M_{y^*} D_{y^*}  + M_{z^*} D_{z^*} - M_{w^*} D_{w^*}, \\
A_3 &= M_{x^*} D_{x^*} - M_{y^*} D_{y^*}  - M_{z^*} D_{z^*} + M_{w^*} D_{w^*}.
\end{align*}
\end{proposition}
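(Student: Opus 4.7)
The plan is to reduce this statement to Proposition \ref{lem:A6} by conjugating everything by the automorphism $\sigma$ of $P$. The dual formulas are exactly the images of the ones in Proposition \ref{lem:A6} under conjugation by $\sigma$, since $\sigma$ interchanges starred and unstarred variables.

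More precisely, first I would invoke Proposition \ref{lem:A6} to write out $A_1, A_2, A_3$ and $A^*_1, A^*_2, A^*_3$ as explicit polynomials in the maps $M_x, M_y, M_z, M_w$ and $D_x, D_y, D_z, D_w$. Then I would conjugate each of these six equations by $\sigma$. On the left-hand sides, Proposition \ref{thm:aut2} (equation \eqref{eq:SRS}) gives $\sigma A_i \sigma^{-1} = A^*_i$ and $\sigma A^*_i \sigma^{-1} = A_i$ for $i \in \lbrace 1,2,3\rbrace$. On the right-hand sides, each conjugated term factors as a product of conjugated $M$'s and $D$'s, and by Lemma \ref{lem:sDs} and Lemma \ref{lem:sMs} we have $\sigma M_a \sigma^{-1} = M_{a^*}$ and $\sigma D_a \sigma^{-1} = D_{a^*}$ for $a \in \lbrace x,y,z,w\rbrace$. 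Combining these substitutions yields exactly the six identities asserted in the proposition.

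The only thing to keep track of is that the roles of the starred and unstarred operators swap: the formula for $A_i$ in Proposition \ref{lem:A6} involves the ``crossing'' pattern $M_y D_x + M_x D_y + M_w D_z + M_z D_w$, so after conjugation the corresponding identity has the same pattern in starred operators but is now an identity for $A^*_i$. Likewise, the diagonal pattern $M_xD_x + M_yD_y - M_zD_z - M_wD_w$ for $A^*_1$ conjugates to an identity for $A_1$. Matching indices is routine.

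I do not expect a genuine obstacle here. The main point is that we have already built the three ingredients (the intertwining relation \eqref{eq:SRS} between $A_i, A^*_i$ and $\sigma$, and the intertwining of $M, D$ with their starred counterparts under $\sigma$), so the result is really a formal consequence. As a sanity check, one could instead apply each side of any of the six claimed identities to a basis vector from \eqref{eq:Pdualbasis} and compare with Proposition \ref{lem:dbAction} using Lemma \ref{lem:DDact} and the action of the $M_{a^*}$ on \eqref{eq:Pdualbasis}; this gives a direct but more computational verification that runs in parallel with the proof of Proposition \ref{lem:A6}.
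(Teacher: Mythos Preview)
Your proposal is correct and matches the paper's proof essentially verbatim: the paper also conjugates each identity in Proposition \ref{lem:A6} by $\sigma$ and invokes \eqref{eq:SRS} together with Lemmas \ref{lem:sDs} and \ref{lem:sMs}. Your additional sanity check via the basis \eqref{eq:Pdualbasis} is a fine alternative, paralleling the direct proof of Proposition \ref{lem:A6}.
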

\begin{proof}  In Proposition \ref{lem:A6} conjugate each term  by $\sigma$, and evaluate the result using  \eqref{eq:SRS} along with
 Lemmas \ref{lem:sDs}, \ref{lem:sMs}.
\end{proof}

%%%%%%%%%%%%%%%%%%%%%%%%%%%
%%%%%%%%%%%%%%%%%%%%%%%%%%%

\section{A Hermitian form on $P$}
 We continue to discuss the $\mathfrak{sl}_4(\mathbb C)$-module $P= \mathbb C\lbrack x,y,z,w\rbrack$.
 In this section, we endow the vector space
$P$ with a Hermitian form $\langle \,,\,\rangle$  with respect to which the $P$-bases \eqref{eq:Pbasis} and \eqref{eq:Pdualbasis} are orthogonal.

\begin{definition}\label{def:Herm} \rm Let $W$ denote a vector space. A {\it Hermitian form on $W$} is a function $\langle \,,\,\rangle: W \times W \to \mathbb C$
such that:
\begin{enumerate}
\item[\rm (i)] $\langle f+g, h \rangle = \langle f, h\rangle + \langle g, h \rangle$ for all $f,g,h \in W$;
\item[\rm (ii)] $\langle \alpha f, g \rangle = \alpha \langle f,g \rangle$ for all $\alpha \in \mathbb C$ and $f,g\in W$;
\item[\rm (iii)] $\langle f,g \rangle = \overline {\langle g,f \rangle}$ for all $f,g \in W$.
\end{enumerate}
\end{definition}
\noindent For a Hermitian form $\langle \,,\,\rangle $ on $W$, we abbreviate $\Vert f \Vert^2 = \langle f,f\rangle$ for all $f \in W$.

\begin{definition} \label{def:bform} \rm We endow the vector space $P$ with a Hermitian form $\langle \,,\,\rangle$ with respect to which
the basis vectors
\begin{align*}
x^r y^s z^t w^u \qquad \qquad r,s,t,u \in \mathbb N
\end{align*}
are mutually orthogonal and
\begin{align}
\Vert x^r y^s z^t w^u \Vert^2 = r!s!t!u!  \qquad \qquad r,s,t,u \in \mathbb N. \label{eq:PNsn}
\end{align}
%%Note that $\langle \,,\,\rangle $ is nondegenerate.
\end{definition}

\begin{lemma} \label{lem:PDorthog} The homogeneous components $\lbrace P_N \rbrace_{N \in \mathbb N}$ are mutually 
orthogonal with respect to $\langle \,,\,\rangle$.
\end{lemma}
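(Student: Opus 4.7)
The plan is to exploit the way the Hermitian form was defined in Definition \ref{def:bform}: the monomial basis $x^r y^s z^t w^u$ ($r,s,t,u \in \mathbb N$) is orthogonal, so two distinct monomials are orthogonal. Since each $P_N$ is spanned by the monomials of total degree $N$ (Lemma \ref{lem:PDbasic}), and monomials of different total degrees are distinct, every basis vector of $P_M$ is orthogonal to every basis vector of $P_N$ whenever $M \neq N$.

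Concretely, I would fix $M, N \in \mathbb N$ with $M \neq N$, pick arbitrary $f \in P_M$ and $g \in P_N$, and expand each in the monomial basis $f = \sum \alpha_{r,s,t,u}\, x^r y^s z^t w^u$ over profiles $(r,s,t,u) \in \mathcal P_M$ and $g = \sum \beta_{r',s',t',u'}\, x^{r'} y^{s'} z^{t'} w^{u'}$ over profiles $(r',s',t',u') \in \mathcal P_N$. Applying sesquilinearity (Definition \ref{def:Herm}(i),(ii) and the conjugate-linearity in the second slot obtained from (iii)), the inner product $\langle f, g \rangle$ becomes a double sum of $\langle x^r y^s z^t w^u, x^{r'} y^{s'} z^{t'} w^{u'} \rangle$ weighted by $\alpha_{r,s,t,u} \overline{\beta_{r',s',t',u'}}$. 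In every term the two monomials have different total degrees, hence are distinct basis vectors, hence are orthogonal by the defining property of $\langle \,,\,\rangle$ in Definition \ref{def:bform}. So each term vanishes and $\langle f, g\rangle = 0$, which gives $P_M \perp P_N$.

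There is no real obstacle here; the statement is essentially a bookkeeping consequence of the way the form was defined. The only care needed is to note that the orthogonality of the monomial basis, together with bilinearity (in the first slot) and conjugate linearity (in the second slot) automatically propagates to arbitrary linear combinations, which is standard. I would therefore write the proof as a short paragraph rather than a long calculation.
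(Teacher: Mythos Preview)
Your proposal is correct and follows essentially the same approach as the paper: the paper's proof is the one-line citation ``By Lemma \ref{lem:PDbasic} and Definition \ref{def:bform},'' which is exactly the reasoning you spell out in detail.
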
 
\begin{proof} By Lemma  \ref{lem:PDbasic} and Definition \ref{def:bform}.
\end{proof}

\begin{lemma} \label{lem:bdb} For $N \in \mathbb N$ the $P_N$-basis 
\begin{align*}
x^r y^s z^t w^u \qquad \qquad (r,s,t,u) \in \mathcal P_N 
\end{align*}
and the $P_N$-basis 
\begin{align*}
\frac{x^r y^s z^t w^u}{r!s!t!u!} \qquad \qquad (r,s,t,u) \in \mathcal P_N
\end{align*}
are dual with respect to $\langle \,,\,\rangle$.
\end{lemma}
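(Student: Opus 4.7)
The plan is to verify the duality condition $\langle e_\alpha, f_\beta \rangle = \delta_{\alpha,\beta}$ for the two given bases by a direct calculation using Definition \ref{def:bform} together with sesquilinearity of $\langle\,,\,\rangle$. Fix profiles $(r,s,t,u), (r',s',t',u') \in \mathcal P_N$. By conjugate-linearity in the second argument (Definition \ref{def:Herm}(ii),(iii)) and the fact that $1/(r'!s'!t'!u'!)$ is a positive real, we have
\begin{align*}
\left\langle x^r y^s z^t w^u,\, \frac{x^{r'} y^{s'} z^{t'} w^{u'}}{r'!\,s'!\,t'!\,u'!}\right\rangle
= \frac{1}{r'!\,s'!\,t'!\,u'!}\,\langle x^r y^s z^t w^u,\, x^{r'} y^{s'} z^{t'} w^{u'}\rangle.
\end{align*}

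Next I would split into two cases. If $(r,s,t,u) \neq (r',s',t',u')$, the monomials are distinct basis vectors in the orthogonal family from Definition \ref{def:bform}, so the inner product on the right is $0$ and the displayed expression vanishes. If $(r,s,t,u) = (r',s',t',u')$, the square-norm formula \eqref{eq:PNsn} yields $\langle x^r y^s z^t w^u,\, x^r y^s z^t w^u\rangle = r!\,s!\,t!\,u!$, which cancels the denominator to give $1$. Taken together this is exactly the duality statement.

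There is no genuine obstacle here; the lemma is essentially a reformulation of Definition \ref{def:bform} after rescaling the second basis by the reciprocal of the square norms. The only point worth flagging is the routine one above, that pulling the scalar $1/(r'!s'!t'!u'!)$ out of the right slot is legal because it is real and the form is Hermitian.
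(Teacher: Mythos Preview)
Your proof is correct and matches the paper's approach: both arguments are a direct computation from Definition~\ref{def:bform}, using orthogonality of distinct monomials and the square-norm formula~\eqref{eq:PNsn} to obtain the Kronecker delta. The paper states this in one line (with the scaled vector in the left slot rather than the right), but the content is identical.
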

\begin{proof}  We invoke Definition \ref{def:bform}. For $(r,s,t,u) \in \mathcal P_N$  and $(R,S,T,U) \in \mathcal P_N$ we have
\begin{align*}
\Bigl \langle \frac{x^r y^s z^t w^u}{r!s!t!u!}, x^R y^S z^T w^U \Bigr \rangle = \delta_{r,R} \delta_{s,S} \delta_{t,T} \delta_{u,U}.
\end{align*}
\end{proof}

\begin{lemma} \label{lem:xnExpand} For $N \in \mathbb N$,
\begin{align} \label{eq:xs}
x^{*N} = \frac{N!}{2^{N}} \sum_{(r,s,t,u) \in \mathcal P_N}   \frac{x^r y^s z^t w^u}{r!s!t!u!}.
\end{align}
\end{lemma}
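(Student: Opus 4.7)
The plan is to obtain this identity as an immediate application of the multinomial theorem. By Definition \ref{def:xyzws}, we have $x^* = (x+y+z+w)/2$, so
\begin{align*}
x^{*N} = \frac{(x+y+z+w)^N}{2^N}.
\end{align*}
Since $x,y,z,w$ commute in $P$, the multinomial theorem applies, giving
\begin{align*}
(x+y+z+w)^N = \sum_{(r,s,t,u) \in \mathcal{P}_N} \frac{N!}{r!\,s!\,t!\,u!}\, x^r y^s z^t w^u,
\end{align*}
where the index set $\mathcal{P}_N$ from Definition \ref{def:profiles} is exactly the set of $4$-tuples $(r,s,t,u) \in \mathbb{N}^4$ with $r+s+t+u=N$, matching the standard multinomial indexing. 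Dividing by $2^N$ and pulling out $N!$ yields the claimed formula.

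There is no real obstacle here; the statement is essentially a restatement of the multinomial expansion. The only minor point to verify is that the summation index set in the lemma coincides with the one in the multinomial theorem, which is immediate from Definition \ref{def:profiles}.
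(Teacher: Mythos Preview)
Your proof is correct and follows essentially the same approach as the paper: both use Definition \ref{def:xyzws} to write $x^{*N} = \bigl((x+y+z+w)/2\bigr)^N$ and then apply the multinomial theorem.
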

\begin{proof} By Definition \ref{def:xyzws},
\begin{align*}
x^{*N} = \Bigl ( \frac{x+y+z+w}{2} \Bigr)^N.
\end{align*}
In this equation, expand the right-hand side using the multinomial theorem.
\end{proof}

\begin{lemma}\label{lem:Normal} For $N \in \mathbb N$ and $(r,s,t,u) \in \mathcal P_N$,
\begin{align*}
\Bigl \langle x^r y^s z^t w^u, x^{*N} \Bigr \rangle = \frac{N!}{2^N}.
\end{align*}
\end{lemma}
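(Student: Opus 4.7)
The plan is to combine the expansion of $x^{*N}$ from Lemma \ref{lem:xnExpand} with the orthogonality relations encoded in Definition \ref{def:bform}. Everything should reduce to a single-term sum after invoking these facts, so no new ideas are needed.

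First I would write out $x^{*N}$ using Lemma \ref{lem:xnExpand}:
\begin{align*}
x^{*N} = \frac{N!}{2^N} \sum_{(R,S,T,U)\in\mathcal P_N} \frac{x^R y^S z^T w^U}{R!S!T!U!}.
\end{align*}
Then I would plug this into the inner product $\langle x^r y^s z^t w^u, x^{*N}\rangle$ and use the (conjugate-)linearity properties from Definition \ref{def:Herm}. Since all the scalars $N!/2^N$ and $1/(R!S!T!U!)$ are real (in fact rational), complex conjugation is trivial, and the inner product distributes as
\begin{align*}
\langle x^r y^s z^t w^u, x^{*N}\rangle = \frac{N!}{2^N} \sum_{(R,S,T,U)\in\mathcal P_N} \frac{\langle x^r y^s z^t w^u, x^R y^S z^T w^U\rangle}{R!S!T!U!}.
\end{align*}

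Next I would apply the orthogonality from Definition \ref{def:bform}: the inner product $\langle x^r y^s z^t w^u, x^R y^S z^T w^U\rangle$ vanishes unless $(R,S,T,U)=(r,s,t,u)$, in which case by \eqref{eq:PNsn} it equals $r!s!t!u!$. Equivalently, this step is exactly the duality relation of Lemma \ref{lem:bdb}. The entire sum then collapses to its single surviving term, giving
\begin{align*}
\langle x^r y^s z^t w^u, x^{*N}\rangle = \frac{N!}{2^N} \cdot \frac{r!s!t!u!}{r!s!t!u!} = \frac{N!}{2^N},
\end{align*}
as required. There is no real obstacle here; the only thing to be mindful of is the conjugate-linearity convention in the second slot of the Hermitian form, but since all coefficients in the expansion of $x^{*N}$ are real this causes no issue.
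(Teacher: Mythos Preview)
Your proof is correct and follows exactly the same approach as the paper: expand $x^{*N}$ via \eqref{eq:xs} and then evaluate using the orthogonality in Definition~\ref{def:bform}.
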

\begin{proof} To verify the result, eliminate $x^{*N}$ using  \eqref{eq:xs} and evaluate the result using  Definition \ref{def:bform}.
\end{proof}

\begin{lemma} \label{lem:bilDM2} For $f,g \in P$ we have
\begin{align}
&\langle D_x f, g \rangle = \langle f, M_x g\rangle, \qquad \qquad
\langle D_y f, g \rangle = \langle f, M_y g\rangle, \label{eq:w1} \\
&\langle D_z f, g \rangle = \langle f, M_z g\rangle, \qquad \qquad
\langle D_w f, g \rangle = \langle f, M_w g\rangle. \label{eq:w2}
\end{align}
\end{lemma}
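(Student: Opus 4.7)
The plan is to reduce each of the four claimed adjointness identities to a computation on the $P$-basis \eqref{eq:Pbasis} and verify it directly from Definition \ref{def:bform} together with the action formulas \eqref{eq:one}--\eqref{eq:four}. By Definition \ref{def:Herm}(i),(ii), the form $\langle\,,\,\rangle$ is linear in its first argument, and by (iii) it is conjugate-linear in its second; since the maps $D_x, M_x$ (and their analogues) are $\mathbb C$-linear, it suffices to verify each identity when $f$ and $g$ both range over the monomial basis \eqref{eq:Pbasis}.

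I will treat the first identity $\langle D_x f, g\rangle = \langle f, M_x g\rangle$; the other three are formally identical. Fix $f = x^r y^s z^t w^u$ and $g = x^R y^S z^T w^U$. By \eqref{eq:one} and \eqref{eq:three},
\begin{align*}
D_x f = r\, x^{r-1} y^s z^t w^u, \qquad M_x g = x^{R+1} y^S z^T w^U.
\end{align*}
Applying Definition \ref{def:bform}, the left side $\langle D_x f, g\rangle$ vanishes unless $(r-1,s,t,u) = (R,S,T,U)$, in which case it equals $r\cdot(r-1)!\,s!\,t!\,u! = r!\,s!\,t!\,u!$. The right side $\langle f, M_x g\rangle$ vanishes unless $(r,s,t,u) = (R+1,S,T,U)$, in which case it equals $r!\,s!\,t!\,u!$. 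Both conditions coincide with $r = R+1$ together with equality of the other indices, and both give the same value. Hence the two sides agree on all pairs of basis vectors, and the first identity follows by sesquilinearity.

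The remaining three identities are obtained from exactly the same computation after applying \eqref{eq:one}--\eqref{eq:four} for the appropriate variable. There is no real obstacle here: the only thing to be careful about is keeping track of the factorial coming from the derivative and matching it with the factorial in the squared norm \eqref{eq:PNsn}, which works out cleanly because lowering (resp.\ raising) an exponent from $r$ to $r-1$ (resp.\ raising it from $r-1$ to $r$) produces the same factor $r$ needed to pass between $(r-1)!$ and $r!$.
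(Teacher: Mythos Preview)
Your proof is correct and follows essentially the same approach as the paper: reduce by sesquilinearity to basis vectors from \eqref{eq:Pbasis} and verify the identities directly using Definition~\ref{def:bform} and \eqref{eq:one}--\eqref{eq:four}. The paper leaves this verification as ``routinely checked,'' while you have written it out explicitly.
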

\begin{proof} Without loss of generality, we may assume that $f,g$ are contained in
the $P$-basis  \eqref{eq:Pbasis}.  Under this assumption  \eqref{eq:w1}, \eqref{eq:w2}  are routinely 
checked using Definition \ref{def:bform} and 
\eqref{eq:one}--\eqref{eq:four}.
\end{proof}

\begin{lemma} \label{lem:bilDM3} For $f,g \in P$ we have
\begin{align}
&\langle M_x f, g \rangle = \langle f, D_x g\rangle, \qquad \qquad
\langle M_y f, g \rangle = \langle f, D_y g\rangle, \label{eq:w1d} \\
&\langle M_z f, g \rangle = \langle f, D_z g\rangle, \qquad \qquad
\langle M_w f, g \rangle = \langle f, D_w g\rangle. \label{eq:w2d}
\end{align}
\end{lemma}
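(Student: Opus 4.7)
The plan is to derive Lemma \ref{lem:bilDM3} as a direct formal consequence of Lemma \ref{lem:bilDM2} by exploiting the Hermitian-symmetry axiom from Definition \ref{def:Herm}(iii). I do not expect to need to revisit the explicit action of $D_a$ or $M_a$ on the monomial basis: all of that work has already been absorbed into Lemma \ref{lem:bilDM2}, so the present lemma should reduce to taking complex conjugates and relabeling.

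Concretely, I would argue as follows. Fix $a \in \{x,y,z,w\}$ and $f,g \in P$. By Definition \ref{def:Herm}(iii),
\begin{align*}
\langle M_a f, g \rangle = \overline{\langle g, M_a f\rangle}, \qquad \qquad \langle f, D_a g\rangle = \overline{\langle D_a g, f\rangle}.
\end{align*}
Apply Lemma \ref{lem:bilDM2} to the pair $(g,f)$ in place of $(f,g)$: it gives $\langle D_a g, f\rangle = \langle g, M_a f\rangle$. Take the complex conjugate of both sides of this identity and substitute into the two displayed expressions above; the result is $\langle M_a f, g\rangle = \langle f, D_a g\rangle$. Running this argument through each of $a=x,y,z,w$ yields the four equations \eqref{eq:w1d}, \eqref{eq:w2d}.

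There is essentially no obstacle: the content of the lemma is that $D_a$ and $M_a$ are mutually adjoint, and Lemma \ref{lem:bilDM2} already establishes one direction of this adjointness. Because the Hermitian form is conjugate-symmetric rather than bilinear, the second direction is not literally a tautology, but it follows at once upon conjugating and swapping the arguments. Alternatively, one could give a direct proof by restricting $f,g$ to the real-coefficient monomial basis \eqref{eq:Pbasis} (on which the form takes real values) and repeating the computation of Lemma \ref{lem:bilDM2}, but the conjugation argument is shorter and avoids any duplicated calculation.
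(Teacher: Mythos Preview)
Your proof is correct and takes essentially the same approach as the paper: the paper's proof is the one-line citation ``By Definition~\ref{def:Herm}(iii) and Lemma~\ref{lem:bilDM2},'' and you have simply spelled out the conjugation-and-swap argument that this citation encodes.
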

\begin{proof} By Definition \ref{def:Herm}(iii)  and Lemma \ref{lem:bilDM2}.
\end{proof}

\begin{lemma} \label{lem:inv} For $i \in \lbrace 1,2,3 \rbrace$ and $f,g \in P$,
\begin{align*}
\langle A_i f, g \rangle = \langle f, A_i g\rangle,  \qquad \qquad \langle A^*_i f, g \rangle = \langle f, A^*_i g\rangle.
\end{align*}
\end{lemma}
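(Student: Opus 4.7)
The plan is to reduce the statement to the adjointness relations for the partial derivatives and multiplication maps that were already established in Lemmas \ref{lem:bilDM2} and \ref{lem:bilDM3}, and then to exploit the explicit formulas for $A_i$ and $A^*_i$ given in Proposition \ref{lem:A6}.

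First I would record the following consequence of Lemmas \ref{lem:bilDM2}, \ref{lem:bilDM3}: for any $a,b \in \{x,y,z,w\}$ and any $f,g \in P$,
\[
\langle M_a D_b\, f, g \rangle = \langle D_b f, D_a g\rangle = \langle f, M_b D_a\, g\rangle,
\]
where the first equality is Lemma \ref{lem:bilDM3} applied to $M_a$ and the second is Lemma \ref{lem:bilDM2} applied to $D_b$. Thus for each ordered pair $(a,b)$ the operator $M_a D_b$ has adjoint $M_b D_a$ with respect to $\langle\,,\,\rangle$. The claim for $A_i$ and $A^*_i$ will follow by checking that the formulas of Proposition \ref{lem:A6} are invariant under the swap $M_a D_b \leftrightarrow M_b D_a$ applied termwise.

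Next I would verify this invariance. For $A^*_i$ the expression from Proposition \ref{lem:A6} is a signed sum of operators of the form $M_a D_a$, and each such term is manifestly self-adjoint by the displayed formula above (with $a=b$). For $A_i$, for instance
\[
A_1 = M_y D_x + M_x D_y + M_w D_z + M_z D_w,
\]
the summands come in matched pairs $\{M_y D_x, M_x D_y\}$ and $\{M_w D_z, M_z D_w\}$ that are swapped by $(a,b)\mapsto (b,a)$; hence the adjoint of $A_1$ equals $A_1$. The corresponding pairings for $A_2$ and $A_3$ are read off from Proposition \ref{lem:A6} in exactly the same way.

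There is no real obstacle here; the only point requiring a little care is to make sure the pairing of terms in the formula for each $A_i$ really is invariant under the transposition $M_a D_b \leftrightarrow M_b D_a$, which is immediate by inspection of Proposition \ref{lem:A6}. Alternatively, one could derive the claim for $A^*_i$ directly from Proposition \ref{lem:A6D} in the same manner (since each $A^*_i$ in the starred formulas is a signed sum of $M_{a^*} D_{a^*}$), giving the second adjointness relation symmetrically.
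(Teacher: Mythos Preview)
Your proposal is correct and follows essentially the same approach as the paper: both use Proposition~\ref{lem:A6} to write $A_i, A^*_i$ as sums of operators $M_a D_b$, and then use Lemmas~\ref{lem:bilDM2}, \ref{lem:bilDM3} to show each such term has adjoint $M_b D_a$, so the formulas are self-adjoint by the evident pairing of terms. The paper carries out the computation explicitly for $A_1$, while you abstract the adjoint relation $\langle M_a D_b f, g\rangle = \langle f, M_b D_a g\rangle$ once and then appeal to the termwise symmetry; these are the same argument.
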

\begin{proof}  We first show that $\langle A_1 f, g\rangle = \langle f, A_1 g\rangle$.
By Proposition  \ref{lem:A6}, the following holds on $P$:
\begin{align*}
A_1 = M_y D_x + M_x D_y + M_w D_z + M_z D_w.
\end{align*}
Using this and Lemmas  \ref{lem:bilDM2}, \ref{lem:bilDM3} we obtain
\begin{align*}
\langle A_1 f, g\rangle &= \langle  M_y D_x f,g\rangle + \langle M_x D_y f,g\rangle + \langle M_w D_z f,g\rangle +\langle  M_z D_w f,g\rangle \\
 &= \langle  D_x f,  D_yg\rangle + \langle  D_y f, D_x g\rangle + \langle  D_z f, D_w g\rangle +\langle  D_w f, D_z g\rangle \\
  &= \langle   f, M_x D_yg\rangle + \langle  f, M_y D_x g\rangle + \langle   f, M_z D_w g\rangle +\langle  f, M_w D_z g\rangle \\
  & = \langle f, A_1 g\rangle.
\end{align*}
We have shown that $\langle A_1 f, g\rangle = \langle f, A_1 g\rangle$. The other assertions are similarly shown.
\end{proof}

\begin{proposition} \label{lem:bilinear}  With respect to  $\langle \,,\,\rangle$ 
the vectors
\begin{align}
x^{*r} y^{*s} z^{*t} w^{*u} \qquad \qquad r,s,t,u \in \mathbb N \label{eq:dbasis}
\end{align}
are mutually orthogonal and
\begin{align} \label{eq:sn}
\Vert x^{*r} y^{*s} z^{*t} w^{*u} \Vert^2 = r!s!t!u! \qquad \qquad r,s,t,u \in \mathbb N.
\end{align}
\end{proposition}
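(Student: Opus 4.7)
The plan is to reduce everything to the adjointness relations of Lemmas \ref{lem:bilDM2} and \ref{lem:bilDM3} together with the defining norms in Definition \ref{def:bform}, via a ``move the creation operators to the other side'' argument.

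First I would promote the adjointness relations to the starred operators: for all $f, g \in P$ and each $a^* \in \{x^*, y^*, z^*, w^*\}$,
\begin{equation*}
\langle D_{a^*} f, g\rangle = \langle f, M_{a^*} g\rangle, \qquad \langle M_{a^*} f, g\rangle = \langle f, D_{a^*} g\rangle.
\end{equation*}
For $a^* = x^*$ this follows by writing $D_{x^*} = (D_x + D_y + D_z + D_w)/2$ and $M_{x^*} = (M_x + M_y + M_z + M_w)/2$ from Lemmas \ref{lem:DDs} and \ref{lem:MMs}, then applying Lemmas \ref{lem:bilDM2} and \ref{lem:bilDM3} termwise and recollecting. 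The other three cases are identical.

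Next I would write $x^{*r}y^{*s}z^{*t}w^{*u} = M_{x^*}^r M_{y^*}^s M_{z^*}^t M_{w^*}^u (1)$ and push the creation operators across the form:
\begin{equation*}
\langle x^{*r}y^{*s}z^{*t}w^{*u},\, x^{*R}y^{*S}z^{*T}w^{*U}\rangle = \langle 1,\, D_{w^*}^u D_{z^*}^t D_{y^*}^s D_{x^*}^r(x^{*R}y^{*S}z^{*T}w^{*U})\rangle.
\end{equation*}
Iterating Lemma \ref{lem:DDact}, together with the commutativity of the starred derivations from Lemma \ref{lem:WeylD}, the right-hand argument equals $\tfrac{R!\,S!\,T!\,U!}{(R-r)!\,(S-s)!\,(T-t)!\,(U-u)!}\, x^{*R-r}y^{*S-s}z^{*T-t}w^{*U-u}$ when $r \leq R,\,s \leq S,\,t \leq T,\,u \leq U$, and vanishes otherwise.

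To finish, I would case-split using $\|1\|^2 = 1$ (take $r=s=t=u=0$ in \eqref{eq:PNsn}) together with the fact that $\langle 1, f\rangle = 0$ whenever $f \in P_M$ with $M > 0$ (Lemma \ref{lem:PDorthog}). When some left index strictly exceeds its right counterpart, the derivative already produces $0$. When all left indices are bounded by the right ones but the tuples differ, the surviving monomial has positive total degree $(R-r)+(S-s)+(T-t)+(U-u)$ and pairs trivially with $1$. When the tuples coincide, the derivative equals $r!s!t!u! \cdot 1$ and pairs with $1$ to give $r!s!t!u!$. The only real obstacle is choreographing the iteration of Lemma \ref{lem:DDact} under the four commuting derivations, which is routine bookkeeping once the adjointness step is in place.
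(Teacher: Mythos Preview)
Your proof is correct and takes a genuinely different route from the paper's.

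The paper argues orthogonality by invoking Lemma~\ref{lem:inv}: if $\langle f,g\rangle\neq 0$ then the $A_1,A_2,A_3$-eigenvalues of $f$ and $g$ must match, forcing the exponent tuples to coincide by Lemma~\ref{lem:rR}. For the norms, the paper first computes the base case $\Vert x^{*N}\Vert^2=N!$ by expanding $x^{*N}$ via the multinomial theorem (Lemma~\ref{lem:xnExpand}) and summing, then runs an induction on $s+t+u$ using the action of $A_1^*,A_2^*,A_3^*$ from Proposition~\ref{lem:dbAction}. Your argument bypasses the $\mathfrak{sl}_4(\mathbb C)$-action entirely: you establish the adjointness $\langle D_{a^*}f,g\rangle=\langle f,M_{a^*}g\rangle$ directly from Lemmas~\ref{lem:bilDM2}, \ref{lem:bilDM3} and the real linear expansions in Lemmas~\ref{lem:DDs}, \ref{lem:MMs}, then reduce the inner product to $\langle 1,\,D_{w^*}^uD_{z^*}^tD_{y^*}^sD_{x^*}^r(x^{*R}y^{*S}z^{*T}w^{*U})\rangle$ and read off both orthogonality and the norm in one stroke. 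This is the standard boson Fock-space argument transported through the change of basis. A pleasant side effect is that you obtain the content of Lemma~\ref{lem:bilDM2s} without appealing to the $\sigma$-invariance of the form (Proposition~\ref{thm:aut}); in the paper that lemma is proved \emph{after} Proposition~\ref{lem:bilinear} and its proof via $\sigma$ would be circular here. Your route is shorter and more elementary; the paper's route has the virtue of exhibiting how the Cartan subalgebras $\mathbb H,\mathbb H^*$ separate the starred monomials, which fits its broader narrative.
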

\begin{proof} We first show that the vectors \eqref{eq:dbasis} are mutually orthogonal.
To this end, let $f,g$ denote vectors from \eqref{eq:dbasis} such that $\langle f,g\rangle \not=0$.
We show that $f=g$. Write
\begin{align*}
f= x^{*r} y^{*s} z^{*t} w^{*u}, \qquad \qquad g=x^{*R}y^{*S}z^{*T}w^{*U}.
\end{align*} 
By Lemma \ref{lem:PDorthog}  and since $\langle f,g\rangle\not=0$,
\begin{align} \label{eq:R1}
r+s+t+u=R+S+T+U.
\end{align}
By Lemma  \ref{lem:inv} and the construction,
\begin{align*}
\frac{\langle A_1 f, g\rangle}{\langle f,g\rangle}
= 
\frac{\langle f, A_1 g\rangle}{\langle f,g\rangle}, \qquad \quad
\frac{\langle A_2 f, g\rangle}{\langle f,g\rangle}
= 
\frac{\langle f, A_2 g\rangle}{\langle f,g\rangle}, \qquad \quad
\frac{\langle A_3 f, g\rangle}{\langle f,g\rangle}
= 
\frac{\langle f, A_3 g\rangle}{\langle f,g\rangle}.
\end{align*}
Evaluating these equations using Proposition  \ref{lem:dbAction}, we obtain
\begin{align} 
r+s-t-u&=R+S-T-U, \label{eq:R2} \\ 
r-s+t-u &=R-S+T-U,  \label{eq:R3}\\
r-s-t+u &= R-S-T+U. \label{eq:R4}
\end{align}
By Lemma \ref{lem:rR} and \eqref{eq:R1}--\eqref{eq:R4}, 
\begin{align*}
r=R, \qquad s= S, \qquad t=T, \qquad u=U.
\end{align*}
Therefore $f=g$. We have shown that the vectors \eqref{eq:dbasis} are mutually orthogonal. Next we prove 
\eqref{eq:sn}. For the rest of this proof, fix $N \in \mathbb N$. We will prove that
\begin{align} \label{eq:sn2}
\Vert x^{*r} y^{*s} z^{*t} w^{*u} \Vert^2 = r!s!t!u! \qquad \qquad  (r,s,t,u) \in \mathcal P_N.
\end{align}
Our proof of \eqref{eq:sn2} is by induction on $s+t+u$. First assume that $s+t+u=0$. Then $r=N$ and $s=t=u=0$.
We must show that $\Vert x^{*N} \Vert^2 = N!$.  To this end, in \eqref{eq:xs}  take the square norm of each side to obtain
\begin{align*}
\Vert x^{*N}\Vert^2
                              &= \Bigl \Vert \frac{N!}{2^N}  \sum_{(r,s,t,u) \in \mathcal P_N }  \frac{x^ry^sz^t w^u}{r!s!t!u!} \Bigr \Vert^2 \\
                              &= \frac{(N!)^2}{4^{N}} \sum_{(r,s,t,u) \in \mathcal P_N }  \frac{ \Vert  x^ry^sz^t w^u \Vert^2}{(r!s!t!u!)^2}  \\
                              &= \frac{N!}{4^{N}} \sum_{(r,s,t,u) \in \mathcal P_N  }    \frac{N!}{r!s!t!u!}  \\
                              &= N! 4^{-N} (1+1+1+1)^N \\
                              &=N!.
\end{align*}
We have proved \eqref{eq:sn2} for $s+t+u=0$. Next assume that $s+t+u\geq 1$. Then $s\geq 1$ or $t\geq 1$ or $u\geq 1$.
We take each case in turn. 
\medskip

\noindent {\bf Case} $s\geq 1$.  Define
\begin{align*}
R=r+1, \qquad S=s-1, \qquad T=t, \qquad U=u
\end{align*}
and note that $S+T+U<s+t+u$.
By Proposition \ref{lem:dbAction}(iv) and Lemma \ref{lem:inv},
\begin{align*}
R \Vert x^{*r} y^{*s} z^{*t} w^{*u} \Vert^2 &=  
\langle  x^{*r} y^{*s} z^{*t} w^{*u}, A^*_1 x^{*R}y^{*S}z^{*T}w^{*U} \rangle  \\
&=\langle A^*_1 x^{*r} y^{*s} z^{*t} w^{*u}, x^{*R}y^{*S}z^{*T}w^{*U} \rangle  \\
&= s \Vert x^{*R} y^{*S} z^{*T} w^{*U} \Vert^2.
\end{align*}
By this and induction,
\begin{align*}
\Vert x^{*r} y^{*s} z^{*t} w^{*u} \Vert^2 &= \frac{s}{R} \Vert x^{*R} y^{*S} z^{*T} w^{*U} \Vert^2 
            = \frac{s}{R} R! S!T!U!
            = r!s! t! u!.
\end{align*}
\noindent {\bf Case} $t\geq 1$.  Define
\begin{align*}
R=r+1, \qquad S=s, \qquad T=t-1, \qquad U=u
\end{align*}
and note that $S+T+U<s+t+u$.
By Proposition \ref{lem:dbAction}(v) and Lemma \ref{lem:inv},
\begin{align*}
R \Vert x^{*r} y^{*s} z^{*t} w^{*u} \Vert^2 &=  
\langle  x^{*r} y^{*s} z^{*t} w^{*u}, A^*_2 x^{*R}y^{*S}z^{*T}w^{*U} \rangle  \\
&=\langle A^*_2 x^{*r} y^{*s} z^{*t} w^{*u}, x^{*R}y^{*S}z^{*T}w^{*U} \rangle  \\
&= t \Vert x^{*R} y^{*S} z^{*T} w^{*U} \Vert^2.
\end{align*}
By this and induction,
\begin{align*}
\Vert x^{*r} y^{*s} z^{*t} w^{*u} \Vert^2 &= \frac{t}{R} \Vert x^{*R} y^{*S} z^{*T} w^{*U} \Vert^2 
            = \frac{t}{R} R! S!T!U!
            = r!s! t! u!.
\end{align*}
\noindent {\bf Case} $u\geq 1$.  Define
\begin{align*}
R=r+1, \qquad S=s, \qquad T=t, \qquad U=u-1
\end{align*}
and note that $S+T+U<s+t+u$.
By Proposition \ref{lem:dbAction}(vi) and Lemma \ref{lem:inv},
\begin{align*}
R \Vert x^{*r} y^{*s} z^{*t} w^{*u} \Vert^2 &=  
\langle  x^{*r} y^{*s} z^{*t} w^{*u}, A^*_3 x^{*R}y^{*S}z^{*T}w^{*U} \rangle  \\
&=\langle A^*_3 x^{*r} y^{*s} z^{*t} w^{*u}, x^{*R}y^{*S}z^{*T}w^{*U} \rangle  \\
&= u \Vert x^{*R} y^{*S} z^{*T} w^{*U} \Vert^2.
\end{align*}
By this and induction,
\begin{align*}
\Vert x^{*r} y^{*s} z^{*t} w^{*u} \Vert^2 &= \frac{u}{R} \Vert x^{*R} y^{*S} z^{*T} w^{*U} \Vert^2 
            = \frac{u}{R} R! S!T!U!
            = r!s! t! u!.
\end{align*}
We have proven \eqref{eq:sn2}, and \eqref{eq:sn} follows.
\end{proof}

\noindent The following result is motivated by Lemma \ref{lem:bdb}.

\begin{lemma} For $N \in \mathbb N$ the $P_N$-basis 
\begin{align*}
x^{*r} y^{*s} z^{*t} w^{*u} \qquad \qquad (r,s,t,u) \in \mathcal P_N
\end{align*}
and the $P_N$-basis 
\begin{align*}
\frac{x^{*r} y^{*s} z^{*t} w^{*u}}{r!s!t!u!} \qquad \qquad (r,s,t,u) \in \mathcal P_N
\end{align*}
are dual with respect to $\langle \,,\,\rangle$.
\end{lemma}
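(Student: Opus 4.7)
The plan is to reduce this statement directly to Proposition \ref{lem:bilinear}, which is the starred analogue of Definition \ref{def:bform}. Recall that by definition, two bases $\{e_\alpha\}_{\alpha \in I}$ and $\{f_\alpha\}_{\alpha \in I}$ of a vector space are dual with respect to $\langle\,,\,\rangle$ precisely when $\langle e_\alpha, f_\beta\rangle = \delta_{\alpha,\beta}$ for all $\alpha,\beta \in I$. So I just need to compute this pairing for our two candidate bases indexed by $\mathcal P_N$.

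Concretely, I would fix $(r,s,t,u), (R,S,T,U) \in \mathcal P_N$ and evaluate
\begin{align*}
\Bigl\langle \frac{x^{*r}y^{*s}z^{*t}w^{*u}}{r!s!t!u!}, \, x^{*R}y^{*S}z^{*T}w^{*U} \Bigr\rangle
 = \frac{1}{r!s!t!u!} \Bigl\langle x^{*r}y^{*s}z^{*t}w^{*u}, \, x^{*R}y^{*S}z^{*T}w^{*U} \Bigr\rangle,
\end{align*}
using sesquilinearity of $\langle\,,\,\rangle$ (Definition \ref{def:Herm}(ii)). By the orthogonality half of Proposition \ref{lem:bilinear}, the right-hand inner product vanishes unless $(r,s,t,u)=(R,S,T,U)$, in which case it equals $\Vert x^{*r}y^{*s}z^{*t}w^{*u}\Vert^2 = r!s!t!u!$ by the square-norm formula \eqref{eq:sn} in the same proposition. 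Dividing by $r!s!t!u!$ gives $\delta_{(r,s,t,u),(R,S,T,U)}$, which is precisely the duality condition.

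There is no real obstacle here: all the substantive work was already carried out in Proposition \ref{lem:bilinear} (where orthogonality and the square-norm computation were established via the induction on $s+t+u$ using the action of $A_1^*, A_2^*, A_3^*$). This final lemma is a one-line consequence, exactly parallel to how Lemma \ref{lem:bdb} followed from Definition \ref{def:bform}, and the proof amounts to little more than citing the preceding proposition.
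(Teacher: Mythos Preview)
Your proposal is correct and follows exactly the same approach as the paper, which simply cites Proposition \ref{lem:bilinear}. Your observation that this is parallel to Lemma \ref{lem:bdb} following from Definition \ref{def:bform} is precisely the point.
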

\begin{proof} Use Proposition \ref{lem:bilinear}. 
\end{proof}

\noindent The following result is motivated by Lemma \ref{lem:xnExpand}. Recall the automorphism $\sigma$ of $P$ from Proposition \ref{thm:aut2}.

\begin{lemma} \label{lem:motivated} For $N \in \mathbb N$,
\begin{align*} 
x^{N} = \frac{N!}{2^{ N}} \sum_{(r,s,t,u) \in \mathcal P_N  }      \frac{x^{*r}y^{*s}z^{*t} w^{*u}}{r!s!t!u!}.
\end{align*}
\end{lemma}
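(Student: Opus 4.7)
The plan is to derive this identity from Lemma \ref{lem:xnExpand} by applying the algebra automorphism $\sigma$ of $P$ (from Proposition \ref{thm:aut2}) to both sides. Recall that $\sigma$ is an algebra automorphism with $\sigma^2 = {\rm id}$ that swaps $x \leftrightarrow x^*$, $y \leftrightarrow y^*$, $z \leftrightarrow z^*$, $w \leftrightarrow w^*$. Since $\sigma$ is a $\mathbb C$-algebra homomorphism, it is multiplicative on products and fixes scalars, so applying $\sigma$ term by term to
\begin{align*}
x^{*N} = \frac{N!}{2^{N}} \sum_{(r,s,t,u) \in \mathcal P_N} \frac{x^r y^s z^t w^u}{r!s!t!u!}
\end{align*}
converts the left-hand side to $x^N$ and the right-hand side to the desired sum in the starred variables.

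Alternatively, the identity admits a direct proof via the multinomial theorem: by Lemma \ref{lem:xyzws} we have
\begin{align*}
x = \frac{x^* + y^* + z^* + w^*}{2},
\end{align*}
and since $x^*, y^*, z^*, w^*$ mutually commute (they lie in the commutative algebra $P$), raising to the $N$th power and expanding gives
\begin{align*}
x^N = \frac{1}{2^N} \sum_{(r,s,t,u) \in \mathcal P_N} \binom{N}{r,s,t,u} x^{*r} y^{*s} z^{*t} w^{*u} = \frac{N!}{2^N} \sum_{(r,s,t,u) \in \mathcal P_N} \frac{x^{*r} y^{*s} z^{*t} w^{*u}}{r!s!t!u!}.
\end{align*}

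There is no real obstacle here; the statement is a formal consequence of either the $\sigma$-symmetry already established in Proposition \ref{thm:aut2} or, more elementarily, the multinomial expansion applied to Lemma \ref{lem:xyzws}. I would present the $\sigma$-based proof since it emphasizes the duality between the two bases \eqref{eq:Pbasis} and \eqref{eq:Pdualbasis} that is a recurring theme in this paper.
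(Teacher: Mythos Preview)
Your proposal is correct and your primary $\sigma$-based argument is exactly the paper's proof: the paper simply says to apply the automorphism $\sigma$ to each side of \eqref{eq:xs}. Your alternative multinomial expansion via Lemma~\ref{lem:xyzws} is also valid, and indeed mirrors how Lemma~\ref{lem:xnExpand} itself was proved, but the paper chooses the $\sigma$ route precisely for the duality reason you mention.
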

\begin{proof}  Apply the automorphism
$\sigma$ to each side of \eqref{eq:xs}.
\end{proof}

\begin{proposition} \label{thm:aut} For $f,g \in P$ we have
\begin{align} \label{eq:orthog}
\langle \sigma f, \sigma g\rangle = \langle f,g \rangle.
\end{align}
\end{proposition}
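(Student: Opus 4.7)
\begin{proof}[Proof sketch]
The plan is to verify \eqref{eq:orthog} on pairs of basis vectors from \eqref{eq:Pbasis} and then extend by sesquilinearity. Since $\sigma$ is a $\mathbb C$-algebra automorphism of $P$, it is in particular $\mathbb C$-linear, so the map $(f,g)\mapsto \langle \sigma f,\sigma g\rangle$ is a Hermitian form on $P$. The Hermitian form $\langle\,,\,\rangle$ is determined by its values on the basis \eqref{eq:Pbasis}, so it suffices to show equality on that basis.

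Fix $r,s,t,u,R,S,T,U\in\mathbb N$. By the comment at the end of Section 7, $\sigma$ swaps
\begin{align*}
x^r y^s z^t w^u \;\leftrightarrow\; x^{*r} y^{*s} z^{*t} w^{*u},
\end{align*}
so
\begin{align*}
\langle \sigma(x^r y^s z^t w^u),\sigma(x^R y^S z^T w^U)\rangle
=\langle x^{*r} y^{*s} z^{*t} w^{*u},x^{*R} y^{*S} z^{*T} w^{*U}\rangle.
\end{align*}
By Proposition \ref{lem:bilinear}, the right-hand side equals $\delta_{r,R}\delta_{s,S}\delta_{t,T}\delta_{u,U}\,r!s!t!u!$. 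By Definition \ref{def:bform}, this is precisely $\langle x^r y^s z^t w^u,x^R y^S z^T w^U\rangle$. Thus \eqref{eq:orthog} holds on the basis \eqref{eq:Pbasis}, hence on all of $P$ by sesquilinearity.
\end{proof}

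The main point of this plan is that all the work was already done in Proposition \ref{lem:bilinear}: once we know the starred basis is orthogonal with the same square norms as the unstarred basis, the fact that $\sigma$ intertwines the two bases forces $\sigma$ to preserve $\langle\,,\,\rangle$. There is no real obstacle; the argument is a one-line reduction to the square-norm computation \eqref{eq:sn}, which was the genuinely nontrivial step and has already been carried out via the induction on $s+t+u$ in the proof of Proposition \ref{lem:bilinear}.
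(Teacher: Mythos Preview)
Your proof is correct and follows essentially the same approach as the paper: reduce to basis vectors from \eqref{eq:Pbasis}, use that $\sigma$ swaps this basis with the starred basis, and compare Definition~\ref{def:bform} with Proposition~\ref{lem:bilinear}. The paper's proof is terser but unwinds to exactly the computation you wrote out.
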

\begin{proof} Without loss of generality, we may assume that $f,g$ are contained in the $P$-basis \eqref{eq:Pbasis}. Comparing
Definition  \ref{def:bform} and Proposition \ref{lem:bilinear}, we routinely obtain \eqref{eq:orthog}.
\end{proof}

\noindent The following result is motivated by Lemma \ref{lem:Normal}.

\begin{lemma} \label{lem:xBinner} For $N \in \mathbb N$ and $(r,s,t,u) \in \mathcal P_N$,
\begin{align*}
\Bigl \langle x^N, x^{*r} y^{*s}z^{*t} w^{*u} \Bigr \rangle = \frac{N!}{2^N}.
\end{align*}
\end{lemma}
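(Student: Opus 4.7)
The statement is essentially a "dual" version of Lemma \ref{lem:Normal}, and the cleanest route is to transport Lemma \ref{lem:Normal} across the automorphism $\sigma$ introduced in Proposition \ref{thm:aut2}.

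The plan is to apply Proposition \ref{thm:aut}, which asserts that $\langle \sigma f, \sigma g\rangle = \langle f,g\rangle$ for all $f,g \in P$. Since $\sigma$ swaps $x\leftrightarrow x^*$, $y\leftrightarrow y^*$, $z\leftrightarrow z^*$, $w\leftrightarrow w^*$ and is an algebra automorphism, it sends $x^N \mapsto x^{*N}$ and $x^{*r}y^{*s}z^{*t}w^{*u} \mapsto x^r y^s z^t w^u$. Therefore
\begin{align*}
\bigl\langle x^N, x^{*r} y^{*s} z^{*t} w^{*u} \bigr\rangle
= \bigl\langle \sigma(x^N), \sigma(x^{*r} y^{*s} z^{*t} w^{*u}) \bigr\rangle
= \bigl\langle x^{*N}, x^r y^s z^t w^u \bigr\rangle.
\end{align*}
Now by the Hermitian symmetry from Definition \ref{def:Herm}(iii) together with Lemma \ref{lem:Normal},
\begin{align*}
\bigl\langle x^{*N}, x^r y^s z^t w^u \bigr\rangle
= \overline{\bigl\langle x^r y^s z^t w^u, x^{*N} \bigr\rangle}
= \overline{N!/2^N} = N!/2^N,
\end{align*}
which is the desired identity.

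An equivalent approach that avoids $\sigma$ would be to expand $x^N$ via Lemma \ref{lem:motivated} inside the inner product and use that the vectors $x^{*R}y^{*S}z^{*T}w^{*U}$ form an orthogonal family with square norms $R!S!T!U!$ (Proposition \ref{lem:bilinear}); only the term with $(R,S,T,U) = (r,s,t,u)$ survives, and the factors $r!s!t!u!$ cancel with the reciprocal in Lemma \ref{lem:motivated}, leaving $N!/2^N$. There is no real obstacle in either route; the only point requiring care is keeping track of the sesquilinearity of $\langle \,,\,\rangle$, but since all the quantities in sight are real scalar multiples of basis products, no conjugation actually changes the answer.
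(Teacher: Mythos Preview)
Your proof is correct and matches the paper's approach exactly: the paper's proof also applies the automorphism $\sigma$ to Lemma \ref{lem:Normal} and invokes Proposition \ref{thm:aut}. The alternative expansion via Lemma \ref{lem:motivated} and Proposition \ref{lem:bilinear} that you mention is also valid, but the $\sigma$-transport is precisely what the paper does.
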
 
\begin{proof}
Apply the automorphism $\sigma$
to everything in Lemma \ref{lem:Normal}, and evaluate the result using Proposition \ref{thm:aut}.
\end{proof}

\noindent The next result is motivated by Lemmas  \ref{lem:bilDM2},  \ref{lem:bilDM3}.

\begin{lemma} \label{lem:bilDM2s} For $f,g \in P$ we have
\begin{align*}
&\langle D_{x^*} f, g \rangle = \langle f, M_{x^*} g\rangle, \qquad \qquad
\langle D_{y^*} f, g \rangle = \langle f, M_{y^*} g\rangle, \\
&\langle D_{z^*} f, g \rangle = \langle f, M_{z^*} g\rangle, \qquad \qquad
\langle D_{w^*} f, g \rangle = \langle f, M_{w^*} g\rangle
\end{align*}
and also
\begin{align*}
&\langle M_{x^*} f, g \rangle = \langle f, D_{x^*} g\rangle, \qquad \qquad
\langle M_{y^*} f, g \rangle = \langle f, D_{y*} g\rangle,  \\
&\langle M_{z^*} f, g \rangle = \langle f, D_{z^*} g\rangle, \qquad \qquad
\langle M_{w^*} f, g \rangle = \langle f, D_{w^*} g\rangle. 
\end{align*}
\end{lemma}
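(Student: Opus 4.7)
The plan is to deduce this lemma from the analogous starred-free statements (Lemmas \ref{lem:bilDM2}, \ref{lem:bilDM3}) by conjugating everything through the automorphism $\sigma$ of $P$ from Proposition \ref{thm:aut2}. The two crucial ingredients are the intertwining relations $D_{a^*} = \sigma D_a \sigma^{-1}$ and $M_{a^*} = \sigma M_a \sigma^{-1}$ for $a \in \{x,y,z,w\}$ (Lemmas \ref{lem:sDs}, \ref{lem:sMs}), together with the $\sigma$-invariance of the Hermitian form $\langle \sigma f, \sigma g\rangle = \langle f,g\rangle$ (Proposition \ref{thm:aut}).

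First I would derive the convenient consequence $\langle \sigma u, v\rangle = \langle u, \sigma^{-1} v\rangle$ of Proposition \ref{thm:aut} (substituting $v \mapsto \sigma v$, then using $\sigma^2 = \mathrm{id}$, which also gives $\sigma^{-1} = \sigma$). With this in hand, for the first group of identities I would compute, for arbitrary $f,g \in P$,
\begin{align*}
\langle D_{x^*} f, g\rangle
 = \langle \sigma D_x \sigma^{-1} f, g\rangle
 = \langle D_x \sigma^{-1} f, \sigma^{-1} g\rangle
 = \langle \sigma^{-1} f, M_x \sigma^{-1} g\rangle
 = \langle f, \sigma M_x \sigma^{-1} g\rangle
 = \langle f, M_{x^*} g\rangle,
\end{align*}
invoking Lemma \ref{lem:bilDM2} at the middle step and Lemma \ref{lem:sMs} at the last. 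The remaining three cases are identical, with $y,z,w$ in place of $x$.

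For the second group of identities, either the same conjugation trick applies verbatim with Lemma \ref{lem:bilDM3} playing the role of Lemma \ref{lem:bilDM2}, or one can obtain them directly from the first group by invoking Definition \ref{def:Herm}(iii): for instance $\langle M_{x^*} f, g\rangle = \overline{\langle g, M_{x^*} f\rangle} = \overline{\langle D_{x^*} g, f\rangle} = \langle f, D_{x^*} g\rangle$. I would use whichever presentation reads more cleanly.

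There is no real obstacle here; the only thing to watch is getting the direction of the conjugation correct when moving $\sigma$ across $\langle\,,\,\rangle$, and confirming that the ``unitarity'' identity $\langle \sigma u, v\rangle = \langle u, \sigma^{-1} v\rangle$ really is a formal consequence of $\mathbb{C}$-linearity of $\sigma$ combined with Proposition \ref{thm:aut}. Once that is observed, the proof is a one-line chain of substitutions for each of the eight identities.
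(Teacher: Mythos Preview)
Your proposal is correct and essentially identical to the paper's own proof: the paper writes out exactly the chain $\langle D_{x^*} f, g\rangle = \langle \sigma D_x \sigma^{-1} f, g\rangle = \langle D_x \sigma^{-1} f, \sigma^{-1} g\rangle = \langle \sigma^{-1} f, M_x \sigma^{-1} g\rangle = \langle f, \sigma M_x \sigma^{-1} g\rangle = \langle f, M_{x^*} g\rangle$ and then says the remaining equations are similarly proved.
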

\begin{proof} To prove the first equation, observe that
\begin{align*}
\langle D_{x^*} f, g \rangle &=
\langle \sigma D_x \sigma^{-1} f, g\rangle =
\langle  D_x \sigma^{-1}  f, \sigma^{-1} g \rangle \\
&=
\langle  \sigma^{-1} f ,  M_x \sigma^{-1} g \rangle 
=
\langle   f,  \sigma M_x \sigma^{-1} g \rangle =
\langle   f,  M_{x^*} g\rangle.
\end{align*}
The remaining equations are similarly proved.
\end{proof}

\noindent  Let $N \in \mathbb N$. Our next goal is to compute all the inner products between the $P_N$-basis
\begin{align*}
x^r y^s z^t w^u \qquad \qquad (r,s,t,u) \in \mathcal P_N 
\end{align*}
and the $P_N$-basis
\begin{align*}
x^{*r} y^{*s} z^{*t} w^{*u} \qquad \qquad (r,s,t,u) \in \mathcal P_N.
\end{align*}
We will express these inner products in two ways: using
a generating function, and as  hypergeometric sums.
%We will express these inner products using a generating function.

\begin{proposition} \label{thm:cross} Let $N \in \mathbb N$. For $(r,s,t,u) \in \mathcal P_N$ and $(R,S,T,U) \in \mathcal P_N$, the inner product
\begin{align} \label{eq:cross}
\Bigl \langle x^r y^s z^t w^u, x^{*R} y^{*S} z^{*T} w^{*U} \Bigr \rangle
\end{align}
is equal to 
\begin{align*}
\frac{r!s!t!u!}{2^N}
\end{align*}
times the coefficient of $x^ry^s z^t w^u$ in 
\begin{align*}
(x+y+z+w)^R (x+y-z-w)^S (x-y+z-w)^T (x-y-z+w)^U.
\end{align*}
\end{proposition}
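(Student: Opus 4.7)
The plan is to directly substitute the formulas for $x^*, y^*, z^*, w^*$ from Definition \ref{def:xyzws} into the second slot of the inner product, expand, and then apply the orthogonality relations of Definition \ref{def:bform}. Because $R+S+T+U=N$, we have
$$x^{*R} y^{*S} z^{*T} w^{*U} = \frac{1}{2^N}(x+y+z+w)^R (x+y-z-w)^S (x-y+z-w)^T (x-y-z+w)^U.$$
Call the four-factor product on the right $Q$; note $Q \in P_N$, and write its monomial expansion
$$Q = \sum_{(r',s',t',u') \in \mathcal P_N} c_{r',s',t',u'}\, x^{r'} y^{s'} z^{t'} w^{u'},$$
where each coefficient $c_{r',s',t',u'} \in \mathbb Z$ (in particular is real).

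Next I would apply $\langle x^r y^s z^t w^u, \cdot\,\rangle$ to this expansion. By conjugate-linearity of $\langle\,,\,\rangle$ in the second argument (Definition \ref{def:Herm}(iii)), and since each $c_{r',s',t',u'}$ is real,
$$\Bigl\langle x^r y^s z^t w^u,\; x^{*R} y^{*S} z^{*T} w^{*U}\Bigr\rangle = \frac{1}{2^N}\sum_{(r',s',t',u') \in \mathcal P_N} c_{r',s',t',u'}\, \Bigl\langle x^r y^s z^t w^u,\; x^{r'} y^{s'} z^{t'} w^{u'}\Bigr\rangle.$$
By Definition \ref{def:bform}, the basis $\{x^{r'} y^{s'} z^{t'} w^{u'}\}$ is orthogonal and $\Vert x^r y^s z^t w^u\Vert^2 = r!s!t!u!$, so only the term with $(r',s',t',u')=(r,s,t,u)$ contributes, and it contributes exactly $c_{r,s,t,u}\cdot r!s!t!u!$. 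The assertion follows immediately, since $c_{r,s,t,u}$ is by definition the coefficient of $x^r y^s z^t w^u$ in $Q$.

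There is essentially no obstacle: the whole proof reduces to the bookkeeping above. The only points requiring any care are (a) checking that the expansion of $Q$ lives inside $P_N$, so that the sum is indexed by $\mathcal P_N$ (which is immediate from homogeneity of the four linear factors and $R+S+T+U=N$), and (b) noting that the coefficients in the expansion of $Q$ are real, so that complex conjugation in the Hermitian form causes no sign change. Both are transparent, so the argument is very short.
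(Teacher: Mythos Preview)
Your proof is correct and takes essentially the same approach as the paper: substitute Definition~\ref{def:xyzws} into the second argument, expand, and apply the orthogonality of Definition~\ref{def:bform}. The paper's proof is a one-line sketch of exactly this computation; your version just fills in the bookkeeping.
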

\begin{proof} Expand the inner product \eqref{eq:cross} using Definition \ref{def:xyzws},
and evaluate the result using Definition \ref{def:bform}.
\end{proof}

\noindent We bring in some notation. For $\alpha \in \mathbb C$ define
\begin{align*}
(\alpha)_n = \alpha (\alpha+1) \cdots (\alpha+n-1) \qquad \qquad n \in \mathbb N.
\end{align*}
\noindent The following result is a routine application of \cite[Line~(6)]{tanaka}.

\begin{proposition} \label{thm:sumCross} 
Let $N \in \mathbb N$. For $(r,s,t,u) \in \mathcal P_N$ and $(R,S,T,U) \in \mathcal P_N$, 
\begin{align*} %%%%%%\label{eq:sumCross}
&\Bigl \langle x^r y^s z^t w^u, x^{*R} y^{*S} z^{*T} w^{*U} \Bigr\rangle \\
&= \frac{N!}{2^N} \sum_{\stackrel{a,b,c,d,e,f\in \mathbb N}{\stackrel{a+b+c+d+e+f\leq N}{}}} \frac{(-s)_{a+b} (-t)_{c+d} (-u)_{e+f} (-S)_{c+e} (-T)_{a+f} (-U)_{b+d} }  {(-N)_{a+b+c+d+e+f}} \,\frac{   2^{a+b+c+d+e+f}}{a! b! c! d! e! f!}
\end{align*}
\end{proposition}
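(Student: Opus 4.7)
\medskip
\noindent\textbf{Proof proposal.}
The plan is to start from Proposition \ref{thm:cross}, which reduces the inner product to the coefficient of $x^r y^s z^t w^u$ in the four-fold product
\begin{align*}
(x+y+z+w)^R (x+y-z-w)^S (x-y+z-w)^T (x-y-z+w)^U,
\end{align*}
times the factor $r!s!t!u!/2^N$. The strategy is then to expand this coefficient by the multinomial theorem and rearrange the resulting sum so that it matches the hypergeometric identity cited from \cite[Line~(6)]{tanaka}.

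First I would apply the multinomial theorem to each of the four factors separately. Each factor $(x\pm y\pm z\pm w)^{A}$ (with $A\in\{R,S,T,U\}$) becomes a sum over quadruples $(r_i,s_i,t_i,u_i)$ of nonnegative integers summing to $A$, weighted by $A!/(r_i!s_i!t_i!u_i!)$ and a sign $(-1)^{\cdot}$ determined by which of $y,z,w$ carry a minus sign in that factor. Collecting the coefficient of $x^r y^s z^t w^u$ then imposes four linear constraints $\sum_i r_i=r$, $\sum_i s_i=s$, $\sum_i t_i=t$, $\sum_i u_i=u$, producing an alternating multi-index sum. The goal is to recast this sum in the six-variable form of the statement.

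Next I would perform a change of summation variables in which the six indices $a,b,c,d,e,f$ track precisely the ``mixed'' contributions, that is, the parts of the exponents of $y,z,w$ drawn from factors other than the first. Concretely, the pair $(a,b)$ records how the $y$-exponent $s$ is split among the factors carrying a sign on $y$, and similarly $(c,d)$ for $t$ and $(e,f)$ for $u$; dually, the pairs $(c+e,a+f,b+d)$ record how the exponents of the factors indexed by $S,T,U$ are apportioned. The Pochhammer symbols $(-s)_{a+b}$, $(-t)_{c+d}$, $(-u)_{e+f}$, $(-S)_{c+e}$, $(-T)_{a+f}$, $(-U)_{b+d}$ arise from rewriting the multinomial coefficients of each factor as descending factorials, and $(-N)_{a+b+c+d+e+f}$ in the denominator comes from consolidating $N!/r!$ after the constraint $\sum_i r_i=r$ is eliminated. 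The power $2^{a+b+c+d+e+f}$ accounts for the signs $(-1)^{\cdot}$ and the remaining powers of $2$ left over after the prefactor $1/2^N$ has been extracted. This matches the cited identity verbatim, so applying it produces the displayed formula.

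The main obstacle is the combinatorial bookkeeping: correctly identifying the symmetric role played by the six indices, and verifying that the signs together with the powers of $2$ coming from the four factors and from the $1/2^N$ prefactor combine into a clean $2^{a+b+c+d+e+f}$ rather than a messier alternating expression. Once the six-variable reparameterization is set up so that the Pochhammer symbols appear with the indicated arguments, the identification with \cite[Line~(6)]{tanaka} is mechanical.
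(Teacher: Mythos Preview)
Your starting point (Proposition~\ref{thm:cross} followed by a four-fold multinomial expansion) is sound, but the heart of your argument --- the ``change of summation variables'' from the multinomial expansion to the six-index hypergeometric sum --- does not work as stated. The full multinomial expansion produces a sum over sixteen exponents $r_i,s_i,t_i,u_i$ ($1\le i\le 4$) subject to seven independent row/column constraints, hence nine free indices. After identifying the six ``signed'' entries with $a,\dots,f$ as you suggest ($a=s_3$, $b=s_4$, $c=t_2$, $d=t_4$, $e=u_2$, $f=u_3$), three indices remain, and the resulting inner sum is \emph{not} in term-by-term correspondence with the six-variable formula. For a concrete counterexample take $N=2$, $(r,s,t,u)=(R,S,T,U)=(0,1,1,0)$: the multinomial expansion, grouped by $(a,c)$ (the only nontrivial indices here), contributes $1,0,0,1$ at $(a,c)=(0,0),(1,0),(0,1),(1,1)$, whereas the six-variable sum has terms $1,-1,-1,2$. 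The totals agree (after the prefactors), but there is no bijective reparameterization; in particular at $(a,c)=(1,0)$ the multinomial side has an \emph{empty} sum while the hypergeometric side has a nonzero term. So the passage from nine indices to six is not ``bookkeeping'' --- it requires a genuine identity, not a relabelling.

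The paper avoids this entirely: it simply invokes \cite[Line~(6)]{tanaka}, which gives the hypergeometric expansion for the generalized Krawtchouk polynomials attached to any character algebra, and observes that the present situation is the special case of the Klein four-group $\mathbb Z_2\oplus\mathbb Z_2$ (whose eigenmatrix is $2\Upsilon$ and whose $\tilde\Omega$ has zero diagonal and $2$'s elsewhere). If you want a self-contained argument rather than a citation, a workable route is to show that both sides satisfy the three-term recurrences of Proposition~\ref{prop:4TR} together with the common normalization $\mathcal P(0,0,0;S,T,U)=1$; uniqueness then forces equality. But the pure multinomial-plus-change-of-variables route you sketched does not close.
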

\begin{proof} This is \cite[Line~(6)]{tanaka} applied to the character algebra of the Klein four-group $\mathbb Z_2 \oplus \mathbb Z_2$. For this character algebra the first and
second eigenmatrix is $2 \Upsilon$, where $\Upsilon$ is from Definition \ref{def:T}. The matrix $\tilde \Omega$ mentioned in \cite[Line~(6)]{tanaka} is given by
\begin{align*}
\tilde \Omega = \begin{pmatrix} 0 & 2 & 2 \\
                                                   2 & 0 & 2 \\
                                                   2 & 2 &0
                        \end{pmatrix}.
\end{align*}
\end{proof}

\section{Some polynomials}
\noindent In this section, we consider some polynomials that are  motivated by Proposition \ref{thm:sumCross}.
Background information about this general topic can be found in \cite{DG, citeG, Miz, sasaki}.
\medskip

\noindent Throughout this section, fix $N \in \mathbb N$.
Let $\lambda_1, \lambda_2, \lambda_3, \mu_1, \mu_2, \mu_3$ denote mutually commuting indeterminates.
\medskip

\begin{definition} \label{def:calP} \rm Define the polynomial
\begin{align*}
\mathcal P&(\lambda_1, \lambda_2, \lambda_3 ; \mu_1, \mu_2, \mu_3) \\
&=  \sum_{\stackrel{a,b,c,d,e,f\in \mathbb N}{\stackrel{a+b+c+d+e+f\leq N}{}}} \frac{(-\lambda_1)_{a+b} (-\lambda_2)_{c+d} (-\lambda_3)_{e+f} (-\mu_1)_{c+e} (-\mu_2)_{a+f} (-\mu_3)_{b+d} }  {(-N)_{a+b+c+d+e+f}} \,\frac{   2^{a+b+c+d+e+f}}{a! b! c! d! e! f!}.
\end{align*}
\end{definition}
\begin{note}\rm The polynomial $\mathcal P(\lambda_1, \lambda_2, \lambda_3 ; \mu_1, \mu_2, \mu_3) 
$ is symmetric in $\lambda_1, \lambda_2, \lambda_3$ and symmetric in $\mu_1,\mu_2, \mu_3$. Moreover,
\begin{align}
\mathcal P(\lambda_1, \lambda_2, \lambda_3 ; \mu_1, \mu_2, \mu_3) =
\mathcal P(\mu_1, \mu_2, \mu_3; \lambda_1, \lambda_2, \lambda_3). \label{eq:calPsym}
\end{align}
\end{note}

\begin{lemma} \label{lem:Pmeaning} For $(r,s,t,u) \in \mathcal P_N$ and $(R,S,T,U) \in \mathcal P_N$,
\begin{align*}
 \Bigl \langle x^r y^s z^t w^u, x^{*R} y^{*S} z^{*T} w^{*U} \Bigr \rangle= \frac{N!}{2^N} \mathcal P(s,t,u; S,T,U).
\end{align*}
\end{lemma}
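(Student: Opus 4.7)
The plan is to observe that this lemma is essentially a notational reformulation, obtained by matching the hypergeometric sum in Proposition \ref{thm:sumCross} against the definition of $\mathcal P$ in Definition \ref{def:calP}.

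First, I would write out the right-hand side $\frac{N!}{2^N}\mathcal P(s,t,u;S,T,U)$ by making the substitution $\lambda_1=s$, $\lambda_2=t$, $\lambda_3=u$, $\mu_1=S$, $\mu_2=T$, $\mu_3=U$ in Definition \ref{def:calP}. This produces
\begin{align*}
\frac{N!}{2^N}\sum_{\stackrel{a,b,c,d,e,f\in \mathbb N}{a+b+c+d+e+f\leq N}} \frac{(-s)_{a+b} (-t)_{c+d} (-u)_{e+f} (-S)_{c+e} (-T)_{a+f} (-U)_{b+d}}{(-N)_{a+b+c+d+e+f}}\,\frac{2^{a+b+c+d+e+f}}{a!b!c!d!e!f!}.
\end{align*}
Next, I would quote Proposition \ref{thm:sumCross}, which asserts that the inner product $\langle x^r y^s z^t w^u,\, x^{*R} y^{*S} z^{*T} w^{*U}\rangle$ equals the very same expression. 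Comparing the two displays term by term completes the proof.

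There is no real obstacle here; the only thing to verify is that the correspondence between the $\lambda_i$'s in Definition \ref{def:calP} and the exponents $(s,t,u)$ (and likewise the $\mu_i$'s and $(S,T,U)$) is consistent with the pairing of Pochhammer symbols that appears in Proposition \ref{thm:sumCross}. Since Definition \ref{def:calP} was explicitly designed with this matching in mind, the proof is a one-line substitution and reference.
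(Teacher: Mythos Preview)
Your proposal is correct and matches the paper's own proof, which is the single line ``By Proposition \ref{thm:sumCross} and Definition \ref{def:calP}.'' Both arguments recognize that the lemma is just the substitution $(\lambda_1,\lambda_2,\lambda_3;\mu_1,\mu_2,\mu_3)=(s,t,u;S,T,U)$ in the definition of $\mathcal P$, compared against the hypergeometric expression already computed for the inner product.
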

\begin{proof} By Proposition \ref{thm:sumCross} and Definition \ref{def:calP}.
\end{proof} 

%\noindent Let $(r,s,t,u) \in \mathcal P_N$. Then $\mathcal P(s,t,u ; \mu_1, \mu_2, \mu_3)$ is a polynomial in $\mu_1, \mu_2, \mu_3$ with total degree at most $s+t+u$.
%Let $(R,S,T,U) \in \mathcal P_N$. Then  $ \mathcal P(\lambda_1, \lambda_2, \lambda_3 ; S,T,U) $         is a polynomial in $\lambda_1, \lambda_2, \lambda_3$ with total degree at most $S+T+U$.
%\medskip
        %% \ref{eq:PDbasis} and the $P_N$-basis  \ref{eq:PNDbasis}.

\begin{proposition}\label{prop:trans} For $(r,s,t,u) \in \mathcal P_N$ we have
\begin{align}
x^r y^s z^t w^u &= \frac{N!}{2^N} \sum_{(R,S,T,U) \in \mathcal P_N} \frac{\mathcal P(s,t,u; S,T,U)}{R!S!T!U!} x^{*R} y^{*S} z^{*T} w^{*U},    \label{eq:trans1}   \\
x^{*r} y^{*s} z^{*t} w^{*u} &= \frac{N!}{2^N} \sum_{(R,S,T,U) \in \mathcal P_N} \frac{\mathcal P(s,t,u; S,T,U)}{R!S!T!U!} x^{R} y^{S} z^{T} w^{U}.      \label{eq:trans2}
\end{align}
\end{proposition}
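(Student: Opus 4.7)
The plan is to obtain \eqref{eq:trans1} by expanding $x^r y^s z^t w^u \in P_N$ in the orthogonal basis of $P_N$ formed by the monomials $x^{*R}y^{*S}z^{*T}w^{*U}$ with $(R,S,T,U) \in \mathcal P_N$, and then to derive \eqref{eq:trans2} from \eqref{eq:trans1} by applying the algebra automorphism $\sigma$.

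More precisely, by Lemma \ref{lem:PDdualbasis} together with Proposition \ref{lem:bilinear}, the vectors $\{x^{*R}y^{*S}z^{*T}w^{*U} : (R,S,T,U) \in \mathcal P_N\}$ form an orthogonal basis for $P_N$ with $\Vert x^{*R} y^{*S} z^{*T} w^{*U}\Vert^2 = R!\,S!\,T!\,U!$. Since $\langle\,,\,\rangle$ is linear in the first slot and conjugate-linear in the second, the standard expansion in an orthogonal basis reads, for any $f \in P_N$,
\begin{align*}
f \;=\; \sum_{(R,S,T,U) \in \mathcal P_N} \frac{\langle f,\, x^{*R} y^{*S} z^{*T} w^{*U}\rangle}{R!\,S!\,T!\,U!}\, x^{*R} y^{*S} z^{*T} w^{*U}.
\end{align*}
Applying this with $f = x^r y^s z^t w^u$ and invoking Lemma \ref{lem:Pmeaning} to replace the inner product by $\frac{N!}{2^N}\mathcal P(s,t,u;S,T,U)$ gives \eqref{eq:trans1} directly.

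For \eqref{eq:trans2}, apply the automorphism $\sigma$ of $P$ from Proposition \ref{thm:aut2} to both sides of \eqref{eq:trans1}. Since $\sigma$ swaps $x^r y^s z^t w^u \leftrightarrow x^{*r} y^{*s} z^{*t} w^{*u}$ and $x^{*R} y^{*S} z^{*T} w^{*U} \leftrightarrow x^R y^S z^T w^U$, and since the scalar coefficients $\frac{N!}{2^N}\frac{\mathcal P(s,t,u;S,T,U)}{R!S!T!U!}$ are fixed by $\sigma$, the identity \eqref{eq:trans2} drops out. (Alternatively, one can run the same orthogonal-expansion argument directly, using $\{x^R y^S z^T w^U\}$ as the orthogonal basis and the symmetry $\mathcal P(\lambda_1,\lambda_2,\lambda_3;\mu_1,\mu_2,\mu_3) = \mathcal P(\mu_1,\mu_2,\mu_3;\lambda_1,\lambda_2,\lambda_3)$ from \eqref{eq:calPsym} together with Lemma \ref{lem:Pmeaning}, to conclude $\langle x^{*r}y^{*s}z^{*t}w^{*u},\,x^Ry^Sz^Tw^U\rangle = \frac{N!}{2^N}\mathcal P(s,t,u;S,T,U)$.)

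There is no real obstacle here: every ingredient has already been assembled. The only thing to be careful about is the Hermitian-form convention (so that the coefficient in the orthogonal expansion is $\langle f, e\rangle/\Vert e\Vert^2$ rather than its conjugate), and making sure the roles of $(r,s,t,u)$ versus $(R,S,T,U)$ in the arguments of $\mathcal P$ are tracked consistently after applying $\sigma$; both are bookkeeping issues rather than genuine difficulties.
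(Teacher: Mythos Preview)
Your proof is correct and follows essentially the same approach as the paper, which simply says to ``use linear algebra, invoking Definition \ref{def:bform} and Proposition \ref{lem:bilinear} and Lemma \ref{lem:Pmeaning}''---i.e., the orthogonal expansion you carry out explicitly. Your derivation of \eqref{eq:trans2} via $\sigma$ (or alternatively via \eqref{eq:calPsym}) is a fine way to handle the second identity.
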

\begin{proof}  Use linear algebra, invoking Definition \ref{def:bform} and Proposition \ref{lem:bilinear} and Lemma  \ref{lem:Pmeaning}.
\end{proof}

\noindent Next, we give an orthogonality relation.
The following result is a special case of \cite[Theorem~1.1]{tanaka}.
\begin{proposition} For $(r,s,t,u) \in \mathcal P_N$ and $(r',s',t',u') \in \mathcal P_N$,
\begin{align*}
\sum_{(R,S,T,U) \in \mathcal P_N} \frac{\mathcal P(s,t,u; S,T,U) \mathcal P(s',t',u'; S,T,U)}{R!S!T!U!} = \delta_{s,s'} \delta_{t,t'} \delta_{u,u'} \frac{4^N r!s!t!u!}{(N!)^2}.
\end{align*}
\end{proposition}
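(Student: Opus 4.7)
The plan is to derive this orthogonality relation by computing the inner product $\langle x^r y^s z^t w^u, x^{r'} y^{s'} z^{t'} w^{u'}\rangle$ in two different ways. On one hand, Definition \ref{def:bform} tells us directly that this inner product equals $\delta_{r,r'}\delta_{s,s'}\delta_{t,t'}\delta_{u,u'} \, r!s!t!u!$, and since the profiles lie in $\mathcal P_N$, the condition $r=r'$ is forced by $s=s'$, $t=t'$, $u=u'$.

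On the other hand, I will expand the second factor using the transformation formula \eqref{eq:trans1} from Proposition \ref{prop:trans}, written for $(r',s',t',u')$:
\begin{align*}
x^{r'} y^{s'} z^{t'} w^{u'} = \frac{N!}{2^N} \sum_{(R,S,T,U) \in \mathcal P_N} \frac{\mathcal P(s',t',u'; S,T,U)}{R!S!T!U!}\, x^{*R} y^{*S} z^{*T} w^{*U}.
\end{align*}
Substituting this into the inner product, invoking linearity of the Hermitian form in the first slot and conjugate-linearity in the second (noting that the coefficients $\mathcal P(\cdot;\cdot)/R!S!T!U!$ are real by Definition \ref{def:calP}), and then applying Lemma \ref{lem:Pmeaning} to evaluate each cross term $\langle x^r y^s z^t w^u, x^{*R} y^{*S} z^{*T} w^{*U}\rangle = \frac{N!}{2^N}\mathcal P(s,t,u; S,T,U)$, yields
\begin{align*}
\langle x^r y^s z^t w^u, x^{r'} y^{s'} z^{t'} w^{u'}\rangle = \frac{(N!)^2}{4^N} \sum_{(R,S,T,U) \in \mathcal P_N} \frac{\mathcal P(s,t,u; S,T,U)\,\mathcal P(s',t',u'; S,T,U)}{R!S!T!U!}.
\end{align*}

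Equating the two expressions for this inner product and isolating the sum gives exactly the asserted identity. The only subtlety is the reality of the coefficients $\mathcal P(s,t,u; S,T,U)$, which follows immediately from Definition \ref{def:calP} since each summand there is a product of Pochhammer symbols evaluated at nonnegative integers divided by positive integer quantities; hence no complex conjugate appears and the sum pulls out cleanly. Everything else is bookkeeping.

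I do not anticipate any real obstacle: the two-ways-of-computing-the-same-inner-product paradigm is completely standard, and all the pieces (the orthogonality of the basis \eqref{eq:Pbasis} under $\langle\,,\,\rangle$, the expansion in Proposition \ref{prop:trans}, and the interpretation of $\mathcal P$ as an inner product in Lemma \ref{lem:Pmeaning}) are already in hand. The citation to \cite[Theorem~1.1]{tanaka} in the proposition statement can serve as a cross-check, but the self-contained proof above uses only results established earlier in the paper.
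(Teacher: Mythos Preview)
Your proposal is correct and follows essentially the same approach as the paper: compute $\langle x^r y^s z^t w^u, x^{r'} y^{s'} z^{t'} w^{u'}\rangle$ two ways. The only cosmetic difference is that the paper expands \emph{both} arguments via \eqref{eq:trans1} and then invokes the orthogonality of the starred basis (Proposition \ref{lem:bilinear}), whereas you expand only the second argument and invoke Lemma \ref{lem:Pmeaning} for the cross terms; the two computations are equivalent and yield the same sum.
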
 
\begin{proof} We consider the inner product
\begin{align}  \label{eq:2View}
\Bigl \langle x^r y^s z^t w^u, x^{r'} y^{s'} z^{t'} w^{u'} \Bigr \rangle 
\end{align}
from two points of view. On one hand, we evaluate \eqref{eq:2View} using Definition \ref{def:bform}.
On the other hand, we eliminate the two arguments in \eqref{eq:2View} using  \eqref{eq:trans1},
and evaluate the result using  Proposition \ref{lem:bilinear}. Comparing the two points of view,
we get the orthogonality relation.
\end{proof}

\noindent 
Next, we give some recurrence relations.

\begin{proposition}\label{prop:4TR} The following {\rm (i)--(iii)} hold for $(r,s,t,u) \in \mathcal P_N$ and $(R,S,T,U) \in \mathcal P_N$.
\begin{enumerate}
\item[\rm (i)] $(R+S-T-U) \mathcal P(s,t,u; S,T,U)$ 
 is a linear combination with the following terms and coefficients:
\begin{align*} 
\begin{tabular}[t]{c|c}
{\rm Term }& {\rm Coefficient} 
 \\
 \hline
   $ \mathcal P(s+1,t,u; S,T,U) $& $r$   \\
 $ \mathcal P(s-1,t,u; S,T,U) $   & $s$\\
  $ \mathcal P(s,t-1,u+1; S,T,U)  $  & $t$ \\
  $ \mathcal P(s,t+1,u-1; S,T,U) $& $u$
   \end{tabular}
\end{align*}
%%%\newpage
\item[\rm (ii)]   $(R-S+T-U) \mathcal P(s,t,u; S,T,U)$ 
 is a linear combination with the following terms and coefficients:
\begin{align*} 
\begin{tabular}[t]{c|c}
{\rm Term }& {\rm Coefficient} 
 \\
 \hline
   $ \mathcal P(s,t+1,u; S,T,U) $& $ r$   \\
 $ \mathcal P(s-1,t,u+1; S,T,U)$   & $s$\\
  $ \mathcal P(s,t-1,u; S,T,U)$  & $t$ \\
  $ \mathcal P(s+1,t,u-1; S,T,U)  $& $u $
   \end{tabular}
\end{align*}
\item[\rm (iii)]  $(R-S-T+U) \mathcal P(s,t,u; S,T,U)$ 
 is a linear combination with the following terms and coefficients:
\begin{align*} 
\begin{tabular}[t]{c|c}
{\rm Term }& {\rm Coefficient} 
 \\
 \hline
    $\mathcal P(s,t,u+1; S,T,U) $& $r $   \\
 $\mathcal P(s-1,t+1,u; S,T,U) $   & $s $\\
    $ \mathcal P(s+1,t-1,u; S,T,U)$  & $t$ \\
  $\mathcal P(s,t,u-1; S,T,U) $& $u $
   \end{tabular}
\end{align*}
\end{enumerate}
\end{proposition}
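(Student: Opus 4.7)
\noindent\textbf{Proof proposal for Proposition \ref{prop:4TR}.}
The plan is to obtain each of the three recurrences by evaluating the same inner product in two different ways, exploiting the fact that on the starred basis the generator $A_i$ acts diagonally (Proposition \ref{lem:dbAction}(i)--(iii)), while on the unstarred basis it acts by the shifts tabulated in Proposition \ref{lem:ActP}(i)--(iii), together with the self-adjointness relation $\langle A_i f, g\rangle = \langle f, A_i g\rangle$ from Lemma \ref{lem:inv} and the interpretation of $\mathcal P$ as an inner product from Lemma \ref{lem:Pmeaning}.

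First, to prove (i), I would fix $(r,s,t,u), (R,S,T,U) \in \mathcal P_N$ and consider the inner product
\[
I := \Bigl\langle x^r y^s z^t w^u,\, A_1\bigl(x^{*R} y^{*S} z^{*T} w^{*U}\bigr)\Bigr\rangle.
\]
On one hand, Proposition \ref{lem:dbAction}(i) says $A_1$ acts on $x^{*R} y^{*S} z^{*T} w^{*U}$ as the scalar $R+S-T-U$, so by Lemma \ref{lem:Pmeaning},
\[
I = (R+S-T-U)\, \frac{N!}{2^N}\, \mathcal P(s,t,u;S,T,U).
\]
On the other hand, Lemma \ref{lem:inv} gives $I = \langle A_1(x^r y^s z^t w^u), x^{*R} y^{*S} z^{*T} w^{*U}\rangle$, and Proposition \ref{lem:ActP}(i) expands $A_1(x^r y^s z^t w^u)$ as a sum of four monomials with coefficients $r,s,t,u$. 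Applying Lemma \ref{lem:Pmeaning} to each of the four resulting inner products (noting that, for a monomial $x^{r'}y^{s'}z^{t'}w^{u'}$ with $r'+s'+t'+u'=N$, the inner product with the fixed starred monomial equals $\frac{N!}{2^N}\mathcal P(s',t',u';S,T,U)$) yields the right-hand side of (i). Cancelling the common factor $N!/2^N$ gives the identity.

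The arguments for (ii) and (iii) are identical in structure, using $A_2$ and $A_3$ in place of $A_1$: the diagonal action of $A_2$ (respectively $A_3$) on $x^{*R} y^{*S} z^{*T} w^{*U}$ produces the scalar $R-S+T-U$ (respectively $R-S-T+U$) on the left, while Proposition \ref{lem:ActP}(ii),(iii) produce the four shifted monomials on the right. A short check confirms that the shift pattern $(s,t,u) \mapsto (s',t',u')$ read off from Proposition \ref{lem:ActP}(ii),(iii) matches exactly the shift pattern of the $\mathcal P$-arguments listed in the tables of (ii),(iii).

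The only real bookkeeping point — and the one I would treat as the main source of potential error — is to verify that the four ``shifted'' monomials appearing in $A_i(x^r y^s z^t w^u)$ still have total degree $N$, so that after discarding $r'=N-s'-t'-u'$ from the $\mathcal P$-notation one recovers precisely the shifted tuples displayed in the statement. Apart from this correspondence, the proof is a direct two-way evaluation of an inner product.
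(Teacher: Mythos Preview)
Your proposal is correct and follows essentially the same approach as the paper: use the self-adjointness $\langle A_i f,g\rangle=\langle f,A_i g\rangle$ from Lemma~\ref{lem:inv}, evaluate one side via the diagonal action in Proposition~\ref{lem:dbAction}(i)--(iii) and the other via the shift action in Proposition~\ref{lem:ActP}(i)--(iii), then translate both sides into values of $\mathcal P$ using Lemma~\ref{lem:Pmeaning} and cancel $N!/2^N$. Your bookkeeping remark about the shifted monomials remaining in degree $N$ is correct but automatic, since each shift in Proposition~\ref{lem:ActP} preserves $r+s+t+u$.
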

\begin{proof} (i) By Lemma \ref{lem:inv}  we have
\begin{align} \label{eq:1step}
\bigl \langle A_1 (x^r y^s z^t w^u), x^{*R}y^{*S}z^{*T} w^{*U} \bigr \rangle =
\bigl \langle  x^r y^s z^t w^u, A_1(x^{*R}y^{*S}z^{*T} w^{*U}) \bigr \rangle.
\end{align}
\noindent Evaluate the left-hand side of \eqref{eq:1step} using Proposition \ref{lem:ActP}(i) and Lemma \ref{lem:Pmeaning}.
Evaluate the right-hand side of \eqref{eq:1step} using Proposition \ref{lem:dbAction}(i) and Lemma \ref{lem:Pmeaning}.
The result follows.
\\
\noindent (ii), (iii) Similar to the proof of (i) above.
\end{proof}

\noindent We just gave some recurrence relations. Next, we simplify these relations using a change of variables.

\begin{definition}\label{def:PCheck} \rm Define the polynomial
\begin{align*}
&\mathcal P^\vee(\lambda_1, \lambda_2, \lambda_3; \mu_1, \mu_2, \mu_3) \\
&\quad =
\mathcal P \biggl( \lambda_1, \lambda_2, \lambda_3; \frac{N+\mu_1-\mu_2 -\mu_3}{4}, \frac{N-\mu_1+\mu_2 -\mu_3}{4}, \frac{N-\mu_1-\mu_2 +\mu_3}{4} \biggr).
\end{align*}
\end{definition}

\begin{lemma} \label{lem:PCheckSym} For $(r,s,t,u) \in \mathcal P_N$ and $(R,S,T,U) \in \mathcal P_N$,
\begin{align*}
\mathcal P(s,t,u; S,T,U) = \mathcal P^\vee (s,t,u; R+S-T-U, R-S+T-U, R-S-T+U).
\end{align*}
\end{lemma}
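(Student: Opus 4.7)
The proof plan is a direct unwinding of Definition \ref{def:PCheck} combined with the change-of-variable formulas already established in Lemma \ref{lem:ChangeVar} and Lemma \ref{lem:PNVSPNP}. My approach is to apply Definition \ref{def:PCheck} to the right-hand side and verify that the three arguments $\mu_1 = R+S-T-U$, $\mu_2 = R-S+T-U$, $\mu_3 = R-S-T+U$ produce exactly the triple $(S,T,U)$ inside $\mathcal P$.

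The key computation uses the hypothesis $(R,S,T,U) \in \mathcal P_N$, so $N = R+S+T+U$. First I would compute
\[
\mu_1 - \mu_2 - \mu_3 = (R+S-T-U) - (R-S+T-U) - (R-S-T+U) = -R+3S-T-U,
\]
so that
\[
\frac{N+\mu_1-\mu_2-\mu_3}{4} = \frac{R+S+T+U -R+3S-T-U}{4} = S.
\]
By the symmetric computation (or equivalently by invoking the inverse bijection in Lemma \ref{lem:PNVSPNP}, which is precisely the statement that the substitution $(R,S,T,U) \mapsto (\mu_1,\mu_2,\mu_3)$ is inverted by dividing $(N+\mu_1-\mu_2-\mu_3,N-\mu_1+\mu_2-\mu_3,N-\mu_1-\mu_2+\mu_3)$ by $4$), one obtains analogously
\[
\frac{N-\mu_1+\mu_2-\mu_3}{4} = T, \qquad \frac{N-\mu_1-\mu_2+\mu_3}{4} = U.
\]

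Substituting these three identities into Definition \ref{def:PCheck} gives
\[
\mathcal P^\vee(s,t,u;\mu_1,\mu_2,\mu_3) = \mathcal P(s,t,u;S,T,U),
\]
which is the claimed equality. There is no genuine obstacle here: the entire content of the lemma is that the definition of $\mathcal P^\vee$ has been rigged precisely so that the $4 \Upsilon^{-1} = 4\Upsilon$ (using $\Upsilon^2=I$ from Definition \ref{def:T}) change of coordinates sends $(R,S,T,U)$-data to $(\mu_1,\mu_2,\mu_3)$-data compatibly with the formulas in Lemma \ref{lem:ChangeVar}. Thus the proof is a one-line verification that reduces to the invertibility of $\Upsilon$ that underlies Lemma \ref{lem:PNVSPNP}.
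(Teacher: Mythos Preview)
Your proof is correct and takes essentially the same approach as the paper, which simply cites Lemma \ref{lem:ChangeVar} and Definition \ref{def:PCheck}. You have spelled out the arithmetic that the paper leaves implicit, but the underlying argument is identical.
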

\begin{proof} By Lemma \ref{lem:ChangeVar} and  Definition \ref{def:PCheck}.
\end{proof}

\noindent Recall the set $\mathcal P'$ from Definition \ref{def:PnP}.

\begin{proposition} \label{prop:4TRec2} The following {\rm (i)--(iii)} hold for  $(r,s,t,u) \in \mathcal P_N$ and $(\lambda, \mu, \nu) \in \mathcal P'_N$.
\begin{enumerate}
\item[\rm (i)] $\lambda  \mathcal P^\vee (s,t,u; \lambda, \mu, \nu)$ 
 is a linear combination with the following terms and coefficients:
\begin{align*} 
\begin{tabular}[t]{c|c}
{\rm Term }& {\rm Coefficient} 
 \\
 \hline
   $ \mathcal P^\vee(s+1,t,u; \lambda, \mu,\nu ) $& $r$   \\
 $ \mathcal P^\vee(s-1,t,u; \lambda, \mu, \nu) $   & $s$\\
  $ \mathcal P^\vee(s,t-1,u+1;\lambda, \mu, \nu)  $  & $t$ \\
  $ \mathcal P^\vee (s,t+1,u-1; \lambda, \mu, \nu) $& $u$
   \end{tabular}
\end{align*}
%%%\new page
\item[\rm (ii)]   $\mu  \mathcal P^\vee(s,t,u; \lambda, \mu,\nu)$ 
 is a linear combination with the following terms and coefficients:
\begin{align*} 
\begin{tabular}[t]{c|c}
{\rm Term }& {\rm Coefficient} 
 \\
 \hline
   $ \mathcal P^\vee(s,t+1,u; \lambda, \mu,\nu ) $& $ r$   \\
 $ \mathcal P^\vee(s-1,t,u+1; \lambda, \mu,\nu )$   & $s$\\
  $ \mathcal P^\vee(s,t-1,u; \lambda, \mu,\nu  )$  & $t$ \\
  $ \mathcal P^\vee(s+1,t,u-1; \lambda, \mu,\nu )  $& $u $
   \end{tabular}
\end{align*}
\item[\rm (iii)]  $\nu \mathcal P^\vee(s,t,u; \lambda, \mu,\nu)$ 
 is a linear combination with the following terms and coefficients:
\begin{align*} 
\begin{tabular}[t]{c|c}
{\rm Term }& {\rm Coefficient} 
 \\
 \hline
    $\mathcal P^\vee(s,t,u+1; \lambda, \mu,\nu ) $& $r $   \\
 $\mathcal P^\vee(s-1,t+1,u; \lambda, \mu,\nu ) $   & $s $\\
    $ \mathcal P^\vee(s+1,t-1,u; \lambda, \mu,\nu )$  & $t$ \\
  $\mathcal P^\vee(s,t,u-1; \lambda, \mu,\nu ) $& $u $
   \end{tabular}
\end{align*}
\end{enumerate}
\end{proposition}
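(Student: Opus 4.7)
The plan is to obtain Proposition \ref{prop:4TRec2} as a direct translation of Proposition \ref{prop:4TR} via the change of variables of Lemma \ref{lem:ChangeVar}. By Lemma \ref{lem:PNVSPNP}, the map
\begin{align*}
(R,S,T,U) \mapsto (R+S-T-U,\; R-S+T-U,\; R-S-T+U)
\end{align*}
is a bijection $\mathcal P_N \to \mathcal P'_N$. So, given $(\lambda,\mu,\nu) \in \mathcal P'_N$, there is a unique $(R,S,T,U) \in \mathcal P_N$ producing it, and I may substitute this $(R,S,T,U)$ into the relations of Proposition \ref{prop:4TR}.

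First I would prove (i). Start from Proposition \ref{prop:4TR}(i): the scalar $R+S-T-U$ times $\mathcal P(s,t,u; S,T,U)$ equals the displayed linear combination of the four shifted terms $\mathcal P(s\pm 1,\ldots ; S,T,U)$, $\mathcal P(s, t\mp 1, u\pm 1; S,T,U)$ with coefficients $r,s,t,u$. Under the substitution above, $R+S-T-U$ becomes $\lambda$, so the left-hand side is $\lambda\,\mathcal P(s,t,u; S,T,U)$. Applying Lemma \ref{lem:PCheckSym} to each of the five $\mathcal P$-terms (same $(S,T,U)$, hence same $(\lambda,\mu,\nu)$, but with $(s,t,u)$ variously shifted) converts each $\mathcal P(\cdots; S,T,U)$ to $\mathcal P^\vee(\cdots; \lambda,\mu,\nu)$. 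The resulting identity is precisely the relation of Proposition \ref{prop:4TRec2}(i).

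For (ii) the same procedure applies starting from Proposition \ref{prop:4TR}(ii): the substitution sends $R-S+T-U \mapsto \mu$, and Lemma \ref{lem:PCheckSym} converts each $\mathcal P$ into $\mathcal P^\vee$ with arguments $(\lambda,\mu,\nu)$. For (iii) one starts from Proposition \ref{prop:4TR}(iii) and uses $R-S-T+U \mapsto \nu$ in the same way.

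There is no real obstacle: the content of the proposition is already established by Proposition \ref{prop:4TR}, and Definition \ref{def:PCheck} together with Lemma \ref{lem:PCheckSym} is designed exactly so that the substitution works term-by-term. The only thing worth double-checking is that the displayed shifts of $(s,t,u)$ on the right-hand sides of Proposition \ref{prop:4TR} match those in Proposition \ref{prop:4TRec2}, which they do by direct inspection of the two tables.
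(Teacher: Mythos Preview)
Your proposal is correct and follows essentially the same approach as the paper: the paper's proof reads ``Evaluate Proposition~\ref{prop:4TR}(i)--(iii) using Lemma~\ref{lem:PNVSPNP} and Lemma~\ref{lem:PCheckSym},'' which is exactly the substitution-and-translation argument you describe.
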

\begin{proof} Evaluate Proposition \ref{prop:4TR}(i)--(iii) using Lemma  \ref{lem:PNVSPNP} and Lemma \ref{lem:PCheckSym}.
\end{proof}

\begin{proposition}
\label{prop:PVmeaning} The following hold for  $(r,s,t,u) \in \mathcal P_N$:
\begin{enumerate}
\item[\rm (i)]
$\mathcal P^\vee(s,t,u; A_1, A_2, A_3) x^N = x^r y^s z^t w^u;$
\item[\rm (ii)] $\mathcal P^\vee(s,t,u; A^*_1, A^*_2, A^*_3) x^{*N} = x^{*r} y^{*s} z^{*t} w^{*u}$.
\end{enumerate}
\end{proposition}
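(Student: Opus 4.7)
The plan is to reduce both identities to the transition formulas in Proposition \ref{prop:trans} by exploiting that $\mathbb H = \langle A_1, A_2, A_3\rangle$ is diagonalized by the starred basis and $\mathbb H^* = \langle A^*_1, A^*_2, A^*_3\rangle$ is diagonalized by the unstarred basis.

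For part (i), I would first expand $x^N$ in the starred basis using Lemma \ref{lem:motivated}:
\begin{align*}
x^{N} = \frac{N!}{2^{N}} \sum_{(R,S,T,U) \in \mathcal P_N} \frac{x^{*R}y^{*S}z^{*T}w^{*U}}{R!S!T!U!}.
\end{align*}
Each summand is a joint eigenvector for $A_1, A_2, A_3$ by Lemma \ref{lem:Dualtogether}, with eigenvalues $\lambda = R+S-T-U$, $\mu = R-S+T-U$, $\nu = R-S-T+U$. Applying the polynomial $\mathcal P^\vee(s,t,u;A_1,A_2,A_3)$ term by term therefore multiplies the $(R,S,T,U)$-summand by $\mathcal P^\vee(s,t,u;\lambda,\mu,\nu)$, which by Lemma \ref{lem:PCheckSym} equals $\mathcal P(s,t,u;S,T,U)$. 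The resulting sum is precisely the right-hand side of \eqref{eq:trans1} in Proposition \ref{prop:trans}, yielding $x^r y^s z^t w^u$.

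For part (ii), the cleanest route is to apply the algebra automorphism $\sigma$ from Proposition \ref{thm:aut2} to the identity in part (i). Since $\sigma$ swaps $x \leftrightarrow x^*$, etc., and hence swaps $x^N \leftrightarrow x^{*N}$ and $x^r y^s z^t w^u \leftrightarrow x^{*r}y^{*s}z^{*t}w^{*u}$, and since $\sigma A_i \sigma^{-1} = A^*_i$ by \eqref{eq:SRS}, conjugation by $\sigma$ sends $\mathcal P^\vee(s,t,u;A_1,A_2,A_3)$ to $\mathcal P^\vee(s,t,u;A^*_1,A^*_2,A^*_3)$, delivering (ii) immediately. Alternatively one repeats the argument of (i) verbatim using Lemma \ref{lem:xnExpand} to expand $x^{*N}$ in the unstarred basis, the eigenvalue statement from Lemma \ref{lem:together} for $A^*_1, A^*_2, A^*_3$, and the transition formula \eqref{eq:trans2}.

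There is no real obstacle here; the only point of care is the bookkeeping in the conversion between the $\mathcal P$ and $\mathcal P^\vee$ notations (Definition \ref{def:PCheck} and Lemma \ref{lem:PCheckSym}), which is what makes the eigenvalue substitution match the $\mathcal P(s,t,u;S,T,U)$ coefficients appearing in Proposition \ref{prop:trans}. Once that matching is made explicit, both identities follow by comparing the expansion of $\mathcal P^\vee(s,t,u;\cdot)$ applied to a sum of eigenvectors with the explicit transition formulas from Proposition \ref{prop:trans}.
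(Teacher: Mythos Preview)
Your proof is correct and rests on the same core computation as the paper's: both arguments evaluate $\mathcal P^\vee(s,t,u;A_1,A_2,A_3)$ on the starred monomials (where the $A_i$ act by scalars), invoke Lemma~\ref{lem:PCheckSym} to convert $\mathcal P^\vee$ to $\mathcal P$, and then identify the result with $x^r y^s z^t w^u$. The packaging differs slightly: the paper tests both sides against $x^{*R}y^{*S}z^{*T}w^{*U}$ via the Hermitian form, using self-adjointness (Lemma~\ref{lem:inv}) to move the operator across and Lemmas~\ref{lem:xBinner}, \ref{lem:Pmeaning} to evaluate the inner products, whereas you expand $x^N$ directly in the starred basis (Lemma~\ref{lem:motivated}) and match against the transition formula \eqref{eq:trans1}. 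Your route is marginally more economical in that it never invokes $\langle\,,\,\rangle$; the paper's route makes the duality with Lemma~\ref{lem:Pmeaning} more visible. Your derivation of (ii) from (i) via $\sigma$ is cleaner than the paper's ``similar to (i)'' and is exactly the right use of Proposition~\ref{thm:aut2}.
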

\begin{proof} (i) It suffices to show that for $(R,S,T,U) \in \mathcal P_N$,
\begin{align}
\Bigl \langle \mathcal P^\vee (s,t,u; A_1, A_2, A_3) x^N, x^{*R} y^{*S} z^{*T} w^{*U} \Bigr \rangle = \Bigl \langle x^r y^s z^t w^u, x^{*R} y^{*S} z^{*T} w^{*U} \Bigr \rangle.
\end{align}
Using in order Lemma  \ref{lem:inv}, Proposition  \ref{lem:dbAction}(i)--(iii), and Lemmas \ref{lem:PCheckSym},  \ref{lem:xBinner}, \ref{lem:Pmeaning}, 
\begin{align*}
&\Bigl \langle \mathcal P^\vee (s,t,u; A_1, A_2, A_3) x^N, x^{*R} y^{*S} z^{*T} w^{*U} \Bigr \rangle \\
&\quad =\Bigl \langle  x^N,  \mathcal P^\vee (s,t,u; A_1, A_2, A_3) x^{*R} y^{*S} z^{*T} w^{*U} \Bigr \rangle \\
&\quad=\Bigl \langle  x^N, x^{*R} y^{*S} z^{*T} w^{*U} \Bigr \rangle  \mathcal P^\vee (s,t,u; R+S-T-U, R-S+T-U, R-S-T+U)  \\
&\quad=\Bigl \langle  x^N, x^{*R} y^{*S} z^{*T} w^{*U} \Bigr \rangle  \mathcal P(s,t,u; S,T,U) \\
&\quad = \frac{N!}{2^N} \mathcal P(s,t,u; S,T,U) \\
&\quad =  \Bigl \langle x^r y^s z^t w^u, x^{*R} y^{*S} z^{*T} w^{*U} \Bigr \rangle.
\end{align*}
\noindent (ii) Similar to the proof of (i) above.
\end{proof}

\begin{proposition} \label{prop:PNAAAbasis}  The following hold.
\begin{enumerate}
\item[\rm (i)]  $P_N$ has a basis
\begin{align}
A_1^s A_2^t A_3^u x^N \qquad \qquad s,t,u \in \mathbb N, \qquad s+t+u \leq N.    \label{eq:AAAbasis}
\end{align}
\item[\rm (ii)]  $P_N$ has a basis
\begin{align}
A_1^{*s} A_2^{*t} A_3^{*u} x^{*N} \qquad \qquad s,t,u \in \mathbb N, \qquad s+t+u \leq N.    \label{eq:AAAbasis2}
\end{align}
\end{enumerate}
\end{proposition}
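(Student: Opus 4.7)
The plan is to combine Proposition \ref{prop:PVmeaning}(i) with a total-degree bound on $\mathcal{P}^\vee$ and a straightforward dimension count. First I examine $\mathcal{P}^\vee(s,t,u;\lambda_1,\lambda_2,\lambda_3)$ as a polynomial in $\lambda_1,\lambda_2,\lambda_3$. From Definition \ref{def:calP}, each summand of $\mathcal{P}(\lambda_1,\lambda_2,\lambda_3;\mu_1,\mu_2,\mu_3)$ contains the factor $(-\mu_1)_{c+e}(-\mu_2)_{a+f}(-\mu_3)_{b+d}$, whose total degree in $(\mu_1,\mu_2,\mu_3)$ is $(c+e)+(a+f)+(b+d)=a+b+c+d+e+f\leq N$. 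Hence $\mathcal{P}(\lambda_1,\lambda_2,\lambda_3;\mu_1,\mu_2,\mu_3)$ has total degree at most $N$ in $(\mu_1,\mu_2,\mu_3)$. Since $\mathcal{P}^\vee$ is built from $\mathcal{P}$ by substituting $\mathbb{C}$-linear expressions for the $\mu_i$ (Definition \ref{def:PCheck}), the polynomial $\mathcal{P}^\vee(s,t,u;\lambda_1,\lambda_2,\lambda_3)$ has total degree at most $N$ in $(\lambda_1,\lambda_2,\lambda_3)$.

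Next, $A_1,A_2,A_3$ pairwise commute by Definition \ref{def:LL}(i), so $\mathcal{P}^\vee(s,t,u;A_1,A_2,A_3)$ is a well-defined element of the commutative subalgebra of $\mathrm{End}(P)$ generated by $A_1,A_2,A_3$, and by the degree bound above it is a $\mathbb{C}$-linear combination of monomials $A_1^a A_2^b A_3^c$ with $a+b+c \leq N$. Applying this operator to $x^N$ and invoking Proposition \ref{prop:PVmeaning}(i), for every $(r,s,t,u)\in\mathcal{P}_N$ the monomial $x^r y^s z^t w^u$ lies in the $\mathbb{C}$-span of \eqref{eq:AAAbasis}. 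Since the monomials $x^r y^s z^t w^u$ with $(r,s,t,u)\in\mathcal{P}_N$ form a basis of $P_N$ by Lemma \ref{lem:PDbasic}, it follows that \eqref{eq:AAAbasis} spans $P_N$.

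To finish part (i), I count: the set $\{(s,t,u)\in\mathbb{N}^3 : s+t+u\leq N\}$ has cardinality $\binom{N+3}{3}$, which equals $\dim P_N$ by \eqref{eq:PNsize} and Lemma \ref{lem:PDbasic}. A spanning set of that size is automatically a basis, so \eqref{eq:AAAbasis} is a basis of $P_N$. For part (ii) the cleanest route is to transport (i) by the algebra automorphism $\sigma$ of $P$ from Proposition \ref{thm:aut2}: using \eqref{eq:SRS} and $\sigma(x)=x^*$ one obtains $\sigma(A_1^s A_2^t A_3^u x^N)=A_1^{*s}A_2^{*t}A_3^{*u}x^{*N}$, and since $\sigma$ preserves $P_N$ (final remark of Section~7) it sends the basis of (i) to \eqref{eq:AAAbasis2}. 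Equivalently, one could rerun the first two paragraphs with Proposition \ref{prop:PVmeaning}(ii) in place of (i). No step presents a real obstacle; the only delicate point is the total-degree bookkeeping for $\mathcal{P}^\vee$, and this is immediate from the explicit form of $\mathcal{P}$ in Definition \ref{def:calP}.
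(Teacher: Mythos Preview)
Your proof is correct and follows essentially the same route as the paper: use Proposition~\ref{prop:PVmeaning}(i) together with a dimension count to conclude that the vectors \eqref{eq:AAAbasis} span and hence form a basis. You spell out the total-degree bound on $\mathcal{P}^\vee$ explicitly (the paper leaves this implicit), and your use of $\sigma$ for part~(ii) is a clean alternative to the paper's ``similar to (i)''.
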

\begin{proof} (i) The number of vectors in \eqref{eq:AAAbasis} is equal to $\binom{N+3}{3}$, and this is the dimension of $P_N$.
By linear algebra, it suffices to show that $P_N$ is spanned by the vectors   \eqref{eq:AAAbasis}.
By Lemma \ref{lem:PDbasic} and Proposition \ref{prop:PVmeaning}(i), $P_N$ is spanned by the vectors   \eqref{eq:AAAbasis}.\\
\noindent (ii) Similar to the proof of (i) above.
\end{proof} 

\section{Some $\mathfrak{sl}_2(\mathbb C)$-actions on $P$}
We continue to discuss the $\mathfrak{sl}_4(\mathbb C)$-module  $P= \mathbb C\lbrack x,y,z,w\rbrack$.
Throughout this section we fix distinct $i,j \in \lbrace 1,2,3\rbrace$. In Corollary \ref{cor:6} we saw that  $A_i, A^*_j$ generate a Lie subalgebra of
$\mathfrak{sl}_4(\mathbb C)$ that is isomorphic to $\mathfrak{sl}_2(\mathbb C)$. 
The $\mathfrak{sl}_4(\mathbb C)$-module $P$ becomes an $\mathfrak{sl}_2\mathbb C)$-module, by restricting the
$\mathfrak{sl}_4(\mathbb C)$ action to $\mathfrak{sl}_2\mathbb C)$.
 In the present section we investigate the $\mathfrak{sl}_2(\mathbb C)$-module $P$.
\medskip

\noindent Let us briefly review the theory of finite-dimensional $\mathfrak{sl}_2(\mathbb C)$-modules. The following material can be found in  \cite[Sections~6, 7]{humphreys}.
Each finite-dimensional $\mathfrak{sl}_2(\mathbb C)$-module is a direct sum of irreducible $\mathfrak{sl}_2(\mathbb C)$-modules.
For $N \in \mathbb N$, up to isomorphism there exists a unique irreducible $\mathfrak{sl}_2(\mathbb C)$-module $\mathbb V_N$ of dimension $N+1$.
The $\mathfrak{sl}_2(\mathbb C)$-module $\mathbb V_N$ has a basis $\lbrace v_n \rbrace_{n=0}^N$ such that
\begin{align*}
H v_n &= (N-2n) v_n\qquad \qquad (0 \leq n \leq N), \\
F v_n&= (n+1) v_{n+1} \qquad \qquad (0 \leq n \leq N-1), \qquad F v_N=0, \\
E v_n &= (N-n+1) v_{n-1} \qquad \qquad (1 \leq n \leq N), \qquad E v_0=0.
\end{align*}
\noindent  We mention another basis for $\mathbb V_N$.  Define
\begin{align*}
u_n = v_n \binom{N}{n}^{-1} \qquad \qquad (0 \leq n \leq N).
\end{align*}
The vectors $\lbrace u_n \rbrace_{n=0}^N$ form a basis for $\mathbb V_N$, and
\begin{align*}
H u_n &= (N-2n) u_n\qquad \qquad (0 \leq n \leq N), \\
F u_n&= (N-n) u_{n+1} \qquad \qquad (0 \leq n \leq N-1), \qquad F v_N=0, \\
E u_n &= n u_{n-1} \qquad \qquad (1 \leq n \leq N), \qquad E v_0=0.
\end{align*}
\begin{definition} \label{def:compondent} \rm Let $W$ denote a finite-dimensional $\mathfrak{sl}_2(\mathbb C)$-module.
Decompose  $W$ into a direct sum of irreducible  $\mathfrak{sl}_2(\mathbb C)$-submodules:
\begin{align}
 W=W_1 + W_2 + \cdots + W_m. \label{eq:decomp}
 \end{align}
Pick $N \in \mathbb N$ and consider the irreducible $\mathfrak{sl}_2(\mathbb C)$-module $\mathbb V_N$. 
By the {\it multiplicity with which $\mathbb V_N$ appears in $W$}
 we mean the number of summands in \eqref{eq:decomp} that are isomorphic to $\mathbb V_N$ (this
 number does not depend on the choice of decomposition, see  \cite[Chapter~IV]{CR}).
We say that $\mathbb V_N$ is an {\it irreducible component} of $W$ whenever $\mathbb V_N$ appears in $W$ with nonzero  multiplicity.
\end{definition}

\begin{proposition} For $N \in \mathbb N$  the $\mathfrak{sl}_2(\mathbb C)$-module $P_N$
has the following 
irreducible components:
\begin{align} \label{eq:modulePoss}
{\mathbb V}_{N-2n} \qquad \qquad 0 \leq n \leq \lfloor N/2 \rfloor.
\end{align}
For $0 \leq n \leq \lfloor N/2 \rfloor$ the module $\mathbb V_{N-2n}$ appears in $P_N$  with multiplicity $N-2n+1$.
\end{proposition}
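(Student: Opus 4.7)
The plan rests on two previously established facts: (a) by Lemma \ref{lem:LtoL}, the pair $A_i, A^*_j$ generates a copy of $\mathfrak{sl}_2(\mathbb C)$ inside $\mathfrak{sl}_4(\mathbb C)$, with $A_i$ playing the role of $E+F$ and $A^*_j$ playing the role of $H$ under the isomorphism $\natural$ of Lemma \ref{lem:Lnat}; and (b) by Lemma \ref{lem:ASspec}, the action of $A^*_j$ on $P_N$ is diagonalizable with eigenvalues $\{N-2k\}_{k=0}^N$, where the $(N-2k)$-eigenspace has dimension $(k+1)(N-k+1)$. So on $P_N$, the Cartan generator $H$ has known weight-space dimensions, and the strategy is to read off the multiplicities of the irreducible summands from these dimensions.

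Since $P_N$ is finite-dimensional, decompose it into irreducibles, writing $P_N \cong \bigoplus_{m \in \mathbb N} \mathbb V_m^{\oplus a_m}$. The $H$-weights that can appear in $\mathbb V_m$ are $\{m, m-2, \ldots, -m\}$, each with multiplicity one. Because the weights of $H$ on $P_N$ all lie in $\{-N, -N+2, \ldots, N\}$, only the $\mathbb V_m$ with $m \leq N$ and $m \equiv N \pmod{2}$ can have $a_m \neq 0$. Thus every possible irreducible summand is of the form $\mathbb V_{N-2n}$ for some $0 \leq n \leq \lfloor N/2 \rfloor$, which already proves the first assertion.

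For the multiplicity computation, let $d_r$ denote the dimension of the $r$-eigenspace of $H$ on $P_N$, which by Lemma \ref{lem:ASspec} equals $(k+1)(N-k+1)$ when $r = N-2k$. Counting weights from the decomposition yields $d_r = \sum_{m \geq |r|,\, m \equiv r \,(2)} a_m$, so telescoping gives
\begin{align*}
a_m = d_m - d_{m+2}
\end{align*}
for every $m \geq 0$ with $m \equiv N \pmod{2}$. Substituting $m = N-2n$ (so the eigenspace of weight $m$ corresponds to $k=n$ and the eigenspace of weight $m+2$ to $k=n-1$), I compute
\begin{align*}
a_{N-2n} = (n+1)(N-n+1) - n(N-n+2) = N - 2n + 1,
\end{align*}
which is the desired multiplicity.

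There is essentially no main obstacle: once one recognizes that $A^*_j$ plays the role of $H$ and that Lemma \ref{lem:ASspec} supplies the full weight multiplicity table, the proof reduces to the standard telescoping argument from finite-dimensional $\mathfrak{sl}_2(\mathbb C)$-representation theory together with a one-line algebraic simplification. The only small point to be careful about is the parity constraint, which rules out those $m$ of the wrong parity and pins the irreducible components to exactly the listed $\mathbb V_{N-2n}$.
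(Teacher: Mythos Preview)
Your proof is correct and follows essentially the same approach as the paper: both identify $A^*_j$ with $H$, invoke Lemma \ref{lem:ASspec} for the weight-space dimensions, and then telescope the cumulative counts $(n+1)(N-n+1)$ to extract $\mathrm{mult}(\mathbb V_{N-2n}) = N-2n+1$. The paper writes the telescoping as $(n+1)(N-n+1) = \sum_{\ell=0}^n \mathrm{mult}(\mathbb V_{N-2\ell})$ and then subtracts consecutive terms, which is exactly your formula $a_m = d_m - d_{m+2}$.
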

\begin{proof} Pick $M \in \mathbb N$ such that $\mathbb V_M$ is an irreducible component of $P_N$.
The eigenvalues of $A^*_j$ on $P_N$ are $\lbrace N-2n\rbrace_{n=0}^N$.
The integer $M$ is a nonnegative eigenvalue of $A^*_j$ on $\mathbb V_M$, so there exists  a natural number $n  \leq \lfloor N/2 \rfloor$ such that
$M=N-2n$.
By these comments, $\mathbb V_M$ is included in the list \eqref{eq:modulePoss}. For $0 \leq n \leq \lfloor N/2 \rfloor$
let ${\rm mult}({\mathbb V}_{N-2n})$ denote the multiplicity with which ${\mathbb V}_{N-2n}$ appears in $P_N$.
Counting $A^*_j$ eigenspace dimensions and using Lemma \ref{lem:ASspec}, we obtain
\begin{align*}
(n+1)(N-n+1) = \sum_{\ell=0}^n {\rm mult}({\mathbb V}_{N-2 \ell}), \qquad \qquad          0 \leq n \leq \lfloor N/2 \rfloor .
\end{align*}
From these equations we  find that for $0 \leq n \leq \lfloor N/2 \rfloor$,
\begin{align*}
{\rm mult}({\mathbb V}_{N-2n}) &= (n+1)(N-n+1)- n(N-n+2) \\
&= N-2n+1.
\end{align*}
\end{proof}

\section{The maps $L_1, L_2, L_3$ and $R_1, R_2, R_3$}
We continue to discuss the $\mathfrak{sl}_4(\mathbb C)$-module  $P= \mathbb C\lbrack x,y,z,w\rbrack$. In this section we do the following.
For $i \in \lbrace 1,2,3\rbrace$ we introduce  a ``lowering map''  $L_i \in {\rm End}(P)$ and a ``raising map''  $R_i \in {\rm End}(P)$.
For these maps we discuss the
 injectivity and surjectivity.
We also discuss how these maps are related 
 to the  $\mathfrak{sl}_4(\mathbb C)$-generators and the Hermitian form $\langle \,,\,\rangle$. 

\begin{definition} \label{def:L123} \rm Define  $L_1, L_2, L_3 \in {\rm End}(P)$ by
\begin{align*}
L_1 = D_x D_y -D_z D_w, \qquad \quad %%%\frac{\partial}{\partial x} \frac{\partial}{\partial y} - \frac{\partial}{\partial z} \frac{\partial }{\partial w}, \qquad \quad
L_2 = D_x D_z -D_w D_y, \qquad \quad %%%%%\frac{\partial}{\partial x} \frac{\partial}{\partial z} - \frac{\partial}{\partial w} \frac{\partial }{\partial y}, \qquad  \quad
L_3 =  D_x D_w -D_y D_z.%%%\frac{\partial}{\partial x} \frac{\partial}{\partial w} - \frac{\partial}{\partial y} \frac{\partial }{\partial z}.
\end{align*}
\end{definition}

\begin{lemma} \label{lem:LiDual} We have
\begin{align*}
L_1 = D_{x^*} D_{y^*} -D_{z^*} D_{w^*}, \qquad %%%\frac{\partial}{\partial x} \frac{\partial}{\partial y} - \frac{\partial}{\partial z} \frac{\partial }{\partial w}, \qquad \quad
L_2 = D_{x^*} D_{z^*} -D_{w^*} D_{y^*}, \qquad  %%%%%\frac{\partial}{\partial x} \frac{\partial}{\partial z} - \frac{\partial}{\partial w} \frac{\partial }{\partial y}, \qquad  \quad
L_3 =  D_{x^*} D_{w^*} -D_{y^*} D_{z^*}. %%%\frac{\partial}{\partial x} \frac{\partial}{\partial w} - \frac{\partial}{\partial y} \frac{\partial }{\partial z}.
\end{align*}
\end{lemma}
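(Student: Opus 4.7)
The plan is to verify each of the three identities by direct substitution, using Lemma \ref{lem:DDs} to express each $D_{a^*}$ as a linear combination of $D_x,D_y,D_z,D_w$, and then expanding. Since $D_x,D_y,D_z,D_w$ mutually commute by Lemma \ref{lem:Weyl}(ii), the computation is a purely polynomial manipulation and the order of factors will not matter.

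For the first identity, I would observe that Lemma \ref{lem:DDs} allows the factorizations
\[
D_{x^*} D_{y^*} \;=\; \tfrac{1}{4}\bigl((D_x+D_y)+(D_z+D_w)\bigr)\bigl((D_x+D_y)-(D_z+D_w)\bigr),
\]
\[
D_{z^*} D_{w^*} \;=\; \tfrac{1}{4}\bigl((D_x-D_y)+(D_z-D_w)\bigr)\bigl((D_x-D_y)-(D_z-D_w)\bigr).
\]
Each of these is a difference of squares, so they equal $\tfrac{1}{4}\bigl((D_x+D_y)^2-(D_z+D_w)^2\bigr)$ and $\tfrac{1}{4}\bigl((D_x-D_y)^2-(D_z-D_w)^2\bigr)$ respectively. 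Subtracting, the pure squares $D_x^2, D_y^2, D_z^2, D_w^2$ cancel and only the cross terms survive, leaving $D_xD_y-D_zD_w = L_1$. The other two identities follow by analogous regroupings: for $L_2$ one pairs $\{D_x,D_z\}$ against $\{D_y,D_w\}$, and for $L_3$ one pairs $\{D_x,D_w\}$ against $\{D_y,D_z\}$.

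There is no real obstacle here beyond bookkeeping; the only thing one needs to notice is the correct difference-of-squares grouping matched to each $L_i$. A conceptually cleaner route, if desired, is to remark that the matrix $\Upsilon$ from Definition \ref{def:T} is orthogonal (being symmetric and squaring to $I$), so the quadratic forms $xy-zw$, $xz-wy$, and $xw-yz$ are invariant under the substitution in Lemma \ref{lem:xyzws}, giving the polynomial identities $xy-zw=x^*y^*-z^*w^*$, etc.; passing to partial derivatives via the defining characterization in Lemma \ref{lem:DerDef} and Lemma \ref{lem:DerDefdual} then yields the claim at once. Either way the proof is short.
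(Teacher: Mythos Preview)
Your main approach is correct and is exactly what the paper does: its proof reads ``To verify these equations, eliminate $D_{x^*}, D_{y^*}, D_{z^*}, D_{w^*}$ using Lemma~\ref{lem:DDs} and evaluate the results using Definition~\ref{def:L123}.'' Your difference-of-squares organization is a tidy way to carry this out, but it is the same computation.

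Your alternative conceptual route---observing that $\Upsilon$ is orthogonal, so the quadratic forms $xy-zw$, $xz-wy$, $xw-yz$ are invariant under the change of variables, and then transferring this to the commuting operators $D_x,D_y,D_z,D_w$ via the identical linear relations in Lemma~\ref{lem:DDs}---is genuinely different from the paper's bare substitution. It is cleaner because it explains \emph{why} the identities hold rather than merely checking them, and it anticipates Lemma~\ref{lem:sxs} (where the paper proves the polynomial invariance $xy-zw=x^*y^*-z^*w^*$ directly, for use with the $R_i$). The paper's approach has the virtue of being completely mechanical and requiring no insight; yours has the virtue of unifying the $L_i$ and $R_i$ cases under a single orthogonality observation.
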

\begin{proof} To verify these equations, eliminate $D_{x^*}, D_{y^*}, D_{z^*}, D_{w^*}$ using Lemma \ref{lem:DDs} and evaluate the results using
Definition \ref{def:L123}.
\end{proof}

\noindent Recall the automorphism $\sigma$ of $P$ from Proposition \ref{thm:aut2}.

\begin{lemma} \label{lem:LScom} For $i \in \lbrace 1,2,3\rbrace$ the  map $L_i$ commutes with $\sigma$.
\end{lemma}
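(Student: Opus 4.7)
The plan is to show $\sigma L_i \sigma^{-1} = L_i$ for each $i \in \lbrace 1,2,3 \rbrace$ by combining the two expressions available for $L_i$: the one in Definition \ref{def:L123} (in terms of $D_x, D_y, D_z, D_w$) and the one in Lemma \ref{lem:LiDual} (in terms of $D_{x^*}, D_{y^*}, D_{z^*}, D_{w^*}$).

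First I would fix $i$, say $i = 1$, and conjugate Definition \ref{def:L123} by $\sigma$:
\begin{align*}
\sigma L_1 \sigma^{-1} = (\sigma D_x \sigma^{-1})(\sigma D_y \sigma^{-1}) - (\sigma D_z \sigma^{-1})(\sigma D_w \sigma^{-1}).
\end{align*}
Now apply Lemma \ref{lem:sDs}, which identifies each $\sigma D_a \sigma^{-1}$ with the corresponding starred derivation $D_{a^*}$. This gives
\begin{align*}
\sigma L_1 \sigma^{-1} = D_{x^*} D_{y^*} - D_{z^*} D_{w^*},
\end{align*}
which is exactly the expression for $L_1$ supplied by Lemma \ref{lem:LiDual}. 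Hence $\sigma L_1 = L_1 \sigma$. The arguments for $i=2$ and $i=3$ are identical in structure, just with the pairings $(x^*,z^*),(w^*,y^*)$ and $(x^*,w^*),(y^*,z^*)$ respectively, and again Lemma \ref{lem:LiDual} supplies the needed identification.

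There is essentially no obstacle here; the work was done in Lemmas \ref{lem:sDs} and \ref{lem:LiDual}, and the present lemma is simply the observation that the original and starred formulas for $L_i$ are related by conjugation with $\sigma$. The only point worth being careful about is preserving the order of factors when conjugating the product of two derivations, which is automatic since $\sigma$ is an algebra automorphism of $P$ and so $\sigma (\varphi \phi) \sigma^{-1} = (\sigma \varphi \sigma^{-1})(\sigma \phi \sigma^{-1})$ for any $\varphi, \phi \in \mathrm{End}(P)$.
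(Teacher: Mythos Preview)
Your argument is correct and is essentially identical to the paper's proof: conjugate Definition~\ref{def:L123} by $\sigma$, apply Lemma~\ref{lem:sDs} to obtain the starred derivations, and recognize the result via Lemma~\ref{lem:LiDual}. One small remark: the splitting $\sigma(\varphi\phi)\sigma^{-1}=(\sigma\varphi\sigma^{-1})(\sigma\phi\sigma^{-1})$ holds simply because conjugation by any invertible element is an algebra automorphism of $\mathrm{End}(P)$; the fact that $\sigma$ is an algebra automorphism of $P$ is not what is being used there.
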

\begin{proof} By Definition \ref{def:L123} and Lemmas \ref{lem:sDs}, \ref{lem:LiDual} we obtain
\begin{align*}
\sigma L_i \sigma^{-1} = \sigma \Bigl( D_x D_y-D_z D_w \Bigr) \sigma^{-1} =  D_{x^*} D_{y^*}-D_{z^*} D_{w^*} = L_i.
\end{align*}
\end{proof}

\begin{lemma} \label{lem:2L}
The following hold:
\begin{enumerate}
\item[\rm (i)] for distinct $i,j \in \lbrace 1,2,3\rbrace$ we have $\lbrack L_i, L_j \rbrack=0$;
\item[\rm (ii)] for $i \in \lbrace 1,2,3\rbrace$ and $N \in \mathbb N$ we have 
$L_i (P_N) \subseteq P_{N-2} $.
\end{enumerate}
\end{lemma}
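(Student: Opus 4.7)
The plan is to prove both parts by direct appeal to the basic commutation and grading properties of the partial derivatives established in Section 6.

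For part (i), I would observe that by Definition \ref{def:L123} each of $L_1, L_2, L_3$ is a polynomial in the four commuting partial derivatives $D_x, D_y, D_z, D_w$. By Lemma \ref{lem:Weyl}(ii), any two distinct partial derivatives from this list commute with each other. Consequently, any two polynomials in $D_x, D_y, D_z, D_w$ commute. In particular $L_i$ and $L_j$ commute, so $\lbrack L_i, L_j \rbrack = 0$ for distinct $i,j \in \lbrace 1,2,3\rbrace$.

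For part (ii), I would use the grading information from \eqref{eq:one}, \eqref{eq:two}: each of $D_x, D_y, D_z, D_w$ sends $P_N$ into $P_{N-1}$. Hence any product of two of them sends $P_N$ into $P_{N-2}$. Since $L_i$ is by Definition \ref{def:L123} a linear combination of two such products, we conclude $L_i(P_N) \subseteq P_{N-2}$ for all $N \in \mathbb N$ and all $i \in \lbrace 1,2,3\rbrace$.

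Neither part presents an obstacle, as both reduce immediately to facts already recorded; the proof amounts to citing Lemma \ref{lem:Weyl}(ii) and the degree-lowering property of the $D_a$.
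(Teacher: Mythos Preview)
Your proof is correct and essentially matches the paper's own argument: part (i) follows from Definition \ref{def:L123} and the mutual commutativity of $D_x, D_y, D_z, D_w$ (which you cite via Lemma \ref{lem:Weyl}(ii)), and part (ii) follows from Definition \ref{def:L123} together with the fact that each $D_a$ lowers degree by one (the paper cites the displayed comments above Lemma \ref{lem:DerDef} rather than \eqref{eq:one}, \eqref{eq:two}, but the content is the same).
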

\begin{proof} (i) By Definition \ref{def:L123} and since $D_x, D_y, D_z, D_w$ mutually commute. \\
\noindent (ii)  By Definition \ref{def:L123} and the comments above Lemma  \ref{lem:DerDef}.    %%%\ref{def:MMMM}.
\end{proof}

\noindent Next, we describe how $L_1, L_2, L_3$ act on the $P$-basis \eqref{eq:Pbasis}.

\begin{lemma} \label{lem:LLLact}  For $r,s,t,u \in \mathbb N$ we have
\begin{align*}
L_1(x^r y^s z^t w^u) &= rs x^{r-1}y^{s-1} z^t w^u - tu x^r y^s z^{t-1} w^{u-1}, \\
L_2(x^r y^s z^t w^u) &= rt x^{r-1}y^{s} z^{t-1} w^u - su x^r y^{s-1} z^{t} w^{u-1}, \\
L_3(x^r y^s z^t w^u) &= ru x^{r-1}y^{s} z^t w^{u-1} - st x^r y^{s-1} z^{t-1} w^{u}.
\end{align*}
\end{lemma}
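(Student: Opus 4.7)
The plan is to apply each $L_i$ directly to the monomial basis vector using the definitions in Definition 12.1 together with the explicit action of the partial derivatives given in equations \eqref{eq:one}, \eqref{eq:two}.

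First I would treat $L_1 = D_x D_y - D_z D_w$. Because $D_x, D_y, D_z, D_w$ mutually commute (Lemma 6.3(ii)), the two composed operators act on $x^r y^s z^t w^u$ one partial derivative at a time. Applying $D_y$ first yields $s x^r y^{s-1} z^t w^u$, and then $D_x$ yields $rs x^{r-1} y^{s-1} z^t w^u$. Similarly $D_z D_w$ produces $tu x^r y^s z^{t-1} w^{u-1}$. Subtracting gives the stated formula for $L_1$. The same straightforward pattern handles $L_2$ and $L_3$: in each case one term is obtained by differentiating in $x$ and one of $\{y,z,w\}$, and the other by differentiating in the remaining pair, multiplying out the constants that drop down.

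There is essentially no obstacle; the only thing to be careful about is matching indices correctly so that the signs and the exponents in each summand agree with the claimed formula. The cases $L_2$ and $L_3$ are obtained from $L_1$ by the same computation with the roles of $y, z, w$ permuted according to Definition 12.1. This completes the verification of all three formulas.
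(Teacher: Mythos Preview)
Your proposal is correct and matches the paper's approach exactly: the paper's proof is simply ``By \eqref{eq:one}, \eqref{eq:two} and Definition \ref{def:L123},'' which is precisely the direct computation you describe.
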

\begin{proof} By \eqref{eq:one}, \eqref{eq:two} and Definition \ref{def:L123}.
\end{proof}

\noindent Next, we describe how $L_1, L_2, L_3$ act on the $P$-basis \eqref{eq:Pdualbasis}.

\begin{lemma} \label{lem:LLLDact}  For $r,s,t,u \in \mathbb N$ we have
\begin{align*}
L_1(x^{*r} y^{*s} z^{*t} w^{*u}) &= rs x^{* r-1}y^{* s-1} z^{*t} w^{*u} - tu x^{*r} y^{*s} z^{*t-1} w^{*u-1}, \\
L_2(x^{*r} y^{*s} z^{*t} w^{*u}) &= rt x^{*r-1}y^{*s} z^{*t-1} w^{*u} - su x^{*r} y^{*s-1} z^{*t} w^{*u-1}, \\
L_3(x^{*r} y^{*s} z^{*t} w^{*u}) &= ru x^{*r-1}y^{*s} z^{*t} w^{*u-1} - st x^{*r} y^{*s-1} z^{*t-1} w^{*u}.
\end{align*}
\end{lemma}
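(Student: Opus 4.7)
The plan is to reduce this to a computation that exactly parallels the proof of Lemma \ref{lem:LLLact}, since every ingredient we need has already been built in the starred setting. The key observation is that Lemma \ref{lem:LiDual} expresses each $L_i$ in terms of the starred partial derivatives $D_{x^*}, D_{y^*}, D_{z^*}, D_{w^*}$ with exactly the same formulas that Definition \ref{def:L123} uses for the unstarred derivatives. So I would first apply Lemma \ref{lem:LiDual} to rewrite, say, $L_1 = D_{x^*}D_{y^*} - D_{z^*}D_{w^*}$, and then evaluate by Lemma \ref{lem:DDact}, which tells us that $D_{x^*}, D_{y^*}, D_{z^*}, D_{w^*}$ act on the basis \eqref{eq:Pdualbasis} in the same way that \eqref{eq:one}--\eqref{eq:two} describe the action of $D_x, D_y, D_z, D_w$ on \eqref{eq:Pbasis}. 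A line-by-line computation then yields the stated formula for $L_1$, and the cases $L_2$, $L_3$ are identical.

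As a cleaner alternative, the result is a direct consequence of applying the automorphism $\sigma$ of $P$ to Lemma \ref{lem:LLLact}. Indeed, $\sigma$ swaps $x \leftrightarrow x^*$, $y \leftrightarrow y^*$, $z \leftrightarrow z^*$, $w \leftrightarrow w^*$ by Proposition \ref{thm:aut2}, and $\sigma$ commutes with each $L_i$ by Lemma \ref{lem:LScom}. Applying $\sigma$ to both sides of each equation in Lemma \ref{lem:LLLact}, the left-hand side becomes $L_i(x^{*r}y^{*s}z^{*t}w^{*u})$ and the right-hand side becomes exactly the claimed expression.

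I do not anticipate any real obstacle: the lemma is essentially a transcription of Lemma \ref{lem:LLLact} under the $\sigma$-symmetry, and the only substance is bookkeeping already encoded in Lemmas \ref{lem:LiDual}, \ref{lem:DDact}, and \ref{lem:LScom}.
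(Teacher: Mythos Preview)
Your first approach is exactly the paper's proof: it cites Lemmas \ref{lem:DDact} and \ref{lem:LiDual} and nothing more. Your $\sigma$-based alternative is also valid and perhaps more conceptual, but the paper takes the direct computational route you outlined first.
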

\begin{proof} By Lemmas \ref{lem:DDact}, \ref{lem:LiDual}.
\end{proof}

\begin{proposition} \label{prop:comL}
The following {\rm (i)--(iii)} hold on $P$.
\begin{enumerate}
\item[\rm (i)] $L_1$ commutes with
\begin{align*}
A_2, \qquad A_3, \qquad A^*_2, \qquad A^*_3.
\end{align*}
\item[\rm (ii)] $L_2$ commutes with
\begin{align*}
A_3, \qquad A_1, \qquad A^*_3, \qquad A^*_1.
\end{align*}
\item[\rm (iii)] $L_3$ commutes with
\begin{align*}
A_1, \qquad A_2, \qquad A^*_1, \qquad A^*_2.
\end{align*}
\end{enumerate}
\end{proposition}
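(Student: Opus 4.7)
The plan is to reduce the twelve commutation relations in (i), (ii), (iii) to six direct calculations in the fourth Weyl algebra; the remaining six follow by symmetry. First, I will verify (i) for the generators $A_2$ and $A_3$ by substituting the expansion $L_1 = D_x D_y - D_z D_w$ from Definition \ref{def:L123} and the expansion $A_2 = M_z D_x + M_w D_y + M_x D_z + M_y D_w$ from Proposition \ref{lem:A6}. The commutator $[L_1, A_2]$ expands into eight brackets of the form $[D_a D_b, M_c D_d]$, which I evaluate using the Leibniz rule $[UV, W] = U[V,W] + [U,W]V$ together with the relations $[D_a, M_b] = \delta_{a,b} I$ and $[D_a, D_b] = 0$ from Lemma \ref{lem:Weyl}. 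Only four of the eight brackets are nonzero; collecting them yields $[D_x D_y, A_2] = D_y D_z + D_x D_w$ and $[D_z D_w, A_2] = D_w D_x + D_z D_y$, which cancel. The same style of computation gives $[L_1, A_3] = 0$.

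Next, I will deduce $[L_1, A^*_2] = 0$ and $[L_1, A^*_3] = 0$ without further calculation. By Lemma \ref{lem:LScom}, $\sigma L_1 \sigma^{-1} = L_1$, and by Proposition \ref{thm:aut2}, $A^*_r = \sigma A_r \sigma^{-1}$ for $r \in \{1,2,3\}$. Therefore
\begin{equation*}
[L_1, A^*_r] \;=\; [\sigma L_1 \sigma^{-1}, \sigma A_r \sigma^{-1}] \;=\; \sigma [L_1, A_r] \sigma^{-1}
\end{equation*}
vanishes for $r \in \{2,3\}$, completing the proof of (i).

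Parts (ii) and (iii) follow by the same pattern, applied to $L_2 = D_x D_z - D_w D_y$ and $L_3 = D_x D_w - D_y D_z$ with the corresponding two generators from $\{A_1, A_2, A_3\}$, followed in each case by the $\sigma$-conjugation trick. The only potential obstacle is bookkeeping: one must track which variable appears both as a derivative in $L_i$ and as a multiplication operator in the relevant $A_j$ to identify the surviving terms, and verify that the two halves of $L_i$ always contribute the same pair of degree-two products of derivatives with opposite signs. No new ideas beyond Lemma \ref{lem:Weyl}, Proposition \ref{lem:A6}, Lemma \ref{lem:LScom}, and Proposition \ref{thm:aut2} are required.
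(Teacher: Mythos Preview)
Your proof is correct. The core Weyl-algebra computation for $[L_1,A_2]=0$ is exactly the paper's; the paper then verifies $[L_1,A^*_2]=0$ by a second direct computation using the expansion $A^*_2=M_xD_x-M_yD_y+M_zD_z-M_wD_w$ from Proposition~\ref{lem:A6}, and declares the remaining cases similar. You instead do the second direct computation for $[L_1,A_3]$ and then obtain the starred commutators for free via $\sigma$-conjugation, invoking Lemma~\ref{lem:LScom} and Proposition~\ref{thm:aut2}. Your route halves the number of explicit Weyl-algebra checks and highlights the $\sigma$-symmetry that the paper exploits elsewhere but not here; the paper's route keeps this proposition independent of the $\sigma$-machinery. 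Either way the argument is the same in spirit.
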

\begin{proof} First we prove that $L_1, A_2$ commute.  By Lemma \ref{lem:Weyl}, Proposition \ref{lem:A6}, and Definition \ref{def:L123},
\begin{align*}
\lbrack L_1, A_2 \rbrack &= \lbrack D_x D_y - D_z D_w,   M_x D_z + M_y D_w + M_z D_x +M_w D_y   \rbrack \\
&= \lbrack D_x D_y, M_x D_z \rbrack
+
\lbrack D_x D_y, M_y D_w \rbrack
-
\lbrack D_z D_w, M_z D_x \rbrack
-
\lbrack D_z D_w, M_w D_y\rbrack \\
&= \lbrack D_x, M_x \rbrack D_y D_z +
 \lbrack D_y, M_y \rbrack D_x D_w -
  \lbrack D_z, M_z \rbrack D_x D_w -
   \lbrack D_w, M_w \rbrack D_y D_z  \\
   &= D_y D_z + D_x D_w -D_x D_w -D_y D_z \\
   &=0.
\end{align*}
Next we prove that $L_1, A^*_2$ commute.  We have %%%By Lemma \ref{lem:Weyl}, Proposition \ref{lem:A6}, and Definition \ref{def:L123},
\begin{align*}
\lbrack L_1, A^*_2 \rbrack &= \lbrack D_x D_y - D_z D_w,  M_x D_x - M_y D_y + M_z D_z - M_w D_w\rbrack \\
&= \lbrack D_x D_y, M_x D_x \rbrack
- \lbrack D_x D_y, M_y D_y \rbrack
- \lbrack D_z D_w, M_z D_z \rbrack
+ \lbrack D_z D_w, M_w D_w \rbrack \\
&= \lbrack D_x, M_x \rbrack D_x D_y -
 \lbrack D_y, M_y \rbrack D_x D_y -
  \lbrack D_z, M_z \rbrack D_z D_w +
   \lbrack D_w, M_w \rbrack D_z D_w  \\
   &= D_x D_y - D_x D_y -D_z D_w +D_z D_w \\
   &=0.
\end{align*}
The remaining assertions are similarly proven.
\end{proof}

\begin{lemma} \label{lem:LLLsurj} Each of  $L_1, L_2, L_3$ is surjective  but not injective.
\end{lemma}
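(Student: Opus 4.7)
The plan is to handle the two assertions separately and keep everything within Section~12, using only results already proved there.

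For non-injectivity, the argument is immediate from Lemma~\ref{lem:2L}(ii): for each $i \in \{1,2,3\}$ the map $L_i$ sends $P_0$ into $P_{-2}=0$. Since $P_0 = \mathbb{C}\cdot 1$ is nonzero, the constant polynomial $1$ lies in the kernel, so $L_i$ is not injective.

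For surjectivity I will argue that $L_i(P_N) = P_{N-2}$ for every $N \in \mathbb N$; combined with the grading $P = \bigoplus_{N} P_N$ and Lemma~\ref{lem:2L}(ii), this gives $L_i(P) = P$. The cases $N \in \{0,1\}$ are vacuous since $P_{N-2}=0$. For $N\geq 2$, Lemma~\ref{lem:PDbasic} reduces the task to showing that every monomial basis vector $x^r y^s z^t w^u$ with $(r,s,t,u)\in\mathcal P_{N-2}$ belongs to $L_i(P_N)$. I will establish this for $i = 1$ by induction on $t+u$; the cases $i=2,3$ are completely parallel after permuting variables in accordance with Lemma~\ref{lem:LLLact}.

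The induction is driven by the explicit formula from Lemma~\ref{lem:LLLact}:
\[
L_1(x^{r+1} y^{s+1} z^t w^u) = (r+1)(s+1)\, x^r y^s z^t w^u \;-\; tu\, x^{r+1} y^{s+1} z^{t-1} w^{u-1}.
\]
When $tu = 0$ the right-hand side is a nonzero scalar multiple of the target monomial, settling the base case. When $tu \geq 1$ the ``error'' monomial $x^{r+1} y^{s+1} z^{t-1} w^{u-1}$ sits in $P_{N-2}$ (same total degree) but has strictly smaller value of $t+u$, so by the inductive hypothesis it already lies in $L_1(P_N)$. Rearranging the displayed identity then places $x^r y^s z^t w^u$ in $L_1(P_N)$ as well.

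I do not anticipate any substantial obstacle. The only subtlety is bookkeeping: one must check that the induction parameter $t+u$ decreases strictly at each step and that each correction monomial stays within $P_{N-2}$, both of which are immediate from the formula and from $L_1$ being homogeneous of degree $-2$. Notably, the proof uses only Lemma~\ref{lem:2L} and Lemma~\ref{lem:LLLact}; in particular it avoids the raising operators $R_i$ and the commutation relation $[L_i,R_i] = \Omega + 2I$, which are introduced later in the paper.
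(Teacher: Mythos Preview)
Your proof is correct and follows essentially the same approach as the paper: both apply $L_1$ to $x^{r+1}y^{s+1}z^{t}w^{u}$, observe that the leading coefficient $(r+1)(s+1)$ is nonzero, and absorb the error term $tu\,x^{r+1}y^{s+1}z^{t-1}w^{u-1}$ by induction. The only cosmetic differences are that the paper inducts on $u$ alone rather than on $t+u$, and proves surjectivity on all of $P$ directly (deferring the graded statement $L_i(P_N)=P_{N-2}$ to the next lemma).
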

\begin{proof} We first show that $L_1$ is surjective. To do this, we show that
\begin{align}
x^r y^s z^t w^u \in L_1(P) \qquad \qquad r,s,t,u \in \mathbb N. \label{eq:LsurjTOSHOW}
\end{align}
\noindent We will prove \eqref{eq:LsurjTOSHOW} by induction on $u$. First assume that $u=0$. By Lemma \ref{lem:LLLact},
\begin{align*}
x^r y^s z^t  = \frac{L_1\bigl( x^{r+1} y^{s+1} z^t \bigr)}{(r+1)(s+1)} \in L_1 (P).
\end{align*}
Next assume that $u\geq 1$. By Lemma \ref{lem:LLLact},
\begin{align*}
x^r y^s z^t w^u = \frac{L_1\bigl( x^{r+1} y^{s+1} z^t w^u  \bigr) + tux^{r+1} y^{s+1} z^{t-1} w^{u-1} }{(r+1)(s+1)}.
\end{align*}
In the above fraction, the numerator term on the right is contained in $L_1(P)$ by induction. By these comments,
$x^r y^s z^t w^u \in L_1(P)$.
We have shown that $L_1$ is surjective. Observe that $L_1$ is not injective,
because $L_1$ sends $P_0\to 0$ and $P_1 \to 0$. We have proved our claims about $L_1$. The claims about $L_2, L_3$
are similarly proven.
\end{proof}
\noindent We make an observation for later use. For $N \in \mathbb N$,
\begin{align} \label{eq:BNBNS}
\binom{N+3}{3} = \binom{N+1}{3} + (N+1)^2.
\end{align}

\begin{lemma} \label{prop:cap4} The following hold for  $i \in \lbrace 1,2,3\rbrace$ and $N \in \mathbb N$:
\begin{enumerate} 
\item[\rm (i)]  $L_i (P_N)=P_{N-2}$;
\item[\rm (ii)] ${\rm Ker}(L_i) \cap P_N$ has dimension $(N+1)^2$.
%%%%%%%%%\item[\rm (iii)] ${\rm Ker}(L_i) \cap P_N$ is invariant under $A_\beta, A^*_\beta$ for all $\beta \in \lbrace 1,2,3\rbrace\backslash \lbrace \alpha \rbrace$;
\end{enumerate}
\end{lemma}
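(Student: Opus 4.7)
The plan is to derive both statements from the surjectivity already established in Lemma \ref{lem:LLLsurj} together with the grading property in Lemma \ref{lem:2L}(ii) and the dimension identity \eqref{eq:BNBNS}.

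For part (i), I would fix $i\in\{1,2,3\}$ and $N\in\mathbb N$ and argue as follows. Let $g\in P_{N-2}$. By Lemma \ref{lem:LLLsurj}, there exists $f\in P$ with $L_i(f)=g$. Decompose $f=\sum_{M\in\mathbb N} f_M$ with $f_M\in P_M$. By Lemma \ref{lem:2L}(ii), $L_i(f_M)\in P_{M-2}$, so the direct sum decomposition $P=\sum_M P_M$ forces $L_i(f_N)=g$ and $L_i(f_M)=0$ for $M\neq N$. Hence $g\in L_i(P_N)$, and since the reverse inclusion is Lemma \ref{lem:2L}(ii), we conclude $L_i(P_N)=P_{N-2}$. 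The degenerate cases $N\in\{0,1\}$ are covered by the conventions $P_{-1}=P_{-2}=0$ together with the observation that $L_i$ annihilates $P_0$ and $P_1$ (any second-order differential operator kills polynomials of degree at most $1$).

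For part (ii), I would apply the rank-nullity theorem to the linear map $L_i\colon P_N\to P_{N-2}$. Part (i) gives that this map is surjective, so
\begin{align*}
\dim\bigl({\rm Ker}(L_i)\cap P_N\bigr)=\dim P_N - \dim P_{N-2}.
\end{align*}
By Lemma \ref{lem:PDbasic} this equals $\binom{N+3}{3}-\binom{N+1}{3}$, and by the identity \eqref{eq:BNBNS} this is $(N+1)^2$, as desired. The two boundary cases again check directly: for $N=0$ and $N=1$ the map $L_i$ is zero on $P_N$, so the kernel is all of $P_N$ with dimension $1=(0+1)^2$ and $4=(1+1)^2$ respectively, matching the formula under the convention $P_{-2}=P_{-1}=0$.

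There is no serious obstacle here; the substantive content has already been done in Lemma \ref{lem:LLLsurj}. The only mild care needed is the degree bookkeeping in part (i) to upgrade ``surjective on $P$'' to ``surjective on each homogeneous component,'' and checking the small-$N$ cases of part (ii) against the conventions $P_{-1}=P_{-2}=0$.
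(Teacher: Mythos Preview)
Your proof is correct and takes essentially the same approach as the paper: the paper's proof of (i) simply cites Lemmas \ref{lem:2L}(ii) and \ref{lem:LLLsurj}, leaving implicit the homogeneous-component argument you spell out, and its proof of (ii) is exactly your rank-nullity computation via \eqref{eq:BNBNS}. Your additional explicit treatment of the cases $N\in\{0,1\}$ is not needed, since the conventions $P_{-1}=P_{-2}=0$ already absorb them into the general argument.
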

\begin{proof}(i) By Lemma \ref{lem:2L}(ii) and Lemma \ref{lem:LLLsurj}. \\
\noindent (ii) By linear algebra along with \eqref{eq:BNBNS}
and (i) above,
\begin{align*}
&{\rm dim} \Bigl( {\rm Ker}(L_i) \cap P_N \Bigr) =
{\rm dim}\,P_N  - {\rm dim} \,P_{N-2} \\
&\qquad = \binom{N+3}{3} - \binom{N+1}{3} 
= (N+1)^2.
\end{align*}
\end{proof}

\begin{definition} \label{def:RRR} \rm Define $R_1, R_2, R_3 \in {\rm End}(P)$ by
 \begin{align*}
 R_1 =  M_x M_y  - M_z M_w,      \qquad \quad R_2  = M_x M_z - M_w M_y,  \qquad \quad R_3 = M_x M_w - M_y M_z.
 \end{align*}
\end{definition}

\noindent We clarify Definition \ref{def:RRR}. For $f \in P$,
\begin{align}
\label{eq:RRRf}
R_1(f) = (xy-zw)f, \qquad \quad R_2(f)= (xz-wy) f, \qquad \quad R_3(f) = (xw-yz) f.
\end{align}

\begin{lemma} \label{lem:sxs}  We have
\begin{align*}
xy-zw=x^* y^* - z^* w^*, \qquad  xz-wy = x^* z^* - w^* y^*, \qquad x w-yz = x^* w^*-y^* z^*.
\end{align*}
In other words, the automorphism $\sigma$ fixes each of
\begin{align*}
xy-zw, \qquad xz-wy, \qquad xw-yz.
\end{align*}
\end{lemma}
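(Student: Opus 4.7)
The plan is to verify the three stated identities by direct expansion from Definition \ref{def:xyzws}, and then deduce the claim about $\sigma$ as an immediate corollary. Since $\sigma$ is an algebra automorphism sending $x \mapsto x^*$, $y \mapsto y^*$, $z \mapsto z^*$, $w \mapsto w^*$, we have $\sigma(xy-zw) = x^*y^* - z^*w^*$ and analogously for the other two pairs; thus asserting that $\sigma$ fixes these three elements is logically equivalent to the three stated equalities.

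For the first identity, I would compute
\begin{align*}
4\,x^*y^* &= (x+y+z+w)(x+y-z-w) = (x+y)^2 - (z+w)^2, \\
4\,z^*w^* &= (x-y+z-w)(x-y-z+w) = (x-y)^2 - (z-w)^2,
\end{align*}
and subtract to obtain $4(x^*y^* - z^*w^*) = \bigl((x+y)^2-(x-y)^2\bigr) - \bigl((z+w)^2-(z-w)^2\bigr) = 4xy - 4zw$. The second and third identities follow from the analogous difference-of-squares factorizations: for instance,
\begin{align*}
4(x^*z^* - w^*y^*) &= (x+z)^2 - (y+w)^2 - (x-z)^2 + (y-w)^2 = 4xz - 4wy,
\end{align*}
after grouping $x^* \pm z^*$ and $y^* \pm w^*$ (these sums involve only even or only odd combinations of $y,w$), and similarly for $xw - yz$ using the pairing $x^* \pm w^*$ against $y^* \pm z^*$.

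Once the three identities are established, the final claim is immediate from Proposition \ref{thm:aut2}: applying $\sigma$ to $xy - zw$ gives $x^*y^* - z^*w^*$, which by the identity just proved equals $xy - zw$, and similarly for the remaining two. No real obstacle is anticipated; the computation is routine, and the main thing to be careful about is bookkeeping the signs so that the correct pairing (by sign pattern in $x^*, y^*, z^*, w^*$) is used in each of the three cases.
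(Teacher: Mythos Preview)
Your proposal is correct and follows the same approach as the paper, which simply says ``Use Definition~\ref{def:xyzws}.'' You have supplied the explicit difference-of-squares computation that the paper leaves implicit, and your deduction of the $\sigma$-invariance from Proposition~\ref{thm:aut2} is sound.
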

\begin{proof} Use Definition \ref{def:xyzws}.
\end{proof}

\begin{lemma} \label{def:RRRD}  We have
 \begin{align*}
 R_1 =  M_{x^*} M_{y^*}  - M_{z^*} M_{w^*},      \qquad  R_2  = M_{x^*} M_{z^*} - M_{w^*} M_{y^*},  \qquad  R_3 = M_{x^*} M_{w^*} - M_{y^*} M_{z^*}.
 \end{align*}
\end{lemma}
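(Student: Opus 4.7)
The plan is to verify each of the three equations by applying both sides to an arbitrary $f \in P$ and using the elementary fact that a product of multiplication maps is itself multiplication by the product of the corresponding elements.

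First I would fix $i \in \{1,2,3\}$ and argue uniformly. By Definition \ref{def:MMMMd}, for any $f \in P$ and any $a,b \in \{x^*,y^*,z^*,w^*\}$ we have $M_a M_b(f) = ab\, f$, so the right-hand side of each claimed equation is the multiplication operator
\[
M_{x^*}M_{y^*} - M_{z^*}M_{w^*} : f \mapsto (x^*y^* - z^*w^*)f,
\]
and analogously for the other two. On the other hand, by \eqref{eq:RRRf} the left-hand sides $R_1, R_2, R_3$ are multiplication by $xy-zw$, $xz-wy$, $xw-yz$ respectively.

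The crux is then Lemma \ref{lem:sxs}, which tells us that
\[
xy-zw = x^*y^* - z^*w^*, \qquad xz-wy = x^*z^* - w^*y^*, \qquad xw-yz = x^*w^* - y^*z^*.
\]
Combining, both sides of each claimed equation act on an arbitrary $f \in P$ by multiplication by the same element of $P_2$, so they agree as elements of ${\rm End}(P)$. There is no real obstacle here; the only content is the invariance recorded in Lemma \ref{lem:sxs}, which itself follows by direct substitution from Definition \ref{def:xyzws}. Thus the proof is a one-line appeal to Definition \ref{def:RRR}, equation \eqref{eq:RRRf}, Definition \ref{def:MMMMd}, and Lemma \ref{lem:sxs}.
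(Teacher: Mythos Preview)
Your proof is correct and follows essentially the same approach as the paper, which simply cites \eqref{eq:RRRf} and Lemma \ref{lem:sxs}. You have spelled out the details a bit more by explicitly noting via Definition \ref{def:MMMMd} that $M_a M_b$ is multiplication by $ab$, but the underlying idea is identical.
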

\begin{proof} By \eqref{eq:RRRf} and Lemma \ref{lem:sxs}.
\end{proof} 

\begin{lemma} \label{lem:DScom} For $i \in \lbrace 1,2,3\rbrace$ the  map $R_i$ commutes with $\sigma$.
\end{lemma}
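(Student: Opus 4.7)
The plan is to mimic the proof of Lemma \ref{lem:LScom}, which handled the analogous statement for $L_i$ in place of $R_i$. The key input is that $R_i$ admits two equivalent expressions: one in terms of the multiplication maps $M_x, M_y, M_z, M_w$ (Definition \ref{def:RRR}), and one in terms of their starred counterparts $M_{x^*}, M_{y^*}, M_{z^*}, M_{w^*}$ (Lemma \ref{def:RRRD}). Conjugation by $\sigma$ will be used to pass from one expression to the other.

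Concretely, I would fix $i \in \{1,2,3\}$ and compute $\sigma R_i \sigma^{-1}$. Take $i=1$ as the model case. Starting from Definition \ref{def:RRR}, I would write
\begin{align*}
\sigma R_1 \sigma^{-1} = \sigma\bigl(M_x M_y - M_z M_w\bigr)\sigma^{-1} = (\sigma M_x \sigma^{-1})(\sigma M_y \sigma^{-1}) - (\sigma M_z \sigma^{-1})(\sigma M_w \sigma^{-1}),
\end{align*}
then apply Lemma \ref{lem:sMs} to each factor to replace $\sigma M_a \sigma^{-1}$ by $M_{a^*}$. This yields
\begin{align*}
\sigma R_1 \sigma^{-1} = M_{x^*} M_{y^*} - M_{z^*} M_{w^*},
\end{align*}
and by Lemma \ref{def:RRRD} the right-hand side equals $R_1$. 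Multiplying on the right by $\sigma$ gives $\sigma R_1 = R_1 \sigma$, which is the desired commutation. The cases $i=2$ and $i=3$ are handled identically, substituting the appropriate defining formulas from Definition \ref{def:RRR} and Lemma \ref{def:RRRD}.

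There is no genuine obstacle: every needed ingredient is already in place. The only mild point to be careful about is that $\sigma$ is an algebra automorphism (Proposition \ref{thm:aut2}), so it respects the products $M_a M_b$ when conjugating, which is exactly what is used in the first displayed line above. Because the proof is a two-line calculation parallel to that of Lemma \ref{lem:LScom}, I would simply record it as such rather than elaborate further.
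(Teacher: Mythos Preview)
Your proposal is correct and follows exactly the approach the paper takes: the paper's proof simply says ``Similar to the proof of Lemma \ref{lem:LScom},'' and your computation $\sigma R_i \sigma^{-1} = M_{x^*}M_{y^*} - M_{z^*}M_{w^*} = R_i$ (via Definition \ref{def:RRR}, Lemma \ref{lem:sMs}, and Lemma \ref{def:RRRD}) is precisely that analogous argument.
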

\begin{proof} Similar to the proof of Lemma \ref{lem:LScom}.
\end{proof}

\begin{lemma} \label{lem:2R} The following hold:
\begin{enumerate}
\item[\rm (i)]
for distinct $i,j \in \lbrace 1,2,3\rbrace$ we have $\lbrack R_i, R_j \rbrack=0$;
\item[\rm (ii)] 
for $i \in \lbrace 1,2,3\rbrace$ and $N \in \mathbb N$ we have
$R_i (P_N) \subseteq P_{N+2} $.
\end{enumerate}
\end{lemma}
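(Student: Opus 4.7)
The proof is essentially routine and should follow the same lines as the arguments given earlier for $L_1, L_2, L_3$ (e.g.\ Lemma \ref{lem:2L}). There are two natural avenues, and I would choose whichever is cleaner in the flow of the paper.

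For part (i), the key observation is that $R_1, R_2, R_3$ are built entirely from the multiplication maps $M_x, M_y, M_z, M_w$ via Definition \ref{def:RRR}. By Lemma \ref{lem:Weyl}(ii), the maps $M_x, M_y, M_z, M_w$ mutually commute. Since each $R_i$ is a linear combination of products of two such $M$'s, any two $R_i, R_j$ must commute. Alternatively, and perhaps more elegantly, one can observe from \eqref{eq:RRRf} that $R_i$ is simply multiplication by a fixed polynomial (namely $xy-zw$, $xz-wy$, or $xw-yz$); since $P$ is a commutative algebra, any two such multiplication operators automatically commute.

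For part (ii), recall from the discussion just after Definition \ref{def:MMMM} that each of $M_x, M_y, M_z, M_w$ raises total degree by one, i.e.\ sends $P_N$ into $P_{N+1}$. Therefore any product $M_a M_b$ sends $P_N$ into $P_{N+2}$. Since each $R_i$ is a $\mathbb C$-linear combination of two such products (by Definition \ref{def:RRR}), we obtain $R_i(P_N) \subseteq P_{N+2}$. Equivalently, from \eqref{eq:RRRf}, $R_i$ multiplies $f \in P_N$ by a homogeneous polynomial of total degree $2$, producing a homogeneous polynomial of total degree $N+2$.

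There is no real obstacle here; the statement is essentially formal once Lemma \ref{lem:Weyl} and the degree-shifting properties of the multiplication maps are in hand. The only minor stylistic choice is whether to phrase the argument in terms of the $M_a$'s (paralleling the proof of Lemma \ref{lem:2L}, which was written for the $D_a$'s) or directly in terms of polynomial multiplication in $P$. I would opt for the $M_a$ formulation to keep the proof parallel to the treatment of $L_1, L_2, L_3$.
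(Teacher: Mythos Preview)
Your proposal is correct and matches the paper's proof essentially verbatim: the paper proves (i) by citing Definition~\ref{def:RRR} and the mutual commutation of $M_x, M_y, M_z, M_w$, and proves (ii) by citing Lemma~\ref{lem:PDbasic} and \eqref{eq:RRRf}. You have identified both argument paths and the paper simply chooses one for each part.
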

\begin{proof} (i) By Definition \ref{def:RRR} and since $M_x, M_y, M_z, M_w$ mutually commute. \\
\noindent (ii)  By Lemma \ref{lem:PDbasic} and \eqref{eq:RRRf}.    %%%% Definition \ref{def:RRR} and the comments above Lemma \ref{lem:Weyl}.
\end{proof}

\noindent Next, we describe how $R_1, R_2, R_3$ act on the $P$-basis  \eqref{eq:Pbasis}.

\begin{lemma} \label{lem:RRRact}  For $r,s,t,u \in \mathbb N$ we have
\begin{align*}
R_1(x^r y^s z^t w^u) &=  x^{r+1}y^{s+1} z^t w^u - x^r y^s z^{t+1} w^{u+1}, \\
R_2(x^r y^s z^t w^u) &=  x^{r+1}y^{s} z^{t+1} w^u -  x^r y^{s+1} z^{t} w^{u+1}, \\
R_3(x^r y^s z^t w^u) &=   x^{r+1}y^{s} z^t w^{u+1} -  x^r y^{s+1} z^{t+1} w^{u}.
\end{align*}
\end{lemma}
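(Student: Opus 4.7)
The plan is to derive all three identities directly from Definition \ref{def:RRR}, which expresses $R_i$ as a sum of two compositions of the multiplication maps $M_x, M_y, M_z, M_w$ introduced in Definition \ref{def:MMMM}. Since those multiplication maps act on the basis \eqref{eq:Pbasis} by the elementary rules \eqref{eq:three}, \eqref{eq:four}, evaluating each $R_i$ on $x^r y^s z^t w^u$ is a mechanical one-step calculation; equivalently, one may invoke \eqref{eq:RRRf} and observe that $R_i$ simply multiplies its argument by the corresponding element of $P_2$.

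Concretely, for $R_1$ one writes
\begin{align*}
R_1(x^r y^s z^t w^u) &= (M_x M_y - M_z M_w)(x^r y^s z^t w^u) \\
 &= M_x M_y (x^r y^s z^t w^u) - M_z M_w (x^r y^s z^t w^u) \\
 &= x^{r+1} y^{s+1} z^t w^u - x^r y^s z^{t+1} w^{u+1},
\end{align*}
using \eqref{eq:three}, \eqref{eq:four} in the last step. The computations for $R_2$ and $R_3$ are identical in structure, with the roles of $M_y, M_z, M_w$ permuted according to Definition \ref{def:RRR}.

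There is no real obstacle here: the statement is an explicit recording of the action of the concrete operators from Definition \ref{def:RRR}, and the proof consists only of unwinding notation. The analogous lemma for $L_1, L_2, L_3$ (Lemma \ref{lem:LLLact}) was handled the same way, using \eqref{eq:one}, \eqref{eq:two} in place of \eqref{eq:three}, \eqref{eq:four}. Thus the proposed proof is a single routine verification using Definition \ref{def:RRR} and the formulas \eqref{eq:three}, \eqref{eq:four} for how $M_x, M_y, M_z, M_w$ act on the basis \eqref{eq:Pbasis}.
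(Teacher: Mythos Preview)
Your proof is correct and essentially identical to the paper's, which simply cites \eqref{eq:RRRf}; you unwind Definition \ref{def:RRR} via \eqref{eq:three}, \eqref{eq:four}, which amounts to the same one-line multiplication.
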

\begin{proof} By  \eqref{eq:RRRf}.
\end{proof}

\noindent Next, we describe how $R_1, R_2, R_3$ act on the $P$-basis  \eqref{eq:Pdualbasis}.

\begin{lemma} \label{lem:RRRDact}  For $r,s,t,u \in \mathbb N$ we have
\begin{align*}
R_1(x^{*r} y^{*s} z^{*t} w^{*u}) &=  x^{*r+1}y^{*s+1} z^{*t} w^{*u} - x^{*r} y^{*s} z^{*t+1} w^{*u+1}, \\
R_2(x^{*r} y^{*s} z^{*t} w^{*u}) &=  x^{*r+1}y^{*s} z^{*t+1} w^{*u} -  x^{*r} y^{*s+1} z^{*t} w^{*u+1}, \\
R_3(x^{*r} y^{*s} z^{*t} w^{*u}) &=   x^{*r+1}y^{*s} z^{*t} w^{*u+1} -  x^{*r} y^{*s+1} z^{*t+1} w^{*u}.
\end{align*}
\end{lemma}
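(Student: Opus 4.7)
The plan is to prove the three identities as an immediate consequence of Lemma \ref{def:RRRD} together with the explicit action of the starred multiplication maps on the basis \eqref{eq:Pdualbasis}, which was recorded in the display following Definition \ref{def:MMMMd}. Specifically, from that display we have, for $r,s,t,u \in \mathbb N$,
\begin{align*}
M_{x^*}(x^{*r} y^{*s} z^{*t} w^{*u}) &= x^{*r+1} y^{*s} z^{*t} w^{*u},\\
M_{y^*}(x^{*r} y^{*s} z^{*t} w^{*u}) &= x^{*r} y^{*s+1} z^{*t} w^{*u},
\end{align*}
and similarly for $M_{z^*}$ and $M_{w^*}$. These maps mutually commute by Lemma \ref{lem:WeylD}(ii).

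To prove the first identity, I will use Lemma \ref{def:RRRD}, which gives $R_1 = M_{x^*}M_{y^*} - M_{z^*}M_{w^*}$. Applying $M_{x^*}M_{y^*}$ to $x^{*r} y^{*s} z^{*t} w^{*u}$ yields $x^{*r+1} y^{*s+1} z^{*t} w^{*u}$, while applying $M_{z^*}M_{w^*}$ yields $x^{*r} y^{*s} z^{*t+1} w^{*u+1}$. Subtracting gives the stated expression for $R_1(x^{*r} y^{*s} z^{*t} w^{*u})$. The identities for $R_2$ and $R_3$ follow by the same computation from the remaining formulas in Lemma \ref{def:RRRD}.

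Alternatively, one can give a one-line proof using the automorphism $\sigma$. By the remark at the end of Section 7, $\sigma$ swaps $x^r y^s z^t w^u \leftrightarrow x^{*r} y^{*s} z^{*t} w^{*u}$, and by Lemma \ref{lem:DScom} each $R_i$ commutes with $\sigma$. Thus applying $\sigma$ to both sides of each identity in Lemma \ref{lem:RRRact} and using $\sigma^2 = \mathrm{id}$ yields the three claimed formulas immediately. There is essentially no obstacle here; the content of the lemma is a direct transcription of Lemma \ref{lem:RRRact} under $\sigma$, and the only thing to verify is that the signs and index shifts are transported correctly, which is clear from the symmetry of Definition \ref{def:RRR} and Lemma \ref{def:RRRD}.
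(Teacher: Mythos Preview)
Your proposal is correct, and your first approach is exactly what the paper does: its proof is the single line ``By Lemma~\ref{def:RRRD},'' which amounts to applying the expressions $R_i = M_{x^*}M_{y^*} - M_{z^*}M_{w^*}$ (etc.) to a basis monomial using the displayed action of the starred multiplication maps. Your alternative via $\sigma$ and Lemma~\ref{lem:RRRact} is also valid (Lemma~\ref{lem:DScom} is already available at this point), but the paper does not take that route.
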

\begin{proof} By  Lemma  \ref{def:RRRD}.
\end{proof}

\begin{proposition} \label{prop:comR}
The following {\rm (i)--(iii)} hold on $P$.
\begin{enumerate}
\item[\rm (i)] $R_1$ commutes with
\begin{align*}
A_2, \qquad A_3, \qquad A^*_2, \qquad A^*_3.
\end{align*}
\item[\rm (ii)] $R_2$ commutes with
\begin{align*}
A_3, \qquad A_1, \qquad A^*_3, \qquad A^*_1.
\end{align*}
\item[\rm (iii)] $R_3$ commutes with
\begin{align*}
A_1, \qquad A_2, \qquad A^*_1, \qquad A^*_2.
\end{align*}
\end{enumerate}
\end{proposition}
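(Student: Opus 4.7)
The plan is to deduce Proposition \ref{prop:comR} from Proposition \ref{prop:comL} via Hermitian adjoints with respect to the form $\langle\,,\,\rangle$ introduced in Section 9. Lemmas \ref{lem:bilDM2} and \ref{lem:bilDM3} say that $D_a$ and $M_a$ are mutual adjoints for each $a \in \{x,y,z,w\}$, and by Lemma \ref{lem:Weyl} the multiplications $M_a, M_b$ commute. Since adjoint reverses the order of products, $(D_x D_y)^\dagger = M_y M_x = M_x M_y$ and similarly for the other pairs, so comparing Definition \ref{def:L123} with Definition \ref{def:RRR} term by term gives $L_i^\dagger = R_i$ and $R_i^\dagger = L_i$ for $i \in \{1,2,3\}$.

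Next, recall from Lemma \ref{lem:inv} that every $A_j$ and every $A^*_k$ is self-adjoint with respect to $\langle\,,\,\rangle$. For any $X \in \mathrm{End}(P)$ and any self-adjoint $Y \in \mathrm{End}(P)$ one has the formal identity $[X,Y]^\dagger = -[X^\dagger, Y]$, so the relation $[L_i, Y] = 0$ forces $[R_i, Y] = 0$. Applying this implication to each of the four commutations listed in Proposition \ref{prop:comL}(i) produces the four commutations in Proposition \ref{prop:comR}(i); parts (ii) and (iii) follow in the same way by cycling the role of $i$.

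I do not anticipate any substantial obstacle here: once $L_i^\dagger = R_i$ is established, the proposition is a formal consequence of Proposition \ref{prop:comL} together with Lemma \ref{lem:inv}. If a self-contained proof is preferred, a direct Weyl-algebra computation in the style of Proposition \ref{prop:comL} is also available: expand $R_i$ through Definition \ref{def:RRR} and each of $A_j, A_k, A^*_j, A^*_k$ through Proposition \ref{lem:A6}, and invoke $[D_a, M_b] = \delta_{a,b} I$ together with $[M_a, M_b] = 0$ from Lemma \ref{lem:Weyl}. Since $R_i$ is a difference of two products of multiplications, every resulting commutator $[M_aM_b, M_cD_d]$ collapses to $-M_c([D_d, M_a]M_b + M_a[D_d,M_b])$, which is a linear combination of quadratic monomials in the $M$'s; tracking signs shows that in each of the twelve commutators the four surviving terms cancel in pairs, for example $[R_1, A_2] = -M_zM_y - M_wM_x + M_xM_w + M_yM_z = 0$.
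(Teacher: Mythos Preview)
Your proof is correct, and your primary route genuinely differs from the paper's. The paper proves Proposition~\ref{prop:comR} by a direct Weyl-algebra computation exactly parallel to its proof of Proposition~\ref{prop:comL}: it expands $[A_2,R_1]$ and $[A^*_2,R_1]$ term by term using Proposition~\ref{lem:A6}, Definition~\ref{def:RRR}, and Lemma~\ref{lem:Weyl}, and observes the four surviving monomials cancel in pairs. Your alternative paragraph is essentially this same computation (with the bracket written in the opposite order), so that part matches the paper.

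Your main argument, by contrast, is an adjoint transfer: from Lemmas~\ref{lem:bilDM2} and~\ref{lem:bilDM3} you get $L_i^\dagger = R_i$, from Lemma~\ref{lem:inv} the generators $A_j, A^*_j$ are self-adjoint, and then $[L_i,Y]=0$ forces $[R_i,Y]=0$. This is shorter and more conceptual than the paper's approach; it also foreshadows Lemmas~\ref{lem:RLbil} and~\ref{lem:LRbil}, which the paper only states later. The trade-off is that the paper's direct computation is entirely self-contained within the Weyl-algebra framework and does not touch the Hermitian form at all, whereas your argument imports Section~9. Both are clean; yours avoids repeating a calculation already done for $L_i$, while the paper's keeps the $R_i$ commutation facts independent of the inner product.
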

\begin{proof} First we prove that $R_1, A_2$ commute. By Lemma \ref{lem:Weyl}, Proposition \ref{lem:A6},  and Definition  \ref{def:RRR},
\begin{align*}
\lbrack A_2,R_1\rbrack &= \lbrack M_z D_x + M_w D_y+ M_x D_z + M_y D_w, M_x M_y - M_z M_w \rbrack \\
&= \lbrack M_z D_x, M_x M_y \rbrack+ 
 \lbrack M_w D_y, M_x M_y \rbrack-
  \lbrack M_x D_z, M_z M_w \rbrack-
   \lbrack M_y D_w, M_z M_w \rbrack\\
&= M_y M_z \lbrack D_x, M_x \rbrack +M_x M_w\lbrack D_y, M_y\rbrack - M_x M_w\lbrack D_z, M_z \rbrack- M_y M_z\lbrack D_w, M_w \rbrack \\
&= M_y M_z +  M_x M_w -  M_x M_w - M_y M_z \\
&= 0.
\end{align*}
Next we prove that $R_1, A^*_2$ commute. We have
\begin{align*}
\lbrack A^*_2,R_1\rbrack &= \lbrack M_x D_x - M_y D_y+ M_z D_z - M_w D_w, M_x M_y - M_z M_w \rbrack \\
&= \lbrack M_x D_x, M_x M_y \rbrack
- \lbrack M_y D_y, M_x M_y\rbrack
- \lbrack M_z, D_z, M_z M_w \rbrack
+ \lbrack M_w D_w, M_z M_w \rbrack \\
&= M_x M_y \lbrack D_x, M_x \rbrack  -  M_x M_y \lbrack D_y, M_y\rbrack  - M_z M_w \lbrack D_z, M_z \rbrack  + M_z M_w \lbrack D_w, M_w \rbrack \\
&= M_x M_y -  M_x M_y -  M_z M_w + M_z M_w \\
&= 0.
\end{align*}
The remaining assertions are similarly proven.
\end{proof}

\begin{lemma} \label{lem:RRRinj} Each of $R_1, R_2, R_3$ is injective but not surjective.
\end{lemma}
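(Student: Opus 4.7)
The plan is to handle the non-surjectivity and injectivity assertions separately, using different earlier results for each.

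For non-surjectivity, I would invoke Lemma \ref{lem:2R}(ii), which says $R_i(P_N) \subseteq P_{N+2}$ for each $N \in \mathbb N$ and $i \in \lbrace 1,2,3\rbrace$. Summing over $N$, the image $R_i(P)$ lies in $\sum_{N \geq 2} P_N$, and in particular contains no nonzero element of $P_0$. Since $1 \in P_0$ is nonzero, $1 \notin R_i(P)$, so $R_i$ is not surjective. This is the quickest route and parallels the observation in the proof of Lemma \ref{lem:LLLsurj} that $L_i$ kills $P_0$ and $P_1$.

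For injectivity, the cleanest argument uses \eqref{eq:RRRf}: each $R_i$ acts on $P$ as multiplication by one of the polynomials $xy-zw$, $xz-wy$, $xw-yz$. Each of these is a nonzero element of the integral domain $P = \mathbb C[x,y,z,w]$ (for instance, $xy-zw$ is a nontrivial $\mathbb C$-linear combination of distinct vectors from the basis \eqref{eq:Pbasis}). Multiplication by a nonzero element of an integral domain is injective, which gives the claim.

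As an alternative injectivity argument in keeping with the Hermitian-form theme, one could instead derive the adjoint relation $\langle L_i f, g\rangle = \langle f, R_i g\rangle$ directly from Lemmas \ref{lem:bilDM2} and \ref{lem:bilDM3} (expanding $L_i, R_i$ via Definitions \ref{def:L123}, \ref{def:RRR}), and then argue: if $R_i f = 0$, then $\langle L_i g, f\rangle = 0$ for all $g \in P$; since $L_i$ is surjective by Lemma \ref{lem:LLLsurj}, this forces $\langle h, f\rangle = 0$ for all $h \in P$, and the non-degeneracy implicit in Definition \ref{def:bform} (the basis \eqref{eq:Pbasis} is orthogonal with nonzero square norms) yields $f=0$. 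I would favor the integral-domain proof, however, since it avoids introducing the $L_i$--$R_i$ adjointness prematurely. There is no serious obstacle in either route; the only thing to be careful about is to cite \eqref{eq:RRRf} correctly so that the multiplicative interpretation of $R_i$ is in hand.
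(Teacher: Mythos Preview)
Your proposal is correct and matches the paper's proof essentially line for line: the paper invokes \eqref{eq:RRRf} together with the fact that $P$ is an integral domain for injectivity, and Lemma~\ref{lem:2R}(ii) (the image misses $P_0, P_1$) for non-surjectivity. Your alternative adjointness argument is also valid but, as you note, unnecessary here.
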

\begin{proof} The maps are injective, by \eqref{eq:RRRf} and since $P$ is an integral domain. The maps are not surjective, because their images do not contain
$P_0, P_1$ by   Lemma \ref{lem:2R}(ii).
\end{proof}

\begin{lemma} \label{lem:RLbil} For  $f,g \in P$ we have
\begin{align*} %%%%%%\label{eq:RLbil}
\langle L_1 f, g \rangle = \langle f, R_1 g \rangle, \qquad \quad
\langle L_2 f, g \rangle = \langle f, R_2 g \rangle, \qquad \quad
\langle L_3 f, g \rangle = \langle f, R_3 g \rangle.
\end{align*}
\end{lemma}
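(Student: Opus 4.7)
The plan is to verify each of the three identities by unpacking the definitions and repeatedly applying the adjointness relations between the partial derivatives and the multiplication operators established earlier. Specifically, I would combine Definition \ref{def:L123}, Definition \ref{def:RRR}, and Lemma \ref{lem:bilDM2} (which gives $\langle D_a f, g\rangle = \langle f, M_a g\rangle$ for $a \in \{x,y,z,w\}$).

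For the first identity, I would compute directly:
\begin{align*}
\langle L_1 f, g\rangle &= \langle D_x D_y f, g\rangle - \langle D_z D_w f, g\rangle \\
&= \langle D_y f, M_x g\rangle - \langle D_w f, M_z g\rangle \\
&= \langle f, M_y M_x g\rangle - \langle f, M_w M_z g\rangle \\
&= \langle f, (M_x M_y - M_z M_w) g\rangle = \langle f, R_1 g\rangle,
\end{align*}
where we used Lemma \ref{lem:bilDM2} twice on each term and then the commutativity of the $M_a$'s from Lemma \ref{lem:Weyl}(ii). The identities for $L_2, R_2$ and $L_3, R_3$ follow by the same scheme, just relabelling which pair of variables appears in each term of Definition \ref{def:L123} and Definition \ref{def:RRR}.

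There is no real obstacle here; the proof is a short manipulation, and the only ``bookkeeping'' step is making sure that after moving both derivatives across the inner product one reassembles precisely $R_i g$ on the right — which is guaranteed because the $M_a$ operators pairwise commute, so the order in which the derivatives are transferred is immaterial.
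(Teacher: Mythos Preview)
Your proof is correct and essentially identical to the paper's: both expand $L_1$ via Definition~\ref{def:L123}, apply Lemma~\ref{lem:bilDM2} twice to move each derivative across the form as a multiplication operator, use the commutativity from Lemma~\ref{lem:Weyl}(ii) to reorder the $M_a$'s into $R_1$, and then remark that the cases $i=2,3$ are analogous.
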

\begin{proof}  We first show that $\langle L_1 f, g \rangle = \langle f, R_1 g \rangle$.  Using
Lemmas \ref{lem:Weyl}(ii), \ref{lem:bilDM2} and
 Definitions  \ref{def:L123}, \ref{def:RRR} we obtain
\begin{align*}
\langle L_1 f, g \rangle &=\langle D_x D_y f, g \rangle - \langle D_z D_w f,g \rangle \\
&=
 \langle  D_y f, M_x g \rangle - \langle D_w f,  M_zg \rangle \\
 & =\langle   f, M_y M_x g \rangle - \langle  f, M_w M_zg \rangle  \\
 &=\langle   f, M_x M_y g \rangle - \langle  f, M_z M_w g \rangle  \\
&= 
 \langle f, R_1 g \rangle.
\end{align*}
The remaining assertions are similarly shown.
\end{proof}

\begin{lemma} \label{lem:LRbil} For  $f,g \in P$ we have
\begin{align*} %%%%%%\label{eq:RLbil}
\langle R_1 f, g \rangle = \langle f, L_1 g \rangle, \qquad \quad
\langle R_2 f, g \rangle = \langle f, L_2 g \rangle, \qquad \quad
\langle R_3 f, g \rangle = \langle f, L_3 g \rangle.
\end{align*}
\end{lemma}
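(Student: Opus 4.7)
The plan is to deduce this lemma immediately from the previous one (Lemma \ref{lem:RLbil}) together with the Hermitian symmetry axiom in Definition \ref{def:Herm}(iii). The idea is that $\langle\,,\,\rangle$ is conjugate symmetric, so any identity of the form $\langle Xf,g\rangle=\langle f,Yg\rangle$ automatically implies the ``swapped'' identity $\langle Yf,g\rangle=\langle f,Xg\rangle$, provided the scalars involved behave well under conjugation.

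The key steps are as follows. First, I would fix $i\in\{1,2,3\}$ and $f,g\in P$. Applying Definition \ref{def:Herm}(iii) gives
\[
\langle R_i f,g\rangle=\overline{\langle g,R_i f\rangle}.
\]
Next, I would invoke Lemma \ref{lem:RLbil} with the roles of its two arguments interchanged, i.e.\ with $g$ in place of $f$ and $f$ in place of $g$, to obtain
\[
\langle L_i g,f\rangle=\langle g,R_i f\rangle.
\]
Combining these two equalities yields
\[
\langle R_i f,g\rangle=\overline{\langle L_i g,f\rangle}=\langle f,L_i g\rangle,
\]
where the last step is a second application of Definition \ref{def:Herm}(iii). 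Carrying this out for each $i\in\{1,2,3\}$ gives the three stated equations.

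There is essentially no obstacle here: the argument is a two-line symmetry manipulation that rests entirely on Lemma \ref{lem:RLbil} and the conjugate-symmetry of the Hermitian form. No case analysis on the index $i$ is really needed, since the same derivation works uniformly for $i=1,2,3$. One minor thing to note for correctness is that we are freely using $\overline{\overline{\alpha}}=\alpha$ for $\alpha\in\mathbb{C}$, which is of course automatic.
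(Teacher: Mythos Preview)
Your proposal is correct and takes essentially the same approach as the paper, which simply cites Definition \ref{def:Herm}(iii) and Lemma \ref{lem:RLbil}. Your write-up just makes explicit the two applications of conjugate symmetry that the paper leaves implicit.
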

\begin{proof} By Definition \ref{def:Herm}(iii) and Lemma \ref{lem:RLbil}.
\end{proof}

\begin{proposition} \label{prop:RPvsKerL}
For $i \in \lbrace 1,2,3\rbrace$ and $N \in \mathbb N$ the following sum is orthogonal and direct:
\begin{align}
P_N = R_i (P_{N-2}) + {\rm Ker} (L_i) \cap P_N. \label{eq:DSPPL}
\end{align}
\end{proposition}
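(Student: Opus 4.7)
The plan is to establish orthogonality first, then leverage it to deduce both directness and equality. For orthogonality, pick $f \in P_{N-2}$ and $g \in \mathrm{Ker}(L_i) \cap P_N$. Then by Lemma \ref{lem:LRbil},
\begin{align*}
\langle R_i f, g \rangle = \langle f, L_i g \rangle = \langle f, 0 \rangle = 0,
\end{align*}
so $R_i(P_{N-2})$ and $\mathrm{Ker}(L_i) \cap P_N$ are mutually orthogonal inside $P_N$ (both live in $P_N$ by Lemma \ref{lem:2R}(ii) and construction, respectively). Since $\langle\,,\,\rangle$ is a nondegenerate Hermitian form on $P_N$ (the basis \eqref{eq:PDbasis} has positive square norms by Definition \ref{def:bform}), any vector in the intersection would be orthogonal to itself and hence zero. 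Thus the sum on the right-hand side of \eqref{eq:DSPPL} is direct.

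Next I would match dimensions to conclude the sum equals all of $P_N$. By Lemma \ref{lem:RRRinj} the map $R_i$ is injective, so
\begin{align*}
\dim R_i(P_{N-2}) = \dim P_{N-2} = \binom{N+1}{3}
\end{align*}
by Lemma \ref{lem:PDbasic}. By Lemma \ref{prop:cap4}(ii),
\begin{align*}
\dim \bigl( \mathrm{Ker}(L_i) \cap P_N \bigr) = (N+1)^2.
\end{align*}
Adding these and invoking \eqref{eq:BNBNS} gives $\binom{N+1}{3} + (N+1)^2 = \binom{N+3}{3} = \dim P_N$. Since the direct sum sits inside $P_N$ and has the same dimension, equality holds.

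There is no real obstacle here: the conceptual content is that $R_i$ and $L_i$ are adjoint with respect to $\langle\,,\,\rangle$, which immediately yields orthogonality, and the dimension counts have already been prepared in Lemma \ref{prop:cap4} and the identity \eqref{eq:BNBNS}. The only minor subtlety to double-check is that $R_i(P_{N-2}) \subseteq P_N$, which is immediate from Lemma \ref{lem:2R}(ii), and that $\mathrm{Ker}(L_i) \cap P_N$ is well-defined as a subspace of $P_N$, which is automatic.
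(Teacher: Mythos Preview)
Your proof is correct and follows essentially the same approach as the paper: both establish orthogonality via $\langle R_i f, g\rangle = \langle f, L_i g\rangle = 0$ and match dimensions using the injectivity of $R_i$, Lemma~\ref{prop:cap4}(ii), and the identity \eqref{eq:BNBNS}. The only cosmetic difference is ordering---you prove orthogonality first and deduce directness from nondegeneracy, whereas the paper computes dimensions first---but the ingredients and logic are identical.
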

\begin{proof}  Referring to \eqref{eq:DSPPL}, we consider the dimensions of the three terms.
By 
 Lemma  \ref{lem:PDbasic},  the dimension of $P_N$ is equal to $\binom{N+3}{3}$. By Lemma \ref{lem:PDbasic} and since
the map $R_i$ is injective by Lemma \ref{lem:RRRinj},  the dimension of $R_i (P_{N-2})$ is equal to $\binom{N+1}{3}$.
By Lemma    \ref{prop:cap4}(ii) the dimension of  ${\rm Ker} (L_i) \cap P_N$ is equal to $(N+1)^2$.
By these comments, for \eqref{eq:DSPPL} 
the dimension of the left-hand side is equal to the sum of the dimensions of the two terms on the right-hand side.
It remains to show that these two terms are orthogonal. For $f \in P_{N-2}$ and $g \in {\rm Ker}(L_i) \cap P_N$,
\begin{align*}
\langle R_i f, g \rangle = \langle f, L_i g \rangle = \langle f, 0 \rangle = 0.
\end{align*}
The result follows.
\end{proof}

\begin{proposition} \label{prop:Psum1} For  $i \in \lbrace 1,2,3\rbrace$ and $N \in \mathbb N$ the following sum is direct:
\begin{align} \label{eq:PsumPiece}
P_N = \sum_{\ell =0}^{\lfloor N/2\rfloor} R^\ell_i \Bigl( {\rm Ker}(L_i) \cap P_{N-2\ell}\Bigr).
\end{align}
%%%%%%%Moreover, each summand is invariant under $A_j, A^*_j$ for all $j \in \lbrace 1,2,3\rbrace \backslash \lbrace i \rbrace$.
\end{proposition}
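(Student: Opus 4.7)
\smallskip
\noindent\textbf{Proof proposal.}
The plan is to prove this by induction on $N$, using Proposition \ref{prop:RPvsKerL} as the splitting step at each level. For the base cases $N \in \{0,1\}$, observe that $L_i$ lowers total degree by $2$, so $L_i(P_N) \subseteq P_{N-2} = 0$ by the convention $P_{-1}=P_{-2}=0$. Hence $\mathrm{Ker}(L_i) \cap P_N = P_N$, and $\lfloor N/2 \rfloor = 0$, so the right-hand side of \eqref{eq:PsumPiece} is just its $\ell=0$ summand, which equals $P_N$. The directness is trivial.

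For the inductive step, assume $N \geq 2$ and that the decomposition holds for $N-2$. Proposition \ref{prop:RPvsKerL} gives the direct sum
\begin{align*}
P_N = R_i(P_{N-2}) \,\dotplus\, \bigl( \mathrm{Ker}(L_i) \cap P_N \bigr).
\end{align*}
By the induction hypothesis applied to $P_{N-2}$, the sum
\begin{align*}
P_{N-2} = \sum_{\ell'=0}^{\lfloor (N-2)/2 \rfloor} R_i^{\ell'}\bigl( \mathrm{Ker}(L_i) \cap P_{N-2-2\ell'} \bigr)
\end{align*}
is direct. Since $R_i$ is injective by Lemma \ref{lem:RRRinj}, applying $R_i$ preserves directness, and reindexing with $\ell = \ell'+1$ (and using $\lfloor (N-2)/2 \rfloor + 1 = \lfloor N/2 \rfloor$) yields
\begin{align*}
R_i(P_{N-2}) = \sum_{\ell=1}^{\lfloor N/2 \rfloor} R_i^{\ell}\bigl( \mathrm{Ker}(L_i) \cap P_{N-2\ell} \bigr),
\end{align*}
as a direct sum. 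Combining with the $\ell = 0$ summand $\mathrm{Ker}(L_i) \cap P_N$ via the outer direct sum above produces the claimed decomposition \eqref{eq:PsumPiece} as a direct sum.

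There is no substantial obstacle; the content sits entirely in Proposition \ref{prop:RPvsKerL}, the injectivity of $R_i$, and the degree arithmetic. The only place to be slightly careful is the reindexing and the identity $\lfloor (N-2)/2 \rfloor + 1 = \lfloor N/2 \rfloor$ for $N \geq 2$, which holds for both parities of $N$. A cleaner alternative, if desired, is a direct dimension count: by Lemma \ref{prop:cap4}(ii) the candidate sum has dimension at most $\sum_{\ell=0}^{\lfloor N/2 \rfloor} (N-2\ell+1)^2$, which one checks equals $\binom{N+3}{3} = \dim P_N$, so containment plus dimensions force both equality and directness. I would use the inductive argument for cleanliness, reserving the dimension count as a consistency check.
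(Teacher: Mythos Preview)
Your proposal is correct and follows essentially the same approach as the paper: the paper's proof reads ``By Proposition \ref{prop:RPvsKerL} and induction on $N$, together with the fact that $R_i$ is injective.'' Your version simply fills in the details of that induction (base cases, reindexing, the identity $\lfloor (N-2)/2\rfloor+1=\lfloor N/2\rfloor$), and your alternative dimension count is a legitimate sanity check but not needed.
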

\begin{proof} By Proposition \ref{prop:RPvsKerL} and induction on $N$, together with the fact that  $R_i$ is injective.
\end{proof}
\noindent Shortly we will show that the sum \eqref{eq:PsumPiece} is orthogonal.

\begin{proposition} \label{prop:RPExp1}  For $i\in \lbrace 1,2,3\rbrace$
the following sum is direct:
\begin{align}
\label{eq:RPExp}
     P = \sum_{N \in \mathbb N} \sum_{\ell \in \mathbb N} R_i^\ell \Bigl( {\rm Ker}(L_i ) \cap P_N\Bigr).
\end{align}
\end{proposition}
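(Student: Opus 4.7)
The plan is to deduce the directness of the double sum \eqref{eq:RPExp} from Proposition \ref{prop:Psum1} by a reindexing that tracks the total degree. The argument is essentially bookkeeping based on the homogeneity of each summand; there is no substantive obstacle.

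First, I would observe that every summand in \eqref{eq:RPExp} is homogeneous. Iterating Lemma \ref{lem:2R}(ii), for any $N, \ell \in \mathbb N$ we have
\[
R_i^\ell \Bigl({\rm Ker}(L_i) \cap P_N \Bigr) \subseteq R_i^\ell(P_N) \subseteq P_{N+2\ell}.
\]
So the summand indexed by $(N, \ell)$ lies entirely inside the single homogeneous component $P_{N+2\ell}$.

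Second, I would group the summands of \eqref{eq:RPExp} according to their total degree $M = N+2\ell$. Writing $N = M - 2\ell$ and letting $\ell$ range from $0$ to $\lfloor M/2 \rfloor$, the contribution to $P_M$ is
\[
\sum_{\ell=0}^{\lfloor M/2 \rfloor} R_i^\ell \Bigl({\rm Ker}(L_i) \cap P_{M-2\ell}\Bigr),
\]
which is precisely the decomposition asserted in Proposition \ref{prop:Psum1}; in particular it is a direct sum (and in fact equals $P_M$).

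Finally, I would combine this with the fact that the homogeneous decomposition $P = \sum_{N \in \mathbb N} P_N$ is direct by construction. If a finite linear combination of elements drawn from the various summands in \eqref{eq:RPExp} vanishes, then grouping the terms by their degree $M = N + 2\ell$ gives a vanishing sum in each $P_M$; by Proposition \ref{prop:Psum1} each such degree-$M$ piece vanishes term by term, and hence every original summand is zero. This establishes directness. As a byproduct, the same reindexing shows the double sum equals $P$, since each $P_M$ is already exhausted by its inner sum.
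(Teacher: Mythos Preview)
Your proof is correct and takes essentially the same approach as the paper: start from the direct sum $P=\sum_N P_N$, substitute the direct-sum decomposition of each $P_N$ from Proposition~\ref{prop:Psum1}, and reindex via $M=N+2\ell$ (the paper phrases this as the change of variables $N-2\ell\to N$). Your write-up just makes explicit the degree bookkeeping that the paper leaves implicit.
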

\begin{proof}  In the direct sum $P=\sum_{N \in \mathbb N} P_N$ eliminate the summands using
Proposition \ref{prop:Psum1}. Evaluate the result using a change of variables $N-2 \ell \rightarrow N$.
\end{proof}
\noindent Shortly we will show that the sum \eqref{eq:RPExp} is orthogonal. We will also describe the
summands from various points of view.

\begin{lemma} \label{lem:KPinv} The following holds for $N, \ell \in \mathbb N$:
\begin{enumerate}
\item[\rm (i)]  $R_1^\ell \Bigl( {\rm Ker}(L_1) \cap P_N \Bigr) $ is invariant under each of
         \begin{align*} A_2, \qquad A_3, \qquad A^*_2, \qquad A^*_3, \qquad \sigma.
         \end{align*} 
\item[\rm (ii)] $R_2^\ell \Bigl( {\rm Ker}(L_2) \cap P_N \Bigr) $ is invariant under each of
         \begin{align*} A_3, \qquad A_1, \qquad A^*_3, \qquad A^*_1, \qquad \sigma.
         \end{align*} 
\item[\rm (iii)] $R_3^\ell \Bigl( {\rm Ker}(L_3) \cap P_N \Bigr) $ is invariant under each of
         \begin{align*} A_1, \qquad A_2, \qquad A^*_1, \qquad A^*_2, \qquad \sigma.
         \end{align*} 
\end{enumerate}
\end{lemma}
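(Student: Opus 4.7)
The three parts are structurally identical (under cyclic relabeling of the index $i \in \{1,2,3\}$), so the plan is to explain the argument for (i) and note that (ii), (iii) follow by the same reasoning. The strategy splits into two steps: first establish invariance of the ``base'' subspace ${\rm Ker}(L_1) \cap P_N$, then propagate invariance through $R_1^\ell$.

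For the first step, let $\varphi$ denote any of $A_2, A_3, A^*_2, A^*_3$. I would take an arbitrary $f \in {\rm Ker}(L_1) \cap P_N$ and show that $\varphi f$ again lies in this intersection. Membership in $P_N$ is immediate from Lemma \ref{lem:PNAAA}, which tells us $P_N$ is a submodule of the $\mathfrak{sl}_4(\mathbb C)$-module $P$. Membership in ${\rm Ker}(L_1)$ follows from Proposition \ref{prop:comL}(i), which gives $L_1 \varphi = \varphi L_1$; hence $L_1 (\varphi f) = \varphi L_1 f = 0$. For $\sigma$, the same template works: Proposition \ref{thm:aut2} gives $\sigma(P_N) = P_N$, and Lemma \ref{lem:LScom} gives $L_1 \sigma = \sigma L_1$, so $L_1 \sigma f = \sigma L_1 f = 0$.

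For the second step, pick any $g = R_1^\ell f$ with $f \in {\rm Ker}(L_1) \cap P_N$, and again let $\varphi$ range over $A_2, A_3, A^*_2, A^*_3$. Proposition \ref{prop:comR}(i) says $R_1$ commutes with each such $\varphi$, so iterating gives $\varphi R_1^\ell = R_1^\ell \varphi$, whence
\begin{align*}
\varphi g = \varphi R_1^\ell f = R_1^\ell \varphi f \in R_1^\ell\bigl({\rm Ker}(L_1) \cap P_N\bigr)
\end{align*}
by the first step. For $\sigma$, the corresponding commutation $R_1 \sigma = \sigma R_1$ is Lemma \ref{lem:DScom}, and the argument is identical.

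There is no real obstacle here; the lemma is essentially a formal consequence of the commutation relations already established (Propositions \ref{prop:comL}, \ref{prop:comR} and Lemmas \ref{lem:LScom}, \ref{lem:DScom}) together with the fact that $P_N$ is an $\mathfrak{sl}_4(\mathbb C)$-submodule of $P$ and is preserved by $\sigma$. Parts (ii) and (iii) are proved by the same two-step template, using Propositions \ref{prop:comL}(ii),(iii) and \ref{prop:comR}(ii),(iii) to supply the relevant commutations.
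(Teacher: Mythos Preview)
Your proof is correct and follows essentially the same approach as the paper: both rely on the commutation relations from Propositions \ref{prop:comL}, \ref{prop:comR} and Lemmas \ref{lem:LScom}, \ref{lem:DScom}, together with the invariance of $P_N$ under the $\mathfrak{sl}_4(\mathbb C)$-generators (Lemma \ref{lem:PNAAA}) and under $\sigma$. The paper's version is terser, but the logical content is the same two-step argument you describe.
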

\begin{proof} (i) Each of  $L_1,  R_1$ commutes with each of 
$ A_2,  A_3,  A^*_2,  A^*_3$ by  Propositions \ref{prop:comL}, \ref{prop:comR}. 
Each of $L_1, R_1$ commutes with $\sigma $ by Lemmas \ref{lem:LScom}, \ref{lem:DScom}.
By Lemma  \ref{lem:PNAAA}, the subspace $P_N$ is invariant under $ A_2,  A_3,  A^*_2,  A^*_3$. We mentioned at the end of Section 7 that
$\sigma(P_N)=P_N$.
The result follows.
\\
\noindent (ii), (iii) Similar to the proof of (i).
\end{proof}

\section{More actions of $\mathfrak{sl}_2(\mathbb C)$  on $P$}
We continue to discuss the $\mathfrak{sl}_4(\mathbb C)$-module $P= \mathbb C\lbrack x,y,z,w\rbrack$. 
Let $i \in \lbrace 1,2,3 \rbrace$. 
 In this section, we use $L_i, R_i$ to construct an $\mathfrak{sl}_2(\mathbb C)$-action on $P$.
 We decompose $P$ into an orthogonal direct sum of irreducible $\mathfrak{sl}_2(\mathbb C)$-submodules.
 These  irreducible $\mathfrak{sl}_2(\mathbb C)$-submodules are infinite-dimensional.
 We investigate how the decomposition is related to the sum in \eqref{eq:RPExp}.

\begin{definition}\label{def:H} \rm
Define a map $\Omega \in {\rm End}(P)$ such that for $N \in \mathbb N$, the subspace $P_N$
is an eigenspace for $\Omega$ with eigenvalue $N$.
\end{definition}

\begin{lemma} \label{lem:OmegaC} The map $\Omega$ commutes with $\sigma$ and everything in $\mathfrak{sl}_4(\mathbb C)$.
\end{lemma}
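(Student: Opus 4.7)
The plan is to use the elementary fact that a diagonalizable operator commutes with any map that preserves each of its eigenspaces. Here $\Omega$ is by construction diagonalizable with eigenspace decomposition $P = \sum_{N \in \mathbb N} P_N$, and the eigenvalue on $P_N$ is the scalar $N$.

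First I would note that $\sigma$ preserves each $P_N$: this was observed at the end of Section~7, where it is recorded that $\sigma(P_N)=P_N$ for all $N \in \mathbb N$. Likewise, by Lemma~\ref{lem:PNAAA}, each $P_N$ is an $\mathfrak{sl}_4(\mathbb C)$-submodule of $P$, so every $\varphi \in \mathfrak{sl}_4(\mathbb C)$ satisfies $\varphi(P_N) \subseteq P_N$. So all the maps we need to commute with $\Omega$ stabilize the eigenspaces of $\Omega$.

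Now for any $T \in \mathrm{End}(P)$ that preserves each $P_N$, and for $f \in P_N$, we have
\begin{align*}
\Omega T f = N (Tf) = T(Nf) = T \Omega f,
\end{align*}
so $\Omega T = T \Omega$ on $P_N$; since $P = \sum_{N \in \mathbb N} P_N$, this identity extends by linearity to all of $P$. Applying this with $T = \sigma$ gives $\Omega \sigma = \sigma \Omega$, and applying it with $T = \varphi$ for each $\varphi \in \mathfrak{sl}_4(\mathbb C)$ gives $\Omega \varphi = \varphi \Omega$.

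There is no real obstacle here; the lemma is essentially a bookkeeping statement that records the fact that both $\sigma$ and $\mathfrak{sl}_4(\mathbb C)$ act homogeneously on $P$. The entire argument rests on invoking $\sigma(P_N)=P_N$ from Section~7 and Lemma~\ref{lem:PNAAA}, together with the one-line eigenspace computation above.
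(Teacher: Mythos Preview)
Your proof is correct and takes essentially the same approach as the paper: the paper's one-line proof simply observes that each $P_N$ is invariant under $\sigma$ and under every element of $\mathfrak{sl}_4(\mathbb C)$, which is exactly the eigenspace-preservation argument you spell out in detail.
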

\begin{proof} For $N \in \mathbb N$ the subspace $P_N$ is invariant under $\sigma$ and everything in  $\mathfrak{sl}_4(\mathbb C)$.
\end{proof}

\begin{lemma} \label{lem:HDDDD} We have
\begin{align*}
\Omega &= M_x D_x + M_y D_y +M_z D_z + M_w D_w, \\
     \Omega   &= M_{x^*} D_{x^*} + M_{y^*} D_{y^*} +M_{z^*} D_{z^*} + M_{w^*} D_{w^*}. 
\end{align*}
\end{lemma}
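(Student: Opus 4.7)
The plan is to verify both identities by evaluating each side on the two homogeneous bases of $P$. Since $\Omega$ acts as the scalar $N$ on $P_N$, and since each of the two monomial bases consists of homogeneous elements, it suffices to show that the right-hand sides are \emph{degree-counting operators} on the respective bases.

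For the first equation, I would apply both sides to a basis vector $x^r y^s z^t w^u$ from \eqref{eq:Pbasis}, where $(r,s,t,u) \in \mathcal P_N$. By Definition \ref{def:H}, the left-hand side gives $N \cdot x^r y^s z^t w^u = (r+s+t+u)\, x^r y^s z^t w^u$. For the right-hand side, a direct calculation using \eqref{eq:one}--\eqref{eq:four} shows
\begin{align*}
M_x D_x(x^r y^s z^t w^u) &= r\, x^r y^s z^t w^u, \qquad M_y D_y(x^r y^s z^t w^u) = s\, x^r y^s z^t w^u, \\
M_z D_z(x^r y^s z^t w^u) &= t\, x^r y^s z^t w^u, \qquad M_w D_w(x^r y^s z^t w^u) = u\, x^r y^s z^t w^u,
\end{align*}
so the sum equals $(r+s+t+u)\, x^r y^s z^t w^u$, matching the left-hand side. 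Since the monomials \eqref{eq:Pbasis} span $P$, the first identity follows.

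For the second equation, the quickest route is to conjugate the first equation by the automorphism $\sigma$ from Proposition \ref{thm:aut2}. By Lemma \ref{lem:OmegaC}, $\sigma \Omega \sigma^{-1} = \Omega$, and by Lemmas \ref{lem:sDs} and \ref{lem:sMs}, $\sigma M_a D_a \sigma^{-1} = M_{a^*} D_{a^*}$ for each $a \in \{x,y,z,w\}$. Alternatively, one can repeat the calculation of the previous paragraph: apply both sides to $x^{*r} y^{*s} z^{*t} w^{*u}$ from \eqref{eq:Pdualbasis}, using Lemma \ref{lem:DDact} for the action of $D_{x^*}, D_{y^*}, D_{z^*}, D_{w^*}$ and Definition \ref{def:MMMMd} for the action of $M_{x^*}, M_{y^*}, M_{z^*}, M_{w^*}$, which again yields the scalar $r+s+t+u = N$.

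There is no real obstacle here; the only thing to be careful about is confirming that $\sigma$ commutes with $\Omega$, which is immediate since $\sigma(P_N)=P_N$ by the comment at the end of Section~7.
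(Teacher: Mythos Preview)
Your proposal is correct and follows essentially the same approach as the paper: the paper's proof simply says to apply each side of the first equation to a basis vector from \eqref{eq:Pbasis} and each side of the second equation to a basis vector from \eqref{eq:Pdualbasis}, which is exactly your direct verification. Your additional conjugation-by-$\sigma$ argument for the second equation is a nice alternative that the paper does not use here (though it uses this trick elsewhere, e.g.\ in Proposition~\ref{lem:A6D}).
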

\begin{proof} To verify the first  equation, apply each side to a $P$-basis vector from \eqref{eq:Pbasis}.
To verify the second equation, apply each side to a $P$-basis vector from \eqref{eq:Pdualbasis}.
\end{proof}

\begin{lemma} \label{lem:Hbil} We have 
\begin{align*}
\langle \Omega f, g\rangle = \langle f, \Omega g\rangle \qquad \qquad f, g \in P.
\end{align*}
\end{lemma}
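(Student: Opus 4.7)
The plan is to give two independent short proofs, both of which are essentially mechanical once the right identities are assembled.

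The first (and most natural) approach uses the spectral description of $\Omega$ directly. By Lemma~\ref{lem:PDbasic} the direct sum $P = \sum_{N \in \mathbb N} P_N$ expresses every $f,g \in P$ uniquely as finite sums $f = \sum_N f_N$, $g = \sum_M g_M$ with $f_N \in P_N$ and $g_M \in P_M$. By Lemma~\ref{lem:PDorthog} the homogeneous components are mutually orthogonal, so $\langle f_N, g_M\rangle = 0$ whenever $N \not= M$. Since $\Omega$ acts as multiplication by $N$ on $P_N$ by Definition~\ref{def:H}, we compute
\begin{align*}
\langle \Omega f, g\rangle
 = \sum_{N,M} N \langle f_N, g_M\rangle
 = \sum_N N \langle f_N, g_N\rangle
 = \sum_{N,M} M \langle f_N, g_M\rangle
 = \langle f, \Omega g\rangle.
\end{align*}

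Alternatively, one can avoid decomposing $f,g$ by appealing to Lemma~\ref{lem:HDDDD}, which gives $\Omega = M_x D_x + M_y D_y + M_z D_z + M_w D_w$. For each $a \in \{x,y,z,w\}$, the adjoint identities of Lemmas~\ref{lem:bilDM2} and \ref{lem:bilDM3} yield
\begin{align*}
\langle M_a D_a f, g\rangle = \langle D_a f, D_a g\rangle = \langle f, M_a D_a g\rangle.
\end{align*}
Summing over $a \in \{x,y,z,w\}$ and invoking Lemma~\ref{lem:HDDDD} produces the desired identity.

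There is no real obstacle here; the claim is essentially a consequence of either the orthogonality of $\{P_N\}$ together with the eigenvalue description, or of the adjointness between the partial derivatives and the corresponding multiplication maps. I would present the first argument as the main proof since it is one line and uses only Definition~\ref{def:H} and Lemma~\ref{lem:PDorthog}, and note the second as a transparent alternative.
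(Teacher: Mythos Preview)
Your first proof is correct and is exactly the paper's approach: the paper's proof reads ``By Lemma~\ref{lem:PDorthog} and Definition~\ref{def:H}.'' Your second argument via Lemma~\ref{lem:HDDDD} and the adjointness Lemmas~\ref{lem:bilDM2}, \ref{lem:bilDM3} is also correct and provides a pleasant alternative, though it is not needed here.
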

\begin{proof} By Lemma \ref{lem:PDorthog} and Definition \ref{def:H}.  %%%%%%%\ref{lem:bilDM}, \ref{lem:HDDDD}.
\end{proof}

\begin{proposition} \label{prop:HLR} For $i \in \lbrace 1,2,3 \rbrace$ we have
\begin{align*}
\lbrack L_i, R_i \rbrack = \Omega +2I, \qquad \quad 
\lbrack \Omega, R_i \rbrack = 2 R_i, \qquad \quad
\lbrack \Omega, L_i \rbrack = -2 L_i.
\end{align*}
\end{proposition}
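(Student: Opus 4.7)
The plan is to handle the three relations separately, doing the second and third first since they follow immediately from the grading, and then grinding out the first via the Weyl commutation relations from Lemma \ref{lem:Weyl}.

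For the relations $\lbrack \Omega, R_i \rbrack = 2 R_i$ and $\lbrack \Omega, L_i \rbrack = -2 L_i$, I would argue from the homogeneity of $R_i$ and $L_i$ together with the definition of $\Omega$. By Lemma \ref{lem:2R}(ii) we have $R_i(P_N) \subseteq P_{N+2}$ for every $N \in \mathbb N$, so for $f \in P_N$,
\begin{align*}
\Omega R_i f = (N+2) R_i f = R_i (N f) + 2 R_i f = R_i \Omega f + 2 R_i f.
\end{align*}
Since $P = \sum_{N \in \mathbb N} P_N$, this suffices. The computation for $L_i$ is identical, using Lemma \ref{lem:2L}(ii) with a sign flip.

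The main content is the relation $\lbrack L_i, R_i \rbrack = \Omega + 2I$. I would fix $i=1$ (the others are handled by the same calculation, just permuting the pairing of variables) and expand
\begin{align*}
\lbrack L_1, R_1 \rbrack = \lbrack D_x D_y - D_z D_w, \, M_x M_y - M_z M_w \rbrack
\end{align*}
bilinearly into four commutators. The two cross terms $\lbrack D_x D_y, M_z M_w \rbrack$ and $\lbrack D_z D_w, M_x M_y \rbrack$ vanish by Lemma \ref{lem:Weyl}(ii), since all pairs $(D_a, M_b)$ with $a \neq b$ commute, and the $D$'s and $M$'s commute among themselves. For the diagonal term $\lbrack D_x D_y, M_x M_y \rbrack$, I would push the $D$'s through the $M$'s one at a time using $\lbrack D_a, M_a \rbrack = I$: this yields
\begin{align*}
D_x D_y M_x M_y = M_x M_y D_x D_y + M_x D_x + M_y D_y + I,
\end{align*}
so the commutator equals $M_x D_x + M_y D_y + I$. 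By the same computation, $\lbrack D_z D_w, M_z M_w \rbrack = M_z D_z + M_w D_w + I$. Combining,
\begin{align*}
\lbrack L_1, R_1 \rbrack = \bigl(M_x D_x + M_y D_y + I\bigr) + \bigl(M_z D_z + M_w D_w + I\bigr),
\end{align*}
which equals $\Omega + 2I$ by Lemma \ref{lem:HDDDD}. The cases $i=2,3$ are obtained by replacing the pair $\lbrace\lbrace x,y\rbrace,\lbrace z,w\rbrace\rbrace$ by the appropriate partition of $\lbrace x,y,z,w\rbrace$ into two pairs, and the calculation is structurally identical.

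I do not anticipate a real obstacle here: the cancellation of cross terms is immediate from the Weyl relations, and the only slightly fiddly step is carefully moving $D_y$ past $M_x$ (trivial, as they commute) before applying $\lbrack D_y, M_y \rbrack = I$. The cleanest presentation is to verify the expansion of $D_a D_b M_a M_b$ once and then invoke it three times with the three partitions of $\lbrace x,y,z,w\rbrace$ into two pairs.
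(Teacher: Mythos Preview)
Your proposal is correct and matches the paper's proof essentially verbatim for the computation of $[L_1,R_1]$: the same four-term expansion, the same vanishing of cross terms via Lemma~\ref{lem:Weyl}(ii), the same evaluation $[D_aD_b,M_aM_b]=M_aD_a+M_bD_b+I$, and the same appeal to Lemma~\ref{lem:HDDDD}. For the relations $[\Omega,R_i]=2R_i$ and $[\Omega,L_i]=-2L_i$ you use the grading directly rather than a Weyl-algebra computation, which is a minor but perfectly clean shortcut.
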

\begin{proof} We check that $\lbrack L_1, R_1 \rbrack=\Omega+2I$. Using Definitions \ref{def:L123},  \ref{def:RRR} we find
\begin{align*}
\lbrack L_1, R_1 \rbrack &= \lbrack D_x D_y -D_z D_w, M_x M_y-M_z M_w \rbrack \\
& = \lbrack D_x D_y, M_x M_y\rbrack 
- \lbrack D_x D_y, M_z M_w\rbrack 
-\lbrack D_z D_w, M_x M_y\rbrack 
+ \lbrack D_z D_w, M_z M_w\rbrack.
\end{align*}
Using Lemma \ref{lem:Weyl}  we obtain
\begin{align*}
&\lbrack D_x D_y, M_x M_y\rbrack = M_x D_x + M_y D_y + I, \qquad \qquad 
 \lbrack D_x D_y, M_z M_w\rbrack = 0, \\
&\lbrack D_z D_w, M_x M_y\rbrack =0, \qquad \qquad 
 \lbrack D_z D_w, M_z M_w\rbrack = M_z D_z + M_w D_w + I.
\end{align*}
By these comments and Lemma \ref{lem:HDDDD}, we obtain  $\lbrack L_1, R_1 \rbrack=\Omega+2I$.
The remaining equations are similarly checked.
\end{proof}

%\noindent Recall that the Lie algebra $\mathfrak{sl}_2$ has a basis $e,f,h$ and Lie bracket
%\begin{align}
%\lbrack e,f \rbrack=h, \qquad \quad \lbrack h,e \rbrack=2e, \qquad \quad \lbrack h,f \rbrack=-2f. \label{eq:sl2}
%\end{align}

\noindent Recall the Lie algebra $\mathfrak{sl}_2(\mathbb C)$ from Example \ref{ex:sl2}.

\begin{lemma} \label{lem:sl2P} For $i \in \lbrace 1,2,3\rbrace$ the vector space $P$ becomes an $\mathfrak{sl}_2(\mathbb C)$-module on which $E, F, H$ act as follows:
\begin{align*} 
\begin{tabular}[t]{c|ccc}
{\rm element $\varphi$ }&$E$ &$F$&$H$ 
 \\
 \hline 
 {\rm action of $\varphi$ on $P$} & $-L_i$ &$R_i$ &$-\Omega-2I$ 
   \end{tabular}
\end{align*}
\end{lemma}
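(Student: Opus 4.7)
The strategy is to define a linear map $\pi: \mathfrak{sl}_2(\mathbb{C}) \to \mathfrak{gl}(P)$ by the prescribed assignments on the basis $\{E,F,H\}$ of $\mathfrak{sl}_2(\mathbb{C})$ from Example \ref{ex:sl2}, and then verify that $\pi$ is a Lie algebra homomorphism. Since $E, F, H$ form a basis and the bracket is bilinear and alternating, checking that $\pi$ preserves brackets reduces to verifying the three defining relations
\begin{align*}
\lbrack H,E\rbrack = 2E, \qquad \lbrack H,F\rbrack = -2F, \qquad \lbrack E,F\rbrack = H
\end{align*}
under $\pi$. This then upgrades $P$ to an $\mathfrak{sl}_2(\mathbb{C})$-module in the standard way.

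The verifications are each one-line consequences of Proposition \ref{prop:HLR}, using that the scalar term $2I$ in $H \mapsto -\Omega - 2I$ is central and drops out of every bracket. Explicitly, I would compute
\begin{align*}
\lbrack \pi(H), \pi(E)\rbrack &= \lbrack -\Omega - 2I, -L_i \rbrack = \lbrack \Omega, L_i \rbrack = -2L_i = 2\pi(E), \\
\lbrack \pi(H), \pi(F)\rbrack &= \lbrack -\Omega - 2I, R_i \rbrack = -\lbrack \Omega, R_i \rbrack = -2R_i = -2\pi(F), \\
\lbrack \pi(E), \pi(F)\rbrack &= \lbrack -L_i, R_i \rbrack = -\lbrack L_i, R_i \rbrack = -\Omega - 2I = \pi(H),
\end{align*}
each right-hand evaluation being exactly one of the three identities of Proposition \ref{prop:HLR}.

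There is no real obstacle here; the content of the lemma is entirely packaged in Proposition \ref{prop:HLR}, and the only thing one must be careful about is sign bookkeeping. In particular, the two minus signs in $\lbrack -\Omega - 2I, -L_i\rbrack$ cancel so that the correct relation $[H,E]=2E$ (with positive coefficient $2$) emerges, and in $[E,F]=H$ the single minus sign produced by $E\mapsto -L_i$ matches the minus sign in $H\mapsto -\Omega - 2I$. Once these are confirmed, $\pi$ extends uniquely to the Lie algebra homomorphism making $P$ into the claimed $\mathfrak{sl}_2(\mathbb{C})$-module.
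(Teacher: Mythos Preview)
Your proof is correct and takes essentially the same approach as the paper, which simply says to compare the relations in Example \ref{ex:sl2} with those in Proposition \ref{prop:HLR}. You have carried out that comparison explicitly and carefully, and your sign bookkeeping is accurate.
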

\begin{proof} Compare the relations in Example \ref{ex:sl2} and Proposition \ref{prop:HLR}.
\end{proof}

\noindent We recall the Casimir operator for $\mathfrak{sl}_2(\mathbb C)$, see  \cite[p.~238]{carter} or \cite[p.~118]{humphreys}.
The Casimir operator $C$ is the following element in the universal enveloping algebra of $\mathfrak{sl}_2(\mathbb C)$:
\begin{align*}
C = EF+FE+H^2/2.
\end{align*} 
The operator $C$ looks as follows in terms of $A, A^*$:
\begin{align*}
C = \frac{4 A^2 + 4 A^{*2} - (A A^*-A^* A)^2}{8}.
\end{align*}
The operator $C$ generates the center of the universal enveloping algebra of $\mathfrak{sl}_2(\mathbb C)$. In particular,
$C$ commutes with each of $E,F,H, A, A^*$. On the $\mathfrak{sl}_2(\mathbb C)$-module $\mathbb V_N$,
\begin{align}
C = \frac{N(N+2)}{2} I. \label{eq:CasAct}
\end{align}

\begin{proposition} \label{prop:CCC1} On the $\mathfrak{sl}_4(\mathbb C)$-module $P$,
\begin{align*}
\frac{(\Omega+2I)^2}{2} - L_1 R_1 - R_1 L_1  &= \frac{4 A^2_2 + 4 A^{*2}_3 - (A_2 A^*_3 - A^*_3 A_2)^2 }{8} \\
 &= \frac{4 A^{*2}_2 + 4 A^{2}_3 - (A^{*}_2 A_3 - A_3 A^{*}_2)^2 }{8}.
\end{align*}
Call this common value $C_1$. Then $C_1$ commutes with
\begin{align*}
\Omega, \qquad L_1, \qquad R_1, \qquad A_2, \qquad A^*_2, \qquad A_3, \qquad A^*_3, \qquad \sigma.
\end{align*}
\end{proposition}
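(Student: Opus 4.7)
The plan is to recognize each of the three expressions in the proposition as a Casimir operator for an $\mathfrak{sl}_2(\mathbb C)$-action on $P$, and to verify their equality by combining a direct Weyl-algebra computation with a $\sigma$-conjugation symmetry. The first expression is precisely $EF+FE+H^2/2$ for the $\mathfrak{sl}_2(\mathbb C)$-action of Lemma \ref{lem:sl2P} with $i=1$, where $E\mapsto -L_1$, $F\mapsto R_1$, $H\mapsto -\Omega-2I$. The second and third expressions are the Casimir $(4A^2+4A^{*2}-[A,A^*]^2)/8$ of the $\mathfrak{sl}_2(\mathbb C)$-subalgebras of $\mathfrak{sl}_4(\mathbb C)$ generated respectively by $(A_2,A^*_3)$ and by $(A^*_2,A_3)$, which exist by Corollary \ref{cor:6}.

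To prove equality of the first and second expressions I would expand both sides inside the fourth Weyl algebra using Proposition \ref{lem:A6}, Lemma \ref{lem:HDDDD}, Definition \ref{def:L123}, and Definition \ref{def:RRR}. A convenient intermediate step is to compute $[A_2,A^*_3]=2(M_zD_x-M_wD_y-M_xD_z+M_yD_w)$, which yields the clean factorizations
\begin{align*}
E=\tfrac14(2A_2-[A_2,A^*_3])=M_wD_y+M_xD_z,\qquad F=\tfrac14(2A_2+[A_2,A^*_3])=M_zD_x+M_yD_w
\end{align*}
of the $\mathfrak{sl}_2(\mathbb C)$-triple for (ii). Applying the Weyl-algebra relations of Lemma \ref{lem:Weyl} to move each partial derivative to the right in every product, both $EF+FE+H^2/2$ and $(\Omega+2I)^2/2-L_1R_1-R_1L_1$ reduce to the same explicit element of the Weyl algebra, of the schematic shape
\begin{align*}
\tfrac12\sum_{a}M_a^2D_a^2+\tfrac32\Omega+\sum_{a,b,c,d}\kappa_{abcd}M_aM_bD_cD_d,
\end{align*}
where the coefficients $\kappa_{abcd}$ turn out to agree term by term. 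The identity $[L_1,R_1]=\Omega+2I$ from Proposition \ref{prop:HLR} is useful for keeping track of the constant and linear corrections that arise from reordering.

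To pass from the second expression to the third I would invoke the automorphism $\sigma$ of $P$. By Lemmas \ref{lem:OmegaC}, \ref{lem:LScom}, and \ref{lem:DScom}, each of $\Omega$, $L_1$, $R_1$ commutes with $\sigma$, so conjugation by $\sigma$ fixes the first expression. On the other hand, \eqref{eq:SRS} gives $\sigma A_2\sigma^{-1}=A^*_2$ and $\sigma A^*_3\sigma^{-1}=A_3$, so conjugation by $\sigma$ carries the second expression to the third. Combined with Step 1, all three expressions coincide, and I would define $C_1$ to be their common value. The commutation claims then follow almost for free: viewing $C_1$ as the Casimir of the $\mathfrak{sl}_2(\mathbb C)$-action from Lemma \ref{lem:sl2P} gives commutation with $L_1$, $R_1$, and $\Omega+2I$ (hence with $\Omega$); viewing $C_1$ as the Casimir of the subalgebra generated by $A_2,A^*_3$ gives commutation with $A_2$ and $A^*_3$; viewing $C_1$ as the Casimir of the subalgebra generated by $A^*_2,A_3$ gives commutation with $A^*_2$ and $A_3$. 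Finally $\sigma C_1\sigma^{-1}=C_1$ is precisely the $\sigma$-invariance of the first expression established in the previous step.

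The main obstacle will be the bookkeeping in the Weyl-algebra expansion of Step 1: after pushing all partial derivatives to the right, roughly a dozen distinct quartic monomials $M_aM_bD_cD_d$ appear together with quadratic and constant corrections, and one must verify coefficient-by-coefficient that the two sides agree. Organizing the comparison by the unordered pair of $M$-variables in each monomial, and using Proposition \ref{prop:HLR} to handle the $\Omega$-type contributions in one stroke, should keep the matching transparent.
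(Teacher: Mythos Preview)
Your proposal is correct and follows essentially the same approach as the paper. The paper reduces all three expressions to a single explicit Weyl-algebra element
\[
2 D_x D_y M_z M_w + 2 D_z D_w M_x M_y - 2 M_x M_y D_x D_y - 2 M_z M_w D_z D_w + \Omega (\Omega +2 I)/2
\]
by direct computation, and then invokes the Casimir interpretation together with $\sigma$-invariance of $\Omega,L_1,R_1$ for the commutation claims, exactly as you do. Your one variation is to establish only the equality of the first two expressions by Weyl-algebra expansion and then deduce the third via $\sigma$-conjugation; this is a legitimate shortcut that saves one of the three computations, since the paper instead verifies all three against the displayed normal form independently.
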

\begin{proof} The equations hold, because each of the three given expressions is equal to
\begin{align*}
2 D_x D_y M_z M_w + 2 D_z D_w M_x M_y - 2 M_x M_y D_x D_y - 2 M_z M_w D_z D_w + \Omega (\Omega +2 I)/2.
\end{align*}
This equality is checked using Lemma \ref{lem:Weyl}  along with 
Proposition \ref{lem:A6}  and Definitions \ref{def:L123}, \ref{def:RRR}  and Lemma \ref{lem:HDDDD}. The last assertion in the proposition statement follows from 
our comments about the Casimir operator and the fact that $\sigma$ commutes with each of $\Omega, L_1, R_1$.
\end{proof}

\begin{proposition} \label{prop:CCC2} On the $\mathfrak{sl}_4(\mathbb C)$-module $P$,
\begin{align*}
\frac{(\Omega+2I)^2}{2} - L_2 R_2 - R_2 L_2  &= \frac{4 A^2_3 + 4 A^{*2}_1 - (A_3 A^*_1 - A^*_1 A_3)^2 }{8} \\
 &= \frac{4 A^{*2}_3 + 4 A^{2}_1 - (A^{*}_3A_1 - A_1 A^{*}_3)^2 }{8}.
\end{align*}
Call this common value $C_2$. Then $C_2$ commutes with
\begin{align*}
\Omega, \qquad L_2, \qquad R_2, \qquad A_3, \qquad A^*_3, \qquad A_1, \qquad A^*_1, \qquad \sigma.
\end{align*}
\end{proposition}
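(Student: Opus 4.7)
The plan is to proceed in complete parallel with the proof of Proposition \ref{prop:CCC1}, with the role of index $1$ everywhere replaced by index $2$ (and the coordinate pairs $\lbrace x,y\rbrace,\lbrace z,w\rbrace$ replaced by $\lbrace x,z\rbrace,\lbrace w,y\rbrace$, as dictated by Definition \ref{def:L123} and Definition \ref{def:RRR}).

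First I would show that all three displayed expressions in the proposition, when acting on $P$, coincide with the common element
\begin{align*}
2 D_x D_z M_w M_y + 2 D_w D_y M_x M_z - 2 M_x M_z D_x D_z - 2 M_w M_y D_w D_y + \tfrac{1}{2}\Omega(\Omega + 2I).
\end{align*}
To verify this for the left-hand side, I expand $L_2 R_2$ and $R_2 L_2$ using Definitions \ref{def:L123}, \ref{def:RRR}, commute the resulting $D$'s past $M$'s via the Weyl-algebra relations in Lemma \ref{lem:Weyl}, and identify $M_x D_x + M_y D_y + M_z D_z + M_w D_w$ as $\Omega$ via Lemma \ref{lem:HDDDD}. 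To verify it for the middle and right-hand expressions, I substitute for $A_3, A^*_3, A_1, A^*_1$ using Proposition \ref{lem:A6} and again simplify via Lemma \ref{lem:Weyl}. The main obstacle is purely the book-keeping: each of the three reductions produces on the order of twenty monomials in $M$'s and $D$'s that must be collected, but the cancellations follow the identical pattern as in Proposition \ref{prop:CCC1}, so no new ideas are required.

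For the commutation assertions, I proceed in three steps. Observe that the first expression for $C_2$ is a polynomial in $\Omega, L_2, R_2$, and by Lemma \ref{lem:sl2P} these three elements realize an $\mathfrak{sl}_2(\mathbb C)$-action on $P$ in which $C_2$ coincides, up to sign and the constant $I$, with the standard Casimir $EF+FE+H^2/2$. Hence $C_2$ commutes with $\Omega, L_2, R_2$. Moreover each of $\Omega, L_2, R_2$ commutes with $\sigma$, by Lemmas \ref{lem:OmegaC}, \ref{lem:LScom}, \ref{lem:DScom} respectively; therefore $C_2$ commutes with $\sigma$.

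Finally, the second expression for $C_2$ is the image of the $\mathfrak{sl}_2(\mathbb C)$-Casimir under the Lie algebra embedding of Lemma \ref{lem:LtoL} that sends $A \mapsto A_3$ and $A^* \mapsto A^*_1$; by Corollary \ref{cor:6} this embeds $\mathfrak{sl}_2(\mathbb C)$ into $\mathfrak{sl}_4(\mathbb C)$, and the Casimir of the image commutes with its generators, giving $\lbrack C_2, A_3\rbrack = \lbrack C_2, A^*_1\rbrack = 0$. The third expression for $C_2$ is likewise the image of the Casimir under the embedding $A \mapsto A_1$, $A^* \mapsto A^*_3$, giving $\lbrack C_2, A_1\rbrack = \lbrack C_2, A^*_3\rbrack = 0$. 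Together these four commutations amount to saying $C_2$ commutes with the second Lie subalgebra of $\mathfrak{sl}_4(\mathbb C)$ isomorphic to $\mathfrak{sl}_2(\mathbb C) \oplus \mathfrak{sl}_2(\mathbb C)$ in the sense of Definition \ref{def:sl2sl2sub}, completing the proof.
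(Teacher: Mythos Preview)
Your proposal is correct and follows exactly the approach the paper takes: the paper's own proof is simply ``Similar to the proof of Proposition \ref{prop:CCC1},'' and what you have written is precisely that analogy carried out in detail, including the correct common element $2 D_x D_z M_w M_y + 2 D_w D_y M_x M_z - 2 M_x M_z D_x D_z - 2 M_w M_y D_w D_y + \tfrac{1}{2}\Omega(\Omega+2I)$ and the Casimir argument for the commutation claims.
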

\begin{proof} Similar to the proof of Proposition \ref{prop:CCC1}.
\end{proof}

\begin{proposition} \label{prop:CCC3} On the $\mathfrak{sl}_4(\mathbb C)$-module $P$,
\begin{align*}
\frac{(\Omega+2I)^2}{2} - L_3 R_3 - R_3L_3 &= \frac{4 A^2_1 + 4 A^{*2}_2 - (A_1 A^*_2 - A^*_2 A_1)^2 }{8} \\
 &= \frac{4 A^{*2}_1 + 4 A^{2}_2 - (A^{*}_1A_2 - A_2 A^{*}_1)^2 }{8}.
\end{align*}
Call this common value $C_3$. Then $C_3$ commutes with
\begin{align*}
\Omega, \qquad L_3, \qquad R_3, \qquad A_1, \qquad A^*_1, \qquad A_2, \qquad A^*_2, \qquad \sigma.
\end{align*}
\end{proposition}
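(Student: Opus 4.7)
The plan is to mirror the proof of Proposition \ref{prop:CCC1} verbatim, replacing the role of the index $1$ by $3$ and adjusting the distinguished pair of variables accordingly. Recall $L_3 = D_x D_w - D_y D_z$ and $R_3 = M_x M_w - M_y M_z$. My target is to show that all three expressions appearing in the proposition coincide with the common element
\begin{align*}
2 D_x D_w M_y M_z + 2 D_y D_z M_x M_w - 2 M_x M_w D_x D_w - 2 M_y M_z D_y D_z + \frac{\Omega(\Omega+2I)}{2}.
\end{align*}
Once this is established, the three given expressions are equal to each other, which proves the displayed equations.

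To carry out the verification, first I would expand $(\Omega+2I)^2/2 - L_3 R_3 - R_3 L_3$ using Lemma \ref{lem:HDDDD} for $\Omega$ and Definitions \ref{def:L123}, \ref{def:RRR} for $L_3, R_3$, and simplify repeatedly with the Weyl relations of Lemma \ref{lem:Weyl}; the cross terms $D_x D_w M_x M_w$, $D_y D_z M_y M_z$ etc.\ can be normal-ordered against the multiplication maps, and the diagonal terms collect into $\Omega(\Omega+2I)/2$. Next I would expand $\bigl( 4A_1^2+4A_2^{*2}-(A_1 A_2^* - A_2^* A_1)^2 \bigr)/8$ using the explicit formulas
\begin{align*}
A_1 &= M_y D_x + M_x D_y + M_w D_z + M_z D_w, \\
A_2^* &= M_x D_x - M_y D_y + M_z D_z - M_w D_w
\end{align*}
from Propositions \ref{lem:A6}. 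The commutator $[A_1, A_2^*]$ can be computed term-by-term with Lemma \ref{lem:Weyl}, and after squaring and adding $4A_1^2 + 4A_2^{*2}$, most monomials in the $D$'s and $M$'s cancel, leaving the target expression. The third expression, with $A_1^*, A_2$ in the roles of $A_1, A_2^*$, is handled identically using Proposition \ref{lem:A6}. The main bookkeeping obstacle will be managing the signs and keeping track of which monomials of the form $M_a M_b D_c D_d$ survive — there are many terms and the cancellation, while clean, requires careful normal ordering; one useful shortcut is to note that both $L_3, R_3$ and the pair $(A_1, A_2^*)$ treat $\{x,w\}$ and $\{y,z\}$ as distinguished pairs, which is precisely why these expressions match.

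Finally, for the commutation assertions: by Corollary \ref{cor:LL} the matrices $A_1, A_2, A_1^*, A_2^*$ generate a Lie subalgebra of $\mathfrak{sl}_4(\mathbb C)$ isomorphic to $\mathfrak{sl}_2(\mathbb C) \oplus \mathfrak{sl}_2(\mathbb C)$, with the two $\mathfrak{sl}_2$-summands generated by $(A_1, A_2^*)$ and $(A_1^*, A_2)$. The expression $\bigl(4A_1^2 + 4A_2^{*2} - (A_1 A_2^* - A_2^* A_1)^2\bigr)/8$ is the Casimir of the first $\mathfrak{sl}_2$-summand (as recorded in the paragraph above Proposition \ref{prop:CCC1}), hence lies in the center of its universal enveloping algebra, so it commutes with $A_1$ and $A_2^*$. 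The second form of $C_3$ is the Casimir of the other $\mathfrak{sl}_2$-summand, so it commutes with $A_1^*$ and $A_2$. Commutation with $\Omega, L_3, R_3$ follows directly from the first (operator-theoretic) form of $C_3$, together with Proposition \ref{prop:HLR} which implies $\Omega$, $L_3 R_3 + R_3 L_3$, and $(\Omega+2I)^2$ mutually commute and each commutes with $L_3$ and $R_3$ in the combination giving $C_3$. Commutation with $\sigma$ follows from Lemmas \ref{lem:OmegaC}, \ref{lem:LScom}, \ref{lem:DScom}, since $C_3$ is a polynomial in $\Omega, L_3, R_3$, each of which commutes with $\sigma$.
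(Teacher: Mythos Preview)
Your proposal is correct and follows essentially the same approach as the paper; indeed, the paper's proof of Proposition~\ref{prop:CCC3} is simply ``Similar to the proof of Proposition~\ref{prop:CCC1},'' and you have faithfully spelled out that analogous argument with the correct common target expression. One minor streamlining: for the commutation of $C_3$ with $\Omega, L_3, R_3$, rather than appealing to Proposition~\ref{prop:HLR} directly, you can note (via Lemma~\ref{lem:sl2P}) that the first form of $C_3$ is literally $EF+FE+H^2/2$ for the $\mathfrak{sl}_2(\mathbb C)$-action where $E=-L_3$, $F=R_3$, $H=-\Omega-2I$, so it is the Casimir of that action and hence central.
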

\begin{proof} Similar to the proof of Proposition \ref{prop:CCC1}.
\end{proof}

\begin{lemma}\label{prop:Cinv} For  $f,g \in P$ we have
\begin{align*}
\langle C_1 f, g \rangle = \langle f, C_1 g\rangle, \qquad  \quad
\langle C_2 f, g \rangle = \langle f, C_2 g\rangle, \qquad  \quad
\langle C_3 f, g \rangle = \langle f, C_3 g\rangle.
\end{align*}
\end{lemma}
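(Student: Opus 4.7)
The plan is to exploit the representation $C_i = \tfrac{(\Omega+2I)^2}{2} - L_i R_i - R_i L_i$ given in Propositions \ref{prop:CCC1}--\ref{prop:CCC3} and show that each of the three summands is Hermitian-self-adjoint with respect to $\langle\,,\,\rangle$; the claim then follows by linearity. This is purely a pushing-around-the-form exercise and relies on three facts that are already available in the paper: Lemma \ref{lem:Hbil} ($\Omega$ is self-adjoint), and Lemmas \ref{lem:RLbil} and \ref{lem:LRbil} (the pair $L_i,R_i$ is mutually adjoint).

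First I would handle $(\Omega+2I)^2$. For any $f,g\in P$, two applications of Lemma \ref{lem:Hbil} give
\[
\langle \Omega^2 f,g\rangle = \langle \Omega f,\Omega g\rangle = \langle f,\Omega^2 g\rangle,
\]
and since $I$ is trivially self-adjoint, the square $(\Omega+2I)^2$ is self-adjoint as well.

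Next I would handle the products $L_iR_i$ and $R_iL_i$. For $f,g\in P$, chaining Lemmas \ref{lem:RLbil} and \ref{lem:LRbil} yields
\[
\langle L_i R_i f,g\rangle = \langle R_i f, R_i g\rangle = \langle f, L_i R_i g\rangle,
\]
and symmetrically
\[
\langle R_i L_i f,g\rangle = \langle L_i f, L_i g\rangle = \langle f, R_i L_i g\rangle.
\]
Thus both $L_iR_i$ and $R_iL_i$ are self-adjoint. Combining linearly with the coefficient $\tfrac12$ on $(\Omega+2I)^2$ and $-1$ on the other two pieces gives $\langle C_i f,g\rangle = \langle f,C_i g\rangle$ for $i\in\{1,2,3\}$.

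There is really no serious obstacle here; the only thing to watch is the order in which one applies Lemmas \ref{lem:RLbil}/\ref{lem:LRbil} when moving a product $L_iR_i$ across the form (one must alternate the two adjointness relations so that the intermediate step $\langle R_if, R_ig\rangle$ is genuinely symmetric in $f$ and $g$). As an alternative route, one could instead use the second equivalent expression for $C_i$ from Propositions \ref{prop:CCC1}--\ref{prop:CCC3} together with Lemma \ref{lem:inv} (which says each $A_j$ and each $A^*_j$ is self-adjoint); then $A_j^2$, $A^{*2}_k$, and $(A_jA^*_k-A^*_kA_j)^2$ are all self-adjoint by the same squaring argument, leading to the same conclusion. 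I would present the $\Omega, L_i, R_i$ version since it is marginally shorter.
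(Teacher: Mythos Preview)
Your proof is correct. The paper, however, takes precisely the alternative route you mention at the end: it invokes the expression
\[
C_i = \frac{4 A^2_j + 4 A^{*2}_k - (A_j A^*_k - A^*_k A_j)^2}{8}
\]
from Propositions \ref{prop:CCC1}--\ref{prop:CCC3} together with Lemma \ref{lem:inv}, and simply notes that self-adjointness of $A_j$ and $A^*_k$ makes each term self-adjoint (the commutator is anti-self-adjoint, so its square is self-adjoint). Your $\Omega,L_i,R_i$ version is equally clean; it trades the commutator-squared check for the observation that $L_iR_i$ and $R_iL_i$ are individually self-adjoint via the mutual adjointness of $L_i$ and $R_i$. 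Neither approach is materially shorter or more conceptual than the other; they just draw on different earlier lemmas.
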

\begin{proof}  The result for $C_1$ follows from Lemma \ref{lem:inv} and the definition of $C_1$ in Proposition \ref{prop:CCC1}.
The results for $C_2, C_3$ are similarly obtained.
\end{proof}

\begin{lemma}\label{lem:PNCinv} For $N \in \mathbb N$,
\begin{align*}
C_1 (P_N) \subseteq P_N, \qquad \quad 
C_2 (P_N) \subseteq P_N, \qquad \quad 
C_3 (P_N) \subseteq P_N.
\end{align*}
\end{lemma}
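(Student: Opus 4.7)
The plan is to use either of the two equivalent formulas for $C_i$ given in Propositions \ref{prop:CCC1}, \ref{prop:CCC2}, \ref{prop:CCC3}; both lead to short proofs, and I would present both briefly.

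The cleanest approach uses the formulas for $C_i$ in terms of the $\mathfrak{sl}_4(\mathbb C)$-generators. For example, $C_1 = \bigl(4 A^2_2 + 4 A^{*2}_3 - (A_2 A^*_3 - A^*_3 A_2)^2\bigr)/8$, which is a polynomial in $A_2$ and $A^*_3$. By Lemma \ref{lem:PNAAA}, $P_N$ is an $\mathfrak{sl}_4(\mathbb C)$-submodule of $P$, so it is invariant under each $A_i$ and each $A^*_i$, and hence under any noncommutative polynomial in these generators. Thus $C_1(P_N)\subseteq P_N$, and the same argument using $A_3, A^*_1$ (resp.\ $A_1, A^*_2$) gives $C_2(P_N)\subseteq P_N$ and $C_3(P_N)\subseteq P_N$.

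Alternatively, one can use the formulas $C_i = (\Omega+2I)^2/2 - L_i R_i - R_i L_i$. The operator $\Omega$ acts on $P_N$ as $N\,I$ by Definition \ref{def:H}, so $(\Omega+2I)^2/2$ preserves $P_N$. By Lemma \ref{lem:2L}(ii), $L_i(P_N)\subseteq P_{N-2}$, and by Lemma \ref{lem:2R}(ii), $R_i(P_N)\subseteq P_{N+2}$; consequently $L_i R_i(P_N)\subseteq L_i(P_{N+2})\subseteq P_N$ and $R_i L_i(P_N)\subseteq R_i(P_{N-2})\subseteq P_N$. Combining these gives $C_i(P_N)\subseteq P_N$.

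There is no real obstacle here: the lemma is essentially a bookkeeping consequence of the grading of $P$ and of the fact that the two composite ``ladder'' operators $L_iR_i$ and $R_iL_i$ each return to the same homogeneous component. The only thing to check is that one of the two formulas for $C_i$ from Propositions \ref{prop:CCC1}--\ref{prop:CCC3} applies, and either one works without further computation.
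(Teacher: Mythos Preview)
Your proposal is correct; both approaches you outline work without issue. The paper's own proof is even shorter: it simply observes that each $C_i$ commutes with $\Omega$ (this was established in Propositions \ref{prop:CCC1}--\ref{prop:CCC3}), and since $P_N$ is by definition the $N$-eigenspace of $\Omega$, any operator commuting with $\Omega$ preserves $P_N$. Your first approach (via the $\mathfrak{sl}_4(\mathbb C)$-generator formula and Lemma \ref{lem:PNAAA}) and your second approach (via the grading behavior of $L_i, R_i$) each unpack slightly more than necessary, but they are perfectly valid and arguably more self-contained, since they do not rely on the commutation assertion at the end of Propositions \ref{prop:CCC1}--\ref{prop:CCC3}.
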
 
\begin{proof} Since each of $C_1, C_2, C_3$ commutes with $\Omega$.
\end{proof}

\begin{proposition}  The maps $C_1, C_2, C_3$ act as follows on the $P$-basis  \eqref{eq:Pbasis}.
For $N \in \mathbb N$ and $(r,s,t,u) \in {\mathcal P}_N$,
\begin{enumerate}
\item[\rm (i)] the vector
\begin{align*}
 C_1( x^ry^s z^t w^u)
 \end{align*}
 is a linear combination with the following terms and coefficients:
\begin{align*} 
\begin{tabular}[t]{c|c}
{\rm Term }& {\rm Coefficient} 
 \\
 \hline
   $ x^{r-1} y^{s-1} z^{t+1} w^{u+1} $& $2rs$   \\
 $ x^{r} y^{s} z^{t} w^{u} $   & $N(N+2)/2-2rs-2tu$\\
  $ x^{r+1} y^{s+1} z^{t-1} w^{u-1}  $  & $2tu$
   \end{tabular}
\end{align*}
%%%\newpage
\item[\rm (ii)] 
the vector
\begin{align*}
C_2(x^r y^s z^t w^u)     
\end{align*}
 is a linear combination with the following terms and coefficients:
\begin{align*} 
\begin{tabular}[t]{c|c}
{\rm Term }& {\rm Coefficient} 
 \\
 \hline
   $ x^{r-1} y^{s+1} z^{t-1} w^{u+1} $& $ 2rt$   \\
 $ x^{r} y^{s} z^{t} w^{u} $   & $N(N+2)/2-2rt-2su$\\
  $ x^{r+1} y^{s-1} z^{t+1} w^{u-1} $  & $2su$ 
   \end{tabular}
\end{align*}
\item[\rm (iii)] 
the vector
\begin{align*}
C_3( x^{r} y^{s} z^{t} w^{u} )
\end{align*}
 is a linear combination with the following terms and coefficients:
\begin{align*} 
\begin{tabular}[t]{c|c}
{\rm Term }& {\rm Coefficient} 
 \\
 \hline
    $ x^{r-1} y^{s+1} z^{t+1} w^{u-1} $& $2ru $   \\
 $x^{r} y^{s} z^{t} w^{u} $   & $N(N+2)/2-2ru-2st $\\
    $ x^{r+1} y^{s-1} z^{t-1} w^{u+1} $  & $2st$
   \end{tabular}
\end{align*}
\end{enumerate}
To get the action of $C_1, C_2, C_3$ on the $P$-basis  \eqref{eq:Pdualbasis}, replace $x,y,z,w$ by $x^*, y^*, z^*, w^*$ respectively in {\rm (i)--(iii)} above.
\end{proposition}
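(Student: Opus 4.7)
The plan is to apply the Casimir identity $C_i = \tfrac{1}{2}(\Omega+2I)^2 - L_i R_i - R_i L_i$ established in Propositions \ref{prop:CCC1}--\ref{prop:CCC3} directly to the basis vector $f = x^r y^s z^t w^u \in P_N$. By Definition \ref{def:H}, $f$ is an $\Omega$-eigenvector with eigenvalue $N$, so the first term contributes $\tfrac{(N+2)^2}{2}\,f$; the substantive content is therefore the computation of $(L_i R_i + R_i L_i)f$.

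Focusing first on $i=1$, I would compute $R_1 f$ using Lemma \ref{lem:RRRact} (giving two monomials), then apply $L_1$ to each via Lemma \ref{lem:LLLact}; this produces four monomials, two of which are multiples of $f$ itself. I would then perform the parallel computation starting with $L_1 f$ from Lemma \ref{lem:LLLact} and applying $R_1$ via Lemma \ref{lem:RRRact}. Summing the two results, the off-diagonal monomials $x^{r-1}y^{s-1}z^{t+1}w^{u+1}$ and $x^{r+1}y^{s+1}z^{t-1}w^{u-1}$ pick up coefficients $-2rs$ and $-2tu$, respectively, while the coefficient of $f$ collapses to $(r+1)(s+1) + (t+1)(u+1) + rs + tu$, which simplifies using $r+s+t+u=N$ to $2rs + 2tu + N + 2$. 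Subtracting from $\tfrac{(N+2)^2}{2}$ gives exactly $\tfrac{N(N+2)}{2} - 2rs - 2tu$, establishing (i). Cases (ii) and (iii) are handled identically using the corresponding formulas in Lemmas \ref{lem:LLLact} and \ref{lem:RRRact} for $L_2, R_2$ and $L_3, R_3$.

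For the final assertion about the dual basis \eqref{eq:Pdualbasis}, I would use the fact (also stated in Propositions \ref{prop:CCC1}--\ref{prop:CCC3}) that each $C_i$ commutes with the automorphism $\sigma$ from Proposition \ref{thm:aut2}. Since $\sigma$ swaps $x^r y^s z^t w^u \leftrightarrow x^{*r}y^{*s}z^{*t}w^{*u}$ and individually swaps $x \leftrightarrow x^*$, $y \leftrightarrow y^*$, $z \leftrightarrow z^*$, $w \leftrightarrow w^*$, applying $\sigma$ to both sides of (i)--(iii) produces the starred analogues immediately. There is no conceptual obstacle; the only delicate step is the algebraic bookkeeping in the diagonal coefficient $(r+1)(s+1)+(t+1)(u+1)+rs+tu$, which must be collapsed to $2rs+2tu+N+2$ via $r+s+t+u=N$ before subtracting from $\tfrac{(N+2)^2}{2}$.
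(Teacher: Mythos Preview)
Your proposal is correct and follows essentially the same route as the paper: the paper's proof of (i) cites the definition of $C_1$ in Proposition~\ref{prop:CCC1} together with Lemmas~\ref{lem:LLLact}, \ref{lem:RRRact} and Definition~\ref{def:H}, which is exactly the $\tfrac{1}{2}(\Omega+2I)^2 - L_1R_1 - R_1L_1$ computation you carry out, and it handles the dual basis via $\sigma C_i = C_i \sigma$ just as you do. Your explicit bookkeeping of the diagonal coefficient is accurate and merely fills in what the paper leaves implicit.
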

\begin{proof} (i) By the definition of $C_1$ in Proposition \ref{prop:CCC1}, along with Proposition \ref{lem:ActP}  or Lemma \ref{lem:LLLact}, Lemma \ref{lem:RRRact}, 
Definition \ref{def:H}. \\
\noindent (ii), (iii) Similar to the proof of (i) above. \\
\noindent The last assertion of the proposition statement holds because $\sigma$ commutes with each of $C_1, C_2, C_3$.
\end{proof}

\noindent  In Section 11 we discussed the finite-dimensional irreducible $\mathfrak{sl}_2(\mathbb C)$-modules.
We now discuss a more general type of  $\mathfrak{sl}_2(\mathbb C)$-module, said to be highest-weight.
The following discussion is distilled from \cite[Chapter~VI]{humphreys}.
Let $W$ denote an $\mathfrak{sl}_2(\mathbb C)$-module. A {\it highest-weight vector} in $W$ is a nonzero $v \in W$ such that $v$ is an eigenvector
for $H$ and $Ev=0$. The $\mathfrak{sl}_2(\mathbb C)$-module $W$ is said to be {\it highest-weight} whenever $W$ 
contains a highest-weight vector $v$ such that $W$
is spanned by $\lbrace F^\ell v \rbrace_{\ell \in \mathbb N}$. Assume that $W$ is highest-weight. Then the previously mentioned 
vector $v$ is unique up to multiplication by a nonzero scalar in $\mathbb C$.
By the {\it highest-weight of $W$}, we mean the $H$-eigenvalue associated with $v$. A pair of irreducible highest-weight
$\mathfrak{sl}_2(\mathbb C)$-modules are isomorphic if and only if  they have the same highest-weight \cite[p.~109]{humphreys}.

\begin{lemma} \label{lem:Wprelim} Pick $i \in \lbrace 1,2,3\rbrace$ and consider the corresponding $\mathfrak{sl}_2(\mathbb C)$-module structure on $P$ from Lemma \ref{lem:sl2P}.
Let $N \in \mathbb N$. Pick $0 \not= v \in {\rm Ker}(L_i) \cap P_N$ and define
$v_\ell = R^\ell_i v/{\ell !} $ for $\ell \in \mathbb N$.
Then:
\begin{align*}
    & v_\ell \in P_{N+2\ell}, \qquad \quad \Omega v_\ell = (N+2\ell) v_\ell \qquad \qquad \ell \in \mathbb N,  \\
    & R_i v_\ell = (\ell+1) v_{\ell+1} \qquad \qquad \ell \in \mathbb N,  \\
    &  L_i v_\ell = (N+\ell+1) v_{\ell-1} \qquad \quad \ell \geq 1, \qquad \quad L_i v_0=0. 
      \end{align*}
     The vectors $\lbrace v_\ell \rbrace_{\ell \in \mathbb N}$ form a basis for an $\mathfrak{sl}_2(\mathbb C)$-submodule of $P$. Denote this $\mathfrak{sl}_2(\mathbb C)$-submodule 
      by $W$.
      Then $W$ is irreducible and highest-weight, with highest-weight $-N-2$. On $W$,
      \begin{align*}
     C_i =  \frac{N(N+2)}{2} I.    
     \end{align*}
         \end{lemma}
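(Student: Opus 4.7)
The plan is to establish the six claims in turn by combining the commutation relations from Proposition \ref{prop:HLR} with induction on $\ell$, then read off the $\mathfrak{sl}_2(\mathbb{C})$-module structure via Lemma \ref{lem:sl2P}, and finish by computing the Casimir on the highest-weight vector. First I will show $v_\ell \in P_{N+2\ell}$ and $\Omega v_\ell = (N+2\ell) v_\ell$ by induction on $\ell$, using $R_i(P_k) \subseteq P_{k+2}$ from Lemma \ref{lem:2R}(ii) together with Definition \ref{def:H}. The raising identity $R_i v_\ell = (\ell+1) v_{\ell+1}$ is immediate from $v_\ell = R_i^\ell v / \ell!$, and $L_i v_0 = 0$ is the hypothesis $v \in \mathrm{Ker}(L_i)$.

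The main computation will be the lowering formula $L_i v_\ell = (N+\ell+1) v_{\ell-1}$ for $\ell \geq 1$, which I will prove by induction on $\ell$. For the inductive step I will rewrite
\begin{align*}
L_i R_i^\ell v = R_i L_i R_i^{\ell-1} v + (\Omega + 2I) R_i^{\ell-1} v
\end{align*}
using $\lbrack L_i, R_i \rbrack = \Omega + 2I$, apply the inductive hypothesis to $L_i R_i^{\ell-1} v$, and use $R_i^{\ell-1} v \in P_{N+2\ell-2}$ to evaluate $\Omega$ on it. A short calculation collapses the resulting coefficient to $\ell(N+\ell+1)$, which after dividing by $\ell!$ yields the stated formula. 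This is the one step that requires any real care, although it remains routine bookkeeping.

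The remaining structural claims follow quickly. Each $v_\ell$ is nonzero by $v \neq 0$ and the injectivity of $R_i$ (Lemma \ref{lem:RRRinj}), and the $v_\ell$ are linearly independent since they lie in distinct homogeneous components $P_{N+2\ell}$. The span $W$ is closed under $L_i$, $R_i$, and $\Omega$ by the formulas above, hence closed under $E = -L_i$, $F = R_i$, and $H = -\Omega - 2I$, so $W$ is an $\mathfrak{sl}_2(\mathbb{C})$-submodule. The vector $v_0 = v$ satisfies $E v_0 = -L_i v = 0$ and $H v_0 = -(N+2) v_0$, while $W$ is spanned by $\lbrace F^\ell v_0 \rbrace_{\ell \in \mathbb{N}}$ because $F^\ell v_0 = R_i^\ell v = \ell! \, v_\ell$; thus $W$ is highest-weight with highest weight $-N-2$. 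For irreducibility, I will argue that any nonzero submodule $W' \subseteq W$ is $H$-invariant, hence $\Omega$-invariant, hence contains some $v_\ell$; successive applications of $E = -L_i$ with the nonzero coefficients $-(N+\ell+1), -(N+\ell), \ldots, -(N+2)$ then produce $v_0 \in W'$, and $F^k v_0 = k! \, v_k \in W'$ recovers every $v_k$.

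Finally, for the Casimir assertion I will evaluate $C_i$ on $v_0$ using the formula from Proposition \ref{prop:CCC1} (or its analogue for $i = 2, 3$). Since $\Omega v_0 = N v_0$, $L_i v_0 = 0$, and $L_i R_i v_0 = L_i v_1 = (N+2) v_0$, the computation
\begin{align*}
C_i v_0 = \tfrac{(N+2)^2}{2} v_0 - (N+2) v_0 - 0 = \tfrac{N(N+2)}{2} v_0
\end{align*}
is straightforward. Because $C_i$ commutes with $R_i$ (also from Proposition \ref{prop:CCC1}) and $W$ is generated from $v_0$ by repeated application of $R_i$, this scalar action extends to all of $W$, giving $C_i = \tfrac{N(N+2)}{2} I$ on $W$.
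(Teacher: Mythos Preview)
Your proof is correct and is essentially a detailed, self-contained working out of exactly what the paper has in mind: the paper's proof consists of the single sentence ``Routine application of the $\mathfrak{sl}_2(\mathbb C)$ representation theory, see \cite[Chapter~VI]{humphreys},'' and your argument is precisely that routine application, spelled out via the commutation relations of Proposition~\ref{prop:HLR}, injectivity of $R_i$, and a direct Casimir computation on the highest-weight vector.
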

      \begin{proof} Routine application of the $\mathfrak{sl}_2(\mathbb C)$ representation theory, see \cite[Chapter~VI]{humphreys}.
      \end{proof}

\noindent  We refer to the  $\mathfrak{sl}_2(\mathbb C)$-module $P$ in Lemma \ref{lem:Wprelim}. In that lemma, we 
constructed some  irreducible  $\mathfrak{sl}_2(\mathbb C)$-submodules of $P$. In the next result, we show that every
irreducible  $\mathfrak{sl}_2(\mathbb C)$-submodule of $P$ comes from the construction.

\begin{lemma} \label{lem:PdsW} Pick $i \in \lbrace 1,2,3\rbrace$ and consider the corresponding $\mathfrak{sl}_2(\mathbb C)$-module structure on $P$ from Lemma \ref{lem:sl2P}.
Then the following hold.
\begin{enumerate}
\item[\rm (i)] Each irreducible $\mathfrak{sl}_2(\mathbb C)$-submodule of $P$ is highest-weight.
\item[\rm (ii)] Let $W$ denote an irreducible $\mathfrak{sl}_2(\mathbb C)$-submodule of $P$, with highest-weight $\zeta$. Then $\zeta$ is an integer at most  $-2$.
Write $\zeta=-N-2$. Then there exists 
$0 \not= v \in {\rm Ker}(L_i) \cap P_N$ such that $\lbrace R^\ell_i v/{\ell !}\rbrace_{\ell \in \mathbb N}$ is a basis for $W$.
      \end{enumerate}
      \end{lemma}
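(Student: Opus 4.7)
The plan is to show first that $W$ respects the homogeneous grading on $P$, and then to locate a highest-weight vector by minimality of degree. To establish that $W = \bigoplus_{N \in \mathbb N} (W \cap P_N)$, I will use that $H = -\Omega - 2I$ by Lemma \ref{lem:sl2P} and that $P_N$ is the $(-N-2)$-eigenspace of $H$ by Definition \ref{def:H}, so the $H$-eigenvalues $\{-(N+2)\}_{N \in \mathbb N}$ are pairwise distinct. Concretely, for any $w \in W$ I write $w = \sum_{N \in S} w_N$ with finite $S \subseteq \mathbb N$ and $w_N \in P_N$; Lagrange interpolation produces polynomials $p_N(t) \in \mathbb C\lbrack t\rbrack$ satisfying $p_N(-M-2) = \delta_{M,N}$ for $M \in S$, whence $w_N = p_N(H) w$ lies in $W$ since $W$ is $H$-invariant. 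Thus each homogeneous component of every element of $W$ again lies in $W$.

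Next I produce a highest-weight vector in $W$. Because $W \neq 0$, there is a smallest $N_0 \in \mathbb N$ with $W \cap P_{N_0} \neq 0$; pick any nonzero $v$ there. By Lemma \ref{lem:2L}(ii), the vector $L_i v$ lies in $P_{N_0 - 2}$, and also in $W$ because $W$ is $L_i$-invariant (as $E = -L_i$). Using $P_{-1} = P_{-2} = 0$ and the minimality of $N_0$, this forces $L_i v = 0$, i.e.\ $Ev = 0$. Since $v$ is an $H$-eigenvector with eigenvalue $-(N_0+2)$, it is a highest-weight vector of $W$, which proves (i). Writing $\zeta = -(N_0+2)$ we obtain an integer at most $-2$, and with $N = N_0$ we have $0 \neq v \in {\rm Ker}(L_i) \cap P_N$ as required for (ii).

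To complete (ii), I invoke irreducibility: $W$ coincides with the cyclic $\mathfrak{sl}_2(\mathbb C)$-submodule generated by $v$. A standard PBW argument, using $Ev = 0$ and $Hv = \zeta v$, shows that this cyclic submodule is spanned by $\{F^\ell v\}_{\ell \in \mathbb N} = \{R_i^\ell v\}_{\ell \in \mathbb N}$. Applying Lemma \ref{lem:Wprelim} to the pair $(N_0, v)$ yields an irreducible $\mathfrak{sl}_2(\mathbb C)$-submodule of $P$ with basis $\{R_i^\ell v/\ell!\}_{\ell \in \mathbb N}$; this submodule contains $v$, is therefore contained in $W$, and by irreducibility of $W$ coincides with it. So these vectors form a basis for $W$, as claimed.

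The main point of concern is the first step, since in principle $W$ could be infinite-dimensional and one must avoid simply invoking a finite-dimensional weight-space decomposition. What makes the step routine here is that $H$ acts diagonalizably on all of $P$ with eigenspaces exactly the homogeneous components $P_N$, which reduces the graded decomposition of $W$ to polynomial interpolation in $H$. After that, everything is a direct application of classical highest-weight theory together with the explicit basis furnished by Lemma \ref{lem:Wprelim}.
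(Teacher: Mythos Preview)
Your proof is correct and follows essentially the same approach as the paper. Both arguments first establish that $H$ acts diagonalizably on $W$ (you via Lagrange interpolation, the paper by noting $\Omega$ is diagonalizable on all of $P$), then locate a highest-weight vector by extremality (you take the minimal degree $N_0$, the paper takes the maximal $H$-eigenvalue $\zeta$; since $\zeta=-(N+2)$ these are equivalent), and finally invoke Lemma~\ref{lem:Wprelim} together with irreducibility of $W$ to conclude.
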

      \begin{proof}     By Definition \ref{def:H},  $\Omega$ is diagonalizable on $P$ with eigenvalues $0,1,2,\ldots$
       Let $W$ denote an irreducible   $\mathfrak{sl}_2(\mathbb C)$-submodule of $P$.
   The action of $\Omega$
     on $W$ is diagonalizable with each eigenvalue contained in $\mathbb N$.  By this and Lemma \ref{lem:sl2P}, $H$ is diagonalizable on $W$
     with each eigenvalue  an integer at most $-2$. Let $\zeta$ denote the maximal eigenvalue of $H$ on $W$. Let $0 \not=v \in W$
     denote an eigenvector for $H$ with eigenvalue $\zeta $. 
     We have $Ev=0$; otherwise $Ev \in W$ is an eigenvector for $H$ with eigenvalue $\zeta+2$, contradicting the
     maximality of $\zeta$.  By these comments, $v$ is a highest-weight vector for $W$. 
     Write $\zeta=-N-2$ and note that $v \in {\rm Ker}(L_i) \cap P_N$. Using $v$ we define vectors $\lbrace v_\ell\rbrace_{\ell \in \mathbb N}$ as in Lemma \ref{lem:Wprelim}.
   By Lemma \ref{lem:Wprelim} the vectors $\lbrace v_\ell \rbrace_{\ell \in \mathbb N}$ form a basis for
     a highest-weight $\mathfrak{sl}_2(\mathbb C)$-submodule of $W$, which must equal $W$ by the irreducibility of $W$.   
     \end{proof}
      %   By $\lbrack L_i, R_i \rbrack=\Omega + 2I$  and induction on $\ell$, the vectors  $\lbrace v_\ell \rbrace_{\ell \in \mathbb N}$
   %  satisfy \eqref{eq:HW3}.  By \eqref{eq:HW3} and $v \not=0$, we obtain $v_\ell \not=0$ for $\ell \in \mathbb N$.
   %   By this and \eqref{eq:HW1}--\eqref{eq:HW3},
     % the vectors $\lbrace v_\ell \rbrace_{\ell \in \mathbb N}$ form a basis for
    %  an $\mathfrak{sl}_2(\mathbb C)$-submodule of $W$. This  $\mathfrak{sl}_2(\mathbb C)$-submodule is equal to $W$ by the irreducibility of $W$.
   %   By these comments, the vectors $\lbrace v_\ell \rbrace_{\ell \in \mathbb N}$ form a basis for $W$. By construction, $W$ is spanned by
    %  the vectors $\lbrace F^\ell v \rbrace_{\ell \in \mathbb N}$. Therefore the $\mathfrak{sl}_2(\mathbb C)$-module $W$ is highest-weight.
    
   \begin{lemma} \label{lem:Breakdown} 
  Pick $i \in \lbrace 1,2,3\rbrace$ and consider the corresponding $\mathfrak{sl}_2(\mathbb C)$-module structure on $P$ from Lemma \ref{lem:sl2P}.
  For $N \in \mathbb N$ the following are the same:
  \begin{enumerate}
  \item[\rm (i)] the span of the irreducible $\mathfrak{sl}_2(\mathbb C)$-submodules of $P$ that have highest-weight $-N-2$;
  \item[\rm (ii)]  $ \sum_{\ell \in \mathbb N} R_i^\ell \bigl( {\rm Ker}(L_i ) \cap P_N\bigr)$;
  \item[\rm (iii)] the eigenspace of $C_i$ on $P$ with eigenvalue $N(N+2)/2$.
  \end{enumerate}
  \end{lemma}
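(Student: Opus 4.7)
The plan is to establish the three-way equality by showing (i) = (ii) and then (ii) = (iii); the machinery is entirely in place, so the argument should be short.

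For (i) = (ii), I would apply Lemma \ref{lem:Wprelim} and Lemma \ref{lem:PdsW} directly. Given any nonzero $v \in {\rm Ker}(L_i) \cap P_N$, Lemma \ref{lem:Wprelim} produces an irreducible $\mathfrak{sl}_2(\mathbb C)$-submodule of highest-weight $-N-2$ containing all $R_i^\ell v$, so each summand in (ii) lies inside (i). Conversely, Lemma \ref{lem:PdsW}(ii) tells me that every irreducible submodule of highest-weight $-N-2$ has the form $\text{span}\{R_i^\ell v/\ell!\}_{\ell \in \mathbb N}$ for some $0 \neq v \in {\rm Ker}(L_i) \cap P_N$, so (i) is contained in (ii). This gives (i) = (ii) with no real work.

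For (ii) $\subseteq$ (iii), I would invoke the last sentence of Lemma \ref{lem:Wprelim}: on each irreducible submodule obtained from $0 \neq v \in {\rm Ker}(L_i) \cap P_N$, the Casimir $C_i$ acts as $\frac{N(N+2)}{2} I$. In particular every vector $R_i^\ell v$ with $v \in {\rm Ker}(L_i) \cap P_N$ lies in the $\frac{N(N+2)}{2}$-eigenspace of $C_i$, and the inclusion follows by linearity.

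The reverse inclusion (iii) $\subseteq$ (ii) is the only step requiring thought, and it is where I would use Proposition \ref{prop:RPExp1}. Writing $V_M = \sum_{\ell \in \mathbb N} R_i^\ell\bigl({\rm Ker}(L_i) \cap P_M\bigr)$, that proposition supplies the direct-sum decomposition
\begin{equation*}
P = \bigoplus_{M \in \mathbb N} V_M.
\end{equation*}
By the argument for (ii) $\subseteq$ (iii) applied with $N$ replaced by $M$, the operator $C_i$ acts as the scalar $\frac{M(M+2)}{2}$ on $V_M$. Since the map $M \mapsto M(M+2)$ is strictly increasing on $\mathbb N$, these scalars are pairwise distinct, so the eigenspace decomposition of $C_i$ on $P$ is exactly $\bigoplus_M V_M$ with $V_M$ being the $\frac{M(M+2)}{2}$-eigenspace. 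Specializing to $M = N$ identifies (iii) with $V_N$, which is (ii).

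The only conceptual obstacle is the last step, and even there the entire burden is carried by Proposition \ref{prop:RPExp1} together with the distinctness of the eigenvalues $\frac{M(M+2)}{2}$; no further computation with $L_i$, $R_i$, or $\Omega$ should be needed beyond what Lemma \ref{lem:Wprelim} already packages.
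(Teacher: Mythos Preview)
Your proof is correct and follows essentially the same route as the paper: both arguments obtain (i) = (ii) from Lemmas \ref{lem:Wprelim} and \ref{lem:PdsW}, observe that $C_i$ acts as $N(N+2)/2$ on this common subspace, and then invoke the direct-sum decomposition of Proposition \ref{prop:RPExp1} together with the distinctness of the scalars $M(M+2)/2$ to identify it with the $C_i$-eigenspace. The paper's proof is simply a terser packaging of exactly the steps you have written out.
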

  \begin{proof} Let $P\lbrack N \rbrack, P'\lbrack N \rbrack, P''\lbrack N \rbrack$ denote the subspaces in (i), (ii), (iii) respectively.
By Lemmas \ref{lem:Wprelim}, \ref{lem:PdsW} we have $P\lbrack N \rbrack = P' \lbrack N \rbrack$ and on this common value 
$C_i=N(N+2)/2\, I$. By this and Proposition \ref{prop:RPExp1},  we obtain $P\lbrack N \rbrack= P'\lbrack N \rbrack= P''\lbrack N \rbrack$.
  \end{proof}
      
 \begin{corollary} \label{lem:Breakdown2} 
  Pick $i \in \lbrace 1,2,3\rbrace$ and consider the corresponding $\mathfrak{sl}_2(\mathbb C)$-module structure on $P$ from Lemma \ref{lem:sl2P}.
  For $N \in \mathbb N$ the subspace 
   $ \sum_{\ell \in \mathbb N} R_i^\ell \bigl( {\rm Ker}(L_i ) \cap P_N\bigr)$ is an $\mathfrak{sl}_2(\mathbb C)$-submodule of $P$.
  \end{corollary}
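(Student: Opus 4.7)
The plan is to derive this immediately from Lemma~\ref{lem:Breakdown}, which identifies the subspace $\sum_{\ell \in \mathbb N} R_i^\ell \bigl({\rm Ker}(L_i) \cap P_N\bigr)$ in two alternative ways: as the span of the irreducible $\mathfrak{sl}_2(\mathbb C)$-submodules of $P$ of highest-weight $-N-2$, and as the $N(N+2)/2$-eigenspace of $C_i$ acting on $P$. Either of these descriptions directly yields the submodule property, so there is essentially nothing new to prove; the work has been done in Lemma~\ref{lem:Breakdown}.

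First I would present the clean route via description~(i) of Lemma~\ref{lem:Breakdown}: a sum of $\mathfrak{sl}_2(\mathbb C)$-submodules of $P$ is again an $\mathfrak{sl}_2(\mathbb C)$-submodule of $P$, since the $\mathfrak{sl}_2(\mathbb C)$-action respects linear combinations. Because Lemma~\ref{lem:Breakdown} equates $\sum_{\ell \in \mathbb N} R_i^\ell \bigl({\rm Ker}(L_i) \cap P_N\bigr)$ with a sum of irreducible $\mathfrak{sl}_2(\mathbb C)$-submodules, the conclusion follows.

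As a sanity check, I would also mention the alternative route via description~(iii). The action of $\mathfrak{sl}_2(\mathbb C)$ on $P$ from Lemma~\ref{lem:sl2P} is generated by $-L_i$, $R_i$, and $-\Omega - 2I$. By Propositions~\ref{prop:CCC1}, \ref{prop:CCC2}, \ref{prop:CCC3}, the Casimir element $C_i$ commutes with each of $\Omega$, $L_i$, $R_i$, so $C_i$ commutes with the entire $\mathfrak{sl}_2(\mathbb C)$-action. Consequently every eigenspace of $C_i$ on $P$ is $\mathfrak{sl}_2(\mathbb C)$-invariant, and the $N(N+2)/2$-eigenspace — which equals our subspace by Lemma~\ref{lem:Breakdown}(iii) — is an $\mathfrak{sl}_2(\mathbb C)$-submodule.

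Since both routes are one-line arguments from results already established, there is no serious obstacle; the ``hard part'' has already been absorbed into Lemma~\ref{lem:Breakdown} (namely, recognizing the three descriptions as equal). The corollary is therefore a direct packaging of that lemma, and a two-line proof citing either (i) or (iii) will suffice.
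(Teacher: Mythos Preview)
Your proposal is correct and matches the paper's approach: the paper's proof simply cites Lemma~\ref{lem:Breakdown}(i),(ii), using exactly your first route (a span of irreducible $\mathfrak{sl}_2(\mathbb C)$-submodules is a submodule). Your alternative via description~(iii) is also valid, though the paper does not invoke it here.
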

      \begin{proof} By Lemma \ref{lem:Breakdown}(i),(ii).
      \end{proof}

    \begin{lemma} \label{lem:CEV} For $i \in \lbrace 1,2,3\rbrace$ the following hold:
    \begin{enumerate}
    \item[\rm (i)]  the map $C_i$ is diagonalizable on $P$;
    \item[\rm (ii)] the eigenvalues of $C_i$ on $P$ are 
    \begin{align*}
    N(N+2)/2 \qquad \qquad N \in \mathbb N;
    \end{align*}
    \item[\rm (iii)] the eigenspaces of $C_i$ on $P$ are mutually orthogonal.
    \end{enumerate}
    \end{lemma}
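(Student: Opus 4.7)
The plan is to read off (i) and (ii) directly from the decomposition established in the preceding material, and then to obtain (iii) from Hermitian self-adjointness plus the fact that the candidate eigenvalues are real and distinct.

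First I would fix $i \in \{1,2,3\}$ and invoke Proposition \ref{prop:RPExp1}, which gives the direct sum decomposition
\begin{align*}
P = \sum_{N\in\mathbb N} W_N, \qquad W_N := \sum_{\ell \in \mathbb N} R_i^\ell \bigl( {\rm Ker}(L_i) \cap P_N \bigr).
\end{align*}
By Lemma \ref{lem:Breakdown}(ii),(iii), each $W_N$ is precisely the eigenspace of $C_i$ on $P$ with eigenvalue $N(N+2)/2$. Since $P$ is the direct sum of these subspaces, $P$ is spanned by eigenspaces of $C_i$, proving (i), while the list of eigenvalues that actually appear is exactly $\{N(N+2)/2 : N \in \mathbb N\}$, proving (ii). (One should note that these values are genuinely distinct: $N(N+2)/2 = (N+1)^2/2 - 1/2$, and $N \mapsto (N+1)^2$ is injective on $\mathbb N$, so distinct $N$ yield distinct eigenvalues.)

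For (iii), the standard self-adjointness argument applies. Let $f, g \in P$ be eigenvectors of $C_i$ with eigenvalues $\lambda = M(M+2)/2$ and $\mu = N(N+2)/2$ respectively, where $M \neq N$. By Lemma \ref{prop:Cinv}, $\langle C_i f, g\rangle = \langle f, C_i g\rangle$. Evaluating each side and using that $\mu \in \mathbb R$ so $\overline{\mu} = \mu$, we obtain
\begin{align*}
\lambda \langle f, g\rangle = \overline{\mu}\, \langle f, g \rangle = \mu \langle f, g\rangle,
\end{align*}
so $(\lambda - \mu)\langle f,g\rangle = 0$, forcing $\langle f,g\rangle = 0$. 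This yields (iii).

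There is no real obstacle here; the work has been done in advance. The only thing to be careful about is the bookkeeping for part (iii): one must note that the eigenvalues $N(N+2)/2$ are real (hence equal to their complex conjugates) so that the Hermitian sesquilinearity in Definition \ref{def:Herm} does not introduce an unwanted conjugation when one moves $C_i$ across the inner product.
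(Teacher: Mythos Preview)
Your proof is correct and follows essentially the same approach as the paper: parts (i) and (ii) come from Proposition \ref{prop:RPExp1} together with Lemma \ref{lem:Breakdown}(ii),(iii), and part (iii) comes from Lemma \ref{prop:Cinv} combined with the reality and distinctness of the eigenvalues from (ii). Your writeup is simply more explicit about the distinctness check and the self-adjointness computation than the paper's terse citation.
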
      
    \begin{proof} (i), (ii)  By Proposition   \ref{prop:RPExp1} and  Lemma \ref{lem:Breakdown}(ii),(iii). \\
    \noindent (iii) By  Lemma \ref{prop:Cinv} and  (ii) above.
    \end{proof}

   \noindent Recall the direct sum decompositions   \eqref{eq:PsumPiece} and \eqref{eq:RPExp}.
      \begin{proposition}  \label{prop:sl2sl2MOrth} The following hold for $ i \in \lbrace 1,2,3\rbrace$:
     \begin{enumerate}
     \item[\rm (i)] the summands in \eqref{eq:RPExp} are mutually orthogonal;
  \item[\rm (ii)] for $N \in \mathbb N$ the summands in \eqref{eq:PsumPiece} are mutually orthogonal.
      \end{enumerate}
      \end{proposition}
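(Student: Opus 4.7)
The plan is to prove (ii) first using the Casimir operator $C_i$ as a separating invariant, and then deduce (i) by combining (ii) with the orthogonality of homogeneous components. The key observation is Lemma \ref{lem:Wprelim}, which says that if $v \in {\rm Ker}(L_i) \cap P_M$ then every $R_i^\ell v$ is annihilated by $C_i - M(M+2)/2\,I$. Consequently, each summand $R_i^\ell\bigl({\rm Ker}(L_i)\cap P_{N-2\ell}\bigr)$ in \eqref{eq:PsumPiece} is contained in the $C_i$-eigenspace on $P$ with eigenvalue $(N-2\ell)(N-2\ell+2)/2$.

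For (ii), pick distinct $\ell,\ell'$ with $0\le\ell,\ell'\le \lfloor N/2\rfloor$ and set $M=N-2\ell$, $M'=N-2\ell'$, so $M\neq M'$ are distinct nonnegative integers. Since the function $M\mapsto M(M+2)/2$ is strictly increasing on $\mathbb N$, the two summands sit in distinct $C_i$-eigenspaces of $P$. By Lemma \ref{lem:CEV}(iii), these eigenspaces are mutually orthogonal with respect to $\langle\,,\,\rangle$, so the two summands are orthogonal. This proves (ii).

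For (i), let $(N,\ell),(N',\ell')\in\mathbb N^2$ be distinct index pairs and consider the corresponding summands
\[
R_i^\ell\bigl({\rm Ker}(L_i)\cap P_N\bigr)\subseteq P_{N+2\ell},\qquad R_i^{\ell'}\bigl({\rm Ker}(L_i)\cap P_{N'}\bigr)\subseteq P_{N'+2\ell'},
\]
where the containments come from Lemma \ref{lem:2R}(ii). If $N+2\ell\neq N'+2\ell'$, the two summands lie in distinct homogeneous components of $P$, hence are orthogonal by Lemma \ref{lem:PDorthog}. Otherwise $N+2\ell=N'+2\ell'$ but $(N,\ell)\neq(N',\ell')$ forces $N\neq N'$, so the two summands lie in distinct $C_i$-eigenspaces (eigenvalues $N(N+2)/2$ and $N'(N'+2)/2$), and Lemma \ref{lem:CEV}(iii) again gives orthogonality. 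This establishes (i).

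The only remotely delicate point is the interplay between the two cases in (i): one must not forget that summands indexed by different $(N,\ell)$ can still land in the same $P_M$, and this is precisely what (ii) handles. Everything else is a bookkeeping application of results already in hand (Lemmas \ref{lem:PDorthog}, \ref{lem:Wprelim}, \ref{lem:Breakdown}, \ref{lem:CEV}), so no substantial obstacle is expected.
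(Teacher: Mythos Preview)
Your proof is correct and uses essentially the same ingredients as the paper: the orthogonality of homogeneous components (Lemma \ref{lem:PDorthog}), the fact that each summand lies in a specific $C_i$-eigenspace (via Lemma \ref{lem:Wprelim} or equivalently Lemma \ref{lem:Breakdown}(ii),(iii)), and the mutual orthogonality of $C_i$-eigenspaces (Lemma \ref{lem:CEV}(iii)). The only cosmetic difference is the order: the paper proves (i) directly from these lemmas and then reads off (ii) as a special case, whereas you prove (ii) first and then assemble (i) from (ii) together with Lemma \ref{lem:PDorthog}.
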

      \begin{proof}  (i) By Lemma \ref{lem:PDorthog}  along with Lemmas \ref{lem:Breakdown}(ii),(iii) and \ref{lem:CEV}(iii). \\
      \noindent (ii) By (i) above.
      \end{proof}

\noindent  Pick $i \in \lbrace 1,2,3\rbrace$ and consider the corresponding $\mathfrak{sl}_2(\mathbb C)$-module structure on $P$ from Lemma \ref{lem:sl2P}.
Our next general goal is to show that the $\mathfrak{sl}_2(\mathbb C)$-module $P$ is an orthogonal direct sum of irreducible $\mathfrak{sl}_2(\mathbb C)$-submodules.

\begin{lemma} \label{lem:INL}  For  $i \in \lbrace 1,2,3\rbrace$ and $N, \ell \in \mathbb N$ the
following holds on  $R_i^\ell \bigl( {\rm Ker}(L_i ) \cap P_N\bigr)$:        
\begin{align*}
L_i R_i = (\ell+1)(N+\ell+2) I, \qquad \qquad R_i L_i = \ell (N+\ell +1) I.
\end{align*}
\end{lemma}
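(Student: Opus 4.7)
The plan is to reduce this to the explicit formulas from Lemma \ref{lem:Wprelim}. Pick any $0 \neq v \in {\rm Ker}(L_i) \cap P_N$ and set $v_\ell = R_i^\ell v/\ell!$ for $\ell \in \mathbb N$, so that $R_i^\ell v = \ell!\, v_\ell$. By Lemma \ref{lem:Wprelim}, these vectors satisfy
\begin{align*}
R_i v_\ell = (\ell+1) v_{\ell+1}, \qquad L_i v_\ell = (N+\ell+1) v_{\ell-1} \quad (\ell\geq 1), \qquad L_i v_0 = 0.
\end{align*}
Translating the second and third of these back to $R_i^\ell v$, one obtains the single identity
\begin{align*}
L_i R_i^\ell v = \ell(N+\ell+1)\, R_i^{\ell-1} v,
\end{align*}
valid for all $\ell \in \mathbb N$ (the $\ell=0$ case being $L_i v = 0$).

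First I would verify the $L_i R_i$ identity. For $v$ as above, apply the displayed identity with $\ell$ replaced by $\ell+1$ to get
\begin{align*}
L_i R_i \bigl(R_i^\ell v\bigr) = L_i R_i^{\ell+1} v = (\ell+1)(N+\ell+2)\, R_i^\ell v,
\end{align*}
which is the first equation. Next I would verify the $R_i L_i$ identity by the same mechanism: applying $R_i$ to the identity $L_i R_i^\ell v = \ell(N+\ell+1)R_i^{\ell-1}v$ gives
\begin{align*}
R_i L_i \bigl(R_i^\ell v\bigr) = \ell(N+\ell+1)\, R_i^\ell v.
\end{align*}

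Since every element of $R_i^\ell({\rm Ker}(L_i) \cap P_N)$ is of the form $R_i^\ell v$ with $v \in {\rm Ker}(L_i) \cap P_N$, both identities hold on this subspace, as claimed. There is no real obstacle here; the only subtlety is to notice that Lemma \ref{lem:Wprelim} already packages the needed action of $L_i, R_i$ on the chain $\{R_i^\ell v\}_{\ell \in \mathbb N}$, so the proof is a one-line consequence. (Alternatively, one could derive $L_i R_i^\ell v = \ell(N+\ell+1) R_i^{\ell-1}v$ from scratch by induction on $\ell$ using the commutation relations $\lbrack L_i, R_i \rbrack = \Omega + 2I$ and $\lbrack \Omega, R_i \rbrack = 2R_i$ of Proposition \ref{prop:HLR}, together with $L_i v = 0$ and $\Omega v = Nv$; this gives the same formulas without invoking the basis $\{v_\ell\}$.)
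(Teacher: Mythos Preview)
Your proof is correct and follows essentially the same approach as the paper: both pick $0\neq v\in{\rm Ker}(L_i)\cap P_N$, invoke the recurrences from Lemma~\ref{lem:Wprelim} for the chain $v_\ell=R_i^\ell v/\ell!$, and read off the scalar action of $L_iR_i$ and $R_iL_i$ on $R_i^\ell v$. Your write-up is just a bit more explicit in unwinding the computation.
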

\begin{proof} Let $0 \not=v \in {\rm Ker}(L_i) \cap P_N$ and define $v_\ell = R^\ell_i v/{\ell !}$. By Lemma \ref{lem:Wprelim} we obtain
\begin{align*}
L_i R_i v_\ell = (\ell+1)(N+\ell+2) v_\ell, \qquad \qquad R_i L_i v_\ell= \ell (N+\ell +1) v_\ell.
\end{align*}
The result follows.
\end{proof}

\begin{lemma} \label{lem:INP}  Let  $i \in \lbrace 1,2,3\rbrace$ and $N, \ell \in \mathbb N$.  Then for
$f,g \in R_i^\ell \bigl( {\rm Ker}(L_i ) \cap P_N\bigr)$ we have       
\begin{align*}
\langle R_i f,  R_i g \rangle = (\ell+1)(N+\ell+2) \langle f, g\rangle, \qquad \qquad \langle L_i f,  L_i g \rangle = \ell (N+\ell +1) \langle f, g \rangle.
\end{align*}
\end{lemma}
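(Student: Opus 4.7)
The proof will be a direct two-line computation for each equation, using the operator identities from Lemma \ref{lem:INL} together with the adjointness relations from Lemmas \ref{lem:RLbil} and \ref{lem:LRbil}.

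First I would handle the equation involving $R_i$. Fix $f, g \in R_i^\ell\bigl({\rm Ker}(L_i) \cap P_N\bigr)$. By Lemma \ref{lem:LRbil}, the map $R_i$ has $L_i$ as its adjoint with respect to $\langle\,,\,\rangle$, so
\begin{align*}
\langle R_i f, R_i g \rangle = \langle f, L_i R_i g \rangle.
\end{align*}
Since $g$ lies in $R_i^\ell\bigl({\rm Ker}(L_i) \cap P_N\bigr)$, Lemma \ref{lem:INL} tells us that $L_i R_i$ acts on $g$ as the scalar $(\ell+1)(N+\ell+2)$. Substituting gives the first identity.

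Next I would handle the equation involving $L_i$, using Lemma \ref{lem:RLbil} (the adjoint pairing in the reverse direction):
\begin{align*}
\langle L_i f, L_i g \rangle = \langle f, R_i L_i g \rangle.
\end{align*}
Again $g \in R_i^\ell\bigl({\rm Ker}(L_i) \cap P_N\bigr)$, so Lemma \ref{lem:INL} gives $R_i L_i g = \ell(N+\ell+1)\,g$, producing the second identity.

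There is no real obstacle here; the only subtle point worth flagging is that Lemma \ref{lem:INL} must be applied to the argument $g$ (which sits in the designated subspace), not to $R_i g$ or $L_i g$, which generally do not. Everything else is a one-step use of adjointness plus a scalar substitution.
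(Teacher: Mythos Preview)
Your proof is correct and matches the paper's own proof essentially line for line: both move the outer operator across the inner product using Lemmas \ref{lem:RLbil} and \ref{lem:LRbil}, then invoke the scalar action from Lemma \ref{lem:INL} on the subspace $R_i^\ell\bigl({\rm Ker}(L_i)\cap P_N\bigr)$.
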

\begin{proof} By Lemma \ref{lem:LRbil} and Lemma \ref{lem:INL}, 
\begin{align*}
\langle R_i f,  R_i g \rangle &= \langle f, L_i R_i g \rangle = (\ell+1)(N+\ell+2) \langle f,g \rangle .
\end{align*}
By Lemma \ref{lem:RLbil} and Lemma \ref{lem:INL}, 
\begin{align*}
\langle L_i f,  L_i g \rangle &= \langle f, R_i L_i  g \rangle = \ell (N+\ell+1) \langle f, g \rangle .
\end{align*}
\end{proof}

\begin{corollary}\label{cor:Orth} Let  $i \in \lbrace 1,2,3\rbrace$ and $N \in \mathbb N$. Pick
$u,v \in {\rm Ker}(L_i ) \cap P_N$ such that $\langle u,v\rangle =0$. Then
$\langle R_i^\ell u, R_i^\ell v\rangle = 0$ for $\ell \in \mathbb N$.
\end{corollary}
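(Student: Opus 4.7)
The plan is a straightforward induction on $\ell$, using Lemma \ref{lem:INP} as the single engine. The base case $\ell=0$ is the hypothesis $\langle u,v\rangle=0$. For the inductive step, I would observe that $R_i^\ell u$ and $R_i^\ell v$ both lie in the subspace $R_i^\ell\bigl({\rm Ker}(L_i)\cap P_N\bigr)$, which is precisely the domain on which Lemma \ref{lem:INP} is formulated. Taking $f = R_i^\ell u$ and $g = R_i^\ell v$ in the first identity of Lemma \ref{lem:INP} yields
\begin{align*}
\langle R_i^{\ell+1} u, R_i^{\ell+1} v \rangle = (\ell+1)(N+\ell+2)\,\langle R_i^\ell u, R_i^\ell v\rangle,
\end{align*}
and by the induction hypothesis the right side is zero.

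There is no real obstacle here; the only thing to verify carefully is that $R_i^\ell u$ and $R_i^\ell v$ genuinely belong to $R_i^\ell\bigl({\rm Ker}(L_i)\cap P_N\bigr)$, which is immediate from the definition of that subspace since $u,v \in {\rm Ker}(L_i)\cap P_N$. Thus the scalar factor $(\ell+1)(N+\ell+2)$ is nonzero but also irrelevant, since we are just propagating a vanishing inner product. The whole argument is two or three lines.
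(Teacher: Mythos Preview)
Your proposal is correct and takes essentially the same approach as the paper, which simply says ``By Lemma \ref{lem:INP} and induction on $\ell$.'' Your write-up just spells out the induction that the paper leaves implicit.
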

\begin{proof} By Lemma \ref{lem:INP} and induction on $\ell$.
\end{proof}

\begin{proposition} \label{prop:ODSC}   Pick $i \in \lbrace 1,2,3\rbrace$ and consider the corresponding $\mathfrak{sl}_2(\mathbb C)$-module structure on $P$ from Lemma \ref{lem:sl2P}.
  Then for $N \in \mathbb N$ the $\mathfrak{sl}_2(\mathbb C)$-submodule
   $ \sum_{\ell \in \mathbb N} R_i^\ell \bigl( {\rm Ker}(L_i ) \cap P_N\bigr)$ is an orthogonal direct sum of irreducible $\mathfrak{sl}_2(\mathbb C)$-submodules.
\end{proposition}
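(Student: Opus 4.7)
The plan is to reduce to a decomposition of the ``top slice'' $\operatorname{Ker}(L_i) \cap P_N$ and then propagate it upward via $R_i$. First I would choose an orthogonal basis $v_1, \ldots, v_d$ of $\operatorname{Ker}(L_i) \cap P_N$; such a basis exists because $\langle\,,\,\rangle$ restricted to $P_N$ is positive definite (the norms in Definition \ref{def:bform} are strictly positive on basis vectors) and Gram--Schmidt applies. For each $k$, set $W_k = \operatorname{span}\{R_i^\ell v_k : \ell \in \mathbb{N}\}$. Lemma \ref{lem:Wprelim}, applied to the nonzero highest-weight vector $v_k$, shows that $W_k$ is an irreducible $\mathfrak{sl}_2(\mathbb{C})$-submodule with highest weight $-N-2$.

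Next I would check that $\sum_{\ell \in \mathbb{N}} R_i^\ell\!\left(\operatorname{Ker}(L_i) \cap P_N\right) = \sum_{k=1}^d W_k$. This is immediate from $\mathbb{C}$-linearity: since $v_1,\ldots,v_d$ span $\operatorname{Ker}(L_i) \cap P_N$, the image $R_i^\ell(\operatorname{Ker}(L_i) \cap P_N)$ coincides with $\operatorname{span}\{R_i^\ell v_k : 1 \le k \le d\}$, and summing over $\ell$ gives the claim.

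The key step is the orthogonality of the submodules $W_1, \ldots, W_d$. Fix $j \neq k$ and pick $\ell, m \in \mathbb{N}$. If $\ell \neq m$, then $R_i^\ell v_j \in P_{N+2\ell}$ and $R_i^m v_k \in P_{N+2m}$ by Lemma \ref{lem:2R}(ii), so Lemma \ref{lem:PDorthog} forces $\langle R_i^\ell v_j, R_i^m v_k\rangle = 0$. If $\ell = m$, Corollary \ref{cor:Orth} applied to the orthogonal pair $v_j, v_k \in \operatorname{Ker}(L_i) \cap P_N$ yields $\langle R_i^\ell v_j, R_i^\ell v_k\rangle = 0$. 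Hence $W_j \perp W_k$.

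Finally, orthogonality plus positive definiteness forces directness: if $w_1 + \cdots + w_d = 0$ with $w_k \in W_k$, then pairing with each $w_j$ gives $\|w_j\|^2 = 0$, hence $w_j = 0$. So the decomposition $\sum_k W_k$ is an orthogonal direct sum of irreducible $\mathfrak{sl}_2(\mathbb{C})$-submodules, which is exactly the conclusion. The main obstacle is essentially bookkeeping; all the technical content (irreducibility, Corollary \ref{cor:Orth}, positive definiteness) is already in place, so no new computation is required.
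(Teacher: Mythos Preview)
Your proposal is correct and follows essentially the same approach as the paper: pick an orthogonal basis of $\operatorname{Ker}(L_i)\cap P_N$, use Lemma~\ref{lem:Wprelim} to build irreducible submodules $W_k$, and invoke Corollary~\ref{cor:Orth} for mutual orthogonality. The only differences are cosmetic---you spell out the $\ell\neq m$ case via Lemma~\ref{lem:PDorthog} and the directness-from-orthogonality argument explicitly, which the paper leaves implicit.
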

\begin{proof} By Lemma \ref{prop:cap4}(ii),  the subspace  ${\rm Ker}(L_i) \cap P_N$ has dimension $(N+1)^2$. Abbreviate $d=(N+1)^2$.
 Let $\lbrace v_j \rbrace_{j=1}^d$ denote an orthogonal basis for ${\rm Ker}(L_i) \cap P_N$.
For $1 \leq j \leq d$ we use $v_j$ and Lemma \ref{lem:Wprelim}  to construct an irreducible $\mathfrak{sl}_2(\mathbb C)$-submodule $W_j$ of $P$.
By Lemma \ref{lem:Wprelim} and the construction,
\begin{align}
\sum_{\ell \in \mathbb N} R_i^\ell \bigl( {\rm Ker}(L_i ) \cap P_N\bigr) = \sum_{j=1}^d W_j.       \label{eq:WWW}
\end{align}
By Corollary \ref{cor:Orth} the subspaces $\lbrace W_j \rbrace_{j=1}^d $ are mutually orthogonal.
The result follows.
\end{proof}

\begin{proposition} \label{prop:ODSP}  Pick $i \in \lbrace 1,2,3\rbrace$ and consider the corresponding $\mathfrak{sl}_2(\mathbb C)$-module structure on $P$ from Lemma \ref{lem:sl2P}.
Then the $\mathfrak{sl}_2(\mathbb C)$-module  $P$ is an orthogonal direct sum of irreducible $\mathfrak{sl}_2(\mathbb C)$-submodules.
\end{proposition}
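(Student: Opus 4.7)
The plan is to assemble this as a direct corollary of the preceding results, since almost all the work has been done. Concretely, I would proceed as follows.

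First, I would invoke Proposition~\ref{prop:RPExp1} to write
\begin{align*}
P = \sum_{N \in \mathbb N} W(N), \qquad \text{where} \qquad W(N) := \sum_{\ell \in \mathbb N} R_i^\ell\bigl({\rm Ker}(L_i) \cap P_N\bigr),
\end{align*}
with the outer sum direct. By Corollary~\ref{lem:Breakdown2}, each $W(N)$ is an $\mathfrak{sl}_2(\mathbb C)$-submodule of $P$. By Proposition~\ref{prop:sl2sl2MOrth}(i), the individual summands $R_i^\ell({\rm Ker}(L_i)\cap P_N)$ are pairwise orthogonal; in particular the subspaces $\{W(N)\}_{N \in \mathbb N}$ are pairwise orthogonal.

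Next, I would apply Proposition~\ref{prop:ODSC} to each $W(N)$: for each $N \in \mathbb N$ there is an orthogonal direct sum decomposition
\begin{align*}
W(N) = \bigoplus_{j=1}^{(N+1)^2} W_{N,j},
\end{align*}
where each $W_{N,j}$ is an irreducible $\mathfrak{sl}_2(\mathbb C)$-submodule of $P$ (built via Lemma~\ref{lem:Wprelim} from an orthogonal basis of ${\rm Ker}(L_i) \cap P_N$).

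Combining these two steps yields
\begin{align*}
P = \bigoplus_{N \in \mathbb N} \bigoplus_{j=1}^{(N+1)^2} W_{N,j},
\end{align*}
which is a direct sum of irreducible $\mathfrak{sl}_2(\mathbb C)$-submodules. Orthogonality of the full family $\{W_{N,j}\}$ follows from two facts already established: orthogonality within a fixed $N$ comes from Proposition~\ref{prop:ODSC}, and orthogonality across different $N$ comes from the pairwise orthogonality of the subspaces $W(N)$ noted above (since each $W_{N,j} \subseteq W(N)$). This completes the decomposition.

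There is essentially no obstacle here; the entire statement is a bookkeeping assembly of Proposition~\ref{prop:RPExp1}, Proposition~\ref{prop:sl2sl2MOrth}, Corollary~\ref{lem:Breakdown2}, and Proposition~\ref{prop:ODSC}. The only mild subtlety worth flagging in the write-up is that mutual orthogonality of the coarser family $\{W(N)\}_{N \in \mathbb N}$ is \emph{implied by} (rather than identical to) Proposition~\ref{prop:sl2sl2MOrth}(i), but this implication is immediate since a subspace orthogonal to each of a collection of subspaces is orthogonal to their sum.
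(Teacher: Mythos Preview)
Your proposal is correct and follows essentially the same approach as the paper, which simply cites Propositions~\ref{prop:RPExp1}, \ref{prop:sl2sl2MOrth}(i), and \ref{prop:ODSC}. Your write-up merely unpacks these citations in more detail (and adds the harmless extra reference to Corollary~\ref{lem:Breakdown2}), so there is nothing to adjust.
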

\begin{proof} By Propositions \ref{prop:RPExp1},     \ref{prop:sl2sl2MOrth}(i),  \ref{prop:ODSC}.
\end{proof}
%%%%%%%%%%%%%%%%%%%%%%%%
%%%%%%%%%%%%%%%%%%%%%%%%%
      \section{The  Lie algebra $\mathfrak{sl}_2(\mathbb C) \oplus \mathfrak{sl}_2(\mathbb C)$, revisited}
      
      We continue to discuss the $\mathfrak{sl}_4(\mathbb C)$-module $P= \mathbb C\lbrack x,y,z,w\rbrack$. 
Let $i \in \lbrace 1,2,3 \rbrace$.  By Propositions \ref{prop:RPExp1}, \ref{prop:sl2sl2MOrth}(i) we have an orthogonal direct sum
  \begin{align} \label{eq:SSKsum}
     P = \sum_{N \in \mathbb N} \sum_{\ell \in \mathbb N} R_i^\ell \Bigl( {\rm Ker}(L_i ) \cap P_N\Bigr).
     \end{align}
 In this section, we show how each summand in \eqref{eq:SSKsum} becomes an irreducible module for the Lie algebra $\mathfrak{sl}_2(\mathbb C) \oplus \mathfrak{sl}_2(\mathbb C)$.
\medskip

    \noindent We mentioned  $\mathfrak{sl}_2(\mathbb C) \oplus \mathfrak{sl}_2(\mathbb C)$ in Corollary \ref{cor:LL} and  Definition \ref{def:sl2sl2sub}.
    We now have some more comments about $\mathfrak{sl}_2(\mathbb C) \oplus \mathfrak{sl}_2(\mathbb C)$.
    For $N,M\in \mathbb N$ the vector space ${\mathbb V}_N \otimes {\mathbb V}_M$ is an
    $\mathfrak{sl}_2(\mathbb C) \oplus \mathfrak{sl}_2(\mathbb C)$ module with the following action. Let $a,b \in \mathfrak{sl}_2(\mathbb C)$
   and consider  the element $(a,b)$ in  $\mathfrak{sl}_2(\mathbb C) \oplus \mathfrak{sl}_2(\mathbb C)$.
    For $u \in {\mathbb V}_N$
    and $v \in {\mathbb V}_M$, the element $(a,b)$  sends
    \begin{align*}
    u \otimes v \mapsto (au) \otimes v + u \otimes (bv).
    \end{align*}
   It is routine to check (or see \cite[Section~3.8]{pretel}) that up to isomorphism the finite-dimensional irreducible modules for $\mathfrak{sl}_2(\mathbb C) \oplus \mathfrak{sl}_2(\mathbb C)$
   are
   \begin{align} \label{eq:VNVM}
  {\mathbb  V}_N \otimes {\mathbb V}_M \qquad \qquad N, M \in \mathbb N.
   \end{align}

     \begin{proposition} \label{prop:sl2sl2} The following hold for $ i \in \lbrace 1,2,3\rbrace$.
     \begin{enumerate}
              \item[\rm (i)]  Each summand in \eqref{eq:SSKsum} is an irreducible submodule for the $i$th Lie subalgebra of $\mathfrak{sl}_4(\mathbb C)$ isomorphic to $\mathfrak{sl}_2(\mathbb C)\oplus \mathfrak{sl}_2(\mathbb C)$.
     \item[\rm (ii)] For $N \in \mathbb N$ and $\ell \in \mathbb N$ the corresponding summand in \eqref{eq:SSKsum} is isomorphic to ${\mathbb V}_N \otimes {\mathbb V}_N$
     as a module for the $i$th Lie subalgebra of $\mathfrak{sl}_4(\mathbb C)$ isomorphic to $\mathfrak{sl}_2(\mathbb C)\oplus \mathfrak{sl}_2(\mathbb C)$.
         \end{enumerate}
     \end{proposition}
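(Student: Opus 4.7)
The plan is to exploit the identification, given by Lemma \ref{lem:sl2sl2} and Definition \ref{def:sl2sl2sub}, between the $i$th Lie subalgebra $\mathfrak{g}_i$ of $\mathfrak{sl}_4(\mathbb C)$ and $\mathfrak{sl}_2(\mathbb C) \oplus \mathfrak{sl}_2(\mathbb C)$: with $\lbrace j,k\rbrace = \lbrace 1,2,3\rbrace \backslash \lbrace i\rbrace$, the first copy is generated by $A_j$ (as $A$) and $A_k^*$ (as $A^*$), and the second copy is generated by $A_k$ (as $A$) and $A_j^*$ (as $A^*$). Let $W = R_i^\ell \bigl( {\rm Ker}(L_i) \cap P_N \bigr)$ denote the summand in question. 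By Lemma \ref{lem:KPinv} $W$ is $\mathfrak{g}_i$-invariant, so by Weyl's theorem on complete reducibility of finite-dimensional representations of a semisimple Lie algebra, $W$ decomposes as a direct sum of irreducible $\mathfrak{g}_i$-submodules, each of the form $\mathbb V_a \otimes \mathbb V_b$ for some $a, b \in \mathbb N$ (cf.\ \eqref{eq:VNVM}).

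To pin down $a$ and $b$, I will use the Casimir operators of the two $\mathfrak{sl}_2$-factors. Using the identification above, the Casimir of the first copy is $\bigl(4A_j^2 + 4(A_k^*)^2 - [A_j, A_k^*]^2\bigr)/8$ and the Casimir of the second copy is $\bigl(4A_k^2 + 4(A_j^*)^2 - [A_k, A_j^*]^2\bigr)/8$. By the appropriate choice of the index permutation in Propositions \ref{prop:CCC1}, \ref{prop:CCC2}, \ref{prop:CCC3}, both of these operators are equal to $C_i$ (the two displayed expressions for $C_i$ in Proposition \ref{prop:CCC1}, and similarly for $i=2,3$, are precisely these two Casimirs). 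Meanwhile Lemma \ref{lem:Breakdown}(ii),(iii) shows that $C_i$ acts as $N(N+2)/2 \cdot I$ on the larger subspace $\sum_{\ell \in \mathbb N} R_i^\ell \bigl( {\rm Ker}(L_i) \cap P_N \bigr)$, and hence on $W$. On an irreducible summand $\mathbb V_a \otimes \mathbb V_b$ the two Casimirs act as $a(a+2)/2$ and $b(b+2)/2$ respectively, by \eqref{eq:CasAct}. Equating with $N(N+2)/2$ and using that $a,b \in \mathbb N$ yields $a = b = N$, so every irreducible component of $W$ is isomorphic to $\mathbb V_N \otimes \mathbb V_N$ and therefore has dimension $(N+1)^2$.

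It remains to check a dimension count. The map $R_i$ is injective by Lemma \ref{lem:RRRinj}, so $\dim W = \dim \bigl( {\rm Ker}(L_i) \cap P_N \bigr) = (N+1)^2$ by Lemma \ref{prop:cap4}(ii). Consequently $W$ consists of exactly one irreducible $\mathfrak{g}_i$-component, proving both (i) and (ii) simultaneously. The main obstacle is not conceptual but bookkeeping: one must verify in each of the three cases $i=1,2,3$ that the two Casimirs singled out by the isomorphism $\mathfrak{g}_i \cong \mathfrak{sl}_2(\mathbb C) \oplus \mathfrak{sl}_2(\mathbb C)$ really match the two displayed expressions for $C_i$ in Propositions \ref{prop:CCC1}--\ref{prop:CCC3}, so that the scalar $N(N+2)/2$ can be transferred cleanly from Lemma \ref{lem:Breakdown} to both factors of the tensor product.
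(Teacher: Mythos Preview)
Your proposal is correct and follows essentially the same route as the paper's proof: invariance via Lemma \ref{lem:KPinv}, identification of both $\mathfrak{sl}_2$-Casimirs with $C_i$ via Propositions \ref{prop:CCC1}--\ref{prop:CCC3}, the eigenvalue $N(N+2)/2$ from Lemma \ref{lem:Breakdown}, and the dimension match using Lemma \ref{prop:cap4}(ii) and the injectivity of $R_i$. The only cosmetic difference is that you invoke Weyl's theorem to decompose $W$ fully, whereas the paper simply picks one irreducible submodule and shows it exhausts $W$.
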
  
\begin{proof} 
 Let $j,k$ denote the elements in $\lbrace 1,2,3\rbrace\backslash \lbrace i \rbrace$. Let $\mathcal L$ denote the Lie
subalgebra of $\mathfrak{sl}_4(\mathbb C)$ generated by $A_j, A_k, A^*_j, A^*_k$. By Corollary  \ref{cor:LL}  the Lie algebra $\mathcal L$ is 
isomorphic to  $\mathfrak{sl}_2(\mathbb C) \oplus \mathfrak{sl}_2(\mathbb C)$. By Definition \ref{def:sl2sl2sub}, $\mathcal L$ is  the $i$th Lie subalgebra of $\mathfrak{sl}_4(\mathbb C)$ isomorphic to $\mathfrak{sl}_2(\mathbb C)\oplus \mathfrak{sl}_2(\mathbb C)$.
For notational convenience, throughout this proof we identify the Lie algebras $\mathcal L$ and $\mathfrak{sl}_2(\mathbb C) \oplus \mathfrak{sl}_2(\mathbb C)$ via the isomorphism in
Lemma \ref{lem:sl2sl2}.
The $\mathfrak{sl}_4(\mathbb C)$-module $P$ becomes an $\mathcal L$-module by restricting the $\mathfrak{sl}_4(\mathbb C)$ action to $\mathcal L$.
Let $N,\ell \in \mathbb N$ be given, and let $W$ denote the corresponding summand in \eqref{eq:SSKsum}.
We show that $W$ is an irreducible $\mathcal L$-submodule of $P$ that is isomorphic to ${\mathbb V}_N \otimes {\mathbb V}_N$.
The subspace $W$ is invariant under
$A_j, A_k, A^*_j, A^*_k$ by Lemma \ref{lem:KPinv},     %%%because these maps commute with $L_i, R_i$ by Propositions \ref{prop:comL}, \ref{prop:comR}. 
 so $W$ is an $\mathcal L$-submodule of $P$. Let $\mathcal W$
denote an irreducible $\mathcal L$-submodule of $W$. 
By the discussion around  \eqref{eq:VNVM},
the $\mathcal L$-module $\mathcal W$ is isomorphic to ${\mathbb V}_r \otimes {\mathbb V}_s$ for some $r,s\in \mathbb N$.
Viewing $\mathcal W$ as a module for the copy of $\mathfrak{sl}_2(\mathbb C)$ generated by $A_j, A^*_k$, we find that $\mathcal W$ is a direct sum of $s+1$ irreducible
 $\mathfrak{sl}_2(\mathbb C)$-submodules, each isomorphic to  ${\mathbb V}_r$.
%However on $\mathcal W$ the Casimir operator $C_i$ acts as $N(N+2)/2 I$. Therefore $r=N$.
Viewing $\mathcal W$ as a module for the copy of $\mathfrak{sl}_2(\mathbb C)$  generated by $A^*_j, A_k$, we find that $\mathcal W$ is a direct sum of $r+1$ irreducible
 $\mathfrak{sl}_2(\mathbb C)$-submodules, each isomorphic to 
 ${\mathbb V}_s$.
For both of these $\mathfrak{sl}_2(\mathbb C)$-actions, the Casimir operator acts as $C_i$ by Propositions \ref{prop:CCC1}--\ref{prop:CCC3}, and by Lemma  \ref{lem:Breakdown}(ii),(iii)
 this operator acts on $\mathcal W$ as $N(N+2)/2$ times the identity. By these comments and the discussion around  \eqref{eq:CasAct}, we obtain
 $r=s=N$. Thus the $\mathcal L$-module
 $\mathcal W$ is isomorphic to ${\mathbb V}_N \otimes {\mathbb V}_N$.
The dimension of ${\mathbb V}_N \otimes {\mathbb V}_N$ is $(N+1)^2$, and this is the dimension of $W$ in view of Lemma \ref{prop:cap4}(ii) and the injectivity of $R_i$. By these comments, $\mathcal W=W$.
We have shown that $W$ is an irreducible $\mathcal L$-submodule of $P$ that is isomorphic to ${\mathbb V}_N \otimes {\mathbb V}_N$.
\end{proof}

%%%%%%%%\noindent The next result is a consequence of Proposition \ref{prop:sl2sl2}.
\noindent Let  $i \in \lbrace 1,2,3 \rbrace$ and $N \in \mathbb N$.
By Propositions \ref{prop:Psum1},  \ref{prop:sl2sl2MOrth}(ii) we have an orthogonal direct sum
\begin{align} \label{eq:odsREMIND}
P_N = \sum_{\ell =0}^{\lfloor N/2\rfloor} R^\ell_i \Bigl( {\rm Ker}(L_i) \cap P_{N-2\ell}\Bigr). %%%\qquad \qquad \hbox{\rm (orthogonal direct sum)}.
\end{align}

\begin{proposition} \label{lem:comment} The following hold for $i \in \lbrace 1,2,3\rbrace$ and $N \in \mathbb N$.
  \begin{enumerate}
          \item[\rm (i)]  Each summand in  \eqref{eq:odsREMIND}  is an irreducible submodule for the $i$th Lie subalgebra of $\mathfrak{sl}_4(\mathbb C)$ isomorphic to $\mathfrak{sl}_2(\mathbb C)\oplus \mathfrak{sl}_2(\mathbb C)$.
          \item[\rm (ii)] For $0 \leq \ell \leq \lfloor N/2\rfloor$ the $\ell$-summand in    \eqref{eq:odsREMIND}   has dimension $(N-2\ell+1)^2$.
     \item[\rm (iii)] For $0 \leq \ell \leq \lfloor N/2\rfloor$ the $\ell$-summand in    \eqref{eq:odsREMIND}     is isomorphic to ${\mathbb V}_{N-2\ell} \otimes {\mathbb V}_{N-2\ell}$
     as a module for the $i$th Lie subalgebra of $\mathfrak{sl}_4(\mathbb C)$ isomorphic to $\mathfrak{sl}_2(\mathbb C)\oplus \mathfrak{sl}_2(\mathbb C)$.
     \item[\rm (iv)] For $0 \leq \ell \leq \lfloor N/2 \rfloor$ the $\ell$-summand in     \eqref{eq:odsREMIND}    is an eigenspace for the action of $C_i$ on $P_N$; the  eigenvalue is
\begin{align*}
\frac{(N-2\ell)(N-2\ell+2)}{2}.
\end{align*}
         \end{enumerate}
\end{proposition}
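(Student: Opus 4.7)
The plan is to obtain all four conclusions by reducing to the structural results already established for the full decomposition \eqref{eq:SSKsum}. The key observation is that the $\ell$-summand $R_i^\ell\bigl({\rm Ker}(L_i)\cap P_{N-2\ell}\bigr)$ in \eqref{eq:odsREMIND} is exactly one of the summands that appears in \eqref{eq:SSKsum} (with the role of $N$ played by $N-2\ell$). Thus every assertion is a direct transcription of a previously proved fact, once we make this identification.

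First I would dispose of (ii). By Lemma \ref{lem:RRRinj}, the map $R_i$ is injective, so $R_i^\ell$ is injective on $P$. Consequently
\begin{align*}
\dim R_i^\ell\bigl({\rm Ker}(L_i)\cap P_{N-2\ell}\bigr) = \dim\bigl({\rm Ker}(L_i)\cap P_{N-2\ell}\bigr),
\end{align*}
and the right-hand side equals $(N-2\ell+1)^2$ by Lemma \ref{prop:cap4}(ii). Next I would obtain (i) and (iii) simultaneously by invoking Proposition \ref{prop:sl2sl2}(i),(ii) with $N$ replaced throughout by $N-2\ell$: the summand $R_i^\ell\bigl({\rm Ker}(L_i)\cap P_{N-2\ell}\bigr)$ is one of the summands indexed by the pair $(N-2\ell,\ell)$ in the decomposition \eqref{eq:SSKsum}, and Proposition \ref{prop:sl2sl2} says precisely that such a summand is irreducible for the $i$th copy of $\mathfrak{sl}_2(\mathbb C)\oplus\mathfrak{sl}_2(\mathbb C)$ and isomorphic to $\mathbb V_{N-2\ell}\otimes\mathbb V_{N-2\ell}$.

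Finally, for (iv), I would apply Lemma \ref{lem:Breakdown}(ii),(iii) with $N$ replaced by $N-2\ell$. That lemma identifies the subspace $\sum_{m\in\mathbb N} R_i^m\bigl({\rm Ker}(L_i)\cap P_{N-2\ell}\bigr)$ with the $C_i$-eigenspace on $P$ for eigenvalue $(N-2\ell)(N-2\ell+2)/2$. In particular the single summand $R_i^\ell\bigl({\rm Ker}(L_i)\cap P_{N-2\ell}\bigr)$ is contained in this $C_i$-eigenspace, so $C_i$ acts on it as the scalar $(N-2\ell)(N-2\ell+2)/2$; since the summand is a nonzero subspace, it is an eigenspace for the restriction of $C_i$ to $P_N$.

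There is no real obstacle here: the work was done in Sections 12--13 for the global decomposition \eqref{eq:SSKsum}, and the present statement is just the instance of those results corresponding to the summands of \eqref{eq:SSKsum} that land inside $P_N$. The only bookkeeping point to be careful about is the index shift $N\mapsto N-2\ell$, so I would state this reindexing explicitly at the start of the proof to avoid confusion, and then cite Proposition \ref{prop:sl2sl2}, Lemma \ref{prop:cap4}(ii), Lemma \ref{lem:RRRinj}, and Lemma \ref{lem:Breakdown} in turn.
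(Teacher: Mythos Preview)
Your proposal is correct and follows essentially the same route as the paper: the paper proves (i)--(iii) by citing Proposition~\ref{prop:sl2sl2} and (iv) by citing Lemma~\ref{lem:Breakdown}(ii),(iii), which is exactly what you do after making the reindexing $N\mapsto N-2\ell$ explicit. Your separate treatment of (ii) via Lemma~\ref{lem:RRRinj} and Lemma~\ref{prop:cap4}(ii) is fine, though the paper folds this into the citation of Proposition~\ref{prop:sl2sl2} (whose proof already contains the dimension count).
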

\begin{proof} (i)--(iii) By Proposition \ref{prop:sl2sl2}. \\
\noindent (iv) By Lemma \ref{lem:Breakdown}(ii),(iii).
\end{proof}

\noindent Let $i \in \lbrace 1,2,3\rbrace$ and $N \in \mathbb N$. We finish this section with a summary of how
$C_i$ acts on $P_N$.

\begin{corollary}\label{prop:PNCi} The following hold for $i \in \lbrace 1,2,3 \rbrace$ and $N \in \mathbb N$.
\begin{enumerate}
\item[\rm (i)] The action of $C_i$ on $P_N$ is diagonalizable.
\item[\rm (ii)] For the action of $C_i$ on $P_N$ the eigenvalues are 
\begin{align*}
\frac{(N-2\ell)(N-2\ell+2)}{2}, \qquad \qquad 0 \leq \ell \leq \lfloor N/2 \rfloor.
\end{align*}
\item[\rm (iii)] For $0 \leq \ell \leq \lfloor N/2 \rfloor$ the $(N-2\ell)(N-2\ell+2)/2$-eigenspace for $C_i$ on $P_N$ has dimension $(N-2\ell+1)^2$.
\item[\rm (iv)]   the eigenspaces of $C_i$ on $P_N$ are mutually orthogonal.
\end{enumerate}
\end{corollary}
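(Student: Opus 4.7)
The plan is to extract this corollary almost immediately from Proposition \ref{lem:comment} combined with the orthogonal direct sum decomposition \eqref{eq:odsREMIND}. The work has essentially been done already; what remains is to check that the $\lfloor N/2\rfloor +1$ eigenvalues listed in Proposition \ref{lem:comment}(iv) are mutually distinct, so that each summand of \eqref{eq:odsREMIND} is the \emph{entire} eigenspace of $C_i$ on $P_N$ at the corresponding eigenvalue.

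First I would verify the distinctness. Set $\theta_\ell = (N-2\ell)(N-2\ell+2)/2$ for $0 \leq \ell \leq \lfloor N/2\rfloor$. Writing $k_\ell = N-2\ell$, the integers $\lbrace k_\ell\rbrace$ are nonnegative and strictly decreasing in $\ell$, and the map $k \mapsto k(k+2)/2$ is strictly increasing on nonnegative integers. Hence $\theta_0 > \theta_1 > \cdots > \theta_{\lfloor N/2\rfloor}$, and in particular these values are pairwise distinct.

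Next, by Proposition \ref{lem:comment}(iv) the $\ell$-summand $W_\ell := R_i^\ell\bigl({\rm Ker}(L_i)\cap P_{N-2\ell}\bigr)$ of \eqref{eq:odsREMIND} is contained in the $\theta_\ell$-eigenspace of $C_i$ acting on $P_N$. Since \eqref{eq:odsREMIND} exhausts $P_N$ and the $\theta_\ell$ are distinct, the containment must be equality: $W_\ell$ is exactly the $\theta_\ell$-eigenspace. This simultaneously yields parts (i), (ii), (iii): the action of $C_i$ on $P_N$ is diagonalizable with $P_N$ decomposed as a direct sum of these eigenspaces; the eigenvalues are precisely $\lbrace \theta_\ell\rbrace_{\ell=0}^{\lfloor N/2\rfloor}$; and by Proposition \ref{lem:comment}(ii) the $\theta_\ell$-eigenspace has dimension $(N-2\ell+1)^2$.

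For part (iv), orthogonality of the eigenspaces is immediate from Proposition \ref{prop:sl2sl2MOrth}(ii), which asserts that the summands $\lbrace W_\ell\rbrace$ of \eqref{eq:odsREMIND} are mutually orthogonal. (Equivalently, one could invoke Lemma \ref{prop:Cinv}, which states that $C_i$ is self-adjoint with respect to $\langle\,,\,\rangle$; together with the distinctness of the $\theta_\ell$ this forces the eigenspaces to be mutually orthogonal.) There is no substantial obstacle in this argument; the entire content of the corollary is packaged in the earlier proposition, and the only nontrivial ingredient is the monotonicity observation ensuring the eigenvalues do not collide.
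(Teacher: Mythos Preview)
Your proof is correct and follows essentially the same route as the paper, which simply cites Propositions \ref{prop:sl2sl2MOrth}(ii) and \ref{lem:comment}(ii),(iv). You are more explicit than the paper in verifying that the eigenvalues $\theta_\ell$ are pairwise distinct, a detail the paper leaves implicit but which is indeed needed to identify each summand $W_\ell$ with the full $\theta_\ell$-eigenspace.
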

\begin{proof} By  Propositions \ref{prop:sl2sl2MOrth}(ii) and  \ref{lem:comment}(ii),(iv).
\end{proof}

%%%%%%%%%%%%%%%%%%%%%%%%%%%%%
%%%%%%%%%%%%%%%%%%%%%%%%%%%%%
\section{Some  bases for the vector space $R_i^\ell \bigl( {\rm Ker}(L_i) \cap P_N\bigr) $}

\noindent We continue to discuss the $\mathfrak{sl}_4(\mathbb C)$-module $P= \mathbb C\lbrack x,y,z,w\rbrack$. 
Pick  $i \in \lbrace 1,2,3\rbrace$ and recall the orthogonal direct sum  \eqref{eq:SSKsum}.
In this section, we find some bases for each summand.

\begin{lemma} \label{lem:cap4}  For  $i \in \lbrace 1,2,3\rbrace$ and $N \in \mathbb N$ the
subspace ${\rm Ker}(L_i) \cap P_N$ contains $x^N$ and $x^{*N}$.
\end{lemma}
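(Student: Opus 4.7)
The proof is essentially a direct computation from the definitions, so the plan is simply to spell out why each displayed generator annihilates the two stated vectors. First I would handle $x^N$. By Definition \ref{def:L123} each $L_i$ is a difference of two products of two partial derivatives, and an inspection shows that in every case each of the two products involves at least one of $D_y, D_z, D_w$ as a factor. Since $x^N$ is a polynomial in $x$ alone, we have $D_y(x^N)=D_z(x^N)=D_w(x^N)=0$ by \eqref{eq:one}, \eqref{eq:two}. Hence $L_1(x^N) = D_xD_y(x^N)-D_zD_w(x^N) = 0-0 = 0$, and the analogous one-line computation disposes of $L_2(x^N)$ and $L_3(x^N)$. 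This shows $x^N \in {\rm Ker}(L_i)\cap P_N$.

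For $x^{*N}$, the cleanest route is to invoke the $\sigma$-symmetry already established. By Lemma \ref{lem:LScom} the map $L_i$ commutes with $\sigma$, and by Proposition \ref{thm:aut2} we have $\sigma(x)=x^*$, so that $\sigma(x^N) = x^{*N}$ (since $\sigma$ is an algebra automorphism). Therefore
\begin{align*}
L_i(x^{*N}) \;=\; L_i(\sigma(x^N)) \;=\; \sigma(L_i(x^N)) \;=\; \sigma(0) \;=\; 0,
\end{align*}
placing $x^{*N}$ in ${\rm Ker}(L_i)\cap P_N$ as well. Alternatively, one could simply repeat the first calculation using Lemma \ref{lem:LiDual} (which expresses $L_i$ in terms of $D_{x^*},D_{y^*},D_{z^*},D_{w^*}$) together with Lemma \ref{lem:DDact}; this also gives the conclusion in one line.

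There is no real obstacle here: both assertions reduce to the observation that the four-variable differential operators defining $L_i$ were designed so that the single-variable monomial $x^N$ (respectively $x^{*N}$) is killed termwise, and the whole lemma is a short sanity check that will later feed into the basis constructions for the summands of \eqref{eq:SSKsum}.
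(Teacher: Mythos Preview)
Your proof is correct and follows essentially the same approach as the paper. The only cosmetic difference is that the paper, for the $x^{*N}$ case, cites Lemma~\ref{lem:KPinv} (the $\sigma$-invariance of ${\rm Ker}(L_i)\cap P_N$) rather than invoking Lemma~\ref{lem:LScom} directly as you do; since the relevant part of Lemma~\ref{lem:KPinv} is itself proved via Lemma~\ref{lem:LScom}, the two arguments are equivalent.
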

\begin{proof} We first consider $x^N$.
We have $x^N \in P_N$ since $x^N$ is homogeneous with total degree $N$. We have $x^N \in {\rm Ker}(L_i)$ by
Definition \ref{def:L123} and since
\begin{align*}
D_y (x^N)=0, \qquad \quad D_z(x^N) = 0, \qquad \quad D_w(x^N) = 0.
\end{align*}
By these comments, $ x^N \in {\rm Ker}(L_i) \cap P_N$. We have $ x^{*N} \in {\rm Ker}(L_i) \cap P_N$ because $\sigma(x^N)=x^{*N}$ and
$ {\rm Ker}(L_i) \cap P_N$ is invariant under $\sigma$ by Lemma \ref{lem:KPinv}.
\end{proof}

\begin{definition} \label{def:fi} \rm Let $\eta$ denote an indeterminate. For $N \in \mathbb N$ we define some polynomials $\lbrace f_n\rbrace_{n=0}^{N+1}$
in $\mathbb C \lbrack \eta \rbrack$ such that $f_0=1$ and
\begin{align*}
&\eta f_n= n f_{n-1} + (N-n) f_{n+1} \qquad \quad (0 \leq n \leq N-1), \\
& \eta f_N = N f_{N-1} + f_{N+1},
\end{align*}
where $f_{-1}=0$.
The polynomial $f_n$ has degree $n$ for $0 \leq n \leq N+1$.
The  $\lbrace f_n\rbrace_{n=0}^{N+1}$ are  Krawtchouk polynomials, see \cite[Section~9.11]{KLS},  \cite{KrawSL2}, \cite[Section~6]{madrid}.
\end{definition}
\noindent We refer to Definition \ref{def:fi}.
By \cite[Lemma~4.8]{KrawSL2} we have
\begin{align} \label{eq:minPolyA}
f_{N+1}(\eta)= \frac{(\eta-N)(\eta-N+2)(\eta-N+4) \cdots (\eta+N)}{N!}.
\end{align}
\noindent The polynomials $\lbrace f_n \rbrace_{n=0}^{N+1}$ are related to the irreducible $\mathfrak{sl}_2(\mathbb C)$-module ${\mathbb V}_N$ in the following way.
Above Definition \ref{def:compondent} we discussed the basis $\lbrace u_n \rbrace_{n=0}^N$ for ${\mathbb V}_N$.  By that discussion and $A=E+F$ we obtain
\begin{align*}
A u_n &= n u_{n-1} + (N-n) u_{n+1} \qquad \quad (0 \leq n \leq N-1), \\
A u_N &= N u_{N-1},
\end{align*}
where $u_{-1}=0$.
Comparing this recurrence with the one in Definition \ref{def:fi}, we find
\begin{align*}
f_n(A) u_0 = u_n \qquad \qquad (0 \leq n \leq N).
\end{align*}
We also find that $N! f_{N+1}$ is the minimal polynomial of $A$  on ${\mathbb V}_N$. See \cite{KrawSL2} for more information about
the Krawtchouk polynomials and $\mathfrak{sl}_2(\mathbb C)$.

\begin{lemma} \label{lem:Ni} For $i \in \lbrace 1,2,3\rbrace$ and $N \in \mathbb N$ the following hold on $P_N$:
\begin{align}
A_i  f_n (A_i ) &= n f_{n-1}(A_i) + (N-n) f_{n+1}(A_i) \qquad \qquad (0 \leq n \leq N),  \label{eq:rec1} \\
f_{N+1}(A_i) &=0.     \label{eq:rec2}
\end{align}
\end{lemma}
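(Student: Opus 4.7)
The plan is to derive both identities from the single fact that $N!\,f_{N+1}$ is an annihilating polynomial for the action of $A_i$ on $P_N$.

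First I would prove \eqref{eq:rec2}. By Lemma \ref{lem:Aspec}, the map $A_i$ is diagonalizable on $P_N$ with spectrum
\[
\{N-2n\}_{n=0}^{N} = \{-N,-N+2,\ldots,N-2,N\}.
\]
Hence the minimal polynomial of $A_i$ acting on $P_N$ divides $\prod_{k=0}^{N}(\eta - (N-2k))$. By \eqref{eq:minPolyA}, this product equals $N!\,f_{N+1}(\eta)$, so $N!\,f_{N+1}(A_i) = 0$ on $P_N$, giving \eqref{eq:rec2}.

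Next I would obtain \eqref{eq:rec1} for $0 \le n \le N-1$ as a direct instance of the polynomial recurrence in Definition \ref{def:fi}. That definition says, as an identity in $\mathbb{C}[\eta]$,
\[
\eta f_n(\eta) = n f_{n-1}(\eta) + (N-n) f_{n+1}(\eta) \qquad (0 \le n \le N-1),
\]
and substituting $\eta = A_i$ (and applying the resulting operator equation on $P_N$) yields the desired identity with no further work. For the remaining case $n=N$, the defining recurrence gives $\eta f_N(\eta) = N f_{N-1}(\eta) + f_{N+1}(\eta)$; evaluating at $A_i$ on $P_N$ and invoking \eqref{eq:rec2} yields
\[
A_i f_N(A_i) = N f_{N-1}(A_i) + f_{N+1}(A_i) = N f_{N-1}(A_i) + (N-N) f_{N+1}(A_i),
\]
which is \eqref{eq:rec1} at $n = N$.

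The main (and only) obstacle is justifying \eqref{eq:rec2}, and this reduces at once to the spectral information already recorded in Lemma \ref{lem:Aspec} together with the explicit factorization \eqref{eq:minPolyA}. No new computation is required beyond identifying $N!\,f_{N+1}$ with the product of $(\eta-\theta)$ over all eigenvalues $\theta$ of $A_i$ on $P_N$.
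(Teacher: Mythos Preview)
Your proof is correct and follows essentially the same approach as the paper: first establish \eqref{eq:rec2} from Lemma~\ref{lem:Aspec} and the factorization \eqref{eq:minPolyA}, then deduce \eqref{eq:rec1} from the defining recurrence in Definition~\ref{def:fi}, using \eqref{eq:rec2} to handle the case $n=N$. The paper's proof is just a terser version of exactly what you wrote.
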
 
\begin{proof} By Lemma \ref{lem:Aspec} and \eqref{eq:minPolyA} we obtain \eqref{eq:rec2}.
From this and Definition \ref{def:fi} we obtain  \eqref{eq:rec1}.
\end{proof}

\begin{lemma} \label{lem:fMeaning} For $N \in \mathbb N$  and $0 \leq n \leq N$ we have
\begin{align*}
&f_n(A_1) x^N = x^{N-n} y^n, \qquad \quad
f_n(A_2) x^N = x^{N-n} z^n, \qquad \quad
f_n(A_3) x^N = x^{N-n} w^n.
\end{align*}
\end{lemma}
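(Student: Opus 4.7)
The plan is to prove all three identities by induction on $n$, with the recurrence in Definition \ref{def:fi} matching precisely the action of the $A_i$'s on monomials of the form $x^{N-n} y^n$, $x^{N-n} z^n$, $x^{N-n} w^n$ as given by Proposition \ref{lem:ActP}. I will write out the argument for $A_1$; the cases of $A_2, A_3$ are completely analogous, using parts (ii), (iii) of Proposition \ref{lem:ActP} instead of part (i).

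First, the base case $n=0$ is immediate since $f_0 = 1$, giving $f_0(A_1) x^N = x^N = x^{N-0}y^0$. For the inductive step, assume $0 \leq n \leq N-1$ and that $f_n(A_1) x^N = x^{N-n} y^n$ (and, if $n \geq 1$, also $f_{n-1}(A_1) x^N = x^{N-n+1} y^{n-1}$). Applying $A_1$ to both sides of $f_n(A_1) x^N = x^{N-n} y^n$ and using Proposition \ref{lem:ActP}(i) with $r=N-n$, $s=n$, $t=u=0$ (so that the last two terms in the table vanish), I obtain
\begin{align*}
A_1 f_n(A_1) x^N = (N-n)\, x^{N-n-1} y^{n+1} + n\, x^{N-n+1} y^{n-1}.
\end{align*}
On the other hand, Lemma \ref{lem:Ni} (which is exactly the recurrence of Definition \ref{def:fi} applied on $P_N$) gives
\begin{align*}
A_1 f_n(A_1) x^N = n\, f_{n-1}(A_1) x^N + (N-n)\, f_{n+1}(A_1) x^N.
\end{align*}
By the inductive hypothesis the $n\, f_{n-1}(A_1) x^N$ term equals $n\, x^{N-n+1} y^{n-1}$, so subtracting and dividing by $N-n \neq 0$ yields $f_{n+1}(A_1) x^N = x^{N-n-1} y^{n+1}$, completing the induction up through $n=N$.

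The only thing to watch is the boundary: the induction produces the formulas for $0 \leq n \leq N$, and I do not need to go beyond $n=N$ (the lemma statement stops there). As a sanity check, at $n=N$ the formula gives $f_N(A_1) x^N = y^N$, and one more application of $A_1$ combined with Lemma \ref{lem:Ni} (line \eqref{eq:rec2}, $f_{N+1}(A_1)=0$ on $P_N$) is consistent with Proposition \ref{lem:ActP}(i) since $A_1(y^N) = N x y^{N-1} = N f_{N-1}(A_1) x^N$. The arguments for $A_2$ and $A_3$ are identical, substituting $z$ for $y$ in the $A_2$ case (using Proposition \ref{lem:ActP}(ii) with $r=N-n$, $t=n$, $s=u=0$) and $w$ for $y$ in the $A_3$ case (using Proposition \ref{lem:ActP}(iii) with $r=N-n$, $u=n$, $s=t=0$); in each case the only surviving terms in the relevant table are the two that reproduce the Krawtchouk three-term recurrence.

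There is no real obstacle here: the proof is a routine induction once one observes that the recurrence defining $f_n$ mirrors exactly the two-term action of $A_i$ on the monomial chain $x^N, x^{N-1} y_i, x^{N-2} y_i^2, \ldots, y_i^N$ where $y_i \in \{y,z,w\}$ is the ``partner'' variable for $A_i$ under the swap in Lemma \ref{lem:sl4onP1}(i)--(iii).
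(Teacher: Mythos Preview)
Your proof is correct and follows essentially the same approach as the paper: both arguments rest on the observation that the sequence $\{x^{N-n}y^n\}_{n=0}^N$ satisfies the same three-term recurrence under $A_1$ (via Proposition~\ref{lem:ActP}(i)) as $\{f_n(A_1)x^N\}_{n=0}^N$ does (via Lemma~\ref{lem:Ni}), with the same initial condition. The paper phrases this as ``two sequences satisfying the same recurrence and initial condition must coincide,'' while you spell out the induction explicitly, but the content is identical.
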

\begin{proof} We first prove our assertions involving $A_1$. Define 
\begin{align*}
\xi_n = x^{N-n} y^n \qquad (0 \leq n \leq N), \qquad \qquad \xi_{N+1}=0.
\end{align*}
By Proposition \ref{lem:ActP}(i),
\begin{align}
A_1 \xi_n = n \xi_{n-1} + (N-n) \xi_{n+1} \qquad \qquad (0 \leq n \leq N). \label{eq:hrec}
\end{align}
%%Setting $n=N$ in \eqref{eq:hrec}, we obtain $A_1 \xi_N = N \xi_{N-1}$. Therefore,
%%\begin{align}
%%A_1 \xi_N = N \xi_{N-1} + \xi_{N+1}. \label{eq:hrec2}
%%\end{align}
Comparing  \eqref{eq:rec1},
\eqref{eq:hrec}  we see that the  sequences $\lbrace f_n(A_1) x^N \rbrace_{n=0}^{N}$ and $\lbrace \xi_n \rbrace_{n=0}^{N}$ satisfy the same recurrence. These sequences have
the same initial condition, since $f_0(A_1)x^N =x^N= \xi_0$. Therefore $f_n(A_1)x^N=\xi_n$ for $0 \leq n \leq N$.
We have proven our assertions involving $A_1$. The remaining assertions are similarly proven.
\end{proof}

\noindent Let $N \in \mathbb N$. In the next result, we give an orthogonal basis for  ${\rm Ker}(L_1) \cap P_N$. Similar orthogonal bases exist for 
 ${\rm Ker}(L_2) \cap P_N$ and ${\rm Ker}(L_3) \cap P_N$.

\begin{proposition} \label{lem:KLbasis} For $N \in \mathbb N$  the
vectors
\begin{align}
v_{j,k}=f_j(A_2) f_k(A_3) x^N        \qquad \qquad 0 \leq j,k\leq N  \label{eq:vij}
\end{align}
form an orthogonal basis for ${\rm Ker}(L_1) \cap P_N$. Moreover
 for $0 \leq j,k\leq N$,
\begin{align}
A_2 v_{j,k} &= j v_{j-1,k} + (N-j) v_{j+1,k}, \label{eq:w1A} \\
A_3 v_{j,k} &= k v_{j,k-1} + (N-k) v_{j,k+1},       \label{eq:w2A} \\
A^*_2 v_{j,k} &= (N-2k) v_{j,k},       \label{eq:w3A}  \\
A^*_3 v_{j,k} &= (N-2j) v_{j,k},        \label{eq:w4A}   \\
\Vert v_{j,k} \Vert^2 &= N! \binom{N}{j}^{-1} \binom{N}{k}^{-1} . \label{eq:w5}
\end{align}
In the above lines, we interpret $v_{a,b} =0$ unless $0 \leq a,b\leq N$.
\end{proposition}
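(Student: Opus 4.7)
The proof will assemble the claims in six steps. First, since $L_1$ commutes with both $A_2$ and $A_3$ by Proposition \ref{prop:comL}(i), the subspace ${\rm Ker}(L_1) \cap P_N$ is invariant under any polynomial in $A_2, A_3$. Together with $x^N \in {\rm Ker}(L_1) \cap P_N$ from Lemma \ref{lem:cap4}, this shows each $v_{j,k}$ lies in ${\rm Ker}(L_1) \cap P_N$. Because $[A_2, A_3]=0$ by Definition \ref{def:LL}(i), I have $A_2 v_{j,k} = f_k(A_3) \bigl(A_2 f_j(A_2) x^N\bigr)$, and Lemma \ref{lem:Ni} applied to $A_2$ on $P_N$ (including the vanishing $f_{N+1}(A_2) = 0$) yields \eqref{eq:w1A}; \eqref{eq:w2A} follows symmetrically with the roles of $A_2, A_3$ swapped.

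The eigenvalue identities \eqref{eq:w3A}, \eqref{eq:w4A} are the step requiring more care. I would argue via the $\mathfrak{sl}_2(\mathbb C)$ subalgebra generated by $A_3$ and $A^*_2$ (Corollary \ref{cor:6}), under the identification $A \leftrightarrow A_3$, $A^* \leftrightarrow A^*_2$. First, $A^*_2 x^N = N x^N$ by Proposition \ref{lem:ActP}(v), and Lemma \ref{lem:ASspec} says $N$ is the maximal eigenvalue of $A^*_2$ on $P_N$; the associated $\mathfrak{sl}_2$-element $E = (2 A_3 - [A_3, A^*_2])/4$ stabilises $P_N$ and would send $x^N$ to a vector of $A^*_2$-eigenvalue $N+2$, forcing $E x^N = 0$. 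Hence $x^N$ is a highest-weight vector of weight $N$, so the cyclic $\mathfrak{sl}_2$-submodule it generates is isomorphic to $\mathbb V_N$, and the discussion after Definition \ref{def:fi} identifies $f_k(A_3) x^N$ with the weight vector $u_k$ of $A^*_2$-eigenvalue $N-2k$. Since $A^*_2$ commutes with $A_2$ by Definition \ref{def:LL}(ii), we may pass $f_j(A_2)$ past $A^*_2$, proving \eqref{eq:w3A}; \eqref{eq:w4A} is identical with $A^*_3, A_2$ in place of $A^*_2, A_3$, using that $A^*_3$ commutes with $A_3$.

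Having \eqref{eq:w3A}, \eqref{eq:w4A}, the pair $(A^*_2, A^*_3)$ assigns to $v_{j,k}$ the distinct eigenvalue pair $(N-2k, N-2j)$. Because $A^*_2, A^*_3$ are self-adjoint with respect to $\langle\,,\,\rangle$ (Lemma \ref{lem:inv}) and their eigenvalues are real, any two vectors $v_{j,k}$ and $v_{j',k'}$ with $(j,k) \neq (j',k')$ are orthogonal. The norm identity \eqref{eq:w5} I would establish by induction from the base case $v_{0,0}=x^N$, where $\|x^N\|^2 = N!$ from Definition \ref{def:bform}. Using the self-adjointness of $A_2$, the relation $\langle A_2 v_{j,k}, v_{j+1,k}\rangle = \langle v_{j,k}, A_2 v_{j+1,k}\rangle$, combined with \eqref{eq:w1A} and the orthogonality just established, collapses to $(N-j)\|v_{j+1,k}\|^2 = (j+1)\|v_{j,k}\|^2$, giving $\|v_{j,k}\|^2 = \binom{N}{j}^{-1} \|v_{0,k}\|^2$; the same maneuver with $A_3$ yields $\|v_{0,k}\|^2 = \binom{N}{k}^{-1} N!$, producing \eqref{eq:w5}.

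Finally, the norm formula shows each $v_{j,k}$ is nonzero, so the orthogonal family $\{v_{j,k}\}_{0 \leq j,k \leq N}$ is linearly independent. It consists of $(N+1)^2$ vectors, which by Lemma \ref{prop:cap4}(ii) equals the dimension of ${\rm Ker}(L_1) \cap P_N$; hence it is a basis. The only genuinely nontrivial maneuver is the highest-weight argument in the second paragraph, because the formulas \eqref{eq:w3A}, \eqref{eq:w4A} mix the two labels asymmetrically (the $A^*_2$-eigenvalue depends on $k$, not $j$), and it is this $\mathfrak{sl}_2(\mathbb C)$ crossover that makes everything fit together.
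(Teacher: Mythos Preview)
Your proof is correct and follows the same overall architecture as the paper's: membership in ${\rm Ker}(L_1)\cap P_N$, the recurrences \eqref{eq:w1A}--\eqref{eq:w2A} via Lemma~\ref{lem:Ni}, orthogonality from the eigenvalue identities and self-adjointness, the norm formula by the same two-variable induction, and the dimension count via Lemma~\ref{prop:cap4}(ii).

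The one genuine divergence is in how you establish \eqref{eq:w3A}--\eqref{eq:w4A}. The paper uses Lemma~\ref{lem:fMeaning} to identify $f_k(A_3)x^N$ concretely as the monomial $x^{N-k}w^k$, then simply reads off the $A^*_2$-eigenvalue $N-2k$ from Proposition~\ref{lem:ActP}(v); the commutation $[A_2,A^*_2]=0$ then pushes $f_j(A_2)$ past $A^*_2$, exactly as you do. Your route instead invokes the $\mathfrak{sl}_2(\mathbb C)$-subalgebra generated by $A_3,A^*_2$ and a highest-weight argument (using Lemma~\ref{lem:ASspec} to rule out eigenvalue $N+2$) to conclude that $f_k(A_3)x^N$ sits in the weight-$(N-2k)$ space of a copy of $\mathbb V_N$. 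Both arguments are short; the paper's is slightly more direct since Lemma~\ref{lem:fMeaning} is already available, while yours is more structural and explains conceptually \emph{why} the $A^*_2$-eigenvalue tracks the $A_3$-index~$k$ rather than~$j$.
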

\begin{proof}  
By Lemma  \ref{lem:KPinv}(i) the subspace ${\rm Ker}(L_1) \cap P_N$ is invariant under $A_2$ and $A_3$. By this and
Lemma \ref{lem:cap4},    the vectors \eqref{eq:vij} are contained in ${\rm Ker}(L_1) \cap P_N$.
The vectors \eqref{eq:vij} satisfy the recurrence \eqref{eq:w1A}  by Lemma  \ref{lem:Ni}.
%%%Definition \ref{def:fi}.
%%\eqref{eq:foneA} and
%%%%$\lbrack A_2, A_3 \rbrack=0$. 
The vectors \eqref{eq:vij} satisfy the recurrence \eqref{eq:w2A}  by  Lemma  \ref{lem:Ni}  and 
%%\eqref{eq:ftwoA} and
$\lbrack A_2, A_3 \rbrack=0$. 
Concerning \eqref{eq:w3A}, we use Proposition \ref{lem:ActP}(v), Lemma \ref{lem:fMeaning},
%% and \eqref{eq:ftwo}
and 
$\lbrack A_2, A^*_2 \rbrack=0$  to obtain
\begin{align*}
A^*_2 v_{j,k} &= A^*_2 f_j(A_2) f_k(A_3) x^N 
                     = f_j(A_2) A^*_2 f_k(A_3) x^N 
                     = f_j(A_2) A^*_2 x^{N-k} w^k \\
                   &  =(N-2k)f_j(A_2) x^{N-k}w^k 
                     = (N-2k) f_j(A_2) f_k(A_3) x^N 
                     = (N-2k) v_{j,k}.
\end{align*}
Concerning \eqref{eq:w4A}, we use Proposition \ref{lem:ActP}(vi), Lemma \ref{lem:fMeaning},
%% and \eqref{eq:fone}
and  $\lbrack A_2, A_3\rbrack=0 = \lbrack A^*_3, A_3\rbrack$ to obtain
\begin{align*}
A^*_3 v_{j,k} &= A^*_3 f_j(A_2) f_k(A_3) x^N 
                     = f_k(A_3) A^*_3 f_j(A_2) x^N 
                     = f_k(A_3) A^*_3 x^{N-j} z^j \\
                   &  =(N-2j)f_k(A_3) x^{N-j}z^j
                     = (N-2j)f_k(A_3) f_j(A_2) x^N 
                                          = (N-2j) v_{j,k}.
\end{align*}
The vectors \eqref{eq:vij} are mutually orthogonal by \eqref{eq:w3A}, \eqref{eq:w4A} and Lemma \ref{lem:inv}.
Next we show \eqref{eq:w5}. Assume for the moment that $j\geq 1$. By Lemma \ref{lem:inv} and \eqref{eq:w1A},
\begin{align}
(N-j+1)\Vert v_{j,k} \Vert^2 = \langle A_2 v_{j-1,k}, v_{j,k} \rangle = \langle v_{j-1,k}, A_2 v_{j,k} \rangle = j \Vert v_{j-1,k} \Vert^2. \label{eq:Ind1}
\end{align}
Assume for the moment that $k\geq 1$. By Lemma \ref{lem:inv} and \eqref{eq:w2A},
\begin{align}
(N-k+1)\Vert v_{j,k} \Vert^2 = \langle A_3 v_{j,k-1}, v_{j,k} \rangle = \langle v_{j,k-1}, A_3 v_{j,k} \rangle = k \Vert v_{j,k-1} \Vert^2. \label{eq:Ind2}
\end{align}
 By \eqref{eq:Ind1}, \eqref{eq:Ind2} and induction on $j+k$, 
\begin{align*}
\Vert v_{j,k} \Vert^2 = \Vert v_{0,0} \Vert^2  \binom{N}{j}^{-1} \binom{N}{k}^{-1}.
\end{align*}
This and  $\Vert v_{0,0}\Vert^2=\Vert x^N\Vert^2 = N!$ yields \eqref{eq:w5}.
By \eqref{eq:w5} the vectors  \eqref{eq:vij} are nonzero.
The vectors  \eqref{eq:vij} are linearly independent, because they are nonzero and mutually orthogonal.
These vectors form a basis for ${\rm Ker}(L_1) \cap P_N$ in view of Proposition  \ref{prop:cap4}(ii).
\end{proof}

\begin{proposition} \label{lem:insert} The following hold for $N, \ell \in \mathbb N$:
\begin{enumerate}
\item[\rm (i)] 
$R_1^\ell \bigl( {\rm Ker}(L_1) \cap P_N \bigr)$  has an orthogonal basis
\begin{align*}
R_1^\ell  f_j(A_2) f_k(A_3)x^N \qquad \qquad 0 \leq j,k\leq N;
\end{align*}
\item[\rm (ii)] 
$R_2^\ell \bigl( {\rm Ker}(L_2) \cap P_N \bigr) $  has an orthogonal basis
\begin{align*}
R_2^\ell    f_j(A_3) f_k(A_1)x^N      \qquad \qquad 0 \leq j,k\leq N;
\end{align*}
\item[\rm (iii)] 
$R_3^\ell \bigl( {\rm Ker}(L_3) \cap P_N\bigr) $  has an orthogonal basis
\begin{align*}
R_3^\ell   f_j(A_1) f_k(A_2)x^N \qquad \qquad 0 \leq j,k\leq N.
\end{align*}
\end{enumerate}
\end{proposition}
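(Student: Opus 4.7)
The plan is to deduce Proposition \ref{lem:insert} from Proposition \ref{lem:KLbasis} by propagating its orthogonal basis for $\mathrm{Ker}(L_i)\cap P_N$ through the injective map $R_i^\ell$. First I would prove the analogues of Proposition \ref{lem:KLbasis} needed for parts (ii) and (iii): namely, that
\[
v^{(2)}_{j,k}=f_j(A_3)f_k(A_1)x^N, \qquad v^{(3)}_{j,k}=f_j(A_1)f_k(A_2)x^N \qquad (0\le j,k\le N)
\]
form orthogonal bases for $\mathrm{Ker}(L_2)\cap P_N$ and $\mathrm{Ker}(L_3)\cap P_N$ respectively. These are obtained verbatim from the proof of Proposition \ref{lem:KLbasis} after cyclically permuting the roles of the indices $1,2,3$, using that $L_i$ commutes with the two $A_j, A_k$ outside its index (Proposition \ref{prop:comL}), that $x^N\in \mathrm{Ker}(L_i)$ (Lemma \ref{lem:cap4}), and that the dimension of $\mathrm{Ker}(L_i)\cap P_N$ is $(N+1)^2$ (Lemma \ref{prop:cap4}(ii)); the Hermitian-form calculation is identical.

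With the three orthogonal bases in hand, fix $i\in\{1,2,3\}$ and write $\{v^{(i)}_{j,k}\}_{0\le j,k\le N}$ for the corresponding basis of $\mathrm{Ker}(L_i)\cap P_N$. I would then show that $\{R_i^\ell v^{(i)}_{j,k}\}_{0\le j,k\le N}$ is an orthogonal basis for $R_i^\ell(\mathrm{Ker}(L_i)\cap P_N)$. Spanning is immediate from linearity of $R_i^\ell$. For linear independence, Lemma \ref{lem:RRRinj} gives that $R_i$ is injective, hence so is $R_i^\ell$; since the $v^{(i)}_{j,k}$ are linearly independent, so are their images. Orthogonality is precisely Corollary \ref{cor:Orth}: whenever $\langle v^{(i)}_{j,k}, v^{(i)}_{j',k'}\rangle =0$ we obtain $\langle R_i^\ell v^{(i)}_{j,k}, R_i^\ell v^{(i)}_{j',k'}\rangle =0$, and Proposition \ref{lem:KLbasis} (resp.\ its analogues) ensures this hypothesis holds whenever $(j,k)\ne (j',k')$.

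There is no real obstacle here; the work is already done in Proposition \ref{lem:KLbasis}, Lemma \ref{lem:RRRinj} (injectivity of $R_i$), and Corollary \ref{cor:Orth} (preservation of orthogonality under $R_i^\ell$). The only mild subtlety is that the norms scale in a controlled way: iterating Lemma \ref{lem:INP} gives
\[
\Vert R_i^\ell v^{(i)}_{j,k}\Vert^2 = \ell!\,\frac{(N+\ell+1)!}{(N+1)!}\,\Vert v^{(i)}_{j,k}\Vert^2,
\]
so in particular $R_i^\ell v^{(i)}_{j,k}\ne 0$, which confirms linear independence without appealing to injectivity separately. Thus the entire proposition reduces to applying $R_i^\ell$ to the already-established orthogonal basis of the kernel and invoking the preservation of orthogonality and injectivity of $R_i^\ell$, with parts (ii) and (iii) handled by cyclic symmetry in the indices $1,2,3$.
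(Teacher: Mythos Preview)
Your proposal is correct and follows essentially the same approach as the paper: the paper's proof of part (i) simply cites Lemma \ref{lem:RRRinj}, Corollary \ref{cor:Orth}, and Proposition \ref{lem:KLbasis}, then handles (ii) and (iii) by ``similar to the proof of (i) above,'' which implicitly invokes the cyclic analogues of Proposition \ref{lem:KLbasis} that you spell out explicitly. Your added norm computation via Lemma \ref{lem:INP} is a nice bonus but not needed, since injectivity of $R_i^\ell$ already suffices for linear independence.
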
 
\begin{proof} (i) By Lemma \ref{lem:RRRinj},  Corollary \ref{cor:Orth},  and Proposition  \ref{lem:KLbasis}.
\\
\noindent (ii), (iii) Similar to the proof of (i) above.
\end{proof}

\begin{proposition} \label{prop:RPNbasis3} The following hold for $N, \ell \in \mathbb N$:
\begin{enumerate}
\item[\rm (i)] 
$ R_1^\ell \bigl( {\rm Ker}(L_1) \cap P_N \bigr) $  has a basis
\begin{align*}
R_1^\ell A^j_2 A^k_3 x^N \qquad \qquad 0 \leq j,k\leq N;
\end{align*}
\item[\rm (ii)] 
$R_2^\ell \bigl( {\rm Ker}(L_2) \cap P_N \bigr) $  has a basis
\begin{align*}
R_2^\ell A^j_3 A^k_1 x^N \qquad \qquad 0 \leq j,k\leq N;
\end{align*}
\item[\rm (iii)] 
$R_3^\ell \bigl( {\rm Ker}(L_3) \cap P_N \bigr) $  has a basis
\begin{align*}
R_3^\ell A^j_1 A^k_2 x^N \qquad \qquad 0 \leq j,k\leq N.
\end{align*}
\end{enumerate}
\end{proposition}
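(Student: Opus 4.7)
The plan is to obtain the basis in (i) from the orthogonal basis in Proposition \ref{lem:insert}(i) by a triangular change of basis, and then transport it across the injective map $R_1^\ell$. First I would verify that $A_2^j A_3^k x^N \in {\rm Ker}(L_1) \cap P_N$ for all $0 \le j,k \le N$: membership in $P_N$ follows from Lemma \ref{lem:PNAAA} since $x^N \in P_N$ and each $A_i$ preserves $P_N$, while membership in ${\rm Ker}(L_1)$ follows because $L_1$ commutes with both $A_2$ and $A_3$ by Proposition \ref{prop:comL}(i), and $L_1(x^N)=0$ by Lemma \ref{lem:cap4}. Hence $L_1 A_2^j A_3^k x^N = A_2^j A_3^k L_1 x^N = 0$.

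Next I would exploit the degree structure of the Krawtchouk polynomials: as noted after Definition \ref{def:fi}, the polynomial $f_n(\eta)$ has degree exactly $n$, so for each $m \in \mathbb N$ the monomial $\eta^m$ lies in the span of $f_0(\eta), f_1(\eta), \ldots, f_m(\eta)$, with the coefficient of $f_m$ nonzero. Consequently, under any ordering of $\{(j,k) : 0 \le j,k \le N\}$ by the partial order $(j,k) \le (j',k')$ iff $j \le j'$ and $k \le k'$, the transition from the family $\{f_j(A_2) f_k(A_3) x^N\}_{0 \le j,k \le N}$ to the family $\{A_2^j A_3^k x^N\}_{0 \le j,k \le N}$ is given by an invertible upper-triangular matrix (with nonzero diagonal entries, namely products of leading coefficients of the $f_n$). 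By Proposition \ref{lem:insert}(i) the former is a basis of ${\rm Ker}(L_1) \cap P_N$, so the latter is too.

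Finally, the map $R_1$ is injective by Lemma \ref{lem:RRRinj}, hence so is $R_1^\ell$; applying $R_1^\ell$ to a basis of ${\rm Ker}(L_1) \cap P_N$ therefore produces a basis of $R_1^\ell\bigl({\rm Ker}(L_1) \cap P_N\bigr)$, proving (i). Parts (ii) and (iii) are established by the same argument after cyclic relabelling of the indices $1,2,3$: one invokes Proposition \ref{prop:comL}(ii) (respectively (iii)) to check that $A_3, A_1$ (respectively $A_1, A_2$) commute with $L_2$ (respectively $L_3$), and Proposition \ref{lem:insert}(ii),(iii) in place of Proposition \ref{lem:insert}(i).

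The argument has no genuine obstacle; the only point that requires care is ensuring the triangularity of the transition matrix, which is ultimately a statement about the leading coefficients of $\{f_n\}_{n=0}^N$ being nonzero. This is immediate from the three-term recurrence in Definition \ref{def:fi} by a routine induction on $n$, should the reader want it spelled out.
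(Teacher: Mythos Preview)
Your proof is correct and follows essentially the same route as the paper: the paper's one-line proof invokes Proposition~\ref{lem:insert} and the fact that $f_n$ has degree $n$, which amounts to exactly the triangular change of basis you spell out. Your first paragraph (membership in ${\rm Ker}(L_1)\cap P_N$) is strictly redundant once the triangular argument is in place, and note that the basis of ${\rm Ker}(L_1)\cap P_N$ you cite is really Proposition~\ref{lem:KLbasis} (equivalently Proposition~\ref{lem:insert}(i) with $\ell=0$); but these are cosmetic points, not gaps.
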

\begin{proof} By  Proposition \ref{lem:insert} and since the polynomial $f_n$ has degree $n$ for $0 \leq n \leq N$. 
\end{proof}

\begin{corollary} \label{cor:15p8} The following hold:
\begin{enumerate}
\item[\rm (i)] 
$P$  has a basis
\begin{align*}
R_1^\ell A^j_2 A^k_3 x^{N}  \qquad \qquad N, \ell \in \mathbb N,  \qquad \quad 0 \leq j,k\leq N;
\end{align*}
\item[\rm (ii)] 
$P$  has a basis
\begin{align*}
R_2^\ell A^j_3 A^k_1 x^{N} \qquad \qquad N, \ell \in \mathbb N,  \qquad \quad  0 \leq j,k\leq N;
\end{align*}
\item[\rm (iii)] 
$P$ has a basis
\begin{align*}
R_3^\ell A^j_1 A^k_2 x^{N}  \qquad \qquad N, \ell \in \mathbb N, \qquad \quad  0 \leq j,k\leq N.
\end{align*}
\end{enumerate}
\end{corollary}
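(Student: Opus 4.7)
The plan is to derive this immediately by stitching together two earlier results, one giving a direct-sum decomposition of $P$ and one giving a basis for each summand.

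First I would recall Proposition \ref{prop:RPExp1}, which says that for each $i \in \lbrace 1,2,3\rbrace$ the sum
\begin{align*}
P = \sum_{N \in \mathbb N} \sum_{\ell \in \mathbb N} R_i^\ell \Bigl( {\rm Ker}(L_i) \cap P_N \Bigr)
\end{align*}
is direct. So a basis for $P$ may be assembled by picking a basis for each summand and taking the (disjoint) union. To fill in a basis of each summand, I would invoke Proposition \ref{prop:RPNbasis3}, which provides exactly such bases.

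For part (i), taking $i=1$, the summand $R_1^\ell\bigl({\rm Ker}(L_1)\cap P_N\bigr)$ has basis $R_1^\ell A_2^j A_3^k x^N$ for $0\leq j,k\leq N$ by Proposition \ref{prop:RPNbasis3}(i). Uniting these bases over $N,\ell \in \mathbb N$ yields
\begin{align*}
R_1^\ell A_2^j A_3^k x^N \qquad N,\ell \in \mathbb N, \quad 0 \leq j,k\leq N
\end{align*}
as a basis for $P$. Parts (ii) and (iii) are handled identically, invoking Proposition \ref{prop:RPNbasis3}(ii),(iii) in place of (i).

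There is no substantive obstacle: the whole content of the corollary has already been packaged in the two cited propositions, and the only step is the standard linear-algebra observation that a union of bases of the summands of a direct sum is a basis of the whole. If anything deserves a sentence of care, it is simply to note that the vectors produced for distinct $(N,\ell)$ lie in distinct summands, so their union is automatically linearly independent and no overlap occurs.
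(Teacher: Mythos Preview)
Your proposal is correct and matches the paper's own proof essentially verbatim: the paper simply cites Proposition \ref{prop:RPNbasis3} together with the directness of the sum \eqref{eq:SSKsum} (which is Proposition \ref{prop:RPExp1}).
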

\begin{proof} By Proposition \ref{prop:RPNbasis3}  and since the sum  \eqref{eq:SSKsum}  is direct.
\end{proof} 

\begin{proposition} \label{rem:AboutDual}  Lemmas \ref{lem:Ni}, \ref{lem:fMeaning}  and Propositions \ref{lem:KLbasis}, \ref{lem:insert}, \ref{prop:RPNbasis3}
 and Corollary \ref{cor:15p8}  all remain valid if
we replace $x,y,z,w$ by $x^*, y^*,z^*,w^*$ respectively and swap $A_i \leftrightarrow A^*_i $ for $i \in \lbrace 1,2,3\rbrace$.
\end{proposition}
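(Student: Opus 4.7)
The plan is to push every statement in the listed results through the involution $\sigma$ from Proposition~\ref{thm:aut2}. Recall that $\sigma$ is an algebra automorphism of $P$ satisfying $\sigma^2 = \mathrm{id}$, $\sigma(x)=x^*$, $\sigma(y)=y^*$, $\sigma(z)=z^*$, $\sigma(w)=w^*$, and by \eqref{eq:SRS} it conjugates $A_i$ to $A^*_i$ and $A^*_i$ to $A_i$ on the $\mathfrak{sl}_4(\mathbb{C})$-module $P$. Moreover $\sigma$ preserves $P_N$ for each $N$, commutes with each of $L_i$ and $R_i$ (Lemmas~\ref{lem:LScom}, \ref{lem:DScom}), and preserves the Hermitian form (Proposition~\ref{thm:aut}). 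These properties are exactly what is needed to ensure that applying $\sigma$ to any identity, basis, or orthogonality statement involving the listed data produces the corresponding ``dual'' statement.

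First I would handle Lemma~\ref{lem:Ni}: since the recurrence $\eta f_n = n f_{n-1} + (N-n) f_{n+1}$ defining $f_n$ involves only scalars, and $A^*_i$ is diagonalizable on $P_N$ with the same eigenvalues $\{N-2n\}_{n=0}^N$ as $A_i$ (by Lemma~\ref{lem:ASspec} versus Lemma~\ref{lem:Aspec}), the identical argument that produced \eqref{eq:rec1}--\eqref{eq:rec2} works with $A_i$ replaced by $A^*_i$. Alternatively, conjugate \eqref{eq:rec1}--\eqref{eq:rec2} by $\sigma$ and use $\sigma A_i \sigma^{-1} = A^*_i$. For Lemma~\ref{lem:fMeaning}, I would simply apply $\sigma$ to both sides of each of the three equations; for example $\sigma\bigl(f_n(A_1)x^N\bigr) = f_n(\sigma A_1 \sigma^{-1})\sigma(x^N) = f_n(A^*_1)x^{*N}$, while $\sigma(x^{N-n}y^n) = x^{*N-n}y^{*n}$, yielding $f_n(A^*_1)x^{*N} = x^{*N-n}y^{*n}$, and similarly for the other two.

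Next I would treat Proposition~\ref{lem:KLbasis} by noting that $\sigma$ sends ${\rm Ker}(L_1) \cap P_N$ to itself (since $\sigma$ preserves both $P_N$ and $L_1$), so applying $\sigma$ to the basis $v_{j,k} = f_j(A_2)f_k(A_3)x^N$ gives the vectors $v^*_{j,k} = f_j(A^*_2) f_k(A^*_3) x^{*N}$, which still form an orthogonal basis for ${\rm Ker}(L_1) \cap P_N$ because $\sigma$ preserves $\langle\,,\,\rangle$. The recurrences \eqref{eq:w1A}--\eqref{eq:w4A} and the norm formula \eqref{eq:w5} transport in the same way: conjugate each identity by $\sigma$ and use that $\sigma$ swaps $A_i \leftrightarrow A^*_i$. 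Propositions~\ref{lem:insert} and \ref{prop:RPNbasis3} and Corollary~\ref{cor:15p8} then follow by applying $\sigma$ to each displayed basis and using $\sigma R_i^\ell = R_i^\ell \sigma$ together with $\sigma(x^N)=x^{*N}$.

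There is no real obstacle here, only bookkeeping. The only point that deserves care is to confirm that for each assertion the claim is intrinsic enough to be transported by a form-preserving, $L_i, R_i, P_N$-preserving automorphism that swaps $A_i \leftrightarrow A^*_i$; this is exactly what the listed properties of $\sigma$ guarantee, so the dual versions follow as an immediate corollary of the originals.
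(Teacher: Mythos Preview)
Your proposal is correct and follows essentially the same approach as the paper: apply the involution $\sigma$ throughout, using that $\sigma$ swaps $A_i \leftrightarrow A^*_i$ (Proposition~\ref{thm:aut2}), preserves the Hermitian form (Proposition~\ref{thm:aut}), and commutes with $L_i$ and $R_i$ (Lemmas~\ref{lem:LScom}, \ref{lem:DScom}). Your write-up is more detailed than the paper's one-line proof, but the underlying argument is identical.
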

\begin{proof} Apply $\sigma$ throughout the listed results, and use
Propositions \ref{thm:aut2}, \ref{thm:aut} along with Lemmas \ref{lem:LScom}, \ref{lem:DScom}. 
\end{proof}

\section{Some bases for the vector space $P_N$ }

\noindent We continue to discuss the $\mathfrak{sl}_4(\mathbb C)$-module $P= \mathbb C\lbrack x,y,z,w\rbrack$. 
Pick $i \in \lbrace 1,2,3 \rbrace$ and $N \in \mathbb N$. Recall the direct sum decomposition \eqref{eq:odsREMIND}.
 In Proposition \ref{prop:RPNbasis3}, we obtained a basis for each summand. 
In this section, we use these bases to obtain some bases for $P_N$.

\begin{proposition} \label{lem:PNalt} The following hold for $N \in \mathbb N$:
\begin{enumerate}
\item[\rm (i)] 
$P_N$  has a basis
\begin{align*}
R_1^\ell A^j_2 A^k_3 x^{N-2\ell}  \qquad \qquad 0 \leq \ell \leq \lfloor N/2 \rfloor, \qquad \quad 0 \leq j,k\leq N-2\ell;
\end{align*}
\item[\rm (ii)] 
$P_N$  has a basis
\begin{align*}
R_2^\ell A^j_3 A^k_1 x^{N-2\ell} \qquad \qquad 0 \leq \ell \leq \lfloor N/2 \rfloor, \qquad \quad  0 \leq j,k\leq N-2\ell;
\end{align*}
\item[\rm (iii)] 
$P_N$  has a basis
\begin{align*}
R_3^\ell A^j_1 A^k_2 x^{N-2\ell}  \qquad \qquad 0 \leq \ell \leq \lfloor N/2 \rfloor, \qquad \quad  0 \leq j,k\leq N-2\ell.
\end{align*}
\end{enumerate}
\end{proposition}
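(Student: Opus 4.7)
The plan is to deduce this proposition directly by assembling, over the index $\ell$, the bases already produced in Proposition \ref{prop:RPNbasis3} for the individual summands of the orthogonal direct sum decomposition of $P_N$. I will handle part (i); parts (ii), (iii) are word-for-word analogous with the role of $i=1$ played by $i=2$ and $i=3$.

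First I recall that by Propositions \ref{prop:Psum1} and \ref{prop:sl2sl2MOrth}(ii) we have the orthogonal direct sum
\begin{align*}
P_N = \sum_{\ell=0}^{\lfloor N/2 \rfloor} R_1^\ell \Bigl( {\rm Ker}(L_1) \cap P_{N-2\ell}\Bigr).
\end{align*}
Next, applying Proposition \ref{prop:RPNbasis3}(i) to the natural number $N-2\ell$ in place of $N$, for each $0 \leq \ell \leq \lfloor N/2 \rfloor$ the summand $R_1^\ell\bigl({\rm Ker}(L_1) \cap P_{N-2\ell}\bigr)$ has the basis
\begin{align*}
R_1^\ell A_2^j A_3^k x^{N-2\ell} \qquad \qquad 0 \leq j,k \leq N-2\ell.
\end{align*}
Taking the union of these bases over $\ell$ then produces a spanning set for $P_N$ whose elements lie in the respective (distinct, hence linearly independent) summands of the direct sum above. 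Consequently the union is linearly independent, and therefore a basis for $P_N$.

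As an optional sanity check to include, the cardinality of the proposed basis is
\begin{align*}
\sum_{\ell=0}^{\lfloor N/2 \rfloor} (N-2\ell+1)^2,
\end{align*}
which by Proposition \ref{lem:comment}(ii) is exactly $\dim P_N = \binom{N+3}{3}$; this is also a direct consequence of the direct-sum dimension count and needs no separate combinatorial argument. There is no real obstacle here: once the decomposition \eqref{eq:odsREMIND} and Proposition \ref{prop:RPNbasis3} are in hand, the statement is essentially a bookkeeping exercise, and the only mild point to note is that the exponent $N$ in Proposition \ref{prop:RPNbasis3} is a free natural number, so we are entitled to specialize it to $N-2\ell$ for each allowed $\ell$.
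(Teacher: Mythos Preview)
Your proposal is correct and follows essentially the same approach as the paper: the paper's proof reads simply ``By Proposition \ref{prop:RPNbasis3} and since the sum \eqref{eq:odsREMIND} is direct,'' which is exactly the assembly-of-bases argument you spell out.
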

\begin{proof} By Proposition  \ref{prop:RPNbasis3}  and since the sum \eqref{eq:odsREMIND} is direct.
\end{proof}

\begin{proposition} Proposition \ref{lem:PNalt} remains valid if we replace $x$ by $x^*$ and $A_i$ by $A^*_i$ for $i \in \lbrace 1,2,3\rbrace$.
\end{proposition}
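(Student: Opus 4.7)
The plan is to deduce this from Proposition \ref{lem:PNalt} by applying the automorphism $\sigma$ of the polynomial algebra $P$ introduced in Proposition \ref{thm:aut2}. This mirrors the proof strategy already used in Proposition \ref{rem:AboutDual}, so the argument will be short and mechanical.

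First I would recall the ingredients. By Proposition \ref{thm:aut2}, $\sigma$ is a $\mathbb C$-algebra automorphism of $P$ satisfying $\sigma(x)=x^*$ and
\begin{align*}
A^*_i = \sigma A_i \sigma^{-1}, \qquad A_i = \sigma A^*_i \sigma^{-1}, \qquad i \in \lbrace 1,2,3\rbrace.
\end{align*}
By Lemma \ref{lem:DScom}, each of $R_1, R_2, R_3$ commutes with $\sigma$, so $R_i = \sigma R_i \sigma^{-1}$ as well. Finally, the comment at the end of Section 7 gives $\sigma(P_N) = P_N$ for all $N \in \mathbb N$; since $\sigma$ is an automorphism of $P$, its restriction $\sigma\vert_{P_N}$ is a $\mathbb C$-linear bijection on $P_N$, hence carries bases to bases.

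Next I would apply $\sigma$ to the basis vectors in Proposition \ref{lem:PNalt}(i). For $0 \leq \ell \leq \lfloor N/2\rfloor$ and $0 \leq j,k \leq N-2\ell$, since $\sigma$ is an algebra automorphism we have $\sigma(x^{N-2\ell}) = x^{*\,N-2\ell}$, and therefore
\begin{align*}
\sigma\bigl( R_1^\ell A_2^j A_3^k x^{N-2\ell}\bigr)
 &= (\sigma R_1 \sigma^{-1})^\ell (\sigma A_2 \sigma^{-1})^j (\sigma A_3 \sigma^{-1})^k \sigma(x^{N-2\ell}) \\
 &= R_1^\ell A_2^{*j} A_3^{*k} x^{*\,N-2\ell}.
\end{align*}
Because $\sigma\vert_{P_N}$ is a bijection on $P_N$, the image of the basis of Proposition \ref{lem:PNalt}(i) is again a basis of $P_N$. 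This proves part (i) of the desired proposition. Parts (ii) and (iii) are established identically, applying $\sigma$ to the corresponding bases in Proposition \ref{lem:PNalt}(ii),(iii).

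There is no real obstacle here: the work has been done in Proposition \ref{lem:PNalt}, and the transport along $\sigma$ is purely formal once one combines \eqref{eq:SRS}, Lemma \ref{lem:DScom}, and $\sigma(P_N)=P_N$. The only thing to be careful about is making sure the conjugations pass through the product in the correct order, which is automatic since $\sigma\varphi_1\cdots\varphi_r\sigma^{-1}=(\sigma\varphi_1\sigma^{-1})\cdots(\sigma\varphi_r\sigma^{-1})$ for any operators $\varphi_1,\ldots,\varphi_r$ on $P$.
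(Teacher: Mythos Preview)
Your proof is correct and follows essentially the same approach as the paper: apply $\sigma$ throughout Proposition \ref{lem:PNalt}, using Proposition \ref{thm:aut2}, Lemma \ref{lem:DScom}, and $\sigma(P_N)=P_N$.
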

\begin{proof}  Apply $\sigma$ throughout Proposition \ref{lem:PNalt}, and use 
Proposition \ref{thm:aut2} along with Lemma\ref{lem:DScom} and the fact that $\sigma(P_N)=P_N$.
\end{proof}

%%%%%%%%%%%%%%%%%%%%%%%%%%%%
%%%%%%%%%%%%%%%%%%%%%%%%%%%%%

\section{The hypercube $H(N,2)$}
\noindent 

\noindent 
We turn our attention to graph theory. For us, a graph is understood to be finite and undirected, without loops or multiple edges.
For the rest of this paper, fix  $N \in \mathbb N$. 
We define a graph $H(N,2)$ as follows. The vertex set $X$ consists of the
$N$-tuples of elements taken from the set $\lbrace 1,-1\rbrace$. Vertices $x,y \in X$
are adjacent whenever they differ in exactly one coordinate. The graph $H(N,2)$ 
is called the {\it $N$-cube} or a {\it hypercube} or a {\it binary Hamming graph}. 
 The graph $H(N,2)$ is distance-regular in the sense of \cite[Chapter~1]{bcn}. Background information about $H(N,2)$
 can be found in \cite{bbit, banIto, bcn, dkt, go}.
 Going forward, we will assume
 that the reader is generally familiar with $H(N,2)$. In the next few paragraphs, we recall from \cite{go} some features of $H(N,2)$ for later use.
 \medskip
 
 \noindent We have $\vert X \vert = 2^N$.
 For $x,y \in X$ let $\partial(x,y)$ denote the path-length distance between $x,y$.
 Then $\partial(x,y)$ is equal to the number of coordinates at which $x,y$ differ.
 The diameter of $H(N,2)$ is  $N$.
 The graph $H(N,2)$ is a bipartite  antipodal $2$-cover.
 The intersection numbers $c_i, b_i$ of $H(N,2)$ satisfy
\begin{align*}
c_i = i, \qquad \qquad b_i = N-i \qquad \qquad (0 \leq i \leq N).
\end{align*}
The valencies $k_i$ of $H(N,2)$ satisfy
\begin{align*}
k_i = \binom{N}{i} \qquad \qquad (0 \leq i \leq N).
\end{align*}
\begin{definition}\rm \label{def:standard} Let $V$ denote the vector space  with basis $X$. We call $V$
the {\it standard module} associated with $H(N,2)$.
\end{definition}

\begin{definition}\rm \label{def:HermHN2} \rm We endow $V$ with a Hermitian form $\langle \,,\,\rangle$ with respect to which the basis $X$ is orthonormal.
\end{definition}
\noindent Let us abbreviate  $\Gamma=H(N,2)$. For $x \in X$ let the set $\Gamma(x)$ 
 consist of the vertices in $X$ that are adjacent to $x$. Note that $\vert \Gamma(x) \vert = N$.
 \begin{definition} \label{def:adj} \rm
Define ${\sf A} \in {\rm End}(V)$ such that 
\begin{align}
{\sf A} x = \sum_{\xi \in \Gamma(x)} \xi, \qquad \qquad x \in X.         \label{eq:AdjMap}
\end{align}
We call $\sf A$ the {\it adjacency map} for $H(N,2)$.
\end{definition}
\begin{lemma} \label{lem:Abil} For $x,y \in X$ we have
\begin{align*}
\langle {\sf A}x,y\rangle  = \langle x,{\sf A}y\rangle  =  \begin{cases} 1 & {\mbox{\rm if $\partial(x,y)=1$}}; \\
0, & {\mbox{\rm if $\partial(x,y)\not= 1$}}.
\end{cases} 
%%%%%%%\langle A u, v \rangle = \langle u, Av\rangle \qquad \qquad u,v\in V.
\end{align*}
\end{lemma}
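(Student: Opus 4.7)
The plan is to compute each inner product directly by expanding $\mathsf{A}$ via its defining formula \eqref{eq:AdjMap} and using the fact that $X$ is an orthonormal basis of $V$ (Definition \ref{def:HermHN2}).

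First I would compute $\langle \mathsf{A}x, y\rangle$. Since $\langle \,,\,\rangle$ is linear in the first argument (Definition \ref{def:Herm}(i),(ii)), we have
\begin{align*}
\langle \mathsf{A}x, y\rangle = \Bigl\langle \sum_{\xi \in \Gamma(x)} \xi,\, y\Bigr\rangle = \sum_{\xi \in \Gamma(x)} \langle \xi, y\rangle.
\end{align*}
Because $X$ is orthonormal, $\langle \xi, y\rangle = \delta_{\xi,y}$ for $\xi, y \in X$. Therefore the sum equals $1$ if $y \in \Gamma(x)$ and $0$ otherwise. By definition of $H(N,2)$, $y \in \Gamma(x)$ exactly when $\partial(x,y) = 1$, which gives the claimed value of $\langle \mathsf{A}x, y\rangle$.

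Next I would handle $\langle x, \mathsf{A}y\rangle$. One option is to repeat the argument using conjugate-linearity of $\langle\,,\,\rangle$ in the second slot (Definition \ref{def:Herm}(iii)), noting that the coefficients $1$ appearing in $\mathsf{A}y = \sum_{\eta \in \Gamma(y)} \eta$ are real so conjugation has no effect. A cleaner alternative is to invoke Definition \ref{def:Herm}(iii) directly: $\langle x, \mathsf{A}y\rangle = \overline{\langle \mathsf{A}y, x\rangle}$, and by the previous paragraph (with $x,y$ swapped) this equals $1$ if $\partial(y,x) = 1$ and $0$ otherwise. Since $\partial$ is symmetric, this matches $\langle \mathsf{A}x, y\rangle$.

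There is no real obstacle here; the lemma is an immediate unwinding of the definitions of $\mathsf{A}$ and of the Hermitian form, together with the symmetry of the adjacency relation. The only thing to be careful about is the conjugation in Definition \ref{def:Herm}(iii), but since all matrix entries of $\mathsf{A}$ in the basis $X$ are real (indeed, $0$ or $1$), this plays no role.
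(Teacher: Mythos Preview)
Your proposal is correct and is precisely the unwinding the paper has in mind: the paper's own proof is simply ``By Definition~\ref{def:adj},'' and what you wrote is that definition made explicit together with the orthonormality of $X$ and the symmetry of adjacency.
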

\begin{proof} By Definition \ref{def:adj}.
\end{proof}
\noindent By \cite[p.~45]{bcn} and \cite[Lemma~3.5]{go}, the map $\sf A$ is diagonalizable with eigenvalues
\begin{align}
\theta_i = N-2i \qquad \quad (0 \leq i \leq N). \label{eq:eigvalHN2}
\end{align}
For $0 \leq i \leq N$  let ${\sf E}_i$ denote the primitive idempotent of $\sf A$ associated with $\theta_i$.
By \cite[Lemma~3.5]{go} the eigenspace ${\sf E}_iV$ has dimension $m_i = \binom{N}{i}$.

\begin{lemma} \label{lem:Ebil} For $0 \leq i \leq N$ and $x,y \in X$,
\begin{align*}
\langle {\sf E}_i x, y\rangle = \langle x, {\sf E}_iy\rangle = \langle {\sf E}_i y, x\rangle = \langle y, {\sf E}_ix \rangle.
\end{align*}
\end{lemma}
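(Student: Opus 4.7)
The plan is to exploit the fact that ${\sf E}_i$ is a polynomial in ${\sf A}$ with real coefficients, together with the self-adjointness of ${\sf A}$ (Lemma \ref{lem:Abil}) and the fact that ${\sf A}$ has real matrix entries in the orthonormal basis $X$.

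First I would observe that by the general formula \eqref{eq:Eprod} applied to ${\sf A}$ and its primitive idempotents,
\begin{align*}
{\sf E}_i = \prod_{\substack{0\le j\le N\\ j\neq i}} \frac{{\sf A}-\theta_j I}{\theta_i-\theta_j},
\end{align*}
where $\theta_j=N-2j$ by \eqref{eq:eigvalHN2}. In particular every $\theta_j$ is real, so expanding the product shows that ${\sf E}_i$ is a polynomial in ${\sf A}$ whose coefficients lie in $\mathbb R$. Write ${\sf E}_i=\sum_{k=0}^N\alpha_k{\sf A}^k$ with $\alpha_k\in\mathbb R$.

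Next I would show $\langle{\sf E}_ix,y\rangle=\langle x,{\sf E}_iy\rangle$. By iterating Lemma \ref{lem:Abil} (which gives $\langle{\sf A}u,v\rangle=\langle u,{\sf A}v\rangle$ first for $u,v\in X$ and therefore, by bilinearity, for all $u,v\in V$) we have $\langle{\sf A}^k x,y\rangle=\langle x,{\sf A}^k y\rangle$ for every $k\in\mathbb N$. Using Definition \ref{def:Herm}(i),(ii) and the reality of the $\alpha_k$,
\begin{align*}
\langle{\sf E}_ix,y\rangle=\sum_{k=0}^N\alpha_k\langle{\sf A}^kx,y\rangle=\sum_{k=0}^N\alpha_k\langle x,{\sf A}^ky\rangle=\langle x,{\sf E}_iy\rangle.
\end{align*}
Swapping the roles of $x$ and $y$ in this equality yields $\langle y,{\sf E}_ix\rangle=\langle{\sf E}_iy,x\rangle$.

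It remains to show $\langle x,{\sf E}_iy\rangle=\langle{\sf E}_iy,x\rangle$, i.e.\ that this scalar is real. Since the matrix of ${\sf A}$ in the basis $X$ has entries in $\{0,1\}$ (by \eqref{eq:AdjMap}), every power ${\sf A}^k$ has matrix entries in $\mathbb R$ relative to $X$, and therefore so does ${\sf E}_i=\sum_k\alpha_k{\sf A}^k$. Since the basis $X$ is orthonormal (Definition \ref{def:HermHN2}), $\langle{\sf E}_iy,x\rangle$ is precisely the $(x,y)$-matrix entry of ${\sf E}_i$ in the basis $X$, which is real. Hence $\langle x,{\sf E}_iy\rangle=\overline{\langle{\sf E}_iy,x\rangle}=\langle{\sf E}_iy,x\rangle$ by Definition \ref{def:Herm}(iii), and all four quantities coincide. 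There is no real obstacle here; the only thing to be careful about is making sure that each of the three equalities is justified by the appropriate ingredient (polynomiality of ${\sf E}_i$ in ${\sf A}$, self-adjointness of ${\sf A}$, and reality of the matrix entries of ${\sf A}$ in $X$).
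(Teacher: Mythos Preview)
Your proof is correct and follows essentially the same approach as the paper: the paper's one-line proof cites \eqref{eq:Eprod}, Lemma~\ref{lem:Abil}, and the reality of the eigenvalues \eqref{eq:eigvalHN2}, and you have simply unpacked these ingredients carefully, additionally making explicit the reality of the matrix entries of ${\sf A}$ (implicit in Lemma~\ref{lem:Abil}) to handle the conjugate-symmetry step.
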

\begin{proof} By \eqref{eq:Eprod} and Lemma  \ref{lem:Abil}, and since the eigenvalues \eqref{eq:eigvalHN2}
 are real.
\end{proof}

\noindent The ordering $\lbrace {\sf E}_i \rbrace_{i=0}^N$ is $Q$-polynomial in the sense of \cite[Section~12]{go}. For this ordering
the corresponding dual eigenvalue sequence $\lbrace \theta^*_i \rbrace_{i=0}^N$ is defined in \cite[Definition~3.6]{go}. By \cite[Lemma 3.7]{go} we have
\begin{align}
\theta^*_i = N-2i \qquad \quad (0 \leq i \leq N). \label{eq:deigvalHN2}
\end{align}
By \cite[p.~194]{bcn}, the above $Q$-polynomial structure is formally  self-dual in the sense of \cite[Section~2.3]{bcn}.
%%%%%\noindent By \cite[Theorem~2.86]{bbit}, we have $p^h_{i,j} = q^h_{i,j}$ for $0 \leq h,i,j\leq N$. 

\begin{definition} \rm \label{def:V3} We define the vector space $V^{\otimes 3} = V \otimes V \otimes V$ and the set
\begin{align*}
X^{\otimes 3} = \lbrace x \otimes y \otimes z\vert x,y,z \in X \rbrace.
\end{align*}
Observe that $X^{\otimes 3}$ is a basis for $V^{\otimes 3}$.
\end{definition}

\begin{lemma} \label{lem:Herm2} The following hold.
\begin{enumerate}
\item[\rm (i)]
There exists a unique Hermitian form $\langle\,,\,\rangle$ on $V^{\otimes 3}$
with respect to which the basis $X^{\otimes 3}$  is orthonormal.
\item[\rm (ii)]  For $u,v,w,u',v',w' \in V$ we have
\begin{align*} 
\langle u \otimes v \otimes w, u' \otimes v' \otimes w' \rangle = \langle u,u'\rangle \langle v, v' \rangle \langle w, w' \rangle.
\end{align*}
\end{enumerate}
\end{lemma}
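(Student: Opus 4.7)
The plan is to treat both parts as routine linear algebra consequences of the fact that $X^{\otimes 3}$ is a basis for $V^{\otimes 3}$ and $X$ is an orthonormal basis for $V$ under the Hermitian form from Definition \ref{def:HermHN2}.

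For part (i), I would invoke general linear algebra: given any basis of a complex vector space, there is a unique Hermitian form making that basis orthonormal. Concretely, for $f,g \in V^{\otimes 3}$, expand $f = \sum_{\beta \in X^{\otimes 3}} f_\beta \beta$ and $g = \sum_{\beta \in X^{\otimes 3}} g_\beta \beta$ uniquely in the basis $X^{\otimes 3}$, and define
\begin{align*}
\langle f, g \rangle = \sum_{\beta \in X^{\otimes 3}} f_\beta \overline{g_\beta}.
\end{align*}
One checks directly from this formula that the three defining axioms of a Hermitian form in Definition \ref{def:Herm} are satisfied, and that $X^{\otimes 3}$ is orthonormal. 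Uniqueness follows because any Hermitian form is determined by its values on a basis via sesquilinearity.

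For part (ii), both sides are sesquilinear in the six arguments $u,v,w,u',v',w' \in V$, with the first three entries linear and the last three conjugate-linear. Since $X$ is a basis for $V$, it suffices to verify the identity when $u,v,w,u',v',w' \in X$. In that case, both sides of the proposed equation equal $1$ if $u=u'$, $v=v'$, $w=w'$, and equal $0$ otherwise: the left side because $X^{\otimes 3}$ is orthonormal under the form from (i), and the right side because $X$ is orthonormal in $V$. The general case then follows by extending sesquilinearly in each of the six arguments.

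There is no real obstacle here; the only subtle point is keeping track of which arguments are linear versus conjugate-linear, but this is handled uniformly by expressing each vector as a linear combination of basis elements and noting that both sides of (ii) transform in exactly the same way under such expansions. Thus the entire lemma reduces to a direct verification on basis vectors.
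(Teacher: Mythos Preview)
Your proposal is correct and follows exactly the routine verification the paper has in mind; the paper's own proof simply reads ``Item (i) is clear. Item (ii) is routinely checked.'' You have spelled out those routine checks faithfully.
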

\begin{proof} Item (i) is clear. Item (ii) is routinely checked.
\end{proof}

\noindent Let $G$ denote the
automorphism group of $H(N,2)$.
By \cite[Theorem~9.2.1]{bcn} the group $G$ is isomorphic to the wreath product of the symmetric groups $S_N$ and
$S_2$.
The elements of $S_N$ permute the vertex coordinates $\lbrace 1,2,\ldots, N\rbrace$
and the elements of $S_2$ permute the set $\lbrace 1,-1\rbrace$.
By \cite[p.~207]{banIto} 
the graph $H(N,2)$ is distance-transitive in the sense of \cite[p.~189]{banIto}. 
\medskip

\noindent Next, we describe how  $V$ becomes a $G$-module.
Pick $v \in V$ and write $v = \sum_{x \in X} v_x x$ $(v_x \in \mathbb C)$. For all $g \in G$,
\begin{align*}
g(v) = \sum_{x \in X} v_x g(x).
\end{align*} 

\begin{lemma} \label{lem:gAcom} For $g \in G$ the following hold on $V$:
\begin{enumerate}
\item[\rm (i)] $g{\sf A}={\sf A}g$;
\item[\rm (ii)] $g {\sf E}_i = {\sf E}_i g$ $(0 \leq i \leq N)$.
\end{enumerate}
\end{lemma}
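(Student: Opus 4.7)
The plan is to prove (i) directly from the defining property of a graph automorphism, and then deduce (ii) as a formal consequence via the fact that each primitive idempotent ${\sf E}_i$ is a polynomial in ${\sf A}$.

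For (i), I would verify the equality $g{\sf A} = {\sf A}g$ on the basis $X$ of $V$; since both sides are $\mathbb C$-linear maps, this suffices. For $x \in X$, Definition \ref{def:adj} gives
\begin{align*}
g {\sf A} x = g\Bigl( \sum_{\xi \in \Gamma(x)} \xi \Bigr) = \sum_{\xi \in \Gamma(x)} g(\xi).
\end{align*}
Because $g$ is a graph automorphism of $H(N,2)$, it restricts to a bijection $\Gamma(x) \to \Gamma(g(x))$, so the reindexing $\eta = g(\xi)$ yields
\begin{align*}
\sum_{\xi \in \Gamma(x)} g(\xi) = \sum_{\eta \in \Gamma(g(x))} \eta = {\sf A}\,g(x) = {\sf A}g\, x.
\end{align*}

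For (ii), I would invoke the formula \eqref{eq:Eprod} from the Preliminaries, which expresses each primitive idempotent ${\sf E}_i$ as a polynomial in ${\sf A}$ with complex coefficients (using the eigenvalues $\theta_i = N-2i$ from \eqref{eq:eigvalHN2}). Since $g$ commutes with ${\sf A}$ by part (i), and since $g$ is $\mathbb C$-linear, $g$ commutes with every polynomial in ${\sf A}$; in particular $g {\sf E}_i = {\sf E}_i g$ for $0 \leq i \leq N$.

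I do not foresee a genuine obstacle here: the content is the compatibility of adjacency-preserving maps with the adjacency operator, together with the standard spectral-idempotent calculus. The only mild care needed is the observation that $g$ sends the neighborhood $\Gamma(x)$ bijectively onto $\Gamma(g(x))$, which is immediate from the definition of a graph automorphism.
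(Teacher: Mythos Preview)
Your proposal is correct and follows essentially the same approach as the paper: part (i) holds because a graph automorphism respects adjacency, and part (ii) follows since each ${\sf E}_i$ is a polynomial in ${\sf A}$. Your write-up simply spells out these two observations in more detail than the paper's terse proof.
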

\begin{proof} (i) Because $g$ respects adjacency in $H(N,2)$. \\
\noindent (ii) By (i) and since ${\sf E}_i$ is a polynomial in $\sf A$.
\end{proof}

\noindent Next, we describe how $V^{\otimes 3}$ becomes a $G$-module. For $g \in G$ and $u,v,w \in V$,
\begin{align}
g(u \otimes v \otimes w) = g(u) \otimes g(v) \otimes g(w).        \label{eq:gVVV}
\end{align}

\begin{definition}\rm \label{def:Fix} Define the subspace
\begin{align*}
{\rm Fix}(G) = \lbrace v \in V^{\otimes 3} \vert g(v)=v \;\forall g \in G\rbrace.
\end{align*}
\end{definition}

\noindent Our next general goal is to obtain a basis for ${\rm Fix}(G)$. To reach this goal, we consider the action of $G$ on the 
set $X^{\otimes 3}$. We will describe the partition of $X^{\otimes 3}$ into $G$-orbits.
\medskip

\noindent Recall the set of profiles $\mathcal P_N$ from Definition \ref{def:profiles}.

\begin{definition} \label{def:xyzprofile} \rm For $x \otimes y \otimes z \in X^{\otimes 3}$ we define its profile as follows.
Write
\begin{align*}
x = (x_1, x_2,\ldots, x_N), \qquad \quad
y=(y_1,y_2,\ldots, y_N), \qquad \quad
z = (z_1, z_2,\ldots, z_N).
\end{align*}

\noindent Define
\begin{align*}
r &= \bigl \vert \lbrace i \vert 1 \leq i \leq N, \; x_i = y_i = z_i \rbrace \bigr \vert, \\
s &= \bigl \vert \lbrace i \vert 1 \leq i \leq N, \; x_i \not= y_i = z_i \rbrace \bigr \vert, \\
t &= \bigl \vert \lbrace i \vert 1 \leq i \leq N, \; y_i \not= z_i = x_i \rbrace \bigr \vert, \\
u &= \bigl \vert \lbrace i \vert 1 \leq i \leq N, \; z_i \not= x_i = y_i \rbrace \bigr \vert.
\end{align*}
Note that $(r,s,t,u) \in \mathcal P_N$. We call $(r,s,t,u)$ the {\it profile of $x \otimes y \otimes z$}.
\end{definition}

\begin{lemma} \label{lem:profileSize} For a profile $(r,s,t,u) \in \mathcal P_N$ the number of elements in $X^{\otimes 3}$ with this profile is equal to
\begin{align} \label{eq:pcount}
\frac{N! 2^N}{r!s!t!u!}.
\end{align}
\end{lemma}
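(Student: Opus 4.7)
The plan is to prove this by a direct combinatorial counting argument, organized position by position. The key observation is that each coordinate $i \in \{1,2,\ldots,N\}$ contributes independently to the profile, so the count factors as (a choice of which coordinates fall into which category) times (a choice of actual $\pm 1$ values consistent with that categorization).

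First I would examine, for a single coordinate $i$, the triple $(x_i, y_i, z_i) \in \{1,-1\}^3$. Since there are only two possible values, it is impossible for $x_i, y_i, z_i$ to be pairwise distinct; so at least two of them agree. This gives a dichotomy: either all three agree (contributing to $r$), or exactly two agree and one differs, in which case exactly one of the conditions ``$x_i \neq y_i = z_i$'', ``$y_i \neq z_i = x_i$'', ``$z_i \neq x_i = y_i$'' holds (contributing to $s$, $t$, or $u$ respectively). Thus the four conditions defining $(r,s,t,u)$ in Definition \ref{def:xyzprofile} partition the index set $\{1,\ldots,N\}$ into four disjoint labeled subsets of sizes $r,s,t,u$, confirming that $(r,s,t,u) \in \mathcal P_N$.

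Next I would count. Given a target profile $(r,s,t,u) \in \mathcal P_N$, first choose which coordinates belong to each of the four categories; the number of such ordered partitions of $\{1,\ldots,N\}$ into blocks of sizes $r,s,t,u$ is the multinomial coefficient
\begin{align*}
\binom{N}{r,s,t,u} = \frac{N!}{r!\, s!\, t!\, u!}.
\end{align*}
Then, for each individual coordinate $i$, once the category is fixed, there are exactly two ways to choose the actual triple $(x_i, y_i, z_i)$: in the ``all equal'' case the common value is $\pm 1$, and in each of the three ``two equal, one different'' cases the repeated value is $\pm 1$ while the odd value is forced. Multiplying gives a factor $2^N$, yielding the claimed count $\frac{N!\,2^N}{r!\,s!\,t!\,u!}$.

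There is no real obstacle here; the only subtlety is the initial observation that because $|\{1,-1\}| = 2$ the four categories in Definition \ref{def:xyzprofile} are mutually exclusive and exhaustive, which makes the positionwise factorization valid. The argument would occupy just a few lines.
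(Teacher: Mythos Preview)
Your proposal is correct and is precisely the combinatorial counting the paper has in mind; the paper's own proof consists of the single phrase ``By combinatorial counting,'' and your argument supplies exactly the details one would expect behind that phrase.
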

\begin{proof} By combinatorial counting.
\end{proof}

\begin{lemma} \label{lem:orbitprofile} A pair of elements in $X^{\otimes 3}$
are in the same $G$-orbit if and only if they have the same profile.
\end{lemma}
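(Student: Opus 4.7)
The plan is to prove the two implications separately, with the forward direction being routine and the backward direction requiring a two-step normalization that exploits the wreath-product structure $G \cong S_N \wr S_2$.

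First I would verify invariance of the profile under $G$. The group $G$ is generated by coordinate permutations $\pi \in S_N$ together with coordinatewise sign flips $\epsilon = (\epsilon_1, \ldots, \epsilon_N) \in \lbrace 1, -1\rbrace^N$, and each $g \in G$ acts on $V^{\otimes 3}$ diagonally by \eqref{eq:gVVV}. A coordinate permutation simply reindexes $\lbrace 1, \ldots, N\rbrace$, so it preserves the four counts in Definition \ref{def:xyzprofile}. A sign flip at coordinate $i$ sends the triple $(x_i, y_i, z_i)$ to $(-x_i, -y_i, -z_i)$, which preserves all equality relations among the three entries; hence the type of coordinate $i$ (namely which, if any, of $x_i, y_i, z_i$ is the odd one out) is unchanged. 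It follows that profiles are constant on $G$-orbits.

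For the backward direction, I would perform a two-step normalization. Given $x \otimes y \otimes z \in X^{\otimes 3}$, first apply the sign flip $\epsilon$ with $\epsilon_i = x_i$ for each $i$; under the diagonal action this sends $x$ to the all-ones vector $\mathbf{1} = (1, \ldots, 1)$. With $x = \mathbf{1}$, the four coordinate types of Definition \ref{def:xyzprofile} correspond bijectively to the four possible pairs $(y_i, z_i) \in \lbrace 1, -1\rbrace^2$, namely $(1,1), (-1,-1), (1,-1), (-1,1)$ for types $r, s, t, u$ respectively. Hence once $x$ has been normalized, the tensor $x \otimes y \otimes z$ is determined by the ordered partition of $\lbrace 1, \ldots, N\rbrace$ into four blocks of sizes $r, s, t, u$ recording the coordinate types.

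Given a second element $x' \otimes y' \otimes z'$ with the same profile $(r, s, t, u)$, I would apply the analogous sign-flip normalization and then exhibit a coordinate permutation $\pi \in S_N$ that maps the type-$r$ block for the first element to that of the second, and likewise for the types $s, t, u$; such a $\pi$ exists precisely because the four block sizes agree. Composing these operations produces an element of $G$ sending $x \otimes y \otimes z$ to $x' \otimes y' \otimes z'$, completing the proof. The main obstacle is bookkeeping: one must keep track of the bijection between the four coordinate types and the four residue classes of $(y_i, z_i)$ after normalization, but no deeper difficulty arises.
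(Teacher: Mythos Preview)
Your proposal is correct and is precisely the routine verification the paper has in mind; the paper's own proof reads in its entirety ``This is routinely checked.'' One small bookkeeping slip: after normalizing $x = \mathbf{1}$, type $t$ (where $y_i \neq z_i = x_i$) gives $(y_i,z_i) = (-1,1)$ and type $u$ (where $z_i \neq x_i = y_i$) gives $(y_i,z_i) = (1,-1)$, so your labels for $t$ and $u$ are swapped --- but this does not affect the argument.
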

\begin{proof} This is routinely checked.
\end{proof}

\noindent As an aside, we interpret the profile concept using the distance function $\partial$.

\begin{lemma} \label{lem:profmean} Let $x \otimes y \otimes z \in X^{\otimes 3}$ with profile
$(r,s,t,u)$.  Then
\begin{align*}
\partial(x,y)= s+t, \qquad \quad \partial(y,z)=t+u, \qquad \quad \partial(z,x) = u+s.
\end{align*}
Moreover
\begin{align*}
&r = \frac{ 2N - \partial(x,y) - \partial(y,z)-\partial(z,x)}{2}, \qquad \quad s = \frac{\partial(z,x)+\partial(x,y)-\partial(y,z)}{2}, \\
&t = \frac{\partial(x,y)+\partial(y,z)-\partial(z,x)}{2}, \qquad \quad u = \frac{\partial(y,z)+\partial(z,x)-\partial(x,y)}{2}.
\end{align*}
\end{lemma}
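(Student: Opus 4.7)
The plan is to work coordinate by coordinate and then solve a small linear system. At each coordinate $i$, the triple $(x_i, y_i, z_i) \in \{1,-1\}^3$ falls into exactly one of four categories: (1) $x_i=y_i=z_i$; (2) $x_i \neq y_i = z_i$; (3) $y_i \neq z_i = x_i$; (4) $z_i \neq x_i = y_i$. This is exhaustive and mutually exclusive because $\{1,-1\}$ has only two elements, so among any three values at least two agree, and the trichotomy of ``all equal / one differs from the other two'' covers everything. By Definition \ref{def:xyzprofile}, the numbers of coordinates in categories 1--4 are exactly $r,s,t,u$, which re-confirms $r+s+t+u=N$.

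Next I would compute $\partial(x,y)$ by counting the contribution from each category. Recalling that $\partial(x,y)$ equals the number of coordinates at which $x$ and $y$ differ, we check: category 1 contributes $0$; category 2 (where $x_i \neq y_i$) contributes $1$; category 3 (where $y_i \neq x_i$) contributes $1$; and category 4 (where $x_i = y_i$) contributes $0$. Summing gives $\partial(x,y) = s+t$. The identities $\partial(y,z) = t+u$ and $\partial(z,x) = u+s$ are proved in exactly the same way by reading off which categories put the relevant pair in disagreement.

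Finally, to obtain the inverse formulas, I would solve the linear system
\begin{align*}
s+t &= \partial(x,y), & t+u &= \partial(y,z), & u+s &= \partial(z,x), & r+s+t+u &= N.
\end{align*}
Adding the first three yields $s+t+u = \tfrac{1}{2}\bigl(\partial(x,y)+\partial(y,z)+\partial(z,x)\bigr)$, whence $r = N - (s+t+u)$ gives the claimed expression. Subtracting pairs of the first three equations (e.g.\ the first minus the second plus the third yields $2s$, etc.) produces the formulas for $s$, $t$, $u$. No step here is a real obstacle; the only thing to be careful about is the case analysis at a single coordinate, which relies essentially on $H(N,2)$ being the binary (rather than larger-alphabet) Hamming graph so that the four categories above exhaust all possibilities.
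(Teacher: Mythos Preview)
Your proof is correct and is precisely the unpacking of the paper's one-line proof, which reads in full: ``By Definition \ref{def:xyzprofile}.'' You have simply made explicit the coordinate-by-coordinate case analysis and the linear algebra that the paper leaves to the reader.
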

\begin{proof} By Definition \ref{def:xyzprofile}.
\end{proof}

\begin{definition} \label{def:Php} \rm Let the set $\mathcal P''_N$ consist of the $3$-tuples of integers $(h,i,j)$ such that
\begin{align*}
&0 \leq h,i,j\leq N, \qquad  h+i+j \;\hbox{\rm is even}, \qquad  h+i+j \leq 2N, \\
&h \leq i+j, \quad \qquad \qquad i \leq j+h, \quad \qquad \qquad j \leq h+i.
\end{align*}
\end{definition}

\begin{lemma} \label{lem:PPbij} There exists a bijection $\mathcal P_N \to \mathcal P''_N$ that sends
\begin{align*}
(r,s,t,u) \mapsto (t+u,u+s,s+t).
\end{align*}
The inverse bijection  $\mathcal P''_N \to \mathcal P_N$ sends
\begin{align*}
(h,i,j) \mapsto \biggl( \frac{2N-h-i-j}{2}, \frac{i+j-h}{2}, \frac{j+h-i}{2}, \frac{h+i-j}{2}\biggr).
\end{align*}
\end{lemma}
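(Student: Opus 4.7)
The plan is a direct verification that the two maps are mutually inverse bijections between $\mathcal P_N$ and $\mathcal P''_N$. First I would check that the forward map $\phi:(r,s,t,u)\mapsto(t+u,u+s,s+t)$ lands in $\mathcal P''_N$. The integers $h=t+u$, $i=u+s$, $j=s+t$ are nonnegative, and each is at most $r+s+t+u=N$. Their sum $h+i+j=2(s+t+u)=2(N-r)$ is even and at most $2N$. The three triangle-type inequalities in Definition \ref{def:Php} reduce to the nonnegativity of $s,t,u$: indeed $i+j-h=2s$, $j+h-i=2t$, and $h+i-j=2u$.

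Next I would check that the stated formula
\[
\psi(h,i,j)=\Bigl(\tfrac{2N-h-i-j}{2},\,\tfrac{i+j-h}{2},\,\tfrac{j+h-i}{2},\,\tfrac{h+i-j}{2}\Bigr)
\]
lands in $\mathcal P_N$. The parity hypothesis that $h+i+j$ is even forces each numerator to be even, so the four coordinates are integers. The three triangle-type inequalities give the nonnegativity of $s,t,u$, while $h+i+j\le 2N$ gives the nonnegativity of $r$. Summing the four coordinates, the coefficients of $h$, $i$, $j$ in the numerator are each $-1-1+1+1=0$ (in the appropriate order), leaving $2N/2=N$, so $r+s+t+u=N$.

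Finally I would verify the two compositions. Starting from $(r,s,t,u)$ and applying $\phi$ produces $(h,i,j)=(t+u,u+s,s+t)$; then $\psi$ yields $(i+j-h)/2=s$, $(j+h-i)/2=t$, $(h+i-j)/2=u$, and $(2N-h-i-j)/2=N-(s+t+u)=r$, recovering the original $4$-tuple. Starting from $(h,i,j)$ and applying $\psi$, the three sums $t+u$, $u+s$, $s+t$ are computed by direct addition of the appropriate pairs of the four formulas, and in each case two cross-terms cancel and one survives doubled, giving back $h$, $i$, $j$ respectively.

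There is no substantive obstacle here; the lemma is a bookkeeping exercise, and the only point requiring attention is confirming that the five constraints defining $\mathcal P''_N$ (parity, the upper bound $h+i+j\le 2N$, and the three triangle-type inequalities) correspond exactly to the integrality and nonnegativity conditions needed for $\psi(h,i,j)$ to lie in $\mathcal P_N$.
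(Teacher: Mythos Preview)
Your proposal is correct and is exactly the direct verification that the paper's one-line proof (``This is readily checked'') invites: you confirm $\phi$ maps into $\mathcal P''_N$, $\psi$ maps into $\mathcal P_N$, and the two compositions are identities. There is nothing to add.
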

\begin{proof} This is readily checked.
\end{proof}

\begin{lemma} \label{lem:3wayBij} For  $0 \leq h,i,j\leq N$ the following {\rm (i)--(iii)}
are equivalent:
\begin{enumerate}
\item[\rm (i)] there exists $x \otimes y \otimes z \in X^{\otimes 3}$  such that
\begin{align*}
h=\partial(y,z), \qquad \qquad i=\partial(z,x), \qquad \qquad
j = \partial(x,y);
\end{align*}
\item[\rm (ii)]  there exists $(r,s,t,u) \in \mathcal P_N$ such that
\begin{align*}
 h=t+u, \qquad \qquad i = u+s, \qquad \qquad j= s+t;
 \end{align*}
\item[\rm (iii)]  $(h,i,j) \in \mathcal P''_N$.
\end{enumerate}
\noindent Assume that {\rm (i)--(iii)} hold. Then $(r,s,t,u)$ is the profile of $x\otimes y \otimes z$.
\end{lemma}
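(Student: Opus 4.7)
The plan is to traverse the implications $(\mathrm{i})\Rightarrow(\mathrm{ii})\Rightarrow(\mathrm{iii})\Rightarrow(\mathrm{i})$, using the three preparatory results that already sit just above the statement, namely Lemma \ref{lem:profileSize}, Lemma \ref{lem:profmean}, and Lemma \ref{lem:PPbij}. The two bridges between profiles on one side and distance-triples on the other are already available: profiles translate into distances via Lemma \ref{lem:profmean}, and profiles translate into elements of $\mathcal P''_N$ via Lemma \ref{lem:PPbij}.

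First I would prove $(\mathrm{i})\Rightarrow(\mathrm{ii})$. Let $(r,s,t,u)\in \mathcal P_N$ be the profile of $x\otimes y\otimes z$ as in Definition \ref{def:xyzprofile}. Then Lemma \ref{lem:profmean} gives $\partial(y,z)=t+u$, $\partial(z,x)=u+s$, $\partial(x,y)=s+t$, so the hypothesis of (i) forces $h=t+u$, $i=u+s$, $j=s+t$, which is exactly (ii) (and, as a free byproduct, identifies $(r,s,t,u)$ as the profile of $x\otimes y\otimes z$, giving the final assertion of the lemma for this implication). For $(\mathrm{ii})\Rightarrow(\mathrm{iii})$, given $(r,s,t,u)\in \mathcal P_N$ with the stated equalities, the bijection in Lemma \ref{lem:PPbij} sends $(r,s,t,u)$ to $(t+u,u+s,s+t)=(h,i,j)$, which lies in $\mathcal P''_N$ by construction of that bijection.

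For $(\mathrm{iii})\Rightarrow(\mathrm{i})$, start from $(h,i,j)\in \mathcal P''_N$ and define
\begin{align*}
r=\frac{2N-h-i-j}{2},\qquad s=\frac{i+j-h}{2},\qquad t=\frac{j+h-i}{2},\qquad u=\frac{h+i-j}{2},
\end{align*}
as prescribed by the inverse bijection in Lemma \ref{lem:PPbij}. Then $(r,s,t,u)\in \mathcal P_N$, and in particular all four entries are nonnegative. By Lemma \ref{lem:profileSize} the number of elements of $X^{\otimes 3}$ having this profile is $N!\,2^N/(r!\,s!\,t!\,u!)>0$, so some $x\otimes y\otimes z\in X^{\otimes 3}$ has profile $(r,s,t,u)$. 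Applying Lemma \ref{lem:profmean} to this triple yields $\partial(y,z)=t+u=h$, $\partial(z,x)=u+s=i$, and $\partial(x,y)=s+t=j$, proving (i); the same computation also records that the profile of $x\otimes y\otimes z$ is the $(r,s,t,u)$ produced from $(h,i,j)$, giving the last sentence of the lemma.

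There is no substantial obstacle here: the three lemmas already listed do all the arithmetic. The only thing to be a little careful about is the coherence of the $(r,s,t,u)$ produced in the various implications, which is automatic because the map $(r,s,t,u)\mapsto(t+u,u+s,s+t)$ of Lemma \ref{lem:PPbij} is a bijection $\mathcal P_N\to \mathcal P''_N$; this guarantees that the same tuple $(r,s,t,u)$ surfaces in (i), (ii), and (iii), so the final assertion about the profile is immediate.
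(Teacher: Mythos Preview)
Your proof is correct and uses essentially the same ingredients as the paper's proof: Lemmas \ref{lem:profileSize}, \ref{lem:profmean}, and \ref{lem:PPbij}. The only cosmetic difference is that the paper proves the two equivalences $(\mathrm{i})\Leftrightarrow(\mathrm{ii})$ and $(\mathrm{ii})\Leftrightarrow(\mathrm{iii})$ directly, whereas you traverse the cycle $(\mathrm{i})\Rightarrow(\mathrm{ii})\Rightarrow(\mathrm{iii})\Rightarrow(\mathrm{i})$; the underlying arguments are identical.
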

\begin{proof} ${\rm (i)} \Leftrightarrow {\rm (ii)}$ By Lemmas  \ref{lem:profileSize} \ref{lem:profmean}. \\
${\rm (ii)} \Leftrightarrow {\rm (iii)}$ By Lemma \ref{lem:PPbij}. \\
The last assertion follows from Lemmas \ref{lem:profmean}, \ref{lem:PPbij}.
\end{proof}

\begin{remark}\label{rem:IntKrein}
\rm For $H(N,2)$ and $0 \leq h,i,j\leq N$ 
there are some parameters called the intersection number $p^h_{i,j}$ \cite[p.~401]{go} and  Krein parameter $q^h_{i,j}$ \cite[p.~402]{go}.
We use these parameters in a minimal way, but for the sake of completeness let us discuss them briefly. We have
$p^h_{i,j} = p^h_{j,i}$  \cite[p.~401]{go} and  $q^h_{i,j}=q^h_{j,i}$ \cite[p.~402]{go}.
We have $p^h_{i,j} =q^h_{i,j}$  \cite[Lemma~22]{nicholson}. This common value is nonzero if and only if $(h,i,j) \in \mathcal P''_N$; see
 \cite[Corollary~28]{nicholson}. By \cite[Proposition~12]{nicholson}, for  $(h,i,j) \in \mathcal P''_N$ we have
 \begin{align*}
 k_h p^h_{i,j} = k_i p^i_{j,h} = k_j p^j_{h,i} =
 m_h q^h_{i,j} = m_i q^i_{j,h} = m_j q^j_{h,i} =\frac{N!}{r!s!t!u!},
 \end{align*}
 where 
 \begin{align*}
 h=t+u, \qquad \qquad i = u+s, \qquad \qquad j= s+t.
 \end{align*}
\end{remark}

\begin{definition} \label{def:Brstu} \rm For a profile $(r,s,t,u) \in \mathcal P_N$ we define a vector
\begin{align*}
B(r,s,t,u)  =  \sum_{x \otimes y \otimes z} x \otimes y \otimes z,
\end{align*}
where the sum is over the elements $x \otimes y \otimes z$ in $X^{\otimes 3}$ with profile $(r,s,t,u)$.
\end{definition}

\begin{example} \label{ex:BN000} \rm We have
\begin{align*}
B(N,0,0,0) = \sum_{x \in X}   x \otimes x \otimes x.
\end{align*}
\end{example}

\noindent We have a remark about  notation.
\begin{remark} \label{def:Phij} \rm Let  $0 \leq h,i,j\leq N$. In \cite[Definition~9.9]{S3}  we defined a vector
\begin{align*}
P_{h,i,j}  =  \sum_{x \otimes y \otimes z} x \otimes y \otimes z,
\end{align*}
where the sum is over the elements $x \otimes y \otimes z$ in $X^{\otimes 3}$ such that
\begin{align*}
h= \partial(y,z), \qquad \qquad i = \partial(z,x), \qquad \qquad j = \partial(x,y).
\end{align*}
By Lemma \ref{lem:3wayBij},
$P_{h,i,j} \not=0$ if and only if $(h,i,j) \in \mathcal P''_N$. In this case
\begin{align*}
P_{h,i,j}= B(r,s,t,u),
\end{align*}
where 
\begin{align*}
h=t+u, \qquad \qquad i = u+s, \qquad \qquad j = s+t.
\end{align*}
\end{remark}

\begin{lemma} \label{lem:orthogB} The vectors
\begin{align*} 
B(r,s,t,u) \qquad \qquad (r,s,t,u) \in \mathcal P_N
\end{align*}
are mutually orthogonal and
\begin{align*}
\Vert B(r,s,t,u) \Vert^2 = \frac{N! 2^N}{r!s!t!u!} \qquad \qquad (r,s,t,u) \in \mathcal P_N.
\end{align*}
\end{lemma}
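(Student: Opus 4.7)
The plan is straightforward: exploit the fact that $X^{\otimes 3}$ is an orthonormal basis for $V^{\otimes 3}$ under the Hermitian form of Lemma \ref{lem:Herm2}, combined with the fact that distinct profiles partition $X^{\otimes 3}$.

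First I would observe that by Definition \ref{def:xyzprofile}, every $x \otimes y \otimes z \in X^{\otimes 3}$ has a unique profile $(r,s,t,u) \in \mathcal P_N$. Consequently, as $(r,s,t,u)$ ranges over $\mathcal P_N$, the collections of basis vectors summed in $B(r,s,t,u)$ in Definition \ref{def:Brstu} are pairwise disjoint. Thus for distinct profiles $(r,s,t,u)$ and $(r',s',t',u')$ in $\mathcal P_N$, the supports of $B(r,s,t,u)$ and $B(r',s',t',u')$ are disjoint subsets of the orthonormal basis $X^{\otimes 3}$, which immediately gives
\begin{align*}
\langle B(r,s,t,u), B(r',s',t',u') \rangle = 0.
\end{align*}

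For the square norm, I would use that $B(r,s,t,u)$ is a sum of orthonormal basis vectors, so
\begin{align*}
\Vert B(r,s,t,u) \Vert^2 = \bigl \vert \lbrace x \otimes y \otimes z \in X^{\otimes 3} \,\vert\, \text{profile is } (r,s,t,u)\rbrace \bigr \vert,
\end{align*}
and the right-hand side was computed in Lemma \ref{lem:profileSize} to be $N!2^N/(r!s!t!u!)$.

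There is no real obstacle; the entire argument reduces to unpacking Definition \ref{def:Brstu} and invoking Lemmas \ref{lem:Herm2} and \ref{lem:profileSize}. The proof should fit in a few lines.
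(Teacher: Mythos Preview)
Your proposal is correct and matches the paper's own proof, which simply notes that $X^{\otimes 3}$ is orthonormal and invokes Lemma~\ref{lem:profileSize} and Definition~\ref{def:Brstu}. Your write-up is a slightly more explicit version of the same argument.
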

\begin{proof} 
The vectors $X^{\otimes 3}$ are orthonormal 
 with respect to $\langle \,,\,\rangle$. The result follows in view of Lemma  \ref{lem:profileSize} and Definition  \ref{def:Brstu}.
\end{proof}

\begin{proposition} \label{lem:FixBasis} The  vectors
\begin{align} 
B(r,s,t,u) \qquad \qquad (r,s,t,u) \in \mathcal P_N \label{eq:FixBasis}
\end{align}
form a  basis for ${\rm Fix}(G)$.
\end{proposition}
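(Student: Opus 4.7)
The plan is to exploit the fact that the vectors $B(r,s,t,u)$ are precisely the orbit sums for the action of $G$ on the basis $X^{\otimes 3}$, together with the classification of orbits given in Lemma \ref{lem:orbitprofile}. This reduces the statement to the well-known observation that the orbit sums form a basis for the fixed-point subspace of a permutation module.

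First I would verify that each $B(r,s,t,u)$ lies in ${\rm Fix}(G)$. By \eqref{eq:gVVV}, every $g \in G$ permutes the basis $X^{\otimes 3}$; by Lemma \ref{lem:orbitprofile} this permutation preserves the set of elements of a given profile. So $g$ permutes the summands in Definition \ref{def:Brstu}, whence $g\bigl(B(r,s,t,u)\bigr)= B(r,s,t,u)$, giving $B(r,s,t,u) \in {\rm Fix}(G)$.

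Next I would establish linear independence. The vectors \eqref{eq:FixBasis} are mutually orthogonal and nonzero by Lemma \ref{lem:orthogB} (the square norms $N!2^N/(r!s!t!u!)$ are strictly positive), hence linearly independent. Alternatively, distinct $B(r,s,t,u)$ have disjoint support relative to the orthonormal basis $X^{\otimes 3}$ of Lemma \ref{lem:Herm2}(i), which is an even cleaner linear-independence argument.

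Finally I would show that the vectors \eqref{eq:FixBasis} span ${\rm Fix}(G)$. Given $v \in {\rm Fix}(G)$, expand
\[
v = \sum_{x\otimes y \otimes z \in X^{\otimes 3}} c_{x\otimes y\otimes z}\,(x \otimes y \otimes z), \qquad c_{x\otimes y \otimes z} \in \mathbb C.
\]
For every $g \in G$ we have $g(v)=v$, and comparing coefficients in the basis $X^{\otimes 3}$ yields $c_{g(x\otimes y\otimes z)} = c_{x\otimes y\otimes z}$. Thus $c$ is constant along each $G$-orbit in $X^{\otimes 3}$. By Lemma \ref{lem:orbitprofile} these orbits are precisely the fibers of the profile map, so for each $(r,s,t,u) \in \mathcal P_N$ there exists a scalar $\alpha_{r,s,t,u} \in \mathbb C$ equal to $c_{x\otimes y\otimes z}$ on the orbit of profile $(r,s,t,u)$. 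Then $v = \sum_{(r,s,t,u) \in \mathcal P_N} \alpha_{r,s,t,u}\, B(r,s,t,u)$, as desired. There is no real obstacle in this argument; the only thing to be careful about is to invoke Lemma \ref{lem:orbitprofile} for the orbit/profile bijection and Lemma \ref{lem:profileSize} (via Lemma \ref{lem:orthogB}) for nonvanishing of each $B(r,s,t,u)$.
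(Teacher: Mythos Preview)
Your proof is correct and follows essentially the same approach as the paper: linear independence via Lemma~\ref{lem:orthogB}, and spanning via the orbit description in Lemma~\ref{lem:orbitprofile} together with Definition~\ref{def:Brstu}. You give more detail (explicitly verifying $B(r,s,t,u)\in{\rm Fix}(G)$ and writing out the coefficient argument for spanning), but the underlying argument is the same orbit-sum basis argument for a permutation module.
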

\begin{proof} The vectors \eqref{eq:FixBasis} are linearly independent by Lemma \ref{lem:orthogB}.
These vectors span ${\rm Fix}(G)$ by Lemma \ref{lem:orbitprofile} and  Definition \ref{def:Brstu}.
\end{proof}

\begin{corollary}\label{cor:FixGdim} We have
\begin{align*}
{\rm dim}\, {\rm Fix}(G) = \binom{N+3}{3}.
\end{align*}
\end{corollary}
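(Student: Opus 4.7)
The plan is very short: this is an immediate dimension count built from two facts that have already been established in the excerpt. By Proposition \ref{lem:FixBasis}, the vectors
\begin{align*}
B(r,s,t,u) \qquad (r,s,t,u) \in \mathcal P_N
\end{align*}
form a basis for ${\rm Fix}(G)$, so $\dim {\rm Fix}(G) = \vert \mathcal P_N \vert$. The size of $\mathcal P_N$ is $\binom{N+3}{3}$ by the formula \eqref{eq:PNsize}, which was noted right after Definition \ref{def:profiles} as a standard stars-and-bars count of nonnegative integer solutions to $r+s+t+u=N$.

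Combining these two facts immediately yields $\dim {\rm Fix}(G) = \binom{N+3}{3}$. There is no real obstacle; the work was all done in establishing Proposition \ref{lem:FixBasis} (namely: identifying $G$-orbits on $X^{\otimes 3}$ with profiles via Lemma \ref{lem:orbitprofile}, and observing that a basis for ${\rm Fix}(G)$ is given by the orbit-sums). One could also verify consistency with the announced isomorphism $P_N \to {\rm Fix}(G)$: the space $P_N$ has dimension $\binom{N+3}{3}$ by Lemma \ref{lem:PDbasic}, which matches the count for ${\rm Fix}(G)$ and is reassuring.
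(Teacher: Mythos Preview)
Your proof is correct and takes essentially the same approach as the paper: both invoke Proposition \ref{lem:FixBasis} to get a basis indexed by $\mathcal P_N$ and then cite \eqref{eq:PNsize} for the cardinality $\binom{N+3}{3}$. Your additional remark about consistency with $\dim P_N$ via Lemma \ref{lem:PDbasic} is a nice sanity check but not needed for the argument.
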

\begin{proof} By  \eqref{eq:PNsize} and Proposition \ref{lem:FixBasis}.
\end{proof}

\noindent Next, we describe the basis for ${\rm Fix}(G)$ that is dual to the one in Proposition \ref{lem:FixBasis},
with respect to the Hermitian form $\langle \,,\,\rangle$.

\begin{definition} \label{lem:Bdual} \rm For a profile $(r,s,t,u) \in \mathcal P_N$ we define a vector
\begin{align*}
\tilde B(r,s,t,u) = \frac{r!s!t!u!}{N! 2^N} B(r,s,t,u).
\end{align*}
\end{definition}

\begin{proposition} \label{lem:FixBasisd} The  vectors
\begin{align*} 
\tilde B(r,s,t,u) \qquad \qquad (r,s,t,u) \in \mathcal P_N 
\end{align*}
form a  basis for ${\rm Fix}(G)$.
\end{proposition}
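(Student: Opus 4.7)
The plan is immediate. By Definition \ref{lem:Bdual}, each vector $\tilde B(r,s,t,u)$ is obtained from $B(r,s,t,u)$ by multiplication by the nonzero scalar $r!s!t!u!/(N!2^N)$. Since rescaling basis vectors by nonzero scalars preserves the property of being a basis, the conclusion follows at once from Proposition \ref{lem:FixBasis}.

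Although the proposition statement itself only asserts that the $\tilde B(r,s,t,u)$ form a basis, the context (the preceding remark about duality with respect to $\langle\,,\,\rangle$) suggests that the expected content is really the duality relation. The step I would actually record is the orthogonality computation: for $(r,s,t,u), (r',s',t',u') \in \mathcal P_N$,
\begin{align*}
\bigl\langle B(r,s,t,u), \tilde B(r',s',t',u')\bigr\rangle = \frac{r'!s'!t'!u'!}{N!2^N}\bigl\langle B(r,s,t,u), B(r',s',t',u')\bigr\rangle = \delta_{r,r'}\delta_{s,s'}\delta_{t,t'}\delta_{u,u'},
\end{align*}
where the last equality uses Lemma \ref{lem:orthogB}: distinct profiles give orthogonal vectors, and the square norm of $B(r,s,t,u)$ is exactly the reciprocal of the scalar defining $\tilde B(r,s,t,u)$.

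There is no genuine obstacle here; the only ``work'' is bookkeeping with the factorial normalization. The proof I would write consists of a single sentence invoking Proposition \ref{lem:FixBasis} together with the observation that the scaling factor $r!s!t!u!/(N!2^N)$ is nonzero.
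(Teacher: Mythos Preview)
Your proof is correct and matches the paper's own one-line argument exactly: invoke Proposition \ref{lem:FixBasis} and Definition \ref{lem:Bdual}. The duality computation you add is correct too, but in the paper it is separated out as the subsequent Lemma \ref{lem:Fixdual} rather than part of this proposition.
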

\begin{proof} By Proposition \ref{lem:FixBasis} and Definition \ref{lem:Bdual}.
\end{proof}

\begin{lemma} \label{lem:Fixdual} The ${\rm Fix}(G)$-basis
\begin{align*}
B(r,s,t,u) \qquad \qquad (r,s,t,u) \in \mathcal P_N 
\end{align*}
and the 
${\rm Fix}(G)$-basis
\begin{align*}
\tilde B(r,s,t,u) \qquad \qquad (r,s,t,u) \in \mathcal P_N 
\end{align*}
are dual with respect to $\langle \,,\,\rangle$.
\end{lemma}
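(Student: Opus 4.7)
The plan is to verify the defining duality relation
\begin{align*}
\langle B(r,s,t,u),\tilde B(r',s',t',u')\rangle = \delta_{r,r'}\delta_{s,s'}\delta_{t,t'}\delta_{u,u'}
\end{align*}
for all $(r,s,t,u),(r',s',t',u')\in\mathcal P_N$, by a direct calculation combining the norm/orthogonality data in Lemma \ref{lem:orthogB} with the rescaling in Definition \ref{lem:Bdual}.

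First I would pull the scalar $r'!s'!t'!u'!/(N!\,2^N)$ out of $\tilde B(r',s',t',u')$ using Definition \ref{lem:Bdual}, reducing the question to evaluating $\langle B(r,s,t,u), B(r',s',t',u')\rangle$. By Lemma \ref{lem:orthogB}, this inner product vanishes whenever $(r,s,t,u)\neq (r',s',t',u')$, which immediately handles the off-diagonal case. In the diagonal case $(r,s,t,u)=(r',s',t',u')$, the same lemma gives $\|B(r,s,t,u)\|^2 = N!\,2^N/(r!s!t!u!)$, and multiplying by the scalar $r!s!t!u!/(N!\,2^N)$ produces exactly $1$.

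There is essentially no obstacle: the two factors are designed to cancel, and the orthogonality across distinct profiles is already in hand from Lemma \ref{lem:orthogB} (which in turn rests on the orthonormality of $X^{\otimes 3}$ and the disjointness of the $G$-orbits from Lemma \ref{lem:orbitprofile}). Thus the proof is a one-step verification, and the only thing worth noting is that we have already established (Propositions \ref{lem:FixBasis} and \ref{lem:FixBasisd}) that both collections are bases of ${\rm Fix}(G)$, so a duality of Gram-type inner products between them is enough to conclude.
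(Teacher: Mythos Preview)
Your proposal is correct and matches the paper's approach exactly: the paper's proof simply cites Lemma~\ref{lem:orthogB} and Definition~\ref{lem:Bdual}, which is precisely the orthogonality-plus-rescaling verification you carried out.
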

\begin{proof} By Lemma  \ref{lem:orthogB} and Definition  \ref{lem:Bdual}.
\end{proof}

\noindent Recall the vector space $P_N$ from Lemma \ref{lem:PDbasic}.

\begin{lemma} \label{lem:vee} There exists a vector space isomorphism $\ddag: P_N \to {\rm Fix}(G)$ that sends
\begin{align*}
x^r y^s z^t w^u \mapsto    \bigl( N!2^N\bigr)^{1/2}     \tilde B(r,s,t,u) \qquad \qquad (r,s,t,u) \in \mathcal P_N.
\end{align*}
\end{lemma}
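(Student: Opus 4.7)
The statement is essentially a linear-algebra observation once the right bases are in hand, so the proof plan is very short. I note first that by Lemma \ref{lem:PDbasic} the set $\{x^r y^s z^t w^u : (r,s,t,u) \in \mathcal P_N\}$ is a basis for $P_N$, and by Proposition \ref{lem:FixBasisd} the set $\{\tilde B(r,s,t,u) : (r,s,t,u) \in \mathcal P_N\}$ is a basis for ${\rm Fix}(G)$. Since both bases are indexed by the common set $\mathcal P_N$, the prescription
\[
x^r y^s z^t w^u \;\mapsto\; (N! \, 2^N)^{1/2}\, \tilde B(r,s,t,u)
\]
defines a unique $\mathbb C$-linear map $\ddag : P_N \to {\rm Fix}(G)$.

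To verify that $\ddag$ is a vector space isomorphism, I would observe that the scalar $(N!\,2^N)^{1/2}$ is nonzero, so $\ddag$ carries a basis of $P_N$ to a (rescaled) basis of ${\rm Fix}(G)$. Hence $\ddag$ is both injective and surjective, i.e., a $\mathbb C$-linear bijection. As a consistency check, both spaces have dimension $\binom{N+3}{3}$ — for $P_N$ by Lemma \ref{lem:PDbasic} and for ${\rm Fix}(G)$ by Corollary \ref{cor:FixGdim} — which confirms that a linear bijection between them can exist.

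There is essentially no obstacle here: the lemma is packaging the bookkeeping between the polynomial basis and the orbit-sum basis, both parameterized by $\mathcal P_N$, and the normalization factor $(N!\,2^N)^{1/2}$ is chosen with later developments in mind (it will match the square-norm data of Definition \ref{def:bform} and Lemma \ref{lem:orthogB}, so that $\ddag$ becomes an isometry). I would not grind through any calculation in the proof of this particular lemma; that comparison of square norms, along with checking compatibility with the $\mathfrak{sl}_4(\mathbb C)$-action, belongs to the subsequent results where $\ddag$ is shown to be an $\mathfrak{sl}_4(\mathbb C)$-module isomorphism preserving $\langle\,,\,\rangle$.
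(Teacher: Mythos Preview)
Your proposal is correct and follows essentially the same approach as the paper, which simply cites Lemma~\ref{lem:PDbasic} and Proposition~\ref{lem:FixBasisd} to produce bases of $P_N$ and ${\rm Fix}(G)$ indexed by the common set $\mathcal P_N$. Your added remarks about the dimension count and the role of the normalization constant are accurate commentary but not needed for the proof itself.
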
 
\begin{proof} By Lemma \ref{lem:PDbasic} and Proposition  \ref{lem:FixBasisd}.
\end{proof}

\begin{theorem}  \label{lem:HermComp} The Hermitian forms on $P_N$ and ${\rm Fix}(G)$ are related as follows:
\begin{align*}
\langle f,g \rangle = \langle f^\ddag, g^\ddag \rangle \qquad \qquad f,g \in P_N.
\end{align*}
\end{theorem}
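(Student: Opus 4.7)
The plan is to verify the identity on basis vectors, then extend by sesquilinearity. Specifically, I would check that the map $\ddag$ sends the $P_N$-basis $\{x^ry^sz^tw^u \mid (r,s,t,u)\in\mathcal P_N\}$ to an orthogonal system in ${\rm Fix}(G)$ with matching square norms.

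First I would compute $\|\tilde B(r,s,t,u)\|^2$. By Definition \ref{lem:Bdual} and Lemma \ref{lem:orthogB},
\begin{align*}
\|\tilde B(r,s,t,u)\|^2 = \left(\frac{r!s!t!u!}{N!2^N}\right)^2 \|B(r,s,t,u)\|^2 = \left(\frac{r!s!t!u!}{N!2^N}\right)^2 \cdot \frac{N!2^N}{r!s!t!u!} = \frac{r!s!t!u!}{N!2^N}.
\end{align*}
Since the vectors $B(r,s,t,u)$ for distinct profiles are sums over disjoint $G$-orbits in $X^{\otimes 3}$ (Lemma \ref{lem:orbitprofile}) and $X^{\otimes 3}$ is orthonormal, the $\tilde B(r,s,t,u)$ are mutually orthogonal for distinct profiles.

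Next, combining these facts with the definition of $\ddag$ in Lemma \ref{lem:vee}, for any two profiles $(r,s,t,u),(r',s',t',u')\in\mathcal P_N$ I compute
\begin{align*}
\bigl\langle (x^ry^sz^tw^u)^\ddag, (x^{r'}y^{s'}z^{t'}w^{u'})^\ddag\bigr\rangle = N!2^N \bigl\langle \tilde B(r,s,t,u),\tilde B(r',s',t',u')\bigr\rangle,
\end{align*}
which equals $r!s!t!u!$ when the profiles agree and $0$ otherwise. By Definition \ref{def:bform}, the same holds for $\langle x^ry^sz^tw^u, x^{r'}y^{s'}z^{t'}w^{u'}\rangle$. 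Hence the identity $\langle f,g\rangle = \langle f^\ddag,g^\ddag\rangle$ holds when $f,g$ range over this basis.

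Finally I would invoke sesquilinearity: given arbitrary $f,g\in P_N$, expand each in the basis \eqref{eq:PDbasis} and apply the Hermitian form properties from Definition \ref{def:Herm} (linearity in the first slot, conjugate-linearity in the second) on both sides. The equality on basis pairs propagates to all $f,g\in P_N$. There is no real obstacle here; the whole argument is essentially a verification that Lemma \ref{lem:vee} was set up with the correct normalization constant $(N!2^N)^{1/2}$ precisely so that the square norms $\|x^ry^sz^tw^u\|^2 = r!s!t!u!$ on the $P_N$ side match those on the ${\rm Fix}(G)$ side. The only thing to be careful about is the cancellation of the factor $N!2^N$ against the reciprocal appearing in $\|\tilde B(r,s,t,u)\|^2$, which is the content of the first display above.
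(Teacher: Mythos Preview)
Your proof is correct and follows essentially the same approach as the paper: verify the identity on the monomial basis by matching square norms and orthogonality, then extend by sesquilinearity. The paper's proof is a one-line citation of Definitions~\ref{def:bform}, \ref{lem:Bdual} and Lemmas~\ref{lem:Fixdual}, \ref{lem:vee}, which encodes exactly the computation you spell out (you use Lemma~\ref{lem:orthogB} directly rather than its repackaging as the duality statement in Lemma~\ref{lem:Fixdual}, but this is cosmetic).
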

\begin{proof}
By Definitions \ref{def:bform},  \ref{lem:Bdual}   and Lemmas \ref{lem:Fixdual}, \ref{lem:vee}.
\end{proof}

\noindent Our next goal is to turn ${\rm Fix}(G)$ into an $\mathfrak{sl}_4(\mathbb C)$-module.

\begin{definition}\rm \label{def:mapsAAA} (See \cite[Definition~6.2]{S3}.) 
We define $A^{(1)}, A^{(2)}, A^{(3)} \in {\rm End}(V^{\otimes 3})$ as follows. For $x\otimes y\otimes z \in X^{\otimes 3}$,
\begin{align*}
A^{(1)} (x\otimes y \otimes z) &= \sum_{\xi \in \Gamma(x)} \xi \otimes y \otimes z, \\
A^{(2)} (x\otimes y \otimes z) &= \sum_{\xi \in \Gamma(y)} x \otimes \xi \otimes z, \\
A^{(3)} (x\otimes y \otimes z) &= \sum_{\xi \in \Gamma(z)} x \otimes y \otimes \xi.
\end{align*}
\end{definition}

\noindent In the next result, we clarify Definition \ref{def:mapsAAA}.

\begin{lemma} \label{lem:Atensor} For $u,v,w \in V$ we have
\begin{align*}
&A^{(1)} (u\otimes v \otimes w) = {\sf A}u\otimes v \otimes w, \\
&A^{(2)} (u\otimes v \otimes w) = u\otimes {\sf A}v \otimes w, \\
&A^{(3)} (u\otimes v \otimes w) = u\otimes v \otimes {\sf A}w,
\end{align*}
where $\sf A$ is the adjacency map from Definition \ref{def:adj}.
\end{lemma}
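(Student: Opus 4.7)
The plan is to verify the identities first on basis vectors from $X$, then extend by linearity, observing that each equation is trilinear in its arguments $u, v, w$.

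First I would take $u = x$, $v = y$, $w = z$ with $x, y, z \in X$. For these basis vectors, Definition \ref{def:mapsAAA} directly gives
\begin{align*}
A^{(1)}(x \otimes y \otimes z) = \sum_{\xi \in \Gamma(x)} \xi \otimes y \otimes z.
\end{align*}
By Definition \ref{def:adj} we have ${\sf A}x = \sum_{\xi \in \Gamma(x)} \xi$, and by the bilinearity of the tensor product this yields ${\sf A}x \otimes y \otimes z = \sum_{\xi \in \Gamma(x)} \xi \otimes y \otimes z$. Combining the two displays establishes the first identity on $X^{\otimes 3}$. The arguments for $A^{(2)}$ and $A^{(3)}$ are identical up to relabeling of the slot.

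To promote the identity to arbitrary $u, v, w \in V$, I would note that both $A^{(1)}(u\otimes v\otimes w)$ and ${\sf A}u\otimes v\otimes w$ are trilinear functions of $(u,v,w) \in V^3$: the left-hand side because $A^{(1)} \in {\rm End}(V^{\otimes 3})$ and the map $V^3 \to V^{\otimes 3}$ sending $(u,v,w) \mapsto u \otimes v \otimes w$ is trilinear; the right-hand side because $\sf A$ is linear and tensoring is trilinear. Two trilinear maps on $V^3$ that agree on the basis $X^3$ are equal, and this finishes the first equation. The same argument handles $A^{(2)}$ and $A^{(3)}$. There is no real obstacle here; the result is essentially unpacking definitions and invoking trilinearity, and the only step that could be flubbed is forgetting to extend linearly from $X^{\otimes 3}$ to $V^{\otimes 3}$.
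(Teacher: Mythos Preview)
Your proof is correct and follows the same approach as the paper, which simply cites Definitions \ref{def:adj} and \ref{def:mapsAAA}. You have merely made explicit the trilinearity extension that the paper leaves implicit.
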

\begin{proof} By Definitions \ref{def:adj}, \ref{def:mapsAAA}.
\end{proof}

\begin{definition}\rm \label{def:mapsAAAs} (See \cite[Definition~7.1]{S3}.)
We define $A^{*(1)}, A^{*(2)}, A^{*(3)} \in {\rm End}(V^{\otimes 3})$ as follows. For $x\otimes y\otimes z \in X^{\otimes 3}$,
\begin{align*}
A^{*(1)} (x\otimes y \otimes z) &=  x \otimes y \otimes z\,\theta^*_{\partial(y,z)}, \\
A^{*(2)} (x\otimes y \otimes z) &=  x \otimes y\otimes z\, \theta^*_{\partial(z,x)}  , \\
A^{*(3)} (x\otimes y \otimes z) &=  x \otimes y \otimes z\, \theta^*_{\partial(x,y)}.
\end{align*}
\end{definition}

\begin{proposition} \label{lem:HAactionC} For a profile $(r,s,t,u) \in \mathcal P_N$ the following {\rm (i)--(vi)} hold:
\begin{enumerate}
\item[\rm (i)] the vector
\begin{align*}
A^{(1)} \tilde B(r,s,t,u)
\end{align*}
 is a linear combination with the following terms and coefficients:
\begin{align*} 
\begin{tabular}[t]{c|c}
{\rm Term }& {\rm Coefficient} 
 \\
 \hline
   $ \tilde B(r-1,s+1,t,u)$& $r$   \\
 $\tilde B(r+1,s-1,t,u)$   & $s$\\
  $ \tilde B(r,s,t-1,u+1)$  & $t$ \\
  $ \tilde B(r,s,t+1,u-1) $& $u$
   \end{tabular}
\end{align*}
%%%\newpage
\item[\rm (ii)] 
the vector
\begin{align*}
A^{(2)} \tilde B(r,s,t,u) 
\end{align*}
 is a linear combination with the following terms and coefficients:
\begin{align*} 
\begin{tabular}[t]{c|c}
{\rm Term }& {\rm Coefficient} 
 \\
 \hline
   $ \tilde B(r-1,s,t+1,u) $& $ r$   \\
 $ \tilde B(r,s-1,t,u+1) $   & $s$\\
  $ \tilde B(r+1,s,t-1,u)$  & $t$ \\
  $ \tilde B(r,s+1,t,u-1) $& $u $
   \end{tabular}
\end{align*}
\item[\rm (iii)] 
the vector
\begin{align*}
A^{(3)} \tilde B(r,s,t,u)
\end{align*}
 is a linear combination with the following terms and coefficients:
\begin{align*} 
\begin{tabular}[t]{c|c}
{\rm Term }& {\rm Coefficient} 
 \\
 \hline
    $ \tilde B(r-1,s,t,u+1)$& $r $   \\
 $ \tilde B(r,s-1,t+1,u)  $   & $s $\\
    $ \tilde B(r,s+1,t-1,u) $  & $t$ \\
  $ \tilde  B(r+1,s,t,u-1) $& $u $
   \end{tabular}
\end{align*}
\item[\rm (iv)] $A^{*(1)} \tilde B(r,s,t,u) = (r+s-t-u) \tilde B(r,s,t,u)$;
\item[\rm (v)] $A^{*(2)} \tilde B(r,s,t,u) = (r-s+t-u) \tilde B(r,s,t,u)$;
\item[\rm (vi)] $A^{*(3)} \tilde B(r,s,t,u)  = (r-s-t+u) \tilde B(r,s,t,u)$.
\end{enumerate}
%%%%\item[\rm (iv)] $A^{*(1)} \chi (d_1, d_2, d_3, d_4, d_5) = \chi (d_1, d_2, d_3, d_4, d_5) \bigl( Nd_1+Nd_2-D     \bigr)$
\end{proposition}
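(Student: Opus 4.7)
Parts (iv)--(vi) will fall out immediately from Lemma \ref{lem:profmean} and Definition \ref{def:mapsAAAs}: for every summand $x\otimes y\otimes z$ of $B(r,s,t,u)$ we have $\partial(y,z)=t+u$, so $A^{*(1)}$ acts on that summand by $\theta^*_{t+u} = N-2(t+u) = r+s-t-u$, and similarly for the other two using $\partial(z,x)=u+s$ and $\partial(x,y)=s+t$. Since $\tilde B(r,s,t,u)$ differs from $B(r,s,t,u)$ only by the scalar $r!s!t!u!/(N!2^N)$, the same eigenvalue survives the rescaling.

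For parts (i)--(iii), the heart of the argument is a coordinate-by-coordinate bookkeeping. Fix $x\otimes y\otimes z$ with profile $(r,s,t,u)$. Flipping the $i$th entry of $x$ produces $\xi\otimes y\otimes z$ with $\xi\in\Gamma(x)$, and the profile of the new triple depends only on the ``type'' of position $i$ in the original triple, according to the rule
\begin{align*}
\text{type }r &\longrightarrow (r-1,s+1,t,u), & \text{type }s &\longrightarrow (r+1,s-1,t,u), \\
\text{type }t &\longrightarrow (r,s,t-1,u+1), & \text{type }u &\longrightarrow (r,s,t+1,u-1),
\end{align*}
verified directly from Definition \ref{def:xyzprofile} (e.g.\ a type-$t$ position has $y_i\ne z_i=x_i$; after flipping, $\xi_i=y_i\ne z_i$, which is type $u$). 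Summing over the $r,s,t,u$ positions of each type, this gives the local description of $A^{(1)}$. For parts (ii) and (iii) the analogous tables are obtained by cycling the roles of the three tensor factors, and the resulting profile transitions will match the columns displayed in the proposition (after relabelling via the cyclic permutation $(x,y,z)\mapsto(y,z,x)$).

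To pass from the pointwise statement to the action on $B(r,s,t,u)$, I will count the multiplicity with which a fixed triple of profile $(r-1,s+1,t,u)$, say, appears in $A^{(1)}B(r,s,t,u)$: this equals the number of positions of type $s$ in that triple, namely $s+1$. Carrying out the same count in each of the four cases yields
\begin{align*}
A^{(1)} B(r,s,t,u) &= (s+1)B(r-1,s+1,t,u) + (r+1)B(r+1,s-1,t,u) \\
&\quad + (u+1)B(r,s,t-1,u+1) + (t+1)B(r,s,t+1,u-1),
\end{align*}
and the analogues for $A^{(2)}, A^{(3)}$.

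Finally, substituting $B(r,s,t,u) = \frac{N!2^N}{r!s!t!u!}\tilde B(r,s,t,u)$ into both sides using Definition \ref{lem:Bdual} and simplifying each coefficient (e.g.\ $(s+1)\cdot\frac{r!s!t!u!}{(r-1)!(s+1)!t!u!}=r$) converts each $B$-equation into the $\tilde B$-equation asserted in the table. The only non-routine step is the coordinate-flip case analysis sketched above; everything else is combinatorial counting and the rescaling arithmetic, and no new structural input about $\mathfrak{sl}_4(\mathbb C)$ or ${\rm Fix}(G)$ is needed here.
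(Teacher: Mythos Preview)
Your proof is correct and is exactly the ``combinatorial counting'' that the paper invokes without detail: parts (iv)--(vi) come straight from Lemma~\ref{lem:profmean} and Definition~\ref{def:mapsAAAs} as you say, and for (i)--(iii) your type-by-type case analysis of a single coordinate flip, followed by the reverse count and the rescaling from $B$ to~$\tilde B$, is precisely what the paper has in mind. One small caution: your remark that (ii) and (iii) follow by the cyclic relabelling $(x,y,z)\mapsto(y,z,x)$ is morally right but literally a bit off, since that cycle permutes the profile components $(s,t,u)$ nontrivially rather than fixing them; it is cleaner simply to redo the four-case table for flipping $y_i$ and $z_i$ directly, which is just as short and yields the displayed transitions on the nose.
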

\begin{proof} (i)--(iii) By combinatorial counting. \\
\noindent (iv)--(vi)  By \eqref{eq:deigvalHN2} and Lemma  \ref{lem:profmean} along
with Definitions \ref{def:Brstu}, \ref{lem:Bdual}, \ref{def:mapsAAAs}.
\end{proof} 

\begin{lemma} \label{lem:FixInv} The subspace ${\rm Fix}(G)$ is invariant under the maps
\begin{align*}
A^{(i)}, \quad A^{*(i)} \qquad \quad i \in \lbrace 1,2,3 \rbrace.
\end{align*}
\end{lemma}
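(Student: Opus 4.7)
The plan is to prove invariance by establishing that each of $A^{(i)}$ and $A^{*(i)}$ commutes with the $G$-action on $V^{\otimes 3}$. Once this is done, invariance of ${\rm Fix}(G)$ is immediate: for $v \in {\rm Fix}(G)$ and any $g \in G$, we get $g(A^{(i)} v) = A^{(i)}(g(v)) = A^{(i)} v$, and similarly for $A^{*(i)}$.

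For $A^{(i)}$, I would first invoke Lemma \ref{lem:Atensor}, which expresses $A^{(i)}$ as the adjacency map $\sf A$ applied to the $i$th tensor slot. Since $g$ acts diagonally on $V^{\otimes 3}$ by \eqref{eq:gVVV}, and $g$ commutes with $\sf A$ on $V$ by Lemma \ref{lem:gAcom}(i), the identities
\begin{align*}
g A^{(1)}(u\otimes v\otimes w) &= g({\sf A}u)\otimes g(v)\otimes g(w) = {\sf A}g(u)\otimes g(v) \otimes g(w) \\
&= A^{(1)}(g(u)\otimes g(v) \otimes g(w)) = A^{(1)} g(u\otimes v \otimes w)
\end{align*}
follow, and analogously for $A^{(2)}, A^{(3)}$.

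For $A^{*(i)}$, the argument rests on the fact that every $g\in G$ is an automorphism of $H(N,2)$ and therefore preserves the distance function, i.e.\ $\partial(g(y),g(z)) = \partial(y,z)$ for all $y,z\in X$. Consequently, for $x\otimes y\otimes z \in X^{\otimes 3}$,
\begin{align*}
g A^{*(1)}(x\otimes y\otimes z) &= \theta^*_{\partial(y,z)}\, g(x)\otimes g(y) \otimes g(z) \\
&= \theta^*_{\partial(g(y),g(z))}\, g(x)\otimes g(y) \otimes g(z) = A^{*(1)} g(x\otimes y\otimes z),
\end{align*}
and extending by linearity gives $gA^{*(1)} = A^{*(1)}g$ on $V^{\otimes 3}$; the same argument works for $A^{*(2)}, A^{*(3)}$.

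No step is particularly subtle here; the only thing to be careful about is invoking the correct facts (diagonal $G$-action on $V^{\otimes 3}$, commutation of $g$ with $\sf A$, and $G$-invariance of the distance function). As a sanity check, the explicit action on the basis $\{\tilde B(r,s,t,u)\}$ given in Proposition \ref{lem:HAactionC} already exhibits the closure of ${\rm Fix}(G)$ under each map directly, since every term on the right-hand side there is again a ${\rm Fix}(G)$-basis element; thus Proposition \ref{lem:HAactionC} provides an independent, computational confirmation of the invariance.
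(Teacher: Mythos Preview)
Your proof is correct. Your primary argument differs from the paper's: you show that each of $A^{(i)}, A^{*(i)}$ commutes with the $G$-action on $V^{\otimes 3}$, from which invariance of ${\rm Fix}(G)$ follows structurally. The paper instead cites Propositions \ref{lem:FixBasisd} and \ref{lem:HAactionC} directly: since the $\tilde B(r,s,t,u)$ form a basis for ${\rm Fix}(G)$ and the explicit formulas in Proposition \ref{lem:HAactionC} send each basis vector to a linear combination of basis vectors, invariance is read off. Your approach is cleaner in that it explains \emph{why} invariance holds (equivariance of the operators) and would work even without the explicit basis computation; the paper's approach is shorter given that Proposition \ref{lem:HAactionC} is already in hand for other purposes. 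You even note this computational route as a sanity check at the end, so you have effectively covered both.
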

\begin{proof} By Propositions \ref{lem:FixBasisd}, \ref{lem:HAactionC}.  
\end{proof}

\begin{theorem} \label{prop:GenAction} The subspace ${\rm Fix}(G)$ becomes an $\mathfrak{sl}_4(\mathbb C)$-module on which
\begin{align*}
A_i = A^{(i)}, \qquad \quad A^*_i = A^{*(i)} \qquad \qquad i \in \lbrace 1,2,3\rbrace.
\end{align*}
Moreover, the map $\ddagger: P_N \to {\rm Fix}(G)$ is an isomorphism of  $\mathfrak{sl}_4(\mathbb C)$-modules.
\end{theorem}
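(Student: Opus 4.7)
The plan is to transport the $\mathfrak{sl}_4(\mathbb C)$-module structure from $P_N$ to ${\rm Fix}(G)$ via the vector space isomorphism $\ddag$ of Lemma \ref{lem:vee}, and then verify that under this transport the generators $A_i$ and $A^*_i$ become exactly $A^{(i)}$ and $A^{*(i)}$. Concretely, I would declare the action on ${\rm Fix}(G)$ by $\varphi \cdot v = \ddag\bigl(\varphi \cdot \ddag^{-1}(v)\bigr)$ for $\varphi \in \mathfrak{sl}_4(\mathbb C)$; this is an $\mathfrak{sl}_4(\mathbb C)$-module structure by construction, and $\ddag$ is tautologically an isomorphism of $\mathfrak{sl}_4(\mathbb C)$-modules. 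The substance of the theorem is then the identification of the transported generators with $A^{(i)}, A^{*(i)}$.

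The verification proceeds in two steps. First, one needs that $A^{(i)}$ and $A^{*(i)}$ actually map ${\rm Fix}(G)$ into itself, which is Lemma \ref{lem:FixInv} and follows directly from the explicit formulas of Proposition \ref{lem:HAactionC}: every $A^{(i)}\tilde B(r,s,t,u)$ and $A^{*(i)} \tilde B(r,s,t,u)$ is a $\mathbb C$-linear combination of the basis vectors $\tilde B(r',s',t',u')$ listed in Proposition \ref{lem:FixBasisd}. Second, and this is the computational heart of the argument, one compares Proposition \ref{lem:ActP} with Proposition \ref{lem:HAactionC} term by term. For example, $A_1$ sends $x^r y^s z^t w^u$ to a linear combination of $x^{r-1}y^{s+1}z^t w^u$, $x^{r+1}y^{s-1}z^t w^u$, $x^r y^s z^{t-1} w^{u+1}$, $x^r y^s z^{t+1} w^{u-1}$ with coefficients $r,s,t,u$; and $A^{(1)}$ sends $\tilde B(r,s,t,u)$ to a linear combination of $\tilde B(r-1,s+1,t,u)$, $\tilde B(r+1,s-1,t,u)$, $\tilde B(r,s,t-1,u+1)$, $\tilde B(r,s,t+1,u-1)$ with the \emph{same} coefficients $r,s,t,u$. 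Since $\ddag(x^r y^s z^t w^u) = (N!2^N)^{1/2}\,\tilde B(r,s,t,u)$, the common prefactor $(N!2^N)^{1/2}$ cancels from both sides, and we obtain $A^{(1)}\ddag(x^r y^s z^t w^u) = \ddag(A_1 x^r y^s z^t w^u)$. The identical pattern of correspondence holds for $A_2,A_3$ (via items (ii),(iii) of each proposition) and, even more transparently, for $A^*_1, A^*_2, A^*_3$ (via items (iv)--(vi), where the actions are diagonal with the same eigenvalues $r+s-t-u$, $r-s+t-u$, $r-s-t+u$ on matching basis vectors).

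Once the identities $\ddag \circ A_i = A^{(i)} \circ \ddag$ and $\ddag \circ A^*_i = A^{*(i)} \circ \ddag$ are established on the basis \eqref{eq:PDbasis}, they hold on all of $P_N$ by linearity. Combined with the transport construction, this shows simultaneously that ${\rm Fix}(G)$ carries the desired $\mathfrak{sl}_4(\mathbb C)$-module structure with $A_i = A^{(i)}$ and $A^*_i = A^{*(i)}$, and that $\ddag$ is an $\mathfrak{sl}_4(\mathbb C)$-module isomorphism.

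There is no real obstacle; the argument is a mechanical basis-to-basis matching. The only thing to be careful about is bookkeeping: one must note that $A_i$ and $A^*_i$ generate $\mathfrak{sl}_4(\mathbb C)$ (Lemma \ref{lem:sl4Basis}), so matching the six generators is enough to match the whole module action, and that the scalar $(N!2^N)^{1/2}$ is the \emph{same} on every basis vector and hence harmless. An alternative, more intrinsic route would be to verify directly that the six operators $A^{(i)}|_{{\rm Fix}(G)}$, $A^{*(i)}|_{{\rm Fix}(G)}$ satisfy the defining relations (i)--(iv) of Definition \ref{def:LL}, but the transport route is shorter and avoids re-checking those relations by hand.
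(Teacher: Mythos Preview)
Your proposal is correct and matches the paper's approach exactly: the paper's proof is the single line ``Compare Propositions \ref{lem:ActP}, \ref{lem:HAactionC} using Lemma \ref{lem:vee},'' which is precisely the basis-to-basis matching via $\ddag$ that you spell out. Your additional remarks about the scalar $(N!2^N)^{1/2}$ cancelling and about the generators sufficing via Lemma \ref{lem:sl4Basis} are accurate elaborations of what the paper leaves implicit.
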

\begin{proof} Compare Propositions \ref{lem:ActP}, \ref{lem:HAactionC} using Lemma \ref{lem:vee}.
\end{proof}

\medskip
\noindent Our next general goal
is to explain what the $\mathfrak{sl}_4(\mathbb C)$-module isomorphism $\ddag: P_N \to {\rm Fix}(G)$
does to the $P_N$-basis given in Lemma \ref{lem:PDdualbasis}.

\begin{definition}\rm \label{def:Qhij} (See \cite[Definition~9.14]{S3}.)         For $0 \leq h,i,j\leq N$ we define a vector
\begin{align*}
Q_{h,i,j} = 2^N \sum_{x \in X} {\sf E}_h x \otimes {\sf E}_i x \otimes {\sf E}_j x.
\end{align*}
\end{definition}

\begin{lemma} \label{lem:Qcondition}  For $0 \leq h,i,j\leq N$ the following hold:
\begin{enumerate}
\item[\rm (i)] $\Vert Q_{h,i,j} \Vert^2 = \Vert P_{h,i,j} \Vert^2$, where $P_{h,i,j}$ is from Remark \ref{def:Phij};
\item[\rm (ii)] $Q_{h,i,j} \not=0$ if and only if $(h,i,j) \in \mathcal P''_N$.
\end{enumerate}
\end{lemma}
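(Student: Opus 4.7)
The plan is to establish (i) by direct computation of both norms; (ii) then follows immediately, since Remark \ref{def:Phij} already records that $P_{h,i,j}\neq 0$ iff $(h,i,j)\in\mathcal P''_N$, so the equality of norms transfers this to $Q_{h,i,j}$. For $\|P_{h,i,j}\|^2$, when $(h,i,j)\in\mathcal P''_N$ Lemma \ref{lem:PPbij} produces the unique profile $(r,s,t,u)\in\mathcal P_N$ with $h=t+u$, $i=u+s$, $j=s+t$; then Remark \ref{def:Phij} identifies $P_{h,i,j}=B(r,s,t,u)$, and Lemma \ref{lem:orthogB} yields $\|P_{h,i,j}\|^2 = 2^N N!/(r!s!t!u!)$. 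Otherwise $P_{h,i,j}=0$.

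To handle $\|Q_{h,i,j}\|^2$ I would realize $X=\{\pm1\}^N$ and work in the character basis. For $S\subseteq\{1,\ldots,N\}$ set
\[
\chi_S = \sum_{x\in X} \chi_S(x)\, x, \qquad \chi_S(x) := \prod_{i\in S} x_i.
\]
A short computation using Definition \ref{def:adj} gives ${\sf A}\chi_S = (N-2|S|)\chi_S$ and $\|\chi_S\|^2 = 2^N$, so $\{2^{-N/2}\chi_S : |S|=h\}$ is an orthonormal basis of ${\sf E}_h V$, yielding the projection formula
\[
{\sf E}_h x = 2^{-N} \sum_{|S|=h} \chi_S(x)\, \chi_S, \qquad x \in X.
\]
Substituting into Definition \ref{def:Qhij}, interchanging sums, and applying the character identity
\[
\sum_{x\in X}\chi_S(x)\chi_T(x)\chi_U(x) = \sum_{x\in X}\chi_{S\triangle T\triangle U}(x) = 2^N\cdot[S\triangle T\triangle U = \emptyset]
\]
collapses the definition to
\[
Q_{h,i,j} = 2^{-N} \sum_{\substack{|S|=h,\, |T|=i,\, |U|=j \\ S\triangle T\triangle U = \emptyset}} \chi_S\otimes\chi_T\otimes\chi_U.
\]
By Lemma \ref{lem:Herm2}(ii) these tensors are mutually orthogonal with squared norm $(2^N)^3 = 8^N$, so $\|Q_{h,i,j}\|^2 = 2^N\mathcal N$, where $\mathcal N$ is the number of admissible triples $(S,T,U)$.

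It remains to evaluate $\mathcal N$. The condition $S\triangle T\triangle U=\emptyset$ forces each $k\in\{1,\ldots,N\}$ to lie in exactly $0$ or $2$ of $S,T,U$; partitioning $\{1,\ldots,N\}$ by the four types (in none; in $T\cap U$ only; in $S\cap U$ only; in $S\cap T$ only) with respective cardinalities $a,b,c,d$ reduces the count to the multinomial $N!/(a!b!c!d!)$, subject to $c+d=h$, $b+d=i$, $b+c=j$, $a+b+c+d=N$. This linear system has a nonnegative integer solution precisely when $(h,i,j)\in\mathcal P''_N$, and the solution is then $(a,b,c,d) = (r,s,t,u)$ with $(r,s,t,u)$ the profile produced by Lemma \ref{lem:PPbij}. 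Hence $\|Q_{h,i,j}\|^2 = 2^N N!/(r!s!t!u!) = \|P_{h,i,j}\|^2$ in the nonzero case, and both vanish otherwise, establishing (i) and with it (ii). The main technical step is the projection formula for ${\sf E}_h$ in the character basis; after that, the remaining work is a routine orthogonality/counting argument matched against Lemma \ref{lem:PPbij}.
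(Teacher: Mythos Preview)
Your argument is correct and complete. The computation of $\|P_{h,i,j}\|^2$ via Remark~\ref{def:Phij} and Lemma~\ref{lem:orthogB} is straightforward, and your treatment of $\|Q_{h,i,j}\|^2$ is clean: the character basis $\{\chi_S\}$ diagonalizes ${\sf A}$, the projection formula for ${\sf E}_h$ follows, the triple-character identity collapses the sum over $x$, and the resulting count matches the multinomial from Lemma~\ref{lem:PPbij}. The case $(h,i,j)\notin\mathcal P''_N$ is handled correctly since the linear system then has no nonnegative integer solution, forcing $\mathcal N=0$.

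Your approach differs genuinely from the paper's. The paper establishes (i) by citing \cite[Lemmas~9.11,~9.16]{S3} together with Remark~\ref{rem:IntKrein}, i.e.\ it invokes the general identities $\|P_{h,i,j}\|^2=|X|\,k_h p^h_{i,j}$ and $\|Q_{h,i,j}\|^2=|X|\,m_h q^h_{i,j}$ valid for any $Q$-polynomial distance-regular graph, and then uses formal self-duality of $H(N,2)$ (giving $k_h p^h_{i,j}=m_h q^h_{i,j}$) to conclude. Your argument is instead fully self-contained and specific to the hypercube: you exploit the explicit Fourier/character basis of $\{\pm1\}^N$ to compute ${\sf E}_h x$ directly, bypassing the Krein-parameter machinery and the external reference entirely. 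The paper's route generalizes immediately to other formally self-dual schemes, while yours is more elementary and exposes the combinatorial content (the count $\mathcal N$ is literally the same multinomial that appears in Lemma~\ref{lem:profileSize}).
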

\begin{proof}  (i) By \cite[Lemmas~9.11,~9.16]{S3} and  Remark \ref{rem:IntKrein}. \\
\noindent (ii) By Remark \ref{def:Phij} and (i) above.
\end{proof} 

\begin{definition}\label{lem:Bs} \rm For a profile $(r,s,t,u) \in \mathcal P_N$ we define a vector
\begin{align*}
B^*(r,s,t,u) = Q_{h,i,j},
\end{align*}
\noindent where
\begin{align}
h = t+u, \qquad \qquad i = u+s, \qquad \qquad j = s+t.       \label{eq:hij}
\end{align}
\end{definition}

\begin{lemma} \label{lem:BsOrth} The  vectors 
\begin{align}
B^*(r,s,t,u) \qquad \qquad (r,s,t,u) \in \mathcal P_N          \label{eq:step1}
\end{align}
are mutually orthogonal and
\begin{align}
\Vert B^*(r,s,t,u) \Vert^2 = \frac{N! 2^N}{r!s!t!u!} \qquad \qquad (r,s,t,u) \in \mathcal P_N.       \label{eq:step2}
\end{align}
\end{lemma}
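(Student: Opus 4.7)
The plan is to derive both claims directly from Definition \ref{lem:Bs}, which identifies $B^*(r,s,t,u)$ with $Q_{h,i,j}$ for $h=t+u$, $i=u+s$, $j=s+t$, together with properties of the primitive idempotents $\{\sf E\}_h$ already established.

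For the mutual orthogonality, I would compute the inner product directly from Definition \ref{def:Qhij}. Given two profiles $(r,s,t,u), (r',s',t',u') \in \mathcal{P}_N$ with corresponding triples $(h,i,j), (h',i',j')$ via \eqref{eq:hij}, Lemma \ref{lem:Herm2}(ii) yields
\begin{align*}
\langle Q_{h,i,j}, Q_{h',i',j'}\rangle = 4^N \sum_{x,y \in X} \langle {\sf E}_h x, {\sf E}_{h'} y\rangle \langle {\sf E}_i x, {\sf E}_{i'} y\rangle \langle {\sf E}_j x, {\sf E}_{j'} y\rangle.
\end{align*}
By Lemma \ref{lem:Ebil} each ${\sf E}_a$ is self-adjoint, so $\langle {\sf E}_a x, {\sf E}_b y\rangle = \langle x, {\sf E}_a {\sf E}_b y\rangle = \delta_{a,b}\langle x, {\sf E}_a y\rangle$. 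Hence the right-hand side vanishes unless $h=h'$, $i=i'$, and $j=j'$. Since the map $(r,s,t,u)\mapsto(h,i,j)$ is a bijection $\mathcal{P}_N \to \mathcal{P}''_N$ by Lemma \ref{lem:PPbij}, distinct profiles give distinct triples, and orthogonality of the $B^*(r,s,t,u)$ follows.

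For the norm formula, I would simply chain together three earlier results. Applying Lemma \ref{lem:Qcondition}(i) gives $\|Q_{h,i,j}\|^2 = \|P_{h,i,j}\|^2$. Since $(r,s,t,u) \in \mathcal{P}_N$ corresponds to $(h,i,j)\in \mathcal{P}''_N$ under \eqref{eq:hij}, Remark \ref{def:Phij} asserts $P_{h,i,j} = B(r,s,t,u)$, so $\|P_{h,i,j}\|^2 = \|B(r,s,t,u)\|^2$. Finally Lemma \ref{lem:orthogB} evaluates this as $N!2^N/(r!s!t!u!)$, which is \eqref{eq:step2}.

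The proof is essentially a bookkeeping exercise chaining previously established facts, so there is no serious obstacle. The only subtle point is the self-adjointness of the ${\sf E}_h$ used in the orthogonality computation, which I would invoke explicitly via Lemma \ref{lem:Ebil} rather than leaving it implicit; everything else reduces to a single substitution from Definition \ref{lem:Bs} and a citation of \eqref{eq:hij}.
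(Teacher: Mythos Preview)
Your proof is correct. The norm computation is essentially identical to the paper's: both chain Definition \ref{lem:Bs}, Lemma \ref{lem:Qcondition}(i), Remark \ref{def:Phij}, and Lemma \ref{lem:orthogB} in the same order.

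For the orthogonality, however, you take a different route. The paper simply cites an external result, \cite[Lemma~9.16]{S3}, and Definition \ref{lem:Bs}. You instead give a direct, self-contained computation: expand $\langle Q_{h,i,j}, Q_{h',i',j'}\rangle$ via Lemma \ref{lem:Herm2}(ii), use the self-adjointness from Lemma \ref{lem:Ebil} together with the primitive idempotent relation ${\sf E}_a{\sf E}_b=\delta_{a,b}{\sf E}_a$ (stated in the Preliminaries) to force $h=h'$, $i=i'$, $j=j'$, and then invoke the bijection of Lemma \ref{lem:PPbij}. Your argument is more elementary and keeps the proof internal to the paper, at the cost of a few extra lines; the paper's version is shorter but relies on importing the result from \cite{S3}. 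One small point worth making explicit in your write-up: Lemma \ref{lem:Ebil} is stated only for $x,y\in X$, so you are tacitly extending $\langle {\sf E}_a u, v\rangle = \langle u, {\sf E}_a v\rangle$ to all $u,v\in V$ by sesquilinearity and the reality of the entries of ${\sf E}_a$; this is routine but should be acknowledged.
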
 
\begin{proof} The  vectors \eqref{eq:step1} are mutually orthogonal by  \cite[Lemma~9.16]{S3} and Definition \ref{lem:Bs}.
For $(r,s,t,u) \in \mathcal P_N$ and $(h,i,j)$ from \eqref{eq:hij}, the following holds
by Definition \ref{lem:Bs}, Lemma \ref{lem:Qcondition}(i), Remark \ref{def:Phij}, Lemma  \ref{lem:orthogB}:
\begin{align*}
\Vert B^*(r,s,t,u) \Vert^2 = \Vert Q_{h,i,j} \Vert^2 = \Vert P_{h,i,j}\Vert^2 =   \Vert B(r,s,t,u) \Vert^2 =\frac{N! 2^N}{r!s!t!u!}.
\end{align*}
\end{proof}

\begin{proposition} \label{lem:BsBasis}  The  vectors 
\begin{align}
B^*(r,s,t,u) \qquad \qquad (r,s,t,u) \in \mathcal P_N  \label{eq:GV}
\end{align}
form a basis for ${\rm Fix}(G)$.
\end{proposition}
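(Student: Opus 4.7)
The plan is to establish the result by a dimension-count argument combined with $G$-invariance of each candidate basis vector. The number of vectors in \eqref{eq:GV} equals $\vert \mathcal P_N\vert = \binom{N+3}{3}$ by \eqref{eq:PNsize}, and by Corollary \ref{cor:FixGdim} this is precisely ${\rm dim}\,{\rm Fix}(G)$. So by linear algebra, it suffices to show two things: (a) the vectors \eqref{eq:GV} are linearly independent, and (b) each vector in \eqref{eq:GV} lies in ${\rm Fix}(G)$.

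For part (a), I would simply invoke Lemma \ref{lem:BsOrth}: the vectors \eqref{eq:GV} are mutually orthogonal with respect to $\langle\,,\,\rangle$, and by \eqref{eq:step2} their square norms are all nonzero (each equal to $N!2^N/(r!s!t!u!)$). A set of nonzero mutually orthogonal vectors in an inner-product space is automatically linearly independent.

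For part (b), I would unwind Definitions \ref{def:Qhij} and \ref{lem:Bs}. Fix $(r,s,t,u) \in \mathcal P_N$ and set $h=t+u$, $i=u+s$, $j=s+t$. For $g \in G$ we have, using \eqref{eq:gVVV} and the fact that $g$ commutes with each primitive idempotent ${\sf E}_m$ of $\sf A$ (Lemma \ref{lem:gAcom}(ii)),
\begin{align*}
g\bigl(B^*(r,s,t,u)\bigr)
 &= 2^N \sum_{x \in X} g({\sf E}_h x) \otimes g({\sf E}_i x) \otimes g({\sf E}_j x) \\
 &= 2^N \sum_{x \in X} {\sf E}_h g(x) \otimes {\sf E}_i g(x) \otimes {\sf E}_j g(x).
\end{align*}
Because $g$ permutes $X$, re-indexing the sum via $y=g(x)$ recovers $Q_{h,i,j} = B^*(r,s,t,u)$. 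Hence $B^*(r,s,t,u) \in {\rm Fix}(G)$.

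Combining (a) and (b) with the dimension match yields the claim. I do not expect any serious obstacle: all the heavy lifting (the nonvanishing of $Q_{h,i,j}$ on the index set $\mathcal P''_N$, mutual orthogonality, and the identification of $\mathcal P_N$ with $\mathcal P''_N$) has already been done in Lemmas \ref{lem:PPbij}, \ref{lem:Qcondition}, and \ref{lem:BsOrth}. The only mild subtlety is being careful that the re-indexing step in (b) uses the bijectivity of $g:X\to X$ rather than merely its linearity on $V$, and that Lemma \ref{lem:gAcom}(ii) applies to each tensor factor independently.
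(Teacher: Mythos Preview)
Your proposal is correct and follows essentially the same approach as the paper: show $G$-invariance of each $B^*(r,s,t,u)$ via Lemma~\ref{lem:gAcom}(ii), \eqref{eq:gVVV}, and the reindexing $y=g(x)$, then invoke Lemma~\ref{lem:BsOrth} for linear independence and match dimensions using \eqref{eq:PNsize} and Corollary~\ref{cor:FixGdim}. The only cosmetic difference is that the paper establishes containment in ${\rm Fix}(G)$ first and linear independence second.
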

\begin{proof} We first show that the vectors \eqref{eq:GV} are contained in ${\rm Fix}(G)$.
Pick $g \in G$ and $(r,s,t,u) \in \mathcal P_N$. Let $(h,i,j)$ satisfy   \eqref{eq:hij}.
Using Lemma  \ref{lem:gAcom}(ii) and  \eqref{eq:gVVV} and Definitions \ref{def:Qhij}, \ref{lem:Bs}  we obtain
\begin{align*}
g \bigl(B^*(r,s,t,u)\bigr) &= g \bigl( Q_{h,i,j} \bigr) \\
                 &=  g \Biggl(2^N \sum_{x \in X}  {\sf E}_h x \otimes {\sf E}_i x \otimes {\sf E}_j x \Biggr) \\
                        &= 2^N   \sum_{x \in X}  g({\sf E}_h x) \otimes g({\sf E}_i x) \otimes g({\sf E}_j x) \\
                          &= 2^N   \sum_{x \in X}  {\sf E}_h g(x) \otimes {\sf E}_i g(x) \otimes {\sf E}_j g(x) \\
                           &= 2^N \sum_{y \in X}  {\sf E}_h y \otimes {\sf E}_i y \otimes {\sf E}_j y \\ 
                           &= B^*(r,s,t,u).
                          \end{align*}
Therefore $B^*(r,s,t,u) \in  {\rm Fix}(G)$.
We have shown that the vectors \eqref{eq:GV} are contained in ${\rm Fix}(G)$.
The vectors \eqref{eq:GV} are linearly independent by Lemma  \ref{lem:BsOrth}. The result follows in view of
 \eqref{eq:PNsize} and Corollary  \ref{cor:FixGdim}.
\end{proof}

\noindent Next, we describe the basis for ${\rm Fix}(G)$ that is dual to the one in Proposition \ref{lem:BsBasis},
with respect to the Hermitian form $\langle \,,\,\rangle$.

\begin{definition} \label{lem:Bsdual} \rm For a profile $(r,s,t,u) \in \mathcal P_N$ we define a vector
\begin{align*}
\tilde B^*(r,s,t,u) = \frac{r!s!t!u!}{N! 2^N} B^*(r,s,t,u).
\end{align*}
\end{definition}

\begin{proposition} \label{lem:FixBSasisd} The  vectors
\begin{align*} 
\tilde B^*(r,s,t,u) \qquad \qquad (r,s,t,u) \in \mathcal P_N 
\end{align*}
form a  basis for ${\rm Fix}(G)$.
\end{proposition}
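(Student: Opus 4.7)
The plan is to deduce the result directly from Proposition \ref{lem:BsBasis} together with Definition \ref{lem:Bsdual}. By Proposition \ref{lem:BsBasis}, the collection
\begin{align*}
B^*(r,s,t,u) \qquad (r,s,t,u) \in \mathcal P_N
\end{align*}
is a basis for ${\rm Fix}(G)$. By Definition \ref{lem:Bsdual}, each $\tilde B^*(r,s,t,u)$ is obtained from $B^*(r,s,t,u)$ by multiplying by the scalar $r!s!t!u!/(N!\,2^N)$, which is a nonzero element of $\mathbb C$ because $(r,s,t,u) \in \mathcal P_N$ consists of natural numbers and $N \in \mathbb N$. Rescaling the individual vectors in a basis by nonzero scalars produces another basis of the same space, so the vectors $\tilde B^*(r,s,t,u)$ with $(r,s,t,u) \in \mathcal P_N$ form a basis for ${\rm Fix}(G)$.

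Since this is essentially a one-line argument, there is no serious obstacle; the only thing to verify is that the rescaling factor never vanishes, which is immediate. The proof can therefore be written simply as ``By Proposition \ref{lem:BsBasis} and Definition \ref{lem:Bsdual}.''
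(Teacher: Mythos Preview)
Your proposal is correct and matches the paper's proof exactly: the paper's argument is simply ``By Proposition \ref{lem:BsBasis} and Definition \ref{lem:Bsdual},'' which is precisely what you arrived at.
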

\begin{proof} By Proposition \ref{lem:BsBasis} and Definition \ref{lem:Bsdual}.
\end{proof}

\begin{lemma} \label{lem:FixBSdual} The ${\rm Fix}(G)$-basis
\begin{align*}
B^*(r,s,t,u) \qquad \qquad (r,s,t,u) \in \mathcal P_N 
\end{align*}
and the 
${\rm Fix}(G)$-basis
\begin{align*}
\tilde B^*(r,s,t,u) \qquad \qquad (r,s,t,u) \in \mathcal P_N 
\end{align*}
are dual with respect to $\langle \,,\,\rangle$.
\end{lemma}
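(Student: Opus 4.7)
The plan is to observe that this lemma follows directly from the orthogonality relations already established in Lemma \ref{lem:BsOrth}, together with the normalization chosen in Definition \ref{lem:Bsdual}. In other words, the dual basis is built by scaling each $B^*(r,s,t,u)$ by the reciprocal of its square norm, so the duality is essentially automatic.

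Concretely, I would compute the inner product $\langle B^*(r,s,t,u), \tilde B^*(r',s',t',u') \rangle$ for arbitrary $(r,s,t,u), (r',s',t',u') \in \mathcal P_N$. Using the sesquilinearity of $\langle\,,\,\rangle$ (Definition \ref{def:Herm}) and the fact that the scalar $\frac{r'!s'!t'!u'!}{N!2^N}$ in Definition \ref{lem:Bsdual} is real, one pulls this scalar out to obtain
\[
\langle B^*(r,s,t,u), \tilde B^*(r',s',t',u') \rangle = \frac{r'!s'!t'!u'!}{N!2^N} \langle B^*(r,s,t,u),  B^*(r',s',t',u') \rangle.
\]
By Lemma \ref{lem:BsOrth}, the right-hand side vanishes unless $(r,s,t,u)=(r',s',t',u')$, in which case $\langle B^*(r,s,t,u), B^*(r,s,t,u)\rangle = \frac{N!2^N}{r!s!t!u!}$, and the two factors cancel to give $1$.

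There is essentially no obstacle here; the lemma is a bookkeeping verification. The only thing to be careful about is that the scalar in Definition \ref{lem:Bsdual} is a positive real number, so conjugation in the second slot of the Hermitian form leaves it unchanged. The proof is in spirit parallel to Lemma \ref{lem:Fixdual}, which established the analogous duality between the bases $B(r,s,t,u)$ and $\tilde B(r,s,t,u)$ via Lemma \ref{lem:orthogB} and Definition \ref{lem:Bdual}; the present lemma is simply the starred counterpart of that argument.
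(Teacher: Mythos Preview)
Your proposal is correct and takes essentially the same approach as the paper, which simply cites Lemma \ref{lem:BsOrth} and Definition \ref{lem:Bsdual}. Your version merely spells out the one-line computation in more detail.
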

\begin{proof} By Lemma  \ref{lem:BsOrth} and Definition \ref{lem:Bsdual}.
\end{proof}

\noindent The following result is a variation on  \cite[Lemma~9.18]{S3}.
\begin{lemma} \label{lem:SumBs}  We have
\begin{align*}
B(N,0,0,0)= 2^{-N} \sum_{(r,s,t,u) \in \mathcal P_N} B^*(r,s,t,u).
\end{align*}
\end{lemma}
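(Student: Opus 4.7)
The plan is to reduce the identity to a straightforward resolution-of-identity calculation using the primitive idempotents $\mathsf{E}_0,\ldots,\mathsf{E}_N$ of $\mathsf A$. The key observation is that summing $B^*(r,s,t,u)$ over all profiles is essentially summing $Q_{h,i,j}$ over all admissible triples, and the admissibility can be dropped since the remaining $Q_{h,i,j}$ vanish. Then the three factors $\sum_h \mathsf{E}_h x$, $\sum_i \mathsf{E}_i x$, $\sum_j \mathsf{E}_j x$ each collapse to $x$, yielding the claim.

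More concretely, first I would invoke Definition \ref{lem:Bs} to write
\begin{align*}
\sum_{(r,s,t,u) \in \mathcal P_N} B^*(r,s,t,u) = \sum_{(r,s,t,u) \in \mathcal P_N} Q_{t+u,\,u+s,\,s+t}.
\end{align*}
By Lemma \ref{lem:PPbij}, the map $(r,s,t,u) \mapsto (t+u,u+s,s+t)$ is a bijection $\mathcal P_N \to \mathcal P''_N$, so the right-hand side equals $\sum_{(h,i,j) \in \mathcal P''_N} Q_{h,i,j}$. Next, by Lemma \ref{lem:Qcondition}(ii), $Q_{h,i,j}=0$ whenever $(h,i,j)\notin \mathcal P''_N$, so adding those vanishing terms gives
\begin{align*}
\sum_{(r,s,t,u) \in \mathcal P_N} B^*(r,s,t,u) = \sum_{h=0}^N\sum_{i=0}^N\sum_{j=0}^N Q_{h,i,j}.
\end{align*}

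Now I would substitute Definition \ref{def:Qhij} and exchange the order of summation:
\begin{align*}
\sum_{h,i,j=0}^N Q_{h,i,j}
&= 2^N \sum_{x \in X} \Bigl(\sum_{h=0}^N \mathsf{E}_h x\Bigr) \otimes \Bigl(\sum_{i=0}^N \mathsf{E}_i x\Bigr) \otimes \Bigl(\sum_{j=0}^N \mathsf{E}_j x\Bigr).
\end{align*}
Since $\sum_{h=0}^N \mathsf{E}_h = I$ on $V$, each parenthesized sum equals $x$, so the right-hand side is $2^N \sum_{x \in X} x \otimes x \otimes x = 2^N B(N,0,0,0)$ by Example \ref{ex:BN000}. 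Dividing by $2^N$ yields the desired identity.

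There is no real obstacle here; the only subtlety is confirming that the bijection of Lemma \ref{lem:PPbij} matches exactly the substitution $(h,i,j)=(t+u,u+s,s+t)$ used in Definition \ref{lem:Bs}, and that extending the sum to all of $\{0,\ldots,N\}^3$ is harmless by Lemma \ref{lem:Qcondition}(ii). Both are immediate from the cited results.
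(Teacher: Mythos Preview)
Your proof is correct and follows essentially the same argument as the paper, just run in the reverse direction: the paper starts from $B(N,0,0,0)=\sum_{x\in X} x\otimes x\otimes x$, inserts $I=\sum_\ell \mathsf{E}_\ell$ in each tensor factor, and then collapses the resulting triple sum to $2^{-N}\sum Q_{h,i,j}$ before restricting to $\mathcal P''_N$ and reindexing via the bijection. The ingredients (Example~\ref{ex:BN000}, Definition~\ref{def:Qhij}, Lemma~\ref{lem:Qcondition}(ii), Lemma~\ref{lem:PPbij}, Definition~\ref{lem:Bs}) are identical.
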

\begin{proof} We have
\begin{align*}
B(N,0,0,0) &= \sum_{x \in X} x \otimes x \otimes x \qquad \qquad \qquad \qquad \quad\qquad\qquad \hbox{\rm by Example \ref{ex:BN000}} \\
                & = \sum_{x \in X} \Biggl(\sum_{h=0}^N {\sf E}_h x \Biggr) \otimes \Biggl( \sum_{i=0}^N {\sf E}_i x \Biggr) \otimes \Biggl( \sum_{j=0}^N {\sf E}_j x \Biggr) \qquad \hbox{\rm by $I= \sum_{\ell=0}^N {\sf E}_\ell$} \\
                & = \sum_{x \in X} \sum_{h=0}^N \sum_{i=0}^N \sum_{j=0}^N {\sf E}_h x \otimes {\sf E}_i x \otimes {\sf E}_j x \\
                & = \sum_{h=0}^N \sum_{i=0}^N \sum_{j=0}^N \sum_{x \in X} {\sf E}_h x \otimes {\sf E}_i x \otimes {\sf E}_j x \\
                &=  2^{-N} \sum_{h=0}^N \sum_{i=0}^N \sum_{j=0}^N Q_{h,i,j} \qquad \qquad \quad \; \;\qquad \qquad \hbox{\rm by Definition  \ref{def:Qhij}} \\
                & = 2^{-N} \sum_{(h,i,j) \in \mathcal P''_N} Q_{h,i,j} \qquad \qquad \quad \qquad \qquad \qquad \hbox{\rm by Lemma \ref{lem:Qcondition}(ii) }\\
                &= 2^{-N} \sum_{(r,s,t,u) \in \mathcal P_N} B^*(r,s,t,u) \qquad  \quad  \hbox{\rm by Lem. \ref{lem:PPbij} and Def. \ref{lem:Bs}}.
\end{align*}
\end{proof}

\begin{lemma} \label{lem:AactBs} For a profile $(r,s,t,u) \in \mathcal P_N$ the following {\rm (i)--(iii)} hold:
\begin{enumerate}
\item[\rm (i)] 
$A^{(1)} B^*(r,s,t,u) = (r+s-t-u) B^*(r,s,t,u)$;
\item[\rm (ii)] 
$A^{(2)} B^*(r,s,t,u) = (r-s+t-u) B^*(r,s,t,u)$;
\item[\rm (iii)] 
$A^{(3)} B^*(r,s,t,u) = (r-s-t+u) B^*(r,s,t,u)$.
\end{enumerate}
\end{lemma}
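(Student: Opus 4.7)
The plan is to unfold the definition of $B^*(r,s,t,u)$ via Definition \ref{lem:Bs} to rewrite it as $Q_{h,i,j}$ with $h=t+u$, $i=u+s$, $j=s+t$, and then apply the maps $A^{(1)}, A^{(2)}, A^{(3)}$ directly to the expression
\begin{align*}
Q_{h,i,j} = 2^N \sum_{x \in X} {\sf E}_h x \otimes {\sf E}_i x \otimes {\sf E}_j x
\end{align*}
from Definition \ref{def:Qhij}.

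For part (i), I would use Lemma \ref{lem:Atensor} to move $A^{(1)}$ inside the tensor as an action of ${\sf A}$ on the first factor. Because ${\sf E}_h$ is the primitive idempotent of ${\sf A}$ associated with $\theta_h$, we have ${\sf A E}_h = \theta_h {\sf E}_h$, so
\begin{align*}
A^{(1)} Q_{h,i,j} = 2^N \sum_{x \in X} {\sf A E}_h x \otimes {\sf E}_i x \otimes {\sf E}_j x = \theta_h Q_{h,i,j}.
\end{align*}
Using $\theta_h = N-2h$ from \eqref{eq:eigvalHN2} with $h=t+u$ and $N=r+s+t+u$, I obtain $\theta_h = r+s-t-u$, giving the claimed eigenvalue. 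Parts (ii) and (iii) follow by the same argument, acting instead on the second and third factors: $A^{(2)}$ pulls out the scalar $\theta_i = N-2(u+s)=r-s+t-u$, and $A^{(3)}$ pulls out $\theta_j = N-2(s+t)=r-s-t+u$.

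There is no genuine obstacle here — the lemma is really just the observation that $Q_{h,i,j}$ is an eigenvector for each of $A^{(1)}, A^{(2)}, A^{(3)}$ with eigenvalues $\theta_h, \theta_i, \theta_j$ respectively, together with the bookkeeping translation between the two parametrizations $(h,i,j)\in \mathcal P''_N$ and $(r,s,t,u) \in \mathcal P_N$ provided by Lemma \ref{lem:PPbij}. The only care needed is to confirm that ${\sf A}$ indeed commutes past the summation (which is obvious by linearity) and to track signs correctly when substituting $N = r+s+t+u$ into $N-2h$, $N-2i$, $N-2j$.
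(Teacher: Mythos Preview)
Your proposal is correct and follows essentially the same approach as the paper's proof, which simply cites \eqref{eq:eigvalHN2}, Lemma~\ref{lem:Atensor}, and Definitions~\ref{def:Qhij}, \ref{lem:Bs}. The only minor difference is that you spell out the eigenvalue computation explicitly, whereas the paper leaves it implicit.
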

\begin{proof} Use  \eqref{eq:eigvalHN2} and Lemma \ref{lem:Atensor}  along with Definitions \ref{def:Qhij}, \ref{lem:Bs}.
\end{proof}

\noindent Recall the map  $\ddag: P_N \to {\rm Fix}(G)$ from Lemma \ref{lem:vee}.

\begin{proposition} \label{prop:Main2} The map $\ddag: P_N \to {\rm Fix}(G)$ sends
\begin{align*}
x^{*r} y^{*s} z^{*t} w^{*u} \mapsto \bigl(N! 2^N\bigr)^{1/2} \tilde B^*(r,s,t,u) \qquad \qquad (r,s,t,u) \in \mathcal P_N.
\end{align*}
\end{proposition}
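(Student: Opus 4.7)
The plan is to first reduce the identity to the equality of a single scalar by a weight-space argument, and then determine that scalar via one carefully chosen inner product. Since $\ddag$ is an $\mathfrak{sl}_4(\mathbb C)$-module isomorphism (Theorem \ref{prop:GenAction}), it maps the $\mathbb H$-weight decomposition of $P_N$ onto the $\mathbb H$-weight decomposition of ${\rm Fix}(G)$, where $\mathbb H$ acts on ${\rm Fix}(G)$ through $A^{(i)}=A_i$. By Lemma \ref{lem:WSdim1dual}, the vector $x^{*r}y^{*s}z^{*t}w^{*u}$ spans the one-dimensional weight space with eigenvalues $(r+s-t-u,\,r-s+t-u,\,r-s-t+u)$, and by Lemma \ref{lem:AactBs} the vector $\tilde B^*(r,s,t,u)$ lies in the corresponding weight space of ${\rm Fix}(G)$. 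Hence $\ddag(x^{*r}y^{*s}z^{*t}w^{*u})=c_{r,s,t,u}\,\tilde B^*(r,s,t,u)$ for some scalar $c_{r,s,t,u}\in\mathbb C$; the task is to show $c_{r,s,t,u}=(N!2^N)^{1/2}$.

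To pin down $c_{r,s,t,u}$, I would pair both sides of this relation against $\ddag(x^N)=(N!2^N)^{1/2}\tilde B(N,0,0,0)$ (from Lemma \ref{lem:vee}). Using Theorem \ref{lem:HermComp} and Lemma \ref{lem:xBinner},
\[
\langle \ddag(x^N),\ddag(x^{*r}y^{*s}z^{*t}w^{*u})\rangle = \langle x^N, x^{*r}y^{*s}z^{*t}w^{*u}\rangle = N!/2^N,
\]
while substitution of the weight-space relation and sesquilinearity of $\langle\,,\,\rangle$ give
\[
\langle \ddag(x^N),\ddag(x^{*r}y^{*s}z^{*t}w^{*u})\rangle = (N!2^N)^{1/2}\,\overline{c_{r,s,t,u}}\,\langle\tilde B(N,0,0,0),\tilde B^*(r,s,t,u)\rangle.
\]
So the problem reduces to computing the single inner product $\langle\tilde B(N,0,0,0),\tilde B^*(r,s,t,u)\rangle$.

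For this inner product, write $\tilde B(N,0,0,0)=2^{-N}\sum_{x\in X}x\otimes x\otimes x$ (from Definition \ref{lem:Bdual} and Example \ref{ex:BN000}) and $\tilde B^*(r,s,t,u)=\frac{r!s!t!u!}{N!2^N}Q_{h,i,j}$ with $h=t+u$, $i=u+s$, $j=s+t$. Using Lemma \ref{lem:Herm2}(ii) together with the reality of ${\sf E}_\ell$ (so that $\langle x,{\sf E}_\ell y\rangle=({\sf E}_\ell)_{x,y}$ for $x,y\in X$), the inner product reduces to $\frac{r!s!t!u!}{N!2^N}\sum_{x,y\in X}({\sf E}_h)_{x,y}({\sf E}_i)_{x,y}({\sf E}_j)_{x,y}$. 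The Hadamard product identity $|X|\,{\sf E}_h\circ{\sf E}_i=\sum_\ell q^\ell_{h,i}{\sf E}_\ell$ of association scheme theory, combined with $\operatorname{tr}({\sf E}_\ell{\sf E}_j)=\delta_{\ell,j}m_j$, evaluates this sum as $q^j_{h,i}m_j/2^N$, which Remark \ref{rem:IntKrein} identifies as $N!/(2^N r!s!t!u!)$. Therefore $\langle\tilde B(N,0,0,0),\tilde B^*(r,s,t,u)\rangle=1/4^N$, and plugging back in yields $\overline{c_{r,s,t,u}}=(N!2^N)^{1/2}$, hence $c_{r,s,t,u}=(N!2^N)^{1/2}$, as required.

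The main obstacle will be the inner product calculation in the last paragraph, which relies on invoking the Hadamard product identity for association schemes and the explicit Krein-parameter formula from Remark \ref{rem:IntKrein}. This approach neatly avoids the alternative (and much more tedious) route of directly verifying that $A^{*(i)}$ acts on the $\tilde B^*$-basis with the same tridiagonal coefficients that $A^*_i$ acts with on the starred monomial basis of $P_N$ per Proposition \ref{lem:dbAction}(iv)--(vi).
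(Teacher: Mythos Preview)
Your proof is correct, and it takes a genuinely different route from the paper's argument. Both proofs begin with the same weight-space reduction: since $\ddag$ is an $\mathfrak{sl}_4(\mathbb C)$-module isomorphism and the $\mathbb H$-weight spaces are one-dimensional, one must have $\ddag(x^{*r}y^{*s}z^{*t}w^{*u})=c_{r,s,t,u}\,\tilde B^*(r,s,t,u)$. From there the two approaches diverge. The paper avoids computing any individual $c_{r,s,t,u}$: it sets $\Delta(r,s,t,u)$ to be the difference of the two sides and observes that the nonzero $\Delta$'s, lying in distinct weight spaces, are linearly independent; it then shows a single \emph{sum} identity $\sum_{(r,s,t,u)}\Delta(r,s,t,u)/(r!s!t!u!)=0$ using Lemma~\ref{lem:motivated} together with Lemma~\ref{lem:SumBs} (whose proof needs only $I=\sum_\ell{\sf E}_\ell$). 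You instead compute each $c_{r,s,t,u}$ directly by pairing against $\ddag(x^N)$ and evaluating $\langle\tilde B(N,0,0,0),\tilde B^*(r,s,t,u)\rangle$ via the Hadamard-product identity and the explicit Krein-parameter formula of Remark~\ref{rem:IntKrein}. Your route is more hands-on and uses a bit more association-scheme machinery; the paper's route is slicker in that the key Lemma~\ref{lem:SumBs} is nearly trivial. It is worth noting that your inner product could also have been obtained from Lemma~\ref{lem:SumBs} and the orthogonality in Lemma~\ref{lem:BsOrth}, which would bring the two arguments even closer together.
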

\begin{proof} For each profile $(r,s,t,u) \in \mathcal P_N$ we define a vector
\begin{align*}
\Delta(r,s,t,u) = \bigl(x^{*r} y^{*s} z^{*t} w^{*u}\bigr)^\ddag - \bigl(N! 2^N\bigr)^{1/2} \tilde B^*(r,s,t,u).
\end{align*}
We show that $\Delta(r,s,t,u)=0$. By Proposition  \ref{lem:dbAction}(i)--(iii), Theorem \ref{prop:GenAction}, Definition  \ref{lem:Bsdual}, and Lemma \ref{lem:AactBs}, 
\begin{align*}
A^{(1)} \Delta(r,s,t,u) &= (r+s-t-u) \Delta(r,s,t,u), \\
A^{(2)} \Delta(r,s,t,u) &= (r-s+t-u) \Delta(r,s,t,u), \\
A^{(3)} \Delta(r,s,t,u) &= (r-s-t+u) \Delta(r,s,t,u).
\end{align*}
Therefore, $\Delta(r,s,t,u)$ is contained in an $\mathbb H$-weight space of the $\mathfrak{sl}_4(\mathbb C)$-module ${\rm Fix}(G)$.
The vector space ${\rm Fix}(G)$ is the direct sum of its $\mathbb H$-weight spaces. Therefore,
the nonzero vectors among 
\begin{align*}
\Delta(r,s,t,u) \qquad \qquad (r,s,t,u) \in \mathcal P_N
\end{align*}
are linearly independent.
To finish the proof, it suffices to show that
\begin{align} \label{eq:toshowDelta}
0 = \sum_{(r,s,t,u) \in \mathcal P_N} \frac{\Delta(r,s,t,u)}{r!s!t!u!}.
\end{align}
\noindent We have
\begin{align*}
\sum_{(r,s,t,u) \in \mathcal P_N} \frac{ \bigl(x^{*r} y^{*s} z^{*t} w^{*u}\bigr)^\ddag}{r!s!t!u!} &= \Biggl( \sum_{(r,s,t,u) \in \mathcal P_N} \frac{ x^{*r} y^{*s} z^{*t} w^{*u}}{r!s!t!u!} \Biggr)^\ddag \\
&= \frac{2^N}{N!} \bigl( x^N \bigr)^\ddag  \qquad \qquad \qquad \qquad \qquad\hbox{\rm by Lemma \ref{lem:motivated}} \\
&= \frac{2^N}{N!} \bigl(N! 2^N\bigr)^{1/2} \tilde B(N,0,0,0) \qquad \qquad \hbox{\rm by Lemma \ref{lem:vee}} \\
&=  \frac{ \bigl(N! 2^N\bigr)^{1/2}}{N!} B(N,0,0,0) \qquad \qquad \hbox{\rm by Definition  \ref{lem:Bdual}}.
\end{align*}
\noindent We also have
\begin{align*}
\sum_{(r,s,t,u) \in \mathcal P_N} \frac{\tilde B^*(r,s,t,u)}{r!s!t!u!} &= \frac{1}{N! 2^N} \sum_{(r,s,t,u) \in \mathcal P_N} B^*(r,s,t,u) \qquad \hbox{\rm by Definition  \ref{lem:Bsdual}} \\
&= \frac{B(N,0,0,0)}{N!} \qquad \qquad \qquad \qquad \hbox{\rm by Lemma \ref{lem:SumBs}.}
\end{align*}
We may now argue
\begin{align*}
\sum_{(r,s,t,u) \in \mathcal P_N} \frac{\Delta(r,s,t,u)}{r!s!t!u!} &= \sum_{(r,s,t,u) \in \mathcal P_N} \frac{ \bigl(x^{*r} y^{*s} z^{*t} w^{*u}\bigr)^\ddag}{r!s!t!u!}
           -  \bigl(N! 2^N\bigr)^{1/2} \sum_{(r,s,t,u) \in \mathcal P_N} \frac{\tilde B^*(r,s,t,u)}{r!s!t!u!} \\
&=\frac{ \bigl( N! 2^N \bigr)^{1/2}}{N!} B(N,0,0,0) -  \frac{ \bigl( N! 2^N \bigr)^{1/2}}{N!} B(N,0,0,0) \\
 &=0.
\end{align*}
We have shown \eqref{eq:toshowDelta}, and the result follows.
\end{proof}

\noindent In the next four results, we give some comments and tie up some loose ends.

\begin{proposition} \label{lem:HAactionCd} For a profile $(r,s,t,u) \in \mathcal P_N$ the following {\rm (i)--(vi)} hold:
\begin{enumerate}
\item[\rm (i)] $A^{(1)} \tilde B^*(r,s,t,u) = (r+s-t-u) \tilde B^*(r,s,t,u)$;
\item[\rm (ii)] $A^{(2)} \tilde B^*(r,s,t,u) = (r-s+t-u) \tilde B^*(r,s,t,u)$;
\item[\rm (iii)] $A^{(3)} \tilde B^*(r,s,t,u)  = (r-s-t+u) \tilde B^*(r,s,t,u)$;
\item[\rm (iv)] the vector
\begin{align*}
A^{*(1)} \tilde B^*(r,s,t,u)
\end{align*}
 is a linear combination with the following terms and coefficients:
\begin{align*} 
\begin{tabular}[t]{c|c}
{\rm Term }& {\rm Coefficient} 
 \\
 \hline
   $ \tilde B^*(r-1,s+1,t,u)$& $r$   \\
 $\tilde B^*(r+1,s-1,t,u)$   & $s$\\
  $ \tilde B^*(r,s,t-1,u+1)$  & $t$ \\
  $ \tilde B^*(r,s,t+1,u-1) $& $u$
   \end{tabular}
\end{align*}
%%%\newpage
\item[\rm (v)] 
the vector
\begin{align*}
A^{*(2)} \tilde B^*(r,s,t,u) 
\end{align*}
 is a linear combination with the following terms and coefficients:
\begin{align*} 
\begin{tabular}[t]{c|c}
{\rm Term }& {\rm Coefficient} 
 \\
 \hline
   $ \tilde B^*(r-1,s,t+1,u) $& $ r$   \\
 $ \tilde B^*(r,s-1,t,u+1) $   & $s$\\
  $ \tilde B^*(r+1,s,t-1,u)$  & $t$ \\
  $ \tilde B^*(r,s+1,t,u-1) $& $u $
   \end{tabular}
\end{align*}
\item[\rm (vi)] 
the vector
\begin{align*}
A^{*(3)} \tilde B^*(r,s,t,u)
\end{align*}
 is a linear combination with the following terms and coefficients:
\begin{align*} 
\begin{tabular}[t]{c|c}
{\rm Term }& {\rm Coefficient} 
 \\
 \hline
    $ \tilde B^*(r-1,s,t,u+1)$& $r $   \\
 $ \tilde B^*(r,s-1,t+1,u)  $   & $s $\\
    $ \tilde B^*(r,s+1,t-1,u) $  & $t$ \\
  $ \tilde  B^*(r+1,s,t,u-1) $& $u $
   \end{tabular}
\end{align*}
\end{enumerate}
%%%%\item[\rm (iv)] $A^{*(1)} \chi (d_1, d_2, d_3, d_4, d_5) = \chi (d_1, d_2, d_3, d_4, d_5) \bigl( Nd_1+Nd_2-D     \bigr)$
\end{proposition}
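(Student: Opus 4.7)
The proposition has two distinct halves: parts (i)--(iii) describe the diagonal action of $A^{(i)}$ on $\tilde B^*(r,s,t,u)$, and parts (iv)--(vi) describe the shift action of $A^{*(i)}$. My plan is to deduce all six parts as direct translations of earlier results through the $\mathfrak{sl}_4(\mathbb C)$-module isomorphism $\ddag: P_N \to {\rm Fix}(G)$ established in Theorem \ref{prop:GenAction}, together with Proposition \ref{prop:Main2}, which explicitly identifies $\ddag(x^{*r}y^{*s}z^{*t}w^{*u}) = (N!2^N)^{1/2}\tilde B^*(r,s,t,u)$.

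For parts (i)--(iii), the quickest route is actually to bypass $\ddag$ and argue directly from Lemma \ref{lem:AactBs}: since $\tilde B^*(r,s,t,u)$ differs from $B^*(r,s,t,u)$ only by the scalar $r!s!t!u!/(N!2^N)$, the eigenvalue equations for $A^{(1)}, A^{(2)}, A^{(3)}$ carry over unchanged. Alternatively, one could invoke Proposition \ref{lem:dbAction}(i)--(iii) together with Theorem \ref{prop:GenAction} ($A_i = A^{(i)}$ under $\ddag$) and Proposition \ref{prop:Main2}; both approaches deliver the same three eigenvalue identities.

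For parts (iv)--(vi), the clean argument is to apply $\ddag$ to the identities of Proposition \ref{lem:dbAction}(iv)--(vi). Concretely, for part (iv) we start from
\begin{align*}
A^*_1(x^{*r}y^{*s}z^{*t}w^{*u}) = r\,x^{*r-1}y^{*s+1}z^{*t}w^{*u} + s\,x^{*r+1}y^{*s-1}z^{*t}w^{*u} + t\,x^{*r}y^{*s}z^{*t-1}w^{*u+1} + u\,x^{*r}y^{*s}z^{*t+1}w^{*u-1},
\end{align*}
apply the linear map $\ddag$ to both sides, and use $\ddag \circ A^*_1 = A^{*(1)} \circ \ddag$ together with Proposition \ref{prop:Main2}. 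The common prefactor $(N!2^N)^{1/2}$ cancels from every term, leaving the stated recursion on the $\tilde B^*$ basis. Parts (v) and (vi) are handled identically by applying $\ddag$ to Proposition \ref{lem:dbAction}(v) and (vi) respectively.

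There is no real obstacle here; everything is a bookkeeping exercise behind an already-established $\mathfrak{sl}_4(\mathbb C)$-module isomorphism. The only point requiring attention is verifying that the normalizing factor $r!s!t!u!$ inside $\tilde B^*$ does not spoil the coefficients when the indices $(r,s,t,u)$ are shifted by $\pm 1$ on the right-hand side of (iv)--(vi); one must check that the coefficient ratios produced by $\ddag$ match Proposition \ref{lem:dbAction} exactly. But since Proposition \ref{prop:Main2} already shows that $\ddag$ intertwines the two bases with the \emph{same} scalar $(N!2^N)^{1/2}$ independent of $(r,s,t,u)$, the recurrence coefficients $r, s, t, u$ transfer verbatim.
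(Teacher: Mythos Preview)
Your proof is correct and follows essentially the same approach as the paper, which simply says to apply $\ddag$ to everything in Proposition \ref{lem:dbAction} and evaluate using Theorem \ref{prop:GenAction} and Proposition \ref{prop:Main2}. Your alternative route for (i)--(iii) via Lemma \ref{lem:AactBs} is a valid shortcut, and your explicit check that the index-independent prefactor $(N!2^N)^{1/2}$ lets the coefficients $r,s,t,u$ transfer verbatim is exactly the right observation.
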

\begin{proof} Apply the map $\ddag $ to everything in Proposition  \ref{lem:dbAction},
and evaluate the result using Theorem \ref{prop:GenAction} and Proposition \ref{prop:Main2}.
\end{proof}

\begin{lemma} \label{lem:dualSumBs}  We have
\begin{align*}
B^*(N,0,0,0)= 2^{-N} \sum_{(r,s,t,u) \in \mathcal P_N} B(r,s,t,u).
\end{align*}
\end{lemma}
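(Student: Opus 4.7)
The plan is to proceed by a direct computation, exploiting the simple structure of the primitive idempotent $\sf E_0$ of the adjacency map. By Definition \ref{lem:Bs} applied to the profile $(N,0,0,0)$, the triple $(h,i,j)=(t+u,u+s,s+t)$ equals $(0,0,0)$, so $B^*(N,0,0,0) = Q_{0,0,0}$. From Definition \ref{def:Qhij} this unpacks as
\begin{align*}
B^*(N,0,0,0) = 2^N \sum_{x \in X} {\sf E}_0 x \otimes {\sf E}_0 x \otimes {\sf E}_0 x.
\end{align*}

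Next I will identify ${\sf E}_0$ explicitly. Setting $\hat{1} = \sum_{y \in X} y$, observe that ${\sf A}\hat{1} = N \hat{1}$ by \eqref{eq:AdjMap}, so $\hat{1}$ is a $\theta_0$-eigenvector for $\sf A$. Since the ${\sf E}_0 V$ has dimension $m_0 = \binom{N}{0}=1$, the subspace ${\sf E}_0 V$ is spanned by $\hat{1}$. Combined with $\langle \hat{1}, \hat{1}\rangle = 2^N$ (using the orthonormality of $X$ in Definition \ref{def:HermHN2}) and Lemma \ref{lem:Ebil}, this forces
\begin{align*}
{\sf E}_0 x = \frac{\hat{1}}{2^N}, \qquad \qquad x \in X.
\end{align*}
Substituting this into the expression above,
\begin{align*}
B^*(N,0,0,0) = 2^N \sum_{x \in X} \frac{\hat{1}^{\otimes 3}}{2^{3N}} = 2^N \cdot 2^N \cdot \frac{\hat{1}^{\otimes 3}}{2^{3N}} = 2^{-N}\, \hat{1}^{\otimes 3}.
\end{align*}

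Finally I must recognize the right-hand side. Expanding $\hat{1}^{\otimes 3} = \sum_{x,y,z \in X} x\otimes y\otimes z$ and partitioning the summation set $X^{\otimes 3}$ by profile via Definition \ref{def:xyzprofile},
\begin{align*}
\hat{1}^{\otimes 3} = \sum_{(r,s,t,u) \in \mathcal P_N} B(r,s,t,u)
\end{align*}
by Definition \ref{def:Brstu}, yielding the claim. The argument is almost entirely routine; the only point requiring any care is the explicit identification of ${\sf E}_0$, and this is immediate from $m_0 = 1$ and the fact that ${\sf A}\hat{1} = N\hat{1}$. (As a sanity check, one could alternatively derive the result by applying the composition $\ddag \circ \sigma \circ \ddag^{-1}$ to Lemma \ref{lem:SumBs} and using Lemma \ref{lem:motivated}, but the direct route above is shorter.)
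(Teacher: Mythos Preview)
Your proof is correct and takes a genuinely different route from the paper's. The paper applies the isomorphism $\ddag:P_N\to{\rm Fix}(G)$ to the polynomial identity in Lemma~\ref{lem:xnExpand}, using Proposition~\ref{prop:Main2} to identify $\bigl(x^{*N}\bigr)^\ddag$ with $(N!2^N)^{1/2}\tilde B^*(N,0,0,0)$ and Lemma~\ref{lem:vee} for the monomials on the right. In effect, the paper obtains Lemma~\ref{lem:dualSumBs} as the $\sigma$-dual of Lemma~\ref{lem:SumBs}, transported through $\ddag$---exactly the alternative you sketch in your parenthetical remark.

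Your direct computation instead exploits the special feature that ${\sf E}_0$ is the rank-one orthogonal projection onto $\mathbb C\hat 1$, which makes $Q_{0,0,0}$ immediately computable. This is shorter and more elementary: it avoids invoking Proposition~\ref{prop:Main2}, which is one of the more substantial results in the section. The paper's route, on the other hand, better highlights the structural duality between Lemmas~\ref{lem:SumBs} and~\ref{lem:dualSumBs} (mirroring that between Lemmas~\ref{lem:xnExpand} and~\ref{lem:motivated}), and generalizes in spirit to situations where no such simple description of the relevant idempotent is available.
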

\begin{proof}  Apply the map $\ddag$ to everything in Lemma  \ref{lem:xnExpand}.
In the resulting equation,
evaluate the left-hand side using Definition  \ref{lem:Bsdual} and Proposition \ref{prop:Main2}.
Evaluate the right-hand side using Definition \ref{lem:Bdual} and Lemma \ref{lem:vee}. The result follows.
\end{proof}

\begin{lemma} \label{lem:FGWSdim1dual}  For the $\mathfrak{sl}_4(\mathbb C)$-module ${\rm Fix}(G)$, each $\mathbb H$-weight space has dimension one
and each $\mathbb H^*$-weight space has dimension one.
\end{lemma}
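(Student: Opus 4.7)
The plan is to transfer the known weight space decompositions of $P_N$ to ${\rm Fix}(G)$ via the $\mathfrak{sl}_4(\mathbb C)$-module isomorphism $\ddag: P_N \to {\rm Fix}(G)$ established in Theorem \ref{prop:GenAction}. Since any $\mathfrak{sl}_4(\mathbb C)$-module isomorphism carries $\mathbb H$-weight spaces bijectively to $\mathbb H$-weight spaces (and likewise for $\mathbb H^*$), preserving dimensions, it suffices to invoke Lemmas \ref{lem:WSdim1}(i) and \ref{lem:WSdim1dual}(i), which assert that each $\mathbb H^*$-weight space and each $\mathbb H$-weight space of $P_N$ is one-dimensional.

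Alternatively, and more concretely, the conclusion can be read directly off the bases of ${\rm Fix}(G)$ already constructed. Proposition \ref{lem:HAactionC}(iv)--(vi) shows that the basis $\lbrace \tilde B(r,s,t,u)\rbrace_{(r,s,t,u)\in\mathcal P_N}$ of ${\rm Fix}(G)$ simultaneously diagonalizes $A^{*(1)}, A^{*(2)}, A^{*(3)}$, with eigenvalues $(r+s-t-u, r-s+t-u, r-s-t+u)$. By Lemma \ref{lem:rR} these eigenvalue triples are distinct for distinct $(r,s,t,u)\in\mathcal P_N$, so each common eigenspace of $A^{*(1)}, A^{*(2)}, A^{*(3)}$ is spanned by a single basis vector. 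Under the identifications $A^*_i = A^{*(i)}$ from Theorem \ref{prop:GenAction}, this says every $\mathbb H^*$-weight space of ${\rm Fix}(G)$ has dimension one. The analogous argument, using Proposition \ref{lem:HAactionCd}(i)--(iii) and the basis $\lbrace \tilde B^*(r,s,t,u)\rbrace_{(r,s,t,u)\in\mathcal P_N}$, yields the $\mathbb H$-weight assertion.

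I do not anticipate any genuine obstacle here: both the isomorphism-transfer route and the direct basis-inspection route are essentially one-line arguments once the machinery of Sections 5, 7 and 16 is in place. The only minor point to verify carefully is that the ``distinct eigenvalue triples'' observation really does force one-dimensionality, and for this Lemma \ref{lem:rR} has already done the combinatorial work.
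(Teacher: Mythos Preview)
Your proposal is correct, and your first route---transferring the one-dimensionality of weight spaces from $P_N$ to ${\rm Fix}(G)$ via the $\mathfrak{sl}_4(\mathbb C)$-module isomorphism $\ddag$ and invoking Lemmas \ref{lem:WSdim1}, \ref{lem:WSdim1dual}---is exactly the paper's proof. Your alternative direct argument via Propositions \ref{lem:HAactionC}, \ref{lem:HAactionCd} and Lemma \ref{lem:rR} is also valid but is not what the paper does.
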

\begin{proof} By Lemmas  \ref{lem:WSdim1}, \ref{lem:WSdim1dual} and since the $\mathfrak{sl}_4(\mathbb C)$-modules $P_N$, ${\rm Fix}(G)$ are isomorphic.
\end{proof}

\begin{lemma} \label{lem:invDual} For the $\mathfrak{sl}_4(\mathbb C)$-module ${\rm Fix}(G)$, the 
following holds for
$i \in \lbrace 1,2,3 \rbrace$:
\begin{align*}
\langle A_i u, v \rangle = \langle u, A_i v\rangle,  \qquad \quad \langle A^*_i u, v \rangle = \langle u, A^*_i v\rangle \qquad \quad u,v \in {\rm Fix}(G).
\end{align*}
\end{lemma}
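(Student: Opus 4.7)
The plan is to transport the identity from $P_N$ to ${\rm Fix}(G)$ using the map $\ddag$, which by Theorem \ref{prop:GenAction} is an $\mathfrak{sl}_4(\mathbb C)$-module isomorphism and by Theorem \ref{lem:HermComp} is an isometry. Since Lemma \ref{lem:inv} already establishes the analogous self-adjointness statement for the $\mathfrak{sl}_4(\mathbb C)$-generators acting on $P_N$, the result for ${\rm Fix}(G)$ should follow immediately.

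In detail, given $u, v \in {\rm Fix}(G)$, the surjectivity of $\ddag: P_N \to {\rm Fix}(G)$ lets us choose $f, g \in P_N$ such that $u = f^{\ddag}$ and $v = g^{\ddag}$. Fix $i \in \{1,2,3\}$. Using in order Theorem \ref{prop:GenAction} (so that $A_i$ on ${\rm Fix}(G)$ agrees with $A_i$ on $P_N$ through $\ddag$), Theorem \ref{lem:HermComp}, and Lemma \ref{lem:inv}, I would compute
\begin{align*}
\langle A_i u, v \rangle
 = \langle A_i f^{\ddag}, g^{\ddag} \rangle
 = \langle (A_i f)^{\ddag}, g^{\ddag} \rangle
 = \langle A_i f, g \rangle
 = \langle f, A_i g \rangle
 = \langle f^{\ddag}, (A_i g)^{\ddag} \rangle
 = \langle u, A_i v \rangle.
\end{align*}
The argument for $A^*_i$ is identical, replacing $A_i$ by $A^*_i$ throughout and invoking the second half of Lemma \ref{lem:inv}.

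There is no real obstacle here; the entire content is the fact that Theorem \ref{lem:HermComp} upgrades the $\mathfrak{sl}_4(\mathbb C)$-module isomorphism $\ddag$ to an isometric isomorphism of Hermitian inner product spaces, so that any bilinear-form identity valid on $P_N$ transfers verbatim to ${\rm Fix}(G)$. The only thing to verify carefully is that $\ddag$ genuinely intertwines the action of each generator, but this is precisely the statement of Theorem \ref{prop:GenAction}, so the proof reduces to stringing these facts together.
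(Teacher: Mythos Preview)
Your proof is correct and takes essentially the same approach as the paper, which simply says to apply $\ddag$ to everything in Lemma \ref{lem:inv} and evaluate using Theorem \ref{lem:HermComp} together with the fact that $\ddag$ is an $\mathfrak{sl}_4(\mathbb C)$-module isomorphism.
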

\begin{proof} Apply the map $\ddag$ to everything in Lemma \ref{lem:inv}, and evaluate the result using Theorem \ref{lem:HermComp} and the fact that
$\ddag$ is an isomorphism of $\mathfrak{sl}_4(\mathbb C)$-modules.
\end{proof}

\noindent  We comment about notation. Earlier in this section, we discussed the vectors 
\begin{align} \label{eq:BBs}
B(r,s,t,u), \qquad B^*(r,s,t,u) \qquad \qquad (r,s,t,u) \in \mathcal P_N.
\end{align}
Up to notation, the vectors \eqref{eq:BBs}  are the same as the vectors
\begin{align} \label{eq:PQ}
P_{h,i,j}, \qquad Q_{h,i,j} \qquad \qquad (h,i,j) \in \mathcal P''_N.
\end{align}
\noindent  In the next section, we will adopt a point of view in which the notation \eqref{eq:PQ} is more convenient
than the notation \eqref{eq:BBs}. To prepare for the next section, we restate a few results using the notation \eqref{eq:PQ}.

\begin{corollary} \label{cor:Phij} The following {\rm (i)--(iv)} hold.
\begin{enumerate}
\item[\rm (i)] The vectors
\begin{align*}
P_{h,i,j} \qquad \qquad (h,i,j) \in \mathcal P''_N
\end{align*}
form an orthogonal basis for ${\rm Fix}(G)$.
\item[\rm (ii)] For $(h,i,j) \in \mathcal P''_N$,
\begin{align*}
&A^{*(1)} \bigl(  P_{h,i,j}   \bigr) = \theta^*_h P_{h,i,j}, \qquad \qquad
A^{*(2)} \bigl(  P_{h,i,j}   \bigr) = \theta^*_i P_{h,i,j}, \\
&A^{*(3)} \bigl(  P_{h,i,j}   \bigr) = \theta^*_j P_{h,i,j}.
\end{align*}
\item[\rm (iii)] The vectors
\begin{align*}
Q_{h,i,j} \qquad \qquad (h,i,j) \in \mathcal P''_N
\end{align*}
form an orthogonal basis for ${\rm Fix}(G)$.
\item[\rm (iv)] For $(h,i,j) \in \mathcal P''_N$,
\begin{align*}
&A^{(1)} \bigl(  Q_{h,i,j}   \bigr) = \theta_h Q_{h,i,j}, \qquad \qquad
A^{(2)} \bigl(  Q_{h,i,j}   \bigr) = \theta_i Q_{h,i,j}, \\
&A^{(3)} \bigl(  Q_{h,i,j}   \bigr) = \theta_j Q_{h,i,j}.
\end{align*}
\end{enumerate}
\end{corollary}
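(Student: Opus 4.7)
The plan is to observe that this corollary is essentially a change of indexing: earlier results are phrased in terms of profiles $(r,s,t,u) \in \mathcal P_N$ and the vectors $B(r,s,t,u)$, $B^*(r,s,t,u)$, while the corollary is phrased in terms of triples $(h,i,j) \in \mathcal P''_N$ and the vectors $P_{h,i,j}$, $Q_{h,i,j}$. The translation is provided by the bijection of Lemma \ref{lem:PPbij}, together with Remark \ref{def:Phij} (which identifies $P_{h,i,j} = B(r,s,t,u)$ under the correspondence $h=t+u$, $i=u+s$, $j=s+t$) and Definition \ref{lem:Bs} (which identifies $Q_{h,i,j} = B^*(r,s,t,u)$ under the same correspondence). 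Once this dictionary is set, all four parts of the corollary fall out from results already established.

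For part (i), I would invoke Proposition \ref{lem:FixBasis}, which says the vectors $\{B(r,s,t,u)\}_{(r,s,t,u)\in\mathcal P_N}$ form a basis for ${\rm Fix}(G)$, together with the orthogonality statement in Lemma \ref{lem:orthogB}; reindexing via Lemma \ref{lem:PPbij} then gives that $\{P_{h,i,j}\}_{(h,i,j)\in \mathcal P''_N}$ is an orthogonal basis. For part (iii), the same argument works with $\{B^*(r,s,t,u)\}$ (Proposition \ref{lem:BsBasis} and Lemma \ref{lem:BsOrth}) reindexed to $\{Q_{h,i,j}\}$.

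For parts (ii) and (iv), the eigenvalue computation is just arithmetic in $\mathcal P_N$. Using $r+s+t+u=N$, one checks
\begin{align*}
r+s-t-u &= N-2(t+u) = N-2h = \theta^*_h = \theta_h, \\
r-s+t-u &= N-2(s+u) = N-2i = \theta^*_i = \theta_i, \\
r-s-t+u &= N-2(s+t) = N-2j = \theta^*_j = \theta_j,
\end{align*}
where the last equalities use \eqref{eq:eigvalHN2} and \eqref{eq:deigvalHN2}. Feeding these into Proposition \ref{lem:HAactionC}(iv)--(vi) gives part (ii) for the vectors $B(r,s,t,u)=P_{h,i,j}$, and feeding them into Lemma \ref{lem:AactBs} gives part (iv) for the vectors $B^*(r,s,t,u)=Q_{h,i,j}$. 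There is no genuine obstacle; the only thing to be careful about is bookkeeping the permutation of indices in the bijection of Lemma \ref{lem:PPbij} so that $h$ corresponds to $t+u$ (not some other pair), which is why the eigenvalue of $A^{*(1)}$ (and $A^{(1)}$) comes out to the $h$-index rather than $i$ or $j$.
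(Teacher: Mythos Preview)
Your proposal is correct and follows essentially the same approach as the paper: both treat the corollary as a reindexing of earlier results via the bijection $\mathcal P_N \to \mathcal P''_N$ from Lemma~\ref{lem:PPbij}. The only minor difference is that for parts (ii) and (iv) the paper cites the raw definitions (Definition~\ref{def:mapsAAAs} for (ii), Lemma~\ref{lem:Atensor} and Definition~\ref{def:Qhij} for (iv)) directly, whereas you route through the already-computed actions in Proposition~\ref{lem:HAactionC}(iv)--(vi) and Lemma~\ref{lem:AactBs}; either path is fine.
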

\begin{proof} (i) By Lemma \ref{lem:PPbij},  Remark \ref{def:Phij},  Lemma  \ref{lem:orthogB}, and Proposition \ref{lem:FixBasis}. \\
\noindent (ii) By Remark \ref{def:Phij} and Definition \ref{def:mapsAAAs}. \\
\noindent (iii) By Lemma \ref{lem:PPbij}, Definition \ref{lem:Bs}, Lemma \ref{lem:BsOrth}, and Proposition \ref{lem:BsBasis}. \\
\noindent (iv) By Lemma \ref{lem:Atensor}  and Definition \ref{def:Qhij}. 
\end{proof}

\section{The subconstituent algebra of $H(N,2)$}

\noindent 
We continue to discuss the hypercube $H(N,2)$. In \cite{go} the subconstituent algebra of $H(N,2)$ is described in detail.
In this section, we explain what  the subconstituent algebra of $H(N,2)$ has to do with our results from the previous sections.
\medskip

\noindent We now review some concepts and notation about $\Gamma=H(N,2)$.
\begin{definition}\label{def:exy} \rm For $x,y \in X$ we define a map $e_{x,y} \in {\rm End}(V)$ that sends $y \mapsto x$ and all other vertices to $0$. Note
that $\lbrace e_{x,y} \rbrace_{x,y \in X}$ form a basis for  ${\rm End}(V)$.
\end{definition}

\begin{definition}\rm We endow ${\rm End}(V)$ with a Hermitian form $\langle \,,\,\rangle $ with respect to which the basis $\lbrace e_{x,y} \rbrace_{x,y \in X}$ is
orthonormal.
\end{definition}

\noindent For $x \in X$ and $0 \leq i \leq N$, define the set $\Gamma_i(x)=\lbrace y \in X \vert \partial(x,y)=i\rbrace$.
 For $0 \leq i \leq N$ define ${\sf A}_i \in {\rm End}(V)$ such that
\begin{align*}
{\sf A}_i x = \sum_{\xi \in \Gamma_i(x)} \xi, \qquad \qquad x \in X.
\end{align*}
By \cite[Section~3]{int} we have ${\sf A}_i = \binom{N}{i} f_i({\sf A})$, where $f_i$ is from Definition \ref{def:fi}.
For the rest of this section, fix $\varkappa \in X$. Define ${\sf A}^* = {\sf A}^*(\varkappa) \in {\rm End}(V)$ such that
\begin{align}
{\sf A}^* x = \theta^*_{\partial(x, \varkappa)} x, \qquad \qquad x \in X. \label{eq:DualAdj}
\end{align}
By construction, the map ${\sf A}^*$ is diagonalizable with eigenvalues $\lbrace \theta^*_i \rbrace_{i=0}^N$.
By \cite[Theorem~4.2]{go} we have
\begin{align*}
\lbrack {\sf A}, \lbrack {\sf A}, {\sf A}^* \rbrack \rbrack = 4 {\sf A}^*, \qquad \qquad
\lbrack {\sf A}^*, \lbrack {\sf A}^*, {\sf A}  \rbrack \rbrack = 4 {\sf A}.
\end{align*}
\noindent Let $T=T(\varkappa)$ denote the subalgebra of ${\rm End}(V)$ generated by $\sf A, A^*$.  By \cite[Corollary~14.15]{go} we have ${\rm dim}\, T = \binom{N+3}{3}$.
We call $T$ the {\it subconstituent algebra} (or {\it Terwilliger algebra}) of
$H(N,2)$ with respect to $\varkappa$; see \cite[Definition~2.1]{go}.
\medskip

\noindent We mention some bases for the vector space $T$. For $0 \leq i \leq N$ define ${\sf E}^*_i = {\sf E}^*_i(\varkappa) \in {\rm End}(V)$ such that
\begin{align*}
{\sf E}^*_i x =  \begin{cases} x, & {\mbox{\rm if $\partial(x,\varkappa)=i$}}; \\
0, & {\mbox{\rm if $\partial(x,\varkappa)\not= i$}},
\end{cases} 
\qquad \qquad x \in X.
\end{align*}
By construction, ${\sf E}^*_i$ is the primitive idempotent of ${\sf A}^*$ for the eigenvalue $\theta^*_i$.
For $0 \leq i \leq N$  define ${\sf A}^*_i = {\sf A}^*_i(\varkappa) \in {\rm End}(V)$ such that
\begin{align*}
{\sf A}^*_i x = 2^N  \langle  {\sf E}_i \varkappa,  x \rangle x, \qquad \qquad x \in X.
\end{align*}
By \cite[Section~11]{int} we have ${\sf A}^*_i = \binom{N}{i} f_i({\sf A}^*) $.
By \cite[p.~403]{go} and Remark \ref{rem:IntKrein}, the following hold for $0 \leq h,i,j\leq N$:
\begin{align*}
{\sf E}^*_i {\sf A}_h {\sf E}^*_j  &\not=0 \quad \hbox{ \rm iff} \quad (h,i,j) \in \mathcal P''_N;      \\
{\sf E}_i {\sf A}^*_h {\sf E}_j &\not=0 \quad \hbox{ \rm iff} \quad (h,i,j) \in \mathcal P''_N.
\end{align*}
\begin{lemma} \label{lem:Tbasis} %%%%%%%%{\rm (See  \cite[Corollary~14.15]{go} and \cite[Lemma~8.1]{int}.)} 
The following is an orthogonal basis for the vector space $T$:
\begin{align}
{\sf E}^*_i {\sf A}_h {\sf E}^*_j \qquad \qquad (h,i,j) \in \mathcal P''_N. \label{eq:TB1}
\end{align}
Moreover, the following is an orthogonal basis for the vector space $T$:
\begin{align}
{\sf E}_i {\sf A}^*_h {\sf E}_j \qquad \qquad (h,i,j) \in \mathcal P''_N. \label{eq:TB2}
\end{align}
\end{lemma}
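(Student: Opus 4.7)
The plan is to verify, for each listed set, that (a) every element lies in $T$, (b) the count is $\binom{N+3}{3} = {\rm dim}\,T$, and (c) the elements indexed by $(h,i,j) \in \mathcal P''_N$ are mutually orthogonal with respect to $\langle \,,\, \rangle$ on ${\rm End}(V)$; together with the nonvanishing criteria stated just above the lemma, this forces each set to be an orthogonal basis. Containment in $T$ is immediate since ${\sf A}_h = \binom{N}{h} f_h({\sf A})$ and ${\sf A}^*_h = \binom{N}{h} f_h({\sf A}^*)$, while ${\sf E}_i$ and ${\sf E}^*_i$ are polynomials in ${\sf A}$ and ${\sf A}^*$ respectively by \eqref{eq:Eprod}. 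The count follows from Lemma \ref{lem:PPbij} and \eqref{eq:PNsize}.

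For the first basis I would compute matrix entries directly: since ${\sf E}^*_i$ is diagonal with $1$ on the vertices at distance $i$ from $\varkappa$ and $({\sf A}_h)_{x,y} = \lbrack \partial(x,y)=h \rbrack$, one reads off
\begin{align*}
({\sf E}^*_i {\sf A}_h {\sf E}^*_j)_{x,y} = \lbrack \partial(x,\varkappa)=i \rbrack\,\lbrack \partial(y,\varkappa)=j \rbrack\,\lbrack \partial(x,y)=h \rbrack.
\end{align*}
Distinct triples $(h,i,j)$ then produce matrices with disjoint support in the orthonormal basis $\lbrace e_{x,y}\rbrace_{x,y \in X}$ of ${\rm End}(V)$, giving both orthogonality and, for $(h,i,j) \in \mathcal P''_N$, a strictly positive squared norm equal to the number of pairs $(x,y) \in X^2$ with the prescribed distances.

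For the second basis the matrix-entry argument is unavailable, but self-adjointness does the work. Since ${\sf E}_i$ is a real polynomial in the Hermitian matrix ${\sf A}$ and ${\sf A}^*_h$ is diagonal with real entries, both are self-adjoint on $V$, so $({\sf E}_{i'} {\sf A}^*_{h'} {\sf E}_{j'})^* = {\sf E}_{j'} {\sf A}^*_{h'} {\sf E}_{i'}$. Writing $\langle A,B\rangle = {\rm tr}(AB^*)$ on ${\rm End}(V)$ and using ${\sf E}_i {\sf E}_{i'} = \delta_{i,i'} {\sf E}_i$ with the cyclic property of trace,
\begin{align*}
\langle {\sf E}_i {\sf A}^*_h {\sf E}_j,\,{\sf E}_{i'} {\sf A}^*_{h'} {\sf E}_{j'} \rangle = \delta_{i,i'}\delta_{j,j'}\,{\rm tr}({\sf E}_i {\sf A}^*_h {\sf E}_j {\sf A}^*_{h'}).
\end{align*}
The cleanest way to finish is to invoke the formal self-duality of $H(N,2)$. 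Taking $X = \lbrace 1,-1\rbrace^N$ and $\varkappa = (1,\ldots,1)$, the Hadamard transform $W \in {\rm End}(V)$ with matrix entries $W_{x,y} = 2^{-N/2}\prod_{k=1}^N x_k y_k$ is unitary and involutive, and conjugation $\psi(B) = WBW^{-1}$ is an isometric algebra automorphism of ${\rm End}(V)$ that swaps ${\sf A} \leftrightarrow {\sf A}^*$ and hence ${\sf A}_h \leftrightarrow {\sf A}^*_h$, ${\sf E}_i \leftrightarrow {\sf E}^*_i$. Thus $\psi(T) = T$ and $\psi$ sends the first basis to the second, transporting orthogonality and the squared-norm formulas. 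The distance-transitivity of $H(N,2)$ reduces the case of general $\varkappa$ to this special choice.

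The main obstacle will be establishing the duality automorphism $\psi$ at the level of the specific operators ${\sf A}^*_h$ and ${\sf E}^*_i$ (defined via the base vertex $\varkappa$), rather than only the generators ${\sf A}, {\sf A}^*$; this is essentially the Fourier-to-convolution principle for $(\mathbb{Z}/2)^N$ and is standard, but it requires careful bookkeeping of the role of $\varkappa$. As a purely combinatorial backup one can evaluate the displayed trace directly via ${\sf A}^*_h = 2^N \sum_x ({\sf E}_h)_{\varkappa,x}\, e_{x,x}$ and the orthogonality relations among the ${\sf E}_h$, reducing the positivity and $\delta_{h,h'}$ vanishing to the Krein parameter interpretation recorded in Remark \ref{rem:IntKrein}.
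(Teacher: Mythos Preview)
Your proposal is correct. The overall strategy---containment in $T$, cardinality $|\mathcal P''_N| = \binom{N+3}{3} = \dim T$, mutual orthogonality, and nonvanishing---matches the paper's, which simply cites an external reference \cite[Lemma~8.1]{int} together with Remark~\ref{rem:IntKrein} and the dimension count. What you add is an explicit, self-contained verification: the disjoint-support argument for the first basis is the cleanest possible, and your duality transport via the Hadamard transform for the second basis is precisely the automorphism the paper invokes later (Lemma~\ref{lem:duality}, citing~\cite{InheritHSD}) but does not spell out here. Your Krein-parameter backup is essentially what lies behind the paper's citation. So the two proofs are the same in outline; yours is more explicit where the paper outsources.
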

\begin{proof} By \cite[Lemma~8.1]{int} and Remark \ref{rem:IntKrein}  and
\begin{align*}
{\rm dim}\, T = \binom{N+3}{3} = \vert \mathcal P_N \vert = \vert \mathcal P''_N \vert.
\end{align*}
\end{proof}

\noindent  By \cite[p.~404]{go} the algebra $T$ is semisimple.
\medskip

\noindent Next, we review the Wedderburn decomposition of $T$. By \cite[Corollary~14.12]{go} the center of $T$ is generated by
\begin{align}
\phi = \frac{  4 {\sf A}^2 + 4 {\sf A}^{*2} - ({\sf A A^*-A^* A})^2 }{8}. \label{eq:phiWedderburn}
\end{align}
By \cite[Theorem~6.3 and Lemma~14.6]{go},  the map $\phi$ is diagonalizable with eigenvalues
\begin{align*}
\frac{(N-2\ell)(N-2\ell+2)}{2} \qquad \qquad 0 \leq \ell \leq \lfloor N/2 \rfloor.
\end{align*}
For $0 \leq \ell \leq \lfloor N/2 \rfloor$ let $\phi_\ell \in {\rm End}(V)$ denote the primitive idempotent of $\phi$ associated with the eigenvalue
\begin{align*}
\frac{(N-2\ell)(N-2 \ell+2)}{2}.
\end{align*}
By \cite[Theorem~14.10]{go} the following is a basis for the center of $T$:
\begin{align*}
\phi_\ell, \qquad \qquad 0 \leq \ell \leq \lfloor N/2 \rfloor.
\end{align*}
By \cite[Corollary~14.9]{go},
\begin{align}
\phi = \sum_{\ell=0}^{\lfloor N/2 \rfloor} \frac{(N-2\ell)(N-2\ell+2)}{2} \phi_\ell.       \label{eq: PhiPhi}
\end{align}
By \cite[Theorem~14.14]{go}, for  $0 \leq \ell \leq \lfloor N/2 \rfloor$  the subspace $ \phi_\ell T$ is a minimal 2-sided ideal of $T$ with dimension $(N-2\ell+1)^2$.
By \cite[Theorems~14.10, 14.14]{go} we have
\begin{align} \label{eq:Wedderburn}
T = \sum_{\ell=0}^{\lfloor N/2 \rfloor}  \phi_\ell T \qquad \quad \hbox{\rm (orthogonal direct sum).}
\end{align}
This is the Wedderburn decomposition of $T$.
\medskip

\noindent Next, we define some maps in ${\rm End}(T)$.
\begin{definition} \label{def:NewAAA} \rm We define ${\mathcal A}^{(1)}, {\mathcal A}^{(2)}, {\mathcal A}^{(3)} \in {\rm End}(T)$ such that for $(h,i,j) \in \mathcal P''_N$,
\begin{align*}
{\mathcal A}^{(1)} \bigl( {\sf E}_i {\sf A}^*_h {\sf E}_j \bigr) &= \theta_h {\sf E}_i {\sf A}^*_h {\sf E}_j, \\
{\mathcal A}^{(2)} \bigl( {\sf E}_i {\sf A}^*_h {\sf E}_j \bigr) &= \theta_i {\sf E}_i {\sf A}^*_h {\sf E}_j, \\
{\mathcal A}^{(3)} \bigl( {\sf E}_i {\sf A}^*_h {\sf E}_j \bigr) &= \theta_j {\sf E}_i {\sf A}^*_h {\sf E}_j.
\end{align*}
\end{definition} 
\noindent The maps ${\mathcal A}^{(2)}, {\mathcal A}^{(3)}$ have the following interpretation.
\begin{lemma} \label{lem:NewAAALR} For $B \in T$,
\begin{align*}
{\mathcal A}^{(2)}(B) = {\sf A}B, \qquad \qquad {\mathcal A}^{(3)}(B) = B{\sf A}.
\end{align*}
\end{lemma}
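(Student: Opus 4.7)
The plan is to verify both identities on the basis of $T$ provided by Lemma \ref{lem:Tbasis}, namely
\begin{align*}
{\sf E}_i {\sf A}^*_h {\sf E}_j \qquad \qquad (h,i,j) \in \mathcal P''_N,
\end{align*}
and then conclude by linearity. Note first that since $\sf A \in T$ and $T$ is a subalgebra of ${\rm End}(V)$, both left multiplication and right multiplication by $\sf A$ preserve $T$, so they define elements of ${\rm End}(T)$.

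Recall from the preliminaries of Section~2 (see the list of properties (i)--(v) for primitive idempotents) that since ${\sf E}_i$ is the primitive idempotent of ${\sf A}$ for the eigenvalue $\theta_i$, we have
\begin{align*}
{\sf A}\,{\sf E}_i = \theta_i {\sf E}_i = {\sf E}_i\,{\sf A} \qquad (0 \leq i \leq N).
\end{align*}
For $(h,i,j) \in \mathcal P''_N$ this gives
\begin{align*}
{\sf A}\bigl({\sf E}_i {\sf A}^*_h {\sf E}_j\bigr) = \bigl({\sf A}{\sf E}_i\bigr) {\sf A}^*_h {\sf E}_j = \theta_i {\sf E}_i {\sf A}^*_h {\sf E}_j = {\mathcal A}^{(2)}\bigl({\sf E}_i {\sf A}^*_h {\sf E}_j\bigr),
\end{align*}
and similarly
\begin{align*}
\bigl({\sf E}_i {\sf A}^*_h {\sf E}_j\bigr){\sf A} = {\sf E}_i {\sf A}^*_h \bigl({\sf E}_j {\sf A}\bigr) = \theta_j {\sf E}_i {\sf A}^*_h {\sf E}_j = {\mathcal A}^{(3)}\bigl({\sf E}_i {\sf A}^*_h {\sf E}_j\bigr).
\end{align*}

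Since the maps $B \mapsto {\sf A}B$ and $B \mapsto B{\sf A}$ on $T$ are $\mathbb{C}$-linear and agree with ${\mathcal A}^{(2)}$ and ${\mathcal A}^{(3)}$ respectively on the basis of $T$ from Lemma \ref{lem:Tbasis}, the claimed equalities hold throughout $T$. There is no real obstacle here: the proof is essentially a one-line calculation per identity, relying only on the eigenvalue relation ${\sf A}{\sf E}_i = \theta_i {\sf E}_i = {\sf E}_i {\sf A}$ and the fact that Lemma \ref{lem:Tbasis} provides a basis on which ${\mathcal A}^{(2)}$ and ${\mathcal A}^{(3)}$ have been defined.
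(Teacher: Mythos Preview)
Your proof is correct and follows exactly the same approach as the paper: verify the identities on the basis $\{{\sf E}_i {\sf A}^*_h {\sf E}_j\}_{(h,i,j)\in\mathcal P''_N}$ of $T$ from Lemma~\ref{lem:Tbasis} using Definition~\ref{def:NewAAA} and the relation ${\sf A}{\sf E}_i=\theta_i{\sf E}_i={\sf E}_i{\sf A}$. The paper simply compresses this into a one-line reference to Lemma~\ref{lem:Tbasis} and Definition~\ref{def:NewAAA}.
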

\begin{proof} By the second assertion in Lemma \ref{lem:Tbasis}, along with Definition \ref{def:NewAAA}.
\end{proof} 

\begin{definition} \label{def:NewAsAsAs} \rm We define ${\mathcal A}^{*(1)}, {\mathcal A}^{*(2)}, {\mathcal A}^{*(3)} \in {\rm End}(T)$ such that for $(h,i,j) \in \mathcal P''_N$,
\begin{align*}
{\mathcal A}^{*(1)} \bigl( {\sf E}^*_i {\sf A}_h {\sf E}^*_j \bigr) &= \theta^*_h {\sf E}^*_i {\sf A}_h {\sf E}^*_j, \\
{\mathcal A}^{*(2)} \bigl( {\sf E}^*_i {\sf A}_h {\sf E}^*_j \bigr) &= \theta^*_j {\sf E}^*_i {\sf A}_h {\sf E}^*_j, \\
{\mathcal A}^{*(3)} \bigl( {\sf E}^*_i {\sf A}_h {\sf E}^*_j \bigr) &= \theta^*_i {\sf E}^*_i {\sf A}_h {\sf E}^*_j.
\end{align*}
\end{definition} 

\noindent The maps ${\mathcal A}^{*(2)}, {\mathcal A}^{*(3)}$ have the following interpretation.

\begin{lemma} \label{lem:NewAsAsAsLR} For $B \in T$,
\begin{align*}
{\mathcal A}^{*(2)}(B) = B{\sf A}^*, \qquad \qquad {\mathcal A}^{*(3)}(B) = {\sf A}^* B.
\end{align*}
\end{lemma}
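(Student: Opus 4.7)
The plan is to mimic the proof of Lemma \ref{lem:NewAAALR}, working now on the first basis of $T$ displayed in Lemma \ref{lem:Tbasis}, namely
\[
{\sf E}^*_i {\sf A}_h {\sf E}^*_j \qquad (h,i,j) \in \mathcal P''_N,
\]
rather than on the second basis. By linearity, it suffices to verify both equalities on a basis vector $B={\sf E}^*_i {\sf A}_h {\sf E}^*_j$. The two identities will then follow from Definition \ref{def:NewAsAsAs}.

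The key input is the spectral decomposition
\[
{\sf A}^* = \sum_{k=0}^N \theta^*_k {\sf E}^*_k,
\]
which holds because $\lbrace {\sf E}^*_k\rbrace_{k=0}^N$ are the primitive idempotents of ${\sf A}^*$ (see the discussion following \eqref{eq:DualAdj}). Combined with the orthogonality relations ${\sf E}^*_j {\sf E}^*_k = \delta_{j,k} {\sf E}^*_j$, I would compute
\[
B\,{\sf A}^* = {\sf E}^*_i {\sf A}_h {\sf E}^*_j \sum_{k=0}^N \theta^*_k {\sf E}^*_k = \sum_{k=0}^N \theta^*_k {\sf E}^*_i {\sf A}_h {\sf E}^*_j {\sf E}^*_k = \theta^*_j {\sf E}^*_i {\sf A}_h {\sf E}^*_j = \theta^*_j B,
\]
and analogously
\[
{\sf A}^* B = \sum_{k=0}^N \theta^*_k {\sf E}^*_k {\sf E}^*_i {\sf A}_h {\sf E}^*_j = \theta^*_i {\sf E}^*_i {\sf A}_h {\sf E}^*_j = \theta^*_i B.
\]

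Comparing these with Definition \ref{def:NewAsAsAs}, one sees immediately that ${\mathcal A}^{*(2)}(B) = \theta^*_j B = B {\sf A}^*$ and ${\mathcal A}^{*(3)}(B) = \theta^*_i B = {\sf A}^* B$ on every basis vector. Extending $\mathbb{C}$-linearly to all of $T$ completes the proof. There is no real obstacle here; the only thing to keep track of is the positional convention in Definition \ref{def:NewAsAsAs} (note the swap: ${\mathcal A}^{*(2)}$ uses $\theta^*_j$ and ${\mathcal A}^{*(3)}$ uses $\theta^*_i$), which is precisely arranged so that right-multiplication by ${\sf A}^*$ picks out the rightmost ${\sf E}^*_j$ and left-multiplication picks out the leftmost ${\sf E}^*_i$.
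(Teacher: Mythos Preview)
Your proposal is correct and follows essentially the same approach as the paper: the paper's proof simply cites the first basis in Lemma~\ref{lem:Tbasis} together with Definition~\ref{def:NewAsAsAs}, and your argument spells out exactly what that citation means, using the spectral decomposition of ${\sf A}^*$ and the orthogonality of the ${\sf E}^*_k$ to verify the identities on basis vectors.
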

\begin{proof} By the first assertion in Lemma \ref{lem:Tbasis}, along with Definition \ref{def:NewAsAsAs}.
\end{proof} 

\noindent We are going to show that the vector space $T$ becomes an $\mathfrak{sl}_4(\mathbb C)$-module  on which $A_i = {\mathcal A}^{(i)} $ and $A^*_i ={\mathcal A}^{*(i)}$
for $i \in \lbrace 1,2,3 \rbrace$. We are also going to show that the $\mathfrak{sl}_4(\mathbb C)$-modules  $T$ and $P_N$ are isomorphic. In addition, we will interpret
the Wedderburn decomposition of $T$ in terms of the decomposition of $P_N$ given in \eqref{eq:odsREMIND} with $i=1$.
\medskip

\noindent Recall from Definition \ref{def:V3} the vector space $V^{\otimes 3}$ and the set $X^{\otimes 3}$.

\begin{definition}\rm We define a $\mathbb C$-linear map $\varepsilon: V^{\otimes 3} \to {\rm End}(V)$ as follows. For $x \otimes y \otimes z \in X^{\otimes 3}$,
\begin{align*}
\varepsilon(x \otimes y \otimes z) =  \begin{cases} 2^{N/2}e_{y,z}, & {\mbox{\rm if $x=\varkappa$}}; \\
0, & {\mbox{\rm if $x \not=\varkappa$}}.
\end{cases} 
\end{align*}
\end{definition}

\begin{lemma} \label{lem:epsActP} For $x \otimes y \otimes z \in X^{\otimes 3}$ the map  $\varepsilon(x \otimes y \otimes z)$ sends
\begin{align*}
\psi \mapsto 2^{N/2} \langle x, \varkappa \rangle  \langle z,\psi \rangle  y, \qquad \qquad \psi \in X.
\end{align*}
\end{lemma}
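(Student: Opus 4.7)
The plan is to verify the identity by a direct case analysis on whether $x=\varkappa$ and whether $\psi = z$, using only the definition of $\varepsilon$, the definition of $e_{y,z}$ (which sends $z \mapsto y$ and every other vertex to $0$), and orthonormality of the basis $X$ with respect to $\langle \,,\,\rangle$ (from Definition \ref{def:HermHN2}).

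First I would unpack the left-hand side. By the defining formula for $\varepsilon$, we have $\varepsilon(x\otimes y \otimes z)=2^{N/2}e_{y,z}$ when $x=\varkappa$ and $\varepsilon(x \otimes y \otimes z)=0$ when $x \neq \varkappa$. Since $e_{y,z}$ sends $z \mapsto y$ and kills all other basis vectors in $X$, applying $\varepsilon(x\otimes y \otimes z)$ to a vertex $\psi \in X$ yields $2^{N/2}y$ precisely when $x=\varkappa$ and $\psi = z$, and yields $0$ in every other case.

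Next I would match this with the right-hand side. Since $X$ is orthonormal, $\langle x, \varkappa \rangle = \delta_{x,\varkappa}$ and $\langle z, \psi \rangle = \delta_{z,\psi}$ for $x,\varkappa,z,\psi \in X$. Hence
\begin{align*}
2^{N/2}\,\langle x,\varkappa\rangle\,\langle z,\psi\rangle\,y
= \begin{cases} 2^{N/2} y, & \text{if $x=\varkappa$ and $\psi = z$;} \\ 0, & \text{otherwise.} \end{cases}
\end{align*}
Comparing with the expression obtained for the left-hand side, the two sides agree in every case, which completes the verification.

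There is no substantial obstacle here; the statement is essentially a reformulation of the definition of $\varepsilon$ in coordinates. The only point that needs care is remembering that $\varepsilon$ is defined only on the basis $X^{\otimes 3}$ and then extended $\mathbb C$-linearly, but since the claim itself concerns a basis vector $x\otimes y \otimes z$, no linearity argument is required.
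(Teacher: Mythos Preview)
Your proof is correct and follows essentially the same approach as the paper: both unpack the definition of $\varepsilon$ and $e_{y,z}$, then use orthonormality of $X$ to replace $\delta_{x,\varkappa}$ and $\delta_{\psi,z}$ by $\langle x,\varkappa\rangle$ and $\langle z,\psi\rangle$. The paper just writes the case analysis more compactly as a single chain of equalities using Kronecker deltas.
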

\begin{proof} For $\psi \in X$ we have
\begin{align*}
\varepsilon(x \otimes y \otimes z) (\psi) &= 2^{N/2} \delta_{x,\varkappa} e_{y,z}(\psi) 
= 2^{N/2} \delta_{x,\varkappa} \delta_{\psi,z} y 
= 2^{N/2} \langle x, \varkappa \rangle \langle z, \psi \rangle y.
\end{align*}
\end{proof}

\begin{lemma} \label{lem:epsAction} For $u,v,w \in V$ the map $\varepsilon(u \otimes v \otimes w)$ sends
\begin{align*}
\psi \mapsto 2^{N/2} \langle u, \varkappa \rangle  \langle w, \overline \psi  \rangle  v, \qquad \qquad \psi \in V.
\end{align*}
\end{lemma}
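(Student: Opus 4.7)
The plan is to extend the formula of Lemma \ref{lem:epsActP} from basis vectors in $X^{\otimes 3}$ and $X$ to arbitrary elements of $V \otimes V \otimes V$ and $V$ by multilinearity. First I would observe that both sides of the claimed identity are $\mathbb C$-linear in each of the four arguments $u$, $v$, $w$, $\psi$ separately. On the left-hand side this is immediate because $\varepsilon$ is $\mathbb C$-linear, the tensor product is trilinear, and the evaluation $\phi \mapsto \phi(\psi)$ on a linear map $\phi \in {\rm End}(V)$ is linear in $\psi$. On the right-hand side linearity in $u$, $v$, $w$ is immediate from the $\mathbb C$-linearity of $\langle \,,\,\rangle$ in its first slot; linearity in $\psi$ requires a small check: $\overline{\psi_1+\psi_2} = \overline \psi_1 + \overline \psi_2$ and $\overline{\alpha\psi} = \overline \alpha\, \overline \psi$ combined with the conjugate-linearity $\langle w, \overline\alpha\, \overline\psi\rangle = \alpha \langle w, \overline\psi\rangle$ from Definition \ref{def:Herm}(ii) give the required $\mathbb C$-linearity.

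By multilinear extension, it therefore suffices to verify the identity when $u = x$, $v = y$, $w = z$ lie in $X$ and $\psi = \xi$ lies in $X$. In this case Lemma \ref{lem:epsActP} gives
\begin{align*}
\varepsilon(x\otimes y\otimes z)(\xi) = 2^{N/2}\langle x, \varkappa\rangle\, \langle z, \xi\rangle\, y.
\end{align*}
Since the basis vectors in $X$ have real (in fact, integer) coordinates in the basis $X$, we have $\overline{\xi} = \xi$, and hence $\langle z, \xi\rangle = \langle z, \overline \xi\rangle$. This matches the right-hand side of the proposed formula on basis vectors, completing the argument.

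The only subtle point, and the place where one must be careful, is the appearance of $\overline\psi$ rather than $\psi$ in the second Hermitian factor. This conjugation is forced: the left-hand side depends $\mathbb C$-linearly on $\psi$, whereas $\langle w, \psi\rangle$ is conjugate-linear in $\psi$; inserting the involution $\psi \mapsto \overline \psi$ restores $\mathbb C$-linearity, and the two conventions coincide on the basis $X$ where $\overline \xi = \xi$. Once this bookkeeping is in place there is no genuine obstacle, and the argument reduces to writing out the expansions $u = \sum_{x}u_x x$, $v=\sum_y v_y y$, $w=\sum_z w_z z$, $\psi = \sum_\xi \psi_\xi \xi$, collapsing the sums using $\langle x,\varkappa\rangle = \delta_{x,\varkappa}$ and $\langle z,\xi\rangle = \delta_{z,\xi}$, and recognizing the resulting scalar factors as $u_\varkappa = \langle u,\varkappa\rangle$ and $\sum_z w_z \psi_z = \langle w,\overline\psi\rangle$.
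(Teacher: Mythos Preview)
Your proposal is correct and takes essentially the same approach as the paper, which simply cites Lemma~\ref{lem:epsActP} and $\mathbb C$-linearity in each of the arguments $u,v,w,\psi$. Your explicit discussion of why $\overline\psi$ is needed to restore $\mathbb C$-linearity in the fourth argument is a helpful elaboration of a point the paper leaves implicit.
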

\begin{proof}  By Lemma  \ref{lem:epsActP} and $\mathbb C$-linearity in each of the arguments $u,v,w,\psi$.
\end{proof}

\noindent Recall the vectors $P_{h,i,j}$ from Remark  \ref{def:Phij}.
\begin{lemma} \label{lem:epsP} For $(h,i,j) \in \mathcal P''_N$,
\begin{align}
\varepsilon(P_{h,i,j}) = 2^{N/2} {\sf E}^*_j {\sf A}_h {\sf E}^*_i. \label{eq:maps2}
\end{align}
\end{lemma}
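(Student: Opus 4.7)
The plan is to unpack both sides using their definitions and match them basis-vector by basis-vector. On the left, $P_{h,i,j}$ is by Remark \ref{def:Phij} the sum of all $x \otimes y \otimes z$ in $X^{\otimes 3}$ such that $\partial(y,z)=h$, $\partial(z,x)=i$, $\partial(x,y)=j$. Applying $\varepsilon$ kills every summand for which $x \neq \varkappa$ and sends the surviving summands $\varkappa \otimes y \otimes z$ to $2^{N/2} e_{y,z}$. Hence
\begin{align*}
\varepsilon(P_{h,i,j}) = 2^{N/2} \sum_{(y,z)} e_{y,z},
\end{align*}
where the sum runs over all $(y,z) \in X \times X$ with $\partial(y,z)=h$, $\partial(z,\varkappa)=i$, and $\partial(\varkappa,y)=j$.

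On the right, I would expand ${\sf E}^*_j {\sf A}_h {\sf E}^*_i$ in the basis $\{e_{y,z}\}_{y,z \in X}$ of ${\rm End}(V)$. By the defining actions of ${\sf E}^*_i$, ${\sf A}_h$, and ${\sf E}^*_j$ on $X$, the operator ${\sf E}^*_j {\sf A}_h {\sf E}^*_i$ sends a basis vector $z \in X$ to $0$ unless $\partial(z,\varkappa)=i$, in which case it sends $z$ to the sum of those $y \in X$ with $\partial(y,z)=h$ and $\partial(y,\varkappa)=j$. Translating this into the $e_{y,z}$ basis gives
\begin{align*}
{\sf E}^*_j {\sf A}_h {\sf E}^*_i = \sum_{(y,z)} e_{y,z}
\end{align*}
with the same indexing condition as above. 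Comparing the two displays yields \eqref{eq:maps2}.

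There is essentially no obstacle here; the statement is a direct computation once the definitions of $\varepsilon$, $P_{h,i,j}$, ${\sf E}^*_i$, and ${\sf A}_h$ are expanded. The only minor point to verify carefully is the ordering of the indices ($j$ on the left of ${\sf A}_h$ with $\partial(\varkappa,y)=j$, and $i$ on the right with $\partial(z,\varkappa)=i$), which matches the placement of $\varkappa$ in the first tensor slot and $z$ in the third, as dictated by $\varepsilon(x \otimes y \otimes z) = 2^{N/2}\delta_{x,\varkappa} e_{y,z}$.
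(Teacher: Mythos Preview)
Your proof is correct and takes essentially the same approach as the paper: the paper verifies \eqref{eq:maps2} by applying each side to a test vertex $\psi \in X$ and checking that both send $\psi \mapsto 2^{N/2} \sum_{y \in \Gamma_j(\varkappa) \cap \Gamma_h(\psi)} y$ when $\partial(\psi,\varkappa)=i$ and to $0$ otherwise, which is exactly your expansion in the $e_{y,z}$ basis viewed column by column.
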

\begin{proof} To verify \eqref{eq:maps2}, we apply each side to a vertex
 $\psi \in X$. First assume that $\partial(\psi, \varkappa) = i$. Then each side of \eqref{eq:maps2} sends
 \begin{align*}
 \psi \mapsto 2^{N/2} \sum_{y \in \Gamma_j(\varkappa) \cap \Gamma_h(\psi)} y.
 \end{align*}
 Next assume that $\partial(\psi, \varkappa) \not= i$. Then each side of \eqref{eq:maps2} sends $\psi \mapsto 0$.
 \end{proof}

\noindent We bring in some notation. 
For $u,v\in V$ we define a vector $u \circ v \in V$ as follows. Write 
\begin{align*}
u=\sum_{x \in X} u_x x, \qquad \qquad  v = \sum_{x \in X} v_x x, \qquad \qquad u_x, v_x \in \mathbb C.
\end{align*}
Define
\begin{align*}
u \circ v = \sum_{x \in X} u_x v_x x. 
\end{align*}
\noindent Note that $u_x = \langle u,x\rangle$ and $v_x = \langle v,x\rangle $ for $x \in X$. Therefore,
\begin{align*}
u \circ v = \sum_{x \in X} \langle u,x\rangle \langle v, x \rangle x.
\end{align*}

\noindent The following result is well known. We give a short proof for the sake of completeness.
\begin{lemma} \label{lem:Ash} {\rm (See \cite[Lemma~9.3]{int}.)} For $0 \leq h \leq N$ we have
\begin{align*}
{\sf A}^*_h v = 2^N {\sf E}_h \varkappa \circ v, \qquad \qquad v \in V.
\end{align*}
\end{lemma}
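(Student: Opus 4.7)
The plan is to reduce to a basis check. Both sides of the claimed equation are $\mathbb C$-linear in $v$, so it suffices to verify the identity when $v=x$ for an arbitrary vertex $x\in X$, since $X$ is a basis for $V$.

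With $v=x\in X$, I would compute the left-hand side directly from the definition of ${\sf A}^*_h$ given earlier in this section, namely
\[
{\sf A}^*_h x = 2^N \langle {\sf E}_h \varkappa, x\rangle\, x.
\]
For the right-hand side, I would expand ${\sf E}_h\varkappa \circ x$ using the formula
\[
u\circ v = \sum_{y\in X} \langle u,y\rangle \langle v,y\rangle\, y
\]
with $u = {\sf E}_h \varkappa$ and $v = x$. Since the basis $X$ is orthonormal with respect to $\langle\,,\,\rangle$, we have $\langle x,y\rangle = \delta_{x,y}$, and therefore the sum collapses to the single term
\[
{\sf E}_h \varkappa \circ x = \langle {\sf E}_h \varkappa, x\rangle\, x.
\]
Multiplying by $2^N$ gives exactly the right-hand side, matching the expression obtained for the left-hand side. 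This completes the basis check, and linearity in $v$ then gives the general assertion. There is no substantive obstacle here; the only subtlety is making sure one invokes orthonormality of $X$ (from Definition \ref{def:HermHN2}) at the right moment to collapse the defining sum for $\circ$.
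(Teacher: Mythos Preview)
Your proof is correct and follows essentially the same approach as the paper: both arguments reduce to the definition of ${\sf A}^*_h$ on basis vectors and the formula for $\circ$, with the paper expanding a general $v$ in the basis $X$ directly while you check on basis vectors and invoke linearity. The two are logically identical, and your explicit appeal to orthonormality of $X$ to collapse the sum is exactly the right step.
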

\begin{proof} For $v \in V$ we have
\begin{align*}
{\sf A}^*_h v &= {\sf A}^*_h \sum_{x \in X} \langle v,x \rangle x 
= \sum_{x \in X} \langle v,x\rangle {\sf A}^*_h x 
= 2^N \sum_{x \in X} \langle v,x \rangle \langle {\sf E}_h \varkappa, x\rangle x \\
&= 2^N \sum_{x \in X} \langle {\sf E}_h \varkappa, x \rangle \langle v,x\rangle x 
= 2^N {\sf E}_h \varkappa \circ v.
\end{align*}
\end{proof}

\noindent Recall the vectors $Q_{h,i,j}$ from Definition  \ref{def:Qhij}.
\begin{lemma} \label{lem:epsQ} For $(h,i,j) \in \mathcal P''_N$,
\begin{align}
\varepsilon(Q_{h,i,j}) =2^{N/2} {\sf E}_i {\sf A}^*_h {\sf E}_j. \label{eq:maps3}
\end{align}
\end{lemma}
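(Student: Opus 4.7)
The plan is to verify the identity by applying both sides to an arbitrary vertex $\psi \in X$ and comparing the results, exactly in the spirit of the proof of Lemma~\ref{lem:epsP}, but using the ``Hadamard'' description of ${\sf A}^*_h$ (Lemma~\ref{lem:Ash}) in place of the combinatorial description of ${\sf A}_h$ via $\Gamma_h$.

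First I would expand $Q_{h,i,j}$ using Definition~\ref{def:Qhij} and apply $\varepsilon$ term-by-term, invoking Lemma~\ref{lem:epsAction} with $u={\sf E}_hx$, $v={\sf E}_ix$, $w={\sf E}_jx$, to obtain
\begin{align*}
\varepsilon(Q_{h,i,j})(\psi) = 2^{3N/2} \sum_{x\in X} \langle {\sf E}_h x, \varkappa\rangle\,\langle {\sf E}_j x, \psi\rangle\,{\sf E}_i x.
\end{align*}
Next I would use the self-adjointness of the primitive idempotents from Lemma~\ref{lem:Ebil} to flip the two inner products, replacing $\langle {\sf E}_h x, \varkappa\rangle$ by $\langle {\sf E}_h\varkappa, x\rangle$ and $\langle {\sf E}_j x,\psi\rangle$ by $\langle {\sf E}_j\psi, x\rangle$. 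After pulling the linear map ${\sf E}_i$ outside the sum, this becomes
\begin{align*}
\varepsilon(Q_{h,i,j})(\psi) = 2^{N/2}\,{\sf E}_i\Biggl( 2^N \sum_{x\in X} \langle {\sf E}_h\varkappa, x\rangle\,\langle {\sf E}_j\psi, x\rangle\, x \Biggr).
\end{align*}

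The bracketed quantity is, by definition of the Schur (Hadamard) product, exactly $2^N({\sf E}_h\varkappa)\circ({\sf E}_j\psi)$, which by Lemma~\ref{lem:Ash} equals ${\sf A}^*_h\,{\sf E}_j\psi$. Substituting yields $\varepsilon(Q_{h,i,j})(\psi) = 2^{N/2}\,{\sf E}_i{\sf A}^*_h{\sf E}_j\,\psi$ for every $\psi\in X$, which proves \eqref{eq:maps3}.

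I do not expect a genuine obstacle here; the argument is parallel to the proof of Lemma~\ref{lem:epsP}, and the only potentially delicate point is the bookkeeping with the Hermitian form, namely that each ${\sf E}_\ell$ has real eigenvalues and hence can be moved across $\langle\,,\,\rangle$ via Lemma~\ref{lem:Ebil}. Once the two inner products are moved so that $x$ sits in their second argument, the outer summation is exactly the Hadamard-multiplication formula for ${\sf A}^*_h$, and the identity drops out.
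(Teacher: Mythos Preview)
Your proposal is correct and follows essentially the same approach as the paper's proof: both apply each side to a vertex $\psi\in X$, expand $Q_{h,i,j}$ via Definition~\ref{def:Qhij}, use Lemma~\ref{lem:epsAction} and the self-adjointness from Lemma~\ref{lem:Ebil} to rewrite the sum as $2^{3N/2}{\sf E}_i\bigl({\sf E}_h\varkappa\circ{\sf E}_j\psi\bigr)$, and then invoke Lemma~\ref{lem:Ash}. The only cosmetic difference is that the paper records the intermediate step $\langle {\sf E}_jx,\overline\psi\rangle=\langle {\sf E}_jx,\psi\rangle$ (since $\psi\in X$ is real) before moving on, which you silently absorb.
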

\begin{proof} Let $\psi \in X$. Using in order Definition  \ref{def:Qhij},  Lemma \ref{lem:epsAction}, Lemma \ref{lem:Ebil}, Lemma \ref{lem:Ash}  we obtain
\begin{align*}
\varepsilon(Q_{h,i,j})(\psi) &= 2^N \sum_{x \in X} \varepsilon \bigl ( {\sf E}_h x \otimes {\sf E}_i x \otimes {\sf E}_j x           \bigr) (\psi) \\
&= 2^{3N/2} \sum_{x \in X} \langle {\sf E}_h x, \varkappa\rangle \langle {\sf E}_j x, \overline \psi \rangle {\sf E}_i x \\
&= 2^{3N/2} \sum_{x \in X} \langle {\sf E}_h x, \varkappa\rangle \langle {\sf E}_j x,  \psi \rangle {\sf E}_i x \\
%%%&= \vert X \vert \sum_{x \in X} {\overline{\langle  \varkappa, E_h x, \rangle }}\langle \psi, {\overline {E_j} x} \rangle E_i x \\
%%%&= \vert X \vert \sum_{x \in X} \langle  \varkappa, \overline {E_h} x, \rangle \langle \psi, {\overline {E_j} x} \rangle E_i x \\
&= 2^{3N/2} {\sf E}_i \sum_{x \in X} \langle {\sf E}_h \varkappa, x \rangle \langle {\sf E}_j \psi, x \rangle x \\
&= 2^{3N/2}{\sf E}_i \bigl(  {\sf E}_h \varkappa \circ {\sf E}_j \psi     \bigr) \\
&= 2^{N/2}{\sf E}_i {\sf A}^*_h {\sf E}_j (\psi).
\end{align*}
The result follows.
\end{proof}

\begin{lemma} \label{lem:epsBIJ} The restriction $\varepsilon\vert_{{\rm Fix}(G)}$ of $\varepsilon$ to ${\rm Fix}(G)$ gives a bijection 
$ \varepsilon\vert_{{\rm Fix}(G)}:    {\rm Fix}(G) \to T$.
\end{lemma}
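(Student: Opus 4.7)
My plan is to verify that $\varepsilon|_{{\rm Fix}(G)}$ carries a basis of ${\rm Fix}(G)$ onto a basis of $T$. By Corollary \ref{cor:Phij}(i), the vectors $\{P_{h,i,j} : (h,i,j)\in \mathcal P''_N\}$ form an (orthogonal) basis for ${\rm Fix}(G)$, and by Lemma \ref{lem:Tbasis} the vectors $\{{\sf E}^*_i {\sf A}_h {\sf E}^*_j : (h,i,j)\in \mathcal P''_N\}$ form an (orthogonal) basis for $T$. These two bases have the same cardinality $|\mathcal P''_N|={\rm dim}\,T={\rm dim}\,{\rm Fix}(G)=\binom{N+3}{3}$, where the last equality uses Corollary \ref{cor:FixGdim}.

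The key computational input is Lemma \ref{lem:epsP}, which gives
\begin{align*}
\varepsilon(P_{h,i,j}) = 2^{N/2}\,{\sf E}^*_j{\sf A}_h{\sf E}^*_i \qquad (h,i,j)\in \mathcal P''_N.
\end{align*}
I would then note that the defining conditions for $\mathcal P''_N$ in Definition \ref{def:Php} are symmetric in the three coordinates $h,i,j$; in particular the swap $(h,i,j)\mapsto (h,j,i)$ is a bijection $\mathcal P''_N\to \mathcal P''_N$. Consequently, as $(h,i,j)$ ranges over $\mathcal P''_N$, the vectors $\varepsilon(P_{h,i,j})$ range (up to the common nonzero scalar $2^{N/2}$) over the $T$-basis of Lemma \ref{lem:Tbasis}.

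Hence $\varepsilon$ sends a basis of ${\rm Fix}(G)$ bijectively onto a basis of $T$, so the restriction $\varepsilon|_{{\rm Fix}(G)}:{\rm Fix}(G)\to T$ is a vector space isomorphism, in particular a bijection. There is no real obstacle here; the only thing to double check is the symmetry of $\mathcal P''_N$ under the transposition $(h,i,j)\leftrightarrow (h,j,i)$, which is immediate from Definition \ref{def:Php}. (Alternatively one could run the same argument using Corollary \ref{cor:Phij}(iii) together with Lemma \ref{lem:epsQ}, which would match $\varepsilon(Q_{h,i,j})=2^{N/2}{\sf E}_i{\sf A}^*_h{\sf E}_j$ against the second basis in Lemma \ref{lem:Tbasis}.)
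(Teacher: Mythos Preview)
Your proof is correct and takes essentially the same approach as the paper: show that $\varepsilon$ carries a basis of ${\rm Fix}(G)$ to a basis of $T$. The paper actually uses your parenthetical alternative (the $Q$-basis together with Lemma~\ref{lem:epsQ} and the second basis in Lemma~\ref{lem:Tbasis}), which avoids the index swap $(h,i,j)\leftrightarrow(h,j,i)$ you needed when going through the $P$-basis; but this is a cosmetic difference, and your handling of the swap via the symmetry of $\mathcal P''_N$ is fine.
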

\begin{proof} By Corollary \ref{cor:Phij}(iii)  and  Lemmas \ref{lem:Tbasis}, \ref{lem:epsQ}.
\end{proof}

\begin{lemma} \label{lem:diagCOM}
For $k\in \lbrace 1,2,3\rbrace $ the following diagrams commute:
\begin{equation*}
{\begin{CD}
{\rm Fix}(G) @>\varepsilon >>
               T
              \\
         @V A^{(k)} VV                   @VV {\mathcal A}^{(k)} V \\
     {\rm Fix}(G)  @>>\varepsilon>
                                 T
                        \end{CD}} 
                        \qquad \qquad \quad
    {\begin{CD}
    {\rm Fix}(G) @>\varepsilon >>
               T
              \\
         @V A^{*(k)} VV                   @VV {\mathcal A}^{*(k)}V \\
        {\rm Fix}(G) @>>\varepsilon >
                                T
                        \end{CD}} 
\end{equation*}
\end{lemma}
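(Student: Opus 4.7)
The plan is to verify each of the six commutativity statements (three for $A^{(k)}$ and three for $A^{*(k)}$, $k \in \{1,2,3\}$) by testing on a convenient basis of ${\rm Fix}(G)$, and then invoke linearity. Since ${\rm Fix}(G)$ has the two orthogonal bases $\{P_{h,i,j}\}_{(h,i,j) \in \mathcal{P}''_N}$ and $\{Q_{h,i,j}\}_{(h,i,j) \in \mathcal{P}''_N}$, and since $\{P_{h,i,j}\}$ diagonalizes the $A^{*(k)}$ while $\{Q_{h,i,j}\}$ diagonalizes the $A^{(k)}$ (Corollary \ref{cor:Phij}(ii),(iv)), the natural strategy is to use the $Q$-basis for the left diagram and the $P$-basis for the right diagram. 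In parallel, Lemmas \ref{lem:epsP} and \ref{lem:epsQ} show that $\varepsilon$ sends these bases (up to the scalar $2^{N/2}$) into the two bases of $T$ from Lemma \ref{lem:Tbasis}, which in turn diagonalize $\mathcal{A}^{(k)}$ and $\mathcal{A}^{*(k)}$ respectively (Definitions \ref{def:NewAAA} and \ref{def:NewAsAsAs}). So in every case both composite maps act diagonally on the chosen basis, and the problem reduces to matching eigenvalues.

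For the left diagram, I would fix $(h,i,j) \in \mathcal{P}''_N$ and compute, using Corollary \ref{cor:Phij}(iv) and Lemma \ref{lem:epsQ},
\begin{align*}
\varepsilon\bigl(A^{(k)} Q_{h,i,j}\bigr) \;=\; \theta_{(k)}\, \varepsilon(Q_{h,i,j}) \;=\; 2^{N/2}\,\theta_{(k)}\, {\sf E}_i {\sf A}^*_h {\sf E}_j,
\end{align*}
where $\theta_{(1)}=\theta_h$, $\theta_{(2)}=\theta_i$, $\theta_{(3)}=\theta_j$; on the other hand Definition \ref{def:NewAAA} gives
\begin{align*}
\mathcal{A}^{(k)}\bigl(\varepsilon(Q_{h,i,j})\bigr) \;=\; 2^{N/2}\,\mathcal{A}^{(k)}\bigl({\sf E}_i {\sf A}^*_h {\sf E}_j\bigr) \;=\; 2^{N/2}\,\theta_{(k)}\, {\sf E}_i {\sf A}^*_h {\sf E}_j,
\end{align*}
with the same $\theta_{(k)}$, so the two agree.

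For the right diagram, I would similarly fix $(h,i,j) \in \mathcal{P}''_N$ and use Corollary \ref{cor:Phij}(ii) together with Lemma \ref{lem:epsP}:
\begin{align*}
\varepsilon\bigl(A^{*(k)} P_{h,i,j}\bigr) \;=\; \theta^*_{(k)}\, \varepsilon(P_{h,i,j}) \;=\; 2^{N/2}\,\theta^*_{(k)}\, {\sf E}^*_j {\sf A}_h {\sf E}^*_i,
\end{align*}
where $\theta^*_{(1)}=\theta^*_h$, $\theta^*_{(2)}=\theta^*_i$, $\theta^*_{(3)}=\theta^*_j$. The key subtlety, and the one step that requires care rather than routine bookkeeping, is that the image $\varepsilon(P_{h,i,j})$ has the factors ${\sf E}^*_j$ on the left and ${\sf E}^*_i$ on the right, i.e.\ with $i$ and $j$ \emph{swapped} compared to the template ${\sf E}^*_i {\sf A}_h {\sf E}^*_j$ used in Definition \ref{def:NewAsAsAs}. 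Applying $\mathcal{A}^{*(2)}$ and $\mathcal{A}^{*(3)}$ with this index relabeling (both triples $(h,i,j)$ and $(h,j,i)$ lie in $\mathcal{P}''_N$, since the defining conditions in Definition \ref{def:Php} are symmetric in the second and third coordinates), Definition \ref{def:NewAsAsAs} gives
\begin{align*}
\mathcal{A}^{*(1)}\bigl({\sf E}^*_j {\sf A}_h {\sf E}^*_i\bigr) &= \theta^*_h\, {\sf E}^*_j {\sf A}_h {\sf E}^*_i, \\
\mathcal{A}^{*(2)}\bigl({\sf E}^*_j {\sf A}_h {\sf E}^*_i\bigr) &= \theta^*_i\, {\sf E}^*_j {\sf A}_h {\sf E}^*_i, \\
\mathcal{A}^{*(3)}\bigl({\sf E}^*_j {\sf A}_h {\sf E}^*_i\bigr) &= \theta^*_j\, {\sf E}^*_j {\sf A}_h {\sf E}^*_i,
\end{align*}
and in each case the eigenvalue matches $\theta^*_{(k)}$ from the previous display. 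This is the only step that is not purely mechanical; once it is settled, combining the two computations yields $\varepsilon \circ A^{*(k)} = \mathcal{A}^{*(k)} \circ \varepsilon$ on the basis $\{P_{h,i,j}\}$, and hence, by $\mathbb{C}$-linearity and Corollary \ref{cor:Phij}(i), on all of ${\rm Fix}(G)$. The analogous conclusion for the left diagram follows from $\mathbb{C}$-linearity and Corollary \ref{cor:Phij}(iii), completing the proof.
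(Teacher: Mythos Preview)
Your proof is correct and follows exactly the approach of the paper: verify the first diagram on the basis $\{Q_{h,i,j}\}$ using Corollary~\ref{cor:Phij}(iv), Lemma~\ref{lem:epsQ}, and Definition~\ref{def:NewAAA}, and verify the second diagram on the basis $\{P_{h,i,j}\}$ using Corollary~\ref{cor:Phij}(ii), Lemma~\ref{lem:epsP}, and Definition~\ref{def:NewAsAsAs}. Your explicit discussion of the $i\leftrightarrow j$ swap in $\varepsilon(P_{h,i,j})=2^{N/2}{\sf E}^*_j{\sf A}_h{\sf E}^*_i$ and how it is compensated by the swap built into Definition~\ref{def:NewAsAsAs} is a useful clarification that the paper leaves implicit.
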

\begin{proof}  Let $(h,i,j) \in \mathcal P''_N$. To verify the first diagram,  chase $Q_{h,i,j}$ around the diagram using
Corollary \ref{cor:Phij}(iv), Definition  \ref{def:NewAAA}, and Lemma \ref{lem:epsQ}.
To verify the second diagram,  chase $P_{h,i,j}$ around the diagram using
Corollary \ref{cor:Phij}(ii), Definition \ref{def:NewAsAsAs}, and Lemma \ref{lem:epsP}.
\end{proof}

\begin{theorem} \label{prop:TisoFix} The vector space $T$ becomes an $\mathfrak{sl}_4(\mathbb C)$-module on which
\begin{align*}
A_i = {\mathcal A}^{(i)}, \qquad \qquad A^*_i ={\mathcal A}^{*(i)} \qquad \qquad i \in \lbrace 1,2,3\rbrace.
\end{align*}
Moreover, the map $\varepsilon\vert_{{\rm Fix}(G)}: {\rm Fix}(G) \to T$ is an isomorphism of $\mathfrak{sl}_4(\mathbb C)$-modules.
\end{theorem}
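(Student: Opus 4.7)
The plan is to leverage the vector space bijection $\varepsilon\vert_{{\rm Fix}(G)}:{\rm Fix}(G) \to T$ of Lemma \ref{lem:epsBIJ} to transport the existing $\mathfrak{sl}_4(\mathbb C)$-module structure on ${\rm Fix}(G)$ (Theorem \ref{prop:GenAction}) over to $T$. Concretely, I would define an action of $\mathfrak{sl}_4(\mathbb C)$ on $T$ by declaring, for each $\varphi \in \mathfrak{sl}_4(\mathbb C)$ and $B \in T$,
\begin{align*}
\varphi \cdot B = \varepsilon\bigl(\varphi \cdot \varepsilon^{-1}(B)\bigr),
\end{align*}
where on the right $\varphi$ acts via the ${\rm Fix}(G)$-module structure of Theorem \ref{prop:GenAction}. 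Because $\varepsilon\vert_{{\rm Fix}(G)}$ is a bijection, this prescription is well defined and tautologically turns $T$ into an $\mathfrak{sl}_4(\mathbb C)$-module for which $\varepsilon\vert_{{\rm Fix}(G)}$ is an isomorphism of $\mathfrak{sl}_4(\mathbb C)$-modules.

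It remains to identify this action on the Chevalley-type generators. For each $k \in \lbrace 1,2,3\rbrace$, the first commutative diagram in Lemma \ref{lem:diagCOM} reads $\varepsilon \circ A^{(k)} = {\mathcal A}^{(k)} \circ \varepsilon$ on ${\rm Fix}(G)$; conjugating by $\varepsilon\vert_{{\rm Fix}(G)}$ yields
\begin{align*}
\varepsilon \circ A^{(k)} \circ \varepsilon^{-1} = {\mathcal A}^{(k)}
\end{align*}
as maps $T \to T$. Under the ${\rm Fix}(G)$-structure, Theorem \ref{prop:GenAction} says $A_k$ acts as $A^{(k)}$; hence under the transported structure $A_k$ acts on $T$ as ${\mathcal A}^{(k)}$. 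The second commutative diagram in Lemma \ref{lem:diagCOM} gives the analogous statement $A^*_k = {\mathcal A}^{*(k)}$ on $T$. Since the elements $A_k$ and $A^*_k$ for $k \in \lbrace 1,2,3\rbrace$ generate $\mathfrak{sl}_4(\mathbb C)$ (by Lemma \ref{lem:sharp} together with Definition \ref{def:LL}), this completely determines the $\mathfrak{sl}_4(\mathbb C)$-action on $T$, and the theorem follows.

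There is no real obstacle here: all the hard work has been done. The bijectivity of $\varepsilon\vert_{{\rm Fix}(G)}$ was established in Lemma \ref{lem:epsBIJ} via the matching bases $\lbrace Q_{h,i,j}\rbrace_{(h,i,j)\in \mathcal P''_N}$ of ${\rm Fix}(G)$ (Corollary \ref{cor:Phij}(iii)) and $\lbrace {\sf E}_i {\sf A}^*_h {\sf E}_j\rbrace_{(h,i,j)\in \mathcal P''_N}$ of $T$ (Lemma \ref{lem:Tbasis}), connected by Lemma \ref{lem:epsQ}. The intertwining relations required for the generators were verified eigenvalue-by-eigenvalue in Lemma \ref{lem:diagCOM} by chasing $P_{h,i,j}$ and $Q_{h,i,j}$ around the two squares. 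The only conceptual point worth emphasizing in the write-up is that transport of structure along a vector-space isomorphism automatically preserves all relations, so one need not reverify the defining relations of Definition \ref{def:LL} on $T$ directly.
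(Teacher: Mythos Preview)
Your proposal is correct and follows exactly the same approach as the paper's proof, which simply cites Theorem \ref{prop:GenAction} and Lemmas \ref{lem:epsBIJ}, \ref{lem:diagCOM}. Your write-up spells out the transport-of-structure argument that the paper leaves implicit, but the underlying logic is identical.
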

\begin{proof} By Theorem \ref{prop:GenAction} and Lemmas \ref{lem:epsBIJ}, \ref{lem:diagCOM}. 
\end{proof}

\noindent Next, we compare the Hermitian forms on ${\rm Fix}(G)$ and $T$.
\begin{lemma} \label{lem:PQcomp} For $(h,i,j) \in \mathcal P''_N$,
\begin{align*}
\Vert P_{h,i,j} \Vert^2 = 2^N \Vert {\sf E}^*_j {\sf A}_h {\sf E}^*_i \Vert^2, \qquad \qquad
\Vert Q_{h,i,j} \Vert^2 = 2^N \Vert {\sf E}_i {\sf A}^*_h {\sf E}_j \Vert^2.
\end{align*}
\end{lemma}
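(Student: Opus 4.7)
The plan is to reduce the two claimed identities to the single statement that $\varepsilon|_{{\rm Fix}(G)}\colon {\rm Fix}(G) \to T$ is an isometry, verify this on one orthogonal basis, and then read off both parts of the lemma. First I would apply Lemmas \ref{lem:epsP} and \ref{lem:epsQ} and take squared norms to obtain
\begin{align*}
\|\varepsilon(P_{h,i,j})\|^2 &= 2^N \|{\sf E}^*_j {\sf A}_h {\sf E}^*_i\|^2, \\
\|\varepsilon(Q_{h,i,j})\|^2 &= 2^N \|{\sf E}_i {\sf A}^*_h {\sf E}_j\|^2.
\end{align*}
Thus the lemma is equivalent to $\|P_{h,i,j}\|^2 = \|\varepsilon(P_{h,i,j})\|^2$ and $\|Q_{h,i,j}\|^2 = \|\varepsilon(Q_{h,i,j})\|^2$.

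For the first assertion I would proceed by direct counting. Let $(r,s,t,u) \in \mathcal P_N$ correspond to $(h,i,j)$ via Lemma \ref{lem:PPbij}. Since $P_{h,i,j}$ is a $0/1$-sum in the orthonormal basis $X^{\otimes 3}$, by Lemma \ref{lem:3wayBij} and Lemma \ref{lem:profileSize} we get $\|P_{h,i,j}\|^2 = N!\,2^N/(r!s!t!u!)$. Unpacking the definitions of ${\sf A}_h$, ${\sf E}^*_i$, ${\sf E}^*_j$ shows that ${\sf E}^*_j {\sf A}_h {\sf E}^*_i = \sum e_{\xi,y}$ in the orthonormal basis $\{e_{x,y}\}_{x,y \in X}$ of ${\rm End}(V)$, where the sum ranges over pairs $(\xi, y)$ with $\partial(y,\varkappa) = i$, $\partial(\xi,\varkappa) = j$, $\partial(\xi,y) = h$. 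Hence $\|{\sf E}^*_j {\sf A}_h {\sf E}^*_i\|^2 = k_i p^i_{j,h}$, and by Remark \ref{rem:IntKrein} this equals $N!/(r!s!t!u!)$ (using the symmetry $p^i_{h,j} = p^i_{j,h}$). Multiplying by $2^N$ yields the first identity.

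Having established the P-identity, I note that by Corollary \ref{cor:Phij}(i) and Lemma \ref{lem:Tbasis} the map $\varepsilon|_{{\rm Fix}(G)}$ carries the orthogonal basis $\{P_{h,i,j}\}_{(h,i,j) \in \mathcal P''_N}$ of ${\rm Fix}(G)$ to the orthogonal basis $\{2^{N/2}{\sf E}^*_j {\sf A}_h {\sf E}^*_i\}_{(h,i,j) \in \mathcal P''_N}$ of $T$, preserving the squared norm of each basis vector. A linear bijection between inner product spaces that matches orthogonal bases with equal norms is an isometry, so $\varepsilon|_{{\rm Fix}(G)}$ is an isometry. Since $Q_{h,i,j} = B^*(r,s,t,u) \in {\rm Fix}(G)$ by Definition \ref{lem:Bs} and Proposition \ref{lem:BsBasis}, we conclude $\|Q_{h,i,j}\|^2 = \|\varepsilon(Q_{h,i,j})\|^2 = 2^N \|{\sf E}_i {\sf A}^*_h {\sf E}_j\|^2$, which is the second identity.

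The main point of care lies in the P-identity: carefully tracking the profile correspondence of Lemma \ref{lem:PPbij}, writing both $P_{h,i,j}$ and ${\sf E}^*_j {\sf A}_h {\sf E}^*_i$ as explicit $0/1$-sums in their respective orthonormal bases, and invoking the right entry of Remark \ref{rem:IntKrein}. The key conceptual move is bootstrapping the Q-identity from the P-identity via the isometry argument, thereby sidestepping a direct computation of $\|{\sf E}_i {\sf A}^*_h {\sf E}_j\|^2$ (which would otherwise require either the Hadamard-product identity ${\sf E}_i \circ {\sf E}_j = |X|^{-1} \sum_h q^h_{ij} {\sf E}_h$ together with a trace calculation, or a separate appeal to the formal self-duality of $H(N,2)$).
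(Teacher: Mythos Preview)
Your argument is correct. The $P$-identity is handled essentially as in the paper: both sides are computed directly as $N!\,2^N/(r!s!t!u!)$ and $N!/(r!s!t!u!)$ respectively, with your counting argument for $\|{\sf E}^*_j {\sf A}_h {\sf E}^*_i\|^2$ replacing the paper's citation of \cite[Corollary~8.2]{int}. (One small remark: you should note that $\mathcal P''_N$ is symmetric under $i\leftrightarrow j$, so the set $\{{\sf E}^*_j{\sf A}_h{\sf E}^*_i : (h,i,j)\in\mathcal P''_N\}$ coincides with the basis of Lemma~\ref{lem:Tbasis}; this is implicit in your orthogonal-basis claim.)

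Where you genuinely diverge is the $Q$-identity. The paper again computes both sides directly: it invokes Lemma~\ref{lem:Qcondition}(i) to get $\|Q_{h,i,j}\|^2=\|P_{h,i,j}\|^2$, and cites \cite[Corollary~8.2]{int} together with Remark~\ref{rem:IntKrein} for $\|{\sf E}_i{\sf A}^*_h{\sf E}_j\|^2$. You instead bootstrap: once the $P$-identity is known, $\varepsilon|_{{\rm Fix}(G)}$ sends one orthogonal basis to another with matching norms and is therefore an isometry, from which the $Q$-identity follows immediately. Your route is more self-contained---it avoids both the external norm formula and any appeal to formal self-duality---and in effect you have proved Theorem~\ref{lem:HFFT} en route, whereas in the paper that theorem is deduced \emph{from} the present lemma. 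The trade-off is that the paper's approach keeps the two identities logically parallel, while yours makes the second a corollary of the first.
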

\begin{proof} In these equations, the left-hand side is computed using Remark \ref{def:Phij},  Lemma \ref{lem:orthogB}, and Lemma \ref{lem:Qcondition}(i),
while the right-hand side is computed using Remark \ref{rem:IntKrein} and  \cite[Corollary~8.2]{int}.
\end{proof}

\begin{theorem} \label{lem:HFFT} The Hermitian forms on ${\rm Fix}(G)$ and  $T$ are related as follows:
\begin{align*}
\langle u,v \rangle = \langle \varepsilon(u), \varepsilon(v) \rangle \qquad \qquad u,v \in {\rm Fix}(G).
\end{align*}
\end{theorem}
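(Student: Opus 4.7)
The plan is to verify the isometry claim on a convenient orthogonal basis of $\mathrm{Fix}(G)$ and then extend by sesquilinearity. Specifically, by Corollary \ref{cor:Phij}(i) the vectors
\[
P_{h,i,j} \qquad (h,i,j) \in \mathcal P''_N
\]
form an orthogonal basis of $\mathrm{Fix}(G)$, and by Lemma \ref{lem:Tbasis} the vectors
\[
\mathsf{E}^*_j \mathsf{A}_h \mathsf{E}^*_i \qquad (h,i,j) \in \mathcal P''_N
\]
form an orthogonal basis of $T$. By Lemma \ref{lem:epsP} the map $\varepsilon$ sends $P_{h,i,j}\mapsto 2^{N/2}\mathsf{E}^*_j \mathsf{A}_h \mathsf{E}^*_i$, so it carries the first orthogonal basis to a nonzero scalar multiple of the second.

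I would then check the claimed equality on pairs of basis vectors. For $(h,i,j),(h',i',j') \in \mathcal P''_N$ distinct, the left-hand side $\langle P_{h,i,j}, P_{h',i',j'}\rangle$ vanishes by orthogonality of the $P$-basis, and the right-hand side $\langle \varepsilon(P_{h,i,j}), \varepsilon(P_{h',i',j'})\rangle = 2^N \langle \mathsf{E}^*_j \mathsf{A}_h \mathsf{E}^*_i, \mathsf{E}^*_{j'}\mathsf{A}_{h'}\mathsf{E}^*_{i'}\rangle$ vanishes by orthogonality of the $\mathsf{E}^*_j \mathsf{A}_h \mathsf{E}^*_i$-basis in $T$. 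For the diagonal case, the key identity is the first equation of Lemma \ref{lem:PQcomp}, which gives
\[
\|P_{h,i,j}\|^2 = 2^N \|\mathsf{E}^*_j \mathsf{A}_h \mathsf{E}^*_i\|^2 = \|2^{N/2}\mathsf{E}^*_j\mathsf{A}_h\mathsf{E}^*_i\|^2 = \|\varepsilon(P_{h,i,j})\|^2.
\]
So the claimed identity holds on every pair drawn from this orthogonal basis.

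To conclude, both $\langle u,v\rangle$ and $\langle \varepsilon(u),\varepsilon(v)\rangle$ are Hermitian forms on $\mathrm{Fix}(G)$ in $u,v$ (the second because $\varepsilon$ is $\mathbb C$-linear and $\langle \,,\,\rangle$ on $\mathrm{End}(V)$ is Hermitian), and they agree on every pair of elements of a basis. Expanding arbitrary $u,v\in\mathrm{Fix}(G)$ in the $P$-basis and using sesquilinearity therefore yields the equality for all $u,v\in\mathrm{Fix}(G)$.

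There is essentially no obstacle: everything needed is already in place. The only genuine content is Lemma \ref{lem:PQcomp}, which supplies the comparison of norms. Once that is available the proof is purely formal: match an orthogonal basis of the source with (a scaling of) an orthogonal basis of the target under $\varepsilon$, compare squared norms, and extend sesquilinearly. Alternatively, one could run the identical argument using the $Q_{h,i,j}$ basis of $\mathrm{Fix}(G)$ (Corollary \ref{cor:Phij}(iii)) and its image $2^{N/2}\mathsf{E}_i \mathsf{A}^*_h \mathsf{E}_j$ under $\varepsilon$ (Lemma \ref{lem:epsQ}), invoking the second equality of Lemma \ref{lem:PQcomp}; either route works.
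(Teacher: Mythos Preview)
Your proof is correct and essentially identical to the paper's: the paper also reduces to the orthogonal basis $P_{h,i,j}$ from Corollary~\ref{cor:Phij}(i), uses Lemma~\ref{lem:epsP} to compute $\varepsilon(P_{h,i,j})$, invokes orthogonality (Lemma~\ref{lem:Tbasis}) for the off-diagonal case, and Lemma~\ref{lem:PQcomp} for the diagonal case.
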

\begin{proof} Without loss of generality, we may assume that $u,v$ are in the basis for ${\rm Fix}(G)$ from Corollary \ref{cor:Phij}(i).
First assume that $u \not=v$. Then $\langle u,v \rangle =0$ by Corollary \ref{cor:Phij}(i), and
$\langle \varepsilon(u), \varepsilon(v) \rangle =0$ by Lemmas \ref{lem:Tbasis}, \ref{lem:epsP}.
Next assume that $u=v$, and write $u=v=P_{h,i,j}$. Using Lemmas \ref{lem:epsP}, \ref{lem:PQcomp} we obtain
\begin{align*}
\Vert \varepsilon(u) \Vert^2 &= \Vert 2^{N/2} {\sf E}^*_j {\sf A}_h {\sf E}^*_i \Vert^2 = 2^N \Vert {\sf E}^*_j {\sf A}_h {\sf E}^*_i \Vert^2 = \Vert P_{h,i,j} \Vert^2 = \Vert u \Vert^2.
\end{align*}
The result follows.
\end{proof}

\noindent Recall the map $\ddag: P_N \to {\rm Fix}(G)$ from Lemma \ref{lem:vee}.
\begin{definition} \label{def:iso} \rm We define a $\mathbb C$-linear map $\vartheta: P_N \to T$ to be the composition
\begin{equation*}
\vartheta: \quad {\begin{CD}
     P_N  @>>\ddag> {\rm Fix}(G) @ >> \varepsilon > T.                           
                        \end{CD}} 
\end{equation*}
\end{definition}

\noindent  In the next result, we clarify how the map $\vartheta$ acts on $P_N$.

\begin{theorem} \label{lem:VarThetaAct}  For $(r,s,t,u) \in \mathcal P_N$ the map $\vartheta$ sends
\begin{align}
x^r y^s z^t w^u &\mapsto \frac{r!s!t!u!}{(N!)^{1/2}} {\sf E}^*_j {\sf A}_h {\sf E}^*_i,           \label{eq:VarTheta1}\\
x^{*r} y^{*s} z^{*t} w^{*u} &\mapsto \frac{r!s!t!u!}{(N!)^{1/2}} {\sf E}_i {\sf A}^*_h {\sf E}_j, \label{eq:VarTheta2}
\end{align}
where
\begin{align*}
h= t+u, \qquad \quad i = u+s, \qquad \quad j= s+t.
\end{align*}
\end{theorem}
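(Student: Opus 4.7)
The plan is to prove Theorem \ref{lem:VarThetaAct} by straightforward composition, since $\vartheta$ is defined in Definition \ref{def:iso} as $\vartheta = \varepsilon \circ \ddag$ and all relevant pieces have already been computed. The approach is to take a monomial, push it through $\ddag$ using the formulas already established, identify the resulting element of ${\rm Fix}(G)$ with one of the vectors $P_{h,i,j}$ or $Q_{h,i,j}$ using the notational dictionary from Remark \ref{def:Phij} and Definition \ref{lem:Bs}, then apply $\varepsilon$ using Lemma \ref{lem:epsP} or Lemma \ref{lem:epsQ} to land in $T$.

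In more detail, I would first handle \eqref{eq:VarTheta1}. Fix $(r,s,t,u) \in \mathcal P_N$ and set $h=t+u$, $i=u+s$, $j=s+t$ (noting $(h,i,j) \in \mathcal P''_N$ by Lemma \ref{lem:PPbij}). By Lemma \ref{lem:vee} we have $\ddag(x^r y^s z^t w^u) = (N!2^N)^{1/2}\,\tilde B(r,s,t,u)$, and by Definition \ref{lem:Bdual} combined with Remark \ref{def:Phij} this equals $\frac{(N!2^N)^{1/2} r!s!t!u!}{N!2^N}\,P_{h,i,j}$. Applying $\varepsilon$ and invoking Lemma \ref{lem:epsP} gives a factor of $2^{N/2}{\sf E}^*_j{\sf A}_h{\sf E}^*_i$. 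The scalar factors then collapse as
\begin{equation*}
\frac{(N!2^N)^{1/2}\cdot 2^{N/2}\cdot r!s!t!u!}{N!2^N} = \frac{r!s!t!u!}{(N!)^{1/2}},
\end{equation*}
which is exactly the coefficient claimed.

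For \eqref{eq:VarTheta2} the argument is parallel, using Proposition \ref{prop:Main2} in place of Lemma \ref{lem:vee}, Definition \ref{lem:Bsdual} together with Definition \ref{lem:Bs} to rewrite $\tilde B^*(r,s,t,u)$ as a scalar multiple of $Q_{h,i,j}$, and finally Lemma \ref{lem:epsQ} to pass through $\varepsilon$. The scalar arithmetic is identical to the previous case.

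There is essentially no obstacle: all the hard work, namely identifying $\ddag$ on both bases (the nontrivial case being \eqref{eq:VarTheta2}, handled earlier in Proposition \ref{prop:Main2}) and computing $\varepsilon$ on $P_{h,i,j}$ and $Q_{h,i,j}$ (Lemmas \ref{lem:epsP} and \ref{lem:epsQ}), has already been done. The only minor bookkeeping item is making sure the indexing convention matches: the profile $(r,s,t,u)$ corresponds to $(h,i,j) = (t+u,\,u+s,\,s+t)$ via the bijection of Lemma \ref{lem:PPbij}, and in particular one must check that $\varepsilon$ puts ${\sf E}^*_j$ on the \emph{left} of ${\sf A}_h$ and ${\sf E}^*_i$ on the \emph{right} (this is exactly the content of Lemma \ref{lem:epsP}), so the asymmetry between the profile entries and the final expression is accounted for automatically.
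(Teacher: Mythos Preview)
Your proposal is correct and follows essentially the same approach as the paper's own proof, which simply cites Remark \ref{def:Phij}, Definition \ref{lem:Bdual}, Lemmas \ref{lem:vee}, \ref{lem:epsP} for \eqref{eq:VarTheta1} and Definitions \ref{lem:Bs}, \ref{lem:Bsdual}, Proposition \ref{prop:Main2}, Lemma \ref{lem:epsQ} for \eqref{eq:VarTheta2}. Your explicit scalar computation and your remark about the left/right placement of ${\sf E}^*_j$ versus ${\sf E}^*_i$ are helpful details that the paper leaves implicit.
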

\begin{proof} The action \eqref{eq:VarTheta1} is obtained from Remark \ref{def:Phij},  Definition \ref{lem:Bdual}, and Lemmas \ref{lem:vee}, \ref{lem:epsP}.
The action \eqref{eq:VarTheta2} is obtained from  Definitions \ref{lem:Bs}, \ref{lem:Bsdual}, Proposition \ref{prop:Main2}, and Lemma \ref{lem:epsQ}.
\end{proof}

\begin{theorem} \label{thm:main1} The map $\vartheta : P_N \to T$ is an isomorphism of $\mathfrak{sl}_4(\mathbb C)$-modules.
\end{theorem}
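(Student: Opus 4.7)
The plan is essentially to observe that $\vartheta$ is by construction a composition of two maps that have already been shown to be $\mathfrak{sl}_4(\mathbb C)$-module isomorphisms, so there is nothing to do beyond citing these results and applying the fact that a composition of module isomorphisms is a module isomorphism.

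Concretely, I would first recall from Definition \ref{def:iso} that $\vartheta = \varepsilon\vert_{\mathrm{Fix}(G)} \circ \ddag$, where $\ddag : P_N \to \mathrm{Fix}(G)$ and $\varepsilon\vert_{\mathrm{Fix}(G)} : \mathrm{Fix}(G) \to T$. Next I would invoke Theorem \ref{prop:GenAction} to assert that $\ddag$ is an $\mathfrak{sl}_4(\mathbb C)$-module isomorphism, and Theorem \ref{prop:TisoFix} to assert that $\varepsilon\vert_{\mathrm{Fix}(G)}$ is also an $\mathfrak{sl}_4(\mathbb C)$-module isomorphism.

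It then remains only to note that the composition of two $\mathbb C$-linear bijections is a $\mathbb C$-linear bijection, and that if two maps intertwine the action of every generator $A_i, A^*_i$ ($i \in \{1,2,3\}$) of $\mathfrak{sl}_4(\mathbb C)$, then so does their composition; hence so does every element of $\mathfrak{sl}_4(\mathbb C)$. Therefore $\vartheta$ is an $\mathfrak{sl}_4(\mathbb C)$-module isomorphism.

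There is no real obstacle: all the substantive work was done in establishing Theorems \ref{prop:GenAction} and \ref{prop:TisoFix}, particularly in verifying the commuting diagrams of Lemma \ref{lem:diagCOM} that show $\varepsilon$ intertwines $A^{(i)}, A^{*(i)}$ with $\mathcal A^{(i)}, \mathcal A^{*(i)}$. The statement \ref{thm:main1} is then a one-line corollary. If desired, one could also verify the claim directly on basis vectors using the explicit formulas in Theorem \ref{lem:VarThetaAct} together with Propositions \ref{lem:ActP}, \ref{lem:dbAction} and Definitions \ref{def:NewAAA}, \ref{def:NewAsAsAs}, but this would merely reproduce the content of the two cited theorems.
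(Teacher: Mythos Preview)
Your proposal is correct and matches the paper's proof exactly: the paper simply cites Definition \ref{def:iso} and Theorems \ref{prop:GenAction}, \ref{prop:TisoFix}, which is precisely the composition-of-isomorphisms argument you describe.
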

\begin{proof}  By Definition \ref{def:iso} and Theorems \ref{prop:GenAction},  \ref{prop:TisoFix}.
\end{proof}

\begin{theorem} \label{thm:HFFT} The Hermitian forms on $P_N$ and  $T$ are related as follows:
\begin{align*}
\langle f,g \rangle = \langle \vartheta(f), \vartheta(g) \rangle \qquad \qquad f,g \in P_N.
\end{align*}
\end{theorem}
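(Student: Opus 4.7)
The plan is to observe that $\vartheta$ is built as a composition of two maps, each of which preserves the Hermitian form, so the result follows by chaining two isometry statements that are already in hand. Specifically, by Definition \ref{def:iso} we have $\vartheta = \varepsilon \circ \ddag$, where $\ddag : P_N \to {\rm Fix}(G)$ is the isomorphism from Lemma \ref{lem:vee} and $\varepsilon\vert_{{\rm Fix}(G)} : {\rm Fix}(G) \to T$ is the isomorphism from Lemma \ref{lem:epsBIJ}.

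The first isometry is Theorem \ref{lem:HermComp}, which tells us that $\langle f, g \rangle = \langle f^\ddag, g^\ddag \rangle$ for all $f, g \in P_N$. The second isometry is Theorem \ref{lem:HFFT}, which tells us that $\langle u, v \rangle = \langle \varepsilon(u), \varepsilon(v) \rangle$ for all $u, v \in {\rm Fix}(G)$. Applying the latter with $u = f^\ddag$ and $v = g^\ddag$ (note that $f^\ddag, g^\ddag \in {\rm Fix}(G)$), and then using the former, we obtain
\begin{align*}
\langle f, g \rangle = \langle f^\ddag, g^\ddag \rangle = \bigl\langle \varepsilon(f^\ddag), \varepsilon(g^\ddag) \bigr\rangle = \langle \vartheta(f), \vartheta(g) \rangle,
\end{align*}
which is the desired conclusion.

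There is no real obstacle here: both constituent isometry statements have already been proved (Theorems \ref{lem:HermComp} and \ref{lem:HFFT} precede this theorem), and Definition \ref{def:iso} explicitly identifies $\vartheta$ with the composition $\varepsilon \circ \ddag$. The entire proof is a one-line composition argument, and consequently the statement can be viewed simply as an immediate corollary of the two earlier results. If one wished to give a more self-contained verification, one could alternatively check orthogonality and square norms directly on the basis $\{x^r y^s z^t w^u\}_{(r,s,t,u) \in \mathcal P_N}$ of $P_N$ using the explicit formula \eqref{eq:VarTheta1} from Theorem \ref{lem:VarThetaAct} together with Lemma \ref{lem:PQcomp} and Definition \ref{def:bform}, but this would merely reprove Theorems \ref{lem:HermComp} and \ref{lem:HFFT} in combined form, so the composition approach is cleaner.
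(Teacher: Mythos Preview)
Your proof is correct and follows essentially the same approach as the paper: the paper's proof is simply ``By Theorems \ref{lem:HermComp}, \ref{lem:HFFT}, and Definition \ref{def:iso},'' which is exactly the composition argument you spell out.
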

\begin{proof}  By Theorems \ref{lem:HermComp},  \ref{lem:HFFT}, and Definition \ref{def:iso}.
\end{proof}

\begin{definition} \label{lem:transpose} \rm Let $\dagger$ denote the antiautomorphism  of ${\rm End}(V)$ that sends
$e_{x,y} \leftrightarrow e_{y,x}$ for all $x,y \in X$.  We have $\dagger^2 = {\rm id}$. We call $\dagger$ the {\it transpose map}.
\end{definition}

\begin{lemma} \label{lem:daggerCom} The map $\dagger$ fixes each of ${\sf A}, {\sf A}^*$. Moreover, $T$ is invariant under $\dagger$.
The restriction of $\dagger $ to $T$ gives an antiautomorphism of the algebra $T$.
\end{lemma}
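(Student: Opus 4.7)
The plan is to verify the three assertions in order, leveraging the action of $\dagger$ on the matrix-unit basis $\lbrace e_{x,y} \rbrace_{x,y \in X}$.

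First, I would show $\dagger({\sf A})={\sf A}$. Expand $\sf A$ in the matrix-unit basis: by Definition \ref{def:adj},
\begin{align*}
{\sf A} = \sum_{x \in X} \sum_{\xi \in \Gamma(x)} e_{\xi,x}.
\end{align*}
Applying $\dagger$ swaps each $e_{\xi,x}$ with $e_{x,\xi}$, giving $\sum_{x \in X} \sum_{\xi \in \Gamma(x)} e_{x,\xi}$. Since adjacency in $H(N,2)$ is symmetric ($\xi \in \Gamma(x)$ iff $x \in \Gamma(\xi)$), reindexing the double sum yields $\sf A$ back. Next, ${\sf A}^*$ is diagonal in the matrix-unit basis: by \eqref{eq:DualAdj},
\begin{align*}
{\sf A}^* = \sum_{x \in X} \theta^*_{\partial(x,\varkappa)} e_{x,x},
\end{align*}
and $\dagger$ fixes each $e_{x,x}$, so $\dagger({\sf A}^*)={\sf A}^*$.

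Second, I would prove that $T$ is invariant under $\dagger$. The algebra $T$ is generated by $\sf A$ and $\sf A^*$, so every element of $T$ is a $\mathbb C$-linear combination of words in ${\sf A},{\sf A}^*$. Because $\dagger$ is an antiautomorphism of ${\rm End}(V)$, it sends a product of generators to the reversed product, and since each generator is fixed (by the first step), the reversed word again lies in $T$. Hence $T$ is closed under $\dagger$.

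Finally, since $\dagger$ is an antiautomorphism of the ambient algebra ${\rm End}(V)$ and $T$ is a $\dagger$-stable subalgebra containing the same identity, the restriction $\dagger\vert_T$ is automatically an antiautomorphism of $T$: it is $\mathbb C$-linear, bijective (with inverse $\dagger\vert_T$ itself, since $\dagger^2={\rm id}$), and reverses multiplication. No real obstacle is anticipated here; the only point requiring care is the symmetry-of-adjacency reindexing in the computation $\dagger({\sf A})={\sf A}$, and the observation that $T$ and ${\rm End}(V)$ share the multiplicative identity so that ``antiautomorphism'' passes unambiguously to the subalgebra.
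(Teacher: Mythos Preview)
Your proposal is correct and follows essentially the same approach as the paper's proof. The paper's version is terser—it simply says the first assertion is a routine consequence of \eqref{eq:AdjMap} and \eqref{eq:DualAdj}, the second holds because $T$ is generated by ${\sf A},{\sf A}^*$, and the third because $\dagger$ is invertible—whereas you have spelled out the matrix-unit computations and the reindexing explicitly.
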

\begin{proof} The first assertion is a routine consequence of \eqref{eq:AdjMap} and \eqref{eq:DualAdj}. The second assertion holds because $T$ is generated by ${\sf A}, {\sf A}^*$.
The third assertion holds because the map $\dagger$ is invertible.
\end{proof}

\begin{lemma} \label{lem:daggerMoves} The map $\dagger$ fixes each of ${\sf A}_i, {\sf E}_i, {\sf A}^*_i, {\sf E}^*_i$ for $0 \leq i \leq N$.
\end{lemma}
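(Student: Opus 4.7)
The plan is to reduce the claim to the single observation that an antiautomorphism which fixes an element must fix every polynomial in that element, and then recognise each of the four maps as a polynomial in either ${\sf A}$ or ${\sf A}^*$.

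First, I would record the general principle: if $\dagger$ is an antiautomorphism of an algebra with $\dagger(B)=B$, then for every $k\in\mathbb N$ we have $\dagger(B^k)=\dagger(B)^k=B^k$ (the antiautomorphism reverses order, but for powers of a single element this is harmless), and consequently $\dagger(p(B))=p(B)$ for every polynomial $p\in\mathbb C[\eta]$. By Lemma \ref{lem:daggerCom}, the transpose map $\dagger$ restricts to an antiautomorphism of $T$ that fixes both ${\sf A}$ and ${\sf A}^*$, so this principle applies to polynomials in ${\sf A}$ and to polynomials in ${\sf A}^*$.

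Next I would assemble the four identifications. For $0 \leq i \leq N$, the text already supplies ${\sf A}_i=\binom{N}{i}f_i({\sf A})$ and ${\sf A}^*_i=\binom{N}{i}f_i({\sf A}^*)$, with $f_i\in\mathbb C[\eta]$ the Krawtchouk polynomial from Definition \ref{def:fi}. For the primitive idempotents, I would invoke \eqref{eq:Eprod} applied to ${\sf A}$ (with eigenvalues $\theta_0,\ldots,\theta_N$ from \eqref{eq:eigvalHN2}) to write
\begin{align*}
{\sf E}_i=\prod_{\stackrel{0\leq j\leq N}{j\neq i}}\frac{{\sf A}-\theta_j I}{\theta_i-\theta_j},
\end{align*}
exhibiting ${\sf E}_i$ as a polynomial in ${\sf A}$; and the analogous formula with ${\sf A}^*$ and $\theta^*_j$ in place of ${\sf A}$ and $\theta_j$ presents ${\sf E}^*_i$ as a polynomial in ${\sf A}^*$ (valid because ${\sf A}^*$ is diagonalizable with eigenvalues $\lbrace\theta^*_i\rbrace_{i=0}^N$, as noted just above \eqref{eq:DualAdj}).

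Combining the two ingredients finishes the proof: applying $\dagger$ and using the general principle gives $\dagger({\sf A}_i)={\sf A}_i$, $\dagger({\sf E}_i)={\sf E}_i$, $\dagger({\sf A}^*_i)={\sf A}^*_i$, and $\dagger({\sf E}^*_i)={\sf E}^*_i$. There is no real obstacle here; the only minor subtlety worth stating explicitly is why the antiautomorphism property does not cause trouble, namely that a product of copies of a single fixed element is itself fixed. Since every object in the lemma is manifestly a polynomial in one of ${\sf A}$, ${\sf A}^*$, the whole statement follows at once from Lemma \ref{lem:daggerCom}.
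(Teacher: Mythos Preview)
Your proof is correct and follows essentially the same approach as the paper: both arguments observe that each of ${\sf A}_i, {\sf E}_i$ is a polynomial in ${\sf A}$ and each of ${\sf A}^*_i, {\sf E}^*_i$ is a polynomial in ${\sf A}^*$, and then invoke Lemma~\ref{lem:daggerCom}. Your version is simply more explicit about why an antiautomorphism fixing $B$ fixes every polynomial in $B$, and about which polynomial expressions are being used.
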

\begin{proof} Each of ${\sf A}_i, {\sf E}_i$ is a polynomial in $\sf A$. Each of ${\sf A}^*_i, {\sf E}^*_i$ is a polynomial in ${\sf A}^*$. The result follows
from these comments and Lemma \ref{lem:daggerCom}.
\end{proof}

\begin{lemma} \label{lem:duality} There exists an automorphism of the algebra $T$ that swaps ${\sf A} \leftrightarrow {\sf A}^*$.
This map swaps ${\sf A}_i \leftrightarrow {\sf A}^*_i$ for $0 \leq i \leq N$ and ${\sf E}_i \leftrightarrow {\sf E}^*_i$ for $0 \leq i \leq N$.
\end{lemma}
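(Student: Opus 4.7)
The plan is to construct $\Phi$ as a composition $\Phi = \dagger \circ \Psi$, where
\begin{align*}
\Psi := \vartheta\,\sigma\,\vartheta^{-1} \in {\rm End}(T)
\end{align*}
transports the polynomial-algebra involution $\sigma$ of Proposition \ref{thm:aut2} across the $\mathfrak{sl}_4(\mathbb{C})$-module isomorphism $\vartheta : P_N \to T$ of Theorem \ref{thm:main1}, and $\dagger$ is the transpose antiautomorphism of $T$ from Lemma \ref{lem:daggerCom}. Since $\sigma^2 = {\rm id}$, the map $\Psi$ is already a $\mathbb C$-linear involution of $T$; the task is to show that $\Psi$ is an antiautomorphism of the algebra $T$ sending ${\sf A} \mapsto {\sf A}^*$ and ${\sf A}^* \mapsto {\sf A}$, whereupon $\Phi$ will be an honest algebra automorphism with the desired generator swap.

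First I would propagate the relation $\sigma \varphi \sigma^{-1} = \tau(\varphi)$ on $P$ (Proposition \ref{thm:aut2}) across the $\mathfrak{sl}_4(\mathbb{C})$-equivariant map $\vartheta$, which gives $\Psi \circ \varphi = \tau(\varphi) \circ \Psi$ on $T$ for every $\varphi \in \mathfrak{sl}_4(\mathbb{C})$. Specializing to $\varphi \in \{A_2, A_3, A^*_2, A^*_3\}$ and using $\tau(A_i) = A^*_i$ (Lemma \ref{lem:Tint}), I would unpack the $\mathfrak{sl}_4(\mathbb{C})$-action on $T$ via Lemmas \ref{lem:NewAAALR}, \ref{lem:NewAsAsAsLR}, which realize left and right multiplication by ${\sf A}$ and ${\sf A}^*$ as ${\mathcal A}^{(2)}, {\mathcal A}^{(3)}, {\mathcal A}^{*(2)}, {\mathcal A}^{*(3)}$. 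This yields the four identities
\begin{align*}
\Psi({\sf A}B) &= \Psi(B)\,{\sf A}^*, & \Psi(B\,{\sf A}) &= {\sf A}^*\,\Psi(B), \\
\Psi({\sf A}^*B) &= \Psi(B)\,{\sf A}, & \Psi(B\,{\sf A}^*) &= {\sf A}\,\Psi(B),
\end{align*}
valid for every $B \in T$.

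Before inducting, I would verify $\Psi(I) = I$ by a short direct computation: Theorem \ref{lem:VarThetaAct} gives $\vartheta^{-1}({\sf E}^*_n) = \tfrac{(N-n)!\,n!}{(N!)^{1/2}}\,x^{N-n}y^n$, hence
\begin{align*}
\vartheta^{-1}(I) = \vartheta^{-1}\Bigl(\sum_{n=0}^N {\sf E}^*_n\Bigr) = \frac{(x+y)^N}{(N!)^{1/2}},
\end{align*}
and this is $\sigma$-invariant since $x^* + y^* = x + y$ by Definition \ref{def:xyzws}. With $\Psi(I) = I$ in hand, setting $B = I$ in the first and third of the four identities gives $\Psi({\sf A}) = {\sf A}^*$ and $\Psi({\sf A}^*) = {\sf A}$, so those identities now read $\Psi(gB) = \Psi(B)\,\Psi(g)$ for $g \in \{{\sf A}, {\sf A}^*\}$ and any $B \in T$. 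I would then prove $\Psi(B_1 B_2) = \Psi(B_2)\,\Psi(B_1)$ for all $B_1, B_2 \in T$ by induction on the length of $B_1$ written as a word in the generators ${\sf A}, {\sf A}^*$ (enough by linearity, since ${\sf A}, {\sf A}^*$ generate $T$): the base $B_1 = I$ uses $\Psi(I) = I$, and the step $B_1 = gB_1'$ with $g \in \{{\sf A}, {\sf A}^*\}$ combines $\Psi(g \cdot B_1' B_2) = \Psi(B_1' B_2)\,\Psi(g)$ with the inductive hypothesis $\Psi(B_1' B_2) = \Psi(B_2)\,\Psi(B_1')$ and the identity $\Psi(B_1) = \Psi(g B_1') = \Psi(B_1')\,\Psi(g)$.

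Finally, $\Phi = \dagger \circ \Psi$ is a composition of two antiautomorphisms of $T$ and hence an algebra automorphism, with $\Phi({\sf A}) = \dagger({\sf A}^*) = {\sf A}^*$ and $\Phi({\sf A}^*) = \dagger({\sf A}) = {\sf A}$ by Lemma \ref{lem:daggerCom}. Because ${\sf A}_h = \binom{N}{h} f_h({\sf A})$ and ${\sf A}^*_h = \binom{N}{h} f_h({\sf A}^*)$ use the same Krawtchouk polynomial $f_h$, and because ${\sf E}_i, {\sf E}^*_i$ arise from formula \eqref{eq:Eprod} applied to ${\sf A}, {\sf A}^*$ with the identical eigenvalue list $\theta_j = \theta^*_j = N - 2j$ (see \eqref{eq:eigvalHN2}, \eqref{eq:deigvalHN2}), the automorphism $\Phi$ also swaps ${\sf A}_i \leftrightarrow {\sf A}^*_i$ and ${\sf E}_i \leftrightarrow {\sf E}^*_i$ for $0 \leq i \leq N$. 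The hard part will be the induction step: one must trust that the four $\mathfrak{sl}_4(\mathbb C)$-derived multiplication identities, together with the fact that ${\sf A}, {\sf A}^*$ generate $T$, really do suffice to pin down the antihomomorphism property on all of $T$, which is exactly where Lemmas \ref{lem:NewAAALR} and \ref{lem:NewAsAsAsLR} are indispensable.
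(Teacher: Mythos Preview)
Your argument is correct and is a genuinely different route from the paper's. The paper's proof is a one-line citation to Curtin's work on hyper-duality in imprimitive Bose--Mesner algebras; you instead build the automorphism internally, transporting $\sigma$ across $\vartheta$ to obtain an antiautomorphism $\Psi$ of $T$ and then composing with $\dagger$. In effect you have reversed the logical order of the paper's Definition~\ref{def:SS} and Proposition~\ref{thm:diagCOM}: the paper first imports the duality automorphism from the literature, uses it to define $S$, and only afterward proves $S=\vartheta\sigma\vartheta^{-1}$; you start from $\Psi=\vartheta\sigma\vartheta^{-1}$, verify directly via Lemmas~\ref{lem:NewAAALR} and~\ref{lem:NewAsAsAsLR} that it is an antiautomorphism swapping ${\sf A}\leftrightarrow{\sf A}^*$, and recover the automorphism as $\dagger\circ\Psi$. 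Your approach is self-contained and exhibits the duality as a consequence of the $\mathfrak{sl}_4(\mathbb C)$-module machinery, rather than as an external input.

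One small slip: your formula for $\vartheta^{-1}({\sf E}^*_n)$ has the coefficient inverted. From Theorem~\ref{lem:VarThetaAct} with $(r,s,t,u)=(N-n,n,0,0)$ one gets $\vartheta(x^{N-n}y^n)=\tfrac{(N-n)!\,n!}{(N!)^{1/2}}\,{\sf E}^*_n$, so $\vartheta^{-1}({\sf E}^*_n)=\tfrac{(N!)^{1/2}}{(N-n)!\,n!}\,x^{N-n}y^n$. Your final expression $\vartheta^{-1}(I)=(x+y)^N/(N!)^{1/2}$ is nonetheless correct, and the $\sigma$-invariance via $x^*+y^*=x+y$ is exactly right.
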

\begin{proof}  The automorphism is given in \cite[Section~9]{InheritHSD} and  \cite[Theorem~6.4]{InheritHSD}.
%%%%%%%%%More detail is provided in  \cite[Lemma~3.2]{HSD}.
\end{proof}

\begin{definition} \label{def:SS} \rm We define a $\mathbb C$-linear map $S: T \to T$ to be the composition
\begin{equation*}
S: \quad {\begin{CD}
    T  @>>{\sf A} \leftrightarrow {\sf A}^*> T @ >> \dagger > T .                           
                        \end{CD}} 
\end{equation*}
Note that $S$ is an antiautomorphism of  $T$ such that  $S^2 = {\rm id}$.
\end{definition}

\begin{lemma} \label{lem:SMoves} The map  $S $ swaps
\begin{align*}
{\sf E}_i {\sf A}^*_h {\sf E}_j \leftrightarrow {\sf E}^*_j {\sf A}_h {\sf E}^*_i 
\end{align*}
for $(h,i,j) \in \mathcal P''_N$.
\end{lemma}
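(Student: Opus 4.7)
The plan is to unwind the composition defining $S$ and apply the known action of each factor on the basis element ${\sf E}_i {\sf A}^*_h {\sf E}_j$. First, I would invoke Lemma \ref{lem:duality}: the algebra automorphism of $T$ swapping ${\sf A} \leftrightarrow {\sf A}^*$ sends ${\sf E}_\ell \leftrightarrow {\sf E}^*_\ell$ and ${\sf A}^*_\ell \leftrightarrow {\sf A}_\ell$ for $0 \le \ell \le N$. Applied to ${\sf E}_i {\sf A}^*_h {\sf E}_j$ as an algebra homomorphism, this produces ${\sf E}^*_i {\sf A}_h {\sf E}^*_j$.

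Next, I would apply the transpose $\dagger$ to ${\sf E}^*_i {\sf A}_h {\sf E}^*_j$. By Lemma \ref{lem:daggerCom} the map $\dagger$ is an antiautomorphism of $T$, and by Lemma \ref{lem:daggerMoves} it fixes each of ${\sf A}_h$, ${\sf E}^*_i$, ${\sf E}^*_j$. Hence
\begin{equation*}
\dagger({\sf E}^*_i {\sf A}_h {\sf E}^*_j) = \dagger({\sf E}^*_j)\, \dagger({\sf A}_h)\, \dagger({\sf E}^*_i) = {\sf E}^*_j {\sf A}_h {\sf E}^*_i.
\end{equation*}
Combining the two steps via Definition \ref{def:SS} yields $S({\sf E}_i {\sf A}^*_h {\sf E}_j) = {\sf E}^*_j {\sf A}_h {\sf E}^*_i$.

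Finally, since $S^2 = {\rm id}$ (as noted in Definition \ref{def:SS}), applying $S$ to both sides gives the reverse direction $S({\sf E}^*_j {\sf A}_h {\sf E}^*_i) = {\sf E}_i {\sf A}^*_h {\sf E}_j$, establishing the claimed swap. There is no real obstacle here; the only thing to be careful about is the order reversal caused by $\dagger$ being an antiautomorphism, which is precisely what produces the swap $i \leftrightarrow j$ on the outer idempotents in the target expression.
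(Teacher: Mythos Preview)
Your proposal is correct and follows exactly the approach implicit in the paper's proof, which simply cites Lemmas \ref{lem:daggerCom}--\ref{lem:duality} and Definition \ref{def:SS}. You have just made explicit the two-step computation (first the ${\sf A}\leftrightarrow {\sf A}^*$ automorphism, then the order-reversing $\dagger$) that those citations encode.
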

\begin{proof} By Lemmas \ref{lem:daggerCom}--\ref{lem:duality} and Definition \ref{def:SS}.
\end{proof}

\noindent Recall the automorphism $\sigma$ of $P$ from Proposition  \ref{thm:aut2}.

\begin{proposition} \label{thm:diagCOM}
The following diagram  commutes:
\begin{equation*}
{\begin{CD}
P_N @>\vartheta >> T
              \\
         @ V\sigma VV                   @ VV S V \\
     P_N @>>\vartheta>
                                 T
                        \end{CD}} 
\end{equation*}
\end{proposition}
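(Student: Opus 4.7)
The plan is to verify the commutativity of the diagram by evaluating both compositions on the basis $\lbrace x^r y^s z^t w^u \rbrace_{(r,s,t,u) \in \mathcal{P}_N}$ of $P_N$ from Lemma \ref{lem:PDbasic}. Since $\sigma$, $\vartheta$, and $S$ are all $\mathbb{C}$-linear, it suffices to check agreement on this basis.

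Fix $(r,s,t,u) \in \mathcal{P}_N$ and set $h = t+u$, $i = u+s$, $j = s+t$ (so $(h,i,j) \in \mathcal{P}''_N$ by Lemma \ref{lem:3wayBij}). For the composition $\vartheta \circ \sigma$, the comment at the end of Section 7 gives
\begin{align*}
\sigma(x^r y^s z^t w^u) = x^{*r} y^{*s} z^{*t} w^{*u},
\end{align*}
and then \eqref{eq:VarTheta2} of Theorem \ref{lem:VarThetaAct} yields
\begin{align*}
\vartheta\bigl(\sigma(x^r y^s z^t w^u)\bigr) = \frac{r!s!t!u!}{(N!)^{1/2}} \,{\sf E}_i {\sf A}^*_h {\sf E}_j.
\end{align*}

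For the composition $S \circ \vartheta$, I would invoke \eqref{eq:VarTheta1} of Theorem \ref{lem:VarThetaAct} to obtain
\begin{align*}
\vartheta(x^r y^s z^t w^u) = \frac{r!s!t!u!}{(N!)^{1/2}}\, {\sf E}^*_j {\sf A}_h {\sf E}^*_i,
\end{align*}
and then apply Lemma \ref{lem:SMoves}, which asserts that $S$ swaps ${\sf E}^*_j {\sf A}_h {\sf E}^*_i \leftrightarrow {\sf E}_i {\sf A}^*_h {\sf E}_j$, to conclude
\begin{align*}
S\bigl(\vartheta(x^r y^s z^t w^u)\bigr) = \frac{r!s!t!u!}{(N!)^{1/2}}\, {\sf E}_i {\sf A}^*_h {\sf E}_j.
\end{align*}
The two outputs coincide, giving $S \circ \vartheta = \vartheta \circ \sigma$ on the basis, hence on $P_N$.

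There is really no obstacle here: all the heavy lifting has already been done in Theorem \ref{lem:VarThetaAct} (the two-formula description of $\vartheta$ on the starred and unstarred monomial bases) and in Lemma \ref{lem:SMoves} (which is exactly the dual statement for $S$ on the two subconstituent-algebra bases). The proposition is essentially a bookkeeping match between the symmetry $\sigma$ of $P$ and the duality antiautomorphism $S$ of $T$, mediated by $\vartheta$; the whole content has been arranged so that this comparison becomes a single-line verification on basis elements.
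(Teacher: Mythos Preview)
Your proof is correct and follows essentially the same approach as the paper's own proof, which simply instructs the reader to chase $x^r y^s z^t w^u$ around the diagram using Theorem \ref{lem:VarThetaAct}, Lemma \ref{lem:SMoves}, and the comment at the end of Section 7. You have written out that chase in full detail.
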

\begin{proof}  For $(r,s,t,u) \in \mathcal P_N$ chase $x^r y^s z^t w^u$ around the diagram, using
Theorem \ref{lem:VarThetaAct}, Lemma \ref{lem:SMoves}  and the comment at the end of Section 7.
%%%%above Lemma \ref{lem:motivated}.
\end{proof}

\noindent Next, we consider the decomposition of $P_N$ given in \eqref{eq:odsREMIND} with $i=1$.
We compare this decomposition with the Wedderburn
decomposition of $T$ given in \eqref{eq:Wedderburn}.

\begin{theorem} \label{thm:final} For $0 \leq \ell \leq \lfloor N/2 \rfloor$ the map $\vartheta$ sends
\begin{align*}
R_1^\ell \Bigl( {\rm Ker} (L_1) \cap P_{N-2\ell} \Bigr) \mapsto  \phi_\ell T.
\end{align*}
\end{theorem}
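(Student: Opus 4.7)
The plan is to identify both sides as the same $C_1$-eigenspace and then invoke the $\mathfrak{sl}_4(\mathbb{C})$-equivariance of $\vartheta$. Recall from Proposition \ref{prop:CCC1} that
\[
C_1 = \frac{4 A_2^2 + 4 A^{*2}_3 - (A_2 A^*_3 - A^*_3 A_2)^2}{8},
\]
viewed as an element of the universal enveloping algebra $U(\mathfrak{sl}_4(\mathbb{C}))$. By Proposition \ref{lem:comment}(iv), the $\ell$-summand $R_1^\ell\bigl({\rm Ker}(L_1) \cap P_{N-2\ell}\bigr)$ is precisely the eigenspace of $C_1$ on $P_N$ with eigenvalue $\lambda_\ell = (N-2\ell)(N-2\ell+2)/2$. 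So it suffices to show that $\vartheta$ maps this eigenspace onto $\phi_\ell T$.

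First I would identify how $C_1$ acts on the $\mathfrak{sl}_4(\mathbb{C})$-module $T$. By Theorem \ref{prop:TisoFix}, on $T$ we have $A_2 = \mathcal{A}^{(2)}$ and $A^*_3 = \mathcal{A}^{*(3)}$, and by Lemmas \ref{lem:NewAAALR} and \ref{lem:NewAsAsAsLR} these act as left multiplication by $\sf A$ and $\sf A^*$ respectively. Since left multiplication is a ring homomorphism, for any polynomial expression in $A_2$ and $A^*_3$ the corresponding action on $T$ is left multiplication by the same polynomial expression in $\sf A$ and $\sf A^*$. Comparing with \eqref{eq:phiWedderburn}, the action of $C_1$ on $T$ coincides with left multiplication by
\[
\phi = \frac{4 {\sf A}^2 + 4 {\sf A}^{*2} - ({\sf A}{\sf A}^* - {\sf A}^* {\sf A})^2}{8}.
\]

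Next I would identify the $\lambda_\ell$-eigenspace of $L_\phi$ on $T$ with $\phi_\ell T$. From \eqref{eq: PhiPhi} and the orthogonal idempotent relations $\phi_m \phi_n = \delta_{m,n} \phi_m$ together with $I = \sum_m \phi_m$ (applicable here because $\phi$ is diagonalizable and lies in the center of $T$), any $B \in T$ decomposes as $B = \sum_m \phi_m B$ with $\phi B = \sum_m \lambda_m \phi_m B$. Hence $\phi B = \lambda_\ell B$ if and only if $\phi_m B = 0$ for all $m \neq \ell$, which holds if and only if $B = \phi_\ell B \in \phi_\ell T$. Thus the $\lambda_\ell$-eigenspace of $C_1$ on $T$ equals $\phi_\ell T$.

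Finally, I would invoke equivariance. By Theorem \ref{thm:main1}, $\vartheta$ is an $\mathfrak{sl}_4(\mathbb{C})$-module isomorphism, so it commutes with the action of every element of $U(\mathfrak{sl}_4(\mathbb{C}))$, in particular $C_1$. Consequently $\vartheta$ carries the $\lambda_\ell$-eigenspace of $C_1$ on $P_N$ bijectively onto the $\lambda_\ell$-eigenspace of $C_1$ on $T$, which by the two preceding paragraphs gives exactly $\vartheta\bigl(R_1^\ell({\rm Ker}(L_1) \cap P_{N-2\ell})\bigr) = \phi_\ell T$. The main conceptual point—and the only place where something non-formal happens—is the translation of the abstract Casimir element $C_1$ into left multiplication by $\phi$; everything else is a direct consequence of diagonalizability and equivariance.
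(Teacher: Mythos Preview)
Your proof is correct and follows essentially the same approach as the paper's: identify the $\ell$-summand in $P_N$ as the $\lambda_\ell$-eigenspace of $C_1$ via Proposition~\ref{lem:comment}(iv), translate the $C_1$-action on $T$ into left multiplication by $\phi$ using Lemmas~\ref{lem:NewAAALR} and~\ref{lem:NewAsAsAsLR}, recognize $\phi_\ell T$ as the corresponding eigenspace, and transport via the $\mathfrak{sl}_4(\mathbb C)$-isomorphism $\vartheta$. The only cosmetic difference is that the paper explicitly names the operator $C^{(1)}\in{\rm End}(T)$ and draws the commuting square, whereas you phrase the same step through $U(\mathfrak{sl}_4(\mathbb C))$-equivariance.
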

\begin{proof}  During this proof, we will refer to  the decomposition of $P_N$ given in \eqref{eq:odsREMIND}.  Throughout the proof, we assume that $i=1$ in \eqref{eq:odsREMIND}.
% First we consider the $\mathfrak{sl}_4(\mathbb C)$-module $P_N$.
In  Proposition \ref{prop:CCC1} we defined $C_1 \in {\rm End}(P)$ such that on $P$,
\begin{align} \label{eq:C11}
C_1&= \frac{4 A_2^2+4A^{*2}_3 - (A_2 A^*_3 -A^*_3 A_2)^2}{8}.
\end{align}
We have $C_1 (P_N) \subseteq P_N$ by Lemma \ref{lem:PNCinv}. %%Consider the action of $C_1$ on $P_N$.
%Consider  the decomposition of $P_N$ given in \eqref{eq:odsREMIND} with $i=1$. 
By Proposition \ref{lem:comment}(iii),
for $0 \leq \ell \leq \lfloor N/2 \rfloor$ the $\ell$-summand in   \eqref{eq:odsREMIND} is an eigenspace
for the action of $C_1$ on $P_N$; the eigenvalue is $(N-2\ell)(N-2\ell+2)/2$. We now bring in the $\mathfrak{sl}_4(\mathbb C)$-module $T$.
Define $C^{(1)} \in {\rm End}(T)$ by
\begin{align}
C^{(1)} =  \frac{4 \bigl({\mathcal A}^{(2)}\bigr)^2+4 \bigl({\mathcal A}^{*(3)} \bigr)^2 - \bigl({\mathcal A}^{(2)} {\mathcal A}^{*(3)} -{\mathcal A}^{*(3)} {\mathcal A}^{(2)} \bigr)^2}{8},   \label{eq:C12}
\end{align}
where ${\mathcal A}^{(2)}$, ${\mathcal A}^{*(3)}$ are from Definitions \ref{def:NewAAA}, \ref{def:NewAsAsAs}.
Comparing \eqref{eq:C11}, \eqref{eq:C12} and using Theorem  \ref{thm:main1},
we see that the following diagram commutes:
\begin{equation*}
{\begin{CD}
P_N @>\vartheta >>
               T
              \\
         @ V C_1 VV                   @VV C^{(1)}V \\
     P_N  @>>\vartheta>
                                 T
                        \end{CD}} 
\end{equation*}
 Recall the central element $\phi \in T$ from \eqref{eq:phiWedderburn}. By Lemmas  \ref{lem:NewAAALR},  \ref{lem:NewAsAsAsLR} and \eqref{eq:C12},
\begin{align}
C^{(1)} (B) = \phi B, \qquad \qquad B \in T.    \label{eq:C1Act}
\end{align} 
Consider the Wedderburn decomposition of $T$ from \eqref{eq:Wedderburn}. We claim that for $0 \leq \ell \leq \lfloor N/2 \rfloor$
the $\ell$-summand in \eqref{eq:Wedderburn} is an eigenspace for $C^{(1)}$ with eigenvalue $(N-2\ell)(N-2\ell+2)/2$.
To prove the claim, let $\ell$ be given. The $\ell$-summand in \eqref{eq:Wedderburn} is equal to $\phi_\ell T$. Using \eqref{eq: PhiPhi},  \eqref{eq:C1Act} we find that for  $B \in T$,
\begin{align*}
C^{(1)} (\phi_\ell B) = \phi \phi_\ell B = \frac{(N-2\ell)(N-2\ell+2)}{2} \phi_\ell B.
\end{align*}
The claim in proven.
Our discussion shows  that  for  $0 \leq \ell \leq \lfloor N/2 \rfloor$ the map $\vartheta$ sends the $\ell$-summand in 
 \eqref{eq:odsREMIND} to the $\ell$-summand in \eqref{eq:Wedderburn}. The result follows.

%   \label{eq: PhiPhi}
\end{proof}
%%%%%%%%%%%%%%%%%%%%%%%%%%%%%%%%%
%%%%%%%%%%%%%%%%%%%%%%%%%%%%%
\section{Directions for future research}

\noindent In this section, we give some suggestions for future research.

\begin{problem}\label{prob:prob1} \rm Recall the polynomial algebra $P= \mathbb C\lbrack x,y,z,w \rbrack$. Define
\begin{align*}
x^{\downarrow} &= \frac{x-y-z-w}{2}, \qquad \qquad 
y^{\downarrow} = \frac{-x+y-z-w}{2}, \\
z^{\downarrow} &= \frac{-x-y+z-w}{2}, \qquad \qquad
w^{\downarrow} = \frac{-x-y-z+w}{2}.
\end{align*}
The vectors $x^{\downarrow}, y^{\downarrow}, z^{\downarrow}, w^{\downarrow}$ form a basis for $P_1$.
Consequently, the following vectors form a basis for $P$:
\begin{align} \label{eq:PbasisDown}
x^{\downarrow r} y^{\downarrow s} z^{\downarrow t} w^{\downarrow u}, \qquad \qquad r,s,t,u \in \mathbb N.
\end{align}
This basis is an eigenbasis for the automorphism $\sigma$ of $P$ from Proposition \ref{thm:aut2}.
To see this, note that
\begin{align*}
x^{\downarrow} &=-\, \frac{x^*-y^*-z^*-w^*}{2}, \qquad \qquad 
y^{\downarrow} = \frac{-x^*+y^*-z^*-w^*}{2}, \\
z^{\downarrow} &= \frac{-x^*-y^*+z^*-w^*}{2}, \qquad \qquad
w^{\downarrow} = \frac{-x^*-y^*-z^*+w^*}{2},
\end{align*}
where $x^*, y^*, z^*, w^*$ are from Definition \ref{def:xyzws}.
Therefore,  $\sigma$ sends
\begin{align*}
x^{\downarrow} \mapsto - x^{\downarrow}, \qquad \quad
y^{\downarrow} \mapsto y^{\downarrow}, \qquad \quad
z^{\downarrow} \mapsto z^{\downarrow}, \qquad \quad
w^{\downarrow} \mapsto w^{\downarrow}.
\end{align*}
It would be interesting to explore how $\mathfrak{sl}_4(\mathbb C)$ acts on the basis  \eqref{eq:PbasisDown}.
\end{problem}

\begin{problem}\label{prob:prob2} \rm We refer to Problem \ref{prob:prob1}.
The vectors $x^{\downarrow}, y^{\downarrow}, z^{\downarrow}, w^{\downarrow}$ are common eigenvectors for the following three
elements of $\mathfrak{sl}_4(\mathbb C)$:
\begin{align*}
A^{\downarrow}_1 = \frac{\lbrack \lbrack A^*_3, A_1\rbrack, A^*_2 \rbrack}{4} 
= \begin{pmatrix} 0 &-1& 0 & 0 \\
                            -1 &0&0&0\\
                             0&0&0&1 \\
                             0&0&1&0
   \end{pmatrix}, \\       
   A^{\downarrow}_2 = \frac{\lbrack \lbrack A^*_1, A_2\rbrack, A^*_3 \rbrack}{4} 
= \begin{pmatrix} 0 &0& -1 & 0 \\
                            0 &0&0&1\\
                             -1&0&0&0 \\
                             0&1&0&0
   \end{pmatrix}, \\    
      A^{\downarrow}_3 = \frac{\lbrack \lbrack A^*_2, A_3\rbrack, A^*_1 \rbrack}{4} 
= \begin{pmatrix} 0 &0& 0 & -1 \\
                            0 &0&1&0\\
                             0&1&0&0 \\
                             -1&0&0&0
   \end{pmatrix}.
   \end{align*}
Specifically,
\begin{enumerate}
\item[\rm (i)]   $A^{\downarrow}_1$ sends
\begin{align*}
x^{\downarrow} \mapsto  x^{\downarrow}, \qquad \quad
y^{\downarrow} \mapsto y^{\downarrow}, \qquad \quad
z^{\downarrow} \mapsto -z^{\downarrow}, \qquad \quad
w^{\downarrow} \mapsto -w^{\downarrow};
\end{align*}
\item[\rm (ii)]  $A^{\downarrow}_2$ sends
\begin{align*}
x^{\downarrow} \mapsto  x^{\downarrow}, \qquad \quad
y^{\downarrow} \mapsto -y^{\downarrow}, \qquad \quad
z^{\downarrow} \mapsto z^{\downarrow}, \qquad \quad
w^{\downarrow} \mapsto -w^{\downarrow};
\end{align*}
\item[\rm (iii)] $A^{\downarrow}_3$ sends
\begin{align*}
x^{\downarrow} \mapsto  x^{\downarrow}, \qquad \quad
y^{\downarrow} \mapsto -y^{\downarrow}, \qquad \quad
z^{\downarrow} \mapsto -z^{\downarrow}, \qquad \quad
w^{\downarrow} \mapsto w^{\downarrow}.
\end{align*}
\end{enumerate}
The elements $A^{\downarrow}_1, A^{\downarrow}_2, A^{\downarrow}_3$ form a basis for a Cartan subalgebra $\mathbb H^{\downarrow}$ of $\mathfrak{sl}_4(\mathbb C)$.
It would be interesting to explore how $\mathbb H$, $\mathbb H^*$, $\mathbb H^{\downarrow}$ are related.
\end{problem}

\begin{problem}\label{prob:prob3} \rm In the present paper we treated the graph $H(N,2)$ is an $S_3$-symmetric way. As we mentioned in Section 17, 
$H(N,2)$ is a $Q$-polynomial distance-regular graph that has diameter $N$ and is  
a bipartite antipodal 2-cover.
Let $\Gamma$ denote any $Q$-polynomial distance-regular graph that has diameter $N$ and is a bipartite antipodal 2-cover. Such a graph is called 2-homogeneous; see \cite{2Hom, 2homT}. 
To avoid trivialities, let us assume that $\Gamma$ is not isomorphic to $H(N,2)$. The intersection numbers
of $\Gamma$ are determined by  $N$ and a certain scalar parameter $q$; see \cite[Theorem~35]{2Hom}.
We seek an $S_3$-symmetric treatment of $\Gamma$ that is analogous to the present paper. Such a treatment would amount to a $q$-analog of the treatment in the present paper.
\end{problem}
\section{Acknowledgement} 
\noindent The authors thank  Kazumasa Nomura for reading the manuscript carefully, and offering valuable comments.

 %%%%%%%%%%%%%%%%%%%%%%%%%%%%%%%%%%%%%%%%%%%%%%%%%%%%%%%%%%%%%%
 %%%%%%%%%%%%%%%%%%%%%%%%%%%%%%%%%%%%%%%%%%%%%%%%%%%%%%%%%%%%%%

\bigskip

\noindent William J. Martin \hfil\break
\noindent Department of Mathematical Sciences \hfil\break
\noindent Worcester Polytechnic Institute \hfil\break
\noindent Worcester, MA  01609 USA \hfil\break
\noindent email: {\tt martin@wpi.edu} \hfil\break

%%\noindent Kazumasa Nomura \hfil\break
%%\noindent Tokyo Medical and Dental University \hfil\break
%%\noindent Kohnodai Ichikawa 272-0827 Japan \hfil\break
%%\noindent email: {\tt knomura@pop11.odn.ne.jp} \hfil\break

\noindent Paul Terwilliger \hfil\break
\noindent Department of Mathematics \hfil\break
\noindent University of Wisconsin \hfil\break
\noindent 480 Lincoln Drive \hfil\break
\noindent Madison, WI 53706-1388 USA \hfil\break
\noindent email: {\tt terwilli@math.wisc.edu }\hfil\break

\section{Statements and Declarations}

\noindent {\bf Funding}: The author declares that no funds, grants, or other support were received during the preparation of this manuscript.
\medskip

\noindent  {\bf Competing interests}:  The author  has no relevant financial or non-financial interests to disclose.
\medskip

\noindent {\bf Data availability}: All data generated or analyzed during this study are included in this published article.

\end{document}